\newtheorem{theorem}{Theorem}[chapter]
\newtheorem{lemma}[theorem]{Lemma}
\newtheorem{corollary}[theorem]{Corollary}
\newtheorem{proposition}[theorem]{Proposition}
\newtheorem{definition-theorem}[theorem]{Definition / Theorem}
\newtheorem*{conjecture*}{Conjecture}
\newtheorem*{theorem*}{Theorem}
\theoremstyle{definition}
\newtheorem{definition}[theorem]{Definition}
\newtheorem{example}[theorem]{Example}
\newtheorem{observation}[theorem]{Observation}
\newtheorem{question}[theorem]{Question}
\theoremstyle{remark}
\newtheorem{remark}[theorem]{Remark}
\newtheorem*{example*}{Example}  
\newtheorem*{remark*}{Remark}
\numberwithin{section}{chapter}
\numberwithin{equation}{chapter}
\newcommand{\N}{\mathbb{N}}
\newcommand{\R}{\mathbb{R}}
\newcommand{\C}{\mathbb{C}}
\newcommand{\T}{\mathbb{T}}
\newcommand{\Manoa}{M\=anoa}
\newcommand{\Hawaii}{Hawai\kern.05em`\kern.05em\relax i}
\newcommand{\inj}{\mathrm{inj}}
\newcommand{\red}{\mathrm{red}}
\newcommand*{\mcomment}[1]{\textcolor{magenta}{#1}}
\DeclareMathOperator{\Aut}{Aut}
\DeclareMathOperator{\Ind}{Ind}
\DeclareMathOperator{\supp}{\mathrm{supp}}
\DeclareMathOperator{\pt}{\mathrm{pt}}
\DeclareMathOperator{\Prim}{\mathrm{Prim}}
\DeclareMathOperator{\SL}{\mathrm{SL}}
\DeclareMathOperator{\PSL}{\mathrm{PSL}}
\newcommand{\op}{{\operatorname{op}}}
\newcommand{\Bd}{\mathcal{B}}
\newcommand*{\nb}{\nobreakdash}
\newcommand*{\dd}{\,d}
\newcommand*{\K}{\mathcal K}
\newcommand*{\M}{\mathcal{M}} 
\newcommand*{\cont}{C}
\newcommand*{\contz}{\cont_0}
\newcommand*{\contc}{\cont_c}
\newcommand*{\contub}{\cont_{ub}}
\newcommand*{\id}{\textup{id}}
\newcommand*{\Ad}{\textup{Ad}}
\newcommand*{\U}{\mathcal U}
\newcommand*{\braket}[2]{\langle#1\!\mid\!#2\rangle}
\newcommand*{\sbe}{\subseteq} 
\newcommand*{\cstar}{\texorpdfstring{$C^*$\nobreakdash-\hspace{0pt}}{*-}}
\newcommand*{\into}{\hookrightarrow}
\newcommand*{\onto}{\twoheadrightarrow}
\renewcommand*{\max}{\mathrm{max}}
\renewcommand*{\min}{\mathrm{min}}
\newcommand{\om}{\omega}
\newcommand{\dach}{{\!\!\widehat{\ \ \ }}}
\begin{document}

\title[Amenability and weak containment]{Amenability and weak containment for actions of  locally compact groups on $C^*$-algebras}
\author{Alcides Buss}
\address{Departamento de Matem\'atica\\
 Universidade Federal de Santa Catarina\\
 88.040-900 Florian\'opolis-SC\\
 Brazil}
\email{alcides.buss@ufsc.br}

\author{Siegfried Echterhoff}
\address{Mathematisches Institut\\
 Westf\"alische Wilhelms-Universit\"at M\"un\-ster\\
 Einsteinstr.\ 62\\
 48149 M\"unster\\
 Germany}
\email{echters@uni-muenster.de}

\author{Rufus Willett}
\address{Mathematics Department\\
 University of \Hawaii~at \Manoa\\
Keller 401A \\
2565 McCarthy Mall \\
 Honolulu\\
 HI 96822\\
USA}
\email{rufus@math.hawaii.edu}

\keywords{Amenable actions, C*-algebra, Matsumura's theorem, weak containment, exact groups}

\subjclass[2020]{46L55, 43A35}

\maketitle

\begin{abstract} In this work we introduce and study a new notion of amenability for actions of locally compact groups on $C^*$-algebras.  Our definition extends the definition of amenability for actions of discrete groups due to Claire Anantharaman-Delaroche.  We show that our definition has several characterizations and permanence properties analogous to those known in the discrete case.   For example, for actions on commutative $C^*$-algebras, we show that our notion of amenability is equivalent to measurewise amenability.
Combined with a recent result of Alex Bearden and Jason Crann, this also settles a long standing open problem about the equivalence of  topological amenability and measurewise amenability for a second countable $G$-space $X$.

We use our new notion of amenability to study when the maximal and reduced crossed products agree.  One of our main results generalizes a theorem of Matsumura: we show that for an action of an exact locally compact group $G$ on a locally compact space $X$ the full and reduced crossed products $C_0(X)\rtimes_\max G$ and $C_0(X)\rtimes_\red G$ coincide if and only if the action of $G$ on $X$ is amenable.  We also show that the analogue of this theorem does not hold for actions on noncommutative $C^*$-algebras. 

Finally, we study amenability as it relates to more detailed structure in the case of $C^*$-algebras that fibre over an appropriate $G$-space $X$, and the interaction of amenability with various regularity properties such as nuclearity, exactness, and the (L)LP, and the equivariant versions of injectivity and the WEP.
\end{abstract}

\tableofcontents

\chapter{Introduction}

Amenability is an important property of groups and their actions (and other objects) with many consequences in dynamics, harmonic analysis, geometric group theory, and elsewhere.  There is a good notion of amenability in the literature for an action of a locally compact group on a von Neumann algebra.  However, for actions on $C^*$-algebras, some natural definitions only work in the case where the acting group is discrete.  Our goals in this work are four-fold:
\begin{enumerate}
\item Introduce a notion of amenability for an action of a locally compact group on a $C^*$-algebra, and study its relationship to approximation properties for the action and to existing notions of amenability for actions on commutative $C^*$-algebras.
\item Study the connection of amenability to the weak containment problem of when the maximal and reduced crossed products coincide.
\item Study amenability in the case of actions on $X\rtimes G$-$C^*$-algebras for a regular action $G\curvearrowright X$, and for type I $G$-$C^*$-algebras.
\item Study the connection of amenability to various important regularity properties such as nuclearity, exactness, lifting properties, and equivariant versions of the WEP and injectivity.
\end{enumerate}

We now discuss each of these goals in turn.

\section{Amenable actions}

To explain our notion of amenable actions, we start by recalling the classical case of actions on von Neumann algebras.  This theory was initiated by Claire Anantharaman-Delaroche \cite{Anantharaman-Delaroche:1979aa}*{D\'{e}finition 3.4} over forty years ago and has been used to great effect by Anantharaman-Delaroche and many others in the intervening period.

To establish terminology, let $A$ be a $C^*$-algebra (or a von Neumann algebra) equipped with an action $\alpha:G\to \text{Aut}(A)$ of a locally compact group $G$.  Then $(A,\alpha)$ is a \emph{$G$-$C^*$-algebra} (respectively, \emph{$G$-von Neumann algebra}) if for all $a\in A$ the `orbit map' $G\to A$ defined by $g\mapsto \alpha_g(a)$ is norm (respectively, ultraweakly) continuous.  

Let $G$ be a locally compact group, and let $(M,\sigma)$ be a $G$-von Neumann algebra.  Equip the von Neumann algebra tensor product $L^\infty(G)\overline{\otimes} M$ with the tensor product of the canonical action $\tau$ on $L^\infty(G)$ induced by left translation, and the given action $\sigma$ on $M$.   The following definition is due to Anantharaman-Delaroche \cite{Anantharaman-Delaroche:1979aa}*{D\'{e}finition 3.4}.  Note that it reduces to one of the standard definitions of amenability of $G$ if $M=\C$.

\begin{definition}
A $G$-von Neumann algebra $(M,\sigma)$ is \emph{amenable} if there exists an equivariant conditional expectation $\Phi:L^\infty(G)\overline{\otimes} M\to M$, 
where we view $M$ as a von Neumann subalgebra of $L^\infty(G)\overline\otimes M$ via the canonical embedding $m\mapsto 1\otimes m$.
\end{definition}

If $(A,\alpha)$ is a $G$-$C^*$-algebra and $G$ is discrete, then its double dual $A^{**}$ is a $G$-von Neumann algebra with the canonically induced action $\alpha^{**}$, and Anantharaman-Delaroche \cite{Anantharaman-Delaroche:1987os}*{D\'{e}finition 4.1} defines the $G$-action on $A$ to be \emph{amenable} precisely when $(A^{**},\alpha^{**})$ is an amenable $G$-von Neumann algebra.  This definition works well in the discrete case, but not for general locally compact $G$: indeed, $A^{**}$ is typically not a $G$-von Neumann algebra when $G$ is not discrete.

One of the main ideas in this paper is to find an appropriate replacement for $A^{**}$ when $G$ is locally compact.  The key ingredient is given by the following theorem of Akio Ikunishi:

\begin{theorem}[{\cite{Ikunishi}*{Theorem 1.1}}]\label{cont dd}
Let $G$ be a locally compact group, and $(A,\alpha)$ a $G$-$C^*$-algebra.  Then there is a canonically associated $G$-von Neumann algebra $(A_\alpha'',\alpha'')$ with the following universal property: any equivariant $*$-homomorphism $A\to M$ from $A$ to a $G$-von Neumann algebra $M$ extends uniquely to an ultraweakly continuous equivariant $*$-homomorphism $A_\alpha''\to M$.  
\end{theorem}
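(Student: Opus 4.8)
The plan is to build $A_\alpha''$ concretely inside the enveloping von Neumann algebra of the full crossed product $B:=A\rtimes_{\alpha,\max}G$, and then to extract the universal property from that of the universal representation of $B$. (I write $\mathcal B(H)$ for the bounded operators on a Hilbert space $H$, reserving $B$ for the crossed product.) First I would fix the universal representation $\pi_B\colon B\to\mathcal B(H_B)$, so that $B^{**}=\pi_B(B)''$, and recall the canonical nondegenerate $*$-homomorphism $\iota_A\colon A\to M(B)$ and the strictly continuous homomorphism $\iota_G\colon G\to U(M(B))$, for which $(\iota_A,\iota_G)$ is a covariant pair for $\alpha$. Since $\pi_B$ is nondegenerate it extends to $M(B)$, with $\pi_B(M(B))\subseteq\pi_B(B)''=B^{**}$. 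I then set
\[
\tilde\pi:=\pi_B\circ\iota_A\colon A\to B^{**},\qquad \tilde u:=\pi_B\circ\iota_G\colon G\to U(H_B),
\]
$A_\alpha'':=\tilde\pi(A)''\subseteq B^{**}$ and $\alpha''_g:=\Ad\tilde u_g$.

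Four routine points must be verified. First, $\tilde\pi$ is injective, because $A$ embeds into $M(B)$ and $\pi_B$ restricts to a faithful homomorphism on $M(B)$. Second, $\tilde u$ is \emph{strongly} continuous: strict continuity of $\iota_G$ makes $g\mapsto\tilde u_g\pi_B(b)\xi$ norm-continuous for $b\in B$, $\xi\in H_B$, and nondegeneracy of $\pi_B$ with $\|\tilde u_g\|=1$ upgrades this to all of $H_B$. Third, $\iota_G(g)\iota_A(a)\iota_G(g)^*=\iota_A(\alpha_g(a))$ forces $\Ad\tilde u_g$ to carry $\tilde\pi(A)$ onto $\tilde\pi(A)$, hence $A_\alpha''$ onto itself, so $\alpha''$ restricts to a genuine $G$-action there. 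Fourth, $g\mapsto\Ad\tilde u_g$ is ultraweakly continuous on $\mathcal B(H_B)$ since $\tilde u$ is strongly continuous; thus $(A_\alpha'',\alpha'')$ is a $G$-von Neumann algebra.

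For the universal property, let $\phi\colon A\to M$ be equivariant into a $G$-von Neumann algebra $(M,\sigma)$. Cutting $M$ down by the $\sigma$-invariant projection $1_{\phi(A)''}$, I may assume $\phi$ is nondegenerate. A standard construction for $W^*$-dynamical systems then provides a normal faithful unital representation $M\subseteq\mathcal B(K)$ and a strongly continuous unitary representation $v\colon G\to U(K)$ with $\Ad v_g|_M=\sigma_g$ --- for instance the diagonal representation of $M$ on $L^2(G,K_0)$ attached to a normal faithful representation $M\subseteq\mathcal B(K_0)$; this is the one place where ultraweak continuity of $\sigma$ is used. Then $(\phi,v)$ is covariant for $\alpha$, so it integrates to a nondegenerate representation $\rho=\phi\rtimes v\colon B\to\mathcal B(K)$ whose multiplier extension satisfies $\bar\rho\circ\iota_A=\phi$ and $\bar\rho\circ\iota_G=v$. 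By universality of $\pi_B$ there is a unique normal $*$-homomorphism $\psi\colon B^{**}\to\mathcal B(K)$ with $\psi\circ\pi_B=\rho$, and I set $\bar\phi:=\psi|_{A_\alpha''}$. Testing against $\rho(b)$ for $b\in B$ and using nondegeneracy of $\rho$ with $\bar\rho\circ\iota_A=\phi$, $\bar\rho\circ\iota_G=v$, one gets $\psi(\tilde\pi(a))=\phi(a)$ and $\psi(\tilde u_g)=v_g$. The first identity says $\bar\phi\circ\tilde\pi=\phi$, i.e.\ $\bar\phi$ extends $\phi$; normality of $\psi$ gives $\bar\phi(A_\alpha'')=\psi(\tilde\pi(A)'')\subseteq\psi(\tilde\pi(A))''=\phi(A)''\subseteq M$, so $\bar\phi$ lands in $M$; and the second gives equivariance, $\bar\phi(\alpha''_g(x))=v_g\bar\phi(x)v_g^*=\sigma_g(\bar\phi(x))$. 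Uniqueness is immediate since $\tilde\pi(A)$ is ultraweakly dense in $A_\alpha''$. This universal property makes $(A_\alpha'',\alpha'')$ the initial object among $G$-von Neumann algebras carrying an equivariant $*$-homomorphism from $A$, hence determines it up to a unique isomorphism --- the meaning of ``canonically associated''.

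The main obstacle, I expect, is not any single hard step but the bookkeeping at the $C^*/W^*$ interface: tracking which maps have been extended to multiplier algebras, checking that $\rho=\phi\rtimes v$ genuinely restricts to $\phi$ and $v$ on $M(B)$, and deriving $\psi(\tilde\pi(a))=\phi(a)$, $\psi(\tilde u_g)=v_g$ cleanly. The substantive external input is the existence, for every $G$-von Neumann algebra, of a normal faithful covariant representation with a \emph{strongly} continuous implementing unitary group; this is what forces the \emph{full} crossed product $B=A\rtimes_{\alpha,\max}G$ to be the right ambient algebra, since an equivariant $\phi\colon A\to M$ need not factor through the reduced crossed product and hence cannot be recovered from a single ``regular'' representation such as the one on $L^2(G,H_u)$.
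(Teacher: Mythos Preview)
Your proof is correct and follows essentially the same route as the paper: the paper defines $A_\alpha''$ as $i_A(A)''$ inside $(A\rtimes_{\max}G)^{**}$ (Definition~\ref{gvna}), then obtains the universal property exactly as you do---extend the integrated form $\phi\rtimes v$ normally to $(A\rtimes_{\max}G)^{**}$ and restrict to $A_\alpha''$ (Proposition~\ref{prop-universal}), using the regular representation of $M$ on $L^2(G,K_0)$ to produce the implementing unitary $v$ (Corollary~\ref{cor-vN}). One side remark: your closing comment that the full crossed product is ``forced'' because equivariant maps need not factor through the reduced one is a bit misleading---Lemma~\ref{lem:A_alpha''-red} of the paper shows that $A_\alpha''$ can equally well be realised inside $(A\rtimes_{\red}G)^{**}$, even though individual covariant representations need not descend there.
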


It follows from the theorem that when $G$ is discrete, $(A_\alpha'',\alpha'')=(A^{**},\alpha^{**})$.  Thus $A_\alpha''$ is a natural replacement for $A^{**}$.  We show that it has similar functoriality properties to $A^{**}$, although it is often a proper quotient.

Replacing $A^{**}$ by the $G$-von Neumann algebra $A_\alpha''$ we define a $G$-$C^*$-algebra $(A,\alpha)$ to be 
{\em von Neumann amenable} if $(A_\alpha'',\alpha'')$ is an amenable $G$-von Neumann algebra. This clearly extends the notion of amenable action of a discrete group $G$.  

However, von Neumann amenability is not directly useful for studying properties of the associated crossed products.  Very early on in the theory, Anantharaman-Delaroche \cite{Anantharaman-Delaroche:1982aa}*{Corollaire 3.7} established the striking fact that amenability of a $G$\nb-von Neumann algebra $(M,\sigma)$ is equivalent to amenability of the induced action of $G$ on the centre $Z(M)$.  For discrete $G$, Anantharaman-Delaroche later used this in \cite{Anantharaman-Delaroche:1987os}*{Th\'{e}or\`{e}me 3.3} to characterize amenability in terms of an approximation property using functions of positive type on $G$ with values in the centre $Z(M)$ of $M$, that is, functions  $\theta:G\to Z(M)$ such that 
for all finite $F\subseteq G$ the `$F\times F$-matrix' 
$$
\big(\sigma_g\theta(g^{-1}h)\big)_{g,h\in F}\in M_F(Z(M))
$$
is positive. Specializing to $M=A^{**}$ gives characterizations of amenability for $G$-$C^*$-algebras. The following  definition is inspired by this characterization.

\begin{definition}\label{def-am}
Let $(A,\alpha)$ be a $G$-$C^*$-algebra.  We say that  $(A,\alpha)$ is  {\em amenable} if there exists a net $(\theta_i:G\to Z(A_\alpha''))$ of norm-continuous, compactly supported functions of positive type such that $\|\theta_i(e)\|\leq 1$ for all $i\in I$ and $\theta_i(g)\to 1_{A_\alpha''}$ ultraweakly and uniformly on compact subsets of $G$.
\end{definition}

Our notion of amenability is a complete analogue of the classical definition of Anantharaman-Delaroche \cite{Anantharaman-Delaroche:1987os}*{D\'{e}finition 4.1} for  discrete groups $G$. For actions of discrete groups it has been shown by Anantharaman-Delaroche in  \cite{Anantharaman-Delaroche:1987os}*{Th\'{e}or\`{e}me 3.3} that amenability and von Neumann amenability coincide.
In  this work we prove that amenability  and von Neumann amenability coincide for actions of {\em exact} locally compact groups $G$. Shortly after we posted a first draft of this work on the arXiv, Alex Bearden and Jason Crann showed in  \cite{Bearden-Crann}*{Theorem 3.6} with different methods  that both notions coincide also for non-exact groups.
Thus we can now state

\begin{theorem}[Bearden-Crann]\label{thm-Bearden-Crann}
A $G$-$C^*$-algebra $(A,\alpha)$ is amenable if and only if it is von Neumann amenable.
\end{theorem}

A drawback of these formulations of amenability is that the
enveloping $G$\nb-von Neumann algebra is usually a huge object which is not easy to understand.  
Motivated by this, we give a different characterization, which we call the
{\em weak quasi-central approximation property} (wQAP).  To state it, let us denote by $S(A)^c$ the set of states $\phi$ on a $G$-$C^*$-algebra $(A,\alpha)$ such that,  if $\alpha^*$ is the induced action of $G$ on $A^*$, then the map 
$$
G\to S(A);\quad g\mapsto \alpha_g^*(\phi)
$$
is norm-continuous.

\begin{definition}\label{def-wQAP intro}
Let $(A,\alpha)$ be a $G$-$C^*$-algebra.  We say that $(A,\alpha)$ has the {\em weak quasi-central approximation property} (wQAP) if there exists a net $(\xi_i)_{i\in I}$ of functions in $C_c(G,A)\subseteq L^2(G,A)$ such that:
\begin{enumerate}[(i)]
\item $\|\braket{\xi_i}{\xi_i}_A\|\leq 1$ for all $i\in I$;
\item for all $\phi\in S(A)^c$ we have $\phi( \braket{\xi_i}{\lambda_g^\alpha\xi_i}_A) \to 1$ uniformly for $g$ in compact subsets of $G$;
\item for all $\phi\in S(A)^c$ and all $a\in A$ we have $\phi(\braket{\xi_i a-a\xi_i}{\xi_i a-a\xi_i}_A)\to 0$.
\end{enumerate}
\end{definition}

The (wQAP) is a variant of the {\em quasi-central approximation property} (QAP) as introduced by the authors in \cite{Buss:2019}*{Section 3} to explain some work of Yuhei Suzuki \cite{Suzuki:2018qo}.  The (QAP) is analogous to the (wQAP), but one replaces the weak convergence conditions (ii) and (iii) by the analogous norm\footnote{One should use strict convergence in the multiplier algebra in the non-unital case.} convergence conditions. 
We show in Theorem \ref{thm-wQAP} that amenability is equivalent to the  (wQAP) and to the {\em weak approximation property} (wAP),
a weak version of the {\em approximation property} (AP) as introduced by Ruy Exel and Chi-Keung Ng  in  \cite{ExelNg:ApproximationProperty}*{Definition 3.6} for Fell bundles over $G$. In an earlier draft of this work we 
presented an argument that for discrete $G$ amenability is also equivalent to the (QAP)  and the (AP), and we asked whether such result could also hold for actions of general locally compact groups. While it turned out later that our argument contained a mistake\footnote{We are grateful to one of the referees for pointing this out to us!},
 our question has been answered in the positive by Narutaka Ozawa and Suzuki in 
\cite{Ozawa-Suzuki}*{Theorem 2.13 and Theorem 3.2}. Combining the results of Ozawa and Suzuki with ours we can now state

\begin{theorem}[Theorem \ref{thm-wQAP} and Ozawa-Suzuki]\label{thm-OzSuz}
For a $G$-$C^*$-algebra $(A, \alpha)$ the following are equivalent
\begin{enumerate}[(i)]
\item $(A,\alpha)$ is amenable.
\item $(A,\alpha)$ has the (wQAP).
\item $(A,\alpha)$ has the (wAP).
\item $(A,\alpha)$ has the (QAP).
\item $(A,\alpha)$ has the (AP).
\end{enumerate}
\end{theorem}

We note a striking aspect of the (wQAP) and/or the (QAP).   As discussed above, Anantharaman-Delaroche \cite{Anantharaman-Delaroche:1982aa}*{Corollaire 3.7} showed that amenability of a $G$-von Neumann algebra is equivalent to amenability of the induced action on its centre.  It had been suspected --  see for example \cite{Brown:2008qy}*{Definition 4.31} -- that amenability of a $G$-$C^*$-algebra $(A,\alpha)$ would also be equivalent to amenability of the action on the centre\footnote{In the non-unital case, one should use the centre $Z\M(A)$ of the multiplier algebra.} $Z(A)$.  However, in Definition \ref{def-wQAP intro}, one has only the ``quasi-centrality'' condition in (iii) above.  Thanks to work of Suzuki \cite{Suzuki:2018qo} (see also the discussion of Suzuki's work in \cite{Buss:2019}*{Section 3}), we know that this distinction between centrality and quasi-centrality in the $C^*$-algebra case is quite fundamental.  Indeed, for simple $C^*$-algebras $A$, amenability of the action on $Z\M(A)\cong\C$ always implies amenability of $G$, while it follows from Suzuki's constructions (see also the more recent work of  Ozawa and Suzuki in  \cite{Ozawa-Suzuki}) that there are many important examples of amenable actions on simple $C^*$-algebras by groups which are not amenable. It forms a theme of the current work that amenability for actions on $C^*$-algebras is a genuinely `noncommutative' property.

Using our characterizations of amenability, we can establish the following permanence properties.  We think of these as evidence that amenability is a natural condition.

\begin{theorem}
Amenability is inherited under taking equivariant: quotients; hereditary subalgebras (in particular ideals); extensions; inductive limits; Morita equivalent $G$-$C^*$-algebras.
\end{theorem}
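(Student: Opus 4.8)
The plan is to establish each clause by transporting an approximating object along the relevant equivariant map, using whichever of our two characterizations is more convenient. For \emph{quotients} and \emph{inductive limits} the (wQAP) of Definition~\ref{def-wQAP intro} is cleanest, since it only involves vectors in $C_c(G,A)$; for \emph{ideals}, \emph{extensions}, \emph{hereditary subalgebras} and \emph{Morita equivalence} it is more convenient to work on the level of the enveloping $G$-von Neumann algebra and the net of positive-type functions $\theta_i\colon G\to Z(A_\alpha'')$ of Definition~\ref{def-am}, because for these operations one can describe $A_\alpha''$ (and hence $Z(A_\alpha'')$) explicitly in terms of the data, exactly as one does for $A^{**}$. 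In all cases the three defining conditions survive because the maps in play are $G$-equivariant, so that they intertwine the twisted left regular representation $\lambda^\bullet$ (respectively the actions $\alpha''$) and the module structures, are compatible with Hilbert-module inner products, and carry the relevant states/functionals back into the continuity classes $S(-)^c$.

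\emph{Quotients.} If $q\colon A\onto B$ is an equivariant quotient with wQAP net $(\xi_i)\subseteq C_c(G,A)$, push it forward pointwise to $(q\circ\xi_i)\subseteq C_c(G,B)$: the identity $\langle q\circ\eta,q\circ\eta\rangle_B=q(\langle\eta,\eta\rangle_A)$ gives (i); equivariance of $q$ gives $\lambda_g^\beta(q\circ\xi_i)=q\circ\lambda_g^\alpha\xi_i$ and turns the commutator defect for $\xi_i$ and $a$ into that for $q\circ\xi_i$ and $q(a)$; and every $\phi\in S(B)^c$ pulls back along the equivariant $q$ to $\phi\circ q\in S(A)^c$, so (ii) and (iii) for $(q\circ\xi_i)$ follow from those for $(\xi_i)$ evaluated at $\phi\circ q$ (and at lifts of elements of $B$). \emph{Inductive limits.} If $A=\varinjlim A_n$ with each $A_n$ amenable then $\bigcup_n A_n$ is norm-dense in $A$; index a candidate net of $A$ by tuples $(F,K,\varepsilon,n)$ with $F\subseteq A$ finite, $K\subseteq G$ compact, $F$ being $\varepsilon$-close to $A_n$, and attach to each such tuple a vector from the wQAP net of $A_n$, included into $C_c(G,A)$. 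Restrictions of states in $S(A)^c$ to the invariant subalgebras $A_n$ lie, after renormalisation (whose norm tends to $1$ along the net), in $S(A_n)^c$; the error from replacing the elements of $F$ by nearby elements of $A_n$ is of order $\varepsilon$; and a routine directedness argument then yields (i)--(iii) in the limit.

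\emph{Ideals, extensions, hereditary subalgebras.} For an equivariant ideal $I\trianglelefteq A$ one shows, just as for $I^{**}\subseteq A^{**}$, that $I_\alpha''$ is identified with $pA_\alpha''$ for a $G$-invariant central projection $p\in Z(A_\alpha'')$, so that $Z(I_\alpha'')=pZ(A_\alpha'')$; cutting a net $(\theta_i)$ as in Definition~\ref{def-am} by $p$ then produces one for $I$ (multiplication by a central invariant projection preserves positive-typeness, $\|\theta_i(e)p\|\le 1$, and $\theta_i(g)p\to p=1_{I_\alpha''}$ ultraweakly, uniformly on compacta). For an equivariant extension $0\to I\to A\to B\to 0$ one shows, again as for the bidual, that $A_\alpha''=I_\alpha''\oplus B_\beta''$ as $G$-von Neumann algebras, whence $Z(A_\alpha'')=Z(I_\alpha'')\oplus Z(B_\beta'')$; pasting a net for $I$ and a net for $B$, indexed by the product, gives a net for $A$, the direct-sum structure making all three conditions immediate. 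For a hereditary subalgebra $B\subseteq A$ one may either realise $B_\beta''$ as a corner $qA_\alpha''q$ cut by the $G$-invariant support projection $q$ of $B$ (using $Z(qA_\alpha''q)=qZ(A_\alpha'')$ and compressing a net of $A$), or, more cheaply, observe that $B$ is a \emph{full} hereditary subalgebra of the equivariant ideal $\overline{ABA}$ it generates, that $\overline{BA}$ is then a $G$-equivariant imprimitivity bimodule between $B$ and $\overline{ABA}$, and deduce this case from the ideal case plus Morita invariance.

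\emph{Morita equivalence, and the main obstacle.} Given a $G$-equivariant imprimitivity bimodule $X$ between $A$ and $B$, form the linking $G$-$C^*$-algebra $L=\begin{pmatrix}A&X\\X^*&B\end{pmatrix}$, in which $A$ and $B$ sit as complementary full corners cut by $G$-invariant projections of $M(L)$; the equivalence lifts to a $G$-equivariant Morita equivalence of enveloping von Neumann algebras (via $L_\gamma''$, whose diagonal corners are $A_\alpha''$ and $B_\beta''$), and Morita equivalent von Neumann algebras have isomorphic centres, so $Z(A_\alpha'')\cong Z(B_\beta'')$ as $G$-von Neumann algebras and a net $(\theta_i)$ for one is literally a net for the other. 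The net-bookkeeping above is routine; the real content, and the step I expect to take most of the work, is establishing the structural statements about Ikunishi's construction that feed into it — that $(-)_\alpha''$ sends an equivariant ideal to a central-projection corner, an equivariant extension to an internal direct sum $A_\alpha''=I_\alpha''\oplus B_\beta''$, and a $G$-equivariant Morita equivalence to one of the enveloping von Neumann algebras — together with the compatibilities of the classes $S(-)^c$ (and of canonical extensions of states from ideals) used in the (wQAP) arguments. These are nontrivial precisely because $A_\alpha''$ is a genuine quotient of $A^{**}$, so one must check that the relevant central projection of $A^{**}$, the direct-sum decomposition, and the Morita data all descend to the continuous enveloping algebra, the extension case being where this descent is most delicate. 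An alternative route for extensions, avoiding these structural facts, is to argue directly on the (wQAP) level by the combination argument of Anantharaman-Delaroche — lift a $B$-net to $C_c(G,A)$, correct it against a quasi-$G$-invariant, quasi-central approximate unit of $I$ together with an $I$-net, and control the norm bookkeeping uniformly over compacta and simultaneously for all states in $S(A)^c$ — but this carries comparable technical overhead.
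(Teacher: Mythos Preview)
Your proposal is correct and aligns closely with the paper's approach: the paper also proves Morita invariance via the linking algebra and the resulting $G$-isomorphism $Z(A_\alpha'')\cong Z(B_\beta'')$ (Proposition~\ref{pro:Morita}), extensions and ideals via the decomposition $A_\alpha''=I_\iota''\oplus (A/I)_\beta''$ (Proposition~\ref{prop-ext-amenable}), hereditary subalgebras via your second route, ideals plus Morita (Corollary~\ref{her perm}), and inductive limits via the (wQAP) (Proposition~\ref{prop-inductive}). One minor difference is that the paper deduces quotients from the extension result rather than pushing a (wQAP) net forward.

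The one place where the paper is noticeably slicker than you anticipate is in establishing the direct-sum decomposition $A_\alpha''=I_\iota''\oplus (A/I)_\beta''$, which you flag as the most delicate descent. Rather than tracking a central projection of $A^{**}$ through the quotient to $A_\alpha''$, the paper (Lemma~\ref{lem-decom-exact}) simply uses that $A_\alpha''$ sits by construction inside $(A\rtimes_\max G)^{**}$: since the maximal crossed product functor is exact, one has a short exact sequence $I\rtimes_\max G\hookrightarrow A\rtimes_\max G\twoheadrightarrow B\rtimes_\max G$, and applying the ordinary bidual splits this as $(A\rtimes_\max G)^{**}\cong (I\rtimes_\max G)^{**}\oplus (B\rtimes_\max G)^{**}$; the decomposition of $A_\alpha''$ is then read off from the unit of $(I\rtimes_\max G)^{**}$. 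This bypasses entirely the issue of descending structure from $A^{**}$ to $A_\alpha''$.
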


 At this point, it seems we can be confident that the definition of amenability in this paper (equivalently, the (QAP)) is the `correct' notion of amenability for actions of locally compact groups on $C^*$\nb-algebras.

We use the equivalence of the (AP) and amenability  to show that nuclearity of the cross-sectional $C^*$-algebra of a Fell bundle $\Bd$ over a discrete group $G$ implies the (AP): this resolves a conjecture of Pere Ara, Exel, and Takeshi Katsura \cite{Ara-Exel-Katsura:Dynamical_systems}*{Remark~6.5}.

Let us complete this discussion of amenability with a result on measurewise amenability.  Having said that amenability is fundamentally a noncommutative property, our introduction of the enveloping $G$-von Neumann algebra also allows us to resolve problems concerning actions on commutative $C^*$-algebras.  Using the equivalence of von Neumann amenability and amenability\footnote{In an earlier version of this paper, we were able to establish a variant of Theorem \ref{mw amen intro} under the additional assumption that $G$ is exact; the work of Bearden and Crann allowed us to establish the current more general version.} as shown by Bearden and Crann \cite{Bearden-Crann}*{Theorem 3.6}, we get the following result.

\begin{theorem}\label{mw amen intro}
Let $(A,\alpha)$ be a $G$-$C^*$-algebra with $A=C_0(X)$ commutative, and both $X$ and $G$ second countable.  Then the following are equivalent:
\begin{enumerate}
\item the $G$-$C^*$-algebra $A$ is amenable; 
\item for every quasi-invariant Radon measure $\mu$ on $X$, the $G$-von Neumann algebra $L^\infty(X,\mu)$ is amenable.
\end{enumerate}
\end{theorem}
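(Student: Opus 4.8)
The plan is to reduce the statement to the equivalence of von Neumann amenability and amenability (Bearden--Crann) together with the classical description of amenability for $G$-von Neumann algebras via their centres. The starting point is the observation that for $A=C_0(X)$ the Ikunishi enveloping $G$-von Neumann algebra $A_\alpha''$ is commutative: indeed $A$ is commutative, hence any $*$-homomorphic image of $A$ into a von Neumann algebra lands in an abelian von Neumann subalgebra, so the universal property of Theorem~\ref{cont dd} forces $A_\alpha''$ to be abelian. Therefore $Z(A_\alpha'')=A_\alpha''$, and by Anantharaman-Delaroche \cite{Anantharaman-Delaroche:1982aa}*{Corollaire 3.7} (which is trivial here since the algebra equals its centre), $A$ is von Neumann amenable if and only if $(A_\alpha'',\alpha'')$ is an amenable $G$-von Neumann algebra in the sense of the first displayed Definition. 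By \cite{Bearden-Crann}*{Theorem 3.6}, this is the same as amenability of $(A,\alpha)$ in the sense of Definition~\ref{def-am}. So (1) is equivalent to amenability of the commutative $G$-von Neumann algebra $A_\alpha''$.

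Next I would identify $A_\alpha''$ concretely. Since $C_0(X)_\alpha''$ carries a faithful normal state built from a faithful state on $C_0(X)$ (using second countability to get such a state, with associated measure $\mu_0$ of full support), one expects $C_0(X)_\alpha''$ to decompose, over the quasi-invariant measure classes on $X$, as (a direct integral / product of) the algebras $L^\infty(X,\mu)$. More precisely, every normal state on $C_0(X)_\alpha''$ pulls back to a state on $C_0(X)$, i.e. a Radon measure on $X$, and the continuity requirement in Ikunishi's construction (that orbit maps be ultraweakly continuous) forces the measure to be quasi-invariant; conversely each quasi-invariant $\mu$ yields an equivariant normal representation $C_0(X)_\alpha''\to L^\infty(X,\mu)$. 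Thus $L^\infty(X,\mu)$ runs over exactly the ``fibres'' of $A_\alpha''$, and amenability of $A_\alpha''$ is equivalent to amenability of each such quotient $G$-von Neumann algebra $L^\infty(X,\mu)$. This gives (1)$\Leftrightarrow$(2), modulo the permanence of amenability for $G$-von Neumann algebras under passing to quotients (for the forward direction) and the fact that amenability of all fibres of a direct-integral-type decomposition implies amenability of the total algebra (for the reverse direction), both of which are in Anantharaman-Delaroche's toolkit.

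To organize the direct-integral step I would use the second-countability hypotheses seriously: second countable $X$ and $G$ guarantee that $C_0(X)$ is separable, so $C_0(X)_\alpha''$ has separable predual and a faithful normal state, and one can invoke the standard measurable/direct-integral machinery. The key technical claim to nail down is: \emph{a normal $*$-homomorphism $\rho: C_0(X)\to N$ into a $G$-von Neumann algebra is equivariant and ``$\sigma$-weakly continuous on orbits'' exactly when the measure $\nu=\rho^*(\text{normal state})$ on $X$ is quasi-invariant}, together with the companion universal statement that $L^\infty(X,\mu)$ with its translation action is the target of an equivariant normal surjection from $C_0(X)_\alpha''$ for each quasi-invariant $\mu$. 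Granting this, $C_0(X)_\alpha''$ is the ``sup'' of the family $\{L^\infty(X,\mu):\mu\ \text{quasi-invariant}\}$, and amenability passes up and down as required.

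The main obstacle I expect is precisely this identification of $A_\alpha''$ and the verification that its normal state space corresponds to quasi-invariant Radon measures on $X$ --- in particular, making the direct-integral decomposition equivariant and checking that amenability is detected fibrewise. Anantharaman-Delaroche proved that amenability of $L^\infty(Y,\nu)\rtimes$-type von Neumann algebras is stable under such decompositions, but one must be careful that the measure-theoretic bookkeeping (full-support base measure, disintegration over the ``central'' decomposition, quasi-invariance of almost every fibre measure) is compatible with the $G$-action; this is where the second-countability hypotheses and the existing literature on measurewise amenability (e.g. the equivalence of measurewise amenability with amenability of $L^\infty(X,\mu)$ for each quasi-invariant $\mu$, which is how (2) should be read) do the real work. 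Once the enveloping von Neumann algebra is pinned down, the rest is a formal consequence of \cite{Bearden-Crann}*{Theorem 3.6} and \cite{Anantharaman-Delaroche:1982aa}*{Corollaire 3.7}.
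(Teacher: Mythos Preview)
Your forward direction $(1)\Rightarrow(2)$ is fine and matches the paper: for each quasi-invariant $\mu$ one has a covariant pair $(M_\mu,u)$ on $L^2(X,\mu)$, hence by the universal property a normal equivariant surjection $C_0(X)_\alpha''\to L^\infty(X,\mu)$, and amenability pushes forward.

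The reverse direction $(2)\Rightarrow(1)$, however, has a genuine gap. Your strategy rests on two claims that are not correct. First, $C_0(X)_\alpha''$ need not admit a faithful normal state or have separable predual: its predual is $A^{*,c}$, the space of $G$-continuous functionals on $C_0(X)$, and for instance when $G$ is trivial this is all of $M(X)$, which is neither separable nor $\sigma$-finite. Second, and more seriously, normal states on $C_0(X)_\alpha''$ do \emph{not} correspond to quasi-invariant measures: they correspond to probability measures $\mu$ on $X$ with $g\mapsto g_*\mu$ norm-continuous, which is a different condition from quasi-invariance (neither implies the other). So the proposed ``direct integral over quasi-invariant measure classes'' picture of $A_\alpha''$ does not hold, and there is no mechanism in your sketch for passing from amenability of each $L^\infty(X,\mu)$ back to amenability of $A_\alpha''$.

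The paper circumvents this entirely. It first proves a lemma (Lemma~\ref{prop:amenability-representations}) showing that amenability of $(A,\alpha)$ is equivalent to amenability of the $G$-von Neumann algebra $(\pi(A)'',\Ad u)$ for every \emph{cyclic covariant} representation $(\pi,u)$; the point of restricting to cyclic representations is that, since $C_0(X)\rtimes_\max G$ is separable, the underlying Hilbert space is then separable. For such a $(\pi,u)$ one can invoke Renault's disintegration theorem to produce a single quasi-invariant measure $\mu$ and an equivariant normal unital map $L^\infty(X,\mu)\to Z(\pi(C_0(X))'')$. Amenability of $L^\infty(X,\mu)$ then transfers to $\pi(C_0(X))''$. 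The link between normal states on $A_\alpha''$ and quasi-invariant measures therefore exists, but it runs through extension to $(A\rtimes_\max G)^{**}$, GNS, and disintegration --- not through a direct identification of the predual.
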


Condition (2) was introduced by Jean Renault in \cite{Renault-LNM}*{Definition II.3.6}, who called it  \emph{measurewise amenability} (see also \cite{ADR}*{Definition 3.3.1}).  In  \cite{Adams:1994wg}*{Theorem A} (see also \cite{ADR}*{Theorem 4.2.7}), Scot Adams, George Elliott and Thierry Giordano show that measurewise amenability is equivalent to requiring that for every quasi-invariant Radon measure $\mu$ on $X$, the measure space $(X,\mu)$ is amenable in the classical sense of Robert Zimmer \cite{Zimmer}*{Definition 1.4}.  Thus our notion of amenability interacts well with those of Zimmer and Renault.

Bearden and Crann \cite{Bearden-Crann}*{Corollary 4.14} have recently shown that amenability of an action $\alpha:G\to\Aut(C_0(X))$ is always equivalent to topological amenability of the underlying action $G\curvearrowright X$. 
Combining this with Theorem \ref{mw amen intro} gives a positive answer to the long standing open question whether topological amenability and measurewise amenability for an action $G\curvearrowright X$ coincide (for second countable $G$ and $X$).

\section{The weak containment problem}  A $G$-$C^*$-algebra $(A,\alpha)$ has the \emph{weak containment property} (WCP) if the canonical quotient map $A\rtimes_{\max} G \to A\rtimes_\red G$ is an isomorphism.  

In \cite{Anantharaman-Delaroche:1987os}*{Proposition 4.8}, Anantharaman-Delaroche used approximations by positive type functions to show that for discrete $G$, amenability of a $G$-$C^*$-algebra $(A,\alpha)$ implies the (WCP).   In this paper, we use our definition to extend this to general locally compact groups.  The {\em weak containment problem} asks whether the converse holds true as well, i.e.\ if $(A,\alpha)$ has the (WCP), is it also amenable?

For the class of exact groups $G$ and commutative $G$-$C^*$-algebras $A$, we can give a complete, positive answer to this problem.

\begin{theorem}\label{com wc}
Let $G$ be a locally compact and exact group, and let $A=C_0(X)$ be a commutative $G$-$C^*$-algebra.  Then the following are equivalent:
\begin{enumerate}[(i)]
\item the $G$-$C^*$-algebra $A$ is amenable;
\item the canonical quotient map $A\rtimes_{\max} G \to A\rtimes_\red G$ is an isomorphism.
\end{enumerate}
\end{theorem}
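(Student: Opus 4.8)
The plan is to prove Theorem \ref{com wc} by establishing the two implications separately, relying crucially on Theorem \ref{mw amen intro} and on the general result (proved earlier in the paper, and extending Anantharaman-Delaroche) that amenability of any $G$-$C^*$-algebra implies the (WCP). The implication (i)$\Rightarrow$(ii) is then immediate and needs no commutativity or exactness: if $A = C_0(X)$ is amenable, then by the general theorem the map $A\rtimes_{\max}G\to A\rtimes_\red G$ is an isomorphism. So the entire content of the theorem is the converse (ii)$\Rightarrow$(i), and this is where exactness and commutativity enter.

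For (ii)$\Rightarrow$(i), I would argue by contradiction: suppose $A=C_0(X)$ has the (WCP) but is \emph{not} amenable. By Theorem \ref{mw amen intro} (after reducing to the second countable case — see below), non-amenability of $C_0(X)$ means there is a quasi-invariant Radon measure $\mu$ on $X$ such that the $G$-von Neumann algebra $L^\infty(X,\mu)$ is \emph{not} amenable. The strategy is then to use $\mu$ to manufacture a quotient or related crossed product for which weak containment fails, contradicting the (WCP) for $C_0(X)$. Concretely, the $G$-representation associated to $\mu$ (via the GNS/induced representation from the measure) should factor through $C_0(X)\rtimes_\red G$ if and only if a corresponding "$L^\infty(X,\mu)$-crossed product" has weak containment, and the latter is governed by amenability of the $G$-von Neumann algebra $L^\infty(X,\mu)$ in the von Neumann-algebraic sense. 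This is precisely the point where \textbf{exactness of $G$} is used: exactness allows one to pass the failure of weak containment through the relevant exact sequences / to control the ideal structure of the crossed product, so that a localized obstruction at the measure $\mu$ survives as a genuine gap between $C_0(X)\rtimes_{\max}G$ and $C_0(X)\rtimes_\red G$. (This mirrors the logic of Matsumura's theorem, whose generalization is advertised in the abstract and of which the present theorem is the commutative special case.)

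A preliminary reduction handles the second countability hypothesis implicit in Theorem \ref{mw amen intro}: a general locally compact exact group $G$ and locally compact space $X$ can be approximated by second countable quotients/subspaces. More precisely, one writes $C_0(X)\rtimes G$ (max or red) as an inductive limit over separable $G$-invariant sub-$C^*$-algebras $C_0(Y)$ together with second countable quotient groups $G/N$ through which the relevant finite pieces of the action factor, uses that exactness, amenability, and the (WCP) all behave well under such limits (amenability by the permanence theorem stated in the excerpt; exactness passes to quotients; the (WCP) for a limit reduces to the building blocks), and thereby reduces to the case where Theorem \ref{mw amen intro} directly applies.

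The main obstacle I anticipate is the core of (ii)$\Rightarrow$(i): turning a single bad measure $\mu$ with $L^\infty(X,\mu)$ non-amenable into an actual failure of the isomorphism $C_0(X)\rtimes_{\max}G\to C_0(X)\rtimes_\red G$. One direction — that non-amenability of $L^\infty(X,\mu)$ produces a representation of $C_0(X)\rtimes_{\max}G$ not weakly contained in the regular representation — requires carefully identifying the right "measured" crossed product, relating its maximal and reduced completions to amenability of the $G$-von Neumann algebra (this is essentially the von Neumann-algebraic analogue of weak containment, and is where one invokes the equivalence of von Neumann amenability with amenability for exact $G$), and then checking that the non-weak-containment genuinely lifts to the $C^*$-level rather than being absorbed. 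Making this localization argument rigorous — probably via inducing representations from stabilizers or from the measure groupoid $X\rtimes G$ and invoking exactness to control the kernel — is the delicate step; the rest is bookkeeping with the permanence properties and the cited results.
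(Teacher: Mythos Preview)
Your approach diverges substantially from the paper's, and the core step you flag as ``delicate'' is in fact a genuine gap.

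The paper does \emph{not} use measurewise amenability (Theorem~\ref{mw amen intro}) at all for this result. Instead, the implication (ii)$\Rightarrow$(i) goes through the notion of \emph{commutant amenability} (Definition~\ref{com amen}) and the Haagerup standard form. Concretely: for exact $G$, the (WCP) implies that every nondegenerate covariant representation $(\pi,u)$ is $G$-injective (Corollary~\ref{inj lem}), whence Lemma~\ref{unital lem} produces a ucp $G$-map $\contub(G)\to\pi(A)'$; since exactness means the $G$-action on $\contub(G)$ is strongly amenable, one gets positive type functions into $\pi(A)'$, i.e.\ commutant amenability (this is Proposition~\ref{prop-general}). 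Then one applies commutant amenability to the Haagerup standard form representation of $A_\alpha''$: for commutative $A$ this representation satisfies $\pi(A)'=\pi(A)''\cong A_\alpha''=Z(A_\alpha'')$, so commutant amenability of this single representation is literally amenability. No measures, no disintegration, no second countability.

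Your route has two problems. First, the reduction to the second countable case is not convincing: locally compact group actions on $C_0(X)$ do not in general factor through second countable quotient groups $G/N$, and the inductive-limit reduction you sketch does not match any standard technique. Second, and more seriously, the step ``non-amenability of $L^\infty(X,\mu)$ as a $G$-von Neumann algebra produces a representation of $C_0(X)\rtimes_{\max}G$ not weakly contained in the regular one'' has no content as stated. Von Neumann amenability of $L^\infty(X,\mu)$ is about the existence of an equivariant conditional expectation $L^\infty(G,L^\infty(X,\mu))\to L^\infty(X,\mu)$; there is no direct ``weak containment'' analogue at the von Neumann level to invoke, and the natural covariant representation on $L^2(X,\mu)$ has no evident reason to fail weak containment just because that conditional expectation is missing. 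You would essentially need to rebuild the paper's commutant-amenability/standard-form argument inside $L^\infty(X,\mu)$ to make this work, at which point the detour through measures is redundant. Exactness in the paper enters not through exact sequences of ideals but through the strong amenability of the translation action on $\contub(G)$.
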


The class of exact groups was introduced by Eberhard Kirchberg and Simon Wassermann in \cite{KW-exact}.  It is very large, containing for example all almost connected groups \cite{KW}*{Corollary 6.9}.  The exactness assumption comes into play in our work via an important characterization of the property due to Jacek Brodzki, Chris Cave, and Kang Li \cite{Brodzki-Cave-Li:Exactness}*{Theorem 5.8} (in the second countable case) and Ozawa and Suzuki \cite{Ozawa-Suzuki}*{Proposition 2.5} (in general). 

In the case of discrete $G$ and unital commutative $A$, Theorem \ref{com wc} is due to Masayoshi Matsumura \cite{Matsumura:2012aa}*{Theorem 1.1}. Our proof is different to Matsumura's, relying heavily on ideas from our earlier work \cite{Buss:2019}*{Sections 4 and 5}.

For noncommutative $C^*$-algebras, we establish the following analogue of Theorem \ref{com wc}.

\begin{theorem}\label{gen wc 2}
Let $G$ be a locally compact exact group, and let $(A,\alpha)$ be a $G$-$C^*$-algebra.  Then the following are equivalent:
\begin{enumerate}[(i)]
\item $(A,\alpha)$ is amenable;
\item for every $G$-$C^*$-algebra $B$, the canonical quotient map $(A\otimes_{\max}B)\rtimes_{\max} G \to (A\otimes_{\max} B)\rtimes_\red G$ is an isomorphism;
\item the canonical quotient  map $(A\otimes_{\max}A^{\op})\rtimes_{\max} G \to (A\otimes_{\max} A^{\op})\rtimes_\red G$ is an isomorphism.
\end{enumerate}
\end{theorem}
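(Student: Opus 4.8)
The plan is to prove $(\mathrm{i})\Rightarrow(\mathrm{ii})\Rightarrow(\mathrm{iii})\Rightarrow(\mathrm{i})$. The implication $(\mathrm{ii})\Rightarrow(\mathrm{iii})$ is immediate: $A^{\op}$ is a $G$-$C^*$-algebra for the action $g\mapsto\alpha_g$ (an automorphism of $A^{\op}$ since $\alpha_g(ba)=\alpha_g(b)\alpha_g(a)$), so $(\mathrm{iii})$ is $(\mathrm{ii})$ specialised to $B=A^{\op}$.

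For $(\mathrm{i})\Rightarrow(\mathrm{ii})$ I would first show that amenability of $(A,\alpha)$ implies amenability of $(A\otimes_{\max}B,\alpha\otimes\beta)$ for every $G$-$C^*$-algebra $(B,\beta)$. Using that $A\otimes_{\max}B$ is a $G$-invariant ideal of $A\otimes_{\max}B^{+}$ and that amenability passes to ideals, one reduces to $B$ unital. For unital $B$, a (wQAP)-net $(\xi_i)\subseteq C_c(G,A)$ for $A$ yields the (wQAP)-net $(\xi_i\otimes 1_B)\subseteq C_c(G,A\otimes_{\max}B)$: conditions (i)--(iii) of Definition \ref{def-wQAP intro} follow by routine estimates from the identities $\braket{\xi_i\otimes 1}{\lambda_g^\alpha(\xi_i\otimes 1)}=\braket{\xi_i}{\lambda_g^\alpha\xi_i}\otimes 1$ and $(\xi_i\otimes 1)(a\otimes b)-(a\otimes b)(\xi_i\otimes 1)=(\xi_i a-a\xi_i)\otimes b$, once one notes that the restriction to $A\otimes 1\cong A$ of any $\phi\in S(A\otimes_{\max}B)^{c}$ lies in $S(A)^{c}$ and that $x\otimes b^{*}b\le\|b\|^{2}\,(x\otimes 1)$ for positive $x$. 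Finally, an amenable $G$-$C^*$-algebra has the (WCP) by the extension of \cite{Anantharaman-Delaroche:1987os}*{Proposition 4.8} to locally compact groups established earlier, applied to $A\otimes_{\max}B$; this gives $(\mathrm{ii})$.

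The substance is $(\mathrm{iii})\Rightarrow(\mathrm{i})$, and this is where exactness and the techniques of \cite{Buss:2019}*{Sections 4 and 5} enter. First I would extract structural information from $(\mathrm{iii})$: realising $A_\alpha''$ in its standard form on a Hilbert space $H$ with modular conjugation $J$, so that $A\subseteq A_\alpha''\subseteq\mathcal{B}(H)$ with $J A_\alpha'' J=(A_\alpha'')'$, one has a canonical $G$-equivariant $*$-homomorphism $A\otimes_{\max}A^{\op}\to\mathcal{B}(H)$, $a\otimes b^{\op}\mapsto a\,J b^{*}J$. Passing to crossed products and invoking the collapse $(A\otimes_{\max}A^{\op})\rtimes_{\max}G=(A\otimes_{\max}A^{\op})\rtimes_\red G$ from $(\mathrm{iii})$ should exhibit $A$ as, in the appropriate equivariant sense, relatively weakly injective (an equivariant WEP-type property); this is the conceptual reason $A^{\op}$ and the maximal tensor product appear. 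Second, I would bring in exactness of $G$: via the characterisation used in \cite{Brodzki-Cave-Li:Exactness}*{Theorem 5.8} and \cite{Ozawa-Suzuki}*{Proposition 2.5} one may choose an amenable action of $G$ on a compact space $Z$, whence $C(Z)$ — and therefore $A\otimes_{\max}C(Z)\cong C(Z,A)$, being $G$-equivariantly fibred over the amenable $G$-space $Z$ — is amenable. Feeding the amenability data of $A\otimes_{\max}C(Z)$ back through the equivariant weak injectivity of $A$, I would construct the net of norm-continuous, compactly supported positive type functions $\theta_i\colon G\to Z(A_\alpha'')$ with $\|\theta_i(e)\|\le 1$ and $\theta_i(g)\to 1_{A_\alpha''}$ ultraweakly, uniformly on compacta, required by Definition \ref{def-am} — equivalently, an equivariant conditional expectation $L^{\infty}(G)\overline{\otimes}A_\alpha''\to A_\alpha''$. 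Morally this is an equivariant, $A_\alpha''$-level version of Kirchberg's theorem that an exact $C^*$-algebra with the WEP is nuclear.

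The hard part will be exactly this last construction inside $(\mathrm{iii})\Rightarrow(\mathrm{i})$: manufacturing, from the collapse of the two crossed products of $A\otimes_{\max}A^{\op}$ and the amenability of $A\otimes_{\max}C(Z)$, an honest net in $Z(A_\alpha'')$ (or an honest equivariant conditional expectation) with the required ultraweak convergence, while keeping the $S(\cdot)^{c}$-continuity conditions under control and dealing throughout with the non-unital case via multiplier algebras and unitizations. Correctly formulating the intermediate ``equivariant relative weak injectivity'' so that the information obtained from $B=A^{\op}$ can be transported to the coefficient algebra $C(Z)$, and carrying out the attendant bookkeeping with $A_\alpha''$ and with states in $S(\cdot)^{c}$, is where the machinery of \cite{Buss:2019}*{Sections 4 and 5} together with the permanence properties of amenability will do the heavy lifting.
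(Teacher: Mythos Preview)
Your $(\mathrm{i})\Rightarrow(\mathrm{ii})$ via the (wQAP) net $\xi_i\otimes 1_B$ is fine and essentially equivalent to what the paper does, though the paper argues more abstractly: the map $a\mapsto a\otimes 1$ is a nondegenerate $G$-homomorphism whose normal extension to enveloping $G$-von Neumann algebras sends $Z(A_\alpha'')$ into $Z((A\otimes_{\max}B)_{\alpha\otimes\beta}'')$, so a net of positive type functions witnessing amenability of $A$ pushes forward directly (Lemma~\ref{lem:funct-strong-amenability}). No need to reduce to unital $B$ or check the (wQAP) conditions by hand.

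For $(\mathrm{iii})\Rightarrow(\mathrm{i})$ you have assembled the right raw materials but are missing the one-line observation that makes the argument short, and your proposed route through an undefined ``equivariant relative weak injectivity of $A$'' is both vague and unnecessary. The paper's argument is this: by Proposition~\ref{prop-general} (whose proof uses exactly the exactness ingredient you cite, namely strong amenability of $C_{ub}(G)$), the (WCP) for $A\otimes_{\max}A^{\op}$ implies that \emph{every} covariant representation of $A\otimes_{\max}A^{\op}$ is commutant amenable. Now take the Haagerup standard form representation $(\pi,u)$ of $A_\alpha''$ and form $\pi\times\pi^{\op}$ on $H$. The crucial computation is
\[
(\pi\times\pi^{\op})(A\otimes_{\max}A^{\op})'
=\pi(A)'\cap\pi^{\op}(A^{\op})'
=\pi(A)'\cap\pi(A)''
=Z(\pi(A)'')\cong Z(A_\alpha''),
\]
using that in the standard form $\pi(A)'=\pi^{\op}((A^{\op})_{\alpha^{\op}}'')$. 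So commutant amenability of this particular representation already produces a net of positive type functions into $Z(A_\alpha'')$, which is precisely amenability of $\alpha$. There is no need to transport anything through $C(Z)$ or to formulate an intermediate weak-injectivity notion for $A$; the commutant identity does all the work.
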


Theorem \ref{gen wc 2} extends Matsumura's \cite{Matsumura:2012aa}*{Theorem 1.1} which covers the case where $G$ is discrete and $A$ is a unital nuclear $C^*$-algebra.  Again, our techniques are closer to our earlier work \cite{Buss:2019}*{Proposition 5.9} in the discrete case than to Matsumura's ideas.

It might at first seem odd that in Theorem \ref{gen wc 2} amenability is equivalent to the weak containment property for $A\otimes_{\max} A^\op$, and not for $A$ itself.  This is in fact quite necessary and is another manifestation of the idea that amenability for actions on $C^*$-algebras is a noncommutative phenomenon.  Indeed, we have the following striking example.

\begin{theorem}\label{no amen}
There is an action of $G=PSL(2,\C)$ on the compact operators $\mathcal{K}$ that is non-amenable, but such that the canonical quotient map $\mathcal{K}\rtimes_{\max}G \to \mathcal{K}\rtimes_\red G$ is an isomorphism.
\end{theorem}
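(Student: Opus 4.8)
The plan is to find a non-amenable action of $G=PSL(2,\C)$ on $\K$ for which the crossed products nonetheless agree, by exploiting the fact that $\K$ is "too small" to detect the failure of amenability at the crossed product level. The starting point is that $G=PSL(2,\C)$ is a connected Lie group that is \emph{not} amenable (it contains free subgroups), but it \emph{is} exact (being almost connected, by \cite{KW}*{Corollary 6.9}). First I would produce an action $\gamma:G\to\Aut(\K)$ that is \emph{genuinely non-amenable as a $G$-$C^*$-algebra}: the natural candidate is the action coming from a projective unitary representation, or more concretely the conjugation action $\Ad\rho$ for a suitable strongly continuous unitary representation $\rho:G\to\U(H)$ on a separable Hilbert space $H$, with $\K=\K(H)$. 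Since $Z\M(\K)=\C$, amenability of $(\K,\Ad\rho)$ as a $G$-$C^*$-algebra would, via the wQAP (Definition \ref{def-wQAP intro}) specialized to this setting, force an approximation property that — for the right choice of $\rho$ — collapses to amenability of $G$ itself; as $G$ is not amenable, no such $\rho$ can give an amenable action. One must choose $\rho$ carefully so that the wQAP genuinely fails: a representation with no almost-invariant vectors and with trivial commutant-type obstructions should work, e.g.\ a multiple of the regular representation twisted so that the induced action on $\K$ has no invariant state but still does not "see" amenability.

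The second, and main, step is to show that despite non-amenability, the map $\K\rtimes_{\max}G\to\K\rtimes_\red G$ is an isomorphism. The key structural fact is that for an action by conjugation $\Ad\rho$, the crossed product $\K(H)\rtimes G$ is Morita equivalent to a very simple object. Concretely, one has the classical computation that when $\rho$ is a genuine unitary representation, $\K(H)\rtimes_{\Ad\rho}G$ — both maximal and reduced versions — can be analyzed via the "untwisting" isomorphism: the covariant representation built from $\rho$ itself trivializes the action, giving $\K(H)\rtimes G\cong \K(H)\otimes \Cst(G)$ at the maximal level and $\K(H)\otimes\Cst_\red(G)$ at the reduced level, \emph{provided} $\rho$ extends to the crossed product appropriately. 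But this would only give weak containment when $G$ is amenable, which it is not — so this naive approach fails, and that tension is exactly the point. The resolution must instead use that $\Ad\rho$ is \emph{inner} in a strong sense: since $\rho$ implements the action, $(\K(H),\Ad\rho)$ is equivariantly Morita equivalent to $(\C, \triv)$ \emph{twisted by} the $2$-cocycle of $\rho$ — but if $\rho$ is an honest representation (trivial cocycle) then $(\K,\Ad\rho)$ is equivariantly Morita equivalent to $(\C,\triv)$, whence weak containment would again force amenability. Therefore the correct construction must use a projective representation with a \emph{nontrivial} Mackey obstruction $\omega\in H^2(G,\T)$: then $\K\rtimes_{\Ad\rho}G$ is Morita equivalent to the \emph{twisted} group algebra $\Cst(G,\omega)$, and one needs a group $G$ admitting a cocycle $\omega$ such that $\Cst(G,\omega)=\Cst_\red(G,\omega)$ (twisted weak containment holds) even though $G$ is not amenable. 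For $G=PSL(2,\C)$ such a cocycle exists: lifting to $SL(2,\C)$, the relevant twisted representations are those with a fixed nontrivial central character of $\Z/2$, and the corresponding twisted group $C^*$-algebra can be shown to satisfy weak containment — this is where the specific choice $PSL(2,\C)$ (as opposed to a generic non-amenable group) is essential.

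The main obstacle, then, is the verification of twisted weak containment for the appropriate $\omega$, i.e.\ showing $\Cst(G,\omega)\to\Cst_\red(G,\omega)$ is an isomorphism for $G=PSL(2,\C)$ and a suitable nontrivial $\omega$, together with checking that the associated action $\Ad\rho$ on $\K$ is genuinely non-amenable in the sense of Definition \ref{def-am}. The first part I would attack using the representation theory of $SL(2,\C)$: every irreducible representation of $G=PSL(2,\C)$ factors through, and one can argue that the $\omega$-twisted representation theory of $PSL(2,\C)$ is "absorbed" by that of $SL(2,\C)$ in a way that makes the twisted reduced and full algebras coincide — concretely, because the genuine representations of $SL(2,\C)$ (those not factoring through $PSL$) are exactly the $\omega$-twisted representations of $PSL(2,\C)$, and $SL(2,\C)$ being a connected semisimple Lie group has a tractable $C^*$-algebra. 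The non-amenability of the action I would deduce from the permanence properties together with the Morita-equivalence: if $(\K,\Ad\rho)$ were amenable, then by Morita invariance of amenability (the permanence theorem stated in the excerpt) the twisted situation $(\C,\omega)$ would be "amenable", which forces amenability of $G$ — contradiction. The remaining bookkeeping — strong continuity of $\rho$, separability, identifying the crossed product as a full corner — is routine and I would not belabor it.
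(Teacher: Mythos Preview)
Your proposal is essentially the paper's own approach: use the correspondence between actions on $\K$ and Borel $2$-cocycles to reduce to showing $C^*_{\max}(G,\omega_\chi)=C^*_{\red}(G,\omega_\chi)$ for the cocycle coming from the central extension $1\to C_2\to \SL(2,\C)\to \PSL(2,\C)\to 1$ and the nontrivial character $\chi$ of $C_2$, and identify $\omega_\chi$-representations of $\PSL(2,\C)$ with genuine representations of $\SL(2,\C)$ restricting to $\chi$ on the centre.

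Two points worth tightening. First, your non-amenability argument is overcomplicated: since $\K_\alpha''=\K^{**}=\Bd(H)$ has centre $\C$, amenability of \emph{any} action of $G$ on $\K$ (regardless of the choice of $\rho$) forces amenability of $G$ directly from the definition --- your phrase ``for the right choice of $\rho$'' is misleading, and the detour through Morita invariance and a putative ``$(\C,\omega)$'' object is unnecessary and not well-posed. Second, the ``main obstacle'' you correctly flag --- twisted weak containment --- is resolved in the paper by one explicit representation-theoretic input you stop just short of naming: by a result of Lipsman, the only irreducible unitary representations of $\SL(2,\C)$ \emph{not} weakly contained in the regular representation are the trivial representation and the complementary series $u^{(0,t)}$, $t\in(0,1)$, and all of these restrict trivially to the centre $C_2$. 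Hence every irreducible of $\SL(2,\C)$ with nontrivial central character is tempered, which via the decomposition $C^*_{\max}(\SL(2,\C))\cong \bigoplus_{\chi}C^*_{\max}(\PSL(2,\C),\omega_\chi)$ gives exactly $C^*_{\max}(\PSL(2,\C),\omega_\chi)=C^*_{\red}(\PSL(2,\C),\omega_\chi)$ for the nontrivial $\chi$.
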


Using these examples, we are able to answer a question of Anantharaman-Delaroche \cite{Anantharaman-Delaroche:2002ij}*{Question 9.2 (b)} on whether the weak containment property passes to the restriction of an action to a closed subgroup.

\begin{theorem}\label{no res}
There is an action of $G=PSL(2,\C)$ on the compact operators $\mathcal{K}$ and a closed subgroup $\Gamma$ of $G$ such that the canonical quotient map $\mathcal{K}\rtimes_{\max}G \to \mathcal{K}\rtimes_\red G$ is an isomorphism, but the canonical quotient map for the restricted action $\mathcal{K}\rtimes_{\max}\Gamma \to \mathcal{K}\rtimes_\red \Gamma$ is not injective.
\end{theorem}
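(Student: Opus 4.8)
The plan is to use the very action $\alpha\colon\PSL(2,\C)\to\Aut(\K)$ produced in Theorem~\ref{no amen} and simply restrict it to a discrete free subgroup. Write $G=\PSL(2,\C)$ and $\K=\K(H)$. Every continuous action of a locally compact group on $\K(H)$ is, up to exterior equivalence, of the form $\Ad\circ V$ for a Borel projective unitary representation $V$, so it carries a well-defined Mackey obstruction $[\omega]\in H^2(G,\Torus)$ in Moore cohomology, and restricting the action to a closed subgroup restricts this obstruction. I would take $\Gamma\cong\free_2$ to be a discrete (hence closed) Schottky subgroup of $G$, produced by the usual ping-pong argument from two loxodromic elements in sufficiently general position. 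Since $\free_2$ has cohomological dimension $1$, we have $H^2(\free_2,\Torus)=0$, so the Mackey obstruction of $\alpha|_\Gamma$ vanishes; hence $\alpha|_\Gamma=\Ad\circ V$ for an \emph{honest} unitary representation $V\colon\Gamma\to\U(H)$, and therefore $\alpha|_\Gamma$ is exterior equivalent to the trivial $\Gamma$-action on $\K$ via the unitary $1$-cocycle $\gamma\mapsto V_\gamma$.

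Exterior equivalence induces isomorphisms of both the full and the reduced crossed products that intertwine the canonical quotient maps, and for the trivial action on a nuclear $C^*$-algebra these crossed products are just (spatial $=$ maximal) tensor products. So there is a commuting square identifying the canonical map $\K\rtimes_\max\Gamma\to\K\rtimes_\red\Gamma$ with
\[
\id_\K\otimes q\colon\ \K\otimes\Cst(\free_2)\longrightarrow\K\otimes\Cst_\red(\free_2),
\]
where $q\colon\Cst(\free_2)\to\Cst_\red(\free_2)$ is the left regular representation. As $\free_2$ is non-amenable, $q$ is not injective, and since $\K$ is exact the kernel of $\id_\K\otimes q$ equals $\K\otimes\ker q\neq 0$; thus $\K\rtimes_\max\Gamma\to\K\rtimes_\red\Gamma$ fails to be injective. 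On the other hand $\K\rtimes_\max G\to\K\rtimes_\red G$ is an isomorphism by Theorem~\ref{no amen}. This produces the required example and, as mentioned, answers Anantharaman-Delaroche's question in the negative.

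The deduction itself is short once Theorem~\ref{no amen} is available, so the only real work — and the step I expect to need the most care — is the Mackey-machine bookkeeping in the locally compact, non-unital setting: that an action on $\K(H)$ carries a Mackey obstruction behaving correctly under restriction, and that for an inner action $\Ad\circ V$ the identification $\K\rtimes\Gamma\cong\K\otimes\Cst(\Gamma)$ is natural simultaneously for the maximal and reduced completions and intertwines the canonical quotient maps. All of this is standard (it is the stabilisation trick and exterior equivalence, as in the twisted-crossed-product framework of Packer--Raeburn), but one must work with multipliers throughout since $\K$ is non-unital.
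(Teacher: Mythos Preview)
Your proposal is correct and takes essentially the same approach as the paper. The paper's argument (Proposition~\ref{prop-discrete} and Proposition~\ref{cor: wc closed}) likewise restricts the action to a discrete subgroup containing $\free_2$, observes that on $\free_2$ the action on $\K$ is inner (the paper phrases this as ``every automorphism of $\K$ is $\Ad u$, so by freeness there is a genuine unitary representation'' rather than via $H^2(\free_2,\Torus)=0$), and concludes that the restricted action is Morita/exterior equivalent to the trivial one, which fails the (WCP) since $\free_2$ is non-amenable; the only cosmetic difference is that the paper allows any discrete $\Gamma\supseteq\free_2$ and gives a surface-group example, while you take $\Gamma=\free_2$ directly as a Schottky subgroup.
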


The bad behaviour of the (WCP) under taking restrictions is in stark contrast 
to the behaviour of amenability: we show that for actions of exact groups, amenability passes to the restriction to a closed subgroup.  While this paper was under review, the exactness assumption was shown to be unnecessary by Ozawa and Suzuki \cite{Ozawa-Suzuki}*{Corollary 3.4}.

We should remark that a key technical tool in our investigation of the weak containment problem is a notion we call commutant amenability: see Definition \ref{com amen}.  The interaction of amenability and commutant amenability with each other and with exactness seems to be at the heart of the weak containment problem: see for example \cite{Willett} and \cite{Kranz:2020ug} for some results relating exactness, amenability, and the weak containment problem for groupoids.

\section{Regular $X\rtimes G$-algebras and type I $C^*$-algebras}

In the case that the $C^*$\nb-algebra being acted on has good structure, we are able to establish several interesting permanence properties and applications.  These results mainly seem to be new even in the discrete group case.

Let $X$ be a locally compact $G$-space.  An $X\rtimes G$-algebra  is a $C^*$-algebra that fibres over $X$ in a way that is compatible with the given actions.  Such $C^*$-algebras are important in the theory of induced representations, and in connection to the Baum-Connes conjecture (among other places).  

In the case that the $G$-action on $X$ is sufficiently well behaved (the technical condition needed is \emph{regularity} - see Definition \ref{def-regular}) we can use our results on weak containment to deduce that amenability for a regular $X\rtimes G$-algebra is determined by the actions on the fibres.

\begin{theorem}
Suppose that $G$ is an exact group and that $X$ is a regular locally compact $G$-space. 
Further let  $(A,\alpha)$ be an $X\rtimes G$-algebra. Then the following are equivalent:
\begin{enumerate}
\item $\alpha:G\to \Aut(A)$ is amenable.
\item For every $x\in X$, the  action $\alpha^x:G_x\to \Aut(A_x)$ on the fibre $A_x$ is  amenable.
\end{enumerate}
\end{theorem}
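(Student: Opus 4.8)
The plan is to prove the two implications separately; regularity of $X$ is needed only for $(2)\Rightarrow(1)$, while exactness of $G$ is used in both directions. For $(1)\Rightarrow(2)$, suppose $\alpha\colon G\to\Aut(A)$ is amenable and fix $x\in X$. The stabiliser $G_x$ is a closed subgroup of the exact group $G$, hence exact, and amenability passes to the restriction of an action to a closed subgroup of an exact group; so $A$ is an amenable $G_x$-$C^*$-algebra. Since $\{x\}$ is $G_x$-invariant, the ideal $I_x=\overline{C_0(X\setminus\{x\})\cdot A}$ is $G_x$-invariant, and the fibre $A_x=A/I_x$ is an equivariant quotient of $A$ as a $G_x$-$C^*$-algebra; as amenability is inherited by equivariant quotients, $\alpha^x\colon G_x\to\Aut(A_x)$ is amenable. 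No regularity is needed here.

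For $(2)\Rightarrow(1)$ I would invoke the weak containment characterisation of amenability for exact groups from Theorem~\ref{gen wc 2}: it suffices to show that for every $G$-$C^*$-algebra $B$ the canonical quotient map $(A\otimes_{\max}B)\rtimes_{\max}G\to(A\otimes_{\max}B)\rtimes_{\red}G$ is an isomorphism. Now $A\otimes_{\max}B$ is again an $X\rtimes G$-algebra, via $C_0(X)\to Z\M(A)\to Z\M(A\otimes_{\max}B)$, and since the maximal tensor product is exact in each variable its fibre at $x$ is $A_x\otimes_{\max}B$, with $G_x$ acting diagonally. By hypothesis $A_x$ is an amenable $G_x$-$C^*$-algebra and $G_x$ is exact, so Theorem~\ref{gen wc 2} applied over $G_x$ shows that each such fibre has the weak containment property over its stabiliser. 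The theorem is thereby reduced to the following assertion, which is where regularity enters: if $C$ is an $X\rtimes G$-algebra such that $C_x$ has the weak containment property over $G_x$ for every $x\in X$, then $C$ has the weak containment property over $G$.

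To prove that assertion I would use the orbit decomposition of $X$ furnished by regularity (Definition~\ref{def-regular}) to build a composition series of $C$ by $G$-invariant ideals, pulled back from $G$-invariant open subsets of $X$, whose successive subquotients are $X_i\rtimes G$-algebras over locally closed $G$-invariant pieces $X_i\subseteq X$ on which the action is tame enough that Green's imprimitivity theorem applies in both its maximal and its reduced form. On such a piece the subquotient is $G$-equivariantly Morita equivalent to an algebra induced from a stabiliser $G_{x_i}$, so its full and reduced crossed products are Morita equivalent to the corresponding crossed products of the fibre over $G_{x_i}$; hence weak containment for the fibres forces weak containment for each subquotient. One then climbs the composition series: weak containment passes to inductive limits because crossed products commute with them, and to extensions because $G$ is exact, so that $0\to I\rtimes_{\red}G\to A\rtimes_{\red}G\to (A/I)\rtimes_{\red}G\to 0$ is exact and a five-lemma argument applies. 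This yields weak containment for $C$, and hence amenability of $A$.

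The main obstacle is precisely this last reduction from fibrewise to global weak containment: its heart is the topological bookkeeping of the orbit decomposition, and the regularity hypothesis is exactly what makes that bookkeeping possible, since without it orbits need not be locally closed and there is no good composition series to induct along. The two uses of exactness of $G$ — restricting amenability to $G_x$ in $(1)\Rightarrow(2)$, and propagating weak containment along extensions in $(2)\Rightarrow(1)$ — are, as noted elsewhere in the paper, removable using later work of Ozawa and Suzuki, but within the present framework exactness is the natural hypothesis.
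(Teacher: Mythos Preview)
Your argument for $(1)\Rightarrow(2)$ matches the paper's exactly, and for $(2)\Rightarrow(1)$ you correctly follow the paper in reducing, via Corollary~\ref{cor-weakam}, to showing weak containment for $A\otimes_{\max}B$ for every $G$-$C^*$-algebra $B$, and in observing that $(A\otimes_{\max}B)_x\cong A_x\otimes_{\max}B$ is again amenable over $G_x$. The divergence is only in the final step: passing from fibrewise to global weak containment.

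Here your composition-series sketch has a genuine gap. The decomposition furnished by regularity (see Theorem~\ref{thm-Glimm}(4), or the analogous transfinite construction from almost-Hausdorffness of $G\backslash X$) produces $G$-invariant pieces $X_i$ whose orbit spaces $G\backslash X_i$ are \emph{Hausdorff}; it does not produce single orbits. So the subquotient of $C$ over $X_i$ is not ``Morita equivalent to an algebra induced from a stabiliser $G_{x_i}$'' as you write: there is a further step in which one must view it as a $C_0(G\backslash X_i)$-algebra (trivial $G$-action on the base), check that its fibres over points of $G\backslash X_i$ are $G/G_x\rtimes G$-algebras, and then identify \emph{those} with induced algebras before Green's imprimitivity can be invoked. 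This two-layer structure is precisely what the paper carries out later in the proof of Theorem~\ref{thm-type I}, but there the identification of orbit-fibres with induced systems comes from \cite{Ech-ind}, which is specific to type~I algebras; for a general $X\rtimes G$-algebra you would need a separate argument. There is also a coverage issue: the composition series comes from almost-Hausdorffness of $G\backslash X$, whereas Definition~\ref{def-regular}(1) also allows $G\backslash X$ merely second countable.

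The paper bypasses all of this with a direct representation-theoretic argument. It invokes \cite{Ech-reg}*{Propositions~2 and~3} to see that the structure map $\varphi\colon\Prim(A)\to X$ is a \emph{complete regularization}, which forces every primitive ideal of $A\rtimes_{\max}G$ to be the kernel of an induced representation $\Ind_{G_x}^G(\rho\rtimes u)$ with $(\rho,u)$ coming (via inflation) from an irreducible representation of $(A_x,G_x,\alpha^x)$. Amenability of $\alpha^x$ makes $\rho\rtimes u$ weakly contained in the regular representation of $(A,G_x)$; since induction preserves weak containment and carries the regular representation of $G_x$ to that of $G$, every primitive ideal contains $\ker\Lambda$. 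This works uniformly across all three variants of regularity and needs neither a composition series nor exactness of $G$ for extensions.
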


As a corollary, we get yet another permanence result: an induced action of an exact group from a closed subgroup is amenable if and only if the original action was.  This partly generalizes a result of Anantharaman-Delaroche from the discrete case \cite{Anantharaman-Delaroche:1987os}*{Th\'eor\`eme 4.6}.

Specializing to actions on type I $C^*$-algebras, we can also show that suitable amenable actions on type I algebras are determined by amenability of the actions on the point stabilizers.  

\begin{theorem}
Let $\alpha:G\to \Aut(A)$ be an action of a second countable, locally compact, exact group
on a separable, type I $C^*$-algebra $A$ such that the induced action on $\widehat{A}$ is regular.
Then $\alpha$ is amenable if and only if all stabilizers $G_\pi$ for the action of $G$ on $\widehat{A}$
are amenable.
\end{theorem}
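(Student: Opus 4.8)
The plan is to combine the Effros-Glimm structure theory of separable type I $C^*$-algebras with the results above, reducing first to subquotients whose spectrum is a single orbit and then to the fibres of those subquotients.

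Since $A$ is separable and type I, the spectrum $\widehat A$ is a second countable $T_0$ space, and the lattice isomorphism between open subsets of $\widehat A$ and ideals of $A$ restricts to a bijection between $G$-invariant open sets and $G$-invariant ideals. Regularity of the action $G\curvearrowright\widehat A$ (Definition \ref{def-regular}) ensures that every orbit $G\cdot\pi$ is locally closed, that its stabiliser $G_\pi$ is closed, and that the orbit map $G/G_\pi\to G\cdot\pi$ is a homeomorphism; it also provides an exhaustion of $\widehat A$ by an increasing transfinite chain of $G$-invariant open sets whose successive differences are single orbits. Passing to ideals, this yields a transfinite increasing chain $0=I_0\subseteq I_1\subseteq\cdots\subseteq I_\lambda=A$ of $G$-invariant ideals, with $I_\mu=\overline{\bigcup_{\nu<\mu}I_\nu}$ at limit stages, such that $\widehat{I_{\beta+1}/I_\beta}$ is a single orbit; separability forces only countably many terms to be distinct. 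Because amenability is inherited by ideals and quotients, amenability of $A$ implies amenability of each subquotient $B_\beta:=I_{\beta+1}/I_\beta$; conversely, a transfinite induction using stability of amenability under extensions (at successor stages) and under inductive limits (at limit stages) shows that if every $B_\beta$ is amenable then so is $A$. Thus $A$ is amenable if and only if each $B_\beta$ is.

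Next I would analyse one subquotient $B:=B_\beta$, whose spectrum is the transitive, locally compact Hausdorff $G$-space $G/H$ with $H:=G_{\pi_\beta}$. Hausdorffness of the spectrum makes $B$ a $(G/H)\rtimes G$-algebra, and by the imprimitivity-type structure theory of such algebras $B$ is $G$-equivariantly Morita equivalent to an induced algebra $\Ind_H^G(D)$, where $D$ is the fibre of $B$ over the coset stabilised by $H$, with the restricted $H$-action. The corollary on induced actions, valid since $G$ is exact, then gives that $\Ind_H^G(D)$ is amenable if and only if $D$ is amenable as an $H$-$C^*$-algebra; and since amenability is a Morita invariant, $B$ is amenable if and only if $D$ is. Now $D$ is separable, type I, and has one-point spectrum, so $D\cong\K(\h_\pi)$ for a separable Hilbert space $\h_\pi$ on which $H$ acts by $\Ad$ of a strongly continuous (projective) unitary representation; since $D^{**}=B(\h_\pi)$ is a factor, the $H$-von Neumann algebra attached to $D$ by Theorem \ref{cont dd} is a nonzero von Neumann quotient of $B(\h_\pi)$ and so equals $B(\h_\pi)$. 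As $H$ is a closed subgroup of the exact group $G$ it is itself exact, so amenability of $D$ is equivalent to von Neumann amenability, i.e.\ to amenability of the $H$-von Neumann algebra $B(\h_\pi)$; by Anantharaman-Delaroche's theorem this is in turn equivalent to amenability of the induced action on the centre $Z(B(\h_\pi))=\C$, which is trivial, hence to amenability of $H=G_{\pi_\beta}$.

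Combining the two reductions, $A$ is amenable if and only if $G_{\pi_\beta}$ is amenable for every $\beta$, i.e.\ for a set of representatives of the $G$-orbits on $\widehat A$; and since stabilisers of points on a common orbit are conjugate, hence isomorphic, this is equivalent to amenability of $G_\pi$ for all $\pi\in\widehat A$. I expect the chief difficulty to lie in the first reduction: constructing the $G$-equivariant Effros-Glimm composition series and, in particular, handling the transfinite limit stages so that the permanence properties — notably stability under inductive limits of $G$-invariant ideals — can be applied exactly as needed; this is precisely where regularity of $G\curvearrowright\widehat A$ enters. The passage from a $(G/H)\rtimes G$-algebra to an induced algebra and the identification of the enveloping von Neumann algebra of $\K(\h_\pi)$ are further points requiring care, but they rest on standard structure theory together with the results quoted above.
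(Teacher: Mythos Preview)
Your analysis of a single orbit is essentially correct and matches the paper: the orbit subquotient is isomorphic (not just Morita equivalent, by \cite{Ech-ind}) to $\Ind_{G_\pi}^G(\K(H_\pi),\alpha^\pi)$, and then Corollary~\ref{cor-ind} together with Observation~\ref{ob2} gives that this is amenable iff $G_\pi$ is amenable. The forward implication of the theorem follows from this alone, since amenability passes to subquotients.

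The gap is in the converse. Your claimed composition series, with successive differences equal to \emph{single orbits}, does not exist in general. For $U_\beta\subsetneq U_{\beta+1}$ with $U_{\beta+1}\setminus U_\beta$ a single orbit, that orbit must be open in $\widehat A\setminus U_\beta$, i.e.\ isolated in the orbit space of the complement. But a locally compact Hausdorff (even second countable) orbit space need not have any isolated points: take $G$ trivial and $A=C(K)$ with $K$ the Cantor set --- $A$ is separable type~I, the action is regular, yet no composition series of this kind can even get started. So your transfinite induction never runs.

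What Glimm's theorem (Theorem~\ref{thm-Glimm}\eqref{thm-Glimm 4}) actually provides is a coarser chain $(U_\nu)$ where each successive quotient $I_{\nu+1}/I_\nu$ has \emph{Hausdorff orbit space} $X_\nu:=G\backslash(U_{\nu+1}\setminus U_\nu)$, typically containing many orbits. The paper then views $I_{\nu+1}/I_\nu$ as an $X_\nu\rtimes G$-algebra over the \emph{trivial} $G$-space $X_\nu$ and invokes Corollary~\ref{cor-field} (a special case of Theorem~\ref{thm-complete-reg}, which needs exactness) to test amenability fibrewise over $X_\nu$; the fibres are precisely the single-orbit subquotients you analysed. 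The transfinite induction then proceeds with these coarser steps, using Proposition~\ref{prop-ext-amenable} at successors and Proposition~\ref{prop-inductive} at limits.

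In short: keep your orbit analysis, but replace the nonexistent single-orbit composition series by Glimm's Hausdorff-orbit-space series plus the fibrewise amenability criterion over the orbit space.
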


In the case of Hausdorff spectrum, we get the following very satisfactory characterization of amenability.  Unlike the results above, the theorem below does not proceed via our results on weak containment, and so does not require exactness.

\begin{theorem}
Let $\alpha:G\to \Aut(A)$ be an action of a second countable, locally compact group
on a separable, type I $C^*$-algebra $A$ such that $X=\widehat{A}$ is Hausdorff (for example, if $A$ has continuous trace). Then  $\alpha$ is amenable if and only if the  induced action 
on $C_0(X)$  is amenable.
\end{theorem}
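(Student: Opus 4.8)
The plan is to reduce the general type I case with Hausdorff spectrum to the structural results already in hand, using the identification of $A$ (up to Morita equivalence and via a composition series) with a continuous field over $X = \widehat{A}$ whose fibres are copies of $\K$. First I would recall that, since $A$ is separable, type I, and has Hausdorff spectrum $X$, the Dixmier--Fell theory gives that $A$ is a continuous $\cont_0(X)$-algebra all of whose fibres $A_x$ are elementary, i.e.\ isomorphic to the compact operators $\K(\h_x)$ for separable Hilbert spaces $\h_x$; moreover the $G$-action is by $\cont_0(X)$-linear automorphisms covering the action $G \curvearrowright X$, so $(A,\alpha)$ is an $X\rtimes G$-algebra. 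The easy direction is immediate from the permanence properties stated in the excerpt: the quotient map $A \to \cont_0(X)$ (Dixmier's central quotient, or rather the map to the centre of the multiplier algebra) is $G$-equivariant, amenability is inherited by equivariant quotients, so if $(A,\alpha)$ is amenable then so is $(\cont_0(X),\cdot)$. The content is the converse.

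For the converse, the strategy is to show that the fibrewise actions $\alpha^x : G_x \to \Aut(A_x) = \Aut(\K(\h_x))$ are always amenable when the stabilizers are \emph{not} assumed amenable — the crucial point being that these are actions on $\K$, which are inner by an $\alpha^x$-cocycle representation of $G_x$, hence are amenable \emph{in the noncommutative sense} regardless of whether $G_x$ is amenable (this is precisely the phenomenon flagged in the introduction: amenability of actions on $\K$, unlike on $\C$, is automatic). Concretely, an action of a locally compact group $H$ on $\K(\h)$ lifts to a projective unitary representation, equivalently an ordinary representation of a central extension $\widetilde H$ by $\T$; the action is $\Ad u$ for that representation, and one checks directly that $\Ad u$ acting on $\K$ has the (wQAP) — the quasi-central vectors can be taken of the form $u_g\xi\, u_g^*$-twisted copies of a fixed approximate unit vector in $L^2(H,\K)$, and condition (iii) is satisfied because conjugation by a unitary representation does not see the asymptotic commutation defect once one uses that $\K$ has an approximate identity that is asymptotically central. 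Hence every fibre action is amenable. Then I would invoke the $X\rtimes G$-algebra theorem from the excerpt — amenability of a regular $X\rtimes G$-algebra is equivalent to amenability of all fibre actions — to conclude. But wait: that theorem requires $G$ exact, which we are trying to avoid here.

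To circumvent exactness I would instead argue directly, without passing through the weak-containment machinery. Since $X$ is Hausdorff and the action $G\curvearrowright X$ is amenable (equivalently $\cont_0(X)$ is amenable, our hypothesis — here we also use the Bearden--Crann equivalence of amenability with topological amenability for commutative algebras), there is a net of continuous compactly supported positive-type functions $G \to \cont_b(X) = ZM(\cont_0(X))$ witnessing amenability of $\cont_0(X)$; I would promote these, fibrewise, to $Z M(A)$-valued positive-type functions by combining them with the fibrewise amenability data constructed above, using that a continuous field of amenable actions over a base with an amenability net glues to an amenability net for the total algebra. The technical heart — and the step I expect to be the main obstacle — is exactly this gluing: assembling a \emph{global}, norm-continuous, compactly supported, $ZM(A)$-valued net of positive-type functions out of the base-space net together with the automatic fibrewise amenability, in a way that is continuous in the $X$-direction. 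This requires a partition-of-unity / local-triviality argument for the continuous field $A$ (using separability of $A$ and second countability of $G$ to get local sections), together with care that the resulting functions genuinely take values in $Z M(A) = \cont_b(X)$ and converge ultraweakly to $1_{A_\alpha''}$ uniformly on compacta, matching Definition~\ref{def-am}. Once the gluing is done, amenability of $(A,\alpha)$ follows. I would present the fibre computation for $\K$ as a standalone lemma, then the gluing as a second lemma, and combine them.
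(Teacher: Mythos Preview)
Your proposal contains a fundamental error that breaks both directions.

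For the ``easy'' direction, there is in general no $G$-equivariant quotient map $A \to C_0(X)$; rather, $C_0(X)$ embeds into $Z\M(A)$. So ``amenability passes to quotients'' does not apply as stated.

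More seriously, for the converse direction your central claim is false: actions on $\K(H)$ are \emph{not} automatically amenable. The paper records this explicitly (Observation~\ref{ob2}): since $\K(H)_\alpha'' = \Bd(H)$ has centre $\C$, an action $G_x \to \Aut(\K(H_x))$ is amenable if and only if $G_x$ is an amenable group. You have conflated two distinct phenomena from the introduction. Suzuki's examples of amenable actions of non-amenable groups on simple $C^*$-algebras are not actions on $\K$; and the paper's examples in Section~\ref{sec:example} of actions on $\K$ satisfying the (WCP) are precisely examples of \emph{non-amenable} actions. So your fibrewise lemma is false, and the gluing strategy has nothing to glue.

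The paper's proof is quite different. It first treats the continuous-trace case (Lemma~\ref{lem-conttrace}) via the equivariant Brauer group $\mathrm{Br}_G(X)$: given amenable $(A,\alpha)$ with continuous trace, one tensors over $C_0(X)$ with a representative of the inverse class $[A,\alpha]^{-1}$ to get an amenable system Morita equivalent to $(C_0(X),\tau)$, and Morita invariance of amenability gives the forward direction; the converse uses that $(A,\alpha) \cong (C_0(X)\otimes_X A, \tau\otimes_X\alpha)$ and amenability passes to balanced tensor products. For general type~I $A$ with Hausdorff spectrum, one shows (Lemma~\ref{lem-conttrace-ideal}) that $A$ has a nonzero $G$-invariant continuous-trace ideal, and then runs transfinite induction along a composition series $(I_\nu)$ with continuous-trace subquotients, using that amenability is preserved under extensions and inductive limits.
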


Combining our work with a result of Bearden and Crann \cite{Bearden-Crann}*{Corollary 4.14}, one sees that in the situation of the theorem above, $\alpha$ is amenable if and only if the action on $X$ is topologically (or measurewise) amenable.

\section{Regularity properties}

For discrete groups, it is a well-known philosophy that if $(A,\alpha)$ is an amenable $G$-$C^*$-algebra, then regularity properties such as nuclearity should be inherited by the crossed product $A\rtimes_\red G$.  For our notion of amenability for actions of locally compact groups, we get the following results.  

\begin{theorem}\label{pass intro}
Let $(A,\alpha)$ be an amenable $G$-$C^*$-algebra. Then $A$ is nuclear (respectively~is exact, has the WEP, has the LLP) if and only if $A\rtimes_\max G$ is nuclear (respectively~is exact, has the WEP, has the LLP). 
\end{theorem}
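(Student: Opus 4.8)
The plan is to prove the four statements simultaneously using the same two-step strategy: for a fixed regularity property $\mathcal{P} \in \{\text{nuclear}, \text{exact}, \text{WEP}, \text{LLP}\}$, first reduce the "only if" direction to a known fact about crossed products and amenability, and then obtain the "if" direction by exhibiting $A$ as a $G$-$C^*$-subalgebra or quotient sitting inside $A \rtimes_{\max} G$ in a way that transfers $\mathcal{P}$ back. The key structural input is that, since $(A,\alpha)$ is amenable, $A \rtimes_{\max} G = A \rtimes_{\red} G$ by the weak-containment half of the theory (the locally-compact extension of Anantharaman-Delaroche's Proposition~4.8 established earlier in the paper), so throughout we may work with a single crossed product $A\rtimes G$ and need not worry about which one we mean.

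For the forward implication ("$A$ has $\mathcal{P}$ $\Rightarrow$ $A \rtimes G$ has $\mathcal{P}$"), the strategy is to use an approximation-property description of amenability. By the characterizations of amenability from earlier in the paper (the (wQAP), or equivalently the (QAP)/(AP) of Ozawa--Suzuki), there is a net $(\xi_i)$ in $C_c(G,A)$ implementing the relevant quasi-central approximations; this is exactly the data one feeds into the standard averaging argument showing that amenable actions transport completely positive/completely bounded approximations from $A$ to $A \rtimes G$. Concretely: nuclearity and exactness are tested by approximate factorizations of the identity (resp.\ of embeddings) through matrix algebras, and one builds such factorizations on $A\rtimes G$ out of ones on $A$ by conjugating with the $\xi_i$ and using condition (iii) to kill commutator terms and condition (ii) to recover the identity; the WEP is handled by using the approximation to split the inclusion $A\rtimes G \hookrightarrow (A\rtimes G)^{**}$ compatibly with a given WEP-splitting for $A$; the LLP similarly lifts, using that the relevant maps out of $A\rtimes G$ restrict to liftable maps on $A$ and on the dense $*$-subalgebra generated by $C_c(G)$. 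In each case the fact that $C_c(G)$ (hence $C^*(G)$, which for amenable-action purposes sits inside as a corner-like object) is itself nuclear/exact/has WEP when $G$ is — no, more carefully: one does not need $G$ to have these properties separately, because amenability of the action already supplies the averaging, and the $\xi_i$ live in $C_c(G,A)$, so all the "$G$-direction" data is controlled by the net.

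For the converse ("$A \rtimes G$ has $\mathcal{P}$ $\Rightarrow$ $A$ has $\mathcal{P}$"), the idea is a Morita/hereditary-subalgebra or conditional-expectation argument. For nuclearity and exactness, which pass to quotients, hereditary subalgebras, and ideals, the cleanest route is: $A$ is equivariantly Morita equivalent (via the module $C_c(G,A)$ completed appropriately, or via the dual-action picture) to a hereditary subalgebra of, or an ideal in, a $C^*$-algebra built from $A\rtimes G$; more directly, one can use the dual coaction and Katayama/Imai--Takai-type duality, but at the level of just transporting nuclearity it suffices to note that $A$ embeds as a hereditary $C^*$-subalgebra of $(A\rtimes G)\rtimes \widehat{G}$-type algebras, or simply that for the purposes of nuclearity and exactness $A$ is a "piece" of $A\rtimes G \otimes \mathcal{K}$. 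For the WEP and the LLP, one uses that the (amenability-supplied) conditional-expectation-like map, or the natural $*$-homomorphism $A \to \M(A\rtimes G)$ together with a genuine $A$-bimodule projection $A\rtimes G \to A$ coming from $\int_G \alpha_g(\,\cdot\,)\,dg$-type averaging against the net $(\xi_i)$, yields a completely positive, completely contractive, (weak-$*$)-continuous splitting; composing a WEP-embedding of $A\rtimes G$ with this projection and with the inclusion $A \hookrightarrow A\rtimes G$ gives the WEP for $A$, and dually for the LLP one transports the lifting through the same maps.

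The main obstacle I expect is the converse direction for the WEP and the LLP: unlike nuclearity and exactness, these are genuinely not inherited by arbitrary subalgebras or quotients, so one really needs the extra structure — an honest completely positive (for WEP) or completely bounded/unital (for LLP) $G$-equivariant projection $A\rtimes G \twoheadrightarrow A$, or an equivariant conditional expectation, and producing it from the approximation net requires care about non-unitality (working in multiplier algebras, with strict continuity, as the paper flags in its footnotes) and about making the limiting map genuinely completely positive rather than merely a net of completely positive maps. The bookkeeping of which convergence (norm, ultraweak, strict) holds where, and the passage to a limit that remains a bona fide completely positive projection, is where the real work lies; everything else is a matter of assembling the standard amenable-averaging estimates together with the already-established equivalence $A\rtimes_{\max}G = A\rtimes_{\red}G$ and the permanence properties of amenability proved earlier.
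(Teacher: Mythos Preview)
Your proposal diverges substantially from the paper's proof, and the converse direction for the WEP and the LLP has a genuine gap.

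The paper's argument avoids all approximation-net gymnastics by passing through Kirchberg's characterizations: a $C^*$-algebra $A$ is nuclear iff $(A,D)$ is a nuclear pair for every $D$; has the LLP iff $(A,\Bd(\ell^2))$ is a nuclear pair; and has the WEP iff $(A,C^*(\mathbb{F}))$ is a nuclear pair. The core lemma (Lemma~\ref{nuc pair}) is then a pure diagram chase: given any $C^*$-algebra $D$ with trivial $G$-action, one uses the untwisting isomorphism $(A\otimes_{\max}D)\rtimes_{\max}G\cong (A\rtimes_{\max}G)\otimes_{\max}D$ together with the fact that amenability of $(A,\alpha)$ forces both $A\rtimes_{\max}G=A\rtimes_{\red}G$ \emph{and} $(A\otimes_{\max}D)\rtimes_{\max}G=(A\otimes_{\max}D)\rtimes_{\red}G$ (since amenability passes to $A\otimes_{\max}D$ by Theorem~\ref{thm-Mats-2}). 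This reduces the question to whether the middle arrow $(A\otimes_{\max}D)\rtimes_{\red}G\to(A\otimes D)\rtimes_{\red}G$ is an isomorphism, which is equivalent to $(A,D)$ being a nuclear pair. Exactness (Proposition~\ref{prop:ex}) is handled by an analogous three-row commutative diagram. A striking feature you miss entirely is that the direction ``$A\rtimes_{\red}G$ has $\mathcal{P}$ $\Rightarrow$ $A$ has $\mathcal{P}$'' holds \emph{without any amenability hypothesis}: the factorization $(A\rtimes_{\red}G)\otimes_{\max}D\twoheadrightarrow(A\otimes_{\max}D)\rtimes_{\red}G\twoheadrightarrow(A\rtimes_{\red}G)\otimes D$ already forces injectivity of the middle map whenever the composite is injective.

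Your converse argument for the WEP and the LLP does not work as stated. For non-discrete $G$ there is \emph{no} conditional expectation $A\rtimes G\to A$; the ``$\int_G\alpha_g(\cdot)\,dg$-type averaging against the net $(\xi_i)$'' you describe does not land in $A$, and the Imai--Takai/Katayama duality picture you allude to gives $A\otimes\K$ as a crossed product by a dual \emph{coaction}, not as a hereditary subalgebra or corner of $A\rtimes G$ in any way that transports the WEP or the LLP back. Since these two properties pass neither to subalgebras nor to quotients in general, the structural maps you have available are not enough. Your forward-direction sketch for nuclearity is in the spirit of the standard discrete-group argument and could be made to work, but for the WEP and the LLP it remains a wish rather than an argument; the nuclear-pair route sidesteps all of this.
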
  

We also get a similar result on the LP, although this is more subtle, in particular requiring separability assumptions.

In the discrete case, the results on nuclearity and exactness are well known: see for example \cite{Brown:2008qy}*{Theorem 4.1.8}.  
The result on the WEP for discrete groups is proved in \cite{Bhattacharya:2013sj} with a different proof, and under the stronger assumption that the $G$-$C^*$-algebra $(A,\alpha)$ is unital and amenable in the sense of Brown and Ozawa \cite{Brown:2008qy}*{Definition 4.3.1}; this is what we call \emph{strong amenability} (see Definition~\ref{def-amenable (SA)}) and it is the same as topological amenability of the action on the spectrum of the centre $Z(A)$ in case $A$ is unital. 

We turn now to $G$-injective $C^*$-algebras.  A $G$-$C^*$-algebra $A$ is \emph{injective} if for any commutative diagram
$$
\xymatrix{ C \ar@{-->}[dr] &  \\ B \ar[u] \ar[r] & A }
$$
where the solid horizontal arrow is an equivariant ccp map and the vertical arrow is an injective equivariant $*$-homomorphism, the diagonal arrow can be filled in by an equivariant ccp map.  The following theorem generalizes work of Brodzki, Cave, and Li \cite{Brodzki-Cave-Li:Exactness} and of Mehrdad Kalantar and Matthew Kennedy \cite{Kalantar:2014sp}*{Theorem 1.1} characterizing exactness in terms of actions on injective $G$-$C^*$-algebras. We refer to Definition \ref{def-amenable (SA)}  for the notion of a strongly amenable action.

\begin{theorem}\label{inj intro}
Let $G$ be a locally compact group. Then the following are equivalent:
\begin{enumerate}
\item $G$ is exact.
\item Every  $G$-injective $G$-$C^*$algebra $(A,\alpha)$ is strongly amenable.
\item There exists a strongly amenable  $G$-injective $G$-$C^*$-algebra  $(A,\alpha)$.
\end{enumerate}
\end{theorem}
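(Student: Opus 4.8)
The plan is to prove the cycle $(1)\Rightarrow(2)\Rightarrow(3)\Rightarrow(1)$. The implication $(2)\Rightarrow(3)$ is essentially free: we just need to exhibit \emph{one} $G$-injective $G$-$C^*$-algebra, and the standard candidate is $\ell^\infty(G)$ (or $C_b(G)$, or more invariantly the injective envelope $I_G(\C)$ in the sense of Hamana), equipped with the translation action; one checks this is $G$-injective by the usual averaging argument, so (2) applies to it directly and yields a strongly amenable $G$-injective algebra. For $(1)\Rightarrow(2)$, the key point is the characterization of exactness due to Brodzki--Cave--Li and Ozawa--Suzuki mentioned in the excerpt: $G$ is exact if and only if $\ell^\infty(G)$ (with the translation action) is strongly amenable, equivalently there is a net of norm-continuous compactly supported positive-type functions $G\to C_b(G)\subseteq \ell^\infty(G)$ witnessing strong amenability. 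Given any $G$-injective $(A,\alpha)$, I would use the equivariant ccp lifting property in the definition of injectivity to transport these positive-type functions from $\ell^\infty(G)$ to $A$: concretely, there is always a (possibly non-equivariant) unital embedding $\C \hookrightarrow A$, and $G$-injectivity of $A$ provides an equivariant ccp map $\ell^\infty(G)\to A$ (filling in the diagram with $C=A$, $B=\C$, using that $\ell^\infty(G)$ receives an equivariant map from $\C$ and is the "largest" such object — more precisely one first maps $\C\to A$ and then, since $\ell^\infty(G)$ is $G$-injective and contains $\C$ equivariantly, one gets a $G$-equivariant ccp splitting $A\to\ell^\infty(G)$; composing in the right order produces the equivariant ccp map $\ell^\infty(G)\to A$ one wants, or rather one uses $G$-injectivity of $A$ against the inclusion of the image of $\C$ in $\ell^\infty(G)$). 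Pushing the center-valued positive-type functions for $\ell^\infty(G)$ through this equivariant ccp map and composing with the canonical unital inclusion into $Z\M(A)'' = Z(A''_\alpha)$ (after taking the enveloping $G$-von Neumann algebra), one obtains the net required by Definition~\ref{def-amenable (SA)} of strong amenability for $A$.

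The remaining implication $(3)\Rightarrow(1)$ is the converse direction and should follow the Kalantar--Kennedy / Brodzki--Cave--Li template: if $(A,\alpha)$ is a strongly amenable $G$-injective algebra, I would show $G$ is exact by using $A$ as a receptacle. Strong amenability of $A$ gives positive-type functions valued in (the center of) $A$; $G$-injectivity lets us factor the relevant coaction or the inclusion $\C\subseteq A$ equivariantly, and one deduces that $\ell^\infty(G)$ itself, or the $G$-$C^*$-algebra $C_b(G)$, inherits strong amenability by an averaging/restriction argument against the equivariant ccp map $A\to\ell^\infty(G)$ coming from $G$-injectivity of $\ell^\infty(G)$ (which exists unconditionally). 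Then by the Brodzki--Cave--Li and Ozawa--Suzuki characterization, strong amenability of $\ell^\infty(G)$ is exactly exactness of $G$, closing the cycle. One has to be a little careful here that strong amenability of a $G$-$C^*$-algebra really does pass to the target of an equivariant ucp map — but this is a formal consequence of the positive-type function formulation, since applying a ucp equivariant map to a positive-type function valued in the source yields a positive-type function valued in the target, preserving the norm bound and the approximate-unit conditions.

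The main obstacle I anticipate is bookkeeping around \emph{which} algebra plays the role of the "universal" $G$-injective object and making the equivariant ccp factorizations go in the correct direction; in particular, for non-discrete $G$ one should replace $\ell^\infty(G)$ by $C_b(G) = C_b^{ru}(G)$ or work inside $L^\infty(G)$ at the von Neumann level, and one must verify that the positive-type functions used to characterize exactness can be taken norm-continuous and compactly supported (as required by the definition of strong amenability in this paper) — this is precisely the content of \cite{Brodzki-Cave-Li:Exactness}*{Theorem 5.8} and \cite{Ozawa-Suzuki}*{Proposition 2.5}, so it can be cited. A secondary technical point is passing from center-valued positive-type functions on $A$ to functions valued in $Z(A''_\alpha)$ (the object in Definition~\ref{def-amenable (SA)}); this uses the universal property of $A''_\alpha$ from Theorem~\ref{cont dd} together with the fact that an equivariant ccp map $A\to B$ induces a normal equivariant ccp map $A''_\alpha\to B''_\beta$ carrying centers to centers, which should already have been established in the body of the paper.
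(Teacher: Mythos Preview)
Your overall cycle $(1)\Rightarrow(2)\Rightarrow(3)\Rightarrow(1)$ and your citations (Brodzki--Cave--Li, Ozawa--Suzuki, Anantharaman-Delaroche) are the right ingredients, and your treatment of $(2)\Rightarrow(3)$ and $(3)\Rightarrow(1)$ is essentially fine once $\ell^\infty(G)$ is replaced by $\contub(G)$ for non-discrete $G$, as you note. Your route for $(3)\Rightarrow(1)$ via a ucp $G$-map $A\to\contub(G)$ does work, though the paper takes a shorter path: since $A$ is unital (a consequence of $G$-injectivity), one simply restricts the strongly amenable action to the commutative unital $G$-$C^*$-algebra $Z(A)$ and invokes \cite{Anantharaman-Delaroche:2002ij}*{Theorem 7.2} directly.

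The genuine gap is in $(1)\Rightarrow(2)$. You correctly produce a ucp $G$-map $\Phi:\contub(G)\to A$ by filling the diagram $\C\hookrightarrow\contub(G)$, $\C\to A$, but you never explain why $\Phi$ takes values in $Z(A)$ --- and strong amenability (Definition~\ref{def-amenable (SA)}) requires the positive type functions to land in $Z\M(A)$, not just in $A$. Your attempted fix, passing to $Z(A_\alpha'')$, conflates strong amenability with (plain) amenability; moreover, the claim that a ccp map ``carries centres to centres'' is false in general. The paper's device here is precisely what you are missing: instead of extending along $\C\hookrightarrow\contub(G)$, one embeds $A$ itself via $\iota:A\hookrightarrow\contub(G)\otimes A$, $a\mapsto 1\otimes a$, and uses $G$-injectivity of $A$ to obtain a ucp $G$-retraction $\varphi:\contub(G)\otimes A\to A$ with $\varphi\circ\iota=\id_A$. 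Then $A\cong 1\otimes A$ lies in the multiplicative domain of $\varphi$, which forces $\Phi(f):=\varphi(f\otimes 1)$ to commute with every $a=\varphi(1\otimes a)$, i.e.\ $\Phi(\contub(G))\subseteq Z(A)$. This multiplicative-domain argument is the missing step in your proposal.
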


Finally in the circle of ideas about regularity, we introduce a weakening of $G$\nb-injectivity called the \emph{continuous $G$-WEP}.  A $G$-$C^*$-algebra $(A,\alpha)$ has the continuous $G$-WEP if for any equivariant inclusion $B\into A$ of a $G$-$C^*$-algebra $(B,\beta)$ into $(A,\alpha)$, there is an equivariant ccp map $A\to B_\beta''$ such that the composition $B\to A \to B_\beta''$ is the canonical inclusion of $B$ into its enveloping $G$-von Neumann algebra.  For actions on suitably nice $C^*$-algebras, this property is closely related to amenability.  Indeed, one has 

\begin{theorem}\label{thm-contGWEP}
Let $(A,\alpha)$  be a $G$-$C^*$-algebra and assume that $A$ has the WEP (e.g.\ $A$ is nuclear) and $G$ is exact.  Then  $(A,\alpha)$  is amenable if and only if it has the continuous $G$\nb-WEP.
\end{theorem}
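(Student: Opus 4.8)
The plan is to prove the two implications separately, using the characterizations of amenability from earlier in the excerpt and the fact that $A$ has the WEP.

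First I would show that amenability implies the continuous $G$-WEP. Suppose $(A,\alpha)$ is amenable and let $B \into A$ be an equivariant inclusion of a $G$-$C^*$-algebra $(B,\beta)$. Since $A$ has the WEP, there is a (non-equivariant) ccp map $A \to B^{**}$ splitting $B \to A$ at the level of second duals; but we need to land in $B_\beta''$ equivariantly. The strategy here is to first use the WEP of $A$ to produce some ccp splitting $A \to B^{**}$, then compose with the canonical ultraweakly continuous quotient $B^{**} \to B_\beta''$ coming from Ikunishi's Theorem \ref{cont dd} (applied to the identity $B \to B_\beta''$), obtaining a ccp map $\psi \colon A \to B_\beta''$ with $\psi|_B$ the canonical inclusion. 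This $\psi$ need not be equivariant, so the main work is to average it: using amenability of $(A,\alpha)$ — concretely, a net $(\theta_i)$ of compactly supported positive-type functions $G \to Z(A_\alpha'')$ from Definition \ref{def-am}, or equivalently the wQAP net $(\xi_i)$ from Definition \ref{def-wQAP intro} — one builds a sequence of perturbations of $\psi$ whose equivariance defect tends to $0$, and passes to a point-ultraweak limit. This is exactly the standard Anantharaman-Delaroche averaging argument that amenable actions admit equivariant ccp ``conditional-expectation-type'' maps; the compact supports and the uniform-on-compacta convergence guarantee the limit map is genuinely equivariant into the $G$-von Neumann algebra $B_\beta''$, and it still restricts to the canonical inclusion on $B$ because each $\theta_i(e)$-term fixes $B$ in the limit.

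Second I would show the converse: if $(A,\alpha)$ has the continuous $G$-WEP, then it is amenable. This is where exactness of $G$ enters. The idea is to apply the continuous $G$-WEP to a cleverly chosen equivariant inclusion whose target enveloping von Neumann algebra is amenable. A natural candidate: since $G$ is exact, by the Brodzki–Cave–Li / Ozawa–Suzuki characterization there exists a strongly amenable $G$-injective $G$-$C^*$-algebra (this is precisely Theorem \ref{inj intro}(3)); call it $(I,\iota)$. Form the equivariant inclusion $B := A \into A \otimes_{\min} I =: A'$ via $a \mapsto a \otimes 1$ (one must check $A \otimes_{\min} I$ is a $G$-$C^*$-algebra and the map is an equivariant inclusion — routine). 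Applying the continuous $G$-WEP of $A$ — no: one applies it with $A$ in the role of the ambient algebra only if $B \into A$; instead I would take the inclusion $A \into A'$ and use $G$-injectivity/strong amenability of $I$ together with the WEP of $A$ to transport a suitable positive-type approximating net from $A' $ (which is amenable, being an equivariant subalgebra of a tensor product with a strongly amenable $G$-injective algebra) back to $A_\alpha''$. Concretely: strong amenability of $I$ gives norm-convergent positive-type approximations valued in $Z\M(I) = \C$-like data over $G$; tensoring gives that $(A',\cdot)$ is amenable (permanence under tensoring with a strongly amenable action, combined with $A$ having the WEP so that $A \otimes_{\min} I = A \otimes_{\max} I$); then the continuous $G$-WEP produces an equivariant ccp map $A' \to A_\alpha''$ restricting to the canonical inclusion $A \to A_\alpha''$, which pushes the approximating net for $A'$ down to an approximating net for $A_\alpha''$ of the form required by Definition \ref{def-am}, up to the usual manipulations (replacing $A' $-valued positive-type functions by $Z(A_\alpha'')$-valued ones via the ccp map and a conditional expectation onto the centre, using positivity of the $F \times F$ matrices).

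The main obstacle I expect is the second implication, specifically getting the output of the continuous $G$-WEP — an equivariant ccp map into $A_\alpha''$ — to yield \emph{centrally}-valued positive-type functions as literally demanded by Definition \ref{def-am}, rather than merely $A_\alpha''$-valued ones. As the introduction emphasizes (the discussion around Suzuki's examples), amenability is about \emph{quasi}-centrality, not centrality, so the honest route is probably to run this argument through the wQAP (Definition \ref{def-wQAP intro}) or through commutant amenability (Definition \ref{com amen}) rather than Definition \ref{def-am} directly, and then invoke the equivalence of these characterizations proved elsewhere in the paper. A secondary technical point is ensuring the WEP hypothesis on $A$ is used in exactly the right place — it is what makes $A \otimes_{\min} - = A \otimes_{\max} -$, which is needed both to identify the relevant crossed products and to run the permanence argument — and checking that exactness of $G$ is genuinely needed only to invoke Theorem \ref{inj intro} (equivalently, to produce a strongly amenable $G$-injective algebra), so that the whole equivalence is clean.
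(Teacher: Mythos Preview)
You have the direction of the continuous $G$-WEP backward. The actual definition in the body of the paper (Definition~\ref{def-weakGWEP}) says: for every $G$-embedding $\varphi\colon A\hookrightarrow B$ there is an equivariant ccp map $\psi\colon B\to A_\alpha''$ with $\psi\circ\varphi=i_A$. (The informal description in the introduction has the roles of $A$ and $B$ swapped, which evidently misled you.) This matters because your first implication breaks immediately: you write ``Since $A$ has the WEP, there is a ccp map $A\to B^{**}$ splitting $B\to A$,'' but the WEP of $A$ says nothing about subalgebras of $A$ --- it produces ccp retractions \emph{onto} $A^{**}$ from \emph{superalgebras} of $A$. With the correct definition, the paper's argument is what you had in spirit: the WEP of $A$ gives a non-equivariant ccp $\psi\colon B\to A_\alpha''$ splitting $A\hookrightarrow B$; one then averages by setting $m_b(g)=\alpha_g''(\psi(\beta_{g^{-1}}(b)))\in L^\infty(G,A_\alpha'')$, forming $T_i(b)=\langle\xi_i,m_b\xi_i\rangle_{A_\alpha''}$ with the amenability net $(\xi_i)\subset C_c(G,Z(A_\alpha'')_c)$, and passing to a point-ultraweak cluster point.

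For the converse, your instinct is right but the route is cleaner than you fear, and your centrality worry dissolves. With the correct definition, one applies the continuous $G$-WEP to the embedding $A\hookrightarrow \M(C\otimes A)_c$, $a\mapsto 1\otimes a$, for an arbitrary unital $G$-$C^*$-algebra $C$, obtaining an equivariant ucp map $\varphi\colon \M(C\otimes A)_c\to A_\alpha''$. Since $A$ lies in the multiplicative domain of $\varphi$ and $C\otimes 1$ commutes with $1\otimes A$, the restriction of $\varphi$ to $C\cong C\otimes 1$ lands in $Z(A_\alpha'')$ automatically. Taking $C=C_{ub}(G)$ and using exactness of $G$ (so that the translation action on $C_{ub}(G)$ is strongly amenable) then pushes a strongly amenable net through $\varphi$ into $Z(A_\alpha'')$, giving amenability directly from Definition~\ref{def-amenable (A)}; no detour through the (wQAP) or commutant amenability is needed.
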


We conclude with the following result on a variant of the weak containment property.

\begin{theorem}\label{thm-contGWEP1}
Let  $(A,\alpha)$ be a commutative $G$-$C^*$-algebra. Then  $(A,\alpha)$ has the continuous $G$-WEP if and only if $A\rtimes_\max G=A\rtimes_\inj G$.
\end{theorem}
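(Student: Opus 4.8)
The plan is to identify $A\rtimes_\inj G$ with a crossed product associated to the enveloping $G$-von Neumann algebra, and then to translate the continuous $G$-WEP for $A=C_0(X)$ into an inclusion of crossed products lying between $A\rtimes_\max G$ and $A\rtimes_\inj G$. Recall that $A\rtimes_\inj G$ should be defined as the crossed product associated to the $G$-injective envelope $I_G(A)$ (or equivalently, by the universal property, the image of $A\rtimes_\max G$ under the representation coming from a $G$-equivariant embedding $A\into I$ into an injective $G$-$C^*$-algebra). The first step is therefore to pin down this definition and record the quotient maps $A\rtimes_\max G \onto A\rtimes_\inj G \onto A\rtimes_\red G$ that exist for every $G$-$C^*$-algebra.

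For the ``only if'' direction: assume $(A,\alpha)=(C_0(X),\alpha)$ has the continuous $G$-WEP. I would apply the defining property with $B=A$ itself sitting inside a $G$-injective $G$-$C^*$-algebra $I$ containing $A$ equivariantly, or rather inside $A_\alpha''$: the continuous $G$-WEP gives, for the embedding $A\into I$, an equivariant ccp map $I\to A_\alpha''$ restricting to the canonical inclusion $A\into A_\alpha''$. Since $A$ is commutative, $A_\alpha''$ is commutative, so this ccp map is automatically a conditional expectation onto a (von Neumann) subalgebra, hence multiplicative in an appropriate sense; the key point is that the composition $A\rtimes_\max G \to I\rtimes_\max G \to A_\alpha''\rtimes_\max G$ together with the fact that $A_\alpha''\rtimes G$ is already ``reduced-like'' on the relevant representation forces the kernel of $A\rtimes_\max G \to A\rtimes_\inj G$ to coincide with the kernel of $A\rtimes_\max G \to A\rtimes_\red G$-type representations that factor through injectives. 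Concretely, one shows that every representation of $A\rtimes_\max G$ weakly contained in the regular representation of $I\rtimes G$ (for $I$ injective) already factors through $A\rtimes_\max G$ itself, which yields $A\rtimes_\max G = A\rtimes_\inj G$.

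For the ``if'' direction: assume $A\rtimes_\max G = A\rtimes_\inj G$. Embed $A=C_0(X)$ equivariantly into a $G$-injective $G$-$C^*$-algebra $I$. By $G$-injectivity of $I$, the inclusion $A\into I$ extends to an equivariant ccp map on any $G$-$C^*$-algebra $B$ containing $A$; composing with an equivariant ccp map $I\to I_\beta''$ — or directly using that $A\rtimes_\inj G$ sees all of $A\rtimes_\max G$ — one produces the required equivariant ccp map $A\to B_\beta''$ for any $B\supseteq A$, whose restriction to $A$ is the canonical map into $B_\beta''$; the hypothesis $A\rtimes_\max G=A\rtimes_\inj G$ is exactly what guarantees this extension does not lose information at the level of the enveloping von Neumann algebra. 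Here the commutativity of $A$ is again used so that conditional expectations onto $A$ (or onto its weak closure) automatically exist and are compatible with the $G$-actions.

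The main obstacle will be the bookkeeping around the three crossed products and making precise the claim that $A\rtimes_\inj G$ is governed by representations coming from $G$-injective envelopes, together with checking that, in the commutative case, the relevant equivariant ccp maps are automatically conditional expectations and behave well under passing to $A_\alpha''$. In particular, the subtle step is verifying that the continuous $G$-WEP map $I\to A_\alpha''$ interacts correctly with the crossed-product functor — i.e.\ that $\mathrm{id}\rtimes(\text{this map})$ is well-defined and has the right image — since ccp maps are only completely positive, not homomorphisms, and one must invoke the commutative structure (or a Stinespring-type argument plus the universal property of $A_\alpha''$ from Theorem~\ref{cont dd}) to control the induced map on crossed products. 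Once that compatibility is established, both implications follow from chasing the universal properties.
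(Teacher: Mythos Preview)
Your proposal has genuine gaps in both directions, and more seriously, it misidentifies where the work lies and where commutativity enters.

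For the ``only if'' direction you are overcomplicating matters. This implication holds for \emph{any} $G$-$C^*$-algebra, commutative or not: if $(A,\alpha)$ has the continuous $G$-WEP, then for every $G$-embedding $\iota\colon A\into B$ and every nondegenerate covariant representation $(\pi,u)$, the composition of the $G$-WEP map $\psi\colon B\to A_\alpha''$ with the normal extension $\pi''\colon A_\alpha''\to \pi(A)''\subseteq\Bd(H)$ gives a ccp $G$-map $B\to\Bd(H)$ extending $\pi$. Thus every covariant representation is $G$-injective, and by the characterization of $A\rtimes_\max G=A\rtimes_\inj G$ in terms of $G$-injective representations (Corollary~\ref{inj lem}), one is done. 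Your discussion of weak containment in regular representations of $I\rtimes G$ is unnecessary, and your description of $A\rtimes_\inj G$ as ``the crossed product associated to the $G$-injective envelope'' is not the paper's definition.

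The ``if'' direction is where commutativity matters, and here your argument does not work. You have the definition of the continuous $G$-WEP backwards at one point (one needs, for each $G$-embedding $A\into B$, a ccp $G$-map $B\to A_\alpha''$, not $A\to B_\beta''$). More importantly, the hypothesis $A\rtimes_\max G=A\rtimes_\inj G$ only tells you that covariant representations are $G$-injective, i.e.\ that ccp $G$-extensions exist \emph{into $\Bd(H)$}. The entire difficulty is forcing such an extension to land in $A_\alpha''$. Your appeal to ``conditional expectations onto $A$ automatically exist in the commutative case'' does not address this: a ccp map from a commutative algebra into $\Bd(H)$ need not have image in any prescribed commutative subalgebra. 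The paper's solution is to use the Haagerup standard form of the commutative von Neumann algebra $A_\alpha''$, in which $\pi(A_\alpha'')$ is a \emph{masa} in $\Bd(H)$; one then embeds $A$ into the specific $G$-injective commutative algebra $B=\contub(G,A_\alpha'')$, obtains a ccp $G$-map $\varphi\colon B\to\Bd(H)$ by $G$-injectivity of the representation, and observes that since $B$ is commutative and $A$ lies in the multiplicative domain, $\varphi(B)\subseteq\pi(A)'=\pi(A_\alpha'')$. The masa property is exactly what converts ``lands in the commutant'' into ``lands in $A_\alpha''$''. Finally, because $B$ was chosen $G$-injective, splitting the single embedding $A\into B$ suffices to split all $G$-embeddings of $A$. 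None of these ingredients---the Haagerup standard form, the masa argument, or the choice of a $G$-injective $B$---appears in your outline.
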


We will not explain all the terminology here, but just state that this is one of the most general forms of the equivalence between a variant of the weak containment property and a variant of an amenability property; indeed, it reduces to Theorem~\ref{com wc} when $G$ is exact.  Outside the exact case, weak containment unfortunately remains quite mysterious, and the above is currently the most general result we know in that direction.

\section{Outline of the paper} 
In Chapter~\ref{Chap:Enveloping-vN-algebra} we study one of the main actors in this paper: Ikunishi's enveloping $G$-von Neumann algebra $(A_\alpha'',\alpha'')$ of a $G$-$C^*$-algebra $(A,\alpha)$.
We show that the construction $(A,\alpha)\mapsto (A_\alpha'',\alpha'')$ has good functoriality  properties, similar to those of  the usual bidual construction $A\mapsto A^{**}$. Our construction is different from the one given by Ikunishi in \cite{Ikunishi}, but 
it follows from the universal properties of $(A_\alpha'',\alpha'')$ that both constructions coincide.  We also 
establish a one-to-one correspondence between quasi-equivalence classes of covariant representations of $(A,G,\alpha)$ 
and  $G$\nb-invariant central projections in $A_\alpha''$.

In Chapter~\ref{sec:Amenable-Actions} we introduce several notions of amenability
and study some relations between approximation properties by positive type functions and our notions of 
amenable actions.  We also prove some basic permanence properties of amenability. We then establish the equivalence of measurewise amenability and amenability.

In Chapter~\ref{chap:QAP} we prove the equivalence between amenability and the weak quasi-central approximation property (wQAP) and we relate this to the more recent results of Ozawa and Suzuki in \cite{Ozawa-Suzuki} which show that amenability is equivalent to the stronger quasi-central approximation property (QAP), as introduced by the authors in \cite{Buss:2019}*{Section 3}, and to the approximation property (AP) as introduced in \cite{ExelNg:ApproximationProperty}*{Definition 3.6}. As an application we solve a long-standing open question of Ara, Exel, and Katsura on the nuclearity of cross-sectional $C^*$-algebras of Fell bundles over $G$.

In Chapter~\ref{sec:Weak-containment} we  study the relation between amenability of  actions and the weak containment property.   Section~\ref{sec-commutative} proves the central positive results: Theorems \ref{com wc} and \ref{gen wc 2} above.  A key tool here is the use of the Haagerup standard form of a $G$-von Neumann algebra.  In Section~\ref{sec:example} we construct our examples of non-amenable actions $\alpha:G\to \Aut(\K)$  with $\K=\K(H)$ for some Hilbert space $H$, 
such that $\K\rtimes_\max G\cong \K\rtimes_\red G$; in other words, this gives a negative solution to the weak containment problem.  We also give our examples of actions with the weak containment property that have restrictions without that property.

In Chapter~\ref{chap:examples} we give applications to $C^*$-algebras with good structure.
In Section~\ref{sec:regular} we prove our results on  $X\rtimes G$-algebras in which $X$ is a {\em regular} $G$-space 
and for regular actions on type I $G$-$C^*$-algebras.  In Section~\ref{sec:continuous-trace} we study actions on  type I $C^*$-algebras $A$ with Hausdorff spectrum $\widehat{A}$.

In Chapter~\ref{sec:G-WEP} we study the relationship of amenability to regularity properties.  In Section \ref{sec:pass} we show that various regularity properties pass to crossed products  as in Theorem \ref{pass intro} above.  In Section \ref{sec:inj} we prove Theorem \ref{inj intro}, and  in Section \ref{sec:gwep} we introduce our equivariant version of the weak expectation property of Lance, the continuous $G$-WEP.

Finally, in Chapter~\ref{sec-questions} we discuss some recent developments and 
summarize some natural questions which arise from  the results in this paper. In particular we will discuss  the impact of the recent works of Bearden and Crann \cites{Bearden-Crann, Bearden-Crann1} and of Ozawa and Suzuki \cite{Ozawa-Suzuki} on the subject of this paper.

\section{Conventions and notation}\label{Sec:Prel}

In this paper, $\Bd(H)$ refers to the algebra of bounded operators on a Hilbert space $H$ and $\mathcal L(\mathcal H)$ refers to the algebra of adjointable operators on a Hilbert $C^*$\nb-module $\mathcal H$.  We will follow standard usage, and say that a net $(a_i)$ in $\Bd(H)$ converges \emph{weakly} if it converges in the weak operator topology (\emph{not} in the weak topology that $\Bd(H)$ inherits from its dual space), and similarly in $\Bd(H,H')$ if $H'$ is another Hilbert space.  The ultraweak topology on a von Neumann algebra $M$ will refer to the weak-$*$ topology coming from its unique predual; if $M\subseteq \Bd(H)$ is a concrete von Neumann algebra, then the weak operator topology inherited from $\Bd(H)$ agrees with the ultraweak topology on bounded sets (but not necessarily in general).  As we will usually be interested in convergence of bounded nets, we will sometimes elide the difference between weak and ultraweak convergence when we are dealing with a concrete von Neumann algebra.

Throughout, $G$ denotes a locally compact group equipped with a fixed Haar measure.  We will typically write $A$, $(A,G)$, $(A,\alpha)$ or $(A,G,\alpha)$ for a $G$-$C^*$-algebra depending on which data we want to emphasize.  If $A$ is a $G$-$C^*$-algebra (or $G$-von Neumann algebra) associated algebras and spaces such as the multiplier algebra $\M(A)$, the centre $Z(A)$, the dual $A^*$, and the double dual $A^{**}$ will be equipped with the canonically induced actions.  We warn the reader that even if the action of $G$ on $A$ is strongly continuous\footnote{Recall this means that for each $a\in A$, the map $G\to A$ defined by $g\mapsto \alpha_g(a)$ is norm continuous.}, the induced actions of $G$ on $A^*$ and $\M(A)$ will typically not be strongly continuous, and the induced action on $A^{**}$ will typically not be ultraweakly continuous.

Throughout, $L^\infty(G)$ denotes the von Neumann algebra of (equivalence classes of) bounded, measurable functions on $G$ equipped with the left translation action $\tau$ defined by $(\tau_gf)(h):=f(g^{-1}h)$. This makes $L^\infty(G)$ a $G$-von Neumann algebra.  If $A$ is a $C^*$-algebra (or von Neumann algebra) equipped with a not-necessarily-continuous action $\alpha:G\to \Aut(A)$, we will write $A_c$ for the collection of all $a\in A$ such that the map $g\mapsto \alpha_g(a)$ is norm continuous; note that norm continuity is used here, even if $A$ was originally a von Neumann algebra.  Note that  $A_c$ is a $G$-$C^*$-algebra with the naturally induced structures.   

If $(M, \sigma)$ is a $G$-von Neumann algebra, then $M$ becomes an $L^1(G)$-module via
\begin{equation}\label{eq-L1module}
f*m:=\int_G f(g)\sigma_g(m)\,dg,\quad f\in L^1(G), ~m\in M,
\end{equation}
where the integral converges ultraweakly.  For $f\in L^1(G)$ and $g\in G$ we define $\tau_g(f)(h):=f(g^{-1}h)$; direct computations then show that
\begin{equation}\label{eq-Mc}
\|f*m\|\leq \|f\|_1\|m\|\quad\text{and}\quad \sigma_g(f*m)=\tau_g(f)*m
\end{equation}
for all  $g\in G, f\in L^1(G)$ and $m\in M$. Since the translation action of $G$ on $L^1(G)$ is strongly continuous, it follows that 
$L^1(G)*M\subseteq M_c$. On the other hand, it is easily checked that if $(f_i)_{i\in I}$ is a standard approximate unit of $L^1(G)$ consisting of positive continuous functions with compact supports, then $f_i*m\to m$ {\em in norm} for every $m\in M_c$ (see \cite{Pedersen:1979zr}*{Lemma~7.5.1}).
Thus it follows from an application of the Cohen-Hewitt factorization  theorem \cite{Hewitt}*{Theorem (2.5)}, that 
\begin{equation}\label{eq:Mc=L1*M}
M_c=L^1(G)*M_c=L^1(G)*M.
\end{equation}
In particular,  if we define  
\begin{equation}\label{contub def}
\contub(G):=L^\infty(G)_c=L^1(G)*L^\infty(G),
\end{equation}
it follows that $\contub(G)$ consists of all bounded continuous functions $f:G\to \C$  that are uniformly continuous for the left-invariant uniform structure on $G$ (see  \cite{Hamana:2011}*{Proposition 3.3}).  Throughout, we equip $C_{ub}(G)$ with the restriction of the action $\tau$ on $L^\infty(G)$, i.e.\ $(\tau_gf)(h):=f(g^{-1}h)$.

Throughout, a \emph{$G$-map} always means a $G$-equivariant map between sets equipped with $G$-actions.  This terminology might be combined with others in what we hope is an obvious way: for example, $G$-embedding, ccp $G$-map, normal $G$-map etc. 

If $\mu$ is a positive Radon measure on a locally compact space $X$  and $M$ is a von Neumann algebra, then
$L^\infty(X,M)$ denotes the von Neumann tensor product $L^\infty(X,\mu)\bar{\otimes} M$. 
We refer to \cite{Tak}*{Chapter V, Theorem 7.17} for the relationship between $L^\infty(X,M)$ and the bounded ultraweakly measurable functions from $X$ to $M$.

For a $G$-$C^*$-algebra $A$ we regard $C_c(G,A)$, the space of compactly supported, norm continuous $A$-valued functions on $G$, as 
a $*$-algebra  with   convolution and involution given by
$$f_1*f_2(g):=\int_G f_1(h)\alpha_h(f_2(h^{-1}g))\,dh\quad\text{and}\quad f^*(g):=\Delta(g^{-1})\alpha_g(f(g^{-1}))^*$$
for $f_1, f_2,f\in C_c(G,A)$ and $g\in G$, where the integral is with respect to Haar measure on $G$ and $\Delta:G\to (0,\infty)$ 
denotes the modular function on $G$. A \emph{covariant homomorphism} $(\pi, u):(A,G)\to \M(D)$ for a $C^*$-algebra $D$
 consists of a $*$-homomorphism $\pi:A\to \M(D)$ together with a strictly continuous homomorphism 
 $u:G\to \U\M(D)$, denoted $g\mapsto u_g$,  into the group of unitaries of $\M(D)$ such that for all $a\in A$ and $g\in G$ we have
 $$\pi(\alpha_g(a))=u_g\pi(a)u_g^*.$$
 We say that $(\pi, u)$ is {\em nondegenerate} if $\pi:A\to \M(D)$ is nondegenerate in the sense that $\pi(A)D=D$. 
 If $D$ is the algebra $\K(H)$ of compact operators on a Hilbert space $H$, then $\M(D)=\Bd(H)$ and $(\pi,u)$ is a covariant representation on the Hilbert space $H$ in the usual sense. 
 Every covariant homomorphism $(\pi,u)$ of $(A,G,\alpha)$  into $\M(D)$ integrates to a $*$-homomorphism
 $$\pi\rtimes u:C_c(G,A)\to \M(D);\quad \pi\rtimes u(f):=\int_G \pi(f(g)) u_g \,dg.$$
 The {\em maximal crossed product} $A\rtimes_\max G$ is defined as the completion of $C_c(G,A)$ by the $C^*$-norm
 $$\|f\|_\max:=\sup_{(\pi,u)}\|\pi\rtimes u(f)\|.$$
Every integrated form $\pi\rtimes u:C_c(G,A)\to \M(D)$ extends uniquely to a $*$-homo\-mor\-phism out of $A\rtimes_\max G$,
and this implements a one-to-one correspondence between nondegenerate covariant homomorphisms of $(A,G,\alpha)$ and 
nondegenerate $*$\nb-homomorphisms of $A\rtimes_\max G$ -- the reverse process of $(\pi, u)\mapsto \pi\rtimes u$
 is given by sending a nondegenerate $*$-homomorphism $\Phi:A\rtimes_\max G\to \M(D)$ to the covariant homomorphism
 $(\Phi\circ i_A, \Phi\circ i_G)$,
   where $(i_A, i_G):(A,G)\to \M(A\rtimes_\max G)$ is the canonical covariant homomorphism\footnote{Note that we have  abused notation slightly: we use the same symbol $\Phi$ for the canonical extension of the original map $\Phi$ to $\M(A\rtimes_{\max}G)$.  Such abuses will be used throughout the paper when they seem unlikely to cause confusion.}. If $A\rtimes_\max G\to \Bd(H_u)$ denotes the universal representation of $A\rtimes_{\max} G$, i.e.\ the direct sum of all  representations which appear as GNS-constructions from the states of $A\rtimes_{\max}G$, then extending this representation (uniquely and faithfully) to $\M(A\rtimes_\max G)$, we will typically identify $(i_A, i_G)$ with the underlying covariant representation of $(A,G,\alpha)$ on $H_u$.

 The {\em regular representation} of $(A,G,\alpha)$ 
 is the covariant representation 
 $$(i_A^r, i_G^r):=\big((\id_A\otimes M)\circ \tilde\alpha, 1\otimes \lambda_G\big)$$ of 
$(A,G)$ to  $\M(A\otimes \K(L^2(G)))$
in which $\lambda_G:G\to \U(L^2(G))$ denotes the left regular representation, $M: C_0(G)\to \Bd(L^2(G))$ denotes the 
representation by multiplication operators, and 
$\tilde{\alpha}: A\to C_b(G,A)\subseteq \M(A\otimes C_0(G))$ is defined by $\big(\tilde{\alpha}(a)\big)(g)=\alpha_{g^{-1}}(a)$.
The {\em reduced crossed  product} $A\rtimes_\red G$ is defined as the image of the integrated form (also called the regular representation)
$$\Lambda_{(A,\alpha)}:A\rtimes_\max G\onto A\rtimes_\red G\subseteq  \M(A\otimes \K(L^2(G)))$$
of $(i_A^r, i_G^r)$. If $\sigma: A\to \Bd(H)$ is any nondegenerate $*$-representation of $A$, 
the extension of $\sigma\otimes\id_{\K}: A\otimes \K(L^2(G))\to \Bd\big(H\otimes L^2(G)\big)$ to $\M(A\otimes \K(L^2(G)))$ restricts to a 
representation
$$\tilde\sigma\rtimes\lambda: A\rtimes_\red G\to \Bd\big(H\otimes L^2(G)\big)$$
 which is faithful if and only if 
$\sigma:A\to \Bd(H)$ is faithful. It can be described more concretely as the integrated form 
of the covariant representation 
$(\tilde\sigma, \lambda)$ on $L^2(G,H)\cong H\otimes L^2(G)$ given by
\begin{equation}\label{eq-regular}
(\widetilde{\sigma}(a)\xi)(g):=\sigma\big(\alpha_{g^{-1}}(a)\big)\xi(g) \quad\text{and}\quad (\lambda_g\xi)(h):=\xi(g^{-1}h),
\end{equation}
for $ \xi\in L^2(G,H)$. We call $\tilde\sigma\rtimes \lambda$ the {\em regular representation} induced from $\sigma$.

Note that  the construction of $(\tilde\sigma, \lambda)$ as in (\ref{eq-regular}) also makes sense if we start with a $G$-von Neumann algebra $M$ and a normal $*$-representation $\sigma:M\to \Bd(H)$, in which case $\widetilde{\sigma}$ is faithful whenever $\sigma$ is.

One of the main topics of this work is the question of when the regular representation is an isomorphism.  When this happens, we say that the $G$-$C^*$-algebra $A$ (or action $\alpha$) has the \emph{weak containment property} (WCP) and usually write $A\rtimes_\max G\cong A\rtimes_\red G$ or just $A\rtimes_\max G=A\rtimes_\red G$.

\section{Acknowledgements} This work was funded by: the Deutsche Forschungsgemeinschaft (DFG, German Research Foundation) Project-ID 427320536 SFB 1442 and under Germany's Excellence Strategy EXC 2044  390685587, Mathematics M\"{u}nster: Dynamics, Geometry, Structure; CNPq/CAPES/Humboldt - Brazil; the US NSF (DMS 1401126, DMS 1564281, and DMS 1901522).

Part of this paper was written while the first author was visiting the second author at the University of M\"unster. The first author is grateful for the warm hospitality provided by the second author, and also to CAPES-Humboldt for granting the visit.

We are grateful to Alex Bearden and Jason Crann for keeping us updated on their paper \cite{Bearden-Crann} and 
for some other fruitful interactions.  The results of Bearden and Crann have important consequences for several statements of this paper. In particular,  \cite{Bearden-Crann}*{Theorem 3.6} allowed us to extend several results we originally only proved in the case of {\em exact} locally compact groups to the general case (for example, see Theorem \ref{Bearden-Crann} which has consequences to Corollary \ref{cor-amenable}, Theorem \ref{thm-amenable-all}, and 
Theorem \ref{thm-wQAP}).   We should also remark that we only became aware of Ikunishi's paper \cite{Ikunishi} through the references in \cite{Bearden-Crann}: using \cite{Ikunishi}, we were able to drop a lengthy appendix on the predual of  $A_\alpha''$ which appeared in the first preprint of this paper.

We also thank Timo Siebenand and Tim de Laat for some useful comments on the representation theory of $\SL(2,\C)$, and Yuhei Suzuki for helpful discussions centered around the results of his paper \cite{Suzuki:2020}.  
We want to thank Narutaka Ozawa and  Suzuki for informing us about their latest progress on the  relation between amenability and the (QAP) \cite{Ozawa-Suzuki}.

Finally, we want to thank the anonymous referees for a careful reading of the paper, and for many helpful comments.  One referee pointed out several errors and gaps in our previous expositions, in particular in our first attempt to  prove Theorem \ref{mw amen intro} and in an  attempt to prove the equivalence of amenability and the (QAP) for actions of discrete groups. The latter equivalence has now been settled for actions of all  locally compact groups by Ozawa and Suzuki in \cite{Ozawa-Suzuki}.  The amount of time the referees spent on this project is far beyond anything one can reasonably expect!  The referees' numerous  comments have significantly improved the quality of this paper, for which we are extremely grateful.

\chapter{The $G$-equivariant enveloping von Neumann algebra}\label{Chap:Enveloping-vN-algebra}

In this chapter, we introduce the enveloping $G$-von Neumann algebra of a $G$\nb-$C^*$\nb-algebra, and establish its basic properties.  For us, the point of the enveloping $G$-von Neumann algebra is that it plays the same role with respect to covariant representations of a $G$-$C^*$-algebra as the usual enveloping von Neumann algebra plays with respect to representations of a $C^*$-algebra.

In Section \ref{Sec:Enveloping-vN-algebra} we give our definition of the enveloping $G$-von Neumann algebra, and establish its universal property and functoriality properties.

In Section \ref{sec:ccover} we give a brief discussion of central covers of representations and how they interact with the enveloping $G$-von Neumann algebra $A_\alpha''$.  This material is a natural extension of the non-equivariant theory, and will be useful in the proof that measurewise amenability and amenability are equivalent.

Finally, in Section \ref{sec:predual}, we show that our enveloping von Neumann algebra is canonically isomorphic to the universal $W^*$-dynamical system of Ikunishi \cite{Ikunishi}, who seems to be the first to have studied this object.  We also give a brief summary (closely related to the work of Ikunishi \cite{Ikunishi}) of the predual of the enveloping $G$-von Neumann algebra.

\section{The enveloping von Neumann algebra of a $C^*$-action}\label{Sec:Enveloping-vN-algebra}

Let $(A,\alpha)$ be a $G$-$C^*$-algebra and let
$(i_A, i_G): (A,G)\to \Bd(H_u)$
be the universal representation of $(A,G)$, that is, the covariant representation corresponding to 
the direct sum of all GNS-representations of $A\rtimes_{\max}G$ as explained in Section \ref{Sec:Prel} above.  

\begin{definition}\label{gvna}
With notation as above, the \emph{enveloping $G$-von Neumann algebra} of $(A,\alpha)$ is defined to be
$$
A_{\alpha}'':=i_A(A)''\subseteq \Bd(H_u).
$$
\end{definition}
Note that $A_{\alpha}''$ is a $G$-von Neumann algebra with $G$-action given by $\alpha'':=\Ad i_G$. 

\begin{remark}\label{env iso 0}
The universal property of $A^{**}$ gives a $G$-equivariant, normal, surjective $*$-homomorphism
$$i_A^{**}:A^{**}\to i_A(A)''=A_\alpha''.$$
It can happen that $i_A^{**}$ is an isomorphism (see Remark \ref{env iso} below), but this is not  true in general. 
Indeed, if $A=C_0(G)$ equipped with the (left) translation action $\tau$, then $C_0(G)\rtimes_\max G\cong \mathcal K(L^2(G))$ via the 
integrated form $M\rtimes\lambda$ of the covariant pair $(M,\lambda)$, where $M$ is the multiplication action of $C_0(G)$, and $\lambda$ is the regular representation.
Therefore $C_0(G)_\tau''\cong L^{\infty}(G)$.  For non-discrete locally compact groups, the induced map $i_{C_0(G)}^{**}:C_0(G)^{**}\to L^\infty(G)$ is always a 
proper quotient, as the left hand side contains the characteristic function $\chi_{\{g\}}$ of any singleton as a non-zero element, and this is non-zero on the right if and only if $G$ is discrete.
\end{remark}

The algebra  $A_\alpha''$ enjoys the following universal property for covariant representations:

\begin{proposition}\label{prop-universal}
Let $(A,\alpha)$ be a $G$-$C^*$-algebra, and let $(\pi,u):(A,G)\to \Bd(H_\pi)$ be a nondegenerate covariant representation. Let
$\alpha^{\pi}=\Ad u$ denote the action of $G$ on $\pi(A)''$ given by conjugation with $u$. 
Then there exists a 
unique normal $\alpha''$-$\alpha^\pi$-equivariant surjective $*$-homomorphism
$$\pi'':A_\alpha''\to \pi(A)''$$
which extends $\pi$.
\end{proposition}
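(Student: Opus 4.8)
The plan is to reduce the statement to the non-equivariant universal property of the bidual together with the universal property defining $H_u$. First I would recall that since $(\pi,u)$ is a nondegenerate covariant representation, its integrated form $\pi\rtimes u : A\rtimes_\max G \to \Bd(H_\pi)$ is a nondegenerate $*$-homomorphism, hence (up to unitary equivalence and amplification) a subrepresentation of the universal representation of $A\rtimes_\max G$ on $H_u$. Concretely, there is a projection $p$ in the commutant of $(i_A\rtimes i_G)(A\rtimes_\max G)$ in $\Bd(H_u)$ and a unitary $H_\pi \cong pH_u$ intertwining $\pi\rtimes u$ with the compression $x\mapsto (i_A\rtimes i_G)(x)p$. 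Because $p$ lies in the commutant of the image of the whole crossed product, it commutes with both $i_A(A)$ and the unitaries $i_G(g)$; in particular $p$ lies in $(A_\alpha'')' \cap (i_G(G))'$, so conjugation/compression by $p$ makes sense equivariantly.

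Next I would define $\pi''$ as the composite of the (normal, unital) compression map $A_\alpha'' = i_A(A)'' \to i_A(A)'' p$, $x\mapsto xp$, with the spatial isomorphism $i_A(A)'' p \xrightarrow{\sim} \pi(A)''$ coming from $pH_u\cong H_\pi$. The map $x\mapsto xp$ is an ultraweakly continuous $*$-homomorphism since $p$ is a central projection for $i_A(A)''$ (it lies in $i_A(A)'$), and it is surjective onto $i_A(A)''p$. That the spatial identification carries $i_A(A)''p$ onto $\pi(A)''$ is a standard fact about central covers/the bicommutant: compressing a von Neumann algebra by a projection in its commutant and then cutting down to the corresponding subspace gives exactly the von Neumann algebra generated by the compressed generators, namely $\pi(A)''$. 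Equivariance of $\pi''$ is then immediate: $\alpha''_g = \Ad i_G(g)$ and $\alpha^\pi_g = \Ad u_g$, and the identification of $u_g$ with the compression of $i_G(g)$ to $pH_u$ (valid because $p$ commutes with $i_G(g)$) shows $\pi''\circ\alpha''_g = \alpha^\pi_g\circ\pi''$ on generators, hence everywhere by normality. That $\pi''$ extends $\pi$ is built into the construction, since the compression sends $i_A(a)$ to (the image of) $\pi(a)$.

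For uniqueness, suppose $\psi : A_\alpha'' \to \pi(A)''$ is another normal $*$-homomorphism extending $\pi$. Both $\psi$ and $\pi''$ agree on $i_A(A)$, which is ultraweakly dense in $A_\alpha''$; since a normal $*$-homomorphism is ultraweakly continuous on bounded sets and both maps are bounded (contractive), they agree everywhere by Kaplansky density. Equivariance is not even needed for uniqueness.

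The main obstacle I anticipate is the bookkeeping around realizing $(\pi,u)$ inside the universal representation: one must be careful that the projection $p$ implementing "$(\pi,u)$ is a subrepresentation of $(i_A,i_G)$" really does commute with $i_G(G)$ and not merely with $i_A(A)$, and that the compression-then-cutdown genuinely produces $\pi(A)''$ rather than something smaller. Both points are standard (they are exactly the content of the central-cover discussion the paper promises in Section~\ref{sec:ccover}, applied to the covariant setting), but they are where the actual work lies; the equivariance and uniqueness parts are then formal consequences of normality and Kaplansky density.
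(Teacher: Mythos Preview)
Your argument is correct, but it takes a noticeably more laborious route than the paper's. The paper simply observes that the integrated form $\pi\rtimes u : A\rtimes_\max G \to \Bd(H_\pi)$ has a unique normal extension $(\pi\rtimes u)^{**} : (A\rtimes_\max G)^{**} \to \Bd(H_\pi)$ by the universal property of the bidual, and then defines $\pi''$ to be the restriction of this map to the von Neumann subalgebra $A_\alpha'' \subseteq (A\rtimes_\max G)^{**}$. Equivariance, surjectivity onto $\pi(A)''$, and the extension property are then immediate, with no need to embed $(\pi,u)$ spatially into the universal representation or to worry about amplifications and commuting projections. Your approach is essentially a hands-on reconstruction of this normal extension via central covers (exactly the machinery of Section~\ref{sec:ccover}), which works but forces you to confront the bookkeeping you yourself flag: an arbitrary $(\pi,u)$ need only be a subrepresentation of an \emph{amplification} of $(i_A,i_G)$, so $p$ lives in $(A_\alpha''\otimes 1)'$ rather than $(A_\alpha'')'$, and you must check that the compression still lands where you want. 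This is routine, but the paper's two-line argument sidesteps it entirely.

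One small remark on uniqueness: you do not need Kaplansky density. Normal $*$-homomorphisms are ultraweakly continuous, so two such maps agreeing on the ultraweakly dense subset $i_A(A)$ agree everywhere; boundedness plays no role.
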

\begin{proof}
Let $(\pi\rtimes u)^{**}:(A\rtimes_\max G)^{**}\to \Bd(H_\pi)$ be the normal extension of the integrated form $\pi\rtimes u$, which exists by the universal property of $(A\rtimes_\max G)^{**}$. Viewing $A_\alpha''$ as a von Neumann subalgebra of $(A\rtimes_\max G)^{**}$, the homomorphism $(\pi\rtimes u)^{**}$ restricts to the desired  $G$-equivariant normal extension $\pi'':A_\alpha''\to \pi(A)''\subseteq \Bd(H_\pi)$.
\end{proof}

\begin{corollary}\label{cor-vN}
Suppose that $(M,\sigma)$ is a $G$-von Neumann algebra and let 
 $\varphi:A\to M$ be a  $G$-equivariant $*$-homomorphism.
 Then there is a unique normal $G$\nb-equivariant 
extension  $\varphi'':A_\alpha''\to M$.  

Moreover, this extension is surjective if (and only if) $\varphi(A)$ is ultraweakly dense in $M$.
\end{corollary}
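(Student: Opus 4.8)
The plan is to deduce the statement from Proposition~\ref{prop-universal} by manufacturing a suitable covariant representation out of $\varphi$. The one delicate point is that Proposition~\ref{prop-universal} is phrased for \emph{nondegenerate} covariant representations, so I would first reduce to the case in which $\varphi$ has ultraweakly dense range. Set $N:=\overline{\varphi(A)}^{\,\text{uw}}\subseteq M$; since $\varphi$ is equivariant and each $\sigma_g$ is ultraweakly continuous, $N$ is a $G$-invariant von Neumann subalgebra, and $\varphi$ corestricts to an equivariant $*$-homomorphism $A\to N$ with ultraweakly dense range. As $A$ (identified with $i_A(A)$) is ultraweakly dense in $A_\alpha''=i_A(A)''$ and normal $*$-homomorphisms are ultraweakly continuous, any normal extension of $\varphi$ to $A_\alpha''$ automatically has range inside $N$; hence both existence and uniqueness over $M$ follow from the corresponding statements over $N$, and the ``moreover'' clause reduces to showing the extension is onto precisely when $\varphi$ already has ultraweakly dense range.

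Assuming now that $\varphi(A)$ is ultraweakly dense in $M$, I would fix a faithful normal unital representation $M\subseteq\Bd(H_0)$ and form, as in the remark following \eqref{eq-regular}, the Hilbert space $H:=L^2(G,H_0)$ together with the faithful normal representation $\tilde\sigma\colon M\to\Bd(H)$, $(\tilde\sigma(m)\zeta)(g)=\sigma_{g^{-1}}(m)\zeta(g)$, and the strongly continuous unitary representation $\lambda\colon G\to\U(H)$, $(\lambda_g\zeta)(h)=\zeta(g^{-1}h)$, which satisfy $\lambda_g\tilde\sigma(m)\lambda_g^*=\tilde\sigma(\sigma_g(m))$ and $\tilde\sigma(1)=1$. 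Put $\pi:=\tilde\sigma\circ\varphi$. Covariance of $(\pi,\lambda)$ is then immediate from equivariance of $\varphi$, and nondegeneracy follows because for an approximate unit $(e_i)$ of $A$ the increasing net $\varphi(e_i)$ has supremum $1_M$ (its supremum $p$ satisfies $p\varphi(a)=\varphi(a)$ for all $a$, so $p=1_M$ by ultraweak density), whence $\pi(e_i)=\tilde\sigma(\varphi(e_i))\to 1$ strongly.

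Proposition~\ref{prop-universal} then supplies a normal, $\alpha''$-$\Ad\lambda$-equivariant, surjective $*$-homomorphism $\pi''\colon A_\alpha''\to\pi(A)''$ extending $\pi$. Since $\tilde\sigma$ is a normal isomorphism onto $\tilde\sigma(M)$, hence an ultraweak homeomorphism, $\pi(A)''=\overline{\tilde\sigma(\varphi(A))}^{\,\text{uw}}=\tilde\sigma(N)=\tilde\sigma(M)$, and $\tilde\sigma\colon(M,\sigma)\to(\tilde\sigma(M),\Ad\lambda)$ is an equivariant isomorphism. I would then set $\varphi'':=\tilde\sigma^{-1}\circ\pi''\colon A_\alpha''\to M$: it is normal, equivariant and surjective as a composition of such maps, and $\varphi''(a)=\tilde\sigma^{-1}(\tilde\sigma(\varphi(a)))=\varphi(a)$ for $a\in A$, so it extends $\varphi$. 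Uniqueness is then automatic, since $A$ is ultraweakly dense in $A_\alpha''$ and two normal maps agreeing on $A$ agree everywhere; and surjectivity of $\varphi''$ forces $M=\varphi''(A_\alpha'')\subseteq\overline{\varphi(A)}^{\,\text{uw}}$, giving the converse direction of the ``moreover''.

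The only real obstacle is the bookkeeping around nondegeneracy: one cannot feed $\tilde\sigma\circ\varphi$ directly into Proposition~\ref{prop-universal} for a general $\varphi$, so the reduction to $N=\overline{\varphi(A)}^{\,\text{uw}}$ — together with the observation that ultraweak density of the range makes the induced representation on $L^2(G,H_0)$ nondegenerate — is the key maneuver. Once that is in place, the remainder is a routine transport of structure along the faithful normal equivariant representation $\tilde\sigma$.
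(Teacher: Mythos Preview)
Your proof is correct and follows essentially the same approach as the paper: represent $M$ via a regular representation so that the $G$-action is spatially implemented, then invoke Proposition~\ref{prop-universal}. The only organizational difference is in how nondegeneracy is arranged: the paper keeps $\varphi$ as is and simply restricts to the essential subspace $H'=\overline{\varphi(A)H}$, whereas you first replace $M$ by $N=\overline{\varphi(A)}^{\,\text{uw}}$ and then argue (via approximate units) that the induced representation on $L^2(G,H_0)$ is already nondegenerate. Both maneuvers achieve the same end, and your version makes the surjectivity clause slightly more explicit; the paper's is a touch shorter because cutting down the Hilbert space avoids the separate reduction step.
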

\begin{proof} 
Using the regular representation associated to a faithful normal representation of $M$ as in line (\ref{eq-regular}), we may assume that $M\subseteq \mathcal{B}(H)$, and that there is a unitary representation $u$ on $H$ that implements $\sigma$.
Then $H'=\varphi(A)H$ is a $u$-invariant closed  subspace.  We can apply Proposition \ref{prop-universal} 
to the nondegenerate covariant representation $(\varphi, u)$ of $(A,G,\alpha)$ on $H'$, giving an extension $\varphi'':A_\alpha''\to \Bd(H')$.  The image of $\varphi''$ is contained in the ultraweak closure of $\varphi(A)$ in $\Bd(H')$, which equals the ultraweak closure of $\varphi(A)$ in $\Bd(H)$, and is therefore contained in $M$.   The surjectivity statement is clear from the construction.
\end{proof}

It follows from Corollary \ref{cor-vN} that $A''_\alpha$ is the `biggest' $G$-von Neumann algebra containing $A$ as an ultraweakly dense $G$-invariant $C^*$-subalgebra.  

\begin{remark}\label{env iso}
The canonical map $i_A^{**}:A^{**}\to A_\alpha''$ of Remark \ref{env iso 0} is an isomorphism if and only if $A^{**}$ is a $G$-von Neumann algebra: indeed, if 
$A^{**}$ is a $G$-von Neumann algebra, then Corollary \ref{cor-vN} gives an inverse to $i_A^{**}$.  In particular, $i_A^{**}$ is an isomorphism whenever $G$ is discrete.  
\end{remark}

Notice that $A_\alpha''$ contains (a copy of) $\M(A)$ as a unital $G$-invariant $C^*$-subalgebra. To see this, let $i_A:A\to \Bd(H_u)$ be the canonical representation of $A$ in the universal representation of $A\rtimes_\max G$.  As this representation is faithful and nondegenerate, $\M(A)$ identifies canonically with the idealizer of $i_A(A)$ in $\Bd(H_u)$ which lies in the bicommutant $A_\alpha''$ of $i_A(A)\subseteq \Bd(H_u)$.  From this, we see that the construction $A\mapsto A_\alpha''$ has good functoriality properties:

\begin{proposition}\label{evna func}
Let $(A,\alpha)$ and $(B,\beta)$ be $G$-$C^*$-algebras, and let $\phi\colon A\to \M(B)$ be a (possibly degenerate) $G$-equivariant $*$-homomorphism. Then there is a unique normal $G$-equivariant 
extension  $\phi''\colon A_\alpha''\to B_\beta''$ of $\pi$. 

Moreover, this correspondence gives a well-defined functor $(A,\alpha)\mapsto (A_\alpha'',\alpha'')$ from the category of $G$-$C^*$-algebras and equivariant $*$-homomorphisms to the category of $G$-von Neumann algebras and equivariant normal $*$-homomorphisms.
\end{proposition}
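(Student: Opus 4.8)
The plan is to reduce the statement about (possibly degenerate) $*$\nobreakdash-homomorphisms $\phi\colon A\to \M(B)$ to the already-established universal property of $B_\beta''$ recorded in Corollary~\ref{cor-vN}. First I would observe that $B_\beta''$ is a $G$-von Neumann algebra containing $\M(B)$ as a $G$-invariant $C^*$-subalgebra, as explained in the paragraph preceding the proposition. Composing $\phi$ with the inclusion $\M(B)\hookrightarrow B_\beta''$ produces a $G$-equivariant $*$-homomorphism $A\to B_\beta''$ into a $G$-von Neumann algebra; Corollary~\ref{cor-vN} then yields a unique normal $G$-equivariant extension $\phi''\colon A_\alpha''\to B_\beta''$, settling existence and uniqueness. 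Uniqueness deserves a word: any normal extension of $\phi$ agrees with $\phi$ on $A$, which is ultraweakly dense in $A_\alpha''$, so two such extensions coincide.

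For the functoriality claim I would check the two axioms. For identities: the identity $\id_A\colon A\to \M(A)$ (viewing $A\subseteq \M(A)$) has as its normal extension a normal $G$-equivariant map $A_\alpha''\to A_\alpha''$ restricting to the identity on the ultraweakly dense subalgebra $A$, hence it is $\id_{A_\alpha''}$ by the uniqueness just noted. For composition, given $\phi\colon A\to \M(B)$ and $\psi\colon B\to \M(C)$, one must make sense of the composite: $\psi$ extends canonically (and strictly continuously) to $\M(B)\to \M(C)$ since $\psi$ is nondegenerate — wait, one should be slightly careful here, as $\psi$ need not be nondegenerate. The cleanest route is to pass through the enveloping algebras directly: $\psi''\circ\phi''\colon A_\alpha''\to C_\gamma''$ is a normal $G$-equivariant $*$-homomorphism, and it restricts on $A$ to $\psi''\circ\phi$; since $\phi(A)\subseteq \M(B)\subseteq B_\beta''$ and $\psi''$ restricted to $\M(B)$ is the canonical (strictly continuous) extension of $\psi$, this composite equals the canonical extension $\M(B)\to\M(C)$ of $\psi$ applied after $\phi$, i.e.\ the composite $*$-homomorphism $A\to\M(C)$ in the category of $G$-$C^*$-algebras. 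By uniqueness of normal extensions, $\psi''\circ\phi'' = (\psi\circ\phi)''$.

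The main obstacle is bookkeeping around degeneracy: one has to make precise that morphisms in the category of $G$-$C^*$-algebras are genuinely $*$-homomorphisms $A\to\M(B)$ that compose via the canonical extension $\M(B)\to\M(C)$ of the second map, and verify that $\psi''$ restricts on $\M(B)\subseteq B_\beta''$ to exactly that canonical extension. This last point follows because $\M(B)$ sits inside $B_\beta''$ as the idealizer of $i_B(B)$, so $\psi''$, being normal and multiplicative, must send a multiplier $m$ of $B$ to the unique element of $\M(C)$ implementing left/right multiplication by $\psi(m)$ on $\psi(B)\cdot C$; normality and strict continuity pin this down. Beyond that, everything is a routine application of Corollary~\ref{cor-vN} together with ultraweak density of $A$ in $A_\alpha''$.
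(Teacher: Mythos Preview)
Your approach is essentially the same as the paper's: embed $\M(B)$ into $B_\beta''$ and apply Corollary~\ref{cor-vN}, then derive functoriality from uniqueness of normal extensions off the ultraweakly dense subalgebra $A$. One small point: you make the functoriality check harder than needed by worrying about composing possibly degenerate maps $A\to\M(B)$ and $B\to\M(C)$, but the functoriality claim in the proposition is only for the category whose morphisms are equivariant $*$-homomorphisms $A\to B$ (not into multiplier algebras), so composition is the ordinary one and the degeneracy bookkeeping is unnecessary.
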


\begin{proof}
To construct $\phi''$, identify $\M(B)$ with a $C^*$-subalgebra of $M:=B_\beta''$, and apply Corollary \ref{cor-vN}.  The functoriality statement follows as the maps involved are normal extensions from ultraweakly dense subalgebras.
\end{proof}

Above, we identified $A_\alpha''$ with the image of $A^{**}$ in $(A\rtimes_\max G)^{**}$ under the normal extension of the canonical map $i_A:A\to (A\rtimes_\max G)^{**}$.  Using the reduced crossed product makes no difference.

\begin{lemma}\label{lem:A_alpha''-red}
Let $(A,\alpha)$ be a $G$-$C^*$-algebra. Then $A_\alpha''$ is isomorphic to the image of $A^{**}$ in $(A\rtimes_\red G)^{**}$ under the normal extension $(\iota_A^r)^{**}$ of the canonical map $i_A^r:A\to (A\rtimes_\red G)^{**}$.
\end{lemma}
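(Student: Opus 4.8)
The plan is to compare the two maps $i_A\colon A\to (A\rtimes_\max G)^{**}$ and $i_A^r\colon A\to (A\rtimes_\red G)^{**}$ by factoring the latter through the former, and to show that the kernel of the normal extension of the regular representation $\Lambda_{(A,\alpha)}\colon A\rtimes_\max G\onto A\rtimes_\red G$, when restricted to the copy of $A_\alpha''$ sitting inside $(A\rtimes_\max G)^{**}$, is trivial. Concretely, the quotient map $\Lambda_{(A,\alpha)}$ induces a normal surjective $*$-homomorphism $\Lambda^{**}\colon (A\rtimes_\max G)^{**}\onto (A\rtimes_\red G)^{**}$, and by the identification in Section \ref{Sec:Prel} we have $\Lambda^{**}\circ i_A = i_A^r$ (both equal the canonical inclusion of $A$ followed by the appropriate universal enveloping construction). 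Hence $\Lambda^{**}$ maps $A_\alpha'' = \overline{i_A(A)}^{uw}\subseteq (A\rtimes_\max G)^{**}$ onto $\overline{i_A^r(A)}^{uw}\subseteq (A\rtimes_\red G)^{**}$, which is precisely the image of $A^{**}$ under $(\iota_A^r)^{**}$; so it remains only to check that $\Lambda^{**}|_{A_\alpha''}$ is injective.

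For injectivity, the key observation is that $A_\alpha''$ is already the ``biggest'' $G$-von Neumann algebra containing $A$ ultraweakly densely and $G$-invariantly (the remark after Corollary \ref{cor-vN}), and the image $N := \overline{i_A^r(A)}^{uw}$ is such a $G$-von Neumann algebra: it contains a $G$-invariant ultraweakly dense copy of $A$, and the $G$-action $\Ad(1\otimes\lambda_G)$ restricted to it is ultraweakly continuous because, for the regular representation on $L^2(G,H)$ as in line (\ref{eq-regular}), the associated covariant representation $(\widetilde\sigma,\lambda)$ has $\lambda$ a genuine (strongly continuous) unitary representation of $G$ and $\widetilde\sigma(A)$ generating $N$ with $g\mapsto \lambda_g\widetilde\sigma(a)\lambda_g^* = \widetilde\sigma(\alpha_g(a))$ norm-continuous in $a$-variable and weakly continuous on $N$. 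Thus Corollary \ref{cor-vN}, applied to the equivariant $*$-homomorphism $i_A^r\colon A\to N$ with ultraweakly dense image, produces a normal $G$-equivariant surjection $(i_A^r)''\colon A_\alpha''\to N$; one then checks that $(i_A^r)''$ agrees with $\Lambda^{**}|_{A_\alpha''}$ on $i_A(A)$, hence everywhere by normality, and symmetrically one must produce an inverse. The cleanest way to get the inverse is to observe that the regular representation $\widetilde\sigma\rtimes\lambda$ of $A\rtimes_\red G$ built from a faithful normal representation $\sigma$ of $A_\alpha''$ (allowed by the final paragraph of Section \ref{Sec:Prel}, since the construction of $(\widetilde\sigma,\lambda)$ makes sense for $G$-von Neumann algebras) restricts on the copy of $A$ to a faithful normal map $A_\alpha''\to \mathcal B(L^2(G,H))$ whose image lies in $\overline{i_A^r(A)}^{uw}$; this gives the map $N\to A_\alpha''$ splitting $\Lambda^{**}|_{A_\alpha''}$.

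The main obstacle I expect is the bookkeeping around which representation is ``faithful'' on $A_\alpha''$ versus merely faithful on $A$: one has to be careful that when $\sigma\colon A_\alpha''\to\mathcal B(H)$ is faithful and normal, the induced regular representation $\widetilde\sigma\rtimes\lambda$ of $A\rtimes_\red G$, restricted via $i_A^r$ to $A$ and then extended normally, is faithful on all of $A_\alpha''$ and not just on $A$ — this is where one genuinely uses that $A_\alpha''$ is generated by $A$ together with the data of the $G$-action, and that the regular representation ``remembers'' the $G$-action through $\lambda$. Once that point is pinned down, the two normal $G$-equivariant maps $\Lambda^{**}|_{A_\alpha''}\colon A_\alpha''\to N$ and its candidate inverse agree with the identity on the ultraweakly dense subalgebra $i_A(A)$ (respectively $i_A^r(A)$), so they are mutually inverse by normality, completing the proof.
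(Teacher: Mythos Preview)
Your core idea is the same as the paper's: use the regular representation induced from a faithful normal representation of $A_\alpha''$ to detect injectivity of the map $A_\alpha''\to (A\rtimes_\red G)^{**}$. But the crucial sentence in your second paragraph is garbled in a way that leaves a real gap. You write that $\widetilde\sigma\rtimes\lambda$ ``restricts on the copy of $A$ to a faithful normal map $A_\alpha''\to \mathcal B(L^2(G,H))$ whose image lies in $\overline{i_A^r(A)}^{uw}$''. This conflates two different ambient spaces: $\widetilde\sigma$ takes $A_\alpha''$ into $\Bd(L^2(G,H))$, whereas $N=\overline{i_A^r(A)}^{uw}$ sits inside $(A\rtimes_\red G)^{**}$; there is no sense in which the image of $\widetilde\sigma$ lies in $N$. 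What you actually need is the reverse containment: the normal extension $(\widetilde\sigma\rtimes\lambda)^{**}\colon (A\rtimes_\red G)^{**}\to\Bd(L^2(G,H))$, restricted to $N$, has image contained in $\widetilde\sigma(A_\alpha'')$, and then you compose with $\widetilde\sigma^{-1}$ to land back in $A_\alpha''$. That last step is never made explicit, and without it you do not have a candidate inverse at all.

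The paper avoids this entire detour by not trying to build an inverse. Taking $\sigma$ to be the defining inclusion $i\colon A_\alpha''\hookrightarrow\Bd(H_u)$, it writes down the commutative diagram
\[
\xymatrix{ A_\alpha'' \ar[r]^-{i} \ar[drr]_{\widetilde{i}} & (A\rtimes_\max G)^{**} \ar[r] & (A\rtimes_\red G)^{**} \ar[d]^-{(\widetilde{i_A}\rtimes \lambda)^{**}} \\ & & \Bd(L^2(G,H_u)) }
\]
and simply observes that the diagonal $\widetilde{i}$ is injective because $i$ is (this is exactly the fact recorded at the end of Section~\ref{Sec:Prel}), whence the top row is injective. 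This is precisely the ``bookkeeping'' point you flagged as the main obstacle, and the paper's one-line resolution is that faithfulness of $\widetilde{i}$ on $A_\alpha''$ is inherited directly from faithfulness of $i$, with no need to match up images in two different spaces.
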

\begin{proof}
Applying the regular representation construction from line \eqref{eq-regular} to the representation $i_A:A\to \Bd(H_u)$ and its normal extension $i:A_\alpha''\to (A\rtimes_\max G)^{**}\subseteq \Bd(H_u)$ we get representations 
$$
\widetilde{i_A}\rtimes \lambda\colon A\rtimes_\red G \to \Bd(L^2(G,H_u)) \quad \text{and}\quad  \widetilde{i} :A_\alpha''\to \Bd(L^2(G,H_u)).
$$
Consider the diagram
$$
\xymatrix{ A_\alpha'' \ar[r]^-{i} \ar[drr]_{\widetilde{i}}& (A\rtimes_\max G)^{**} \ar[r] & (A\rtimes_\red G)^{**} \ar[d]^-{(\widetilde{i_A}\rtimes \lambda)^{**}} \\ & & \Bd(L^2(G,H_u))},
$$
where the map $(A\rtimes_\max G)^{**} \to (A\rtimes_\red G)^{**}$ is the canonical quotient.  This commutes: indeed, it clearly commutes on $A$, and all the maps are normal.  The lemma states that the composition of the two horizontal maps is injective.  However, the diagonal map $\widetilde{i}$ is injective as $i$ is, so we are done.
\end{proof}

\begin{remark}
Note that the image of the homomorphism $(\widetilde{i_A}\rtimes \lambda)^{**}$ in the proof above equals
$$[\widetilde{i_A}(A)(1\otimes\lambda)(G)]''= [\tilde{i}(A_\alpha'')(1\otimes\lambda)(G)]''\sbe \Bd(H_u\otimes L^2(G))$$
which is the von Neumann algebra crossed product $A_\alpha''\bar\rtimes G$.
\end{remark}

In the remainder of this section, we show that the functor $A\mapsto A_\alpha''$ has good behaviour on injections, short exact sequences, and Morita equivalences.  

\begin{corollary}\label{cor:injectivity-double-prime}
If $(A,\alpha)$ and $(B,\beta)$ are $G$-$C^*$-algebras and
$\varphi\colon A\to B$ is an injective $G$-equivariant $*$-homomorphism, then the unique normal extension $\varphi''\colon A_\alpha''\to B_\beta''$ is still injective.
\end{corollary}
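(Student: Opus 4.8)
The plan is to reduce the statement to the non-equivariant fact that an injective $*$-homomorphism $A \to B$ extends to an injective normal $*$-homomorphism $A^{**} \to B^{**}$, together with the identification of $A_\alpha''$ as a quotient of $A^{**}$. The cleanest route seems to be via the reduced crossed product description in Lemma \ref{lem:A_alpha''-red}. Recall from that lemma that $A_\alpha''$ is (canonically isomorphic to) the image of $A^{**}$ under the normal extension of $i_A^r : A \to (A\rtimes_\red G)^{**}$, where $i_A^r$ is the canonical inclusion; equivalently, concretely, $A_\alpha'' \cong \widetilde{i_A}(A)'' \subseteq \Bd(L^2(G,H_u))$ where $\widetilde{i_A}$ is the regular representation construction of \eqref{eq-regular} applied to a faithful representation $i_A$ of $A$.

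First I would fix a faithful nondegenerate representation $\rho_B : B \to \Bd(H)$. Since $\varphi : A \to B$ is injective, $\rho_B \circ \varphi$ is a faithful (possibly degenerate) representation of $A$; passing to $\overline{\rho_B(\varphi(A))H}$ if necessary, or just working inside $\Bd(H)$, we get that $A$ sits faithfully inside $B \subseteq \Bd(H)$. Now apply the regular representation construction of \eqref{eq-regular} to $\rho_B$: this gives a faithful normal representation $\widetilde{\rho_B} \rtimes \lambda$ of $B \rtimes_\red G$ on $L^2(G,H)$, whose restriction to $A \rtimes_\red G$ is exactly the regular representation of $A\rtimes_\red G$ induced from the faithful representation $\rho_B|_A$ of $A$, hence also faithful. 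Taking bicommutants inside $\Bd(L^2(G,H))$, we obtain $B_\beta'' = \widetilde{\rho_B}(B)''$ and $A_\alpha'' = \widetilde{\rho_B\circ\varphi}(A)'' = \widetilde{\rho_B}(\varphi(A))''$; since $\widetilde{\rho_B}(\varphi(A)) \subseteq \widetilde{\rho_B}(B)$, the bicommutant of the former is a von Neumann subalgebra of the bicommutant of the latter, and this inclusion is exactly $\varphi''$ (both sides are normal extensions of the same map on the ultraweakly dense subalgebra $A$, so they agree by uniqueness in Corollary \ref{cor-vN} / Proposition \ref{evna func}). A von Neumann subalgebra inclusion is injective, so $\varphi''$ is injective.

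The main obstacle — really the only point requiring care — is checking that $A_\alpha''$, as abstractly defined in Definition \ref{gvna} via the universal representation of $A\rtimes_\max G$, is genuinely isomorphic to the concrete bicommutant $\widetilde{\rho_B}(\varphi(A))'' \subseteq \Bd(L^2(G,H))$ computed inside $B$'s regular representation, rather than merely a quotient of it. This is precisely what Lemma \ref{lem:A_alpha''-red} buys us: it says $A_\alpha''$ is computed equally well from \emph{any} faithful nondegenerate representation of $A$ via the reduced-crossed-product/regular-representation machinery, because the diagonal map $\widetilde{i}$ there is injective whenever the underlying representation of $A$ is. So one should phrase the argument as: both $A_\alpha''$ and the concrete algebra $\widetilde{\rho_B}(\varphi(A))''$ are the bicommutant of a faithful copy of $A$ under the construction \eqref{eq-regular}, hence canonically isomorphic by (the proof of) Lemma \ref{lem:A_alpha''-red}, and under this identification $\varphi''$ becomes the honest inclusion of one von Neumann algebra inside another.
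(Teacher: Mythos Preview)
Your approach has the right spirit — use Lemma \ref{lem:A_alpha''-red} and injectivity of reduced crossed products — but there is a genuine gap in the key claim at the end. You assert that Lemma \ref{lem:A_alpha''-red} shows $A_\alpha''$ coincides with the concrete bicommutant $\widetilde{\sigma}(A)''\subseteq \Bd(L^2(G,H))$ for \emph{any} faithful nondegenerate representation $\sigma$ of $A$. This is false: take $G$ trivial, so $A_\alpha''=A^{**}$, $L^2(G,H)=H$, and $\widetilde{\sigma}=\sigma$; then your claim reads $A^{**}\cong\sigma(A)''$ for every faithful $\sigma$, which fails whenever $\sigma$ is not quasi-equivalent to the universal representation. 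What Lemma \ref{lem:A_alpha''-red} actually proves is that the canonical map $A_\alpha''\to (A\rtimes_\red G)^{**}$ is injective, and its proof uses crucially that the starting representation $i:A_\alpha''\to\Bd(H_u)$ is already a faithful \emph{normal} representation of the von Neumann algebra $A_\alpha''$ — not merely a faithful $C^*$-representation of $A$. Your $\rho_B\circ\varphi$ is only the latter, so the diagonal-map argument does not apply.

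The paper's proof avoids this by staying at the level of double duals rather than concrete bicommutants: it forms the commutative square
\[
\xymatrix{ A_\alpha'' \ar[r]^-{\varphi''} \ar@{^{(}->}[d] & B_\beta'' \ar@{^{(}->}[d] \\ (A\rtimes_\red G)^{**} \ar[r]^-{(\varphi\rtimes G)^{**}} & (B\rtimes_\red G)^{**} }
\]
with vertical inclusions coming from Lemma \ref{lem:A_alpha''-red}, and then observes that the bottom map is injective because both the reduced crossed product functor and $(\,\cdot\,)^{**}$ preserve injective $*$-homomorphisms. This sidesteps any need to identify $A_\alpha''$ with a bicommutant inside a chosen concrete representation of $B$.
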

\begin{proof}
Consider the commutative diagram
$$
\xymatrix { A_\alpha'' \ar[rr]^-{\varphi''} \ar[d] && B_\beta''  \ar[d]  \\ (A\rtimes_\red G)^{{**}} \ar[rr]^-{(\varphi\rtimes G)^{**} } && (B\rtimes_\red G)^{**}, }
$$
where the vertical maps are the inclusions of Lemma \ref{lem:A_alpha''-red}.  The reduced crossed product functor and the double dual functor take injective $*$-homomorphisms to injective $*$-homomorphisms, whence $(\varphi\rtimes G)^{**}$ is an injection, so $\varphi''$ is too.
\end{proof}

The following result shows that the functor $(A,\alpha)\mapsto (A_\alpha'',\alpha'')$  converts short exact sequences into direct sums, just as for the usual double dual.

\begin{lemma}\label{lem-decom-exact}
Suppose that $I\into A\onto B$ is a short exact sequence of $G$-$C^*$-algebras, with actions on $I$, $A$, and $B$ called $\iota$, $\alpha$, and $\beta$, respectively.  Then there is a functorial direct sum decomposition $(A_\alpha'',\alpha'')=(I_\iota'',\iota'')\oplus (B_\beta'',\beta'')$.
\end{lemma}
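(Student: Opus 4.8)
The plan is to mimic the classical fact that $(A,\alpha)\mapsto A_\alpha''$ behaves like the double dual on short exact sequences, using the universal property established in Corollary~\ref{cor-vN} and the injectivity statement of Corollary~\ref{cor:injectivity-double-prime}. First I would use the quotient map $q\colon A\onto B$ and the inclusion $j\colon I\into A$ to produce, via Proposition~\ref{evna func}, normal equivariant $*$-homomorphisms $q''\colon A_\alpha''\to B_\beta''$ and $j''\colon I_\iota''\to A_\alpha''$. By Corollary~\ref{cor:injectivity-double-prime}, $j''$ is injective, so we may regard $I_\iota''$ as a $G$-invariant von Neumann subalgebra of $A_\alpha''$. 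Since $I$ is an ideal in $A$ and ideals in von Neumann algebras are always of the form $pM$ for a central projection, the ultraweak closure $\overline{I}^{\,w}$ of $I$ inside $A_\alpha''$ is $zA_\alpha''$ for a unique $G$-invariant central projection $z\in A_\alpha''$ (invariance because $I$ is $G$-invariant and $\alpha''$ is normal). The key point to check is that $j''$ identifies $I_\iota''$ with exactly this $\overline{I}^{\,w}=zA_\alpha''$: the image of $j''$ is ultraweakly closed (being a von Neumann algebra, as $j''$ is a normal injection of von Neumann algebras) and contains $I$, hence contains $\overline{I}^{\,w}$; conversely the image is contained in $\overline{I}^{\,w}$ since $j''$ is the normal extension of a map landing in $I$. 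Thus $I_\iota''\cong zA_\alpha''$.

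Next I would show $B_\beta''\cong (1-z)A_\alpha''$. The restriction of $q''$ to $zA_\alpha''$ kills $I$ (as $q|_I=0$ and $q''$ is normal), so $q''$ factors through $(1-z)A_\alpha''$; call the resulting normal equivariant map $\psi\colon (1-z)A_\alpha''\to B_\beta''$. It is surjective because $q''$ is surjective (Proposition~\ref{prop-universal}/Corollary~\ref{cor-vN}: $q$ has ultraweakly dense range in $B_\beta''$, being all of $B$). For injectivity I would argue that $(1-z)A_\alpha''$, as a $G$-von Neumann algebra containing the image of $A$ (equivalently of $B$) as an ultraweakly dense $G$-invariant $C^*$-subalgebra, receives a normal equivariant surjection from $B_\beta''$ by the universal property in Corollary~\ref{cor-vN}; composing the two maps and checking they are the identity on the dense copy of $B$ forces $\psi$ to be a $*$-isomorphism. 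More carefully: the composite $A\onto B\to B_\beta''\to (1-z)A_\alpha''$ agrees with $A\to A_\alpha''\to (1-z)A_\alpha''$ on $A$ (both send $a$ to $(1-z)a$), so by normality and density the composite $(1-z)A_\alpha''\xrightarrow{\psi} B_\beta''\to (1-z)A_\alpha''$ is the identity; combined with surjectivity of $\psi$ this gives the isomorphism.

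Assembling these pieces: $A_\alpha''=zA_\alpha''\oplus(1-z)A_\alpha''\cong I_\iota''\oplus B_\beta''$, and because $z$ is $G$-invariant this is a decomposition of $G$-von Neumann algebras, with $\alpha''$ restricting to $\iota''$ on the first summand and to $\beta''$ on the second. Finally I would note functoriality: given a morphism of short exact sequences, the induced maps on the three enveloping $G$-von Neumann algebras respect the central projections (a morphism sends the closure of the ideal into the closure of the ideal), hence respect the direct sum decompositions; this is a routine diagram chase once the decomposition is in place, using uniqueness of the normal extensions from Proposition~\ref{evna func}. I expect the main obstacle to be the clean verification that $j''$ has image precisely $zA_\alpha''$ and that $\psi$ is injective --- i.e.\ pinning down that $I_\iota''$ and $B_\beta''$ sit inside $A_\alpha''$ as the two complementary central summands rather than merely mapping in and out --- and the cleanest way to do that is to lean on Corollary~\ref{cor-vN}'s universal property in both directions rather than on any explicit Hilbert space model.
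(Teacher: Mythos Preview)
Your argument is correct, but it takes a genuinely different route from the paper's proof. The paper proceeds in three lines: the maximal crossed product functor is exact, so $I\rtimes_\max G\into A\rtimes_\max G\onto B\rtimes_\max G$ is a short exact sequence; applying the ordinary double dual turns this into a direct sum $(A\rtimes_\max G)^{**}\cong (I\rtimes_\max G)^{**}\oplus (B\rtimes_\max G)^{**}$; and since by definition $A_\alpha''$, $I_\iota''$, $B_\beta''$ sit inside these double duals as the weak closures of $A$, $I$, $B$ respectively, the unit $p$ of $(I\rtimes_\max G)^{**}$ (which equals the unit of $I_\iota''$) cuts $A_\alpha''$ into $pA_\alpha''=I_\iota''$ and $(1-p)A_\alpha''=B_\beta''$. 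In other words, the paper exploits the concrete realization of the enveloping $G$-von Neumann algebra inside $(A\rtimes_\max G)^{**}$ and inherits the decomposition from the classical one.

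Your approach, by contrast, never touches the crossed product: you produce the central projection $z$ directly as the support of the ultraweak closure of $I$ in $A_\alpha''$, identify $I_\iota''\cong zA_\alpha''$ via Corollary~\ref{cor:injectivity-double-prime} and normality of $j''$, and then use the universal property of $B_\beta''$ (Corollary~\ref{cor-vN}) in both directions to build mutually inverse maps between $(1-z)A_\alpha''$ and $B_\beta''$. This is longer but more intrinsic: it shows the decomposition follows from the universal property alone, independent of the particular model. The paper's argument is quicker because the heavy lifting (short exact sequences split under $(-)^{**}$) is already packaged in the literature; yours makes the mechanism more transparent and would survive even if one changed the construction of $A_\alpha''$ to any other object with the same universal property.
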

\begin{proof}
As the maximal crossed product is an exact functor, the sequence 
$$
\xymatrix{I\rtimes_\max G \ar@{^(->}[r] & A\rtimes_\max G \ar@{->>}[r] &B\rtimes_{\max}G }
$$
is exact.  As taking double duals converts short exact sequences to direct sums (see for example \cite{Blackadar:2006eq}*{III.5.2.11}), we get a canonical isomorphism
$$
(A\rtimes_\max G)^{**}\cong  (I\rtimes_\max G)^{**} \oplus (B\rtimes_{\max}G)^{**}.
$$
It follows directly that if $p\in I_\alpha''$ is the unit (which is also the unit of $(I\rtimes_{\max} G)^{**}$), then $pA_\alpha''=I_\iota''$ and $(1-p)A_\alpha''=B_\beta''$.
\end{proof}

We give a definition of Morita equivalence of $G$-von Neumann algebras, which is based on \cite{Abadie-Buss-Ferraro:Morita_Fell}*{Definition 4.1}.

\begin{definition}\label{gvn mor}
Two $G$-von Neumann algebras $(M,\sigma)$ and $(N,\tau)$ are \emph{Morita equivalent} if they are Morita equivalent via some $M$-$N$ bimodule $X$ in the sense of \cite{Blecher:2004}*{8.5.12}\footnote{See also \cite{Blecher:2004}*{8.5.1} for the background definitions needed to understand 8.5.12.} that also satisfies the following equivariance condition: $X$ is equipped with a weak-$*$ continuous\footnote{$X$ has a canonical weak-$*$ topology by \cite{Blecher:2004}*{Lemma 8.5.4}.} $G$-action $\kappa$ that is compatible with the $M$- and $N$-valued inner products: this means that $_M\braket{\kappa_g(x)}{\kappa_g(y)}=\sigma_g(_M\braket{x}{y})$ and $\braket{\kappa_g(x)}{\kappa_g(y)}_N=\tau_g(\braket{x}{y}_N)$ for all $x,y\in X$ and $g\in G$.
\end{definition}

The next lemma relates the above to Morita equivalence of $G$-$C^*$-algebras: see for example \cite{Combes}*{Section 3, Definition 1} for the latter.  

\begin{lemma}\label{lem:Morita}
If $(A,\alpha)$ and $(B,\beta)$ are two Morita equivalent $G$-$C^*$-algebras, then $(A_\alpha'',\alpha'')$ and $(B_\beta'',\beta'')$ are Morita equivalent as $G$-von Neumann algebras.
\end{lemma}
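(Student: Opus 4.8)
The plan is to start from a $G$-$C^*$-equivalence bimodule $X$ implementing the Morita equivalence of $(A,\alpha)$ and $(B,\beta)$, and to build from it a von Neumann $G$-equivalence bimodule between $(A_\alpha'',\alpha'')$ and $(B_\beta'',\beta'')$. The natural candidate is the ``enveloping'' or weak closure of $X$ inside a suitable completion, parallel to how one passes from a $C^*$-algebra to its enveloping von Neumann algebra. Concretely, I would use the linking algebra picture: if $L=\begin{pmatrix} A & X \\ \widetilde{X} & B\end{pmatrix}$ is the linking $C^*$-algebra of the Morita equivalence, then $L$ carries a natural $G$-action $\gamma$ restricting to $\alpha$ and $\beta$ on the diagonal corners, and one knows that $A$ and $B$ sit in $L$ as the complementary full hereditary subalgebras cut out by the two diagonal projections $p$ (with $pLp=A$) and $q=1-p$ (with $qLq=B$).

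First I would apply the enveloping $G$-von Neumann algebra functor of Proposition \ref{evna func} to $(L,\gamma)$ to obtain $(L_\gamma'',\gamma'')$. Since $p,q\in\mathcal{M}(L)\subseteq L_\gamma''$ are $G$-invariant projections summing to $1$, we get a corner decomposition of $L_\gamma''$. The key identifications I need are $p L_\gamma'' p = A_\alpha''$ and $q L_\gamma'' q = B_\beta''$ as $G$-von Neumann algebras. This should follow because the inclusion $A=pLp\hookrightarrow L$ is equivariant and hereditary, so by Corollary \ref{cor:injectivity-double-prime} the induced map $A_\alpha''\to L_\gamma''$ is an injective normal $G$-homomorphism, and its image is exactly the corner $pL_\gamma''p$ — one checks this by noting $p L_\gamma'' p$ is the ultraweak closure of $pLp=A$ inside $pL_\gamma''p=p\mathcal{B}(H_u)p$, which matches the universal property of $A_\alpha''$ from Corollary \ref{cor-vN}. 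Then $X_w:=pL_\gamma''q$ is automatically a $(A_\alpha''$-$B_\beta'')$-von Neumann bimodule, with inner products coming from the multiplication in $L_\gamma''$: $_{A_\alpha''}\!\braket{\xi}{\eta}=\xi\eta^*\in pL_\gamma''p$ and $\braket{\xi}{\eta}_{B_\beta''}=\xi^*\eta\in qL_\gamma''q$. Fullness of these inner products follows because $pL_\gamma''q$ is weak-$*$ dense-generated by $pLq=X$ and the ranges $\cspn^{w}\{\xi\eta^*\}$ must be weak-$*$ closed ideals in the factors that already contain the ultraweakly dense $C^*$-subalgebras $_A\braket{X}{X}=A$ and $\braket{X}{X}_B=B$.

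The equivariance requirement of Definition \ref{gvn mor} is then essentially automatic: the $G$-action $\gamma''$ on $L_\gamma''$ restricts to a (weak-$*$ continuous on bounded sets, hence weak-$*$ continuous) action $\kappa$ on the corner $X_w=pL_\gamma''q$ because $p$ and $q$ are $G$-invariant, and compatibility with the inner products, $_{A_\alpha''}\!\braket{\kappa_g\xi}{\kappa_g\eta}=\gamma''_g(_{A_\alpha''}\!\braket{\xi}{\eta})=\alpha''_g(_{A_\alpha''}\!\braket{\xi}{\eta})$ and similarly on the right, is just the statement that $\gamma''_g$ is a $*$-homomorphism of $L_\gamma''$ restricting to $\alpha''$ and $\beta''$ on the diagonal. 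The last point to verify is that $X_w$ genuinely implements a von Neumann Morita equivalence in the precise sense of \cite{Blecher:2004}*{8.5.12} — i.e.\ that it is a weak-$*$ full right $C^*$-module over $B_\beta''$ whose compact operators are $A_\alpha''$; this reduces to the corner decomposition $L_\gamma''\cong\begin{pmatrix}pL_\gamma''p & pL_\gamma''q\\ qL_\gamma''p & qL_\gamma''q\end{pmatrix}$ being a linking von Neumann algebra, which is a standard fact about $2\times2$ corner decompositions of von Neumann algebras by complementary full projections.

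The main obstacle I anticipate is the bookkeeping around fullness and the identification of the corners, specifically confirming that $pL_\gamma''p$ really is $A_\alpha''$ and not some proper quotient or subalgebra; since $L_\gamma''$ is defined via the universal representation of $L\rtimes_\max G$ rather than of $L$ itself, I would want to argue carefully — perhaps via Lemma \ref{lem:A_alpha''-red} and the fact that the reduced crossed product is compatible with hereditary subalgebras, or directly from the universal property in Corollary \ref{cor-vN} applied to the equivariant inclusion $A\hookrightarrow pL_\gamma''p$ — that this corner has exactly the universal property characterizing $A_\alpha''$. Everything else is, I expect, a routine transcription of the $C^*$-linking-algebra proof of Morita invariance into the von Neumann setting, using the functoriality already established in Proposition \ref{evna func} and Corollary \ref{cor:injectivity-double-prime}.
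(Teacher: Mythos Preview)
Your approach is correct and essentially identical to the paper's: both pass to the linking $G$-$C^*$-algebra $L$, form its enveloping $G$-von Neumann algebra $L_\gamma''$, identify the corners $pL_\gamma''p\cong A_\alpha''$ and $qL_\gamma''q\cong B_\beta''$, and conclude that the off-diagonal corner furnishes the required $G$-von Neumann equivalence bimodule (the paper citing \cite{Abadie-Buss-Ferraro:Morita_Fell}*{Remark 4.3} for this last step). You are in fact more explicit than the paper about how to justify the corner identification and fullness, which the paper asserts without detailed argument.
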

\begin{proof} 
If $(A,\alpha)$ is Morita equivalent to $(B,\beta)$, then there is a (linking) $G$-$C^*$-algebra $(L,\delta)$ containing $A$ and $B$ as opposite full corners by $G$-invariant orthogonal full projections $p,q\in \M(L)$: this follows for example from the argument of \cite{CELY}*{Remark 2.5.3 (4)}.  Then the enveloping $G$-von Neumann algebra $A_\alpha''$ of the corner $A=pLp$ identifies with the corner $pL_\delta''p$ in $L_\delta''$, and similarly  $B_\beta''$ identifies with $qL_\delta''q$. Moreover, as $p$ is full in $L$, it is also full in $L''_\delta$ in the sense that the ideal of $L_\delta''$ generated by $p$ is ultraweakly dense, and similarly for $q$.  The result follows from this: compare \cite{Abadie-Buss-Ferraro:Morita_Fell}*{Remark 4.3}.
\end{proof}

\section{Central covers of covariant representations}\label{sec:ccover}

In this section we discuss central covers of representations and how they interact with the enveloping $G$-von Neumann algebra $A_\alpha''$.  This material seems interesting in its own right, and will be useful in the proof that measurewise amenability and amenability are equivalent.

Recall from \cite{Pedersen:1979zr}*{Theorem 3.8.2} that for every nondegenerate $*$-representation $\pi:A\to \Bd(H)$
there exists a unique central projection $c_\pi\in Z(A^{**})$ such that $c_\pi A^{**}\cong \pi(A)''$ via the normal extension 
$\pi^{**}:A^{**}\to \Bd(H)$.   Recall also from \cite{Pedersen:1979zr}*{Definition 3.3.6} that two representations $\pi_1,\pi_2$ are {\em quasi-equivalent} if there exists an isomorphism $\Phi:\pi_1(A)''\to \pi_2(A)''$ such that $\Phi\big(\pi_1(a)\big)=\pi_2(a)$ for all $a\in A$.  From \cite{Pedersen:1979zr}*{Theorem 3.8.2} again, $\pi_1$ and $\pi_2$ are quasi-equivalent if and only if $c_{\pi_1}=c_{\pi_2}$. 

Now, if $(\pi, u)$ is a nondegenerate covariant representation of $(A,G,\alpha)$ on a Hilbert space $H$, it follows from the equation $\pi\circ \alpha_g=\Ad u_g\circ \pi$ and the fact that $\Ad u_g\circ \pi$ is (quasi-)equivalent to $\pi$ that $c_\pi$ is 
$G$-invariant. 
Let $d_\pi\in Z(A_\alpha'')$
denote the image of $c_\pi$ under the canonical quotient map $A^{**}\to A_\alpha''$. 
Since $\pi^{**}:A^{**}\to \pi(A)''$ factors through 
the $\alpha''$-$\Ad u$-equivariant map $\pi'':A_\alpha''\to \pi(A)''$ of Proposition \ref{prop-universal}, it follows that $\pi''$ induces a $G$-isomorphism  $d_\pi A_\alpha''\cong \pi(A)''$. Recall that $A_\alpha''$ is defined as the double commutant $i_A(A)''\subseteq \Bd(H_u)$, where 
$H_u$ denotes the Hilbert space of the universal representation $i_A\rtimes i_G$ of $A\rtimes_{\max} G$. If $(\rho, v)$ denotes the restriction 
of $(i_A, i_G)$ to the subspace $d_\pi H_u$ of $H_u$ we get $\rho(A)''=d_\pi i_A(A)''=d_\pi A_\alpha''$.  Thus we see that $(\rho, v)$ is quasi-equivalent to $(\pi, u)$ as in

\begin{definition}\label{def-quasiequiv}
We say that two nondegenerate covariant representations $(\pi_1, u_1)$ and $(\pi_2,u_2)$ of $(A,G,\alpha)$ are 
{\em quasi-equivalent} if there exists an $\Ad u_1$-$\Ad u_2$-equivariant isomorphism $\Phi:\pi_1(A)''\to \pi_2(A)''$ such that 
$\Phi(\pi_1(a))=\pi_2(a)$ for all $a\in A$. 
\end{definition}

\begin{proposition}\label{lem-quasi} 
Let  $(\pi_1, u_1)$ and $(\pi_2,u_2)$ be nondegenerate covariant representations of $(A,G,\alpha)$. Then the following are equivalent:
\begin{enumerate}
\item \label{lem-quasi 1}$(\pi_1, u_1)$ and $(\pi_2,u_2)$
are quasi-equivalent.
\item \label{lem-quasi 2}$\pi_1$ and $\pi_2$ are quasi-equivalent as representations of $A$.
\item \label{lem-quasi 3}$c_{\pi_1}=c_{\pi_2}$ in $Z(A^{**})$.
\item \label{lem-quasi 4}$d_{\pi_1}=d_{\pi_2}$ in $Z(A_\alpha'')$.
\end{enumerate}
Moreover the correspondence $(\pi,u)\mapsto d_\pi$ sets up a bijection between quasi-equivalence classes of nondegenerate
 covariant representations and $G$-invariant central projections in $A_\alpha''$. 
\end{proposition}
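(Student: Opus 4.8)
The plan is to establish the cycle of implications $(1)\Rightarrow(2)\Leftrightarrow(3)\Rightarrow(4)\Rightarrow(1)$ and then read off the bijection. The implication $(1)\Rightarrow(2)$ is immediate, since an isomorphism $\Phi\colon\pi_1(A)''\to\pi_2(A)''$ with $\Phi(\pi_1(a))=\pi_2(a)$ is in particular a quasi-equivalence of the underlying representations of $A$. The equivalence $(2)\Leftrightarrow(3)$ is exactly \cite{Pedersen:1979zr}*{Theorem 3.8.2}, recalled above. For $(3)\Rightarrow(4)$, note that $d_{\pi_i}$ is by definition the image of $c_{\pi_i}$ under the canonical quotient map $A^{**}\to A_\alpha''$, so $c_{\pi_1}=c_{\pi_2}$ gives $d_{\pi_1}=d_{\pi_2}$ directly.

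The only step requiring work is $(4)\Rightarrow(1)$. By Proposition \ref{prop-universal} the normal extension $\pi_i''\colon A_\alpha''\to\pi_i(A)''$ is $\alpha''$-$\Ad u_i$-equivariant and surjective, and, as recorded in the discussion preceding the statement, it induces a $G$-equivariant isomorphism $\overline{\pi_i''}\colon d_{\pi_i}A_\alpha''\xrightarrow{\sim}\pi_i(A)''$ carrying $d_{\pi_i}i_A(a)$ to $\pi_i(a)$. If $d:=d_{\pi_1}=d_{\pi_2}$, then $\alpha''$ restricts to the same action on $dA_\alpha''$ in both cases, so $\overline{\pi_2''}\circ(\overline{\pi_1''})^{-1}$ is an $\Ad u_1$-$\Ad u_2$-equivariant isomorphism $\pi_1(A)''\to\pi_2(A)''$ sending $\pi_1(a)$ to $\pi_2(a)$; this is precisely a quasi-equivalence of covariant representations in the sense of Definition \ref{def-quasiequiv}.

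For the bijection, the equivalence $(1)\Leftrightarrow(4)$ shows that $(\pi,u)\mapsto d_\pi$ is well-defined and injective on quasi-equivalence classes, and it lands among $G$-invariant central projections since $c_\pi$, hence $d_\pi$, is $G$-invariant whenever $(\pi,u)$ is covariant. For surjectivity, fix a $G$-invariant projection $d\in Z(A_\alpha'')$. Since $\alpha''=\Ad i_G$, every $i_G(g)$ commutes with $d$, so $dH_u$ is $i_G$-invariant; let $(\rho,v)$ be the restriction of $(i_A,i_G)$ to $dH_u$. Compressing the von Neumann algebra $i_A(A)''=A_\alpha''$ by its central projection $d$ yields $\rho(A)''=dA_\alpha''$ acting on $dH_u$, and the normal extension $\rho''$ is the compression $x\mapsto dx$, with kernel $(1-d)A_\alpha''$; hence $d_\rho=d$. (The degenerate case $d=0$ gives the zero representation.) Thus every $G$-invariant central projection is hit, completing the proof.

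I expect the main obstacle, modest as it is, to lie in the surjectivity clause: one must correctly identify $\rho(A)''$ with the corner $dA_\alpha''$ via the standard compression of a von Neumann algebra by a central projection, and confirm that the central cover of $\rho$ computed inside $A_\alpha''$ is exactly $d$ — which is where the realization of $A_\alpha''$ as a concrete bicommutant on $H_u$ gets used essentially. Everything else is formal bookkeeping with the universal property of Proposition \ref{prop-universal} and Pedersen's classification of quasi-equivalence via central covers.
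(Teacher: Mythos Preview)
Your proof is correct and follows essentially the same route as the paper: the same cycle of implications via Pedersen's theorem for $(2)\Leftrightarrow(3)$, the same use of the isomorphisms $d_{\pi_i}A_\alpha''\cong\pi_i(A)''$ from Proposition~\ref{prop-universal} for $(4)\Rightarrow(1)$, and the same restriction of $(i_A,i_G)$ to $dH_u$ for surjectivity. Your write-up simply spells out a few details (the equivariance of the composed isomorphism, the identification $\rho(A)''=dA_\alpha''$) that the paper leaves implicit.
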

\begin{proof} The implications \eqref{lem-quasi 1} $\Rightarrow$ \eqref{lem-quasi 2} and \eqref{lem-quasi 3} $\Rightarrow$ \eqref{lem-quasi 4} are  trivial, and
\eqref{lem-quasi 2} $\Rightarrow$ \eqref{lem-quasi 3} follows from \cite{Pedersen:1979zr}*{Theorem 3.8.2}.  The implication \eqref{lem-quasi 4} $\Rightarrow$ \eqref{lem-quasi 1} follows 
from the above observed fact that the $\alpha''$-$\Ad u_i$-equivariant maps $\pi_i'':A_\alpha''\to \pi_i(A)''$ of Proposition \ref{prop-universal}
induce $G$-equivariant isomorphisms  $d_{\pi_i} A_\alpha''\cong \pi_i(A)''$, $i=1,2$.

For the final statement, the discussion so far shows that the correspondence $(\pi,u)\mapsto d_\pi$ is well defined and injective.  To see that it is surjective, let $(i_A,i_G):(A,G)\to \Bd(H_u)$ be the universal representation.  If $p\in Z(A_\alpha'')$ is a $G$-invariant projection then this restricts to a representation $(\pi,u)$ on $pH_u$, and we have by definition that $d_\pi=p$.
\end{proof}

Thanks to Proposition \ref{lem-quasi}, the following extension of the classical terminology (compare \cite{Pedersen:1979zr}*{3.8.1 and 3.8.12}) seems reasonable.

\begin{definition}\label{def:ccover}
Let $(\pi,u)$ be a nondegenerate covariant representation of $(A,G)$.  Then the projection $d_\pi\in A_\alpha''$ is called the \emph{equivariant central cover} of $(\pi,u)$.
\end{definition}

Of course, if $A^{**}=A_\alpha''$ (for example if $G$ is discrete), then the equivariant central cover of $(\pi,u)$ is just the classical central cover of $\pi$.

\section{The predual of the enveloping $G$-von Neumann algebra}\label{sec:predual}
In this section, we show that our enveloping $G$-von Neumann algebra is the same as the one defined by Ikunishi in \cite{Ikunishi}*{page 2}.  We then discuss the $G$-continuous functionals on $A$ in a little more detail.  This material will be used in our treatment of the weak quasi-central approximation property (wQAP) in Section \ref{sec:wQAP}.

Let $A$ be a $G$-$C^*$-algebra, and let $A^{*,c}$  consist of all elements $\phi\in A^*$
such that the map $g\mapsto \alpha_g^*(\phi)$ defined by $\alpha_g^*(\phi)(a):=\phi(\alpha_{g^{-1}}(a))$ is norm continuous.  In \cite{Ikunishi}*{Theorem 1} Ikunishi constructed  the {\em universal $W^*$-dynamical system} $(M_\alpha, G, \bar\alpha)$ 
of a $G$-$C^*$-algebra $(A,\alpha)$ as the quotient of $A^{**}$ by the annihilator of $A^{*,c}$.  The following proposition, due to Ikunishi for $*$-homomorphisms (although the proof for ccp maps is the same), shows that $(M_\alpha,\bar\alpha)$ enjoys the same universal properties as $(A_\alpha'',\alpha'')$.

\begin{proposition}[see \cite{Ikunishi}*{Theorem 1}]\label{prop-Ikunishi}
Let $(N,\sigma)$ be any $G$-von Neumann algebra and let $\phi:A\to N$ be a $G$-equivariant $*$-homomorphism (resp. ccp map). 
Then there exists a unique normal $G$-equivariant $*$-homomorphic (resp. ccp) extension $\bar\phi:M_\alpha\to N$. 
In particular, the identity $\id:A\to A$ extends to an isomorphism $(M_\alpha, \bar{\alpha})\cong (A_\alpha'', \alpha'')$.
\end{proposition}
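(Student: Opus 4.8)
The plan is to establish the universal property of $M_\alpha$ directly from Ikunishi's construction, and then to deduce the isomorphism $(M_\alpha,\bar\alpha)\cong(A_\alpha'',\alpha'')$ by a standard two-sided-inverse argument using the universal properties on both sides. For the universal property of $M_\alpha$: given a $G$-von Neumann algebra $(N,\sigma)$ and a $G$-equivariant $*$-homomorphism (resp.\ ccp map) $\phi:A\to N$, first form the normal extension $\phi^{**}:A^{**}\to N$ guaranteed by the usual universal property of the bidual (for ccp maps one uses that ccp maps extend to normal ccp maps on the bidual). The key point is that $\phi^{**}$ kills the annihilator $(A^{*,c})^\perp$ of the $G$-continuous functionals: if $\psi\in N_*$ is a normal functional on $N$, then $\psi\circ\phi\in A^*$, and one checks that $g\mapsto\alpha_g^*(\psi\circ\phi)$ is norm-continuous because $\sigma$ is a (continuous) $G$-von Neumann algebra action and $\psi\circ\phi\circ\alpha_{g^{-1}}=\psi\circ\sigma_{g^{-1}}\circ\phi$ — here the continuity of $g\mapsto\sigma_g$ on $N$ combined with equivariance of $\phi$ transfers to norm-continuity of the functional. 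Hence $\psi\circ\phi\in A^{*,c}$, so $\phi^{**}$ annihilates $(A^{*,c})^\perp$ by duality, and therefore descends to a normal map $\bar\phi:M_\alpha=A^{**}/(A^{*,c})^\perp\to N$. Equivariance of $\bar\phi$ and uniqueness follow since $A$ is ultraweakly dense in $M_\alpha$ and all maps in sight are normal and $G$-equivariant, so they are determined by their restriction to $A$.

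Next, to get the isomorphism: apply the universal property of $(A_\alpha'',\alpha'')$ (Corollary~\ref{cor-vN}) to the canonical equivariant $*$-homomorphism $A\to M_\alpha$, obtaining a normal equivariant surjection $\Psi:A_\alpha''\to M_\alpha$ extending the identity on $A$; apply the universal property of $M_\alpha$ just established to the canonical equivariant $*$-homomorphism $A\to A_\alpha''$, obtaining a normal equivariant map $\Theta:M_\alpha\to A_\alpha''$ extending the identity on $A$. Then $\Psi\circ\Theta$ and $\Theta\circ\Psi$ are normal equivariant endomorphisms fixing $A$ pointwise; by ultraweak density of $A$ and normality they are the respective identity maps. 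Hence $\Psi,\Theta$ are mutually inverse $G$-equivariant normal $*$-isomorphisms, and $(M_\alpha,\bar\alpha)\cong(A_\alpha'',\alpha'')$.

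The one step that requires genuine care — and which I expect to be the main obstacle — is verifying that $\phi^{**}$ annihilates $(A^{*,c})^\perp$, equivalently that $\phi^*$ maps $N_*$ into $A^{*,c}$. This is where the hypothesis that $N$ is a \emph{$G$-von Neumann algebra} (i.e.\ $\sigma$ is ultraweakly continuous) is essential: one needs that for a normal functional $\psi$ on $N$, the map $g\mapsto\psi\circ\sigma_{g^{-1}}$ is norm-continuous into $N^*$. This is a standard but slightly delicate fact about $G$-von Neumann algebras — it follows, e.g., because the ultraweakly continuous action together with boundedness forces norm-continuity of orbit maps on the predual (one can reduce to a dense set of functionals of the form $x\mapsto\langle\sigma_g(\cdot)\eta,\eta\rangle$ and use strong continuity of the implementing unitary representation on $H$ from the regular-representation picture of line~\eqref{eq-regular}). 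Once this continuity fact is in hand, precomposition with the equivariant $\phi$ immediately gives $\psi\circ\phi\in A^{*,c}$, and the rest is formal. I would cite Ikunishi \cite{Ikunishi}*{Theorem 1} for the $*$-homomorphism case and remark that the ccp case is identical, the only input being the normal extension of ccp maps to biduals.
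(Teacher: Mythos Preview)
Your proposal is correct and follows essentially the same route as the paper: the paper also argues that the induced action on $N_*$ is norm-continuous (via the regular-representation picture and norm-continuity of $\Ad u$ on trace-class operators), concludes that $\phi^*$ maps $N_*$ into $A^{*,c}$, and then takes $\bar\phi$ to be the dual $(A^{*,c})^*\to N$; the isomorphism $(M_\alpha,\bar\alpha)\cong(A_\alpha'',\alpha'')$ is obtained exactly as you describe, by noting that $\bar{\id}:M_\alpha\to A_\alpha''$ is inverse to the extension $\id'':A_\alpha''\to M_\alpha$ from Corollary~\ref{cor-vN}. The only difference is presentational: the paper phrases the extension as the bidual of $\phi^*:N_*\to A^{*,c}$ rather than as a descent through the quotient $A^{**}/(A^{*,c})^\perp$, but these are the same map.
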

\begin{proof}
First note that the induced action on the predual $N_*$ of $N$ is norm continuous: this can be deduced by representing 
$N$ faithfully, equivariantly, and normally into some $\Bd(H)$  via some regular representation $(\pi, u)$ as in Section \ref{Sec:Prel}, and observing that
the induced action $\Ad u$ on the space of trace-class operators  is norm continuous. 
Thus we get a dual map $\phi^*: N_*\to A^{*,c}$ by $N_*\ni \psi\mapsto \psi\circ \phi$. 
The  extension $\bar\phi$ is then given by  $\bar{\phi}=\phi^{**}:M_\alpha=(A^{*,c})^*\to N$.
If $\phi$ is ccp, then so is $\bar{\phi}$ as $A$ is ultraweakly dense in $M_\alpha$. 
The last assertion follows by observing that $\bar{\id}:M_\alpha\to A_\alpha''$ is the inverse of the extension $\id'':A_\alpha''\to M_\alpha$ of 
Corollary \ref{cor-vN}.
\end{proof}

Identifying $M_\alpha$ with $A_\alpha''$ as above, we now immediately get:

\begin{corollary}\label{ccp fun}
The assignment $A\mapsto A_\alpha''$ functorially takes equivariant ccp maps to equivariant and normal ccp maps. \qed
\end{corollary}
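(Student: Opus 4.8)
The plan is to deduce everything directly from Proposition \ref{prop-Ikunishi}, together with the identification $M_\alpha\cong A_\alpha''$ established there.

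First, given a $G$-equivariant ccp map $\phi\colon A\to B$ between $G$-$C^*$-algebras $(A,\alpha)$ and $(B,\beta)$, I would compose $\phi$ with the canonical equivariant inclusion $\iota_B\colon B\hookrightarrow B_\beta''$ (recall $B$ sits inside $B_\beta''$ as an ultraweakly dense $G$-invariant $C^*$-subalgebra). The composite $\iota_B\circ\phi\colon A\to B_\beta''$ is a $G$-equivariant ccp map into a $G$-von Neumann algebra, so Proposition \ref{prop-Ikunishi} provides a unique normal $G$-equivariant ccp extension, which we denote $\phi''\colon A_\alpha''\to B_\beta''$. This already gives existence, equivariance, normality, and complete positivity of the extension. (If one wants to allow the degenerate case $\phi\colon A\to\M(B)$, the argument is identical, using that $\M(B)$ is an equivariant $C^*$-subalgebra of $B_\beta''$.)

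Next I would verify functoriality using the uniqueness clause of Proposition \ref{prop-Ikunishi}. For the identity, $\id_{A_\alpha''}$ is itself a normal $G$-equivariant ccp map extending $\iota_A\colon A\to A_\alpha''$, so by uniqueness $(\id_A)''=\id_{A_\alpha''}$. For composition, given equivariant ccp maps $\phi\colon A\to B$ and $\psi\colon B\to C$, the map $\psi''\circ\phi''$ is normal, $G$-equivariant and ccp, being a composite of maps with these properties; moreover on $A$ it agrees with $\psi''\circ\iota_B\circ\phi=\iota_C\circ\psi\circ\phi$, which is exactly the map of which $(\psi\circ\phi)''$ is the canonical extension. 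Uniqueness then forces $\psi''\circ\phi''=(\psi\circ\phi)''$, establishing functoriality.

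I do not expect a genuine obstacle here: the substantive content is already contained in Proposition \ref{prop-Ikunishi}, and the only point requiring a little care is keeping track of precisely which map into a $G$-von Neumann algebra is being extended, so that the uniqueness statement can be invoked. For completeness one could also remark, paralleling Corollary \ref{cor-vN}, that $\phi''$ is surjective whenever $\phi$ has ultraweakly dense range, but this is not needed for the statement.
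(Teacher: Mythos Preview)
Your proposal is correct and follows exactly the approach the paper intends: the corollary is stated with a \qed immediately after Proposition~\ref{prop-Ikunishi}, and your argument spells out precisely the details the paper leaves implicit, namely composing with the inclusion into $B_\beta''$, invoking the extension part of Proposition~\ref{prop-Ikunishi}, and using uniqueness for functoriality.
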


In order to give a better description of $A^{*,c}$, note that $L^1(G)$ acts on $A$ and $A^*$ via the formulas 
\begin{equation}\label{dual con}
h*a:=\int_G h(g) \alpha_g(a)\,dg  \quad \text{and}\quad  h*\phi:=\int_G h(g) \alpha_g^*(\phi)\,dg
\end{equation}
(the former integral converges in the norm topology, and the latter in the weak-$*$ topology).   Moreover, for any $a\in A$ and $\phi\in A^*$, one has the formula
\begin{align*}
(h*\phi)(a) & =\int_Gh(g)(\alpha_g^*\phi)(a) \,dg=\int_G h(g)\phi(\alpha_{g^{-1}}(a))\,dg \\ & 
=\int_G h(g^{-1})\Delta(g^{-1})\phi(\alpha_g(a)) \,dg.
\end{align*}
Hence for any $a\in A$, $\phi\in A^*$ and $h\in L^1(G)$, if we define 
$ \check h(g):=h(g^{-1})\Delta(g^{-1})$, we get the formula
\begin{equation}\label{dual con form}
(h*\phi)(a) =\phi(\check h*a). 
\end{equation}

The following proposition is probably well known to experts. As pointed out in \cite{Bearden-Crann} it 
can be deduced from \cite{Hamana:2011}*{Proposition 3.4(i)} (or by a direct computation)
that the span of $L^1(G)*A^*:=\{f*\varphi: f\in L^1(G), \varphi\in A^*\}$ is dense in $A^{*,c}$, and the proposition then follows from an application 
of the Cohen-Hewitt factorization theorem  \cite{Hewitt}*{Theorem (2.5)}. 

\begin{proposition}\label{prop-cont-functionals}
For any $G$-$C^*$-algebra $(A,\alpha)$ we have 
$$A^{*,c}=L^1(G)*A^{*,c}=L^1(G)*A^*. \eqno\qed$$
\end{proposition}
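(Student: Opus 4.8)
The plan is to prove the two equalities $A^{*,c}=L^1(G)*A^*$ and $A^{*,c}=L^1(G)*A^{*,c}$ together, by establishing (a) $L^1(G)*A^*\subseteq A^{*,c}$, (b) $L^1(G)*A^*$ is norm-dense in $A^{*,c}$, and (c) a Cohen--Hewitt factorization argument to upgrade the density in (b) to an equality and to absorb the middle term.

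First I would verify (a): for $h\in L^1(G)$ and $\varphi\in A^*$, the functional $h*\varphi=\int_G h(g)\alpha_g^*(\varphi)\,dg$ lies in $A^{*,c}$. This is a routine estimate using the already-recorded formula $(h*\varphi)(a)=\varphi(\check h*a)$ from \eqref{dual con form} together with $\alpha_g^*(h*\varphi)=\tau_g(h)*\varphi$ (a direct computation analogous to \eqref{eq-Mc}), and the fact that left translation $g\mapsto\tau_g(h)$ is norm continuous on $L^1(G)$; indeed $\|\alpha_g^*(h*\varphi)-h*\varphi\|=\|(\tau_g(h)-h)*\varphi\|\le\|\tau_g(h)-h\|_1\|\varphi\|\to 0$. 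This also shows the module structure makes sense, i.e.\ $L^1(G)*A^*\subseteq A^{*,c}$ and likewise $L^1(G)*A^{*,c}\subseteq A^{*,c}$.

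Next, for (b), I would follow the route indicated in the paragraph preceding the statement: invoke \cite{Hamana:2011}*{Proposition 3.4(i)} — or argue directly — to see that $\spn\{f*\varphi: f\in L^1(G),\ \varphi\in A^*\}$ is norm-dense in $A^{*,c}$. The direct argument is the natural one: take a standard approximate unit $(f_i)$ of $L^1(G)$ consisting of nonnegative continuous compactly supported functions; then for $\phi\in A^{*,c}$ one checks $f_i*\phi\to\phi$ in norm, exactly as in the analogous statement \eqref{eq:Mc=L1*M} for $G$-von Neumann algebras (cf.\ \cite{Pedersen:1979zr}*{Lemma 7.5.1}), using norm continuity of $g\mapsto\alpha_g^*(\phi)$ on the compact support of $f_i$ to estimate $\|f_i*\phi-\phi\|\le\int_G f_i(g)\|\alpha_g^*(\phi)-\phi\|\,dg$. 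This already gives $A^{*,c}=\cl{L^1(G)*A^{*,c}}=\cl{L^1(G)*A^*}$ (closures in norm).

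Finally, for (c), I would apply the Cohen--Hewitt factorization theorem \cite{Hewitt}*{Theorem (2.5)} to the Banach $L^1(G)$-module $A^{*,c}$ (which is nondegenerate by (b), since $f_i*\phi\to\phi$): every element of $A^{*,c}=\cl{L^1(G)*A^{*,c}}$ factors as $f*\phi$ with $f\in L^1(G)$ and $\phi\in A^{*,c}$, so $A^{*,c}=L^1(G)*A^{*,c}$ with no closure needed. Combining with $L^1(G)*A^{*,c}\subseteq L^1(G)*A^*\subseteq A^{*,c}$ from (a) forces all three sets to coincide, completing the proof. I expect the only mild subtlety — hardly an obstacle — to be checking the nondegeneracy hypothesis of Cohen--Hewitt, which is precisely the approximate-identity computation in (b); everything else is a transcription of the standard $C^*$-algebraic facts used already in the surrounding text for $M_c=L^1(G)*M$.
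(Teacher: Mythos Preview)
Your proposal is correct and follows essentially the same route as the paper: the paper's proof (given in the paragraph immediately preceding the statement) invokes \cite{Hamana:2011}*{Proposition 3.4(i)} or a direct computation for the density of the span of $L^1(G)*A^*$ in $A^{*,c}$, and then applies the Cohen--Hewitt factorization theorem. You have simply spelled out the direct computation for (a) and (b) in more detail and made explicit the chain of inclusions in (c).
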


\chapter{Amenable actions}\label{sec:Amenable-Actions}

In this chapter we introduce several a priori different notions of amenability for actions of locally compact groups on $C^*$-algebras.  These are inspired by the work of Anantharaman-Delaroche in the case of discrete groups \cite{Anantharaman-Delaroche:1987os}*{Definition 4.1}. We also  establish some permanence properties and relate amenability to measurewise amenability in the sense of Renault \cite{Renault-LNM}*{Definition II.3.6} (see also \cite{ADR}*{Definitions 3.2.8 and 3.3.1}).   Combining this with a recent result of Bearden and Crann \cite{Bearden-Crann}*{Corollary 4.14}, this affirmatively solves the long-standing open problem of whether measurewise amenability and topological amenability are equivalent for an action $G\curvearrowright X$ of a second countable locally compact group on a second countable locally compact space.

In Section \ref{sec:compare} we introduce our notions of amenability 
 in terms of the enveloping $G$-von Neumann algebra of a $G$-$C^*$-algebra.  A lot of the work in this section is to discuss different approximation properties of $G$-von Neumann algebras.  

In Section \ref{sec:amen pp} we discuss some permanence properties of amenable actions.  This was one of the main motivations for establishing the various equivalent formulations of amenability earlier in the chapter.

Finally, in Section \ref{sec:meas amen} we prove that for second countable $G$ and $X$ an action $G\curvearrowright X$ is measurewise amenable if and only if  the induced action on $C_0(X)$ is amenable.

\section{Amenable actions of locally compact groups}\label{sec:compare}
In this section, we introduce amenable actions of general locally compact groups on $C^*$-algebras, and establish the equivalence of amenability with several other conditions.  The main technical tool is Proposition \ref{prop:Amenability-conditions}, which establishes the equivalence of several approximation properties for a $G$-von Neumann algebra.

Recall that if $M$ is a $G$-von Neumann algebra, we denote by $L^\infty(G, M)$ the von Neumann algebra  tensor product 
$L^\infty(G)\overline{\otimes}M$ equipped with the tensor product action.  We refer to \cite{Tak}*{Chapter V, Theorem 7.17} for the relationship between $L^\infty(G,M)$ and the space of bounded, ultraweakly measurable functions from $G$ to $M$.  We identify $M$ with the subalgebra $ 1\otimes M$ of $L^\infty(G,M)$.  

Anantharaman-Delaroche \cite{Anantharaman-Delaroche:1979aa}*{D\'{e}finition 3.4} defined a continuous action of $G$ on a von Neumann algebra to be \emph{amenable} if there is an equivariant conditional expectation 
$$
P:L^\infty(G,M)\to M.
$$
Later in \cite{Anantharaman-Delaroche:1987os}*{D\'{e}finition 4.1}, Anantharaman-Delaroche defined an action of a {\em discrete} group on a $C^*$-algebra $A$ to be {\em amenable} if the induced action on the double dual $A^{**}$ is amenable in the sense above.  However, this does not make sense for general actions of locally compact groups, as $A^{**}$ is not a $G$-von Neumann algebra in general.  Replacing $A^{**}$ by $A_\alpha''$ in the case where $G$ is a general locally compact group, leads to

\begin{definition}\label{def-vonNeumann}
A $G$-$C^*$-algebra $(A,\alpha)$ (or just the action $\alpha$)\footnote{Throughout this paper, we will treat properties of $G$-$C^*$-algebras and of actions interchangeably: for example, ``Let $A$ be a von Neumann amenable $G$-$C^*$-algebra'' means the same thing as ``Let $\alpha:G\to \text{Aut}(A)$ be a von Neumann amenable action''.}  is called {\em von Neumann amenable},  if there exists a $G$-equivariant conditional expectation 
$$P: L^\infty(G, A_\alpha'')\to A_\alpha''.$$
\end{definition}

\begin{remark}\label{rem-vonNeumannam}
In \cite{Anantharaman-Delaroche:1982aa}*{Corollaire 3.7}, Anantharaman-Delaroche showed that a $G$-von Neumann algebra $M$ is amenable if and only if  $Z(M)$ is amenable, i.e., if and only if there exists a $G$-equivariant  conditional expectation 
$$P: L^\infty(G, Z(M))\to Z(M).$$
In the context of this work, it is important to note that amenability of a $G$\nb-$C^*$\nb-algebra $A$ turns out \emph{not} to be equivalent to amenability of $Z(A)$ in general.
\end{remark}

Although Definition \ref{def-vonNeumann} is a straightforward extension of the established definition of amenable actions for discrete groups,
it is not the most useful one for studying the behaviour of the associated crossed products. In the case of discrete groups
Anantharaman-Delaroche was able to characterize  amenability in terms of several approximation properties involving functions 
of positive type.  The following is based on \cite{Anantharaman-Delaroche:1987os}*{Definition 2.1}.

\begin{definition}\label{amen def}
Let $(A,\alpha)$ be a $G$-$C^*$-algebra, or $G$-von Neumann algebra.  A function $\theta:G\to A$ is of \emph{positive type} (with respect to $\alpha$) if for every finite subset $F\subseteq G$ the  matrix
$$
\big(\alpha_g(\theta(g^{-1}h))\big)_{g,h\in F}\in M_F(A)
$$
is positive.  
\end{definition}

If $A$ is a $G$-$C^*$-algebra, then the prototypical examples of positive type functions are given by $\theta(g)=\braket{\xi}{\gamma_g(\xi)}_{A}$, where $\xi$ is a vector in a $G$-equivariant Hilbert $A$\nb-module $(\mathcal{H},\gamma)$.  
Indeed, it is shown  in \cite{Anantharaman-Delaroche:1987os}*{Proposition~2.3}
that every continuous positive type function into a $G$-$C^*$-algebra is associated to some essentially unique triple $(\mathcal{H},\gamma,\xi)$ as above.

\begin{definition}\label{def-amenable (A)}
A $G$-$C^*$-algebra $(A,\alpha)$ is {\em amenable} if there exists a net 
of norm-continuous, compactly supported, positive type functions 
$\theta_i:G\to Z(A_\alpha'')$ such that
$\|\theta_i(e)\|\leq 1$ for all $i\in I$, and  $\theta_i(g)\to 1_{A_\alpha''}$ ultraweakly and uniformly for $g$ in compact subsets of $G$.
\end{definition}

\begin{definition}\label{def-amenable (SA)}
A $G$-$C^*$-algebra $(A,\alpha)$ is {\em strongly amenable}, if there exists a net 
 of norm-continuous, compactly supported, positive type functions $\theta_i:G\to Z\M(A)$
such that
$\|\theta_i(e)\|\leq 1$ for all $i\in I$, and  $\theta_i(g)\to 1_{Z\M(A)}$ strictly and uniformly for $g$ in compact subsets of $G$.
\end{definition}

The next several remarks record connections between these notions and the existing literature.

\begin{remark}
If $G$ is discrete then $A_\alpha''=A^{**}$.  It follows that for $G$ discrete, amenability as in Definition \ref{def-amenable (A)} for $(A,\alpha)$ is equivalent to von Neumann amenability as in Definition \ref{def-vonNeumann}: this was shown in  1987 by Anantharaman-Delaroche \cite{Anantharaman-Delaroche:1987os}*{Th\'{e}or\`{e}me 3.3}.  On the other hand, the equivalence between amenability and von Neumann amenability was established very recently for all actions of locally compact groups by Bearden and Crann in \cite{Bearden-Crann}*{Theorem 3.6}; we will discuss this in more detail below.
\end{remark}

\begin{remark}
If $A$ is unital, then strict convergence in $\M(A)=A$ coincides  with norm convergence.  It follows that strong amenability as in Definition \ref{def-amenable (SA)} extends the notion of amenability defined by Brown and Ozawa in \cite{Brown:2008qy}*{Definition 4.3.1}. \end{remark}

\begin{remark}\label{rem-SAtoA}
Since the inclusion $i_A:A\into (A\rtimes_\max G)^{**}$ extends to a unital inclusion of $\M(A)$ into 
$A_\alpha''$ such that $Z\M(A)$ is mapped into $Z(A_\alpha'')$, we see that strong amenability as in Definition \ref{def-amenable (SA)} implies amenability as in Definition \ref{def-amenable (A)}. 

If $A$ is commutative and $G$ is discrete, then Anantharaman-Delaroche showed that strong amenability is equivalent to amenability in \cite{Anantharaman-Delaroche:1987os}*{Th\'{e}or\`{e}me 4.9}. This result has been extended recently by Bearden and Crann in \cite{Bearden-Crann}*{Corollary 4.14} to actions of arbitrary locally compact groups on commutative $C^*$-algebras.  

At  the other extreme, if $A$ is simple, then $Z\M(A)\cong \C$ and so strong amenability for a $G$-action on $A$ implies amenability of $G$. This is in stark contrast to the fact that there are important examples,  given by Suzuki in \cite{Suzuki:2018qo} (see also \cite{Ozawa-Suzuki}), of amenable actions of non-amenable discrete exact groups on unital, simple and nuclear  $C^*$-algebras: see the discussion in \cite{Buss:2019}*{Section 3}. This implies in particular that strong amenability is strictly stronger than amenability and it indicates that strong amenability is not the `correct' notion of amenability for actions on noncommutative $C^*$-algebras.
\end{remark}

The following result, which is a consequence of \cite{Anantharaman-Delaroche:2002ij}*{Proposition 2.5}, relates strongly amenable (and hence amenable)  actions on commutative $C^*$-algebras to topologically amenable actions on spaces as in  
\cite{Anantharaman-Delaroche:2002ij}*{Definition 2.1}.

\begin{proposition}\label{rem-amenable} 
An action $G\curvearrowright X$ of a locally compact group $G$ on a locally compact space $X$ is topologically amenable if and only if the induced action $\alpha:G\to \Aut(C_0(X))$ is strongly  amenable.
\end{proposition}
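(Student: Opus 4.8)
The claim is a translation between topological amenability of $G \curvearrowright X$ in the sense of Anantharaman--Delaroche--Renault and strong amenability of the induced $C_0(X)$-action in the sense of Definition \ref{def-amenable (SA)}. The plan is to unwind both definitions and observe that they are literally asking for the same kind of object, namely a net of continuous compactly supported positive-type functions $\theta_i \colon G \to C_b(X) = Z\M(C_0(X))$ with the normalization $\|\theta_i(e)\|_\infty \le 1$ (or equivalently $\theta_i(e) \le 1$ pointwise) and with $\theta_i \to 1$ appropriately. The only substantive point is to match up the convergence conditions: topological amenability is usually phrased in terms of a net $(m_i)$ of continuous maps $x \mapsto m_i^x$ into the space $\Prob(G)$ of probability measures on $G$, with $\|g \cdot m_i^x - m_i^{gx}\|_1 \to 0$ uniformly on compact subsets of $G \times X$, whereas Definition \ref{def-amenable (SA)} wants strict convergence $\theta_i(g) \to 1_{Z\M(C_0(X))}$ uniformly for $g$ in compact sets. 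So the task is really bookkeeping: pass between the ``measure'' formulation and the ``positive-type function'' formulation and check the convergence modes agree.

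First I would recall the dictionary between the two formulations, which is exactly the content of \cite{Anantharaman-Delaroche:2002ij}*{Proposition 2.5}, cited in the statement. Given a net $m_i \colon X \to \Prob(G)$ witnessing topological amenability, writing $m_i^x = \xi_i^x \cdot \xi_i^x\,dg$ with $\xi_i^x \in L^2(G)$ (a square root, chosen measurably in $x$, and after a routine truncation to arrange compact supports and continuity), one sets $\xi_i \in C_c(G, C_0(X))$ (or rather into $C_b$, modulo the usual multiplier subtleties for the nonunital algebra) by $\xi_i(g)(x) = \xi_i^x(g)$ and then defines $\theta_i(g) = \langle \xi_i \mid \lambda_g^\alpha \xi_i \rangle_{C_0(X)}$, which is the prototypical positive-type function as in Definition \ref{amen def} and the paragraph after it. Under this assignment, $\theta_i(e)(x) = \|\xi_i^x\|_2^2 \le 1$, and the $L^2$-almost-invariance of the $\xi_i^x$, uniform on compacta of $G \times X$, translates into $\theta_i(g)(x) \to 1$ uniformly on compacta of $G \times X$, which is precisely strict convergence $\theta_i(g) \to 1_{C_b(X)}$ uniformly for $g$ in compact subsets of $G$. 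Conversely, given the net $(\theta_i)$ as in Definition \ref{def-amenable (SA)}, the representation of continuous positive-type functions by triples $(\mathcal H, \gamma, \xi)$ from \cite{Anantharaman-Delaroche:1987os}*{Proposition 2.3} applied to the $G$-$C^*$-algebra $C_0(X)$ produces the vectors $\xi_i$, whose GNS data over the fibre at $x$ gives back probability measures $m_i^x$ on $G$ with the required asymptotic invariance; again one reads off uniformity on compacta.

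The steps in order would therefore be: (1) state precisely the version of topological amenability to be used, citing \cite{Anantharaman-Delaroche:2002ij}*{Definition 2.1}, and recall that it may be witnessed by continuous compactly supported ``square-root'' fields $(\xi_i^x)_{x \in X}$ in $L^2(G)$; (2) invoke the triple-representation of positive-type functions to pass between such fields and positive-type functions $\theta_i \colon G \to C_b(X)$; (3) identify $C_b(X) = Z\M(C_0(X))$ and check the normalization $\|\theta_i(e)\| \le 1$ corresponds exactly to $\|\xi_i^x\|_2 \le 1$; (4) verify that strict convergence in $\M(C_0(X))$ uniform on compacta of $G$ is the same statement as the uniform-on-compacta asymptotic invariance of the fields; (5) conclude both implications. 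The main obstacle, and the only place requiring care rather than bookkeeping, is the nonunitality of $C_0(X)$: one must work with $C_b(X) = Z\M(C_0(X))$ rather than $C_0(X)$ itself, handle strict versus norm convergence, and make sure the measurable/continuous choices of square roots $\xi_i^x$ and the truncations to compact support can be made compatibly with continuity in $x$ — but all of this is already packaged in \cite{Anantharaman-Delaroche:2002ij}*{Proposition 2.5}, which is why the statement is presented as a consequence of that result rather than proved from scratch.
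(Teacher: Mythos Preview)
Your approach differs from the paper's in a meaningful way. The paper does not pass through the probability-measure formulation of topological amenability at all. Instead, it invokes \cite{Anantharaman-Delaroche:2002ij}*{Proposition 2.5} only to obtain the \emph{positive-type function} characterization: $G\curvearrowright X$ is topologically amenable iff there is a net $(h_i)$ in $C_c(X\times G)$ of positive-type functions tending to $1$ uniformly on compacta. From there, the forward direction is the tautology $\theta_i(g)(x):=h_i(x,g)$, and the backward direction is handled by an explicit cutoff: given $\theta_i:G\to C_b(X)$ from strong amenability, one reindexes by pairs $(K,\epsilon)$, chooses $e_j\in C_c(X,[0,1])$ with $\alpha_g(e_j)\equiv 1$ on the relevant compact set, and sets $h_j(x,g)=e_j(x)\,\theta_{i_j}(g)(x)\,\alpha_g(e_j)(x)$, which remains positive-type and now lies in $C_c(X\times G)$. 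Your route through square roots of measures and the abstract triple-representation would also work, but it is longer, and it does not avoid the cutoff issue.

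On that point there is a genuine gap in your sketch: the truncation to compact support in the $X$-variable is \emph{not} contained in \cite{Anantharaman-Delaroche:2002ij}*{Proposition 2.5}. That proposition establishes the equivalence between the measure formulation (Definition 2.1) and the $C_c(X\times G)$ positive-type function formulation; it says nothing about passing from $C_b(X)$-valued positive-type functions $\theta_i$ (which is what Definition~\ref{def-amenable (SA)} hands you, since $Z\M(C_0(X))=C_b(X)$) to compactly supported ones. Whether you go via measures or stay with positive-type functions, you must supply this step yourself, and the cleanest way is exactly the sandwich $e_j\,\theta\,\alpha_g(e_j)$ that the paper writes down: it forces compact support in $x$ while preserving the positive-type condition (it is of the form $\langle e_j\xi,\lambda_g^\alpha(e_j\xi)\rangle$).
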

\begin{proof}  It is shown in \cite{Anantharaman-Delaroche:2002ij}*{Proposition 2.5} that the action of $G$ on $X$ is topologically amenable if and only if there is a net $(h_i)$ in $C_c(X\times G)$ of positive type functions (in the sense of \cite{Anantharaman-Delaroche:2002ij}*{Definition 2.3}) that tend to one uniformly on compact subsets of $X\times G$.  Let $(h_i)$ be a net in $C_c(X\times G)$ implementing topological amenability as above.  Then the net $\theta_i:G\to C_0(X)$ defined by $\theta_i(g)(x):=h_i(x,g)$ satisfies the assumptions needed for strong amenability in Definition \ref{def-amenable (SA)}.  Conversely, assume $\theta_i:G\to \M(C_0(X))$ satisfies the conditions in Definition \ref{def-amenable (SA)}.   Let $J$ consist of ordered pairs $(K,\epsilon)$ where $K$ is a compact subset of $X\times G$, and where $\epsilon>0$.  Make $J$ a directed set by stipulating that $(K,\epsilon)\leq (K',\epsilon')$ if $K\subseteq K'$ and $\epsilon'<\epsilon$.  For each $j=(K,\epsilon)\in J$ choose $i_j$ such that $|\theta_{i_j}(g)(x)-1|<\epsilon$ for all $(x,g)\in K$.  For each $j=(K,\epsilon)$, let $K_X\subseteq X$ be the projection of $K$ to $X$, and choose $e_j\in C_c(X)$ with values in the unit interval $[0,1]$ such that $\alpha_g(e_j)(x)=1$ for all $(x,g)\in K\cup K_X\times \{e\}$. Define $h_{j}\in C_c(X\times G)$ by $h_{j}(x,g)=e_j(x)\theta_{i_j}(g)(x)\alpha_g(e_j)(x)$.  Then each $h_j$ is positive type, and the net $(h_{j})$ converges uniformly to one on compact sets, showing amenability of the action of $G$ on $X$ as in \cite{Anantharaman-Delaroche:2002ij}*{Definition 2.1}. 
\end{proof}

Our main goal in the rest of this section is to show that in general amenability implies von Neumann amenability, and 
moreover that both are equivalent if $G$ is exact.  We also relate these to several other approximation properties.   As already mentioned above, since we first circulated a draft of this paper, Bearden and Crann \cite{Bearden-Crann}*{Theorem 3.6} showed that amenability and von Neumann amenability are equivalent in general.   However, the results below still seem worthwhile: first, as the proof is different and simpler in the exact case; and second, as some of the other approximation properties we consider are equivalent to amenability when $G$ is exact, but \emph{not} in general.  

For the next lemma, recall from Section \ref{Sec:Prel} that if $\beta:G\to\Aut(B)$ is any homomorphism of $G$ into
the group of $*$-automorphisms of a $C^*$-algebra $B$, we write $B_c$ for the $C^*$-subalgebra 
of $B$ consisting of all elements $b\in B$ such that the map
$$
G\to B;\quad  g\mapsto\beta_g(b)
$$
is continuous in norm.  

\begin{lemma}\label{sub c gone}
Let $(M,\sigma)$ be a $G$-von Neumann algebra.  Then the following are equivalent:
\begin{enumerate}
\item \label{sub c gone 1} there exists a net of norm-continuous, compactly supported, positive type functions 
\mbox{$\theta_i:G\to M$}  such that
$\|\theta_i(e)\|\leq 1$ for all $i\in I$, and  $\theta_i(g)\to 1_{M}$ ultraweakly and uniformly for $g$ in compact subsets of $G$;
\item \label{sub c gone 2} there exists a net of norm-continuous, compactly supported, positive type functions 
\mbox{$\theta_i:G\to M_c$}  such that
$\|\theta_i(e)\|\leq 1$ for all $i\in I$, and  $\theta_i(g)\to 1_{M}$ ultraweakly and uniformly for $g$ in compact subsets of $G$.
\end{enumerate}
The equivalence also holds if we replace ``\,$\theta_i(g)\to 1_{M}$ ultraweakly'' with ``\,$\theta_i(g)\to 1_{M}$ in norm'' in both conditions.
\end{lemma}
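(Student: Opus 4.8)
The plan is to prove \eqref{sub c gone 1}$\Rightarrow$\eqref{sub c gone 2}, the reverse implication being trivial since $M_c\subseteq M$. The idea is to \emph{regularize} the given functions by a two-sided convolution which, via the $L^1(G)$-module identity \eqref{eq:Mc=L1*M}, forces the values into $M_c$. Concretely, I would fix once and for all a function $f\in C_c(G)$ with $f\ge 0$ and $\int_G f\,dg=1$, and, given a net $(\theta_i)$ as in \eqref{sub c gone 1}, set
$$
\tilde\theta_i(g):=\int_G\int_G\overline{f(s)}\,\sigma_s\big(\theta_i(s^{-1}gt)\big)\,f(t)\,ds\,dt,
$$
the integral taken ultraweakly (it exists since $s\mapsto\sigma_s(\theta_i(s^{-1}gt))$ is bounded and ultraweakly continuous). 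I claim the net $(\tilde\theta_i)$ witnesses \eqref{sub c gone 2}. That $\tilde\theta_i$ is norm-continuous with $\supp\tilde\theta_i\subseteq\supp f\cdot\supp\theta_i\cdot(\supp f)^{-1}$ is routine, using that $\theta_i\in C_c(G,M)$ is uniformly continuous.

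The first substantive point is that $\tilde\theta_i$ takes values in $M_c$. Using normality of $\sigma_a$ and left-invariance of Haar measure one computes
$$
\sigma_a\big(\tilde\theta_i(g)\big)=\int_G\int_G\overline{f(a^{-1}u)}\,\sigma_u\big(\theta_i(u^{-1}agt)\big)\,f(t)\,du\,dt,
$$
and then splits $\sigma_a(\tilde\theta_i(g))-\sigma_{a_0}(\tilde\theta_i(g))$ into the contribution of $\overline{f(a^{-1}u)}-\overline{f(a_0^{-1}u)}$, bounded in norm by $\|\tau_af-\tau_{a_0}f\|_1\to0$ (strong continuity of translation on $L^1(G)$), and the contribution of $\theta_i(u^{-1}agt)-\theta_i(u^{-1}a_0gt)$, bounded — using uniform continuity of $\theta_i$ on the relevant compact set together with the fact that $u^{-1}agt\,(u^{-1}a_0gt)^{-1}=u^{-1}aa_0^{-1}u\to e$ uniformly for $u$ in a compact set — by a quantity tending to $0$ as $a\to a_0$. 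Hence $a\mapsto\sigma_a(\tilde\theta_i(g))$ is norm-continuous, i.e.\ $\tilde\theta_i(g)\in M_c$.

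The second point is that $\tilde\theta_i$ is again of positive type, and this is where the hypothesis on $\theta_i$ enters: performing the substitution $u=as$ as above and then $w=bt$, I would show that for every finite $F\subseteq G$ and every $(m_a)_{a\in F}$ in $M$,
$$
\sum_{a,b\in F}m_a^*\,\sigma_a\big(\tilde\theta_i(a^{-1}b)\big)\,m_b=\int_G\int_G X(u)^*\,\sigma_u\big(\theta_i(u^{-1}w)\big)\,X(w)\,du\,dw,
$$
where $X(w):=\sum_{b\in F}f(b^{-1}w)m_b\in C_c(G,M)$; the right-hand side is positive by the integral form of the positive-type condition for $\theta_i$, which follows from the matrix form in Definition \ref{amen def} by approximating the double integral by Riemann sums — each a positive element by the matrix condition applied to the sample points — and passing to the ultraweak limit (positivity being ultraweakly closed). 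Finally $\|\tilde\theta_i(e)\|\le\int_G\int_G f(s)f(t)\,\|\theta_i(s^{-1}t)\|\,ds\,dt\le 1$, since $\|\theta_i(h)\|\le\|\theta_i(e)\|\le1$ for positive-type functions (positivity of the relevant $2\times2$ matrix) and $\int_G f\,dg=1$.

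It remains to transfer the approximation property. Writing $1_M=\int_G\int_G\overline{f(s)}\sigma_s(1_M)f(t)\,ds\,dt$ gives $\tilde\theta_i(g)-1_M=\int_G\int_G\overline{f(s)}\,\sigma_s\big(\theta_i(s^{-1}gt)-1_M\big)\,f(t)\,ds\,dt$, so for $\omega$ in the unit ball of $M_*$ and $g$ in a compact set $K$,
$$
\big|\omega\big(\tilde\theta_i(g)-1_M\big)\big|\le\int_G\int_G f(s)f(t)\,\big|(\sigma_s^*\omega)\big(\theta_i(s^{-1}gt)-1_M\big)\big|\,ds\,dt.
$$
Here $s^{-1}gt$ ranges over the fixed compact set $L:=(\supp f)^{-1}K(\supp f)$ and $C_\omega:=\{\sigma_s^*\omega:s\in\supp f\}$ is a norm-compact subset of $M_*$ (the $G$-action on $M_*$ being norm-continuous, \cf\ the proof of Proposition \ref{prop-Ikunishi}). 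Since $\|\theta_i(h)-1_M\|\le 2$ for all $h$, a routine compactness argument upgrades the hypothesis ``$\psi(\theta_i(h)-1_M)\to0$ uniformly for $h\in L$'' from a single functional $\psi$ to uniformity over $\psi\in C_\omega$; hence $\big|\omega(\tilde\theta_i(g)-1_M)\big|\to0$ uniformly for $g\in K$, which is the ultraweak convergence required. For the norm version of the statement, the same computation gives directly $\|\tilde\theta_i(g)-1_M\|\le\sup_{h\in L}\|\theta_i(h)-1_M\|\to0$ uniformly for $g\in K$. I expect the main obstacle to be the verification that $\tilde\theta_i$ lands in $M_c$: since $\sigma$ is only ultraweakly continuous one cannot control $\sigma_a(m)-\sigma_{a_0}(m)$ in norm directly, and the argument must lean on the $L^1(G)$-regularization together with the genuine uniform norm continuity of the $\theta_i$ — which is exactly what the two-sided convolution by $f$ is designed to supply.
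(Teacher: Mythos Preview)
Your proof is correct and takes a genuinely different route from the paper's. The paper regularizes via a \emph{conjugation} average
\[
\theta_{i,j}(g)=\int_G f_j(k)\,\sigma_k\big(\theta_i(k^{-1}gk)\big)\,dk
\]
with a net $(f_j)$ of approximate units shrinking to $e$, whereas you use a \emph{two-sided convolution} with a single fixed $f$. This has two consequences. First, the paper's conjugation form makes the positive-type check a one-liner (the matrix $\big(\sigma_g\theta_{i,j}(g^{-1}h)\big)$ is literally an integral of positive matrices indexed by the shifted sets $Fk$), while your approach requires the Riemann-sum argument and the observation that $\sum Y_j^*a_{jk}Y_k\geq 0$ for $Y_j\in M$ when $(a_{jk})\in M_n(M)_+$; this is fine but less immediate. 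Second, and conversely, your fixed $f$ makes the convergence step considerably cleaner: you keep a single-indexed net and pass the estimate through the integral using norm-compactness of $\{\sigma_s^*\omega:s\in\supp f\}$, whereas the paper must produce a doubly-indexed net $(\theta_{i,j})$ and argue more delicately (first choosing $i$ for approximation on the dilated compact $V_0KV_0$, then exploiting norm-compactness of $\theta_i(V_0KV_0)$ to choose $j$). Both the $M_c$-verification and the norm-version are handled in essentially the same spirit in the two proofs.
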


Note that the lemma applied to $M=Z(A_\alpha'')$ implies  that we can replace the codomain in the definition of amenability (Definition \ref{def-amenable (A)}) with a $G$-$C^*$-algebra.

\begin{proof}[Proof of Lemma \ref{sub c gone}]
The implication \eqref{sub c gone 2} $\Rightarrow$ \eqref{sub c gone 1} is clear, so we just need to prove that if $(\theta_i)$ has the properties in \eqref{sub c gone 1}, then we can build a net with the properties in \eqref{sub c gone 2}.

Let $\{V_j: j\in J\}$ be a  
neighbourhood basis of $e$ in $G$ consisting of 
symmetric compact sets such that 
$V_j\subseteq V_{j'}$ if $j\geq j'$ and let $(f_j)_{j\in J}$ be an approximate unit of $L^1(G)$ consisting of continuous positive 
 functions with $\int_G f_j(g)\,dg=1$ and such that $\supp(f_j)\subseteq V_j$ for all $j\in J$.
Assume further that there exists a compact  neighbourhood $V_0=V_0^{-1}$
of $e$ such that 
 $V_j\subseteq V_0$ for all $j\in J$. 
 
For each pair $(i,j)\in I\times J$ we define
$$\theta_{i,j}(g):=\int_G f_j(k)\sigma_k(\theta_i(k^{-1}gk))\, dk.$$
We claim that each $\theta_{i,j}$ is norm-continuous, compactly supported, positive type, satisfies $\|\theta_{i,j}(e)\|\leq 1$, and takes values in $M_c$.  
Indeed, it is straightforward to check that the functions $\theta_{i,j}:G\to M$ are compactly supported, and satisfy $\|\theta_{i,j}(e)\|\leq 1$ for all $i,j$.  Moreover, each is norm-continuous as $\theta_i$ is norm-continuous and compactly supported, so uniformly norm-continuous.  To see that each $\theta_{i,j}$ is positive type, let $F$ be a finite subset of $G$.  Then 
\begin{align*}
\big(\sigma_g(\theta_{i,j}(g^{-1}h))\big)_{g,h\in F} & =\int_G f_j(k)\Big(\sigma_{gk}( \theta_i(k^{-1}g^{-1}hk))\Big)_{g,h\in F}\,dk.
\end{align*}
This is an ultraweakly convergent integral of positive matrices in $M_F(M)$, so positive.  Finally, to see that $\theta_{i,j}$ is valued in $M_c$, we note that for any $g,h\in G$, 
$$
\sigma_h(\theta_{i,j}(g))=\int_G f_j(k)\sigma_{hk}( \theta_i(k^{-1}gk))\,dk\stackrel{l=hk}{=}\int_G f_j(h^{-1}l)\sigma_{l}( \theta_i(l^{-1}hgh^{-1}l))\,dl.
$$
Norm-continuity of this in $h$ follows from uniform norm-continuity of the original $\theta_i$, plus uniform continuity of each $f_j$.

To complete the proof in the case of ultraweak convergence it suffices to show that for any compact $K\subseteq G$, any finite $F\subseteq M_*$ and any $\epsilon>0$ there exists a pair $(i,j)\in I\times J$ such that for all $g\in K$ and all $\psi\in F$ we have
$$|\psi(\theta_{i,j}(g)-1_M)|<\epsilon.$$
We now do this.

Since $\theta_i(g)\to 1_M$ ultraweakly and uniformly on compact sets in $G$, we can find an index 
$i\in I$ such that for all $\psi\in F$ and for all $g\in V_0KV_0$ we get
$$|\psi(\theta_i(g)-1_M)|<\frac{\epsilon}{3}.$$
Let $0\neq C\geq \max\{\|\psi\|: \psi\in F\}$.
Since $\theta_i:G\to M$ is norm-continuous, the image $\theta_i(V_0KV_0)$ is 
a norm-compact subset of $M$. We therefore find finitely many elements $g_1,\ldots, g_r\in V_0KV_0$ 
such that $\theta_i(V_0KV_0)\subseteq \bigcup_{l=1}^r B_{\frac{\epsilon}{3C}}(\theta_i(g_l))$.
Since $\sigma:G\to \Aut(M)$ is ultraweakly continuous, we can further find an index $j\in J$
such that for all $k\in V_j$ and for all $\psi\in F$
$$\big|\psi\big(\sigma_k(\theta_i(g_l))-\theta_i(g_l)\big)\big|<\frac{\epsilon}{3}.$$
Then  for all $k\in V_j$, $g\in K$ and $\psi\in F$ we find a suitable $l\in \{1,\ldots,r\}$ such that
\begin{align*}
&\big|\psi(\sigma_k(\theta_i(k^{-1}gk))-1_M)\big|\\
&\leq \left|\psi\big(\sigma_k\big(\theta_i(k^{-1}gk)-\theta_i(g_l)\big)\big)\right|
+\left|\psi\big(\sigma_k(\theta_i(g_l))-\theta_i(g_l)\big)\right|
+\left|\psi(\theta_i(g_l)-1_M)\right|\\
&\leq \frac{\epsilon}{3}+\frac{\epsilon}{3}+\frac{\epsilon}{3}=\epsilon.
\end{align*}
 We then conclude  for all $g\in K$ and $\psi\in F$, that
\begin{align*}
\big|\psi(\theta_{i,j}(g)-1_M)\big|
&=
\left|\int_G f_j(k)\psi\big(\sigma_k(\theta_i(k^{-1}gk)-1_M)\big)\, dk\right|\\
&\leq \int_G f_j(k) \left| \psi\big(\sigma_k(\theta_i(k^{-1}gk)-1_M)\big)\right|\,dk\\
&\leq \int_G f_j(k)\epsilon\,dk=\epsilon
\end{align*}
which completes the proof under the assumption that $\theta_i(g)\to 1_M$ ultraweakly.

Suppose instead that $\theta_i(g)\to 1_M$ in norm uniformly for $g$ in compact subsets of $G$.  Then if $K\subseteq G$  is compact and $\epsilon>0$, let $i_0\in I$ be such that 
$\|\theta_i(g)-1_M\|<\epsilon$ for all $g\in V_0KV_0$ and $i\geq i_0$. Then, for all $g\in K$,  $j\in J$, and $i\geq i_0$ we get
$$
\|\theta_{i,j}(g)-1_M\|\leq \int_G f_j(k)\big\|\sigma_k\big(\theta_i(k^{-1}gk)-1_M\big)\big\|\,dk
\leq \int_G f_j(k)\epsilon\,dk=\epsilon,
$$
which completes the proof.
\end{proof}

\begin{remark}\label{rem:ZM(A)_c}
Similarly to the proof of Lemma \ref{sub c gone}, one can replace the codomain of the maps $\theta_i$ in the definition of strong amenability (Definition \ref{def-amenable (SA)}) with $(Z\M(A))_c$.  We leave the details to the reader.
\end{remark}

We need some more preliminaries.  First, we recall a standard definition.

\begin{definition}\label{l2ga act}
Let $A$ be a $C^*$-algebra and let $X$ be a locally compact space equipped with a fixed Radon measure $\mu$\footnote{We will mainly be interested in the case that $X=G$ and $\mu$ is Haar measure}.  Define $L^2(X,A)$ to be the Hilbert $A$-module 
given as the completion of $C_c(X,A)$ with respect to the $A$-valued inner product
$$\braket{\xi}{\eta}_{A}=\int_X \xi(x)^*\eta(x)\,d\mu(x).$$
We write $\|\xi\|_2:=\sqrt{\|\braket{\xi}{\xi}_A\|}$ for the associated norm.  Note that $L^2(X,A)$ identifies canonically with the (exterior) Hilbert module tensor product $L^2(X)\otimes A$.

Let now $(A,\alpha)$ be a $G$-$C^*$-algebra.   There is a continuous action $\lambda^\alpha$ of $G$ on $L^2(G, A)$ determined by the formula 
$$
\lambda^\alpha_g(\xi)(h)=\alpha_g(\xi(g^{-1}h)).
$$
A direct computation shows this is \emph{compatible} with the given action of $G$ on $A$, meaning that $\braket{\lambda^\alpha_g \xi}{\lambda^\alpha_g \eta}_A=\alpha_g( \braket{\xi}{\eta}_{A})$ for all $g\in G$ and $\xi,\eta\in L^2(G,A)$.  We call $\lambda^\alpha$ 
the {\em $\alpha$-regular representation} of $G$ on $L^2(G,A)$.  
\end{definition}

We record  a well-known result of Anantharaman-Delaroche \cite{Anantharaman-Delaroche:1987os}*{Proposition~2.5} for later use.

\begin{lemma}\label{lem:pt cs}
Let $A$ be a $G$-$C^*$-algebra.  For any norm-continuous, compactly supported, positive type function $\theta:G\to A$, there exists $\xi\in L^2(G,A)$ such that 
$$\theta(g)=\braket{\xi}{\lambda^\alpha_g\xi}_{A}$$
for all $g\in G$. \qed
\end{lemma}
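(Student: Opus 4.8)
The plan is to prove this $C^*$-algebraic analogue of Godement's theorem by realising $\theta$ as a ``matrix coefficient'' $\braket{\xi}{\lambda^\alpha_g\xi}_A$ of the $\alpha$-regular representation via a square-root trick. The central object is the integral operator with kernel built from $\theta$: set
\[
(T\xi)(s):=\int_G \alpha_s\big(\theta(s^{-1}t)\big)\xi(t)\,dt,\qquad \xi\in C_c(G,A),\ s\in G.
\]
First I would record the identity $\theta(g)^*=\alpha_g(\theta(g^{-1}))$, which is read off from positivity of the $\{e,g\}$-block of the matrix in Definition~\ref{amen def}. Then I would check that $T$ extends to a \emph{positive} element of $\mathcal L(L^2(G,A))$: positivity of the $A$-valued form $\xi\mapsto\braket{\xi}{T\xi}_A$ follows by approximating the double integral $\int\!\int \xi(s)^*\alpha_s(\theta(s^{-1}t))\xi(t)\,dt\,ds$ in norm by Riemann sums, each of which is of the shape $(a_s)^*\big(\alpha_s\theta(s^{-1}t)\big)_{s,t}(a_t)\ge0$ for a suitable finite family $(a_s)$; norm-boundedness follows from a Schur-type estimate, using $\int_G\|\theta(s^{-1}t)\|\,dt=\|\theta\|_1$ and $\int_G\|\theta(s^{-1}t)\|\,ds\le\big(\sup_{u\in\supp\theta}\Delta(u^{-1})\big)\|\theta\|_1$, both finite because $\theta$ has compact support; and self-adjointness is a one-line computation using the identity above. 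A direct substitution in the integrals shows $T\lambda^\alpha_g=\lambda^\alpha_g T$ for all $g\in G$, so the positive square root $T^{1/2}\in\mathcal L(L^2(G,A))$ exists and also commutes with every $\lambda^\alpha_g$.

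Next I would manufacture the vector. Fix a self-adjoint approximate unit $(e_i)$ of $A$ with $\|e_i\|\le1$ and functions $(\phi_k)$ in $C_c(G)_+$ with $\int_G\phi_k=1$ and $\supp\phi_k\to\{e\}$, and put $u_{(i,k)}(s):=\phi_k(s)e_i\in C_c(G,A)$. Using that $T^{1/2}$ is self-adjoint and commutes with $\lambda^\alpha$, for each $g\in G$ one computes
\begin{align*}
\braket{T^{1/2}u_{(i,k)}}{\lambda^\alpha_g T^{1/2}u_{(i',k')}}_A
&=\braket{u_{(i,k)}}{T\lambda^\alpha_g u_{(i',k')}}_A\\
&=\int_G\!\!\int_G \phi_k(s)\,\phi_{k'}(g^{-1}t)\; e_i\,\alpha_s\big(\theta(s^{-1}t)\big)\,\alpha_g(e_{i'})\,dt\,ds .
\end{align*}
Since the supports of $s\mapsto\phi_k(s)$ and $t\mapsto\phi_{k'}(g^{-1}t)$ concentrate at $e$ and at $g$ and $\theta$ is norm-continuous, the right-hand side converges to $\theta(g)$, uniformly for $g$ in compact sets because $\{\theta(g):g\in K\}$ is norm-compact and approximate units act uniformly on norm-compact sets. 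Specialising to $g=e$ and expanding $\|T^{1/2}u_{(i,k)}-T^{1/2}u_{(i',k')}\|_2^2=\big\|\braket{u_{(i,k)}-u_{(i',k')}}{T(u_{(i,k)}-u_{(i',k')})}_A\big\|$ into four such terms shows that $(T^{1/2}u_{(i,k)})$ is Cauchy in $L^2(G,A)$; let $\xi$ be its limit. Passing to the limit in the displayed formula (by joint norm-continuity of the inner product) gives $\braket{\xi}{\lambda^\alpha_g\xi}_A=\theta(g)$ for all $g$, which is the assertion. Conceptually, this last step just says that $\xi\mapsto T^{1/2}\xi$ descends to an isometric, $\lambda^\alpha$-equivariant embedding into $(L^2(G,A),\lambda^\alpha)$ of the GNS Hilbert module associated to $\theta$ in \cite{Anantharaman-Delaroche:1987os}*{Proposition~2.3}, and $\xi$ is the image of its cyclic vector.

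The main obstacle is obtaining an \emph{exact} equality $\theta(g)=\braket{\xi}{\lambda^\alpha_g\xi}_A$ rather than merely an approximate one: the naive choice $\xi=u_{(i,k)}$ fails completely, since $\braket{u_{(i,k)}}{\lambda^\alpha_g u_{(i',k')}}_A\to 0$ for every $g\neq e$, and it is precisely the ``smearing by $T^{1/2}$'', together with its commutation with $\lambda^\alpha$, that repairs this while allowing one to pass to a genuine limit. The only other slightly delicate point is the bookkeeping of the modular function $\Delta$ in the Schur estimate for boundedness of $T$; everything else is routine approximate-unit manipulation.
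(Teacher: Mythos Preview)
The paper does not prove this lemma; it is stated with a \qed and attributed to Anantharaman-Delaroche \cite{Anantharaman-Delaroche:1987os}*{Proposition~2.5}. Your square-root argument is correct and is, as you yourself note in the final sentence, a concrete incarnation of the GNS module construction of \cite{Anantharaman-Delaroche:1987os}*{Proposition~2.3} followed by its embedding into $(L^2(G,A),\lambda^\alpha)$; this is essentially how the original proof in \cite{Anantharaman-Delaroche:1987os} proceeds as well.

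One small point in your write-up deserves an extra line of justification. The map $\lambda^\alpha_g$ is not $A$-linear (it satisfies $\lambda^\alpha_g(\xi a)=(\lambda^\alpha_g\xi)\alpha_g(a)$) and therefore is not an element of the $C^*$-algebra $\mathcal L(L^2(G,A))$. Hence the inference from $T\lambda^\alpha_g=\lambda^\alpha_g T$ to $T^{1/2}\lambda^\alpha_g=\lambda^\alpha_g T^{1/2}$ is not quite automatic. The fix is short: using the compatibility relation $\braket{\lambda^\alpha_g\xi}{\lambda^\alpha_g\eta}_A=\alpha_g(\braket{\xi}{\eta}_A)$ one checks that $S\mapsto \lambda^\alpha_g S(\lambda^\alpha_g)^{-1}$ is a $*$-automorphism of $\mathcal L(L^2(G,A))$, so it preserves the continuous functional calculus and in particular fixes $T^{1/2}$ once it fixes $T$.
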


\begin{lemma}\label{prop-approx}
Let $A\subseteq \Bd(H)$ be a concrete $C^*$-algebra
and let $\alpha:G\to \Aut(A)$ be a homomorphism (not necessarily continuous in any sense). Then the following are equivalent:
\begin{enumerate}
\item \label{prop-approx 1} There exists a net $(\xi_i)_{i\in I}$ of vectors in the unit ball of $L^2(G,A_c)$ such that 
$$\braket{\xi_i}{\lambda^\alpha_g\xi_i}_{A_c}\to 1_H$$
weakly in $\Bd(H)$ and  uniformly for $g$ in compact subsets of $G$. 
\item \label{prop-approx 2} There exists a net $(\xi_i)_i$ in $C_c(G,A_c)$ with $\|\xi_i\|_2\leq 1$ for all $i\in I$
and $$\braket{\xi_i}{\lambda^\alpha_g\xi_i}_{A_c}\to 1_H$$
weakly in $\Bd(H)$ and uniformly for $g$ in compact subsets of $G$.
\item \label{prop-approx 3} There exists a net $(\theta_i)_{i\in I}$ in $C_c(G,A_c)$  of positive type functions such that
$\|\theta_i(e)\|\leq 1$ for all $i\in I$, and  $\theta_i(g)\to 1_H$  weakly and uniformly for $g$ in compact subsets of $G$.
\end{enumerate}

The same equivalences hold if we replace ``weakly'' everywhere it appears by either ``in norm'' or ``strictly in $\M(A)_c$''. 
\end{lemma}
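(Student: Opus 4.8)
# Proof proposal for Lemma \ref{prop-approx}

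The plan is to prove the cycle of implications \eqref{prop-approx 3} $\Rightarrow$ \eqref{prop-approx 2} $\Rightarrow$ \eqref{prop-approx 1} $\Rightarrow$ \eqref{prop-approx 3}, and then observe that each step works verbatim (or with cosmetic changes) if ``weakly'' is replaced throughout by ``in norm'' or by ``strictly in $\M(A)_c$''. The implication \eqref{prop-approx 3} $\Rightarrow$ \eqref{prop-approx 2} is essentially Lemma \ref{lem:pt cs} applied to the $G$-$C^*$-algebra $A_c$: given positive type $\theta_i \in C_c(G,A_c)$ one produces $\xi_i \in L^2(G,A_c)$ with $\theta_i(g)=\braket{\xi_i}{\lambda^\alpha_g\xi_i}_{A_c}$; the normalization $\|\theta_i(e)\|\le 1$ translates into $\|\xi_i\|_2^2 = \|\braket{\xi_i}{\xi_i}_{A_c}\| = \|\theta_i(e)\|\le 1$, and weak convergence of $\theta_i(g)\to 1_H$ uniformly on compact sets is exactly the stated convergence of the inner products. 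One should be slightly careful that $A_c$ is genuinely a $G$-$C^*$-algebra (the action restricted to $A_c$ is norm-continuous by definition) so that Lemma \ref{lem:pt cs} applies — but this is immediate. The implication \eqref{prop-approx 2} $\Rightarrow$ \eqref{prop-approx 1} is trivial since $C_c(G,A_c)$ sits inside the unit ball of $L^2(G,A_c)$ under the hypothesis $\|\xi_i\|_2\le 1$.

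The substantive implication is \eqref{prop-approx 1} $\Rightarrow$ \eqref{prop-approx 3}: starting from a net $(\xi_i)$ in the unit ball of $L^2(G,A_c)$ with $\braket{\xi_i}{\lambda^\alpha_g\xi_i}_{A_c}\to 1_H$, I want to manufacture \emph{positive type} functions $\theta_i \in C_c(G,A_c)$ with the right properties. The natural first attempt is $\theta_i(g) := \braket{\xi_i}{\lambda^\alpha_g\xi_i}_{A_c}$ itself, which is automatically of positive type (it is the prototypical example of a positive type function coming from a vector in an equivariant Hilbert module, cf. the discussion after Definition \ref{amen def}), and satisfies $\|\theta_i(e)\| = \|\braket{\xi_i}{\xi_i}_{A_c}\|\le 1$. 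The issue is that $\theta_i$ need not be compactly supported: $\xi_i$ lies in $L^2(G,A_c)$, not necessarily in $C_c(G,A_c)$. So the real work is an approximation/truncation argument: approximate $\xi_i$ in $\|\cdot\|_2$-norm by elements $\eta \in C_c(G,A_c)$ (which is dense by definition of $L^2(G,A_c)$), note that the map $\eta \mapsto \braket{\eta}{\lambda^\alpha_g\eta}_{A_c}$ is continuous from $L^2(G,A_c)$ to $C_b(G,A_c)$ uniformly in $g$ — indeed $\|\braket{\eta}{\lambda^\alpha_g\eta}_{A_c} - \braket{\xi}{\lambda^\alpha_g\xi}_{A_c}\| \le \|\eta-\xi\|_2(\|\eta\|_2+\|\xi\|_2)$ by Cauchy–Schwarz and the isometry property of $\lambda^\alpha_g$ — and that $\eta \mapsto \braket{\eta}{\lambda^\alpha_g\eta}_{A_c}$ has support contained in $(\operatorname{supp}\eta)^{-1}\operatorname{supp}\eta$, hence compact. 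Then pass to the diagonal net indexed by pairs (original index $i$, approximation quality) to get compactly supported positive type functions in $C_c(G,A_c)$ still converging weakly to $1_H$ uniformly on compacta. A minor normalization step (rescaling by a constant $\to 1$) ensures $\|\theta_i(e)\|\le 1$ is preserved exactly rather than just asymptotically.

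The main obstacle I anticipate is keeping track of uniformity in $g$ over compact sets throughout the truncation, combined with making sure that after truncating $\xi_i$ the new inner product functions still converge to $1_H$ in the relevant topology (weak / norm / strict) — the Cauchy–Schwarz estimate above handles the norm case cleanly and hence also the weak case, and for the strict-in-$\M(A)_c$ variant one needs the analogous module-Cauchy–Schwarz estimate together with the fact that strict convergence on bounded sets is controlled by $\|\cdot\|_2$-approximation of the $\xi_i$. The only genuinely new ingredient beyond Lemma \ref{lem:pt cs} and density of $C_c(G,A_c)$ in $L^2(G,A_c)$ is this diagonal-net bookkeeping; everything else is standard. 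I would also remark that for the ``strictly in $\M(A)_c$'' version one should interpret $1_H$ as the unit of $\M(A)_c$ viewed inside $\M(A) \subseteq \Bd(H)$, and check that the truncated inner products, which a priori live in $A_c$, converge strictly to this unit — this follows because strict convergence to the unit of a net in a $C^*$-algebra $A_c$ is detected by testing against elements of $A_c$, and the convergence is already controlled in norm after the $\|\cdot\|_2$-truncation.
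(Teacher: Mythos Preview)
Your argument contains the same three ingredients as the paper's proof --- Lemma~\ref{lem:pt cs}, density of $C_c(G,A_c)$ in $L^2(G,A_c)$, and the assignment $\xi\mapsto\big(g\mapsto\braket{\xi}{\lambda^\alpha_g\xi}\big)$ --- but your cycle has a labeling error that, read literally, leaves a gap. You claim \eqref{prop-approx 3} $\Rightarrow$ \eqref{prop-approx 2} via Lemma~\ref{lem:pt cs}, but that lemma only produces $\xi_i\in L^2(G,A_c)$, not $\xi_i\in C_c(G,A_c)$; so what you have actually shown is \eqref{prop-approx 3} $\Rightarrow$ \eqref{prop-approx 1}. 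Combined with the trivial \eqref{prop-approx 2} $\Rightarrow$ \eqref{prop-approx 1} and your \eqref{prop-approx 1} $\Rightarrow$ \eqref{prop-approx 3}, this yields \eqref{prop-approx 1} $\Leftrightarrow$ \eqref{prop-approx 3} and \eqref{prop-approx 2} $\Rightarrow$ \eqref{prop-approx 1}, but nothing implies \eqref{prop-approx 2}.

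The fix is already implicit in your \eqref{prop-approx 1} $\Rightarrow$ \eqref{prop-approx 3} step: the approximating elements $\eta\in C_c(G,A_c)$ that you construct there (before passing to $\theta(g)=\braket{\eta}{\lambda^\alpha_g\eta}$) already witness \eqref{prop-approx 2}. So your argument really factors as \eqref{prop-approx 1} $\Rightarrow$ \eqref{prop-approx 2} $\Rightarrow$ \eqref{prop-approx 3}, and then \eqref{prop-approx 3} $\Rightarrow$ \eqref{prop-approx 1} via Lemma~\ref{lem:pt cs}. This is exactly the paper's cycle \eqref{prop-approx 2} $\Rightarrow$ \eqref{prop-approx 3} $\Rightarrow$ \eqref{prop-approx 1} $\Rightarrow$ \eqref{prop-approx 2}, just entered at a different point. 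Once the labels are corrected, your proof is correct and essentially identical to the paper's.
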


\begin{proof} \eqref{prop-approx 2} $\Rightarrow$ \eqref{prop-approx 3} follows by defining $\theta_i(g)=\braket{\xi_i}{\lambda^\alpha_g\xi_i}_{A_c}$ for all $g\in G$ and 
\eqref{prop-approx 3} $\Rightarrow$ \eqref{prop-approx 1} follows from  Lemma \ref{lem:pt cs}.  In order to show \eqref{prop-approx 1} $\Rightarrow$ \eqref{prop-approx 2}, let $(\xi_i)_i$ be a net of unit vectors in $L^2(G,A_c)$ such that 
$\braket{\xi_i}{\lambda^\alpha_g\xi_i}_{A_c}\to 1_H$ weakly and uniformly for $g$ in compact subsets of $G$. 
Since $C_c(G,A_c)$ is dense in $L^2(G,A_c)$ we can find for each $i\in I$ and $n\in \N$
an element $\eta_{i,n}\in C_c(G,A)$ with $\|\eta_{i,n}\|_2\leq 1$ and $\|\xi_i-\eta_{i,n}\|_2\leq \frac1{n}$. 
Then, with the canonical  order on $I\times \N$ one easily checks that $(\eta_{i,n})_{(i,n)}$ 
satisfies the conditions in \eqref{prop-approx 2}. For norm or strict approximation the arguments are analogous.
\end{proof}

We need one more definition before getting to the main results.  
Let $M\sbe \Bd(H)$ be a $G$-von Neumann algebra with $G$-action $\sigma$.
For $\xi\in C_c(G,M)$ there is a bounded operator $\xi:H\to L^2(G,H)$ defined by $(\xi\cdot v)(g):=\xi(g)v$.  With notation as in Definition \ref{l2ga act}, one computes directly that $\|\xi\cdot v\|_{L^2(G,H)}^2=\langle v,\braket{\xi}{\xi}_M v\rangle_H$ and therefore the map $C_c(G,M)\to \Bd(H,L^2(G,H))$ extends uniquely to an isometric inclusion $L^2(G,M)\into \Bd(H,L^2(G,H))$.

\begin{definition}\label{l2wgm}
With notation as above, we define $L^2_w(G,M)$ to be the weak closure of $L^2(G,M)$ inside $\Bd(H,L^2(G,H))$.
\end{definition}

Note that $L^2_w(G,M)$ is still a Hilbert $M$-module: considering $L^2(G,M)$ as inside $\Bd(H,L^2(G,H))$, the inner product on $L^2(G,M)$ is given by operator composition via the formula $\braket{\xi}{\eta}=\xi^*\circ \eta$; one checks that this still gives an $M$-valued inner product for $\xi$ and $\eta$ in the weak closure of $L^2(G,M)$.  As any two normal, faithful, (unital) representations of a von Neumann algebra have unitarily equivalent amplifications (see for example \cite[III.2.2.8]{Blackadar:2006eq}), $L^2_w(G,M)$ does not depend on the choice of representation of $M$, up to canonical isomorphism\footnote{One can also see that $L^2_w(G,M)$ is a Hilbert $M$-module that does not depend on the chosen representation of $M$ by identifying it with the selfdual completion of the Hilbert $M$\nb-module $L^2(G,M)$ as a ternary ring of operators: see the discussion on page 357 of \cite{Blecher:2004} and also \cite{Zettl}.}.

In particular, replacing the Hilbert space $H$ if necessary, we may assume without loss of generality that the $G$-action on $M$ is implemented by a unitary representation $u\colon G\to \Bd(H)$, that is, $\sigma_g=\Ad{u_g}$.  Considering $L^2(G,M)$ as a subset of $\Bd(H,L^2(G,H))$, the action\footnote{It is not necessarily strongly continuous for the Hilbert module norm, but this does not matter.} $\lambda^\sigma$ on $L^2(G,M)$ of Definition \ref{l2ga act} is then induced by the formula
$$
(\lambda^\sigma_g)(x)=(\lambda_g\otimes u_g)\circ x \circ u_g^*,\quad x\in L^2(G,M).
$$
Clearly this formula extends $\lambda^\sigma$ to a weakly continuous action on $L^2_w(G,M)$.  Note finally that there is an equivariant action of $L^\infty(G,M)$ on $L^2_w(G,M)$ by adjointable operators that extends the natural tensor product action of $L^\infty(G)\otimes M$ on $L^2(G,M)=L^2(G)\otimes M$.
This can be realized concretely via the canonical representation of 
$L^\infty(G,M)=L^\infty(G)\overline{\otimes} M$ on $L^2(G,H)=L^2(G)\otimes H$: 
the action of $f\in L^\infty(G,M)$ on $x \in L^2_w(G,M)$ is realized by the operator composition $f\circ x:H \to L^2(G,H)$, which one checks belongs to $L^2_w(G,M)$.  We leave the algebraic checks that the action is equivariant and by adjointable operators to the reader.

The following result relates amenability of a $G$-von Neumann algebra to certain approximation properties.  It is the key technical result of this section.

\begin{proposition}\label{prop:Amenability-conditions}
Let $(M,\sigma)$ be a $G$-von Neumann algebra and consider the following assertions:
\begin{enumerate}
\item \label{amen 1} there is a net of compactly supported, norm-continuous, positive type functions 
 \mbox{$\theta_i:G\to Z(M)_c$} 
with  $\|\theta_i(e)\|\leq 1$ and $\theta_i(g)\to 1$ in norm  uniformly for $g$ in compact subsets of $G$;
\item \label{amen 2} there is a net of compactly supported, norm-continuous, positive type functions 
 \mbox{$\theta_i:G\to Z(M)_c$} 
with  $\|\theta_i(e)\|\leq 1$ and $\theta_i(g)\to 1$ ultraweakly and uniformly for $g$ in compact subsets of $G$;
\item \label{amen 3} there is a bounded net $(\xi_i)$ in $\contc(G,Z(M)_c)\sbe L^2(G,Z(M)_c)$ with 
$$\braket{\xi_i}{\lambda^\sigma_g(\xi_i)}_{Z(M)_c}\to 1$$ ultraweakly and uniformly for $g$ in compact subsets of $G$;
\item \label{amen 4} for any ultraweakly dense $*$-subalgebra $A$ of $M$, there is a bounded net $(\xi_i)$ in $L^2_w(G,M)$ such that 
$$\braket{\xi_i}{{a}\lambda^\sigma_g(\xi_i)}_{M}\to {a}$$ 
ultraweakly and pointwise for each $g\in G$ and each $a\in A$;
\item \label{amen 5} the $G$-von Neumann algebra $(M,\sigma)$  is amenable, i.e., there is a $G$\nb-equivariant projection 
$P\colon L^\infty(G,M)\to M$;
\item \label{amen 6} there is a ucp $G$-map $L^\infty(G)\to Z(M)$;
\item \label{amen 7} there is a ucp $G$-map $\contub(G)\to Z(M)_c$.
\end{enumerate}
Then
$$\eqref{amen 1} \Rightarrow\eqref{amen 2} \Leftrightarrow \eqref{amen 3}\Rightarrow\eqref{amen 4}\Rightarrow \eqref{amen 5}\Rightarrow\eqref{amen 6}\Leftrightarrow\eqref{amen 7}
$$
and all of these conditions are equivalent if $G$ is exact.
\end{proposition}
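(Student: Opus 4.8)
The plan is to establish the cyclic chain of implications in the stated order and then, assuming $G$ is exact, to close the loop by deducing the strongest condition \eqref{amen 1} from \eqref{amen 7}; most of the individual implications adapt arguments of Anantharaman-Delaroche from the discrete case, and the only genuinely delicate points are the $G$-equivariance of the conditional expectation in \eqref{amen 4}$\Rightarrow$\eqref{amen 5} and the use of exactness at the very end. The implication \eqref{amen 1}$\Rightarrow$\eqref{amen 2} is immediate since norm convergence implies ultraweak convergence. For \eqref{amen 2}$\Leftrightarrow$\eqref{amen 3} I would apply Lemma \ref{lem:pt cs} to the $G$-$C^*$-algebra $Z(M)_c$: given $(\theta_i)$ as in \eqref{amen 2}, write $\theta_i(g)=\braket{\xi_i}{\lambda^\sigma_g\xi_i}_{Z(M)_c}$ with $\xi_i\in L^2(G,Z(M)_c)$ and $\|\braket{\xi_i}{\xi_i}\|=\|\theta_i(e)\|\le 1$, and then norm-approximate each $\xi_i$ by elements of $C_c(G,Z(M)_c)$ exactly as in the proof of Lemma \ref{prop-approx}, using that $\lambda^\sigma_g$ is isometric for the module norm to control $\|\braket{\eta}{\lambda^\sigma_g\eta}-\braket{\xi_i}{\lambda^\sigma_g\xi_i}\|$ uniformly in $g$; the reverse implication is trivial. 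Throughout one represents $M$ concretely so that ultraweak convergence in $M$ and weak convergence in $\Bd(H)$ agree on bounded sets, as in the conventions section.

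For \eqref{amen 3}$\Rightarrow$\eqref{amen 4}, regard the net $(\xi_i)$ of \eqref{amen 3} inside $L^2_w(G,M)$; it is bounded, and since each $\xi_i$ is valued in the centre one computes directly that $\braket{\xi_i}{a\lambda^\sigma_g(\xi_i)}_M=a\,\braket{\xi_i}{\lambda^\sigma_g(\xi_i)}_{Z(M)_c}$ for all $a\in M$ and $g\in G$, which tends ultraweakly to $a$ because left multiplication by $a$ is ultraweakly continuous; this yields \eqref{amen 4} for every ultraweakly dense $*$-subalgebra at once. For \eqref{amen 4}$\Rightarrow$\eqref{amen 5}, form the completely positive maps $\Phi_i\colon L^\infty(G,M)\to M$, $\Phi_i(f):=\braket{\xi_i}{f\xi_i}_M$, where $f$ acts on $L^2_w(G,M)$ by the adjointable action described just before the Proposition, and let $P$ be a point-ultraweak cluster point of $(\Phi_i)$; condition \eqref{amen 4} at $g=e$ shows $P|_M=\id$, so $P$ is a ucp map, $M$ lies in its multiplicative domain, and $P$ is a conditional expectation onto $M$. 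The remaining point is $G$-equivariance: from the identity $\braket{\xi_i}{\gamma_g(f)\xi_i}_M=\sigma_g\bigl(\braket{\lambda^\sigma_{g^{-1}}\xi_i}{f\lambda^\sigma_{g^{-1}}\xi_i}_M\bigr)$, where $\gamma$ is the tensor-product action on $L^\infty(G,M)$, equivariance of $P$ reduces to showing $\braket{\lambda^\sigma_{g^{-1}}\xi_i}{f\lambda^\sigma_{g^{-1}}\xi_i}_M-\braket{\xi_i}{f\xi_i}_M\to 0$ ultraweakly for each fixed $g$; expanding $\braket{\lambda^\sigma_g\xi_i-\xi_i}{\lambda^\sigma_g\xi_i-\xi_i}_M$ and invoking \eqref{amen 4} with $a=1$ together with the compatibility of $\lambda^\sigma$ with $\sigma$ shows this positive element tends ultraweakly to $0$, and then the Cauchy--Schwarz inequality in the GNS spaces of the states of $M$, combined with the uniform bound on $(\xi_i)$, upgrades this to the required convergence of the cross terms. \emph{I expect this equivariance step to be the main obstacle}, since it is the only place where the weak hypotheses of \eqref{amen 4} must be handled with care rather than termwise.

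The implications into \eqref{amen 6} and \eqref{amen 7} are softer. For \eqref{amen 5}$\Rightarrow$\eqref{amen 6}, restrict $P$ to $L^\infty(G)\otimes 1\cong L^\infty(G)$: the restriction is a ucp $G$-map, and since $P$ is $M$-bimodular while $L^\infty(G)\otimes 1$ commutes with $1\otimes M$, its image lies in $Z(M)$. For \eqref{amen 6}$\Rightarrow$\eqref{amen 7}, restrict a ucp $G$-map $L^\infty(G)\to Z(M)$ to $C_{ub}(G)=L^\infty(G)_c$; equivariance and contractivity force the image into $Z(M)_c$. For \eqref{amen 7}$\Rightarrow$\eqref{amen 6}, fix a probability density $h\in L^1(G)$ and precompose with the map $R_h\colon L^\infty(G)\to C_b(G)$ given by $(R_h\phi)(g):=\int_G\phi(gt)h(t)\dd t$; the substitution $s=gt$ rewrites this as $(R_h\phi)(g)=\int_G\phi(s)(\tau_g h)(s)\dd s$, from which one checks that $R_h$ is ucp, that it maps into $C_{ub}(G)=L^\infty(G)_c$ because $g\mapsto\tau_g h$ is norm continuous into $L^1(G)$, and that it is $G$-equivariant, $R_h\circ\tau_k=\tau_k\circ R_h$. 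Composing $R_h$ with the ucp $G$-map $C_{ub}(G)\to Z(M)_c$ of \eqref{amen 7} then produces the ucp $G$-map $L^\infty(G)\to Z(M)$ required for \eqref{amen 6}.

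Finally, assuming $G$ is exact, I would invoke the characterization of exactness of Brodzki--Cave--Li \cite{Brodzki-Cave-Li:Exactness} and Ozawa--Suzuki \cite{Ozawa-Suzuki}, which says that the $G$-$C^*$-algebra $(C_{ub}(G),\tau)$ is strongly amenable (equivalently, that $G$ acts topologically amenably on the spectrum of $C_{ub}(G)$); since $C_{ub}(G)$ is unital and commutative this provides a net $(\eta_i)$ of norm-continuous, compactly supported, positive type functions $\eta_i\colon G\to C_{ub}(G)$ with $\|\eta_i(e)\|\le 1$ and $\eta_i(g)\to 1$ in norm uniformly on compact sets. Given a ucp $G$-map $\Psi\colon C_{ub}(G)\to Z(M)_c$ as in \eqref{amen 7}, put $\theta_i:=\Psi\circ\eta_i\colon G\to Z(M)_c$; because $\Psi$ is completely positive and intertwines $\tau$ with $\sigma$, applying $\Psi$ entrywise to the positive matrices $\bigl(\tau_g(\eta_i(g^{-1}h))\bigr)_{g,h}$ shows each $\theta_i$ is positive type, while norm-continuity, compact support, the bound $\|\theta_i(e)\|\le 1$, and $\theta_i(g)\to 1$ in norm uniformly on compacta are inherited since $\Psi$ is unital and contractive. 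Thus $(\theta_i)$ witnesses \eqref{amen 1}, which closes the cycle and shows that all seven conditions are equivalent when $G$ is exact.
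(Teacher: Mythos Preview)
Your proof is correct and follows the paper's overall architecture, but in two places you found cleaner arguments than the paper's.

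For \eqref{amen 4}$\Rightarrow$\eqref{amen 5} the paper fixes an arbitrary ultraweakly dense $*$-subalgebra $A$ (possibly non-unital), introduces an approximate unit $(e_j)$ for $A$, builds $P_{i,j}(f)=\braket{\xi_i}{e_j^{1/2}fe_j^{1/2}\xi_i}$, restricts attention to $L^\infty(G,Z(M))$, and uses monotonicity in $j$ together with a quasi-centrality estimate to obtain a $G$-equivariant projection onto $Z(M)$, invoking Remark~\ref{rem-vonNeumannam} at the end. You instead exploit the universal quantifier in \eqref{amen 4} to take $A=M$ itself: then $1\in A$, the cluster point $P$ of the maps $\Phi_i(f)=\braket{\xi_i}{f\xi_i}_M$ is directly a conditional expectation onto $M$, and your equivariance argument (via $\braket{\lambda^\sigma_g\xi_i-\xi_i}{\lambda^\sigma_g\xi_i-\xi_i}_M\to 0$ and Cauchy--Schwarz in the GNS semi-inner products) is exactly the same mechanism the paper uses, just without the approximate-unit layer. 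Your route is shorter; the paper's has the advantage of showing the result starting from \eqref{amen 4} for a single prescribed $A$.

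For \eqref{amen 7}$\Rightarrow$\eqref{amen 6} the paper mollifies on the left via $\Phi_h(f)=\Phi(h*f)$, which is \emph{not} $G$-equivariant for a fixed $h$, and then has to pass to a cluster point over an approximate identity and run a separate computation to recover equivariance. Your right-averaging map $(R_h\phi)(g)=\int_G\phi(gt)h(t)\,dt$ commutes with left translation on the nose, so composing with $\Psi$ immediately gives a ucp $G$-map $L^\infty(G)\to Z(M)$ with no limiting procedure needed. This is a genuine simplification.

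The remaining steps (\eqref{amen 1}$\Rightarrow$\eqref{amen 2}, \eqref{amen 2}$\Leftrightarrow$\eqref{amen 3}, \eqref{amen 3}$\Rightarrow$\eqref{amen 4}, \eqref{amen 5}$\Rightarrow$\eqref{amen 6}, \eqref{amen 6}$\Rightarrow$\eqref{amen 7}, and the closing \eqref{amen 7}$\Rightarrow$\eqref{amen 1} via strong amenability of $C_{ub}(G)$) match the paper's arguments essentially verbatim.
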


Note that by Lemma \ref{sub c gone} we may replace $Z(M)_c$ by $Z(M)$ in statements \eqref{amen 1} and \eqref{amen 2} above. 
 As mentioned before, Bearden and Crann showed (amongst other things) in
\cite{Bearden-Crann}*{Theorem 3.6}  that  \eqref{amen 5} implies \eqref{amen 2} even for non-exact groups $G$.  Using this and the implications from Proposition \ref{prop:Amenability-conditions}, we then get

\begin{theorem}[Bearden-Crann, plus Proposition \ref{prop:Amenability-conditions}]\label{Bearden-Crann}
Items \eqref{amen 2} -- \eqref{amen 5} in Proposition \ref{prop:Amenability-conditions} are all equivalent.\qed
\end{theorem}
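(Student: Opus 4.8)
The plan is to combine the chain of implications already established in Proposition~\ref{prop:Amenability-conditions} with the single nontrivial input provided by Bearden and Crann. From Proposition~\ref{prop:Amenability-conditions} one has, with no hypothesis on $G$, the implications
$$\eqref{amen 2}\Leftrightarrow\eqref{amen 3}\Rightarrow\eqref{amen 4}\Rightarrow\eqref{amen 5}.$$
Hence it suffices to close this into a cycle, i.e.\ to derive (any) one of \eqref{amen 2}, \eqref{amen 3}, \eqref{amen 4} from \eqref{amen 5}. This is precisely \cite{Bearden-Crann}*{Theorem 3.6}: amenability of the $G$-von Neumann algebra $(M,\sigma)$ (condition \eqref{amen 5}) produces a net of norm-continuous, compactly supported, positive type functions $\theta_i\colon G\to Z(M)_c$ with $\|\theta_i(e)\|\leq 1$ converging to $1$ ultraweakly and uniformly on compact subsets of $G$, which is exactly condition \eqref{amen 2}. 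Adjoining the implication $\eqref{amen 5}\Rightarrow\eqref{amen 2}$ to the chain above closes the loop, so conditions \eqref{amen 2}--\eqref{amen 5} are mutually equivalent.

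There is no genuine obstacle here: all of the content is either already contained in Proposition~\ref{prop:Amenability-conditions} or imported verbatim as a black box from \cite{Bearden-Crann}. The statement is recorded separately only to emphasize which of the a priori distinct approximation conditions of Proposition~\ref{prop:Amenability-conditions} become provably equivalent once the Bearden--Crann result is available, in contrast to conditions \eqref{amen 1}, \eqref{amen 6}, and \eqref{amen 7}, whose equivalence with \eqref{amen 2}--\eqref{amen 5} we are only able to establish under the additional assumption that $G$ is exact.
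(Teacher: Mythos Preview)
Your proposal is correct and matches the paper's own treatment exactly: the paper states the theorem with a \qed and explains in the preceding paragraph that it follows by combining the implications $\eqref{amen 2}\Leftrightarrow\eqref{amen 3}\Rightarrow\eqref{amen 4}\Rightarrow\eqref{amen 5}$ from Proposition~\ref{prop:Amenability-conditions} with the Bearden--Crann implication $\eqref{amen 5}\Rightarrow\eqref{amen 2}$ from \cite{Bearden-Crann}*{Theorem 3.6}. One small remark on your closing commentary: the paper actually shows in Remark~\ref{no equiv} that the implications $\eqref{amen 2}\Rightarrow\eqref{amen 1}$ and $\eqref{amen 7}\Rightarrow\eqref{amen 5}$ genuinely \emph{fail} for non-exact $G$, so it is not merely that equivalence cannot be established without exactness---it is provably false.
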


\begin{proof}[Proof of Proposition \ref{prop:Amenability-conditions}]
The implication \eqref{amen 1} $\Rightarrow$ \eqref{amen 2} is clear,  and  \eqref{amen 2} $\Leftrightarrow$ \eqref{amen 3} follows from Lemma \ref{prop-approx}. 
The implication \eqref{amen 3} $\Rightarrow$ \eqref{amen 4} follows (with $A=M$) because, since $(\xi_i)$ takes values in the centre, we have 
$$\braket{\xi_i}{m\lambda^\sigma_g(\xi_i)}_{M}=\braket{\xi_i}{\lambda^\sigma_g(\xi_i)}_{M}m$$ for all $m\in M$, and as multiplication is separately ultraweakly continuous on bounded sets.
 The implication \eqref{amen 5} $\Rightarrow$ \eqref{amen 6} follows by restriction of $P$ to $L^\infty(G)$, and using that $M$ is in the multiplicative domain of $P$ to conclude that the image of the restriction is central. 
The implication \eqref{amen 6} $\Rightarrow$ \eqref{amen 7} follows by taking continuous parts and using that $L^\infty(G)_c=\contub(G)$ (see line \eqref{contub def} above).  
The implication \eqref{amen 7} $\Rightarrow$ \eqref{amen 1} for $G$ exact follows from \cite{Ozawa-Suzuki}*{Proposition 2.5}\footnote{See also \cite{Brodzki-Cave-Li:Exactness}*{Theorem 5.8}, which handles the case of second countable $G$.} which implies that the $G$-action on $\contub(G)$ is strongly amenable if (and only if) $G$ is exact. 

It therefore remains to establish the implications \eqref{amen 4} $\Rightarrow$ \eqref{amen 5} and \eqref{amen 7} $\Rightarrow$ \eqref{amen 6}. For \eqref{amen 4} $\Rightarrow$ \eqref{amen 5} we shall use the idea of the proof of \cite{Abadie:2019kc}*{Lemma~6.5}, which is exactly the implication \eqref{amen 4} $\Rightarrow$ \eqref{amen 5} we need for $G$ discrete.  

First notice that once \eqref{amen 4} holds for $a\in A$, then it also holds for $a$ in the norm closure of $A$ because the net $(\xi_i)$ is bounded. Hence we may assume without loss of generality that $A\sbe M$ is a \cstar{}subalgebra, and in particular, that there is an increasing approximate unit $(e_j)$ for $A$.  
Using the canonical left action of $L^\infty(G,M)$ by adjointable operators on the von Neumann Hilbert module $L^2_w(G,M)$ we define for each $i$ and for all $f\in L^\infty(G,M)$,
$$Q_{i}(f):=\braket{\xi_i}{f \xi_i},$$ 
where here (and below) we put $\braket{\cdot}{\cdot}:=\braket{\cdot}{\cdot}_M$.
Then $(Q_i)$ is a uniformly bounded net of completely positive linear maps $L^\infty(G,M)\to M$ with $\sup_{i}\|Q_i\|\leq \sup_{i}\|\xi_i\|^2_2<\infty$. 
The set of all completely positive linear maps $L^\infty(G,M)\to M$ that  are bounded by a fixed constant $C\geq 0$ is compact
 with respect to the pointwise ultraweak topology (this follows for example from \cite{Brown:2008qy}*{Theorem 1.3.7}).  Hence we may assume (after passing to a subnet if necessary) that  there is a cp linear map $Q\colon L^\infty(G,M)\to M$ with $Q(f)=\lim_i Q_i(f)$ ultraweakly for all $f\in L^\infty(G,M)$.

Recall that $(e_j)$ is an increasing approximate unit for $A$.  For each $j$ we define the cp map $P_j:L^\infty(G,M)\to M$ as
\begin{equation}\label{Pj}
P_j(f):=Q(e_j^{1/2}fe_j^{1/2})\quad f\in L^\infty(G,M),
\end{equation}
and for all pairs $(i,j)$ and $f\in L^\infty(G,M)$ we define 
$$P_{i,j}(f):=Q_i(e_j^{1/2}fe_j^{1/2})=\braket{\xi_i}{e_j^{1/2}fe_j^{1/2}\xi_i}.$$
Then for each $j$, $P_j(f)=\lim_iP_{i,j}(f)=\lim_i\braket{\xi_i}{e_j^{1/2}fe_j^{1/2}\xi_i}$ for all $f\in L^\infty(G,M)$.

 Let us consider the restriction of $P_j$ to the centre $Z(L^\infty(G,M))$, which equals $L^\infty(G,Z(M))$ by \cite{Tak}*{Chapter V, Corollary 5.11}. Hence we get a net $(P_j)$ of cp maps $P_j\colon L^\infty(G,Z(M))\to M$. We shall prove that this net converges pointwise-ultraweakly to a $G$-equivariant projection $P\colon L^\infty(G,Z(M))\to Z(M)$. As we already know that the existence of such a projection is equivalent to the von Neumann amenability of $M$ (see Remark \ref{rem-vonNeumannam}), this will complete the proof of \eqref{amen 4} $\Rightarrow$ \eqref{amen 5}.  

We first claim that the net $(P_j)$ converges pointwise ultraweakly to a cp map $P:L^\infty(G,Z(M))\to M$.  For this it suffices to show that for any positive $f\in L^\infty(G,Z(M))$, the net $(P_j(f))$ is increasing.  Notice first that $f$ commutes with all elements of $M\sbe L^\infty(G,M)$, so in particular it commutes with all elements of $A$.  Hence for any $j\leq k$ and any positive $f\in L^\infty(G,Z(M))$, we have 
$$
P_j(f)\leq Q(e_j^{1/2}fe_j^{1/2})=Q(f^{1/2}e_jf^{1/2})\leq Q(f^{1/2}e_kf^{1/2})=Q(e_k^{1/2}fe_k^{1/2})=P_k(f)
$$
as required.  Let $P\colon L^\infty(G,Z(M))\to M$ be the pointwise ultraweak limit of $(P_j)$.

We next claim that $P$ takes image in $Z(M)$, and is a projection.  It is enough to show that $P(f)a=aP(f)$ for every positive $f\in L^\infty(G,Z(M))$ and every positive $a\in A$.  For this it is enough to show that $P(f)a$ is positive. By construction,
$$P(f)a=\lim_j\lim_i \braket{e_j^{1/2}\xi_i}{fe_j^{1/2}\xi_i}a=\lim_j\lim_i \braket{e_j^{1/2}\xi_i}{fe_j^{1/2}\xi_ia}$$
ultraweakly.  Now, for any normal state $\phi$ on $M$ we  compute
\begin{align}\label{cs est}
\Big|\phi\Big(\braket{e_j^{1/2}\xi_i}{fe_j^{1/2}\xi_ia}&- \braket{e_j^{1/2}\xi_i}{fae_j^{1/2}\xi_i}\Big)\Big|^2=
\Big|\phi\left(\braket{e_j^{1/2}\xi_i}{f(e_j^{1/2}\xi_ia-ae_j^{1/2}\xi_i)}\right)\Big|^2 \nonumber \\
&\leq \|\xi_i\|_2^2\|f\|^2\phi\big(\braket{e_j^{1/2}\xi_ia-ae_j^{1/2}\xi_i}{e_j^{1/2}\xi_ia-ae_j^{1/2}\xi_i}\big)
\end{align}
and notice that
\begin{multline*}
\lim_j\lim_i\phi\big(\braket{e_j^{1/2}\xi_ia-ae_j^{1/2}\xi_i}{e_j^{1/2}\xi_ia-ae_j^{1/2}\xi_i}\big)\\
=\lim_j\lim_i\Big(\phi(\braket{e_j^{1/2}\xi_ia}{e_j^{1/2}\xi_ia})-\phi(\braket{e_j^{1/2}\xi_ia}{ae_j^{1/2}\xi_i})\\-
\phi(\braket{ae_j^{1/2}\xi_i}{e_j^{1/2}\xi_ia})+\phi(\braket{ae_j^{1/2}\xi_i}{ae_j^{1/2}\xi_i})\Big)=0
\end{multline*}
since all terms are converging to $\phi(a^*a)$ by the assumption on the net $(\xi_i)$. But since $fa\geq 0$ (as $f$ and $a$ are positive and commute), this and line \eqref{cs est} then proves
$$\phi\big(P(f)a\big)=\lim_j\lim_i\phi\big(\braket{e_j^{1/2}\xi_i}{fae_j^{1/2}\xi_i}\big)=\lim_j\lim_i \phi\big(P_{i,j}(fa)\big)\geq 0$$
as desired. Thus $P$ takes image into $Z(M)$ and can therefore be viewed as a cp map $P\colon L^\infty(G,Z(M))\to Z(M)$. It is a projection because for every $m\in Z(M)$, 
$$P(m)=\lim_j\lim_i\braket{e_j^{1/2}\xi_i}{me_j^{1/2}\xi_i}=\lim_j\lim_i\braket{\xi_i}{e_jm\xi_i}=\lim_j e_jm=m,$$
where all limits are with respect to the ultraweak topology (note that as $A$ is ultraweakly dense in $A$, $(e_j)$ converges ultraweakly to the identity of $M$).

It remains to verify that $P$ is $G$-equivariant.   
This is equivalent to the identity 
\begin{equation}\label{eq-Ginvariant}
\sigma_g\big(P((\tau\otimes \sigma)_{g^{-1}}(f))\big)=P(f)
\end{equation}
 for every positive $f\in L^\infty(G,Z(M))$ and $g\in G$.
Observe that for all $i$ and $j$ we have
$$
\sigma_g\big(P_{ij}((\tau\otimes \sigma)_{g^{-1}}(f))\big)=\sigma_g\big(\braket{e_j^{1/2}\xi_i}{\Ad \lambda^\sigma_{g^{-1}}(f)e_j^{1/2}\xi_i}\big)
=\braket{\lambda_g^\sigma(e_j^{1/2}\xi_i)}{f\lambda_g^{\sigma}(e_j^{1/2}\xi_i)}.
$$
Thus, in order to prove (\ref{eq-Ginvariant}) it suffices to show that
$$\Big|\phi\left(\braket{\lambda^\sigma_g(e_j^{1/2}\xi_i)}{f \lambda^\sigma_g(e_j^{1/2}\xi_i)}-\braket{e_j^{1/2}\xi_i}{fe_j^{1/2}\xi_i}\right)\Big|\to 0$$
 for every normal state $\phi$ of $M$. The triangle inequality implies that every semi-norm satisfies 
$$\big|\|x\|^2-\|y\|^2\big|=\big(\|x\|+\|y\|\big)\big|\|x\|-\|y\|\big|\leq \big(\|x\|+\|y\|\big)\|x-y\|;$$
applying this to the semi-norm $\|\cdot\|_\phi$ induced by the semi-inner product $\braket{\xi}{\eta}_\phi:=\phi(\braket{\xi}{\eta})$ on $L^2_w(G,M)$
we deduce that
\begin{multline*}
\Big|\phi\left(\braket{\lambda^\sigma_g(e_j^{1/2}\xi_i)}{f\cdot \lambda^\sigma_g(e_j^{1/2}\xi_i)}-\braket{e_j^{1/2}\xi_i}{fe_j^{1/2}\xi_i}\right)\Big|
\\
= \Big|\|f^{1/2}\lambda^\sigma_g(e_j^{1/2}\xi_i)\|^2_\phi-\|f^{1/2}e_j^{1/2}\xi_i\|^2_\phi\Big|\\
\leq 2\sup_{k}\|\xi_k\|_\phi\cdot\|f\|\cdot \|\lambda^\sigma_g(e_j^{1/2}\xi_i)-e_j^{1/2}\xi_i\|_\phi.
\end{multline*}
Finally notice that 
\begin{align*}
&\|\lambda^\sigma_g(e_j^{1/2}\xi_i)-e_j^{1/2}\xi_i\|^2_\phi\\
&=\phi\big(\braket{\lambda^\sigma_g(e_j^{1/2}\xi_i)-e_j^{1/2}\xi_i}{\lambda^\sigma_g(e_j^{1/2}\xi_i)-e_j^{1/2}\xi_i}\big)\\
&=\phi\Big(\sigma_g(\braket{e_j^{1/2}\xi_i}{e_j^{1/2}\xi_i})-\braket{\lambda^\sigma_g(e_j^{1/2}\xi_i)}{e_j^{1/2}\xi_i}
-\braket{e_j^{1/2}\xi_i}{\lambda^\sigma_g(e_j^{1/2}\xi_i)}+\braket{e_j^{1/2}\xi_i}{e_j^{1/2}\xi_i}\Big).
\end{align*}
Taking the limit in $i$ and using the properties of $(\xi_i)$, this converges to 
$$
\phi\Big(\sigma_g(e_j)-\sigma_g(e_j^{1/2})e_j^{1/2}-e_j^{1/2}\sigma_g(e_j^{1/2})+e_j\Big),
$$
and taking now the limit in $j$, using that $(e_j)$ is an approximate unit (and hence converges ultraweakly to $1$), this converges to zero.

Finally, it remains to show that \eqref{amen 7} $\Rightarrow$ \eqref{amen 6}.  We adapt ideas from the proof of 
\cite{Anantharaman-Delaroche:1987os}*{Lemme 2.1}.  Let $\Phi:\contub(G)\to Z(M)_c$ be a ucp $G$-map as in \eqref{amen 7}.
Then for every positive function $h\in C_c(G)$ with $\|h\|_1=1$ it follows from line (\ref{contub def}) that 
$$\Phi_h:L^\infty(G)\to Z(M)_c; \quad \Phi_h(f):=\Phi(h*f)$$
gives a well-defined completely positive map.   As it is clearly unital, it is thus ucp.

Let $I$ be a directed set and let $(U_i)_{i\in I}$ be a neighbourhood base of the identity in $G$ with $U_i\subseteq U_j\Leftrightarrow i\geq j$.
For each $i\in I$ let $h_i\in C_c(G)$ be a positive function with $\|h_i\|_1=1$ and $\supp(h_i)\subseteq U_i$; write $\Phi_i:=\Phi_{h_i}$. 
Then $(\Phi_i)$ is a net of ucp maps from $L^\infty(G)$ to $Z(M)$, and by \cite{Brown:2008qy}*{Theorem 1.3.7}
we may assume, after passing to a subnet if necessary, that there exists a ucp map $\tilde\Phi:L^\infty(G)\to Z(M)$ such that $\tilde\Phi(f)=\lim_i\Phi_i(f)$
for all $f\in L^\infty(G)$, where the limit is in the ultraweak topology of $Z(M)$.

It remains to show that $\tilde\Phi$ is $G$-equivariant. First observe that for each $h\in L^1(G)$ 
we have $\| h*h_i-h_i*h\|_1\to 0$ and therefore 
\begin{equation}\label{h hi com}
\|(h*h_i-h_i*h)*f\|_\infty\leq\| h*h_i-h_i*h\|_1\|f\|_\infty\to 0
\end{equation}
for all $f\in L^\infty(G)$. Recall from Section \ref{Sec:Prel} that we denote the translation action on $C_{ub}(G)$ by $\tau$.  Then for all $f\in C_{ub}(G)$ and $h\in L^1(G)$ it follows from the $G$-equivariance of $\Phi$ 
that
\begin{align*}
\Phi(h*f)&=\Phi\left(\int_G h(g)\tau_g(f)\,dg\right)
=\int_G h(g)\Phi(\tau_g(f))\,dg\\
&=\int_Gh(g)\sigma_g(\Phi(f))\,dg
=h*\Phi(f).
\end{align*}
Since convolution with $h$  is ultraweakly continuous on $Z(M)$, this and line \eqref{h hi com} imply that for all $h\in L^1(G)$ and $f\in L^\infty(G)$:
$$\tilde\Phi(h*f)=\lim_i\Phi(h_i*h*f)=\lim_i\Phi(h*h_i*f)=\lim_ih*\Phi(h_i*f)=h*\tilde\Phi(f).$$
Recall now from line (\ref{eq-Mc}) that 
$$\sigma_g(h*m)=\tau_g(h)*m\quad(\text{resp.}\quad\tau_g(h*f)=\tau_g(h)*f\;)$$
for every $g\in G$, $h\in L^1(G)$, and $m\in M$ (resp.  $f\in L^\infty(G)$). On the other hand, for all $g\in G$, $h\in L^1(G)$ and 
$m\in M$, we get
$$h*\sigma_g(m)=\int_Gh(s)\sigma_{sg}(m)\, ds\stackrel{s\mapsto sg^{-1}}{=} \int_G\Delta(g^{-1})h(sg^{-1})\sigma_s(m)\, ds
=\rho_{g^{-1}}(h)*m,$$
where we define $\rho_{g}(h)(s):=\Delta(g)h(sg)$ for $g\in G$ and $h\in L^1(G)$.  Similarly, we have $h*\tau_g(f)=\rho_{g^{-1}}(h)*f$ for  $g\in G$, $h\in L^1(G)$ and $f\in L^\infty(G)$. 

Now, using the fact that $h_i*m\to m$ (resp. $h_i*f\to f$) ultraweakly for all $m\in Z(M)$ (resp. $f\in L^\infty(G)$) and in norm for $m\in Z(M)_c$ (resp. $f\in C_{ub}(G)$), we get
for all $g\in G$ and $f\in L^\infty(G)$:
\begin{align*}
\tilde\Phi(\tau_g(f))&= \lim_i\Phi(h_i*\tau_g(f))=\lim_i\Phi(\rho_{g^{-1}}(h_i)*f)\\
&=\lim_i\lim_j\Phi(h_j*\rho_{g^{-1}}(h_i)*f)=\lim_i\lim_j\Phi(\rho_{g^{-1}}(h_i)*h_j*f)\\
&=\lim_i\lim_j\rho_{g^{-1}}(h_i)*\Phi(h_j*f)=\lim_i\lim_jh_i*\sigma_g(\Phi(h_j*f))\\
&=\lim_ih_i*\sigma_g(\tilde\Phi(f))=\sigma_g(\tilde\Phi(f)),
\end{align*}
which completes the proof.
\end{proof}

\begin{remark}\label{no equiv}
In \cite{Anantharaman-Delaroche:2002ij}*{Theorem 7.2}, Anantharaman-Delaroche shows (amongst other things) that if a group admits a topologically amenable action on a compact space, then it is exact.  Applying this to the spectrum of the $C^*$-algebra $Z(M)_c$, we see that condition \eqref{amen 1} of Proposition \ref{prop:Amenability-conditions} implies exactness of $G$.

On the other hand, condition \eqref{amen 2} is automatic for $M=L^\infty(G)$, so \eqref{amen 2} does not imply \eqref{amen 1} in general.
This follows from the  fact that the translation action of $G$ on itself is topologically amenable (e.g., see \cite{Anantharaman-Delaroche:2002ij}*{Examples 2.7 (3)}), hence $\tau:G\to\Aut(C_0(G))$ is strongly amenable by Proposition \ref{rem-amenable} and hence 
amenable by Remark \ref{rem-SAtoA}. But then \eqref{amen 2} follows from the fact that $C_0(G)_\tau''=L^\infty(G)$ (see Remark \ref{env iso 0}).

We also remark that \eqref{amen 7} is automatic for $M=\contub(G)_\tau''$: one can just consider the canonical inclusion $C_{ub}(G)\to C_{ub}(G)_\tau''$.  
On the other hand, if  $M=\contub(G)_\tau''$ satisfies condition \eqref{amen 5}, then the action on $\contub(G)$ is amenable by Theorem~\ref{Bearden-Crann}, and therefore the action of $G$ on the spectrum of $\contub(G)$ is topologically amenable by \cite{Bearden-Crann}*{Corollary 4.14} and Proposition \ref{rem-amenable}. Hence $G$ is exact by \cite{Anantharaman-Delaroche:2002ij}*{Theorem 7.2} again, and we see that \eqref{amen 7} $\Rightarrow$ \eqref{amen 5} cannot hold in general.\footnote{
We are grateful to Jason Crann for pointing this out to us.}.

To summarize, the conditions in Proposition \ref{prop:Amenability-conditions} satisfy
$$
\eqref{amen 1} \Rightarrow\eqref{amen 2} \Leftrightarrow \eqref{amen 3}\Leftrightarrow\eqref{amen 4}\Leftrightarrow \eqref{amen 5}\Rightarrow\eqref{amen 6}\Leftrightarrow\eqref{amen 7}
$$
 in general, and all are equivalent when $G$ is exact. But neither of the one-way implications are reversible when $G$ is not exact.
\end{remark}

As our main interest is in $G$-$C^*$-algebras, it is convenient to record the consequences of Proposition \ref{prop:Amenability-conditions} above and Theorem \ref{Bearden-Crann} of Bearden and Crann in this case.  

\begin{corollary}\label{cor-amenable}
For a $G$-$C^*$-algebra $(A,\alpha)$ consider the following statements:
\begin{enumerate}
\item \label{cor-amenable 1} the induced action $\alpha'':G\to \Aut(Z(A_\alpha'')_c)$ is strongly amenable;
\item \label{cor-amenable 2} $\alpha$ is amenable;
\item \label{cor-amenable 3} $\alpha$ is von Neumann amenable;
\item \label{cor-amenable 4} there is a ucp $G$-map $L^\infty(G)\to Z(A_\alpha'')$;
\item \label{cor-amenable 5} there is a ucp $G$-map $\contub(G)\to Z(A_\alpha'')_c$.
\end{enumerate}
Then 
$$\eqref{cor-amenable 1} \Rightarrow \eqref{cor-amenable 2} \Leftrightarrow \eqref{cor-amenable 3} \Rightarrow \eqref{cor-amenable 4} \Leftrightarrow \eqref{cor-amenable 5} 
$$
and all of these conditions are equivalent if $G$ is exact.
\end{corollary}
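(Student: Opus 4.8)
The plan is to obtain the whole statement by specializing Proposition~\ref{prop:Amenability-conditions} and Theorem~\ref{Bearden-Crann} to the $G$-von Neumann algebra $(M,\sigma):=(A_\alpha'',\alpha'')$, for which $Z(M)=Z(A_\alpha'')$ and $Z(M)_c=Z(A_\alpha'')_c$. Under this choice, condition~\eqref{amen 5} of the proposition is by definition von Neumann amenability of $(A,\alpha)$, i.e.\ statement \eqref{cor-amenable 3}; condition~\eqref{amen 6} is verbatim statement \eqref{cor-amenable 4}; and condition~\eqref{amen 7} is verbatim statement \eqref{cor-amenable 5}. So the real content is to identify \eqref{cor-amenable 1} with \eqref{amen 1} and \eqref{cor-amenable 2} with \eqref{amen 2}. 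Once this is done, the proposition gives $\eqref{cor-amenable 1}\Rightarrow\eqref{cor-amenable 2}$, $\eqref{cor-amenable 2}\Rightarrow\eqref{cor-amenable 3}$ (through $\eqref{amen 2}\Rightarrow\eqref{amen 3}\Rightarrow\eqref{amen 4}\Rightarrow\eqref{amen 5}$), $\eqref{cor-amenable 3}\Rightarrow\eqref{cor-amenable 4}$, and $\eqref{cor-amenable 4}\Leftrightarrow\eqref{cor-amenable 5}$; the missing implication $\eqref{cor-amenable 3}\Rightarrow\eqref{cor-amenable 2}$ for general $G$ is exactly Theorem~\ref{Bearden-Crann} (Bearden--Crann), and the equivalence of all five for exact $G$ is the last clause of the proposition.

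For the identification of \eqref{cor-amenable 2}: unwinding Definition~\ref{def-amenable (A)} produces a net of compactly supported, norm-continuous, positive type functions $\theta_i:G\to Z(A_\alpha'')$ with $\|\theta_i(e)\|\le 1$ and $\theta_i(g)\to 1_{A_\alpha''}$ ultraweakly and uniformly on compact subsets of $G$. Applying Lemma~\ref{sub c gone} to the $G$-von Neumann algebra $Z(A_\alpha'')$ (as noted in the remark immediately after that lemma) lets us assume the $\theta_i$ are valued in $Z(A_\alpha'')_c=Z(M)_c$, which is precisely condition~\eqref{amen 2}; the converse implication is trivial.

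For the identification of \eqref{cor-amenable 1}: the point is that $1_{A_\alpha''}$ lies in $Z(A_\alpha'')_c$, since it is central and the orbit map $g\mapsto\alpha_g''(1_{A_\alpha''})=1_{A_\alpha''}$ is constant, hence norm continuous. Thus $Z(A_\alpha'')_c$ is a \emph{unital} commutative $C^*$-algebra, so $Z\M(Z(A_\alpha'')_c)=Z(A_\alpha'')_c$ and strict convergence in its multiplier algebra coincides with norm convergence. Feeding this into Definition~\ref{def-amenable (SA)} applied to the action $\alpha''$ on $Z(A_\alpha'')_c$, strong amenability of that action is exactly the assertion of a net of compactly supported, norm-continuous, positive type functions $\theta_i:G\to Z(A_\alpha'')_c$ with $\|\theta_i(e)\|\le 1$ and $\theta_i(g)\to 1_{A_\alpha''}$ in norm, uniformly on compact subsets of $G$, which is condition~\eqref{amen 1}.

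I do not expect a genuine obstacle here: the argument is essentially bookkeeping, translating the abstract von Neumann algebraic statements of Proposition~\ref{prop:Amenability-conditions} into the $C^*$-algebraic vocabulary of amenability, von Neumann amenability, and strong amenability, and then citing Theorem~\ref{Bearden-Crann} for the one implication not contained in the proposition. The only steps needing a moment's care are the unitality of $Z(A_\alpha'')_c$ used to match \eqref{cor-amenable 1} with \eqref{amen 1}, and the appeal to Lemma~\ref{sub c gone} to pass between $Z(A_\alpha'')$ and $Z(A_\alpha'')_c$ in \eqref{cor-amenable 2}.
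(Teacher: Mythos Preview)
Your proposal is correct and follows exactly the same approach as the paper, which simply says ``Apply Proposition~\ref{prop:Amenability-conditions} and Theorem~\ref{Bearden-Crann} to $M=A_\alpha''$.'' You have merely spelled out in detail the identifications between the five statements of the corollary and the corresponding conditions of the proposition, including the minor observations (unitality of $Z(A_\alpha'')_c$, the use of Lemma~\ref{sub c gone}) that the paper leaves implicit.
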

\begin{proof} Apply Proposition \ref{prop:Amenability-conditions} and Theorem \ref{Bearden-Crann} to  $M=A_\alpha''$.
\end{proof}

\section{Permanence properties of amenability}\label{sec:amen pp}

In this section, we record some permanence properties of amenable actions.

\begin{proposition}\label{pro:Morita}
Amenability is preserved by Morita equivalence: precisely, if $(A,\alpha)$ and $(B,\beta)$ are Morita equivalent $G$-$C^*$-algebras, then $A$ is amenable if and only if $B$ is amenable.
\end{proposition}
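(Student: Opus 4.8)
The plan is to push the question up to the enveloping $G$-von Neumann algebras and then down to their centres, where Morita invariance is essentially immediate. First, by Corollary \ref{cor-amenable} — specifically the equivalence \eqref{cor-amenable 2} $\Leftrightarrow$ \eqref{cor-amenable 3}, which holds for arbitrary locally compact $G$ — the action $\alpha$ is amenable if and only if $(A_\alpha'',\alpha'')$ is an amenable $G$-von Neumann algebra, and likewise $\beta$ is amenable if and only if $(B_\beta'',\beta'')$ is amenable. By Lemma \ref{lem:Morita} the $G$-von Neumann algebras $(A_\alpha'',\alpha'')$ and $(B_\beta'',\beta'')$ are Morita equivalent, so it suffices to show that amenability is a Morita invariant of $G$-von Neumann algebras.

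For this, recall from Remark \ref{rem-vonNeumannam} (a theorem of Anantharaman-Delaroche) that a $G$-von Neumann algebra $(M,\sigma)$ is amenable if and only if the induced action on its centre $Z(M)$ is amenable. Hence it is enough to produce a $G$-equivariant normal $*$-isomorphism $Z(A_\alpha'') \cong Z(B_\beta'')$. I would obtain this from the linking algebra already used in the proof of Lemma \ref{lem:Morita}: there is a $G$-von Neumann algebra $(L_\delta'',\delta'')$ and full $G$-invariant projections $p,q \in L_\delta''$ such that $A_\alpha''$ is $G$-equivariantly isomorphic to the corner $p L_\delta'' p$ and $B_\beta''$ to $q L_\delta'' q$. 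For any full (i.e.\ with central support $1$) $G$-invariant projection $p$ in a $G$-von Neumann algebra $M$, the corner map $z \mapsto zp$ is a normal $*$-isomorphism of $Z(M)$ onto $Z(pMp)$ — surjectivity is the standard identification $Z(pMp) = Z(M)p$, and injectivity uses that the central support of $p$ equals $1$ — and it is manifestly $G$-equivariant when $p$ is $G$-invariant. Applying this to $p$ and to $q$ yields $G$-equivariant isomorphisms $Z(A_\alpha'') \cong Z(L_\delta'') \cong Z(B_\beta'')$.

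Combining the three reductions: $\alpha$ is amenable $\Leftrightarrow$ $A_\alpha''$ is an amenable $G$-von Neumann algebra $\Leftrightarrow$ $Z(A_\alpha'')$ is amenable $\Leftrightarrow$ $Z(L_\delta'')$ is amenable $\Leftrightarrow$ $Z(B_\beta'')$ is amenable $\Leftrightarrow$ $B_\beta''$ is an amenable $G$-von Neumann algebra $\Leftrightarrow$ $\beta$ is amenable. The only point requiring genuine care is the identification of $Z(pMp)$ with $Z(M)$ for a full $G$-invariant projection $p$, together with its $G$-equivariance; everything else is bookkeeping with results already available in the paper. One could instead bypass the linking algebra and invoke directly the standard fact that a von Neumann algebra Morita equivalence induces a canonical isomorphism of centres, observing that the $G$-action $\kappa$ on the equivalence bimodule of Definition \ref{gvn mor} makes this isomorphism $G$-equivariant; but the linking-algebra route reuses machinery already set up for Lemma \ref{lem:Morita} and so is more self-contained.
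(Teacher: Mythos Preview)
Your proof is correct and follows essentially the same line as the paper's: reduce to the enveloping $G$-von Neumann algebras via Lemma~\ref{lem:Morita}, then use that Morita equivalent $G$-von Neumann algebras have $G$-isomorphic centres. The paper outsources the centre isomorphism to \cite{Abadie:2019kc}*{Proposition~4.6}, whereas you supply a self-contained linking-algebra argument; conversely, your detour through Corollary~\ref{cor-amenable} and Remark~\ref{rem-vonNeumannam} is unnecessary, since amenability is already \emph{defined} in terms of $Z(A_\alpha'')$ and so the centre isomorphism finishes the proof directly.
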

\begin{proof}
If $(A,\alpha)$ and $(B,\beta)$ are Morita equivalent, then so are the corresponding $G$-von Neumann algebras $A_\alpha''$ and $B_\beta''$ by (the proof of) Lemma \ref{lem:Morita}.  This implies that their centres $Z(A_\alpha'')$ and $Z(B_\beta'')$ are isomorphic as $G$-von Neumann algebras (see \cite{Abadie:2019kc}*{Proposition~4.6}) and this clearly implies the result. 
\end{proof}

\begin{lemma}\label{lem:funct-strong-amenability}
Let $(A,\alpha)$ and $(B,\beta)$ be $G$-$C^*$-algebras and let $\Phi\colon A\to \M(B)$ be a nondegenerate $G$-equivariant $*$-homomorphism. If the  normal extension $\Phi''\colon A_\alpha''\to B_\beta''$ maps $Z(A_\alpha'')$ into $Z(B_\beta'')$ and if $(A,\alpha)$ is amenable, then so is $(B,\beta)$.
Similarly, if the strictly continuous extension $\bar\Phi\colon \M(A)\to \M(B)$ maps $Z\M(A)$ into $Z\M(B)$, then strong amenability passes from $(A,\alpha)$ to $(B,\beta)$. 
\end{lemma}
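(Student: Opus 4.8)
The plan is to use the positive-type-function characterizations of amenability (Definition~\ref{def-amenable (A)}) and of strong amenability (Definition~\ref{def-amenable (SA)}), and to push an approximating net forward along $\Phi''$ (resp.\ $\bar\Phi$). Before starting I would record what we need about $\Phi''\colon A_\alpha''\to B_\beta''$, which exists by Proposition~\ref{evna func}: it is a normal, $G$-equivariant, norm-contractive (hence completely positive) $*$-homomorphism, by hypothesis it carries $Z(A_\alpha'')$ into $Z(B_\beta'')$, and it is unital. Unitality is where nondegeneracy of $\Phi$ enters: if $(u_\lambda)$ is an approximate unit of $A$, then $u_\lambda\to 1_{A_\alpha''}$ ultraweakly, while $\Phi''(u_\lambda)=\Phi(u_\lambda)\to 1_{\M(B)}$ strictly and hence ultraweakly in $B_\beta''$, so normality of $\Phi''$ forces $\Phi''(1_{A_\alpha''})=1_{B_\beta''}$.

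Given this, from a net $(\theta_i)_{i\in I}$ of norm-continuous, compactly supported, positive type functions $\theta_i\colon G\to Z(A_\alpha'')$ with $\|\theta_i(e)\|\le 1$ and $\theta_i(g)\to 1_{A_\alpha''}$ ultraweakly and uniformly on compact sets, I would set $\eta_i:=\Phi''\circ\theta_i\colon G\to Z(B_\beta'')$ and verify the four defining conditions for $(B,\beta)$: norm-continuity and compact support are immediate from contractivity of $\Phi''$; $\|\eta_i(e)\|\le\|\theta_i(e)\|\le1$; positivity of type holds because, by $G$-equivariance, $\big(\beta''_g(\eta_i(g^{-1}h))\big)_{g,h\in F}=(\Phi''\otimes\id_{M_F})\big((\alpha''_g(\theta_i(g^{-1}h)))_{g,h\in F}\big)$ and a $*$-homomorphism is completely positive; and the convergence $\eta_i(g)\to1_{B_\beta''}$ ultraweakly uniformly on compacta follows by testing against a fixed normal functional $\psi\in(B_\beta'')_*$ and using $\psi(\eta_i(g)-1_{B_\beta''})=(\psi\circ\Phi'')(\theta_i(g)-1_{A_\alpha''})$ together with normality of $\Phi''$ (which makes $\psi\circ\Phi''$ a normal functional on $A_\alpha''$). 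This gives amenability of $(B,\beta)$.

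For the strong amenability statement I would run the same argument with the canonical strictly continuous unital $G$-equivariant $*$-homomorphism $\bar\Phi\colon\M(A)\to\M(B)$ in place of $\Phi''$, starting from a net $\theta_i\colon G\to Z\M(A)$ as in Definition~\ref{def-amenable (SA)} and setting $\eta_i:=\bar\Phi\circ\theta_i\colon G\to Z\M(B)$ (legitimate by the hypothesis $\bar\Phi(Z\M(A))\subseteq Z\M(B)$). Norm-continuity, compact support, the bound on $\|\eta_i(e)\|$ and the positive type property go through verbatim. The one new point is strict convergence: here I would first observe that positivity of the $2\times2$ matrix underlying positive-type-ness gives $\|\theta_i(g)\|\le\|\theta_i(e)\|\le1$, so all nets involved are bounded, and then use nondegeneracy of $\Phi$ — which yields $\Phi(A)B=B=B\Phi(A)$ and the relation $\bar\Phi(x)\Phi(a)=\Phi(xa)$ — to reduce strict convergence of $\eta_i(g)$ to $1_{\M(B)}$, uniformly on a compact $K$, to the estimate $\|(\eta_i(g)-1_{\M(B)})\Phi(a)b\|=\|\Phi((\theta_i(g)-1_{Z\M(A)})a)b\|\le\|(\theta_i(g)-1_{Z\M(A)})a\|\,\|b\|$ (and its mirror image), each of which tends to $0$ uniformly on $K$ by hypothesis.

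I do not anticipate a real obstacle: the whole content is that $\Phi''$ (resp.\ $\bar\Phi$) preserves each clause of the definition. The only point needing a moment's care is that \emph{uniformity} of the approximate invariance is preserved under composition with $\Phi''$ (resp.\ $\bar\Phi$); this is precisely where normality of $\Phi''$ (resp.\ strict continuity of $\bar\Phi$ on bounded sets) is used, by testing against a single normal functional on $B_\beta''$ (resp.\ a single element of $B$).
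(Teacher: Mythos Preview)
Your proposal is correct and follows exactly the same route as the paper's proof: compose the approximating net with $\Phi''$ (respectively $\bar\Phi$) and note that nondegeneracy makes the extension unital so the limit is preserved. The paper compresses this into three sentences, whereas you spell out each clause (positive type, norm bound, uniform ultraweak/strict convergence) explicitly; there is no substantive difference in strategy.
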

\begin{proof}
Since $\Phi$  is nondegenerate, $\Phi''\colon A_\alpha''\to B_\beta''$ is normal and unital. 
Thus, if $(\theta_i)$ is a net  of positive type functions implementing amenability of $(A,\alpha)$, then  $(\Phi''\circ\theta_i)$ implements amenability of $(B,\beta)$.
A similar argument works for strong amenability. 
\end{proof}

\begin{remark}
The above  lemma is not true in general without the assumptions that $Z(A_\alpha'')$ (respectively, $Z\M(A)$) is mapped to $Z(B_\beta'')$ (respectively, $Z\M(B)$). 
To see  counterexamples, let $G$ be a non-amenable group acting amenably on a non-zero $C^*$-algebra $A$ and let $B=A\rtimes_\red G$ 
equipped with the action $\beta=\Ad i_G$, where $i_G:G\to \U\M(A\rtimes_\red G)$ denotes the canonical homomorphism.    
As the inner action $\beta$ on $A\rtimes_\red G$ induces the trivial action on $Z((A\rtimes_\red G)_\beta'')$, we see that $\beta$ is amenable if and only if $G$ is amenable.  On the other hand, the canonical map $i_A:A\to \M(A\rtimes_\red G)$ is a nondegenerate $G$-equivariant $*$-homomorphism (which has image in $B$ if 
$G$ is discrete). We refer to  \cite{Suzuki:2018qo}  for more involved examples where $A$ and $B$  are both simple and unital.
\end{remark}

\begin{proposition}\label{prop-ext-amenable}
Let  $\alpha:G\to\Aut(A)$ be  a continuous action and let $I\subseteq A$ be a $G$-invariant ideal. 
Then $\alpha$ is amenable if and only if the induced actions $\alpha^I$ and $\alpha^{A/I}$ on $I$ and $A/I$, respectively, are 
both amenable.
\end{proposition}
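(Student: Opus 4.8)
The plan is to reduce the statement to the direct-sum decomposition of Lemma~\ref{lem-decom-exact}. Writing $B:=A/I$, with $\iota$ the induced action on $I$ and $\beta$ the induced action on $B$, that lemma supplies a $G$-equivariant identification $(A_\alpha'',\alpha'')=(I_\iota'',\iota'')\oplus(B_\beta'',\beta'')$, realised by a central $G$-invariant projection $p\in A_\alpha''$ with $pA_\alpha''=I_\iota''$ and $(1-p)A_\alpha''=B_\beta''$. Passing to centres and to preduals, this gives $Z(A_\alpha'')=Z(I_\iota'')\oplus Z(B_\beta'')$ and $(A_\alpha'')_*=(I_\iota'')_*\oplus(B_\beta'')_*$, so that amenability (Definition~\ref{def-amenable (A)}) becomes a question purely about nets of positive-type functions valued in these centres.

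For the forward implication I would start from a net $(\theta_i)$ witnessing amenability of $\alpha$ and push it forward to $p\theta_i\colon G\to pZ(A_\alpha'')=Z(I_\iota'')$, and symmetrically $(1-p)\theta_i$ for $B$. The routine verifications are that $p\theta_i$ stays norm-continuous, compactly supported, and contractive at $e$; that $p\theta_i(g)\to p=1_{I_\iota''}$ ultraweakly and uniformly on compact sets, using that $\psi\mapsto\psi(p\,\cdot\,)$ carries normal functionals on $I_\iota''$ to normal functionals on $A_\alpha''$; and --- the single point needing a little care --- that $p\theta_i$ is still of positive type. The last holds because $p$ is central and $G$-invariant, so that $\bigl(\iota''_g(p\theta_i(g^{-1}h))\bigr)_{g,h\in F}=\operatorname{diag}(p,\dots,p)\cdot\bigl(\alpha''_g(\theta_i(g^{-1}h))\bigr)_{g,h\in F}$, a product of a positive central matrix with a positive one.

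For the converse I would take nets $(\theta^I_i)_{i\in\mathcal I}$ and $(\theta^B_j)_{j\in\mathcal J}$ witnessing amenability of $I$ and $B$, and on the product directed set $\mathcal I\times\mathcal J$ form $\theta_{(i,j)}:=\theta^I_i\oplus\theta^B_j\colon G\to Z(I_\iota'')\oplus Z(B_\beta'')=Z(A_\alpha'')$. Norm-continuity, compact support (the union of the two supports), and the bound $\|\theta_{(i,j)}(e)\|=\max\{\|\theta^I_i(e)\|,\|\theta^B_j(e)\|\}\le 1$ are immediate; positive type holds because, under $M_F(Z(A_\alpha''))=M_F(Z(I_\iota''))\oplus M_F(Z(B_\beta''))$ together with the $G$-equivariance of the decomposition, the matrix $\bigl(\alpha''_g(\theta_{(i,j)}(g^{-1}h))\bigr)_{g,h\in F}$ is a direct sum of two positive matrices; and ultraweak convergence uniformly on compacta is checked coordinatewise, by decomposing each test normal functional as $\psi=\psi^I\oplus\psi^B$ along $(A_\alpha'')_*=(I_\iota'')_*\oplus(B_\beta'')_*$ and then choosing indices $i_0,j_0$ that work in each factor.

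I do not anticipate a serious obstacle here: Lemma~\ref{lem-decom-exact} does the structural work, and everything else is bookkeeping with direct sums of von Neumann algebras. The only mildly delicate steps are checking that multiplication by the central $G$-invariant projection $p$ preserves the positive-type condition in the forward direction, and that the diagonal net in the converse direction converges \emph{uniformly on compact sets}; the latter is why one wants the splitting of the predual as a genuine direct sum, and not merely the splitting of the algebra.
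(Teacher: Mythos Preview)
Your proposal is correct and follows exactly the same approach as the paper's proof, which simply invokes the decomposition $Z(A_\alpha'')=Z(I_{\alpha^I}'')\oplus Z((A/I)_{\alpha^{A/I}}'')$ from Lemma~\ref{lem-decom-exact} and declares the result immediate. You have spelled out the routine verifications (positive type under multiplication by the central $G$-invariant projection, and convergence of the diagonal net via the predual splitting) that the paper leaves to the reader.
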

\begin{proof} The decomposition $A_\alpha''=I_{\alpha^I}''\oplus (A/I)_{\alpha^{A/I}}''$ 
of Lemma \ref{lem-decom-exact} induces a decomposition $Z(A_\alpha'')=Z(I_{\alpha^I}'')\oplus Z((A/I)_{\alpha^{A/I}}'')$.   The result follows directly from this.
\end{proof}

As a consequence of the previous results we also get permanence properties for hereditary subalgebras:

\begin{corollary}\label{her perm}
Let  $\alpha:G\to\Aut(A)$ be a continuous amenable action and let $B\subseteq A$ be a $G$-invariant hereditary $C^*$-subalgebra.  Then the restricted action on $B$ is amenable. 
\end{corollary}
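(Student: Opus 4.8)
\textbf{Proof proposal for Corollary \ref{her perm}.} The plan is to reduce to the case of a \emph{full} hereditary subalgebra using the ideal permanence property, and then invoke Morita invariance of amenability. First I would let $I:=\overline{ABA}$ be the closed two-sided ideal of $A$ generated by $B$. Since $B$ is $G$-invariant and each $\alpha_g$ is a $*$-automorphism, $I$ is a $G$-invariant ideal of $A$, and the restriction $\alpha^I$ of $\alpha$ to $I$ is again a (norm-)continuous action, as is the restriction $\alpha^B$ to $B$. By Proposition \ref{prop-ext-amenable}, amenability of $\alpha$ implies amenability of $\alpha^I$. Replacing $A$ by $I$, we are thus reduced to proving: if $\alpha$ is amenable and $B\subseteq A$ is a $G$-invariant, hereditary, and \emph{full} $C^*$-subalgebra, then $\alpha^B$ is amenable.

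In that situation I would write down the standard imprimitivity bimodule explicitly. Set $X:=\overline{BA}\subseteq A$, with left and right inner products $_B\braket{x}{y}:=xy^*$ and $\braket{x}{y}_A:=x^*y$; heredity of $B$ gives $xy^*\in \overline{BAB}\subseteq B$, so $X$ is a left Hilbert $B$-module, while fullness of $B$ together with density of $BA^2B$ in $\overline{ABA}=A$ makes $X$ an $B$--$A$ imprimitivity bimodule (alternatively one may quote the linking-algebra picture as in \cite{CELY}*{Remark 2.5.3} exactly as in the proof of Lemma \ref{lem:Morita}). Since $B$ and $A$ are $G$-invariant subsets of $A$, so is $X$, and the restriction of $\alpha$ to $X$ is a continuous $G$-action compatible with both inner products, because $\alpha_g(x^*y)=\alpha_g(x)^*\alpha_g(y)$ and $\alpha_g(xy^*)=\alpha_g(x)\alpha_g(y)^*$. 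Hence $(B,\alpha^B)$ and $(A,\alpha)$ are Morita equivalent as $G$-$C^*$-algebras in the sense of \cite{Combes}*{Section 3, Definition 1}. Applying Proposition \ref{pro:Morita} then yields amenability of $\alpha^B$, which completes the proof.

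The only point requiring a small verification is the reduction step, namely that $B$ is full in the ideal $I$ it generates. For this I would note that for $b\in B$ an approximate unit $(e_\lambda)$ of $B$ satisfies $e_\lambda b e_\lambda\to b$, so $b\in\overline{BbB}$, and therefore $aba'\in\overline{(AB)\,b\,(BA)}\subseteq\overline{IBI}$ for all $a,a'\in A$; since finite sums of such elements are dense in $I=\overline{ABA}$, we get $\overline{IBI}=I$, i.e.\ $B$ is full in $I$. I do not expect any serious obstacle here: once Propositions \ref{prop-ext-amenable} and \ref{pro:Morita} are available, the corollary is essentially immediate, and everything else is routine bookkeeping with the standard hereditary-subalgebra Morita equivalence carried out $G$-equivariantly.
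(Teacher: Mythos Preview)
Your proof is correct and follows essentially the same route as the paper: the paper's proof also uses the imprimitivity bimodule $\overline{BA}$ to exhibit an equivariant Morita equivalence between $B$ and the ideal $I=\overline{ABA}$, and then applies Propositions~\ref{prop-ext-amenable} and~\ref{pro:Morita}. You have simply supplied the routine verifications (fullness of $B$ in $I$, $G$-equivariance of the bimodule) that the paper leaves implicit.
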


\begin{proof}
The right ideal $BA$ is an imprimitivity bimodule implementing a Morita equivalence between $B=BAB$ and the ideal $I=\overline{ABA}$.  The result follows from Propositions~\ref{prop-ext-amenable} and~\ref{pro:Morita}.
\end{proof}

We shall  see in Proposition \ref{prop-inductive} below that amenability is also preserved under taking  inductive limits of amenable $G$-$C^*$-algebras.

\section{The equivalence of amenability and measurewise amenability}\label{sec:meas amen}

In this section, we establish the equivalence of amenability and measurewise amenability for actions on commutative $C^*$-algebras $A=C_0(X)$.  Measurewise amenability was introduced by Renault in \cite{Renault-LNM}*{Definition II.3.6}.  In \cite{Adams:1994wg}*{Theorem A}, Adams, Elliott, and Giordano show that the original definition is equivalent to the following.

\begin{definition}\label{defn-mwa}
Let $C_0(X)$ be a commutative $G$-$C^*$-algebra, where both $X$ and $G$ are second countable.  Then the underlying action of $G$ on $X$ is called \emph{measurewise amenable} if for every quasi-invariant Radon measure $\mu$ on $X$, the $G$-von Neumann algebra $L^\infty(X,\mu)$ is amenable. 
\end{definition}

Using  Theorem \ref{Bearden-Crann} of Bearden and Crann we obtain the following characterization of measurewise amenability. 

\begin{proposition}\label{prop-measure}
Let $C_0(X)$ be a commutative $G$-$C^*$-algebra, where both $X$ and $G$ are second countable.  Then the underlying action $G\curvearrowright X$ is measurewise amenable if and only if for every quasi-invariant Radon measure $\mu$ on $X$ there exists a net of compactly supported, positive type, norm-continuous functions $\theta_i:G\to L^\infty(X,\mu)$ with $\theta_i(e)\leq 1$ for all $i$ 
and such that $\theta_i(g)\to 1$ ultraweakly  
and uniformly on compact subsets of $G$. \qed
\end{proposition}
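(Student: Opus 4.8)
The plan is to observe that this proposition is essentially a repackaging of Theorem~\ref{Bearden-Crann} in the special case of a commutative $G$-von Neumann algebra, so the ``proof'' amounts to unwinding Definition~\ref{defn-mwa} and matching normalizations. First I would fix a quasi-invariant Radon measure $\mu$ on $X$ and set $M:=L^\infty(X,\mu)$, equipped with its induced (ultraweakly continuous) $G$-action; this is a commutative $G$-von Neumann algebra, so $Z(M)=M$. By Definition~\ref{defn-mwa}, $G\curvearrowright X$ is measurewise amenable exactly when $M$ is amenable for every such $\mu$, i.e.\ exactly when condition \eqref{amen 5} of Proposition~\ref{prop:Amenability-conditions} holds for $M$ for every $\mu$.

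Next I would invoke Theorem~\ref{Bearden-Crann}, which asserts that conditions \eqref{amen 2}--\eqref{amen 5} of Proposition~\ref{prop:Amenability-conditions} are all equivalent (the input needed beyond Proposition~\ref{prop:Amenability-conditions} itself, to cover non-exact $G$, being \cite{Bearden-Crann}*{Theorem 3.6}). Hence amenability of $M$ is equivalent to condition \eqref{amen 2}: the existence of a net of compactly supported, norm-continuous, positive type functions $\theta_i\colon G\to Z(M)_c=M_c$ with $\|\theta_i(e)\|\le 1$ and $\theta_i(g)\to 1$ ultraweakly and uniformly on compact subsets of $G$. Applying Lemma~\ref{sub c gone} to the commutative $G$-von Neumann algebra $M=Z(M)$, I may equally well take the $\theta_i$ to be valued in $M=L^\infty(X,\mu)$ itself rather than in its continuous part $M_c$, which is the codomain appearing in the statement.

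Finally I would reconcile the normalization: the statement writes $\theta_i(e)\le 1$ where \eqref{amen 2} has $\|\theta_i(e)\|\le 1$. Since each $\theta_i$ is of positive type and valued in the commutative algebra $L^\infty(X,\mu)$, taking the finite set $F=\{e\}$ in Definition~\ref{amen def} gives $\theta_i(e)=\alpha_e(\theta_i(e))\ge 0$; for a positive element of a commutative $C^*$-algebra, $\theta_i(e)\le 1$ holds if and only if $\|\theta_i(e)\|\le 1$, so the two conditions coincide. Assembling these remarks over all quasi-invariant $\mu$ yields the stated equivalence. I do not expect a genuine obstacle here: the only mildly delicate points are the (harmless) replacement of $M_c$ by $M$ via Lemma~\ref{sub c gone} and the order-versus-norm reformulation of the value at the identity; the substance of the statement is entirely carried by Theorem~\ref{Bearden-Crann}.
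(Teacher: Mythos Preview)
Your proposal is correct and matches the paper's approach exactly: the paper states this proposition with a bare \qed, noting just beforehand that it follows from Theorem~\ref{Bearden-Crann} applied to $M=L^\infty(X,\mu)$. You have simply filled in the routine details (the use of Lemma~\ref{sub c gone} to pass from $M_c$ to $M$, and the equivalence of $\theta_i(e)\le 1$ with $\|\theta_i(e)\|\le 1$ for positive elements of a commutative algebra) that the paper leaves implicit.
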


Our main goal in the rest of this section is the following

\begin{theorem}\label{thm-amenable-all}
Let $(C_0(X),\alpha)$ be a commutative $G$-$C^*$-algebra, and assume that $X$ and $G$ are second countable.  Then the following are equivalent:
\begin{enumerate}
\item \label{thm-amenable-all 1} $\alpha$ is amenable;
\item \label{thm-amenable-all 2} the underlying action $G\curvearrowright X$ is measurewise amenable.
\end{enumerate}
\end{theorem}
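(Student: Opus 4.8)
The plan is to prove the two implications separately, using throughout that $A_\alpha''$ is commutative: $C_0(X)^{**}$ is a commutative von Neumann algebra and, by Remark \ref{env iso 0}, $A_\alpha''$ is a normal quotient of it. Hence $Z(A_\alpha'')=A_\alpha''$, and by Corollary \ref{cor-amenable} (which incorporates the Bearden--Crann theorem, Theorem \ref{Bearden-Crann}) the action $\alpha$ is amenable if and only if it is von Neumann amenable, i.e.\ if and only if $A_\alpha''$ is an amenable $G$-von Neumann algebra.

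For $\eqref{thm-amenable-all 1}\Rightarrow\eqref{thm-amenable-all 2}$, fix a quasi-invariant Radon measure $\mu$ on $X$ and form the associated covariant representation $(\pi_\mu,u_\mu)$ of $(C_0(X),G)$ on $L^2(X,\mu)$, with $\pi_\mu$ the multiplication representation and $u_\mu$ built from the square root of the Radon--Nikodym cocycle of $\mu$. Then $\pi_\mu$ is nondegenerate and $\pi_\mu(C_0(X))''=L^\infty(X,\mu)$, so Proposition \ref{prop-universal} yields a normal $G$-equivariant surjection $\pi_\mu''\colon A_\alpha''\to L^\infty(X,\mu)$. Since $A_\alpha''$ is an amenable $G$-von Neumann algebra and amenability passes to equivariant normal quotients — such a quotient has kernel $A_\alpha''z$ for a $G$-invariant central projection $z$, hence is $G$-isomorphic to the corner $(1-z)A_\alpha''$, whose amenability follows by restricting the conditional expectation $L^\infty(G,A_\alpha'')\to A_\alpha''$ to $(1-z)L^\infty(G,A_\alpha'')$ — we conclude $L^\infty(X,\mu)$ is amenable, i.e.\ the action is measurewise amenable. (Equivalently, one may transport a net of positive type functions witnessing Definition \ref{def-amenable (A)} through the contractive normal map $\pi_\mu''$ and apply Proposition \ref{prop-measure}.)

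The implication $\eqref{thm-amenable-all 2}\Rightarrow\eqref{thm-amenable-all 1}$ is the substantive one, and here the central-cover machinery of Section \ref{sec:ccover} enters. Decompose the universal covariant representation $(i_A,i_G)$ of $(C_0(X),G)$ on $H_u$ as a direct sum $\bigoplus_{j\in J}(\pi_j,u_j)$ of cyclic, nondegenerate covariant subrepresentations; as $X$ and $G$ are second countable, $C_0(X)\rtimes_\max G$ is separable, so each $H_j$ is a separable Hilbert space. For each $j$ the spectral theorem for the nondegenerate representation $\pi_j\colon C_0(X)\to\Bd(H_j)$ on a separable space identifies $\pi_j(C_0(X))''$ with $L^\infty(X,\mu_j)$, where $\mu_j$ is a finite — hence, $X$ being second countable locally compact, Radon — measure on $X$ whose null sets are the spectral null sets. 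Covariance $\pi_j\circ\alpha_g=\Ad u_{j,g}\circ\pi_j$ forces $\Ad u_{j,g}$ to preserve $\pi_j(C_0(X))''$ and to act there as $f\mapsto f\circ g^{-1}$, which descends to $L^\infty(X,\mu_j)$ precisely because $\mu_j$ is quasi-invariant; thus $\mu_j$ is quasi-invariant and $\pi_j(C_0(X))''\cong L^\infty(X,\mu_j)$ as $G$-von Neumann algebras for the standard action. By measurewise amenability, each $L^\infty(X,\mu_j)$ is amenable. On the other hand, writing $d_{\pi_j}\in A_\alpha''$ for the equivariant central cover of $(\pi_j,u_j)$, the discussion preceding Definition \ref{def-quasiequiv} gives a $G$-isomorphism $d_{\pi_j}A_\alpha''\cong\pi_j(C_0(X))''$, so each corner $d_{\pi_j}A_\alpha''$ is an amenable $G$-von Neumann algebra; and since $(i_A,i_G)=\bigoplus_j(\pi_j,u_j)$ and the (equivariant) central cover of a direct sum is the supremum of the central covers, $\bigvee_{j}d_{\pi_j}=1_{A_\alpha''}$.

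It then remains to assemble these corners, for which I would prove the routine lemma: if $M$ is a $G$-von Neumann algebra and $(p_\lambda)$ a family of $G$-invariant central projections with $\bigvee_\lambda p_\lambda=1$ such that each corner $p_\lambda M$ is an amenable $G$-von Neumann algebra, then $M$ is amenable. A Zorn argument (using normality of the $G$-action to stay within $G$-invariant central projections) produces mutually orthogonal $G$-invariant central projections $(q_k)$ with $\sum_k q_k=1$, each $q_k$ dominated by some $p_\lambda$; then $q_kM$ is a $G$-invariant central corner of an amenable $G$-von Neumann algebra, hence amenable with conditional expectation $P_k\colon (1\otimes q_k)L^\infty(G,M)\to q_kM$, and since $M=\prod_k q_kM$ the formula $P(f):=\sum_k P_k((1\otimes q_k)f)$ (ultraweakly convergent, with $k$-th component $P_k((1\otimes q_k)f)\in q_kM$) defines a $G$-equivariant conditional expectation $L^\infty(G,M)\to M$. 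Applying this to $M=A_\alpha''$ with the family $(d_{\pi_j})$ shows $A_\alpha''$ is amenable, whence $\alpha$ is amenable by Corollary \ref{cor-amenable}. I expect the main obstacle to be the representation-theoretic step of the third paragraph: extracting from each cyclic covariant piece the quasi-invariant Radon measure $\mu_j$ and identifying $\pi_j(C_0(X))''$ with $L^\infty(X,\mu_j)$ as a $G$-von Neumann algebra in exactly the sense of Definition \ref{defn-mwa} (this is where second countability is indispensable); the assembling lemma and the quotient argument of the first implication are standard.
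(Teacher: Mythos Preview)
Your proof is correct. The implication \eqref{thm-amenable-all 1}$\Rightarrow$\eqref{thm-amenable-all 2} is essentially identical to the paper's. For \eqref{thm-amenable-all 2}$\Rightarrow$\eqref{thm-amenable-all 1} you take a genuinely different route in two places, and both differences are worth noting. First, to produce the quasi-invariant measure attached to a cyclic covariant piece $(\pi_j,u_j)$, the paper invokes Renault's disintegration theorem to obtain a direct integral decomposition and hence a normal equivariant map $L^\infty(X,\mu)\to \pi_j(C_0(X))''$; you instead use the ordinary spectral theorem to identify $\pi_j(C_0(X))''$ with $L^\infty(X,\mu_j)$ directly and read off quasi-invariance from covariance. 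Your route is more elementary and fully adequate here since $A$ is commutative; the paper's argument imports heavier machinery than is strictly needed. Second, to pass from amenability of each $\pi_j(C_0(X))''$ back to amenability of $A_\alpha''$, the paper isolates this reduction as Lemma~\ref{prop:amenability-representations} (an $\varepsilon$-argument with normal states showing amenability is detected by cyclic covariant representations), whereas you argue via the equivariant central covers $d_{\pi_j}$ and a Zorn/product-of-corners assembly at the level of conditional expectations. These are two packagings of the same reduction; the paper's lemma is phrased for arbitrary (not necessarily commutative) $A$, which is why it works with $Z(A_\alpha'')$ rather than $A_\alpha''$, but in the commutative case your corner argument is equally clean.
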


For the proof, we will need a technical lemma.  For the statement, let us say that a covariant representation $(\pi,u)$ of a $G$-$C^*$-algebra $A$ is \emph{cyclic} if the integrated form $\pi\rtimes u$ is cyclic as a representation of $A\rtimes_{\max} G$.

\begin{lemma}\label{prop:amenability-representations}
Let $G$ be a  locally compact group, and let $(A,\alpha)$ be a  $G$-$C^*$-algebra.  Then the following are equivalent:
\begin{enumerate}[(1)]
\item \label{prop:amenability-representations 1} $\alpha$ is  amenable;
\item \label{prop:amenability-representations 2} for every nondegenerate covariant representation $(\pi, u)$ of $(A,G,\alpha)$ the action 
$\Ad u:G\to \Aut(\pi(A)'')$ is 
amenable;
\item \label{prop:amenability-representations 3} for every cyclic covariant representation $(\pi,u)$ of $(A,G,\alpha)$ the action 
$\Ad u:G\to \Aut(\pi(A)'')$ is amenable. 
\end{enumerate}
\end{lemma}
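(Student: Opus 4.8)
The plan is to prove the cycle $\eqref{prop:amenability-representations 1}\Rightarrow\eqref{prop:amenability-representations 2}\Rightarrow\eqref{prop:amenability-representations 3}\Rightarrow\eqref{prop:amenability-representations 1}$. The implication $\eqref{prop:amenability-representations 2}\Rightarrow\eqref{prop:amenability-representations 3}$ is trivial. For $\eqref{prop:amenability-representations 1}\Rightarrow\eqref{prop:amenability-representations 2}$: let $(\pi,u)$ be a nondegenerate covariant representation, and let $\pi''\colon A_\alpha''\to \pi(A)''$ be the normal, $\alpha''$-$\Ad u$-equivariant, surjective $*$-homomorphism of Proposition \ref{prop-universal}. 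A surjective equivariant normal $*$-homomorphism of $G$-von Neumann algebras carries the centre into the centre, so $\pi''$ restricts to a normal surjective equivariant $*$-homomorphism $Z(A_\alpha'')\to Z(\pi(A)'')$ (this uses that the image of the centre is central in $\pi(A)''$ and that $\pi''$ is onto; one can also see it from the fact that a surjective normal $*$-homomorphism $M\to N$ of von Neumann algebras is, up to a central projection, a corner map, hence sends $Z(M)$ onto $Z(N)$). Now take a net $(\theta_i)$ of norm-continuous, compactly supported positive type functions $G\to Z(A_\alpha'')$ witnessing amenability of $\alpha$ (Definition \ref{def-amenable (A)}), and push it forward: $\theta_i':=\pi''\circ\theta_i$ is a net of norm-continuous, compactly supported positive type functions $G\to Z(\pi(A)'')$ — positive type is preserved because $\pi''$ is a $*$-homomorphism intertwining $\alpha''$ and $\Ad u$ — with $\|\theta_i'(e)\|\le 1$ and $\theta_i'(g)\to 1$ ultraweakly, uniformly on compact sets, since $\pi''$ is normal and unital. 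By Lemma \ref{sub c gone} (applied to $M=Z(\pi(A)'')$) together with the equivalence $\eqref{amen 1}/\eqref{amen 2}\Leftrightarrow\eqref{amen 5}$ from Theorem \ref{Bearden-Crann}, this shows $\Ad u$ on $\pi(A)''$ is amenable.

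For the main implication $\eqref{prop:amenability-representations 3}\Rightarrow\eqref{prop:amenability-representations 1}$: recall that $A_\alpha''=i_A(A)''\subseteq\Bd(H_u)$ where $(i_A,i_G)$ is the universal representation on $H_u=\bigoplus_\phi H_\phi$, the direct sum over GNS representations $(\pi_\phi,u_\phi)$ of the states $\phi$ of $A\rtimes_\max G$; each $(\pi_\phi,u_\phi)$ is cyclic. For each $\phi$, hypothesis \eqref{prop:amenability-representations 3} gives a $G$-equivariant conditional expectation $P_\phi\colon L^\infty(G,\pi_\phi(A)'')\to\pi_\phi(A)''$, equivalently (by Theorem \ref{Bearden-Crann}, i.e.\ conditions \eqref{amen 1}--\eqref{amen 5} of Proposition \ref{prop:Amenability-conditions}) a net of positive type functions into $Z(\pi_\phi(A)'')$ converging ultraweakly to $1$. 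The point is to assemble these. Using Proposition \ref{lem-quasi} and the theory of equivariant central covers from Section \ref{sec:ccover}: each cyclic $(\pi_\phi,u_\phi)$ has equivariant central cover $d_\phi:=d_{\pi_\phi}\in Z(A_\alpha'')$, a $G$-invariant central projection with $\pi_\phi(A)''\cong d_\phi A_\alpha''$ $G$-equivariantly, and the projections $\{d_\phi\}$ have supremum $1_{A_\alpha''}$ (since $\bigoplus_\phi \pi_\phi$ is the universal representation). Thus $Z(A_\alpha'')$ embeds equivariantly and normally (indeed unitally, using the supremum condition and a suitable central decomposition / separability reduction) into $\prod_\phi Z(d_\phi A_\alpha'')\cong\prod_\phi Z(\pi_\phi(A)'')$, a product of amenable $G$-von Neumann algebras. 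Since amenability of $G$-von Neumann algebras passes to products (an equivariant conditional expectation on each factor assembles to one on the product, as $L^\infty(G,\prod M_\phi)$ maps compatibly to $\prod L^\infty(G,M_\phi)$) and to equivariantly complemented — in fact to any — normal equivariant subalgebras, $Z(A_\alpha'')$ is amenable; hence $A_\alpha''$ is amenable by Remark \ref{rem-vonNeumannam}, i.e.\ $\alpha$ is von Neumann amenable, which by Theorem \ref{Bearden-Crann} is equivalent to amenability of $\alpha$.

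The main obstacle I anticipate is the assembly step in $\eqref{prop:amenability-representations 3}\Rightarrow\eqref{prop:amenability-representations 1}$: making precise the sense in which $Z(A_\alpha'')$ sits inside the product $\prod_\phi Z(\pi_\phi(A)'')$ as a von Neumann subalgebra on which amenability is inherited. One has a family of $G$-equivariant normal surjections $Z(A_\alpha'')\to Z(\pi_\phi(A)'')$ (the central-cover corner maps $m\mapsto d_\phi m$), jointly injective since $\sup_\phi d_\phi=1$, giving an equivariant normal unital embedding $Z(A_\alpha'')\hookrightarrow\prod_\phi Z(\pi_\phi(A)'')$. If one prefers to avoid discussing hereditary behaviour of amenability under arbitrary normal equivariant inclusions, one can instead argue directly with positive type functions: choose for each $\phi$ a net $(\theta_i^\phi)$ into $Z(\pi_\phi(A)'')$ as above and, using a net indexed by finite subsets of the index set of $\phi$'s together with compact subsets of $G$ and $\varepsilon>0$, patch them into a single net of positive type functions into $Z(A_\alpha'')\subseteq\prod_\phi Z(\pi_\phi(A)'')$ converging ultraweakly to $1$ — here it is convenient to invoke the second countability reductions available (as in Section \ref{sec:meas amen}) or simply to use the compactness of the set of contractive cp maps in the pointwise-ultraweak topology (\cite{Brown:2008qy}*{Theorem 1.3.7}) to extract a limit directly, as in the proof of Proposition \ref{prop:Amenability-conditions}. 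Either route yields von Neumann amenability of $\alpha$, and Theorem \ref{Bearden-Crann} closes the loop.
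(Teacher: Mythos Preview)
Your argument for $\eqref{prop:amenability-representations 1}\Rightarrow\eqref{prop:amenability-representations 2}$ is fine and matches what the paper calls ``straightforward.'' The gap is in $\eqref{prop:amenability-representations 3}\Rightarrow\eqref{prop:amenability-representations 1}$.

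The assertion that amenability of $G$-von Neumann algebras passes ``to any normal equivariant subalgebras'' is false: for non-amenable $G$, take $\C\subseteq L^\infty(G)$. So the product argument only works if $Z(A_\alpha'')$ is \emph{equivariantly complemented} in $\prod_\phi Z(\pi_\phi(A)'')$, and you do not verify this. Your fallback ``patching'' argument runs into the same obstacle in a different guise: the central covers $d_\phi$ are not orthogonal, so if you lift a positive type function $\theta^{(j)}\colon G\to d_{\phi_j}Z(A_\alpha'')$ for each of finitely many states $\phi_1,\dots,\phi_n$, there is no evident way to combine them into a single positive type function $\theta\colon G\to Z(A_\alpha'')$ with $|\phi_j(\theta(g))-1|$ small for \emph{all} $j$ simultaneously. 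Summing them over-counts on the overlaps $d_{\phi_j}d_{\phi_k}$, and a direct sum of the cyclic GNS representations $(\pi_{\phi_j},u_{\phi_j})$ need not itself be cyclic.

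The paper's trick is to avoid assembly altogether. Given normal states $\phi_1,\dots,\phi_n$ on $A_\alpha''$, form the \emph{single} state $\psi=\tfrac1n\sum\phi_j$, extend it normally to $(A\rtimes_\max G)^{**}$, and take its GNS representation $(\pi,u)$ with cyclic vector $\xi$. This is cyclic, so hypothesis \eqref{prop:amenability-representations 3} applies and yields a positive type $\theta_0\colon G\to Z(\pi(A)'')$; lift via $Z(\pi(A)'')\cong d_\pi Z(A_\alpha'')\hookrightarrow Z(A_\alpha'')$ to get $\theta$. Since each $\phi_j\le n\psi$, every $\phi_j$ vanishes on $(1-d_\pi)A_\alpha''$, so $\psi\circ\theta=\psi_0\circ\theta_0$ where $\psi_0$ is the vector state of $\xi$. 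Choosing $\theta_0$ so that $|\psi_0(\theta_0(g))-1|<\epsilon^2/2n$ gives $\big|\sum_j(\phi_j(\theta(g))-1)\big|<\epsilon^2/2$; a short real/imaginary-part argument using $\|\theta(g)\|\le 1$ then upgrades this to $|\phi_j(\theta(g))-1|<\epsilon$ for each individual $j$. This averaging-plus-estimate step is the missing idea.
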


\begin{proof} The implications \eqref{prop:amenability-representations 1} $\Rightarrow$ \eqref{prop:amenability-representations 2} $\Rightarrow$ \eqref{prop:amenability-representations 3} are straightforward, so we only need to check \eqref{prop:amenability-representations 3} $\Rightarrow$ \eqref{prop:amenability-representations 1}.  For this, using Theorem \ref{Bearden-Crann}, it suffices to check condition \eqref{amen 2} from Proposition \ref{prop:Amenability-conditions}, i.e.\ to show that for any finite set $\{\phi_1,...,\phi_n\}$ of states from the predual of $A_\alpha''$, any $\epsilon\in (0,1)$, and any compact subset $K$ of $G$ there exists a continuous compactly supported positive type function $\theta:G\to Z(A_\alpha'')$ such that $\|\theta(e)\|\leq 1$, and such that 
\begin{equation}\label{phii est}
|\phi_i(\theta(g))-1|<\epsilon  \text{ for all }i\in \{1,...,n\} \text{ and } g\in K.
\end{equation}
Define $\psi:=\frac{1}{n}\sum_{i=1}^n \phi_i$, which is also a state in the predual of $A_\alpha''$.  As $A_\alpha''$ is a (unital) von Neumann subalgebra of $(A\rtimes_{\max} G)^{**}$, we may extend $\psi$ to a normal state, say $\widetilde{\psi}$, on $(A\rtimes_{\max} G)^{**}$ using \cite{Blackadar:2006eq}*{Corollary III.2.1.10}.  Considering $\widetilde{\psi}$ as a state on $A\rtimes_{\max}G$, let $(\pi,u)$ be its GNS representation, considered as a covariant pair for $(A,G)$ with associated cyclic vector $\xi$. 

Now, as the representation $(\pi,u)$ is cyclic, assumption \eqref{prop:amenability-representations 3} gives a continuous compactly supported positive type function $\theta_0:G \to Z(\pi(A)'')$ such that $\|\theta_0(e)\|\leq 1$, and such that if $\psi_0$ is the (normal) vector state on $\pi(A)''$ associated to $\xi$, then  $|\psi_0(\theta_0(g))-1|<\epsilon^2/2n$ for all $g\in K$.  Let $d_\pi\in Z(A_\alpha'')$ be the central cover of $(\pi,u)$ as in Definition \ref{def:ccover}, so there is a canonical equivariant normal isomorphism $\pi(A)''\cong d_\pi A_\alpha''$.  This restricts to an equivariant isomorphism of centres $Z(\pi(A)'')\cong Z(d_\pi A_\alpha'')$.  Let $\iota:Z(\pi(A)'') \into Z(A_\alpha'')$ be the composition of this isomorphism and the canonical inclusion $Z(d_\pi A_\alpha'')\to Z(A_\alpha'')$. We claim that $\theta:=\iota\circ \theta_0$ has the right properties.  Indeed, it is clearly positive type, satisfies $\|\theta(e)\|\leq 1$ and is compactly supported, so it remains to show the condition in line \eqref{phii est}.

For this, we note that $\psi((1-d_\pi)A_\alpha'')=0$, whence $\psi_0\circ \theta_0=\psi\circ \theta$, and so for any $g\in K$, 
$$
\frac{\epsilon^2}{2n}>|\psi_0(\theta_0(g))-1|=|\psi(\theta(g))-1|=\Bigg|\frac{1}{n}\sum_{i=1}^n \phi_i(\theta(g))-1\Bigg|
$$
whence 
$$
\frac{\epsilon^2}{2}>\Bigg|\sum_{i=1}^n (\phi_i(\theta(g))-1)\Bigg|.
$$
For each $i\in \{1,...,n\}$, write $x_i$ and $y_i$ for the real and imaginary parts, respectively,  of $\phi_i(\theta(g))$, so taking real parts of this inequality gives 
\begin{equation}\label{phii est 2}
-\frac{\epsilon^2}{2} <\sum_{i=1}^n (x_i-1)<\frac{\epsilon^2}{2}.
\end{equation}
Note that, as $\theta$ is positive type and $\|\theta(e)\|\leq 1$, we have $\|\theta(g)\|\leq 1$ for all $g\in G$, whence $x_i+\sqrt{-1}y_i$ is in the unit ball of $\C$ for all $i\in \{1,...,n\}$.  Hence line \eqref{phii est 2} implies that $1-\epsilon^2/2 <x_i\leq 1$ for each $i$.  As $x_i^2+y_i^2\leq 1$, this implies that $y_i^2\leq \epsilon^2-\epsilon^4/4$ for each $i$.  Hence for each $i$ and each $g\in K$,
$$
|\phi_i(\theta(g))-1|^2= (x_i-1)^2+y_i^2 \leq \frac{\epsilon^4}{4}+\epsilon^2-\frac{\epsilon^4}{4}=\epsilon^2
$$
and we are done.
\end{proof}

\begin{proof}[Proof of Theorem \ref{thm-amenable-all}]
For the implication \eqref{thm-amenable-all 1} $\Rightarrow$ \eqref{thm-amenable-all 2}, assume that $C_0(X)$ is amenable.  Note that if  $\mu$ is a quasi-invariant Borel measure on $X$ we can construct a covariant representation $(M_\mu, u)$ of $(C_0(X), G, \alpha)$ on $L^2(X,\mu)$ 
given by
$$\big(M_\mu(f)\xi\big)(x):=f(x) \xi(x)\quad \text{and}\quad \big(u_g\xi\big)(x):=  \Big(\frac{d\mu(g^{-1} x)}{d\mu(x)}\Big)^{1/2}\xi(g^{-1}x),$$
where  $\frac{d\mu(g^{-1} x)}{d\mu(x)}$ denotes the Radon-Nikodym derivative. 

Now, $M_\mu$ gives an equivariant $*$-homomorphism $C_0(X)\to L^\infty(X,\mu)$ with ultraweakly dense image, whence Corollary \ref{cor-vN} gives a canonical, equivariant, surjective extension $M_\mu'':C_0(X)_\alpha''\to L^\infty(X,\mu)$ to the enveloping $G$-von Neumann algebra $C_0(X)_\alpha''$ of $C_0(X)$.  Let $(\theta_i:G\to C_0(X)_\alpha'')$ be a net satisfying the conditions as in \eqref{amen 2} of Proposition \ref{prop:Amenability-conditions}.  Then $(M_\mu''\circ \theta_i)$ also satisfies the same conditions with respect to $L^\infty(X,\mu)$.  Hence by Proposition \ref{prop-measure}, $L^\infty(X,\mu)$ is an amenable $G$-von Neumann algebra.

To see \eqref{thm-amenable-all 2} $\Rightarrow$ \eqref{thm-amenable-all 1}, note that by Lemma~\ref{prop:amenability-representations}, it is enough to show that for every cyclic covariant representation  $(\pi,u)\colon (C_0(X),G)\to \Bd(H_\pi)$ the 
action $\Ad u:G\to \Aut(\pi(C_0(X))'')$ is amenable. Since $C_0(X)\rtimes_\max G$ is separable, it follows that the Hilbert space 
$H_\pi$ is separable as well. 
It follows then from Renault's disintegration theorem \cite{Renault}*{Th\'eor\`eme 4.1}, that there exists a 
quasi-invariant measure $\mu$ on $X$ such that 
$\pi$ has a direct integral 
decomposition $\int_X^\oplus \pi_x \dd \mu(x)$ with respect to a measurable field of Hilbert spaces over $(X,\mu)$, and that $G$ acts compatibly on this field.
This implies that there exists an ultraweakly continuous, unital, equivariant, $*$-homomorphism $\Phi_\pi:L^{\infty}(X,\mu)\to Z(\pi(C_0(X))'')$.
Composing $\Phi_\pi$ with a net $(\theta_i)$  of positive type functions in $C_c(G, L^\infty(X,\mu))$ with the properties in Proposition \ref{prop-measure}, we obtain a net which establishes 
amenability of $(\pi(C_0(X))'',\Ad u)$.
\end{proof}

In \cite{Bearden-Crann}*{Corollary 4.14}, Bearden and Crann show that amenability for a commutative $G$\nb-$C^*$\nb-algebra $C_0(X)$ is equivalent to strong amenability.  As strong amenability of $C_0(X)$ is the same as topological amenability of $G\curvearrowright X$ (see Proposition \ref{rem-amenable}), we thus get the following corollary.  As noted in the introduction to this chapter, this solves a long-standing open question.

\begin{corollary}\label{cor-commutative}
Suppose that $G\curvearrowright X$ is a continuous action of a second countable locally compact group on a second countable locally compact space $X$. Then the following are equivalent:
\begin{enumerate}
\item $G\curvearrowright X$ is measurewise amenable;
\item $G\curvearrowright X$ is topologically amenable. \qed
\end{enumerate}
\end{corollary}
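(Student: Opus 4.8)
The plan is to obtain Corollary~\ref{cor-commutative} with no new work, simply by concatenating three equivalences: the substantive one just proved, plus two that are already on hand.

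First I would apply Theorem~\ref{thm-amenable-all}: since $G$ and $X$ are second countable, the action $G\curvearrowright X$ is measurewise amenable if and only if the induced action $\alpha\colon G\to\Aut(C_0(X))$ is amenable in the sense of Definition~\ref{def-amenable (A)}. Next I would invoke \cite{Bearden-Crann}*{Corollary 4.14}, which says that for a commutative $G$-$C^*$-algebra $C_0(X)$, amenability is equivalent to strong amenability (Definition~\ref{def-amenable (SA)}); this is the one external input, and it carries no exactness or second-countability restriction. Finally, by Proposition~\ref{rem-amenable}, strong amenability of $\alpha$ on $C_0(X)$ is equivalent to topological amenability of the underlying action $G\curvearrowright X$. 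Chaining the three: $G\curvearrowright X$ measurewise amenable $\iff$ $C_0(X)$ amenable $\iff$ $C_0(X)$ strongly amenable $\iff$ $G\curvearrowright X$ topologically amenable, which is exactly the claimed equivalence (note that the implication topological $\Rightarrow$ measurewise is the classically easy direction and is absorbed into the same chain).

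I do not expect any genuine obstacle at the level of the corollary; all of the content lives upstream in Theorem~\ref{thm-amenable-all} (whose proof routes through Renault's disintegration theorem to compare $C_0(X)$ with the von Neumann algebras $L^\infty(X,\mu)$ over quasi-invariant $\mu$, and through the Bearden--Crann equivalence of amenability and von Neumann amenability, i.e.\ Theorem~\ref{Bearden-Crann}, to cover the non-exact case). The only point requiring a moment's care is bookkeeping of hypotheses: Theorem~\ref{thm-amenable-all} is stated under precisely the second-countability assumptions of the corollary, while \cite{Bearden-Crann}*{Corollary 4.14} and Proposition~\ref{rem-amenable} impose no such assumption, so the three inputs compose cleanly.
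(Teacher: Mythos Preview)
Your proposal is correct and matches the paper's argument exactly: the corollary is deduced by chaining Theorem~\ref{thm-amenable-all}, Bearden--Crann's \cite{Bearden-Crann}*{Corollary 4.14}, and Proposition~\ref{rem-amenable}, precisely as you describe. The paper presents this reasoning in the paragraph immediately preceding the corollary and then marks the statement with \qed.
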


\chapter{The quasi-central approximation property}\label{chap:QAP}

In earlier work \cite{Buss:2019}*{Section 3}, we introduced the quasi-central approximation property, or (QAP), for an action 
$\alpha:G\to \Aut(A)$ of a discrete group $G$ on a $C^*$-algebra $A$.  We showed there that the (QAP) implies amenability of the action in the sense of Anantharaman-Delaroche \cite{Anantharaman-Delaroche:1987os}*{D\'{e}finition 4.1} (equivalently, in the sense of Definition \ref{def-amenable (A)}), but left the converse open. Since we introduced it, the (QAP) played an important role in recent work of Suzuki \cite{Suzuki:2020} on the classification of $G$-$C^*$-algebras; this (and the naturalness of the question) motivated us to revisit the converse in this chapter.

In Section \ref{sec:wQAP} we show that amenability of an  action of a locally compact group $G$ is always equivalent to a weak version (wQAP) of the (QAP). This  provides a description of  amenability in terms of $A$-valued positive type functions which avoids the use of the enveloping $G$-von Neumann algebra $A_\alpha''$. As a consequence, we  show that amenability passes to inductive limits of $G$-$C^*$-algebras. 

In the meantime Ozawa and Suzuki showed in \cite{Ozawa-Suzuki}*{Theorem 3.2}  that amenability (and hence the (wQAP)) and the (QAP) are  equivalent and, in \cite{Ozawa-Suzuki}*{Theorem 2.13},  that the (QAP) is also equivalent to the approximation property (AP), which was introduced by Ruy Exel \cite{Exel:Amenability}*{Definition 4.4} in the context of Fell bundles over discrete groups, and by Exel and Chi-Keung Ng \cite{ExelNg:ApproximationProperty}*{Definition 3.6} for Fell bundles over general locally compact groups.

In Section \ref{sec-alternative}, we  deduce consequences for $C^*$-algebras associated to Fell bundles.   
The equivalence of amenability and the (AP) for group actions on $C^*$-algebras gives a hint that Exel's definition is the ``correct'' extension of the notion of amenability to the setting of Fell bundles.  In  this context, we use the connection between amenability and the (AP)   to solve a conjecture of Ara, Exel, and Katsura relating the (AP) and nuclearity of cross-sectional $C^*$-algebras.

\section{The weak quasi-central approximation property (wQAP)}\label{sec:wQAP}

In this section we want to give some characterizations of amenability in terms of $A$-valued positive type functions on $G$.  This is preferable for certain purposes, as the properties use $A$ itself, rather than the much larger enveloping $G$-von Neumann algebra $A_\alpha''$ of the action.

Recall from Section \ref{sec:predual} above that Ikunishi \cite{Ikunishi}*{Theorem 1} showed that the predual of $A_\alpha''$ canonically identifies with the closed subspace $A^{*,c}$  of all $\phi\in A^*$ such that the map
$$G\to A^*; \quad g\mapsto \alpha_g^*(\phi)$$  
is  norm continuous.  Every such functional can be written as a linear combination of at most four states  
in $S(A)^c:=S(A)\cap A^{*,c}$, which then identifies with the set of normal states on $A_\alpha''$. 

In what follows,  if $\mathcal H$ is a Hilbert $A$-module (such as $L^2(G,A)$ from Definition \ref{l2ga act}) we shall write $\|\xi\|_2:=\|\braket{\xi}{\xi}_A\|^{\frac12}$ for all $\xi\in \mathcal H$. Moreover, if $\phi$ is a positive linear functional on $A$, then we shall write
\begin{equation}\label{eq}
\braket{\xi}{\eta}_{\phi}:={\phi}(\braket{\xi}{\eta}_A)\quad\text{and}\quad \|\xi\|_{\phi}:=\braket{\xi}{\xi}_{\phi}^{\frac12}
\end{equation}
for all $\xi,\eta\in \mathcal H$. Note that $\braket{\cdot}{\cdot}_{\phi}$ is a semi-inner product on $\mathcal H$ and therefore satisfies the 
Cauchy-Schwarz inequality
\begin{equation}\label{CS}
|\braket{\xi}{\eta}_{\phi}|\leq \|\xi\|_{\phi}\|\eta\|_{\phi}
\end{equation}
for all $\xi,\eta\in \mathcal H$, whence in particular each $\|\cdot \|_{\phi}$ is a seminorm.  Following Ananthara\-man-Delaroche \cite{Anantharaman-Delaroche:1987os}*{Section 1}, we write $\tau_s$ for the topology on $\mathcal H$ defined by this family of semi-norms.

In what follows  we  equip $L^2(G,A)$ with the $G$-action $\lambda^\alpha$ as in Definition \ref{l2ga act} above, and the left and right $A$-module actions determined respectively by  
$$
(a\xi)(g):=a\xi(g) \quad \text{and}\quad (\xi a)(g):=\xi(g)a
$$ 
for all $\xi\in C_c(G,A)$.

\begin{definition}\label{def-wQAP}
An action  $\alpha:G\to \Aut(A)$ of a locally compact group $G$ on  a $C^*$-algebra $A$ satisfies  the 
{\em weak quasi-central approximation property} (wQAP) if there exists a net $(\xi_i)_{i\in I}$ of functions 
$\xi_i\in C_c(G,A)\subseteq L^2(G,A)$ 
such that
\begin{enumerate}
\item \label{def-wQAP 1} $\|\xi_i\|_2\leq 1$ for all $i\in I$;
\item \label{def-wQAP 2} for all ${\phi}\in S(A)^c$ we have $ \braket{\xi_i}{\lambda_g^\alpha\xi_i}_{\phi}\to 1$ uniformly  on compact subsets of $G$;
\item \label{def-wQAP 3} for all ${\phi}\in S(A)^c$ and all $a\in A$ we have $\|\xi_i a-a\xi_i\|_{\phi}\to 0$.
\end{enumerate}
\end{definition}

\begin{remark}\label{rem-QAP}
The (wQAP) is a variant of the {\em quasi-central approximation property} (QAP) as introduced for actions of discrete groups in \cite{Buss:2019}*{Section 3}.  The (QAP) has a natural extension to locally compact groups where conditions \eqref{def-wQAP 2} and \eqref{def-wQAP 3} in Definition \ref{def-wQAP} are replaced by the conditions
\begin{enumerate}
\item[(2$'$)] $\braket{\xi_i}{\lambda_g^\alpha\xi_i}_A\to 1$ in the strict topology of $\M(A)$ uniformly on compact subsets of $G$;
\item[(3$'$)] $\|\xi_i a-a\xi_i\|_2\to 0$  for all $a\in A$.
\end{enumerate}
\end{remark}

Our main goal in this section is to show that the (wQAP), and some related properties, are equivalent to amenability.  While this paper was under review, Ozawa and Suzuki established that the (QAP) is  also equivalent to amenability 
(hence  the (QAP) and (wQAP) are always equivalent by Theorem~\ref{thm-wQAP} below).\footnote{In a previous version of this paper we had a proof of this fact for discrete groups $G$, but, as was pointed out to us by \mcomment{one of the referees}, there was a mistake in our proof.}

Another interesting approximation property is the Exel-Ng approximation property (AP).  This was introduced in \cite{ExelNg:ApproximationProperty}*{Definition 3.6} in the setting of 
Fell bundles over locally compact groups. In the special case of actions $\alpha:G\to \Aut(A)$ this 
translates into

\begin{definition}\label{def-wAP}
An action $\alpha:G\to \Aut(A)$ satisfies the {\em approximation property}
(AP)  if there exists a bounded net $(\xi_i)$ in $C_c(G,A)\subseteq L^2(G,A)$ such that for all $a\in A$
$$\braket{\xi_i}{a\lambda_g^\alpha\xi_i}_A \to a$$
in norm, uniformly on compact subsets of $G$. 

An action $\alpha:G\to \Aut(A)$ satisfies the {\em weak approximation property} (wAP) if there exists a $\|\cdot\|_2$-bounded net
$(\xi_i)$ in $C_c(G,A)$
such that for all $\phi\in S(A)^c$ and for all $a\in A$
$$\phi\big(\braket{\xi_i}{a\lambda_g^\alpha\xi_i}_A - a\big)\to 0$$ 
uniformly on compact subsets of $G$.
\end{definition}

It is clear that the (AP) implies the (wAP). The main result of this section is the following

\begin{theorem}\label{thm-wQAP}
Let $\alpha:G\to\Aut(A)$ be an action of the locally compact group $G$ on the $C^*$-algebra $A$. Then the following 
are equivalent:
\begin{enumerate}
\item \label{wqap 1}$\alpha$ is amenable.
\item \label{wqap 2}$\alpha$ satisfies the (wQAP).
\item \label{wqap 3}$\alpha$ satisfies the (wAP).
\end{enumerate}
\end{theorem}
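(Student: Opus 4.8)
The plan is to establish the cycle of implications \eqref{wqap 1} $\Rightarrow$ \eqref{wqap 2} $\Rightarrow$ \eqref{wqap 3} $\Rightarrow$ \eqref{wqap 1}. Throughout I use Ikunishi's identification (Proposition \ref{prop-Ikunishi}) of $A^{*,c}$ with the predual of $A_\alpha''$; thus $S(A)^c$ is exactly the set of normal states of $A_\alpha''$, every element of $A^{*,c}$ is a linear combination of at most four of them, and the $A$-valued inner product and the action $\lambda^\alpha$ on $L^2(G,A)$ are the restrictions of the $A_\alpha''$-valued inner product and of the associated action $\lambda^{\alpha''}$ on $L^2_w(G,A_\alpha'')$ (Definition \ref{l2wgm}). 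I also record that for $\phi\in A^{*,c}$ and $a\in A$ the functional $\psi_a:x\mapsto\phi(ax)$ again lies in $A^{*,c}$, since its orbit map $g\mapsto(\alpha^*_g\phi)(\alpha_g(a)\,\cdot\,)$ is a norm-continuous product of the norm-continuous maps $g\mapsto\alpha^*_g\phi$ and $g\mapsto\alpha_g(a)$, and it corresponds to the normal functional $x\mapsto\phi(ax)$ on $A_\alpha''$.

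For \eqref{wqap 2} $\Rightarrow$ \eqref{wqap 3}, let $(\xi_i)$ witness the (wQAP) and fix $\phi\in S(A)^c$ and $a\in A$. Writing $\braket{\xi_i}{a\lambda^\alpha_g\xi_i}_A=\braket{\xi_i a^*}{\lambda^\alpha_g\xi_i}_A+\braket{a^*\xi_i-\xi_i a^*}{\lambda^\alpha_g\xi_i}_A$, the Cauchy--Schwarz inequality \eqref{CS} together with $\|\lambda^\alpha_g\xi_i\|_\phi^2=\phi(\alpha_g(\braket{\xi_i}{\xi_i}_A))\le1$ and condition \eqref{def-wQAP 3} give $\phi(\braket{a^*\xi_i-\xi_i a^*}{\lambda^\alpha_g\xi_i}_A)\to0$ uniformly for $g$ in compact sets. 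Since $\braket{\xi_i a^*}{\lambda^\alpha_g\xi_i}_A=a\braket{\xi_i}{\lambda^\alpha_g\xi_i}_A$, the remaining term equals $\psi_a(\braket{\xi_i}{\lambda^\alpha_g\xi_i}_A)$; expressing $\psi_a$ as a combination of elements of $S(A)^c$ and applying \eqref{def-wQAP 2} to each summand, this converges to $\psi_a(1_{A_\alpha''})=\phi(a)$ uniformly on compact sets. Hence $\phi(\braket{\xi_i}{a\lambda^\alpha_g\xi_i}_A-a)\to0$ uniformly on compact sets, which is the (wAP).

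For \eqref{wqap 3} $\Rightarrow$ \eqref{wqap 1}, let $(\xi_i)$ be a $\|\cdot\|_2$-bounded net in $C_c(G,A)$ witnessing the (wAP), viewed inside $L^2(G,A)\sbe L^2_w(G,A_\alpha'')$. Since each normal functional on $A_\alpha''$ is a combination of elements of $S(A)^c$, the (wAP) says precisely that $\braket{\xi_i}{a\lambda^{\alpha''}_g\xi_i}_{A_\alpha''}\to a$ ultraweakly for each $g\in G$ and each $a$ in the ultraweakly dense $C^*$-subalgebra $A\sbe A_\alpha''$; this is condition \eqref{amen 4} of Proposition \ref{prop:Amenability-conditions} for $M=A_\alpha''$ relative to this subalgebra. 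The proof there of \eqref{amen 4} $\Rightarrow$ \eqref{amen 5} applies with this subalgebra, producing a $G$-equivariant conditional expectation $L^\infty(G,A_\alpha'')\to A_\alpha''$, i.e.\ von Neumann amenability of $(A,\alpha)$; by Theorem \ref{Bearden-Crann}, $(A,\alpha)$ is amenable.

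The implication \eqref{wqap 1} $\Rightarrow$ \eqref{wqap 2} is the core of the argument. By Definition \ref{def-amenable (A)}, Lemma \ref{sub c gone} (applied to $M=Z(A_\alpha'')$) and Lemma \ref{lem:pt cs}, amenability yields a net $(\zeta_i)$ in the unit ball of $L^2(G,Z(A_\alpha'')_c)\sbe L^2_w(G,A_\alpha'')$ with $\braket{\zeta_i}{\lambda^{\alpha''}_g\zeta_i}_{A_\alpha''}\to1$ ultraweakly, uniformly on compact subsets of $G$. The goal is to replace these central, $A_\alpha''$-valued functions by functions in $C_c(G,A)$ without leaving the unit ball; for this I invoke the Kaplansky density theorem for the ternary ring of operators $L^2(G,A)$ inside its self-dual completion $L^2_w(G,A_\alpha'')$ (equivalently, Kaplansky density for the associated linking $C^*$-algebras): the unit ball of $C_c(G,A)$ is dense in that of $L^2_w(G,A_\alpha'')$ for the $\sigma$-strong${}^*$ topology. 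So each $\zeta_i$ is a $\sigma$-strong${}^*$-limit of a net $(\xi_{i,j})_j$ in the unit ball of $C_c(G,A)$; joint $\sigma$-strong${}^*$-continuity of multiplication on bounded sets gives $\braket{\xi_{i,j}}{\lambda^{\alpha''}_g\xi_{i,j}}_{A_\alpha''}\to\braket{\zeta_i}{\lambda^{\alpha''}_g\zeta_i}_{A_\alpha''}$, and centrality of $\zeta_i$ forces $\xi_{i,j}a-a\xi_{i,j}=(\xi_{i,j}-\zeta_i)a-a(\xi_{i,j}-\zeta_i)$, whence $\braket{\xi_{i,j}a-a\xi_{i,j}}{\xi_{i,j}a-a\xi_{i,j}}_{A_\alpha''}\to0$; all convergences are monitored one normal state at a time, the $g$-uniformity over a compact $K$ coming from norm-compactness of $\{\phi\circ\alpha''_g:g\in K\}$ in the predual. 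Taking the diagonal net indexed by $I$ and a directed set of approximation data $(K,F,\epsilon)$, with $F$ a finite set of normal states of $A_\alpha''$ enlarged to absorb the modifications above, produces a net in the unit ball of $C_c(G,A)$ satisfying \eqref{def-wQAP 1}--\eqref{def-wQAP 3}. The main obstacle is exactly this descent from $Z(A_\alpha'')_c$-valued to $A$-valued functions: a pointwise approximation of the values $\zeta_i(g)$ by elements of $A$ does not control the Hilbert-module norm of the result, so one must apply Kaplansky density at the level of the module $L^2(G,A)\sbe L^2_w(G,A_\alpha'')$ to keep the approximants in the unit ball, and then reconcile this with the required uniformity in $g$.
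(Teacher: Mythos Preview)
Your proof is correct and follows essentially the same route as the paper's. For \eqref{wqap 2} $\Rightarrow$ \eqref{wqap 3} and \eqref{wqap 3} $\Rightarrow$ \eqref{wqap 1} you do exactly what the paper does (Cauchy--Schwarz for the former; condition \eqref{amen 4} of Proposition~\ref{prop:Amenability-conditions} together with Theorem~\ref{Bearden-Crann} for the latter). For \eqref{wqap 1} $\Rightarrow$ \eqref{wqap 2}, the Kaplansky density for TROs that you invoke is precisely the content of Lemma~\ref{lem-dense} and Corollary~\ref{cor-dense} in the paper, and your uniformity-in-$g$ argument via norm-compactness of $\{\alpha^*_{g^{-1}}\phi:g\in K\}$ is the paper's use of Lemma~\ref{lem-compact}; the paper additionally arranges the approximants $\xi_{i,j}$ to be supported in a fixed relatively compact open set $V_i\supseteq\supp\zeta_i$, which streamlines that step but is not strictly needed for your formulation.
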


The following lemma is well known to experts: it is a special case 
of a version of Kaplansky's density theorem for Hilbert modules due to 
Zettl (see \cite{Zettl}*{Lemma 2.4 and Corollary 2.7}). For completeness, we give an elementary direct proof below:

\begin{lemma}\label{lem-dense}
Suppose that $M$ is a von Neumann algebra and $A\subseteq M$ is an ultraweakly dense $C^*$-subalgebra.  Then, if $H$ is a Hilbert space, the unit ball  of the Hilbert $A$-module $H\otimes A$
 is dense in the unit ball of the Hilbert $M$-module $H\otimes M$ with respect to the locally convex 
topology $\tau_s$ generated by the seminorms $\xi\mapsto \|\xi\|_\phi={\phi}(\braket{ \xi}{\xi}_M)^{1/2}$, where ${\phi}$ runs through the set of normal states of $M$.
\end{lemma}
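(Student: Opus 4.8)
The plan is to deduce this from the classical Kaplansky density theorem by passing to $n\times n$ matrix algebras, after first truncating an element of $H\otimes M$ to finitely many coordinates. Note first that, since the $M$-valued inner product on $H\otimes A$ is the restriction of that on $H\otimes M$ and $A$ is a $C^*$-subalgebra of $M$, the natural inclusion $H\otimes A\into H\otimes M$ is isometric; in particular the unit ball of $H\otimes A$ is contained in that of $H\otimes M$, and the content of the lemma is that it is $\tau_s$-dense there.

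First I would fix an orthonormal basis $(e_k)_{k\in I}$ of $H$ and, given $\xi$ in the unit ball of $H\otimes M$, write $\xi=\sum_{k\in I}e_k\otimes m_k$ with $m_k\in M$ and $\sum_{k\in I}m_k^*m_k=\braket{\xi}{\xi}_M\leq 1_M$, the sum converging ultraweakly (Paschke's structure theorem for the self-dual completion; elementary for the exterior tensor product). For a finite set $F\subseteq I$ put $\xi_F:=\sum_{k\in F}e_k\otimes m_k$, so that $\braket{\xi_F}{\xi_F}_M=\sum_{k\in F}m_k^*m_k\leq 1_M$. For any normal state $\phi$ of $M$ one has $\|\xi-\xi_F\|_\phi^2=\phi\big(\sum_{k\in I\setminus F}m_k^*m_k\big)=\sum_{k\in I\setminus F}\phi(m_k^*m_k)$, which is the tail of the convergent series of nonnegative reals $\sum_{k\in I}\phi(m_k^*m_k)=\phi(\braket{\xi}{\xi}_M)\leq 1$. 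Hence $\xi_F\to\xi$ in $\tau_s$ along the net of finite subsets of $I$, and it suffices to $\tau_s$-approximate each $\xi_F$ by elements of the unit ball of $H\otimes A$.

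For the finite step, fix $F$ with $n:=|F|$, enumerate $F$, and identify $\xi_F$ with the matrix $x\in M_n(M)$ having first column $(m_k)_{k\in F}$ and all other columns zero; then $x^*x=\big(\sum_{k\in F}m_k^*m_k\big)E_{11}$, so $\|x\|=\|\sum_k m_k^*m_k\|^{1/2}\leq 1$. Since $M_n(A)$ is an ultraweakly dense $C^*$-subalgebra of the von Neumann algebra $M_n(M)$, Kaplansky's density theorem gives that the unit ball of $M_n(A)$ is $\sigma$-strongly dense in the unit ball of $M_n(M)$. For a normal state $\phi$ of $M$ the functional $z\mapsto\phi(z_{11})$ is a normal state of $M_n(M)$, and its associated $\sigma$-strong seminorm is $z\mapsto\phi\big((z^*z)_{11}\big)^{1/2}=\big(\sum_{k}\|z_{k1}\|_\phi^2\big)^{1/2}$. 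Thus, given normal states $\phi_1,\dots,\phi_r$ of $M$ and $\epsilon>0$, I would choose $y$ in the unit ball of $M_n(A)$ with $\sum_{k}\|m_k-y_{k1}\|_{\phi_j}^2<\epsilon^2$ for every $j$. Setting $\eta:=\sum_{k\in F}e_k\otimes y_{k1}\in H\otimes A$, one has $\braket{\eta}{\eta}_M=\sum_k y_{k1}^*y_{k1}=(y^*y)_{11}\leq\|y^*y\|\,1_M\leq 1_M$, so $\eta$ lies in the unit ball, while $\|\xi_F-\eta\|_{\phi_j}^2=\sum_k\|m_k-y_{k1}\|_{\phi_j}^2<\epsilon^2$. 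Combining this with the truncation step (applied with $\epsilon/2$) proves the density.

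The main obstacle is keeping the approximant in the unit ball: mere ultraweak density of $A$ in $M$ does not suffice, and this is exactly where Kaplansky's theorem must be used. The matrix device is what makes this work, as it re-encodes the module-norm bound $\braket{\xi_F}{\xi_F}_M\leq 1_M$ as the single operator-norm bound $\|x\|\leq 1$ on a square matrix, and the elementary corner estimate $(y^*y)_{11}\leq\|y\|^2\,1_M$ transports the bound back to $\eta$. The truncation is needed only to handle arbitrary (possibly non-separable) $H$, and is harmless because normal functionals convert ultraweakly convergent sums of positive elements into genuinely convergent series of scalars.
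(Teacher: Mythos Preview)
Your proof is correct and takes a genuinely different route from the paper's. Both arguments ultimately rest on the classical Kaplansky density theorem, but they apply it in different ambient algebras. The paper represents $M$ faithfully on a Hilbert space $K$, embeds $H\otimes A$ and $H\otimes M$ as the off-diagonal corner $\Bd(K,H\otimes K)$ of the linking algebra acting on $(H\otimes K)\oplus K$, and applies Kaplansky density to this linking algebra; strong-operator convergence in $\Bd(K,H\otimes K)$ is then shown to imply $\tau_s$-convergence. Your approach instead first truncates to finitely many coordinates in an orthonormal basis of $H$ (which is harmless for the $\tau_s$-topology since normal states turn ultraweakly convergent positive sums into convergent scalar series), and then encodes the resulting $n$-tuple as the first column of an $n\times n$ matrix so that Kaplansky density for $M_n(A)\subseteq M_n(M)$ applies directly; the corner estimate $(y^*y)_{11}\leq\|y\|^2\,1_M$ is exactly what preserves the unit-ball constraint when passing back to $H\otimes A$. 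Your argument is more elementary and computational, while the paper's is more conceptual and avoids the truncation step by treating $H\otimes M$ uniformly as a space of operators; both buy the same conclusion.
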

\begin{proof} 
Let us assume without loss of generality that $A$ is represented faithfully and nondegenerately on the Hilbert space $K$ 
such that $M=A''\subseteq \Bd(K)$.  We  represent the linking algebra 
$L(H\otimes A):=\left(\begin{matrix} \K(H)\otimes A& H\otimes A\\H^*\otimes A& A\end{matrix}\right)$ faithfully on the 
Hilbert-space direct sum  $(H\otimes K)\oplus K$ via 
$$\left(\begin{matrix} k\otimes a& \xi\otimes b\\ \eta^*\otimes c& d\end{matrix}\right)\left(\begin{matrix} \zeta\otimes v\\ w\end{matrix}\right)=\left(\begin{matrix} k\zeta\otimes av +\xi\otimes bw\\ \braket{ \eta}{\zeta}_{\C} cv+dw\end{matrix}\right).$$
Let $p$ and $q=1-p$ denote the orthogonal projections from $(H\otimes K)\oplus K$ onto $H\otimes K$ and $K$, respectively. 
The  isometric inclusion of $H\otimes A$ into the upper right corner $\Bd(K, H\otimes K)=p\Bd((H\otimes K)\oplus K)q$  extends  to an isometric inclusion of $H\otimes M=H\otimes A''$.  Moreover,  $H\otimes M$ lies in the strong closure of $H\otimes A$ inside $\Bd(K, H\otimes K)$, as if $\xi\otimes m$ is any elementary tensor in $H\otimes M$ and $(a_i)$ is a net in $A$ which 
converges strongly to $m$, then for every vector $w\in K$ we get
$$(\xi\otimes a_i)w =\xi\otimes a_iw\to \xi\otimes mw=(\xi\otimes m)w.$$
It follows then from the Kaplansky density theorem applied to $L(H\otimes A)\subseteq \Bd((H\otimes K)\oplus K)$, that the unit ball of  $H\otimes A$ is strongly dense in the unit ball of $H\otimes M$. 

Let then $\xi$ be an element of the unit ball of $H\otimes M$ and let $(\xi_i)$ be a net in the unit ball of $H\otimes A$ such that $\xi_i\to \xi$ strongly; we claim that $\xi_i\to \xi$ in the $\tau_s$ topology, which will suffice to complete the proof.  We have $\xi_iv\to\xi v$ in $K$ for all $v\in K$, whence for all $v,w\in K$ we have
$$\braket{ \braket{\xi_i-\xi}{\xi_i-\xi}_M v}{ w}_{\C}=\braket{ (\xi_i-\xi)^*(\xi_i-\xi)v}{ w}_{\C}=\braket{ (\xi_i-\xi)v}{ (\xi_i-\xi)w}_{\C}\to 0$$
as $i\to\infty$.  Hence the bounded net $(\braket{\xi_i-\xi}{\xi_i-\xi}_M)_i$ weakly converges to $0$.
Since the ultraweak topology coincides with the weak topology on bounded sets, it follows that
$\|\xi_i-\xi\|_{\phi}\to 0$ for every normal state ${\phi}$ of $M$.
Thus $\xi_i\to \xi$ in the topology $\tau_s$.
\end{proof}

\begin{corollary}\label{cor-dense}
Let $\alpha:G\to \Aut(A)$ be an action  and  $V\subseteq G$ an open subset of $G$.
Then for every $\xi\in L^2(V,A_\alpha'')$ with $\|\xi\|_2\leq 1$ there exists a net $(\xi_i)\in C_c(V,A)$ such  that 
$\|\xi_i\|_2\leq 1$ for all $i\in I$ and $\xi_i\to \xi$ in the topology $\tau_s$ generated 
by the seminorms  $\xi\mapsto\|\xi\|_{\phi}$, ${\phi}\in S(A)^c$.
\end{corollary}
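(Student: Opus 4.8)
The plan is to deduce this directly from Lemma \ref{lem-dense}. First I would unpack the objects: with $H:=L^2(V)$ (the restriction of Haar measure to the open set $V$), the canonical identification recalled in Definition \ref{l2ga act} turns $L^2(V,A_\alpha'')$ into the exterior Hilbert module tensor product $H\otimes A_\alpha''$, and $L^2(V,A)$ into the Hilbert submodule $H\otimes A$. Since $A$ sits as an ultraweakly dense $C^*$-subalgebra of $M:=A_\alpha''$ by construction of the enveloping $G$-von Neumann algebra, Lemma \ref{lem-dense} applies verbatim and says that the unit ball of $H\otimes A=L^2(V,A)$ is dense in the unit ball of $H\otimes M=L^2(V,A_\alpha'')$ for the topology generated by the seminorms $\xi\mapsto\psi(\braket{\xi}{\xi}_{A_\alpha''})^{1/2}$, $\psi$ running over the normal states of $A_\alpha''$.

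Next I would check that this is exactly the topology $\tau_s$ of the statement. By Ikunishi's theorem (Proposition \ref{prop-Ikunishi} and the identification $M_\alpha\cong A_\alpha''$), the predual of $A_\alpha''$ is $A^{*,c}$ via restriction, so the normal states of $A_\alpha''$ are precisely the (unique) normal extensions of the states $\phi\in S(A)^c$; and for $\xi\in L^2(V,A)$ one has $\braket{\xi}{\xi}_{A_\alpha''}=\braket{\xi}{\xi}_A\in A$, so the seminorm $\xi\mapsto\psi(\braket{\xi}{\xi}_{A_\alpha''})^{1/2}$ coincides with $\|\cdot\|_\phi$ for the corresponding $\phi\in S(A)^c$ (and likewise on all of $L^2(V,A_\alpha'')$, which is how $\|\cdot\|_\phi$ is defined there). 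Hence the unit ball of $L^2(V,A)$ is $\tau_s$-dense in the unit ball of $L^2(V,A_\alpha'')$.

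Finally I would descend from the module completion $L^2(V,A)$ to $C_c(V,A)$. By definition $C_c(V,A)$ is $\|\cdot\|_2$-dense in $L^2(V,A)$; given $\eta$ in the unit ball of $L^2(V,A)$ and $\zeta\in C_c(V,A)$ with $\|\eta-\zeta\|_2$ small, the rescaled element $\zeta/\max(1,\|\zeta\|_2)$ still lies in the unit ball of $C_c(V,A)$ and stays close to $\eta$ in $\|\cdot\|_2$, hence in $\tau_s$ since $\|\cdot\|_\phi\le\|\cdot\|_2$ for a state $\phi$. Thus the unit ball of $C_c(V,A)$ is $\tau_s$-dense in the unit ball of $L^2(V,A)$. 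Combining the two density statements, the $\tau_s$-closure of the unit ball of $C_c(V,A)$ contains the unit ball of $L^2(V,A_\alpha'')$; since membership in a closure is always witnessed by a net, for $\xi\in L^2(V,A_\alpha'')$ with $\|\xi\|_2\le1$ we obtain the desired net $(\xi_i)$ in $C_c(V,A)$ with $\|\xi_i\|_2\le1$ and $\xi_i\to\xi$ in $\tau_s$.

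There is no serious obstacle here: all the weight is carried by Lemma \ref{lem-dense} (a Kaplansky-density argument) and by the identification of the predual of $A_\alpha''$ with $A^{*,c}$. The only points needing a little care are the bookkeeping that matches the $\tau_s$ produced by Lemma \ref{lem-dense} (normal states of $A_\alpha''$) with the $\tau_s$ of the corollary ($S(A)^c$), and the mild rescaling/iterated-limit step used to replace elements of the completion $L^2(V,A)$ by genuine compactly supported continuous functions without losing the norm bound.
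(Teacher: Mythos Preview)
Your proof is correct and follows essentially the same approach as the paper: apply Lemma~\ref{lem-dense} with $H=L^2(V)$ and then pass from $L^2(V,A)$ to $C_c(V,A)$ using norm density of the unit balls. You have simply made explicit the bookkeeping (the identification of normal states on $A_\alpha''$ with $S(A)^c$ via Proposition~\ref{prop-Ikunishi}, and the rescaling step) that the paper leaves implicit.
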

\begin{proof}
This follows from Lemma \ref{lem-dense} with $H=L^2(V)$ (with respect to the Haar measure of $G$ restricted to $V$) together with the fact that 
the unit ball of $C_c(V,A)$ is norm-dense in the unit ball of $L^2(V,A)$.
\end{proof}

For what follows we also need the following well-known fact.  We leave the straightforward proof to the reader.

\begin{lemma}\label{lem-compact}
Suppose that $X$ is a normed space, $({\phi}_i)$ is a norm-bounded 
net in $X^*$, and ${\phi}\in X^*$. Then the following are equivalent:
\begin{enumerate}
\item $({\phi}_i)$ converges to ${\phi}$ in the weak*-topology.
\item $({\phi}_i)$ converges to ${\phi}$ uniformly on every compact subset $K$  of $X$. \qed
\end{enumerate}
\end{lemma}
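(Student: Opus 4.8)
The statement is a standard fact about weak$^*$-convergence of bounded nets, and the plan is to prove the two implications separately by elementary means.

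The direction $(2)\Rightarrow(1)$ is immediate: each singleton $\{x\}\subseteq X$ is compact, so uniform convergence of $(\phi_i)$ to $\phi$ on $\{x\}$ says exactly that $\phi_i(x)\to\phi(x)$, and this holding for every $x\in X$ is the definition of convergence in the weak$^*$-topology.

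For $(1)\Rightarrow(2)$, I would fix a compact set $K\subseteq X$ and $\varepsilon>0$, and set $C:=\max\{\sup_i\|\phi_i\|,\ \|\phi\|,\ 1\}$, which is finite by the boundedness hypothesis. By compactness I would cover $K$ by finitely many balls $B(x_1,\delta),\dots,B(x_n,\delta)$ centred at points $x_k\in K$, with $\delta:=\varepsilon/(4C)$. Since each $x_k$ is a single point, weak$^*$-convergence provides for each $k$ an index beyond which $|\phi_i(x_k)-\phi(x_k)|<\varepsilon/3$; using that the index set is directed, these finitely many indices can be dominated by a single $i_0$, so that the estimate holds for all $k$ simultaneously whenever $i\ge i_0$. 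Then for an arbitrary $x\in K$ I would choose $x_k$ with $\|x-x_k\|<\delta$ and, inserting $\pm\phi_i(x_k)$ and $\pm\phi(x_k)$, use the triangle inequality together with $\|\phi_i\|,\|\phi\|\le C$ to bound $|\phi_i(x)-\phi(x)|$ by $C\delta+\varepsilon/3+C\delta<\varepsilon$ for all $i\ge i_0$. As $\varepsilon>0$ and $K$ were arbitrary, this yields uniform convergence on every compact subset.

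There is no genuine obstacle here; the two points worth keeping in mind are that the uniform norm bound $C$ on the net is precisely what makes the $\delta$-ball approximation argument work — so the boundedness hypothesis really is needed — and that passing from ``for each $k$'' to ``for all $k$ at once'' uses directedness of the index set.
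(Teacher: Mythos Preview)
Your proof is correct; this is exactly the standard $\varepsilon/3$ argument one expects, and the arithmetic checks out ($2C\delta+\varepsilon/3=\varepsilon/2+\varepsilon/3<\varepsilon$). The paper itself does not give a proof of this lemma---it is stated as a ``well-known fact'' with the proof left to the reader---so there is nothing to compare against.
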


We are now ready for the

\begin{proof}[Proof of Theorem \ref{thm-wQAP}]
For \eqref{wqap 1} $\Rightarrow$ \eqref{wqap 2} assume  that $\alpha$ is amenable and let
 $(\xi_i)$ be a net  in  $C_c(G, Z(A_\alpha'')_c)$   with $\braket{\xi_i}{\xi_i}_{A_\alpha''}\leq 1$ such that
\begin{equation}\label{i lim}
\braket{\xi_i}{\lambda_g^{\alpha''}\xi_i}_{\phi}\to 1 \quad \text{as}\quad i\to\infty
\end{equation}
for  all ${\phi}\in S(A)^c$ uniformly on compact subsets  $C\subseteq G$.
 For each $i$, let $V_i\subseteq G$ be a relatively compact open subset of $G$ such that $\supp\xi_i\subseteq V_i$. 
It  follows from Corollary \ref{cor-dense} 
that for each $\xi_i$ we can find a net $(\xi_{ij})_j$ in $C_c(V_i,A)$ such that
 $\|\xi_{ij}\|_2\leq 1$ and 
 \begin{equation}\label{j lim 0}
 \|\xi_{ij}-\xi_i\|_{\phi}\to 0 \quad \text{as} \quad  j\to\infty
 \end{equation}
 for each state ${\phi}\in S(A)^c$.  With notations as in (\ref{eq}), we compute for each ${\phi}\in S(A)^c$:
\begin{align*}
|\braket{\xi_{ij}}{\lambda_g^\alpha(\xi_{ij})}_{\phi} - \braket{\xi_{i}}{\lambda_g^{\alpha''}(\xi_{i})}_{\phi}| 
&\leq |\braket{ \xi_{ij}-\xi_i}{\lambda_g^{\alpha}(\xi_{ij})}_{\phi}| + |\braket{\xi_i}{\lambda_g^{\alpha''}(\xi_{ij}-\xi_i)}_{\phi}| \\
&\leq \|\xi_{ij}\|_2\|\xi_{ij}-\xi_i\|_{\phi} + \|\xi_i\|_2 \| \xi_{ij}-\xi_i\|_{\alpha^*_{g^{-1}}({\phi})}\\
&\leq  \|\xi_{ij}-\xi_i\|_{\phi} +  \| \xi_{ij}-\xi_i\|_{\alpha^*_{g^{-1}}({\phi})}.
\end{align*}
Using Lemma \ref{lem-compact} (applied to $A_\alpha''$ as the dual space of $A^{*,c}$) we conclude that the sum in the last line converges to 
$0$ as $j\to\infty$ uniformly for $g$ in the compact closure $C_i$ of  $V_iV_i^{-1}$ in $G$.
Since $\braket{\xi_i}{\lambda_g^{\alpha''}\xi_i}_{A_\alpha''}=0$ for all $g\notin C_i$ (and similarly with $\xi_i$ replaced by  $\xi_{ij}$), we conclude that for each $i$, 
\begin{equation}\label{j lim}
|\braket{\xi_{ij}}{\lambda_g^\alpha(\xi_{ij})}_{\phi} - \braket{\xi_{i}}{\lambda_g^{\alpha''}(\xi_{i})}_{\phi}|\to 0 \quad \text{as} \quad  j\to\infty
\end{equation}
uniformly on $g\in G$. 

Let now $F$ be a finite subset of $S(A)^c$, let $\mathcal{A}$ be a finite subset of $A$, let $C$ be a compact subset of $G$, and let $\epsilon>0$.  To complete the proof, it suffices to find $\eta\in C_c(G,A)$ such that $\|\eta\|_2\leq 1$, such that $|\braket{\eta}{\lambda_g^{\alpha}\eta}_{\phi}-1|<\epsilon$ for all $g\in C$ and $\phi$ in $F$, and such that $\|\eta a-a\eta\|_{\phi}<\epsilon$ for all $a\in \mathcal{A}$ and $\phi\in F$.  Using line \eqref{i lim}, we may find $i$ so that $\xi_i$ satisfies 
\begin{equation}\label{xii1}
|\braket{\xi_i}{\lambda_g^{\alpha''}\xi_i}_{\phi}-1|<\epsilon/2
\end{equation}
for all $\phi\in F$ and $g\in C$.  Let $P(A)^c:=P(A)\cap A^{*,c}$, where $P(A)$ denotes the set of positive functionals on $A$.  Then the limits in lines \eqref{j lim 0} and \eqref{j lim} clearly still hold for all $\phi\in P(A)^c$.  Using this and the limit in line \eqref{j lim 0} we may find $j$ such that $\xi_{ij}$ satisfies 
\begin{equation}\label{xiij 0}
\|\xi_{ij}-\xi_{i}\|_\phi<\frac{\epsilon}{\kappa}\quad\text{with}\;  \kappa:=2(1+\max_{a\in \mathcal A}\|a\|)
\end{equation}
for all $\phi$ in $F\cup \{\psi(a\cdot a^*):\psi\in F,a\in\mathcal{A}\}$, and so that 
\begin{equation}\label{xiij1}
|\braket{\xi_{ij}}{\lambda_g^\alpha(\xi_{ij})}_{\phi} - \braket{\xi_{i}}{\lambda_g^{\alpha''}(\xi_{i})}_{\phi}|<\epsilon /2
\end{equation}
for all $\phi \in F$ and $g\in G$.  We claim that $\eta:=\xi_{ij}$ works.

Indeed, $\eta$ is in $C_c(G,A)$ and $\|\eta\|_2\leq 1$ as all elements of the net $(\xi_{ij})$ have these properties.  Lines \eqref{xii1} and \eqref{xiij1} imply that $|\braket{\eta}{\lambda_g^{\alpha}\eta}_{\phi}-1|<\epsilon$ for all $g\in C$ and $\phi \in F$.  Finally, we note that for any $\phi\in F$ and any $a\in \mathcal{A}$, using that $\xi_i$ takes values in the center of $A_\alpha''$ we have
\begin{align*}
\| \eta a-a\eta\|_{\phi}&=\| \eta a-\xi_{i} a+a \xi_{i} - a\eta\|_{\phi}\\
&\leq \| (\eta-\xi_i)a\|_{\phi}+\| a(\eta-\xi_{i})\|_{\phi}\\
&\leq \phi(a^*\braket{\eta-\xi_{i}}{\eta-\xi_{i}}_{A_\alpha''}a)^{1/2}+\phi(\|a\|^2\braket{\eta-\xi_i}{\eta-\xi_{i}}_{A_\alpha''})^{1/2} \\
&=\|\eta-\xi_{i}\|_{{\phi}(a^*\cdot a)}+\|a\|\|\eta-\xi_{i}\|_{\phi}.
\end{align*}
This is less than $\epsilon$ by line \eqref{xiij 0}, so we are done with \eqref{wqap 1} $\Rightarrow$ \eqref{wqap 2}.

For the proof of   \eqref{wqap 2} $\Rightarrow$ \eqref{wqap 3} let 
 $(\xi_i)$ be a net in $C_c(G,A)$ as in Definition \ref{def-wQAP}. Then, for all $a\in A$ and ${\phi}\in S(A)^c$ we get:
\begin{align*}
 \big|{\phi}(\braket{\xi_i}{ a\lambda_g^{\alpha}\xi_i}_A-a\braket{ \xi_i}{ \lambda_g^{\alpha}\xi_i}_A)\big|
&=\big|\braket{ a^*\xi_i}{\lambda_g^{\alpha}\xi_i}_{\phi}-\braket{\xi_ia^*}{\lambda_g^{\alpha}\xi_i}_{\phi}\big|\\
&=\big|\braket{a^*\xi_i- \xi_i a^*}{ \lambda_g^{\alpha}\xi_i}_{\phi}\big|\\
&\leq \| a^*\xi_i- \xi_i a^*\|_{\phi}\|\lambda_g^{\alpha}\xi_i\|_2\to 0
\end{align*}
uniformly  on $G$. Since $\phi(\braket{ \xi_i}{ \lambda_g^{\alpha}\xi_i} - 1)\to 0$ uniformly on compact subsets of $G$, 
it follows from this and separate ultraweak continuity of multiplication on bounded sets that ${\phi}(\braket{\xi_i}{ a\lambda_g^{\alpha}\xi_i}_A-a)\to 0$ uniformly on compact subsets of $G$ as well.

The implication \eqref{wqap 3} $\Rightarrow$ \eqref{wqap 1} follows from Theorem \ref{Bearden-Crann}.  Indeed, our assumption \eqref{wqap 3} implies that condition \eqref{amen 4} from Proposition~\ref{prop:Amenability-conditions} holds with respect to $M=A_\alpha''$ and the canonical inclusion $A\into A_\alpha''$; hence by Theorem \ref{Bearden-Crann}, condition \eqref{amen 2} from Proposition~\ref{prop:Amenability-conditions} holds, which is the desired conclusion.
\end{proof}

We now give a useful application of Theorem \ref{thm-wQAP} towards
 inductive limits of  amenable $G$-$C^*$-algebras. 

\begin{proposition}\label{prop-inductive}
Suppose that $(A_m, G,\alpha_m)$ is a directed system of $G$-$C^*$-algebras and let $\alpha:G\to\Aut(A)$ denote the induced action on the inductive limit $A:=\lim_mA_m$. If all $\alpha_m$ are amenable, then so is $\alpha$.
\end{proposition}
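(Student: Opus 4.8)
The plan is to characterise amenability through the weak quasi-central approximation property via Theorem~\ref{thm-wQAP}: it suffices to produce a single net $(\eta_\lambda)$ in $C_c(G,A)$ with $\|\eta_\lambda\|_2\le 1$ satisfying conditions \eqref{def-wQAP 2} and \eqref{def-wQAP 3} of Definition~\ref{def-wQAP}. Write $\varphi_m\colon A_m\to A$ for the canonical equivariant $*$-homomorphisms, so that $\bigcup_m\varphi_m(A_m)$ is norm dense in $A$. Since each $\alpha_m$ is amenable it satisfies the (wQAP), and I would build the $\eta_\lambda$ by pushing (wQAP) nets for the $\alpha_m$ forward along the $\varphi_m$.

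First I would record the routine facts. If $(\xi_i)$ is a net in $C_c(G,A_m)$ then $\varphi_m\circ\xi_i\in C_c(G,A)$ with $\|\varphi_m\circ\xi_i\|_2\le\|\xi_i\|_2$, and, since $\varphi_m$ is $G$-equivariant and commutes with the relevant integrals, one has
\begin{gather*}
\braket{\varphi_m\circ\xi_i}{\lambda^\alpha_g(\varphi_m\circ\xi_i)}_A=\varphi_m\big(\braket{\xi_i}{\lambda^{\alpha_m}_g\xi_i}_{A_m}\big),\\
(\varphi_m\circ\xi_i)\,\varphi_m(b)-\varphi_m(b)\,(\varphi_m\circ\xi_i)=\varphi_m\circ(\xi_i b-b\xi_i)\qquad(b\in A_m).
\end{gather*}
For $\phi\in S(A)^c$ the functional $\phi\circ\varphi_m$ is a positive functional on $A_m$ lying in $A_m^{*,c}$, since $\alpha_{m,g}^*(\phi\circ\varphi_m)=\alpha_g^*(\phi)\circ\varphi_m$; and the (wQAP) conditions for $\alpha_m$, which by Definition~\ref{def-wQAP} are tested against states in $S(A_m)^c$, also hold tested against $\phi\circ\varphi_m$ by homogeneity. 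The one genuinely delicate point---and the main obstacle---is that when $(\xi_i)$ is a (wQAP) net for $\alpha_m$ with $\|\xi_i\|_2\le1$, the first identity only gives $\braket{\varphi_m\circ\xi_i}{\lambda^\alpha_g(\varphi_m\circ\xi_i)}_\phi\longrightarrow\phi(p_m)$, where $p_m:=\varphi_m''(1_{A_{m,\alpha_m}''})\in A_\alpha''$, and $\phi(p_m)$ is in general strictly less than $1$. To fix this I would use Proposition~\ref{evna func}: the normal $*$-homomorphisms $\varphi_m''\colon A_{m,\alpha_m}''\to A_\alpha''$ have images whose union is ultraweakly dense in $A_\alpha''$ (as $\bigcup_m\varphi_m(A_m)$ is norm dense in $A$ and $A$ is ultraweakly dense in $A_\alpha''$), so the increasing net of projections $p_m$ converges ultraweakly to $1_{A_\alpha''}$; hence $\phi(p_m)\to1$ for every normal state $\phi$ of $A_\alpha''$, i.e.\ for every $\phi\in S(A)^c$.

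With this in hand the rest is bookkeeping. I would index by tuples $\lambda=(F,\mathcal A,C,\varepsilon)$ with $F\subseteq S(A)^c$ finite, $\mathcal A\subseteq A$ finite, $C\subseteq G$ compact and $\varepsilon>0$, ordered in the obvious way. Given $\lambda$, choose $m$ so large that $\phi(p_m)>1-\varepsilon/4$ for all $\phi\in F$ and that each $a\in\mathcal A$ lies within $\varepsilon/4$ of some $\varphi_m(b_a)$ with $b_a\in A_m$; then pick a (wQAP) net $(\xi_i)$ for $\alpha_m$ with $\|\xi_i\|_2\le1$. Using the two identities above, the limit $\phi(p_m)\to1$, the bound $|\braket{\varphi_m\circ\xi_i}{\lambda^\alpha_g(\varphi_m\circ\xi_i)}_\phi|\le1$, the vanishing $\|\varphi_m(b_a)(\varphi_m\circ\xi_i)-(\varphi_m\circ\xi_i)\varphi_m(b_a)\|_\phi=\|\xi_ib_a-b_a\xi_i\|_{\phi\circ\varphi_m}\to0$, and the Cauchy--Schwarz inequality \eqref{CS} to absorb the error of replacing $a$ by $\varphi_m(b_a)$, one fixes an index $i$ depending on $\lambda$ for which $\eta_\lambda:=\varphi_m\circ\xi_i$ satisfies $\|\eta_\lambda\|_2\le1$, $|\braket{\eta_\lambda}{\lambda^\alpha_g\eta_\lambda}_\phi-1|<\varepsilon$ for all $\phi\in F$, $g\in C$, and $\|\eta_\lambda a-a\eta_\lambda\|_\phi<\varepsilon$ for all $\phi\in F$, $a\in\mathcal A$. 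Then $(\eta_\lambda)_\lambda$ is a (wQAP) net for $\alpha$, and Theorem~\ref{thm-wQAP} yields that $\alpha$ is amenable. The only non-formal ingredient is the density argument giving $p_m\to1_{A_\alpha''}$ ultraweakly; the remainder is manipulation of the semi-norms $\|\cdot\|_\phi$.
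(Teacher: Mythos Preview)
Your argument is correct and follows the same overall strategy as the paper: verify the (wQAP) for $\alpha$ by pushing forward (wQAP) nets for the $\alpha_m$. The paper first invokes Proposition~\ref{prop-ext-amenable} to replace each $A_m$ by its image $\varphi_m(A_m)\subseteq A$, thereby reducing to the case of $G$-embeddings, and then declares the remaining construction ``elementary'' without further detail. You instead treat the general (possibly non-injective) connecting maps directly, which forces you to confront the point the paper glosses over: for $\phi\in S(A)^c$ the functional $\phi\circ\varphi_m$ need not be a state, so the (wQAP) limit is $\phi(p_m)$ rather than $1$. Your resolution via $p_m\nearrow 1_{A_\alpha''}$ ultraweakly is exactly what is needed (and is needed even after the paper's reduction, since $\phi|_{A_m}$ still need not be a state). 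So the two routes are essentially the same; the paper's reduction buys a cleaner notation, while your version makes the one genuinely non-formal step explicit.
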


\begin{proof}
We follow the ideas of \cite{Buss:2019}*{Proposition 3.3}.  Proposition \ref{prop-ext-amenable} implies that amenability passes to quotients.  Hence we may assume that the canonical maps $A_m\to A$ are $G$-embeddings, and we therefore may regard $A$ as the closed union of the $A_m$.  For each $m$, let $(\xi_{i}^{(m)})$ be a net in $C_c(G,A_m)\subseteq C_c(G,A)$ which implements the (wQAP) for $\alpha_m$, which exists by Theorem \ref{thm-wQAP}.  These nets can be used to construct a net that establishes the (wQAP) for $A$: we leave the elementary details to the reader.  Theorem \ref{thm-wQAP} now shows that $A$ is amenable.
\end{proof}

The  theorem below records the equivalences between the family of properties around the (QAP) and (AP), combining Theorem \ref{thm-wQAP}
and some of the main results of Ozawa-Suzuki \cite{Ozawa-Suzuki} for the reader's convenience.

\begin{theorem}\label{thm-weak}
Let $\alpha:G\to \Aut(A)$ be an action of a locally compact group $G$. Then the following are equivalent:
\begin{enumerate}
\item \label{thm-weak 1} $\alpha$ is amenable;
\item \label{thm-weak 2} $\alpha$ satisfies the (QAP);
\item \label{thm-weak 3} $\alpha$ satisfies the (wQAP);
\item \label{thm-weak 4} $\alpha$ satisfies the Exel-Ng (AP);
\item \label{thm-weak 5} $\alpha$ satisfies the (wAP).
\end{enumerate}
\end{theorem}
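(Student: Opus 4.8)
\textbf{Proof strategy for Theorem \ref{thm-weak}.} The plan is to assemble the equivalence from pieces that are now available: Theorem \ref{thm-wQAP} of this paper already gives \eqref{thm-weak 1} $\Leftrightarrow$ \eqref{thm-weak 3} $\Leftrightarrow$ \eqref{thm-weak 5}, so what remains is to fold in \eqref{thm-weak 2} (the (QAP)) and \eqref{thm-weak 4} (the Exel--Ng (AP)). For \eqref{thm-weak 2}, I would cite \cite{Ozawa-Suzuki}*{Theorem 3.2}, which establishes that amenability is equivalent to the (QAP) for actions of arbitrary locally compact groups; this directly links \eqref{thm-weak 1} and \eqref{thm-weak 2}. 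For \eqref{thm-weak 4}, I would cite \cite{Ozawa-Suzuki}*{Theorem 2.13}, which shows the (QAP) is equivalent to the (AP), thereby linking \eqref{thm-weak 2} and \eqref{thm-weak 4}. Chaining these with Theorem \ref{thm-wQAP} closes the cycle.

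Concretely, the write-up would proceed as follows. First, note that by Theorem \ref{thm-wQAP} we already have \eqref{thm-weak 1} $\Leftrightarrow$ \eqref{thm-weak 3} and \eqref{thm-weak 1} $\Leftrightarrow$ \eqref{thm-weak 5}. Next, invoke \cite{Ozawa-Suzuki}*{Theorem 3.2} to obtain \eqref{thm-weak 1} $\Leftrightarrow$ \eqref{thm-weak 2}. Then invoke \cite{Ozawa-Suzuki}*{Theorem 2.13} to obtain \eqref{thm-weak 2} $\Leftrightarrow$ \eqref{thm-weak 4}. It may be worth spelling out that the (QAP) and (AP) referenced here are the specializations to actions $\alpha:G\to\Aut(A)$ of the notions defined for Fell bundles in \cite{Exel:Amenability}*{Definition 4.4} and \cite{ExelNg:ApproximationProperty}*{Definition 3.6} respectively — this is already recalled in Remark \ref{rem-QAP} for the (QAP) and in the paragraph preceding Definition \ref{def-wAP} for the (AP), so a back-reference suffices. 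No new argument is needed beyond citing and organizing.

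The only mild subtlety — and the closest thing to an obstacle — is bookkeeping: one should double-check that the statement of the (QAP) as extended to locally compact groups in Remark \ref{rem-QAP} coincides with the one Ozawa and Suzuki use (it does, via their translation of the Fell-bundle (AP) to the case of a global action), and similarly that the (AP) in Definition \ref{def-wAP}'s companion (the norm version) matches \cite{ExelNg:ApproximationProperty}*{Definition 3.6} restricted to action bundles. Granting those identifications, the theorem is a formal consequence of Theorem \ref{thm-wQAP} together with \cite{Ozawa-Suzuki}*{Theorems 2.13 and 3.2}, and the proof is essentially a one-paragraph citation argument. I would end with a remark that this is the reason we stated Theorem \ref{thm-wQAP} in the form we did: it is the portion of the chain proved by purely ``classical'' (von Neumann algebraic) means in this paper, the rest being imported from \cite{Ozawa-Suzuki}.
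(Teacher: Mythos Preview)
Your proposal is correct and matches the paper's own proof essentially verbatim: the paper invokes Theorem~\ref{thm-wQAP} for \eqref{thm-weak 1} $\Leftrightarrow$ \eqref{thm-weak 3} $\Leftrightarrow$ \eqref{thm-weak 5}, and cites \cite{Ozawa-Suzuki}*{Theorem 3.2} and \cite{Ozawa-Suzuki}*{Theorem 2.13} for \eqref{thm-weak 1} $\Leftrightarrow$ \eqref{thm-weak 2} and \eqref{thm-weak 2} $\Leftrightarrow$ \eqref{thm-weak 4}, respectively. Your additional bookkeeping remarks about matching the (QAP) and (AP) definitions are reasonable but the paper's proof omits them as understood.
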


\begin{proof} 
The equivalences \eqref{thm-weak 1} $\Leftrightarrow$ \eqref{thm-weak 2} and \eqref{thm-weak 2} $\Leftrightarrow$ \eqref{thm-weak 4} are due to Ozawa and Suzuki: see \cite{Ozawa-Suzuki}*{Theorem 3.2} and \cite{Ozawa-Suzuki}*{Theorem 2.13}, respectively.  On the other hand,  \eqref{thm-weak 1}, \eqref{thm-weak 3}, and \eqref{thm-weak 5} were shown to be equivalent in Theorem \ref{thm-wQAP}.
\end{proof}

\begin{remark}\label{rem-BedCont} 
There is a variant of the approximation property due to  B\'edos and Conti (see \cite{BedCont}*{Definition 5.7}) for actions of discrete  groups $G$, which is extended to actions of locally compact $G$ by Bearden and Crann in \cite{Bearden-Crann}*{Section 4}.   Parallel to our work, Bearden and Crann showed in \cite{Bearden-Crann}*{Theorem 4.2} that amenability is equivalent 
to a variant of the B\'edos-Conti approximation property.  The B\'{e}dos-Conti property is in a similar spirit as the (AP) of Ng and Exel but it is a priori weaker than the (AP).
\end{remark}

\section{Consequences for Fell bundles}\label{sec-alternative}

As mentioned at the start of this chapter, the approximation property of Exel was originally defined in the context of Fell bundles (see for example \cite{Exel:Partial_dynamical} for background on Fell bundles).  In \cite{Ara-Exel-Katsura:Dynamical_systems}*{Remark~6.5} it is conjectured that nuclearity of the cross-sectional $C^*$-algebra $C^*_\red(\Bd)$ of a Fell bundle $\Bd$ should imply the (AP) for $\Bd$. Using Theorem \ref{thm-weak} we give a positive answer to this conjecture:

\begin{corollary}\label{cor:fb nuc}
Let $\Bd=(\Bd_g)_{g\in G}$ be a Fell bundle over the discrete group $G$ and assume that its reduced cross-sectional \cstar{}algebra $C^*_\red(\Bd)$ is nuclear. Then $\Bd$ has the (AP) of Exel.
\end{corollary}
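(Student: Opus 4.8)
The plan is to reduce Corollary~\ref{cor:fb nuc} to the equivalence of the (AP) and amenability from Theorem~\ref{thm-weak}, together with a known permanence statement for amenable Fell bundles. The key point is that, by Theorem~\ref{thm-weak}, for a Fell bundle arising from an action $\alpha\colon G\to\Aut(A)$ the (AP) is the same as amenability of $\alpha$, and by work of Exel (and Exel--Ng in the locally compact case) the (AP) for a general Fell bundle $\Bd$ is known to be equivalent to amenability of $\Bd$ in the sense that the canonical quotient $C^*(\Bd)\to C^*_\red(\Bd)$ is an isomorphism \emph{and} more; the relevant fact I would isolate is: \emph{the (AP) for a Fell bundle $\Bd$ over a discrete group $G$ is inherited by $\Bd$ from the (AP) of the underlying $G$-action on the stabilization}. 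So the first step is to pass from the abstract Fell bundle $\Bd$ to an honest action.

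Concretely, I would invoke the stabilization trick of Exel: for a Fell bundle $\Bd=(\Bd_g)_{g\in G}$ over a discrete group $G$, the bundle $\Bd$ is equivalent (in the sense of Fell bundle equivalence, or via passing to a stabilization) to the Fell bundle associated to an honest action. Precisely, there is a $C^*$-algebra $A$ and an action $\alpha\colon G\to\Aut(A)$ such that $\Bd$ is Morita equivalent to the Fell bundle $\Bd_\alpha$ of $\alpha$, with $A = C^*_\red(\Bd)\otimes\K(\ell^2(G))$ carrying the action coming from the grading of $C^*_\red(\Bd)$ conjugated appropriately (this is exactly Exel's description of a Fell bundle as ``almost'' a crossed product after stabilizing). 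Under this equivalence, $C^*_\red(\Bd_\alpha) = A\rtimes_\red G$ is Morita equivalent to $C^*_\red(\Bd)$, so nuclearity of $C^*_\red(\Bd)$ gives nuclearity of $A$ itself (a hereditary subalgebra / corner of $C^*_\red(\Bd)\otimes\K$ is nuclear, and $A$ is a $G$-invariant corner). Moreover the (AP) is invariant under this Morita equivalence of Fell bundles (Exel--Ng, \cite{ExelNg:ApproximationProperty}), so $\Bd$ has the (AP) if and only if $\alpha$ has the (AP).

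The heart of the argument is then: we must show the action $\alpha\colon G\to\Aut(A)$ is amenable. By Theorem~\ref{thm-weak}, amenability of $\alpha$ is equivalent to the (AP) of $\alpha$, which is what we want, so we cannot argue circularly here — instead I would argue amenability directly. We know $A\rtimes_\red G = C^*_\red(\Bd)\otimes\K$ is nuclear. For an action of a \emph{discrete} group, nuclearity of $A\rtimes_\red G$ forces, first, that $A$ is nuclear (already noted) and, second, that the action is amenable: this is a classical fact in the discrete case — see \cite{Brown:2008qy}*{Theorem 4.3.4} — namely if $A$ is nuclear then $A\rtimes_\red G$ is nuclear if and only if the action $\alpha$ is amenable (in the sense of Anantharaman-Delaroche, equivalently our Definition~\ref{def-amenable (A)} since $G$ is discrete). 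Applying this, $\alpha$ is amenable, hence by Theorem~\ref{thm-weak} it has the (AP), hence $\Bd$ has the (AP).

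The main obstacle I anticipate is making the stabilization/Morita-equivalence step fully rigorous: one must check that the $C^*$-algebra $A$ extracted from $\Bd$ genuinely carries a (strongly continuous, which is automatic for discrete $G$) action of $G$ whose associated Fell bundle is equivalent to $\Bd$, and that the (AP) is preserved under this equivalence — the latter is the content of the equivalence theorems in \cite{ExelNg:ApproximationProperty} and \cite{Exel:Amenability}, which I would cite rather than reprove. A cleaner alternative, if available in the literature, would be to bypass the stabilization and directly use the known implication ``$C^*_\red(\Bd)$ nuclear $\Rightarrow$ $\Bd$ amenable'' for Fell bundles over discrete groups (in the sense that $C^*(\Bd)=C^*_\red(\Bd)$ plus a conditional-expectation-type property), and then combine with Theorem~\ref{thm-weak}; but since Theorem~\ref{thm-weak} is phrased for actions, the stabilization route is the safe one, and the only genuinely delicate point is the invariance of the (AP) under Fell bundle equivalence, everything else being a routine citation to \cite{Brown:2008qy}.
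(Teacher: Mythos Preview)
Your approach is essentially the paper's: replace $\Bd$ by a weakly equivalent honest action $\alpha$, deduce nuclearity of $A\rtimes_\red G$, apply the classical discrete-group result (Anantharaman-Delaroche, \cite{Anantharaman-Delaroche:1987os}*{Th\'eor\`eme~4.5}) to get amenability of $\alpha$, then Theorem~\ref{thm-weak} for the (AP), and transfer back via invariance of the (AP) under weak equivalence (\cite{Abadie:2019kc}). One technical slip worth fixing: the enveloping action is on the coaction crossed product $A=C^*_\red(\Bd)\rtimes_{\delta_\Bd}G$ (see \cite{Abadie-Buss-Ferraro:Morita_Fell}), not on $C^*_\red(\Bd)\otimes\K(\ell^2(G))$; it is $A\rtimes_{\alpha,\red}G$ that is isomorphic to $C^*_\red(\Bd)\otimes\K(\ell^2(G))$, so nuclearity of $A\rtimes_\red G$ follows directly and you need not (and should not try to) argue that $A$ itself is a corner of $C^*_\red(\Bd)\otimes\K$.
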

\begin{proof}
Consider the dual $G$-action $\alpha:=\widehat{\delta}_\Bd$ on the crossed product $A:=C^*_\red(\Bd)\rtimes_{\delta_{\Bd}}G$ by the canonical (dual) coaction $\delta_\Bd$ of $G$ on the cross-sectional $C^*$\nb-algebra $C^*_\red(\Bd)$.   Using \cite{Abadie-Buss-Ferraro:Morita_Fell}*{Remark 3.3 and Theorem 3.4}, we see that $\Bd$ is weakly equivalent to $(A,\alpha)$ as Fell bundles in the sense of \cite{Abadie-Buss-Ferraro:Morita_Fell}*{Definition 2.6}.  If $C^*_\red(\Bd)$ is nuclear, then so is $A\rtimes_{\alpha,\red} G$ as it is isomorphic to $C^*_\red(\Bd)\otimes \K(\ell^2(G))$ (see \cite{Abadie-Buss-Ferraro:Morita_Fell}*{Remark 3.3} again). Using \cite{Anantharaman-Delaroche:1987os}*{Th\'eor\`eme~4.5}, this implies that the $G$-action $\alpha$ on $A$ is amenable and hence by Theorem~\ref{thm-weak}, $\alpha$ also has the (AP).  The (AP) passes through weak equivalences of Fell bundles by \cite{Abadie:2019kc}*{Corollary~5.24 and Theorem~6.12}, from which it follows that $\Bd$ has the (AP).
\end{proof}

The converse of the above corollary is known: if $\Bd$ has the (AP) and $\Bd_e$ is nuclear, then $C^*(\Bd)=C^*_\red(\Bd)$ is nuclear (see \cite{Exel:Partial_dynamical}*{Proposition~25.10}).  Note that this really is the precise converse of Corollary \ref{cor:fb nuc}, as nuclearity of $C^*_\red(\Bd)$ implies nuclearity of $\Bd_e$ due to the  existence of a conditional expectation $E:C^*_\red(\Bd)\to \Bd_e$ (see \cite{Exel:Partial_dynamical}*{Definition 19.2 and Proposition 19.3}).  

The (AP) is therefore a way to characterize nuclearity of cross-sectional $C^*$-algebras of Fell bundles, in particular, crossed products. We should remark that this problem of characterizing nuclearity of Fell bundle $C^*$-algebras or crossed products has been studied recently in \cite{MSTL:Herz-Schur,He:Herz-Schur} using the theory of Herz-Schur multipliers. In \cite{MSTL:Herz-Schur}*{Definition 4.1} a notion of `nuclearity' for a $G$-$C^*$-algebra $A$ is introduced that is shown to be equivalent to nuclearity of $A\rtimes_{\red} G$ in \cite{MSTL:Herz-Schur}*{Theorem 4.3}.  A similar `nuclearity' notion is introduced for Fell bundles in \cite{He:Herz-Schur}*{Definition 6.3}, and again this is used to characterize nuclearity of the cross-sectional $C^*$-algebra in \cite{He:Herz-Schur}*{Theorem 6.8}. Combining all this together, we can rephrase nuclearity of $C^*$-dynamical systems in terms of the ((Q)AP) as follows:

\begin{corollary}
A $G$-$C^*$-algebra $A$ is nuclear in the sense of \cite{MSTL:Herz-Schur} if and only if $A$ is a nuclear $C^*$-algebra and $\alpha$ has the (QAP).
Similarly,  a Fell bundle $\Bd$ is nuclear in the sense of \cite{He:Herz-Schur} if and only if $\Bd_e$ is a nuclear $C^*$-algebra
and  $\Bd$ has the (AP) of Exel.   \qed
\end{corollary}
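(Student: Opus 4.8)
The plan is to assemble both equivalences from results already available, so no new ideas are needed; the work is entirely organizational. What is required is (i) the characterizations of ``nuclearity in the sense of \cite{MSTL:Herz-Schur}'' and ``in the sense of \cite{He:Herz-Schur}'' by nuclearity of the associated reduced $C^*$-algebras, (ii) Corollary \ref{cor:fb nuc} together with its known converse, and (iii) the equivalence of amenability, the (QAP) and the (AP) from Theorem \ref{thm-weak}.

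First I would prove the Fell-bundle statement. By \cite{He:Herz-Schur}*{Theorem 6.8}, $\Bd$ is nuclear in the sense of \cite{He:Herz-Schur} if and only if $C^*_\red(\Bd)$ is nuclear, so it suffices to show that $C^*_\red(\Bd)$ is nuclear if and only if $\Bd_e$ is nuclear and $\Bd$ has the (AP). For ``$\Leftarrow$'' this is exactly \cite{Exel:Partial_dynamical}*{Proposition 25.10}, which moreover yields $C^*(\Bd)=C^*_\red(\Bd)$. For ``$\Rightarrow$'', assume $C^*_\red(\Bd)$ is nuclear. Then $\Bd$ has the (AP) by Corollary \ref{cor:fb nuc}, and $\Bd_e$ is nuclear because of the conditional expectation $E\colon C^*_\red(\Bd)\to\Bd_e$ of \cite{Exel:Partial_dynamical}*{Definition 19.2 and Proposition 19.3}: writing $\iota\colon\Bd_e\to C^*_\red(\Bd)$ for the inclusion we have $\id_{\Bd_e}=E\circ\iota$, so composing (on the left with $E$ and on the right with $\iota$) an approximating net of finite-rank ccp maps for $\id_{C^*_\red(\Bd)}$ produces one for $\id_{\Bd_e}$.

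Next I would reduce the $G$-$C^*$-algebra statement to the Fell-bundle case. Associated to $\alpha\colon G\to\Aut(A)$ is the semidirect-product Fell bundle $\Bd_\alpha$ over $G$, for which $(\Bd_\alpha)_e=A$, $C^*(\Bd_\alpha)=A\rtimes_\max G$, $C^*_\red(\Bd_\alpha)=A\rtimes_\red G$, and (as recalled before Definition \ref{def-wAP}) the Exel--Ng (AP) for $\Bd_\alpha$ is precisely the (AP) for $\alpha$ in the sense of Definition \ref{def-wAP}, hence is equivalent to the (QAP) for $\alpha$ by Theorem \ref{thm-weak}. Applying the Fell-bundle equivalence to $\Bd_\alpha$ gives: $A\rtimes_\red G$ is nuclear if and only if $A$ is nuclear and $\alpha$ has the (QAP). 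On the other hand, \cite{MSTL:Herz-Schur}*{Theorem 4.3} says $A$ is nuclear in the sense of \cite{MSTL:Herz-Schur} if and only if $A\rtimes_\red G$ is nuclear, and chaining the two equivalences finishes the proof. (The ``if'' direction here can also be seen directly: by Theorem \ref{thm-weak} the (QAP) makes $\alpha$ amenable, so $A\rtimes_\max G=A\rtimes_\red G$ since amenable actions have the (WCP), and Theorem \ref{pass intro} then deduces nuclearity of $A\rtimes_\max G$ from that of $A$.)

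I do not expect a genuine obstacle: every implication is a quoted result. The only steps needing a line of verification are the passage of nuclearity through the conditional expectation $E\colon C^*_\red(\Bd)\to\Bd_e$ (the factorization argument above) and the identifications $(\Bd_\alpha)_e=A$, $C^*_\red(\Bd_\alpha)=A\rtimes_\red G$ and ``(AP) for $\Bd_\alpha$ $=$ (AP) for $\alpha$'', all standard. One caveat: Corollary \ref{cor:fb nuc} was stated for discrete $G$, so the Fell-bundle equivalence, and correspondingly the $G$-$C^*$-algebra statement, should be read in that setting.
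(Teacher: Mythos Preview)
Your proposal is correct and matches the paper's approach: the corollary carries a \qed\ in the paper and is presented as an immediate consequence of the preceding discussion, which assembles exactly the same ingredients you list---\cite{MSTL:Herz-Schur}*{Theorem 4.3} and \cite{He:Herz-Schur}*{Theorem 6.8} for the ``nuclearity'' characterizations, Corollary~\ref{cor:fb nuc} together with its converse \cite{Exel:Partial_dynamical}*{Proposition~25.10} (and the conditional-expectation argument for $\Bd_e$), and Theorem~\ref{thm-weak} for the equivalence of amenability, the (QAP), and the (AP). Your caveat that Corollary~\ref{cor:fb nuc} is stated only for discrete $G$ is well taken; the only minor organizational difference is that you explicitly reduce the $G$-$C^*$-algebra statement to the Fell-bundle one via the semidirect-product bundle $\Bd_\alpha$, whereas the paper simply leaves both cases as parallel consequences of the cited results.
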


\chapter{The weak containment property and commutant amenability }\label{sec:Weak-containment}

In this chapter we study the weak containment property as in the following definition.

\begin{definition}\label{def-weak-containment}
A $G$-$C^*$-algebra $(A,\alpha)$ (or just the action $\alpha$) is said to have the {\em weak containment property} (WCP) if the regular
representation $\Lambda_{(A,\alpha)}:A\rtimes_{\max}G\to A\rtimes_{\red}G$ is an isomorphism. When this is the case, to shorten the notation, we usually write $A\rtimes_\max G\cong A\rtimes_\red G$ or even $A\rtimes_\max G=A\rtimes_\red G$.
\end{definition}

For discrete groups, Anantharaman-Delaroche showed in \cite{Anantharaman-Delaroche:1987os}*{Proposition 4.8} that amenability implies the (WCP), but the converse has been a long-standing open problem.  We are interested here in elucidating the relationship between amenability and the (WCP).  

In Section \ref{sec:inj ca}, we build on our earlier work \cite{Buss:2019}*{Sections 4 and 5} for discrete groups.  We study injectivity properties of covariant representations, and use this to show that the (WCP) for $(A,G,\alpha)$ with $G$ exact is equivalent to a property we call \emph{commutant amenability}.  Roughly, $A$ is commutant amenable if the commutant $\pi(A)'$ of $A$ in any covariant representation $(\pi, u)$ (or just in the universal representation) has an amenability-like approximation property.  These results also show that (commutant) amenability always implies the (WCP), whether or not the acting group is exact.

In Section \ref{sec-commutative} we bring into play the \emph{Haagerup standard form}, which is a covariant representation of a $G$-von Neumann algebra with particularly good properties.  The results here extend work of Matsumura \cite{Matsumura:2012aa}, who first had the idea of using the Haagerup standard form to study amenability (our proofs are different to Matsumura's, however, following instead the lines of our earlier work \cite{Buss:2019}*{Section 5} on discrete groups).  The Haagerup standard form lets us translate commutant amenability into more intrinsic properties: for example, amenability of $A$ turns out to be equivalent to commutant amenability of $A\otimes_{\max} A^{\op}$.  We also show that commutant amenability is equivalent to amenability for actions on commutative $C^*$-algebras.  Thus for actions of exact locally compact groups on commutative $C^*$-algebras, amenability is indeed equivalent to the (WCP).  

In sharp contrast, in Section \ref{sec:example}, we show that the (WCP) is \emph{not} equivalent to amenability for actions on noncommutative $C^*$-algebras.  Our counterexamples are quite concrete: for example, we show that actions of $PSL(2,
\C)$ on the compact operators can satisfy the (WCP), yet be non-amenable.  We use the same examples to show that the (WCP) does not pass through the restriction of an action to a closed subgroup, which answers a question of Anantharaman-Delaroche 
\cite{Anantharaman-Delaroche:2002ij}*{Question 9.2 (b)} in the negative.

\section{Injective representations and commutant amenability}\label{sec:inj ca}

In this section, we use ideas developed by the authors in the case of discrete groups in \cite{Buss:2019}*{Section 4} to provide necessary and sufficient conditions for a $G$-$C^*$-algebra $(A,\alpha)$ to have 
the weak containment property (WCP) from Definition \ref{def-weak-containment}, at least when $G$ is exact.

Recall from \cite{Buss:2018nm} that the {\em injective crossed product functor} $(A,\alpha)\mapsto A\rtimes_{\inj}G$ is 
the largest crossed product functor which is injective in the sense that every $G$-equivariant inclusion $\varphi: A\into B$ 
from a $G$-$C^*$-algebra $A$ into a $G$-$C^*$-algebra $B$ descends to an inclusion $\varphi\rtimes_\inj G: A\rtimes_\inj G\into B\rtimes_\inj G$.
For a given $G$-$C^*$-algebra $(A,\alpha)$, the crossed product $A\rtimes_\inj G$ can be realized as the completion 
of the convolution algebra $C_c(G,A)$ with respect to the norm
$$\|f\|_{\inj}:=\inf\{\|\varphi\circ f\|_{B\rtimes_\max G}: \varphi:A\into B\;\text{ is $G$-equivariant}\}.$$
It was observed in \cite{Buss:2018nm}*{Proposition 4.2} that $A\rtimes_\inj G=A\rtimes_\red G$ whenever $G$ is exact\footnote{This can fail if $G$ is not exact: see \cite{Buss:2018nm}*{Lemma 4.7}.},
so that for exact groups the (WCP) is equivalent to $A\rtimes_\max G=A\rtimes_\inj G$, that is, to the statement that the canonical 
quotient map $q:A\rtimes_{\max}G\to A\rtimes_\inj G$ is an isomorphism.

Our first goal is to characterize when $q:A\rtimes_{\max}G\to A\rtimes_\inj G$ is an isomorphism in terms of injective representations as in the next definition.

\begin{definition}\label{def-injective-rep}
Let $(A,\alpha)$ and $(B,\beta)$ be $G$-$C^*$-algebras and let $\iota:A\into B$ be a $G$-equivariant inclusion.  Then a covariant representation 
$$
(\pi, u):(A,G)\to \Bd(H)
$$
is called {\em $G$-injective with respect to $\iota$} if the dashed arrow below 
$$
\xymatrix{ B \ar@{-->}[dr]^-\sigma & \\ A\ar[u]^-\iota \ar[r]^-\pi & \Bd(H)}
$$
can be filled in with a ccp $G$-map $\sigma$ (here $\Bd(H)$ is equipped with the $G$-action $\text{Ad}u$).

We say that $(\pi, u)$ is {\em $G$-injective} if it is $G$-injective with respect to any $G$\nb-inclusion $\iota:A\into B$ of $A$  into a $G$-$C^*$-algebra $B$.
\end{definition}

We need the following generalization of \cite{Buss:2019}*{Lemma 4.8}.

\begin{lemma}\label{int lem}
Let $A$ be a $G$-$C^*$-algebra, and $(\sigma,u):(A,G)\to \Bd(H)$ be a pair consisting of a ccp map $\sigma$ and a unitary representation $u$ satisfying the \emph{covariance relation}
$$
\sigma(\alpha_g(a))=u_g\sigma(a)u_g^*
$$
for all $a\in A$ and $g\in G$. Then the integrated form 
$$
\sigma\rtimes u :C_c(G,A)\to \Bd(H);\quad f\mapsto \int_G \sigma(f(g))u_g\,dg
$$
extends to a ccp map $\sigma\rtimes u:A\rtimes_{\max} G \to\Bd(H)$.
\end{lemma}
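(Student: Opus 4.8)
The plan is to use Stinespring dilation together with the universal property of the maximal crossed product. First I would apply Stinespring's theorem to the ccp map $\sigma:A\to\Bd(H)$: there is a Hilbert space $\widehat H\supseteq H$, a (nondegenerate, after cutting down) $*$-representation $\rho:A\to\Bd(\widehat H)$ and an isometry $V:H\into\widehat H$ (so that $V^*V=1_H$ and $VV^*$ is a projection onto $H$) with $\sigma(a)=V^*\rho(a)V$ for all $a\in A$. The key point is to promote the group representation $u$ to $\widehat H$ in a way compatible with $\rho$.

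The main step is to build a unitary representation $\widehat u:G\to\Bd(\widehat H)$ with $\widehat u_g V=V u_g$ and $\rho(\alpha_g(a))=\widehat u_g\rho(a)\widehat u_g^*$, i.e.\ so that $(\rho,\widehat u)$ is a genuine covariant representation dilating $(\sigma,u)$. For this I would use the standard Stinespring-for-covariant-pairs argument (as in \cite{Buss:2019}*{Lemma 4.8} for the discrete case, which this lemma generalizes): concretely, realize $\widehat H$ as (a quotient/completion of) $A\otimes_{\mathrm{alg}} H$ with inner product $\langle a\otimes\xi, b\otimes\eta\rangle=\langle\xi,\sigma(a^*b)\eta\rangle$, define $\rho$ by left multiplication on the first leg, $V\xi=$ the class of an approximate-unit limit $e\otimes\xi$, and define $\widehat u_g(a\otimes\xi):=\alpha_g(a)\otimes u_g\xi$. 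The covariance relation $\sigma\circ\alpha_g=\Ad u_g\circ\sigma$ is exactly what is needed to check that $\widehat u_g$ is well-defined and isometric on this inner-product space (it preserves the defining form), hence extends to a unitary on $\widehat H$; one also checks $\widehat u$ is a homomorphism and that $g\mapsto\widehat u_g$ is strongly continuous, using strong continuity of $u$ and norm-continuity of the orbit maps in $A$. The identities $\widehat u_gV=Vu_g$ and $\rho\circ\alpha_g=\Ad\widehat u_g\circ\rho$ then hold by direct computation on elementary tensors.

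Having the covariant representation $(\rho,\widehat u)$ of $(A,G,\alpha)$, its integrated form $\rho\rtimes\widehat u$ extends to a $*$-homomorphism $A\rtimes_{\max}G\to\Bd(\widehat H)$ by the universal property recalled in Section~\ref{Sec:Prel}. Then I would simply set $\sigma\rtimes u:=V^*(\rho\rtimes\widehat u)(\cdot)V:A\rtimes_{\max}G\to\Bd(H)$; this is ccp as a compression of a $*$-homomorphism. Finally I would verify that on $C_c(G,A)$ this agrees with the integrated form of $(\sigma,u)$: for $f\in C_c(G,A)$,
$$V^*(\rho\rtimes\widehat u)(f)V=V^*\Big(\int_G\rho(f(g))\widehat u_g\,dg\Big)V=\int_G V^*\rho(f(g))\widehat u_g V\,dg=\int_G V^*\rho(f(g))V u_g\,dg=\int_G\sigma(f(g))u_g\,dg,$$
using $\widehat u_gV=Vu_g$ and pulling $V^*,V$ through the (Bochner or weakly-convergent) integral. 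This shows the compression extends $\sigma\rtimes u$ on $C_c(G,A)$, completing the proof.

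The main obstacle I anticipate is the careful construction and continuity verification of the dilated unitary representation $\widehat u$ on the Stinespring space: one must check it is well-defined on the (generally non-closed, possibly degenerate) pre-Hilbert space $A\otimes_{\mathrm{alg}}H$ modulo the null space of the form, that it is isometric there (this is precisely where the covariance hypothesis on $(\sigma,u)$ enters), and that $g\mapsto\widehat u_g$ is strongly continuous so that $(\rho,\widehat u)$ really is a covariant representation in the sense needed to invoke the universal property of $A\rtimes_{\max}G$. In the non-unital case one must also handle nondegeneracy of $\rho$ (replace $\widehat H$ by $\overline{\rho(A)\widehat H}$) and check $V$ still lands there, which is routine but needs a word. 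Everything else is a formal computation.
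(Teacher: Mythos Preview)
Your argument is correct: the covariant Stinespring dilation you construct is exactly what is needed, and the verifications you outline (well-definedness and isometry of $\widehat u_g$ on the Stinespring space using the covariance relation, strong continuity via norm-continuity of orbit maps in $A$, the intertwining $\widehat u_gV=Vu_g$, and the final compression computation) all go through without difficulty.

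Your route is genuinely different from the paper's, however. The paper does not dilate at all: instead it observes that the equivariant ccp map $\sigma$ necessarily lands in $\Bd(H)_c$ (the norm-continuous part for $\Ad u$), then invokes a functoriality result for the maximal crossed product (\cite{Buss:2014aa}*{Theorem~4.9}) to get a ccp map $\sigma\rtimes_\max G:A\rtimes_\max G\to \Bd(H)_c\rtimes_\max G$, and finally composes with the honest integrated form $\id\rtimes u:\Bd(H)_c\rtimes_\max G\to\Bd(H)$. This two-line argument is shorter because the Stinespring work is hidden inside the cited functoriality theorem. Your approach, by contrast, is self-contained and makes transparent exactly where the covariance hypothesis enters (preservation of the Stinespring form), at the cost of having to carry out the dilation and continuity checks by hand.
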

\begin{proof}  The ccp $G$-map $\sigma:A\to \Bd(H)$ necessarily takes image in $\Bd(H)_c$, and by \cite{Buss:2014aa}*{Theorem 4.9, (5) $\Rightarrow$ (6)} it then descends to a ccp map $\sigma\rtimes_\max G: A\rtimes_\max G\to \Bd(H)_c\rtimes_{\max}G$,
with respect to the action $\Ad u:G\to \Aut(\Bd(H)_c)$ given by conjugation with $u$. Composing $\sigma\rtimes_\max G$ with the integrated form 
$\id\rtimes u: \Bd(H)_c\rtimes_\max G\to \Bd(H)$ of the covariant pair $(\id, u)$  gives  the ccp map $\sigma\rtimes u$.
\end{proof}

The following proposition extends \cite{Buss:2019}*{Theorem 4.9}, where the same result has been shown for actions of discrete groups. 
Although the proof is almost the same as in the discrete case, we include it  for completeness.
\begin{proposition}\label{prop-inj-cov}
Let $\iota:A\into B$ be an inclusion of $G$-$C^*$-algebras $(A,\alpha)$ and $(B,\beta)$. 
 Then the following are equivalent:
\begin{enumerate}
\item \label{prop-inj-cov 1}the descent $\iota\rtimes_\max G: A\rtimes_\max G\to B\rtimes_\max G$ is injective;
\item \label{prop-inj-cov 2}every nondegenerate covariant representation $(\pi, u)$ of $(A,G,\alpha)$ on some Hilbert space $H$ is
$G$-injective with respect to the inclusion $\iota:A\into B$;
\item \label{prop-inj-cov 3}there exists a nondegenerate covariant representation $(\pi,u)$ of $(A,G,\alpha)$ which is $G$-injective with respect to
$\iota:A\into B$ and which has a faithful integrated form
$\pi\rtimes u:A\rtimes_\max G\to \Bd(H)$.
\end{enumerate}
\end{proposition}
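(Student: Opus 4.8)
The plan is to prove the cycle of implications \eqref{prop-inj-cov 1} $\Rightarrow$ \eqref{prop-inj-cov 2} $\Rightarrow$ \eqref{prop-inj-cov 3} $\Rightarrow$ \eqref{prop-inj-cov 1}, following the model of \cite{Buss:2019}*{Theorem 4.9}. The passage from \eqref{prop-inj-cov 2} to \eqref{prop-inj-cov 3} is essentially immediate: one simply applies \eqref{prop-inj-cov 2} to the universal covariant representation $(i_A,i_G)$ on $H_u$, whose integrated form is by construction faithful on $A\rtimes_\max G$. So the real content is in the other two implications.

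For \eqref{prop-inj-cov 1} $\Rightarrow$ \eqref{prop-inj-cov 2}, start with a nondegenerate covariant representation $(\pi,u)$ of $(A,G,\alpha)$ on $H$, and form its integrated form $\pi\rtimes u: A\rtimes_\max G\to \Bd(H)$. By assumption \eqref{prop-inj-cov 1}, $\iota\rtimes_\max G: A\rtimes_\max G\to B\rtimes_\max G$ is an injective $*$-homomorphism, hence isometric, so $\pi\rtimes u$ factors through its image: there is a $*$-homomorphism $\psi: (\iota\rtimes_\max G)(A\rtimes_\max G)\to \Bd(H)$ with $\psi\circ(\iota\rtimes_\max G) = \pi\rtimes u$. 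Now use Arveson's extension theorem to extend $\psi$ (after passing to the nondegenerate part, or composing with the canonical conditional-expectation-style extension to the whole algebra) to a ccp map $\widetilde\psi: B\rtimes_\max G\to \Bd(H)$. Precompose with the canonical inclusion $i_B: B\to \M(B\rtimes_\max G)$ (extended to the multiplier algebra) to obtain a ccp map $\sigma := \widetilde\psi\circ i_B: B\to \Bd(H)$. One then checks that $\sigma$ is a $G$-map for the actions $\beta$ on $B$ and $\Ad u$ on $\Bd(H)$: this uses covariance of $(i_B,i_G)$ inside $\M(B\rtimes_\max G)$, the fact that $\widetilde\psi\circ i_G$ agrees with $u$ (which follows because $\psi$ already respects the $G$-part on the image of $A\rtimes_\max G$, and $i_G$ lands in the multiplier algebra of that image via $\iota$), together with the uniqueness/multiplicative-domain properties of ccp maps. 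Finally $\sigma\circ\iota = \widetilde\psi\circ i_B\circ\iota = \widetilde\psi\circ(\iota\rtimes_\max G)\circ i_A = \psi\circ(\iota\rtimes_\max G)\circ i_A = (\pi\rtimes u)\circ i_A = \pi$, so $\sigma$ fills in the required diagram, proving $(\pi,u)$ is $G$-injective with respect to $\iota$.

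For \eqref{prop-inj-cov 3} $\Rightarrow$ \eqref{prop-inj-cov 1}, let $(\pi,u)$ be a nondegenerate covariant representation that is $G$-injective with respect to $\iota$ and has faithful integrated form $\pi\rtimes u: A\rtimes_\max G\to \Bd(H)$. By $G$-injectivity there is a ccp $G$-map $\sigma: B\to \Bd(H)$ with $\sigma\circ\iota = \pi$; note that since $(\pi,u)$ is covariant and $\sigma$ extends $\pi$, the pair $(\sigma,u)$ satisfies the covariance relation $\sigma(\beta_g(b)) = u_g\sigma(b)u_g^*$ for all $b\in B$ (this needs a short argument: it holds on $\iota(A)$ and hence, by normality/continuity and a multiplicative-domain-type argument, on all of $B$ — alternatively one argues directly that $b\mapsto u_g\sigma(\beta_{g^{-1}}(b))u_g^*$ is a ccp $G$-map extending $\pi$ and invokes a uniqueness statement). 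Then Lemma \ref{int lem} gives a ccp map $\sigma\rtimes u: B\rtimes_\max G\to \Bd(H)$, and by construction $(\sigma\rtimes u)\circ(\iota\rtimes_\max G) = \pi\rtimes u$ on $C_c(G,A)$, hence everywhere by density. Since $\pi\rtimes u$ is injective (hence isometric), it follows that $\iota\rtimes_\max G$ is isometric, hence injective, which is \eqref{prop-inj-cov 1}.

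The main obstacle I expect is the bookkeeping in \eqref{prop-inj-cov 1} $\Rightarrow$ \eqref{prop-inj-cov 2}: one must make sure the extension $\sigma:B\to\Bd(H)$ produced by Arveson's theorem genuinely is $G$-equivariant and not merely ccp. This is where the structure of the crossed product is used crucially — equivariance of $\sigma$ is extracted from the fact that $\widetilde\psi$ intertwines $i_G$ with $u$, which in turn rests on $(\pi,u)$ being recovered as $(\widetilde\psi\circ i_A, \widetilde\psi\circ i_G)$ — and it is the delicate point where care is needed, exactly as in the discrete-group argument of \cite{Buss:2019}*{Theorem 4.9}. Everything else is routine once this is set up correctly, which is why the argument "is almost the same as in the discrete case."
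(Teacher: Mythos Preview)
Your proposal follows essentially the same route as the paper's proof, with the same cycle of implications and the same two key tools: Arveson's extension theorem at the crossed-product level for \eqref{prop-inj-cov 1} $\Rightarrow$ \eqref{prop-inj-cov 2}, and Lemma~\ref{int lem} for \eqref{prop-inj-cov 3} $\Rightarrow$ \eqref{prop-inj-cov 1}.

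Two points of comparison are worth noting. First, for \eqref{prop-inj-cov 1} $\Rightarrow$ \eqref{prop-inj-cov 2} the paper cleans up the bookkeeping you flag as the ``main obstacle'' by first passing to unitizations $\widetilde A\hookrightarrow\widetilde B$ and then working directly at the level of multiplier algebras: one extends $\pi\rtimes u:\M(A\rtimes_\max G)\to\Bd(H)$ along the injective unital map $\M(A\rtimes_\max G)\to\M(B\rtimes_\max G)$. This guarantees from the outset that the extension is defined on $i_B(B)$ and $i_G^B(G)$, and since each $i_G^B(g)$ lies in the image of $\M(A\rtimes_\max G)$ (hence in the multiplicative domain of the ucp extension) and is sent to $u_g$, equivariance of the restriction to $B$ drops out immediately. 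Your sketch reaches the same conclusion but leaves the passage to multipliers and the verification that $\widetilde\psi\circ i_G=u$ somewhat implicit.

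Second, in \eqref{prop-inj-cov 3} $\Rightarrow$ \eqref{prop-inj-cov 1} your parenthetical worry about establishing the covariance relation $\sigma(\beta_g(b))=u_g\sigma(b)u_g^*$ is unnecessary: by Definition~\ref{def-injective-rep} the map $\sigma$ is already required to be a ccp $G$-map for the action $\Ad u$ on $\Bd(H)$, so covariance is built into the hypothesis and no ``multiplicative-domain-type argument'' is needed. With that observed, your argument and the paper's coincide.
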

\begin{proof}
Assume \eqref{prop-inj-cov 1}, and let $(\pi,u):(A,G)\to \Bd(H)$ be a covariant representation.  We must show that the dashed arrow below can be filled in with a ccp $G$-map
\begin{equation}\label{desideratum}
\xymatrix{ B \ar@{-->}[dr] & \\ A \ar[u]^-\iota \ar[r]^-\pi & \Bd(H)~. }
\end{equation}
Let $\widetilde{A}$ and $\widetilde{B}$ be the unitizations of $A$ and $B$ and let $\widetilde{\pi}:\widetilde{A}\to \Bd(H)$ and $\widetilde{\iota}:\widetilde{A}\to \widetilde{B}$ be the canonical (equivariant) unital extensions.  It will suffice to prove that the dashed arrow below 
$$
\xymatrix{ \widetilde{B} \ar@{-->}[dr] & \\ \widetilde{A} \ar[u]^-{\widetilde{\iota}} \ar[r]^-{\widetilde{\pi}} & \Bd(H) }
$$
can be filled in with an equivariant ccp map; indeed, if we can do this, then the restriction of the resulting equivariant ccp map $\widetilde{B}\to \Bd(H)$ to $B$ will have the desired property.

Since the descent $\iota\rtimes G:A\rtimes_{\max} G \to B\rtimes_{\max} G$ of $\iota$ is injective by assumption,  it follows from this and the commutative diagram
$$
\xymatrix{ 0\ar[r] & B\rtimes_{\max} G \ar[r] & \widetilde{B}\rtimes_{\max} G \ar[r] & \C\rtimes_{\max} G \ar[r] & 0 \\
0\ar[r] & A\rtimes_{\max} G \ar[r] \ar[u] & \widetilde{A}\rtimes_{\max} G \ar[r] \ar[u] & \C\rtimes_{\max} G \ar@{=}[u] \ar[r] & 0 }
$$
of short exact sequences that the map 
$$
\widetilde{\iota}\rtimes G:\widetilde{A}\rtimes_{\max} G\to \widetilde{B}\rtimes_{\max} G
$$
is injective as well.  From now on, to avoid cluttered notation, we will assume that $A$, $B$, $\pi$ and $\iota$ are unital, and that the map $\iota\rtimes G:A\rtimes_{\max}G\to B\rtimes_{\max}G$  is injective; our goal is to fill in the dashed arrow in line \eqref{desideratum} under these new assumptions. 

It  follows from unitality of $\iota$ that the map $\iota\rtimes G:A\rtimes_{\max} G \to B\rtimes_{\max} G$
uniquely extends to an injective $*$-homomorphism $\M(A\rtimes_{\max} G) \to \M(B\rtimes_{\max} G)$.  Hence in the diagram below
$$
\xymatrix{ \M(B\rtimes_{\max} G) \ar@{-->}[dr]^-{\widetilde{\pi\rtimes u}} & \\ \M(A\rtimes_{\max}G) \ar[r]^-{\pi \rtimes u} \ar[u]^-{\iota\rtimes G} & \Bd(H) }
$$
we may use injectivity of $\Bd(H)$ (i.e.\ Arveson's extension theorem as, for example, in \cite{Brown:2008qy}*{Theorem 1.6.1}) to show that the dashed arrow can be filled in with a ucp map.  Any element of the form $i_G^B(g)\in \M(B\rtimes_\max G)$ is in the multiplicative domain of $\widetilde{\pi\rtimes u}$, from which it follows that the restriction $\phi$ of $\widetilde{\pi\rtimes u}$ to 
$B\cong i_B(B)\subseteq \M(B\rtimes_\max G)$ is equivariant.  This restriction $\phi$ is the desired map.

The implication \eqref{prop-inj-cov 2} $\Rightarrow$ \eqref{prop-inj-cov 3} is clear, so it remains to show \eqref{prop-inj-cov 3} $\Rightarrow$ \eqref{prop-inj-cov 1}.  Let $\pi\rtimes u:A\rtimes_{\max} G \to \Bd(H)$ be a nondegenerate  faithful representation such that there is a ccp $G$-map $\phi:B\to \Bd(H)$ that extends $\pi$ as in \eqref{prop-inj-cov 3}.  Lemma \ref{int lem} implies that this ccp map integrates to a nondegenerate ccp map $\phi\rtimes u:B\rtimes_{\max} G \to \Bd(H)$.  As the diagram 
$$
\xymatrix{B\rtimes_{\max} G \ar[dr]^{\phi\rtimes u} & \\ A\rtimes_{\max} G \ar[r]^-{\pi\rtimes u} \ar[u]^-{\iota\rtimes G} & \Bd(H) }
$$
commutes and the horizontal map is injective, the vertical map is injective too.
\end{proof}

Notice that $A\rtimes_\max G=A\rtimes_\inj G$ if and only if every $G$-embedding $\iota\colon A\into B$ 
satisfies the equivalent conditions in Proposition~\ref{prop-inj-cov}. Hence we get the following immediate consequence, which we record for ease of future reference:

\begin{corollary}[{cf. \cite{Buss:2019}*{Corollary 4.11}}] \label{inj lem}
For a $G$-$C^*$-algebra $A$, the following are equivalent:
\begin{enumerate}
\item $A\rtimes_\max G=A\rtimes_\inj G$;
\item every nondegenerate covariant representation $(\pi,u)$ is $G$-injective;
\item there is a $G$-injective nondegenerate covariant representation that integrates to a faithful representation of $A\rtimes_\max G$.
\end{enumerate}
Moreover, if $G$ is exact, $\rtimes_{\inj}$ may be replaced by $\rtimes_{\red}$ in the above. \qed
\end{corollary}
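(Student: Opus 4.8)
The plan is to deduce all three equivalences directly from Proposition~\ref{prop-inj-cov} together with the description of the $\rtimes_\inj$-norm recalled above, so the proof is essentially a bookkeeping exercise in chaining implications. The one point worth isolating first is the translation between the global statement ``$A\rtimes_\max G=A\rtimes_\inj G$'' and the local conditions of Proposition~\ref{prop-inj-cov}. If $q\colon A\rtimes_\max G\to A\rtimes_\inj G$ is an isomorphism, then for every $G$-embedding $\iota\colon A\into B$ and every $f\in C_c(G,A)$ one has $\|f\|_\max=\|f\|_\inj\le\|\iota\circ f\|_{B\rtimes_\max G}\le\|f\|_\max$, the first inequality from the definition of $\|\cdot\|_\inj$ and the second because $\iota\rtimes_\max G$ is a $*$-homomorphism; hence $\iota\rtimes_\max G$ is isometric, so condition~\eqref{prop-inj-cov 1} of Proposition~\ref{prop-inj-cov} holds for $\iota$. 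Conversely, if condition~\eqref{prop-inj-cov 1} holds for every $G$-embedding $\iota$, then $\|\iota\circ f\|_{B\rtimes_\max G}=\|f\|_\max$ for all such $\iota$, and taking the infimum over all $G$-embeddings gives $\|f\|_\inj=\|f\|_\max$, i.e.\ $q$ is an isomorphism. (This is exactly the observation recorded just above the statement.)

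Granting this, I would argue as follows. For (1)$\Rightarrow$(2): assuming $A\rtimes_\max G=A\rtimes_\inj G$, the previous paragraph shows that condition~\eqref{prop-inj-cov 1} of Proposition~\ref{prop-inj-cov} holds for every $G$-embedding $\iota\colon A\into B$, so the implication \eqref{prop-inj-cov 1}$\Rightarrow$\eqref{prop-inj-cov 2} there makes every nondegenerate covariant representation $(\pi,u)$ of $(A,G,\alpha)$ $G$-injective with respect to $\iota$; since $\iota$ was arbitrary, $(\pi,u)$ is $G$-injective in the sense of Definition~\ref{def-injective-rep}. For (2)$\Rightarrow$(3), take the universal covariant representation $(i_A,i_G)$ of $(A,G)$ on $H_u$ from Section~\ref{Sec:Prel}: it is nondegenerate, it integrates to the faithful universal representation of $A\rtimes_\max G$, and it is $G$-injective by hypothesis. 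For (3)$\Rightarrow$(1), let $(\pi,u)$ be a $G$-injective nondegenerate covariant representation with faithful integrated form; for each $G$-embedding $\iota\colon A\into B$ the pair $(\pi,u)$ verifies condition~\eqref{prop-inj-cov 3} of Proposition~\ref{prop-inj-cov} relative to $\iota$, whence \eqref{prop-inj-cov 3}$\Rightarrow$\eqref{prop-inj-cov 1} there gives that $\iota\rtimes_\max G$ is injective, and the first paragraph then yields $A\rtimes_\max G=A\rtimes_\inj G$.

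For the closing assertion, recall from \cite{Buss:2018nm}*{Proposition~4.2} (cited above) that $A\rtimes_\inj G=A\rtimes_\red G$ whenever $G$ is exact; in that case the three conditions may therefore be restated with $\rtimes_\red$ in place of $\rtimes_\inj$. Since every step merely chains previously established implications, there is no genuine obstacle here; the only place demanding mild care is the first paragraph, where one must be scrupulous about the quantifier ``for every $G$-embedding $\iota\colon A\into B$'' and about the routine norm estimates converting injectivity of all descents $\iota\rtimes_\max G$ into the equality $\|\cdot\|_\max=\|\cdot\|_\inj$ on $C_c(G,A)$.
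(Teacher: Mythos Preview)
Your proof is correct and follows exactly the approach the paper intends: the paper records the corollary as an immediate consequence of Proposition~\ref{prop-inj-cov} together with the observation (stated just before the corollary) that $A\rtimes_\max G=A\rtimes_\inj G$ if and only if every $G$-embedding $\iota\colon A\into B$ satisfies the equivalent conditions of that proposition. You have simply unpacked this ``immediate'' deduction explicitly, including the norm argument behind that observation and the use of the universal representation for (2)$\Rightarrow$(3).
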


In order to connect the above observations to amenability, we need the following extension 
of \cite{Buss:2019}*{Lemma 4.12} from discrete to locally compact groups. 

\begin{lemma}\label{unital lem}
Let $A$ be a $G$-$C^*$-algebra and let $(\pi,u)\colon (A,G)\to \Bd(H)$ be a nondegenerate $G$-injective covariant pair. Then for any  unital $G$-$C^*$-algebra $C$ there exists a ucp $G$-map $\phi:C\to \pi(A)'\sbe \Bd(H)$.
\end{lemma}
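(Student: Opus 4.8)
The plan is to use the tensor product trick that is standard in this circle of ideas (cf. the proof of \cite{Buss:2019}*{Lemma 4.12}). First I would form the maximal tensor product $A\otimes_{\max} C$, equipped with the diagonal $G$-action $\alpha\otimes\gamma$, where $\gamma$ denotes the given action on $C$. Since $C$ is unital, the inclusion $A\cong A\otimes 1\into A\otimes_{\max} C$ is a $G$-equivariant, nondegenerate inclusion of $G$-$C^*$-algebras. By hypothesis $(\pi,u)$ is $G$-injective, so it is $G$-injective with respect to this particular inclusion: there is a ccp $G$-map $\sigma\colon A\otimes_{\max}C\to\Bd(H)$ (equipped with the $G$-action $\Ad u$) extending $\pi$, i.e. $\sigma(a\otimes 1)=\pi(a)$ for all $a\in A$.

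The key step is then to exploit the multiplicative domain of $\sigma$. Since $\pi$ is a nondegenerate $*$-homomorphism and $\sigma$ restricts to $\pi$ on $A\otimes 1$, every element $a\otimes 1$ lies in the multiplicative domain of $\sigma$; here one uses nondegeneracy to pass from $A$ to an approximate unit and hence to work as if $\sigma$ were unital, or one first unitizes. Consequently, for all $a\in A$ and $c\in C$ we get $\sigma(a\otimes c)=\sigma(a\otimes 1)\sigma(1\otimes c)=\pi(a)\sigma(1\otimes c)$ and likewise $\sigma(a\otimes c)=\sigma(1\otimes c)\pi(a)$. Define $\phi\colon C\to\Bd(H)$ by $\phi(c):=\sigma(1\otimes c)$; this is unital (again using nondegeneracy/unitization of $\pi$ to identify $\sigma(1\otimes 1)$ with $1_H$ on the essential subspace) and completely positive. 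The two displayed identities show $\phi(c)\pi(a)=\pi(a)\phi(c)$ for all $a\in A$, $c\in C$, so $\phi$ takes values in the commutant $\pi(A)'\subseteq\Bd(H)$.

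It remains to check that $\phi$ is $G$-equivariant for the action $\Ad u$ on $\pi(A)'$ (note $\Ad u$ does preserve $\pi(A)'$ because $u$ implements $\Ad u$ on all of $\Bd(H)$ and $\pi(A)''$ is $\Ad u$-invariant by covariance). This is immediate from equivariance of $\sigma$: $\phi(\gamma_g(c))=\sigma(1\otimes\gamma_g(c))=\sigma((\alpha\otimes\gamma)_g(1\otimes c))=\Ad u_g(\sigma(1\otimes c))=u_g\phi(c)u_g^*$. Thus $\phi\colon C\to\pi(A)'$ is the desired ucp $G$-map.

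The only genuine subtlety — the ``main obstacle'' — is the non-unitality of $A$: one must be careful that the phrase ``$G$-injective'' in Definition \ref{def-injective-rep} is about ccp (not necessarily unital) maps, and that the multiplicative-domain argument still forces $\sigma(1\otimes c)$ to commute with $\pi(A)$. The clean way around this is to first pass to unitizations, replacing $A\into A\otimes_{\max}C$ by $\widetilde A\into \widetilde A\otimes_{\max}C$ and extending $\pi$ to the unital $\widetilde\pi$ (using that $(\pi,u)$ nondegenerate forces $\pi(A)H=H$, so $\widetilde\pi$ is still a $*$-homomorphism into $\Bd(H)$ with $\widetilde\pi(1)=1_H$), exactly as in the proof of Proposition \ref{prop-inj-cov}. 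Then $\sigma$ may be taken unital, the multiplicative domain argument is textbook, and restricting $\phi$ back along $C\into\widetilde A\otimes_{\max}C$ gives the result. Everything else is a routine unwinding of definitions.
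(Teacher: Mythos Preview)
Your approach is essentially the same as the paper's: form the tensor product with $C$, apply $G$-injectivity to get a ccp extension, and use the multiplicative domain to land in the commutant. The one wrinkle is your handling of non-unitality. In your main argument you write $\phi(c)=\sigma(1\otimes c)$ with $\sigma$ defined on $A\otimes_{\max}C$, but $1\otimes c$ does not live there when $A$ is non-unital; your proposed fix of replacing the embedding by $\widetilde A\into\widetilde A\otimes_{\max}C$ changes the \emph{domain} of the inclusion, whereas $G$-injectivity is a hypothesis about inclusions of $A$, not $\widetilde A$. The clean resolution (which is what the paper does) is to keep $A$ as the domain but enlarge the \emph{target}: embed $A\into\M(C\otimes A)_c$ via $a\mapsto 1\otimes a$, apply $G$-injectivity to obtain a ccp $G$-map $\varphi\colon\M(C\otimes A)_c\to\Bd(H)$ extending $\pi$, and then argue directly that $\varphi$ is unital via $1\geq\varphi(1)\geq\varphi(1\otimes h_k)=\pi(h_k)\to 1$ for an approximate unit $(h_k)$ of $A$. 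After that your multiplicative-domain and equivariance arguments go through verbatim. (Equivalently, you could embed $A\into\widetilde A\otimes_{\max}C$ and run the same approximate-unit argument; the point is just that the inclusion must have domain $A$.)
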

\begin{proof}
Let $G$ act diagonally on $C\otimes A$ and consider the canonical $G$-embedding 
$$
i\colon A\into \M(C\otimes A)_c;\quad a\mapsto 1\otimes a.
$$
Then $G$-injectivity of $\pi$ yields a ccp $G$-map $\varphi\colon \M(C\otimes A)_c\to \Bd(H)$ with $\varphi\circ i=\pi$.  Say now $(h_k)$ is an increasing approximate unit for $A$.  As $\varphi$ is ccp, we see that for any $k$
$$
1\geq \varphi(1) \geq \varphi(1\otimes h_k)=\varphi(i(h_k))=\pi(h_k).
$$ 
As $\pi(h_k)\to 1$ strongly, the above inequalities force $\varphi(1)=1$, so $\varphi$ is ucp.  We now consider the canonical $G$-embedding $j\colon C\to \M(C\otimes A)_c$, $c\mapsto c\otimes 1$, and then define $\phi\colon C\to \Bd(H)$ by $\phi(c):= \varphi(j(c))$. It remains to show that $\phi(C)\sbe \pi(A)'$. However, since $\varphi\circ i=\pi$ is a homomorphism, the image of $i$ lies in the multiplicative domain of $\varphi$, so that
$$\phi(c)\pi(a) 
=\varphi(j(c)i(a)) =\varphi(i(a)j(c))=\pi(a)\phi(c). \eqno\qedhere$$
\end{proof}

We are now going to introduce a notion of amenability which, at least for exact groups $G$,
 characterizes  weak containment:
 
\begin{definition}\label{com amen}
Let $(A,\alpha)$ be a $G$-$C^*$-algebra.  For a covariant representation $(\pi,u):(A,G)\to \Bd(H)$, equip the commutant $\pi(A)'$ with the action $\Ad u$.   
Then $(\pi,u)$ is \emph{commutant amenable} if  there  exists a net $(\theta_i:G\to \pi(A)')$ of norm-continuous, compactly supported, 
positive type functions such that
 $\|\theta_i(e)\|\leq 1$ for all $i$ and 
$\theta_i(g)\to 1$ ultraweakly and uniformly for $g$ in compact subsets of $G$.

The $G$-$C^*$-algebra $(A,\alpha) $ is \emph{commutant amenable}  if every\footnote{We will see in Proposition \ref{ca uni rep} below that it suffices for the universal representation to be commutant amenable.} nondegenerate covariant pair is commutant amenable.
\end{definition}

\begin{remark}\label{com amen sub c}
Lemma \ref{sub c gone} implies that one can require instead a net taking values in $\pi(A)'_c$ (but that otherwise has the same properties) without changing the definition of commutant amenability.
\end{remark}

In what follows, we say that a family  $\{(\pi_j, u_j): j\in J\}$  of covariant representations of $(A,G,\alpha)$
 is {\em faithful} if their direct sum integrates to  a faithful representation of $A\rtimes_\max G$.

\begin{lemma}\label{lem-CA}
Suppose that $(\pi,u)$ is a commutant amenable covariant representation 
of $(A,G,\alpha)$ on some Hilbert space $H$. Then for each $f\in C_c(G,A)$ we have
$$\|\pi\rtimes u(f)\|\leq \|f\|_{A\rtimes_\red G}.$$
In particular, if there exists a faithful family $\{(\pi_j, u_j): j\in J\}$ of commutant amenable covariant representations
of $(A,G,\alpha)$, then $A\rtimes_\max G= A\rtimes_\red G$.
\end{lemma}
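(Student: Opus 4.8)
The strategy is to use the commutant-amenable net $(\theta_i)$ to build, for the given covariant representation $(\pi,u)$, a family of ``regular-like'' representations whose norms dominate $\|\pi\rtimes u(f)\|$ but which factor through $A\rtimes_\red G$. First I would invoke Lemma \ref{lem:pt cs} to write each positive-type function $\theta_i\colon G\to\pi(A)'$ in the form $\theta_i(g)=\braket{\eta_i}{\lambda_g\eta_i}$ for some $\eta_i$ in $L^2(G,\pi(A)')$; here one should think of $\pi(A)'$ as a $G$-$C^*$-algebra via $\Ad u$, using Remark \ref{com amen sub c} to arrange that $\theta_i$ takes values in $\pi(A)'_c$ so that Lemma \ref{lem:pt cs} literally applies. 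The point of the commutant is that $\pi(A)'$ commutes with $\pi(A)$, so multiplication operators built from $\eta_i$ will commute with $\pi$ and one gets a genuine covariant representation after tensoring with the left regular representation.

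Concretely, I would form the Hilbert space $H\otimes L^2(G)$ and consider the representation $(\widetilde\pi,\lambda)$ associated with $\pi$ as in line \eqref{eq-regular}, whose integrated form factors through $A\rtimes_\red G$ (it is the regular representation induced from $\pi$, which is faithful on $A\rtimes_\red G$ if $\pi$ is faithful on $A$). The operator $\eta_i\in L^2(G,\pi(A)')\subseteq \mathcal L(H,L^2(G,H))$ gives a bounded map $V_i\colon H\to L^2(G,H)$, and because $\eta_i$ is valued in the commutant one checks that $V_i$ intertwines $\pi$ with $\widetilde\pi$ and that $V_i^*(\widetilde\pi\rtimes\lambda)(f)V_i$ relates to $\pi\rtimes u(f)$ with the ``error'' governed by $\braket{\eta_i}{\lambda_g\eta_i}=\theta_i(g)$. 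Then for $f\in C_c(G,A)$ one computes, using $\|\braket{\eta_i}{\eta_i}\|\le\|\theta_i(e)\|\le 1$ and $\theta_i(g)\to 1$ uniformly on $\supp f$ (a compact set), that $V_i^*(\widetilde\pi\rtimes\lambda)(f)V_i\to \pi\rtimes u(f)$ ultraweakly (or weakly) as $i\to\infty$. Since each $\|V_i^*(\widetilde\pi\rtimes\lambda)(f)V_i\|\le \|\braket{\eta_i}{\eta_i}\|\cdot\|(\widetilde\pi\rtimes\lambda)(f)\|\le\|f\|_{A\rtimes_\red G}$, weak lower semicontinuity of the norm gives $\|\pi\rtimes u(f)\|\le\|f\|_{A\rtimes_\red G}$.

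For the second statement: given a faithful family $\{(\pi_j,u_j)\}$, each $\pi_j\rtimes u_j$ satisfies $\|\pi_j\rtimes u_j(f)\|\le\|f\|_{A\rtimes_\red G}$ by the first part, so $\|f\|_{A\rtimes_\max G}=\sup_j\|\pi_j\rtimes u_j(f)\|\le\|f\|_{A\rtimes_\red G}$; the reverse inequality is automatic, so the two norms coincide and $A\rtimes_\max G=A\rtimes_\red G$.

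The main obstacle I anticipate is the bookkeeping in the middle step: making precise the intertwining identity between $V_i$, $\pi$, and $\widetilde\pi$, and verifying that $V_i^*(\widetilde\pi\rtimes\lambda)(f)V_i$ converges weakly to $\pi\rtimes u(f)$ with the right norm bound. This requires carefully unwinding the definition of the $\pi(A)'$-valued inner product on $L^2(G,\pi(A)')$ and its realization inside $\mathcal L(H, L^2(G,H))$ — essentially the computation $\langle v, V_i^*(\widetilde\pi\rtimes\lambda)(f)V_i v\rangle = \int_G \langle v, \pi(f(g))\, \theta_i(\text{something})\, u_g v\rangle\,dg$ — and checking that as $\theta_i\to 1$ uniformly on $\supp(f)$ this integral converges to $\langle v,\pi\rtimes u(f)v\rangle$. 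The density argument reducing from $L^2(G,\pi(A)')$ to $C_c(G,\pi(A)'_c)$, and the fact that the norm bound is uniform in $i$, are what make the weak-limit argument legitimate; none of this is deep but it needs to be done cleanly.
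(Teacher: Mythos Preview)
Your approach is essentially the same as the paper's: construct operators $V_i\colon H\to L^2(G,H)$ from the net implementing commutant amenability, compress a regular-type representation by $V_i$, and take a weak limit. The paper carries this out with the same $V_i$ (called $T_i$ there) and the same norm argument.

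There is one slip to fix. The intertwining $V_i\pi(a)=\widetilde\pi(a)V_i$ you claim does \emph{not} hold for the representation $\widetilde\pi$ of line \eqref{eq-regular}: one has $(V_i\pi(a)v)(g)=\eta_i(g)\pi(a)v=\pi(a)\eta_i(g)v$ (using the commutant), whereas $(\widetilde\pi(a)V_iv)(g)=\pi(\alpha_{g^{-1}}(a))\eta_i(g)v$. The representation you want is instead the Fell-trick pair $(\pi\otimes 1,\,u\otimes\lambda)$ on $H\otimes L^2(G)$, which also factors through $A\rtimes_\red G$; for this one the intertwining $V_i\pi(a)=(\pi\otimes 1)(a)V_i$ is immediate from $\eta_i(g)\in\pi(A)'$, and the computation you sketch goes through to give
\[
V_i^*\big((\pi\otimes 1)\rtimes(u\otimes\lambda)(f)\big)V_i=\int_G \pi(f(g))\,\braket{\eta_i}{\lambda_g^{\Ad u}\eta_i}_{\pi(A)'}\,u_g\,dg,
\]
exactly as in the paper. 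With this correction your argument is the paper's argument.
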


\begin{proof}
Let $(\pi,u):(A,G)\to \Bd(H)$ be a commutant amenable covariant representation.  Using Remark \ref{com amen sub c} and Lemma \ref{prop-approx}, there is a net $(\xi_i)$ in $C_c(G, \pi(A)_c')\subseteq L^2(G, \pi(A)_c')$
with the properties from item \eqref{prop-approx 2} of  Lemma \ref{prop-approx}.
For each $i$, define 
$$
T_i:H\to L^2(G,H),\quad v\mapsto [g\mapsto \xi_i(g)v].
$$
A direct computation  shows that  for all $i\in I$ we have
\begin{align*}
\|T_iv\|^2 & =\braket{ v}{\braket{ \xi_i}{\xi_i}_{\pi(A)'}v}_\C\leq \|v\|^2.
\end{align*}
Thus $\|T_i\|\leq 1$ for all $i\in I$.  More direct computations show that the adjoint of $T_i$ is given by 
$$T_i^*(\eta)=\int_G\xi_i(g)^*\eta(g)\dd g$$
for $\eta\in \contc(G,H)$.

Now, via Fell's trick, the covariant pair $(\pi\otimes 1,u\otimes \lambda):(A,G)\to \Bd(L^2(G,H))$ integrates to $A\rtimes_{\red} G$.  Consider the 
net of contractive completely positive  maps
$$
\phi_i:\Bd(L^2(G,H))\to\Bd(H);\quad b\mapsto T_i^*bT_i.
$$
For $f\in \contc(G,A)\subseteq A\rtimes_\max G$ and $v\in H$ we compute:
\begin{align*}
\phi_i \big((\pi\otimes 1)\rtimes (u\otimes \lambda)(f)\big)v&=T_i^*((\pi\otimes 1)\rtimes (u\otimes \lambda)(f))T_iv\\
&=\left(\int_G\int_G\xi_i(h)^*\pi(f(g))u_g\xi_i(g^{-1}h)\dd g \dd h\right)v.
\end{align*}
Let $\Ad u$ be the action on $\pi(A)'$, and $\lambda^{\Ad u}$ the induced action on $L^2(G,\pi(A)')$ from Definition \ref{l2ga act}.  Using that $\xi_i$ takes values in $\pi(A)'$, we get
\begin{align*}
\phi_i \big((\pi\otimes 1)\rtimes (u\otimes \lambda)(f)\big)&=
\int_G\pi(f(g))\left(\int_G\xi_i(h)^*u_g\xi_i(g^{-1}h)u_g^*\dd h\right)u_g\dd g\\
&=\int_G\pi(f(g))\braket{ \xi_i}{\lambda^{\Ad u}_g\xi_i}_{\pi(A)'} u_g\dd g.
\end{align*}
As $\braket{ \xi_i}{\lambda^{\Ad u}_g\xi_i}_{\pi(A)'} $ converges weakly to $1$ uniformly for all $g$ in compact subsets of $G$, and as multiplication is separately weakly continuous, we get weak convergence 
$$
\phi_i \big((\pi\otimes 1)\rtimes (u\otimes \lambda)(f)\big)\to (\pi\rtimes u)(f).
$$
As weak limits do not increase norms and as each $\phi_i$ is contractive, this implies
\begin{align*}
\|(\pi\rtimes u)(f)\|&\leq \limsup_{i\to\infty}\|\phi_i\big( (\pi\otimes 1)\rtimes (u\otimes \lambda)(f)\big)\|\\
&\leq \|(\pi\otimes 1)\rtimes (u\otimes \lambda)(f)\| \leq \|f\|_{A\rtimes_\red G},
\end{align*} 
which completes the proof.
\end{proof}

Here is the main result of this section.  Note in particular that commutant amenability characterizes weak containment for actions of exact groups.

\begin{proposition}\label{prop-general}
Let $(A,\alpha)$ be a $G$-$C^*$-algebra.  Consider the 
following statements:
\begin{enumerate}
\item \label{prop-general 1}$\alpha$ is amenable;
\item \label{prop-general 2}$\alpha$ is commutant amenable;
\item \label{prop-general 3}there exists a commutant amenable, covariant representation $(\pi,u)$ of $(A,G,\alpha)$ such that 
$\pi\rtimes u:A\rtimes_\max G\to \Bd(H_\pi)$ is faithful;
\item \label{prop-general 4}$\alpha$ has the weak containment property.
\end{enumerate}
Then  
$$
\eqref{prop-general 1}\Rightarrow\eqref{prop-general 2}\Rightarrow\eqref{prop-general 3}\Rightarrow\eqref{prop-general 4}.
$$
Moreover, if $G$ is exact, we also have \eqref{prop-general 4} $\Rightarrow$ \eqref{prop-general 2}.
 \end{proposition}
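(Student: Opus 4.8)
The plan is to prove the chain of implications in Proposition~\ref{prop-general}, treating the four statements in order and then adding the converse for exact groups.

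\textbf{The implications $\eqref{prop-general 1}\Rightarrow\eqref{prop-general 2}\Rightarrow\eqref{prop-general 3}\Rightarrow\eqref{prop-general 4}$.} For $\eqref{prop-general 1}\Rightarrow\eqref{prop-general 2}$, suppose $\alpha$ is amenable and let $(\pi,u)\colon(A,G)\to\Bd(H)$ be any nondegenerate covariant representation. By Proposition~\ref{prop-universal} there is a normal, $\alpha''$-$\Ad u$-equivariant surjection $\pi''\colon A_\alpha''\to\pi(A)''$, which restricts to an equivariant normal unital $*$-homomorphism on centres; but $Z(\pi(A)'')\subseteq \pi(A)'$, so a net $(\theta_i\colon G\to Z(A_\alpha''))$ witnessing amenability pushes forward to a net $(\pi''\circ\theta_i\colon G\to Z(\pi(A)'')\subseteq \pi(A)')$ of norm-continuous, compactly supported, positive type functions with $\|\pi''\circ\theta_i(e)\|\le 1$ and $\pi''(\theta_i(g))\to 1$ ultraweakly, uniformly on compacta (normality of $\pi''$ preserves the ultraweak limit). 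Hence $(\pi,u)$ is commutant amenable, and since $(\pi,u)$ was arbitrary, $\alpha$ is commutant amenable. The implication $\eqref{prop-general 2}\Rightarrow\eqref{prop-general 3}$ is immediate by taking $(\pi,u)$ to be (the covariant representation underlying) the universal representation $(i_A,i_G)$ on $H_u$, which integrates to a faithful representation of $A\rtimes_{\max}G$ by construction. The implication $\eqref{prop-general 3}\Rightarrow\eqref{prop-general 4}$ is exactly the ``in particular'' clause of Lemma~\ref{lem-CA} applied to the singleton faithful family $\{(\pi,u)\}$: it gives $A\rtimes_{\max}G=A\rtimes_{\red}G$, i.e.\ the (WCP).

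\textbf{The converse $\eqref{prop-general 4}\Rightarrow\eqref{prop-general 2}$ for $G$ exact.} Assume $G$ is exact and $\alpha$ has the (WCP), so $A\rtimes_{\max}G=A\rtimes_{\red}G$, and by \cite{Buss:2018nm}*{Proposition 4.2} also $A\rtimes_{\max}G=A\rtimes_{\inj}G$. We must show every nondegenerate covariant representation $(\pi,u)$ of $(A,G,\alpha)$ is commutant amenable. By Corollary~\ref{inj lem}, $A\rtimes_{\max}G=A\rtimes_{\inj}G$ implies that every nondegenerate covariant representation $(\pi,u)$ is $G$-injective. Apply Lemma~\ref{unital lem} with the unital $G$-$C^*$-algebra $C=\contub(G)$ (with the left-translation $\tau$-action): since $(\pi,u)$ is $G$-injective, there is a ucp $G$-map $\phi\colon \contub(G)\to \pi(A)'\subseteq\Bd(H)$. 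Now apply Corollary~\ref{cor-amenable} (equivalently Proposition~\ref{prop:Amenability-conditions}, condition~\eqref{amen 7}) to the $G$-von Neumann algebra $M:=\pi(A)''$; note $\pi(A)'$ is a $G$-von Neumann algebra for the action $\Ad u$, and a ucp $G$-map $\contub(G)\to \pi(A)'$ into the \emph{continuous part} is what \eqref{amen 7} requires with $M$ replaced by the commutant. Here we should be slightly careful: $\phi$ lands in $\pi(A)'\subseteq\Bd(H)$ but need not land in $\pi(A)'_c$; however, composing with the continuous-part map (as in the proof of \eqref{amen 6}$\Rightarrow$\eqref{amen 7}) and using $\contub(G)=L^\infty(G)_c$ repairs this. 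Since $G$ is exact, Proposition~\ref{prop:Amenability-conditions} (or Remark~\ref{no equiv}: \eqref{amen 7}$\Rightarrow$\eqref{amen 1} for $G$ exact, via \cite{Ozawa-Suzuki}*{Proposition 2.5}) gives a net of norm-continuous, compactly supported, positive type functions $\theta_i\colon G\to Z(\pi(A)')_c$ with $\|\theta_i(e)\|\le 1$ and $\theta_i(g)\to 1$ uniformly on compacta; in particular $\theta_i\colon G\to \pi(A)'$ witnesses commutant amenability of $(\pi,u)$.

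\textbf{The main obstacle.} The genuinely delicate point is the exact-case direction: specifically, making sure that $G$-injectivity of an \emph{arbitrary} nondegenerate covariant representation follows from the (WCP), which rests on the chain (WCP) $\Rightarrow$ $A\rtimes_{\max}G=A\rtimes_{\inj}G$ (using exactness via \cite{Buss:2018nm}) $\Rightarrow$ every nondegenerate covariant representation is $G$-injective (Corollary~\ref{inj lem}), and then that Lemma~\ref{unital lem} produces a ucp $G$-map from a \emph{unital} $G$-$C^*$-algebra into the commutant. The choice $C=\contub(G)$ is what converts ``$G$-injectivity'' into the amenability-type approximation property of the commutant via Proposition~\ref{prop:Amenability-conditions}\eqref{amen 7}$\Rightarrow$\eqref{amen 1}, and it is precisely here that exactness of $G$ is used in an essential way (through \cite{Ozawa-Suzuki}*{Proposition 2.5}, i.e.\ that $G\curvearrowright\contub(G)$ is strongly amenable iff $G$ is exact); without exactness this last step breaks and one only recovers the weaker conclusion corresponding to condition~\eqref{amen 7}.
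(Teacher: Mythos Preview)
Your approach matches the paper's, and the chain $(1)\Rightarrow(2)\Rightarrow(3)\Rightarrow(4)$ is handled correctly and in the same way.

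For $(4)\Rightarrow(2)$ under exactness you assemble the right ingredients (Corollary~\ref{inj lem} gives $G$-injectivity of every nondegenerate covariant pair, Lemma~\ref{unital lem} with $C=\contub(G)$ gives a ucp $G$-map $\phi\colon\contub(G)\to\pi(A)'$, and exactness gives strong amenability of $\contub(G)$), but the final step is misformulated. You try to invoke Proposition~\ref{prop:Amenability-conditions} as a black box, yet condition~\eqref{amen 7} there demands a ucp $G$-map into the \emph{centre} $Z(M)_c$; Lemma~\ref{unital lem} only puts $\phi$ into $\pi(A)'$, not into $Z(\pi(A)')=Z(\pi(A)'')$. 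So you are not entitled to \eqref{amen 7}$\Rightarrow$\eqref{amen 1}, and the asserted conclusion that the $\theta_i$ land in $Z(\pi(A)')_c$ is unjustified.

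The paper avoids Proposition~\ref{prop:Amenability-conditions} here and argues directly: since $G$ is exact, \cite{Ozawa-Suzuki}*{Proposition 2.5} yields a net $(\eta_i\colon G\to\contub(G))$ of compactly supported positive type functions with $\|\eta_i(e)\|\le 1$ and $\eta_i(g)\to 1$ \emph{in norm} uniformly on compacta; then $(\phi\circ\eta_i\colon G\to\pi(A)')$ is the required net for commutant amenability (positive type because $\phi$ is ucp, convergence preserved because it is norm convergence). No centre condition is needed, since commutant amenability only asks for functions into $\pi(A)'$. Your ``main obstacle'' paragraph shows you understand exactly this; the middle paragraph simply routes it through the wrong lemma.
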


 \begin{proof} 
Suppose that \eqref{prop-general 1} holds and let $(\pi, u)$ be a nondegenerate covariant representation of $(A,G,\alpha)$. 
 By Proposition \ref{prop-universal} there exists a normal surjective $G$\nb-equivariant $*$-homomorphism
$\Phi: A_\alpha''\onto \pi(A)''$.
Surjectivity implies that $\Phi$ restricts to a unital $*$-homomorphism 
$\Phi_Z: Z(A_\alpha'')\to  Z(\pi(A)'')\subseteq \pi(A)'$.
Thus if $(\theta_i:G\to Z(A_\alpha''))$ is a net of compactly supported positive type functions 
implementing amenability of  $\alpha$ it follows  that 
$(\Phi_Z\circ \theta_i)$ implements commutant amenability of $(\pi,u)$, hence \eqref{prop-general 2}. 
The implication \eqref{prop-general 2} $\Rightarrow$ \eqref{prop-general 3} is trivial and
\eqref{prop-general 3} $\Rightarrow$ \eqref{prop-general 4} follows from Lemma \ref{lem-CA} above.

Assume now that $G$ is exact.  Then \eqref{prop-general 4} and Corollary \ref{inj lem} imply that every nondegenerate covariant representation $(\pi,u)$ is $G$-injective, so Lemma \ref{unital lem} implies that there exists a $G$-equivariant ucp map
$\Phi:\contub(G)\to \pi(A)'$.  As $G$ is exact, \cite{Ozawa-Suzuki}*{Proposition 2.5} implies that the action on $\contub(G)$
is strongly amenable as in Definition \ref{def-amenable (SA)}, i.e., there exists a net $(\eta_i:G\to \contub(G))$ of norm-continuous, compactly supported, positive type functions 
such that $\|\eta_i(e)\|\leq 1$ for all $i$ and $\eta_i(g)\to 1_{\contub(G)}$ in norm and uniformly on compact 
subsets of $G$. But then $(\Phi\circ \eta_i)$ is a net of $\pi(A)'$-valued positive type functions that implements 
commutant amenability of $(\pi,u)$.
\end{proof}

\begin{remark}
We have seen in Section \ref{sec:amen pp} that our notion of amenability enjoys some desirable permanence properties like passage to ideals and quotients. It is also not difficult to see from the definition that commutant amenability passes to quotients and ideals. In particular, since commutant amenability implies the (WCP) by Proposition \ref{prop-general} above, it follows that commutant amenability of a $G$-$C^*$-algebra $A$ implies its \emph{inner exactness} in the sense that the sequence 
$$0\to I\rtimes_\red G\to A\rtimes_\red G\to (A/I)\rtimes_\red G \to 0$$
is exact for every $G$-invariant ideal $I\sbe A$.

On the other hand we will see in Theorem \ref{thm-Mats-2} below that if $A$ is amenable then $A\otimes_{\max} B$ is amenable for any $B$; this property fails for commutant amenability, as will follow from our examples in Section~\ref{sec:example} below.
\end{remark}

We conclude this section by showing that in order to check commutant amenability one does not have to study  all representations: indeed it suffices to just look at cyclic representations, or just at the universal representation.  We were motivated to show this by a question of Ruy Exel.

First, we record a basic lemma about permanence properties.

\begin{lemma}\label{ca perm}
Let $(A,\alpha)$ be a $G$-$C^*$-algebra.  Then the class of commutant amenable covariant representations of $(A,\alpha)$ is closed under taking subrepresentations, and under taking arbitrary direct sums.
\end{lemma}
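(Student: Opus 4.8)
\textbf{Proof proposal for Lemma \ref{ca perm}.}

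The plan is to verify the two closure statements directly from Definition \ref{com amen}, using the relationship between a representation, its subrepresentations, and direct sums at the level of the commutants. First consider a subrepresentation: suppose $(\pi,u)$ is commutant amenable on $H$, and let $H_0\subseteq H$ be a $(\pi,u)$-invariant closed subspace, giving a subrepresentation $(\pi_0,u_0)$ on $H_0$ with corresponding projection $p\in \pi(A)'$ (which is moreover $u$-invariant, hence fixed by $\Ad u$, since $H_0$ is $u$-invariant). The key observation is that compression by $p$ gives a normal, unital, $\Ad u$-equivariant $*$-homomorphism $\pi(A)'\to \pi_0(A)'$, $T\mapsto pTp|_{H_0}$; indeed $p\pi(A)'p|_{H_0}\subseteq \pi_0(A)'$ because any element of $\pi(A)'$ commutes with $\pi(A)$ and hence its compression commutes with $\pi_0(A)=p\pi(A)p|_{H_0}$, and this map is a $*$-homomorphism since $p$ is central in the von Neumann algebra generated by $\pi(A)'$... more carefully, $p\in\pi(A)'$ so for $S,T\in\pi(A)'$ we have $pSpTp = pSTp$ only if $p$ is central in $\pi(A)'$, which need not hold. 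So instead I would use: $p\in\pi(A)'$, hence the map $T\mapsto Tp|_{H_0}$ from $\pi(A)'$ to $\Bd(H_0)$ lands in $\pi_0(A)'$ and is a unital $*$-homomorphism onto $\pi(A)'p|_{H_0}\subseteq\pi_0(A)'$, and it is $\Ad u$-$\Ad u_0$-equivariant because $u_g p = pu_g$. Applying this homomorphism to a net $(\theta_i:G\to \pi(A)')$ implementing commutant amenability of $(\pi,u)$ produces a net into $\pi_0(A)'$ of norm-continuous, compactly supported, positive type functions with $\|\theta_i(e)p\|\le 1$ and $\theta_i(g)p\to p = 1_{H_0}$ ultraweakly, uniformly on compact sets (normality of the homomorphism preserves ultraweak convergence on the bounded net). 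This is exactly commutant amenability of $(\pi_0,u_0)$.

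For direct sums, let $(\pi_j,u_j)$ on $H_j$ be commutant amenable for $j$ in some index set $J$, and form $(\pi,u)=\bigoplus_j(\pi_j,u_j)$ on $H=\bigoplus_j H_j$. Here the commutant $\pi(A)'$ contains the direct-sum von Neumann algebra $\prod_j \pi_j(A)'$ (operators that are block-diagonal with the $j$-th block in $\pi_j(A)'$) as an $\Ad u$-invariant von Neumann subalgebra, with the inclusion being normal and unital. So I would pick, for each $j$, a net $(\theta^{(j)}_i:G\to\pi_j(A)')_{i\in I_j}$ implementing commutant amenability of $(\pi_j,u_j)$; the only subtlety is to organize these into a single net. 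A clean way: index the new net by the directed set $I:=\prod_{j\in J} I_j$ ordered coordinatewise, and for $\mathbf{i}=(i_j)_j\in I$ set $\theta_{\mathbf{i}}(g):=\bigoplus_j \theta^{(j)}_{i_j}(g)\in\prod_j\pi_j(A)'\subseteq\pi(A)'$. Each $\theta_{\mathbf i}$ is positive type (a block-diagonal matrix is positive iff each block is), compactly supported is problematic if $J$ is infinite — the supports need not be uniformly bounded — but \emph{norm-continuity} and compact support must be re-examined: $\theta_{\mathbf i}$ is bounded by $1$ at $e$ and norm-continuous (a block-diagonal operator-valued function is norm continuous iff it is uniformly so across blocks, which fails in general!). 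This is the main obstacle, and the fix is the standard one used throughout this kind of argument: one does \emph{not} take all of $J$ at once. Instead, for a finite subset $F\subseteq J$, a compact $K\subseteq G$, a finite set $\Psi$ of normal functionals on $\prod_j\pi_j(A)'$, and $\epsilon>0$, one uses the nets for $j\in F$ to find a single function supported in a fixed compact set, agreeing with $1$ up to $\epsilon$ in the chosen functionals on $K$ for those finitely many blocks, and extends by $0$ on the blocks outside $F$ — but $0$ does not converge to $1$.

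So the correct approach for direct sums is via the characterization of commutant amenability that avoids this: invoke Lemma \ref{sub c gone} and Lemma \ref{prop-approx} (applied with $A$ the concrete C*-algebra $\pi(A)$ in $\Bd(H)$ and the action $\Ad u$) to replace positive-type functions by a bounded net $(\xi_i)$ in $C_c(G,\pi_j(A)'_c)$ with $\braket{\xi_i}{\lambda^{\Ad u_j}_g\xi_i}\to 1_{H_j}$ weakly, uniformly on compact sets. Then for the direct sum, a net $(\eta_i)$ establishing the analogous property for $(\pi,u)$ is obtained by a diagonal argument over finite subsets $F\subseteq J$: approximate $1_H$ by $1_{\bigoplus_{j\in F}H_j}$ on the relevant data, which is legitimate because weak convergence only needs to be tested against finitely many vector states at a time and the net is uniformly bounded. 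Concretely, index by triples $(F,K,\epsilon)$ with $F\subseteq J$ finite, $K\subseteq G$ compact, $\epsilon>0$; for such a triple, since each $(\pi_j,u_j)$ ($j\in F$) has the approximation property of Lemma \ref{prop-approx}(2), pick $\xi^{(j)}$ with $\|\braket{\xi^{(j)}}{\lambda_g\xi^{(j)}}-1_{H_j}\|$-type control on $K$, set $\eta_{(F,K,\epsilon)}:=\bigoplus_{j\in F}\xi^{(j)}$ (a genuine element of $C_c(G,\pi(A)'_c)$ with norm $\le 1$), and check that $\braket{\eta}{\lambda^{\Ad u}_g\eta}\to 1_H$ weakly uniformly on compact sets — here weak convergence suffices because a normal state on $\Bd(H)$ is, up to arbitrarily small error, supported on finitely many summands. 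Translating back through Lemma \ref{prop-approx} and Lemma \ref{sub c gone} gives the desired net of positive-type functions valued in $\pi(A)'$, completing the proof. I expect the subrepresentation case to be routine once the equivariant compression homomorphism is set up, and the direct-sum case to require exactly this passage through the weak approximation-property reformulation to handle the non-uniformity across infinitely many summands.
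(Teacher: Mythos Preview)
Your subrepresentation argument contains a genuine error. You correctly observe that the compression $T\mapsto pTp|_{H_0}$ need not be a $*$-homomorphism on $\pi(A)'$ (since $p\in\pi(A)'$ but $p$ need not be central there), but your replacement $T\mapsto Tp|_{H_0}$ is not well-defined: for $T\in\pi(A)'$ and $v\in H_0=pH$ one has $Tp\,v=Tv$, and there is no reason for $Tv$ to lie in $H_0$ unless $T$ commutes with $p$, which fails in general. The fix is to stay with your first map: the compression $T\mapsto pTp|_{H_0}$ is a normal unital completely positive $\Ad u$--$\Ad u_0$ equivariant map $\pi(A)'\to\pi_0(A)'$ (in fact onto, since $p\pi(A)'p|_{H_0}=\pi_0(A)'$ by the standard reduction formula for commutants), and a normal equivariant ucp map sends positive type functions to positive type functions, preserves compact supports and norm continuity, and preserves ultraweak convergence to $1$. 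No $*$-homomorphism is needed. This is exactly what the paper does.

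For direct sums your detour is unnecessary. Your worry that ``extending by $0$ on blocks outside $F$'' fails because ``$0$ does not converge to $1$'' forgets that Definition~\ref{com amen} only asks for \emph{ultraweak} convergence, and the projections $P_F$ onto $\bigoplus_{j\in F}H_j$ converge ultraweakly to $1_H$ as $F\nearrow J$; indeed you rediscover this point yourself at the end of the detour. The paper proceeds directly: it indexes a new net by pairs $(F,\mathbf i)$ with $F\subseteq J$ finite and $\mathbf i\in\prod_{j\in F}I_j$ (ordered by inclusion on $F$ and coordinatewise on $\mathbf i$), and sets $\theta_{F,\mathbf i}:=\bigoplus_{j\in F}\theta^{(j)}_{i_j}$, extended by zero on the remaining blocks. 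These are compactly supported, norm-continuous, positive type, bounded by $1$ at $e$, and converge ultraweakly to $1$ uniformly on compacta. Your route through Lemmas~\ref{sub c gone} and~\ref{prop-approx} would also work, but it is a detour that ends up using the same finite-$F$ idea.
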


\begin{proof}
Let $(\sigma,v)$ be a subrepresentation of $(\pi,u)$, where the latter acts on some Hilbert space $H$.   Assume that $(\pi,u)$ is commutant amenable, and let $(\theta_i:G\to \pi(A)')_{i\in I}$ be a net implementing its commutant amenability.  As $(\sigma,v)$
 is a subrepresentation of $(\pi,u)$, there is a $G$-invariant projection $p\in \pi(A)'$ such that $(\sigma,v)$ is the restriction of $(\pi,u)$ to $pH$.  In particular, the corner $p\pi(A)'p$ identifies canonically with $\sigma(A)'$.  Using this identification, the functions defined by $g\mapsto p\theta_i(g)p$ implement commutant amenability of $(\sigma,v)$.

Let now $(\pi_j,u_j)_{j\in J}$ be an arbitrary family of representations of $(A,G)$, all of which are commutant amenable, and let $(\pi,u)$ be their direct sum.  For each $j\in J$, let $(\theta_{i}^{(j)}:G\to \pi_i(A)')_{i\in I_j}$ be a net implementing the commutant amenability of $(\pi_j,u_j)$.  For each finite subset $F$ of $I$ and each tuple $\mathbf{i}:=(i_j)_{j\in F}\in \prod_{j\in F}I_j$, define $\theta_{F,\mathbf{i}}:G\to \pi(A)'$ by taking  the direct sum 
$$
\bigoplus_{j\in F} \theta^{(j)}_{i_j}:G \to \bigoplus_{j\in F} \pi_j(A)'
$$
and composing with the canonical inclusion 
$$
\bigoplus_{j\in F} \pi_j(A)'\into \pi(A)'.
$$
Let $\Lambda$ be the collection of pairs $\lambda=(F,\mathbf{i})$ where $F$ is a finite subset of $J$, and $\mathbf{i}\in \prod_{j\in F}I_j$.  Define a (directed) partial order on $\Lambda$ by stipulating that $(F,\mathbf{i})\leq (F',\mathbf{i}')$ if $F\subseteq F'$, and if for all $j\in F$, the $j^\text{th}$ entry of $\mathbf{i}$ is at most the $j^\text{th}$ entry of $\mathbf{i}'$. Then the net $(\theta_\lambda)_{\lambda\in \Lambda}$ implements commutant amenability of the direct sum representation; we  leave the direct checks to the reader.
\end{proof}

For the next result, recall from Section \ref{sec:meas amen} that we call a covariant representation $(\pi,u)$ of $(A,G)$ \emph{cyclic} if the integrated form $\pi\rtimes u$ is cyclic as a representation of $A\rtimes_{\max}G$.

\begin{proposition}\label{ca uni rep}
Let $(A,\alpha)$ be a $G$-$C^*$-algebra.  The following are equivalent:
\begin{enumerate}
\item \label{ca uni rep 1}$(A,\alpha)$ is commutant amenable;
\item \label{ca uni rep 2}all cyclic covariant representations of $(A,\alpha)$ are commutant amenable;
\item \label{ca uni rep 3}the universal  covariant representation of $(A,\alpha)$ is commutant amenable.
\end{enumerate}
\end{proposition}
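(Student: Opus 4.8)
The plan is to prove the cycle $\eqref{ca uni rep 1}\Rightarrow\eqref{ca uni rep 3}\Rightarrow\eqref{ca uni rep 2}\Rightarrow\eqref{ca uni rep 1}$, using Lemma~\ref{ca perm} as the main engine. The implication $\eqref{ca uni rep 1}\Rightarrow\eqref{ca uni rep 3}$ is immediate from the definition of commutant amenability for a $G$-$C^*$-algebra, since the universal representation is in particular a nondegenerate covariant representation. Similarly $\eqref{ca uni rep 1}\Rightarrow\eqref{ca uni rep 2}$ is trivial, so the content is really in $\eqref{ca uni rep 3}\Rightarrow\eqref{ca uni rep 1}$ (and, as a by-product, $\eqref{ca uni rep 3}\Rightarrow\eqref{ca uni rep 2}$).

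For $\eqref{ca uni rep 3}\Rightarrow\eqref{ca uni rep 1}$, let $(i_A,i_G)$ be the universal covariant representation on $H_u$, and suppose it is commutant amenable. Let $(\pi,u)$ be an arbitrary nondegenerate covariant representation of $(A,G,\alpha)$ on $H_\pi$; we must show it is commutant amenable. The key point is that any such $(\pi,u)$ is quasi-contained in the universal representation: more precisely, since $\pi\rtimes u$ is a nondegenerate representation of $A\rtimes_{\max}G$, it is quasi-equivalent to a subrepresentation of $i_A\rtimes i_G$ (recall $i_A\rtimes i_G$ is the direct sum of all GNS representations of $A\rtimes_{\max}G$, hence by \cite{Pedersen:1979zr}*{Proposition 3.8.4} every representation is quasi-equivalent to a subrepresentation of a multiple of it; but a multiple of the universal representation is again unitarily equivalent to it, cf.\ \cite{Blackadar:2006eq}*{III.2.2.8}). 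So there is an amplification $(i_A\otimes 1, i_G\otimes 1)$ on $H_u\otimes \ell^2$, which is again unitarily equivalent to $(i_A,i_G)$ and hence commutant amenable by the unitary-invariance of the definition, and a $G$-invariant projection $p$ in the commutant of $i_A\otimes 1$ whose corner representation is unitarily equivalent to $(\pi,u)$. By the subrepresentation part of Lemma~\ref{ca perm}, this corner representation is commutant amenable, and since commutant amenability of covariant representations is obviously preserved under unitary equivalence (conjugating the net of positive type functions by the implementing unitary), $(\pi,u)$ is commutant amenable. As $(\pi,u)$ was arbitrary, $(A,\alpha)$ is commutant amenable.

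The implication $\eqref{ca uni rep 3}\Rightarrow\eqref{ca uni rep 2}$ then follows since every cyclic covariant representation is in particular nondegenerate (the integrated form of a cyclic representation of $A\rtimes_{\max}G$ is nondegenerate because $A\rtimes_{\max}G$ has an approximate unit acting nondegenerately), so is covered by $\eqref{ca uni rep 3}\Rightarrow\eqref{ca uni rep 1}$; alternatively one observes directly that a cyclic representation is a subrepresentation of a multiple of the universal one. Finally, for $\eqref{ca uni rep 2}\Rightarrow\eqref{ca uni rep 1}$: given an arbitrary nondegenerate $(\pi,u)$, decompose $\pi\rtimes u$ as a direct sum $\bigoplus_{j\in J}(\pi_j,u_j)$ of cyclic representations (using nondegeneracy to guarantee the cyclic summands can be chosen covariant, i.e.\ that the cyclic vectors generate $G$-invariant subspaces in the natural way); each $(\pi_j,u_j)$ is commutant amenable by hypothesis, and by the direct-sum part of Lemma~\ref{ca perm} so is $(\pi,u)$. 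Hence $(A,\alpha)$ is commutant amenable.

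The main obstacle is the bookkeeping in $\eqref{ca uni rep 3}\Rightarrow\eqref{ca uni rep 1}$: one must be careful that ``quasi-equivalent to a subrepresentation of the universal representation'' really lets one transport the net of $\pi(A)'$-valued positive type functions back, which works because quasi-equivalence of covariant representations is implemented by an equivariant isomorphism of the generated von Neumann algebras (Proposition~\ref{lem-quasi}), hence of their commutants, intertwining the respective $\Ad u$ actions. Once that identification of commutants is set up, transporting the positive type functions and checking the defining conditions of Definition~\ref{com amen} is routine. One should also double-check that passing from $(\pi,u)$ to a quasi-equivalent amplification-subrepresentation preserves nondegeneracy, but this is automatic.
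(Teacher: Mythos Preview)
Your argument is correct in outline, but takes an unnecessarily complicated route for $\eqref{ca uni rep 3}\Rightarrow\eqref{ca uni rep 1}$ and contains one genuinely wrong justification. The paper proceeds more simply: it proves $\eqref{ca uni rep 3}\Rightarrow\eqref{ca uni rep 2}$ directly, observing that every cyclic covariant representation is (unitarily equivalent to) a subrepresentation of the universal representation \emph{by definition of the latter}, so Lemma~\ref{ca perm} applies immediately; then $\eqref{ca uni rep 2}\Rightarrow\eqref{ca uni rep 1}$ is exactly your direct-sum argument. This avoids any discussion of amplifications or quasi-equivalence.

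Two specific issues with your version. First, the claim that the amplification $(i_A\otimes 1,i_G\otimes 1)$ is unitarily equivalent to $(i_A,i_G)$ is not what \cite{Blackadar:2006eq}*{III.2.2.8} says (that reference concerns amplifications of faithful normal representations of a fixed von Neumann algebra being mutually equivalent, not that a given representation equals its own amplification). The clean fix is to note that amplification preserves commutant amenability directly: if $(\theta_i)$ witnesses commutant amenability of $(\pi,u)$, then $(\theta_i\otimes 1)$ witnesses it for the amplification, since $(\pi\otimes 1)(A)'=\pi(A)'\,\overline{\otimes}\,\Bd(K)$ with action $\Ad(u\otimes 1)$. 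Second, and more seriously, your final paragraph asserts that quasi-equivalence of covariant representations yields an equivariant isomorphism of their \emph{commutants}. This is false: quasi-equivalence gives an isomorphism of the bicommutants $\pi_1(A)''\cong\pi_2(A)''$, but the commutants can be quite different (for instance $\pi$ versus $\pi\oplus\pi$, where the commutant passes to $2\times 2$ matrices over itself). Fortunately your main argument does not actually rely on this---you use unitary equivalence, which is fine---so the error is in the afterthought rather than the proof proper. Still, it should be removed.
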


\begin{proof}
The implications \eqref{ca uni rep 1} $\Rightarrow$ \eqref{ca uni rep 2} and \eqref{ca uni rep 1} $\Rightarrow$ \eqref{ca uni rep 3} hold by definition.  Any representation is a direct sum of cyclic representations, so \eqref{ca uni rep 2} $\Rightarrow$ \eqref{ca uni rep 1} by Lemma \ref{ca perm}.   Finally, \eqref{ca uni rep 3} $\Rightarrow$ \eqref{ca uni rep 2} by Lemma \ref{ca perm}, as any cyclic representation is (unitarily equivalent to) a subrepresentation of the universal representation by definition of the latter.
\end{proof}

\section{Applications of the Haagerup standard form}\label{sec-commutative}

A von Neumann algebra $M$ admits an essentially unique \emph{Haagerup standard form}: this is a representation of $M$ such that the relationship  between $M$ and $M'$ has particularly good properties, and that is covariant for a unitary representation of the group of automorphisms of $M$ with the point-ultraweak topology.  Crucially for us, a Haagerup standard form exists even for possibly non-countably decomposable von Neumann algebras such as $A_\alpha''$.  In this section, we exploit the Haagerup standard form to improve the results of the previous section.  In particular, we will see that commutant amenability and amenability coincide for actions on commutative $C^*$-algebras, and we will also show that amenability is characterized by the weak containment property of $A\otimes_{\max}A^{\op}$ for actions of exact groups.

The idea to use  Haagerup standard forms for the study of 
amenable  actions is due to Matsumura \cite{Matsumura:2012aa}, and was also exploited by the current authors in \cite[Section 5]{Buss:2019} for actions of discrete groups. 

The following theorem records the consequences of the Haagerup standard form that we will need: see \cite{Haagerup:1975xh}*{Theorem 2.3 and Corollary 3.6}.  For the statement, for a $G$-$C^*$-algebra $A$, let $(A^\op,\alpha^\op)$ denote its opposite $C^*$-algebra equipped with the $G$-action $\alpha^\op$ that agrees with $\alpha$ as an action on the underlying set.

\begin{theorem}[Haagerup]\label{thm-Haagerup}
Let $(A,\alpha)$ be a $G$-$C^*$-algebra.  There exist faithful normal representations $\pi$ of $A_\alpha''$ and $\pi^\op$ of $(A^\op)_{\alpha^\op}''$ on the same Hilbert space $H$
together with a strongly continuous unitary representation $u:G\to \U(H)$ such that the following are satisfied:
\begin{enumerate}
\item $(\pi, u)$  is covariant for $(A_\alpha'',G, \alpha'')$ and $(\pi^\op, u)$ is covariant for
\linebreak  $\big((A^\op)_{\alpha^\op}'', G, (\alpha^\op)''\big)$;
\item $\pi(A)'=\pi^{\op}((A^\op)_{\alpha^\op}'')$ and $\pi^{\op}(A^{\op})'=\pi(A_\alpha'')$.
\end{enumerate}
Moreover, if $A$ is commutative, we have $\pi(A)'=\pi(A)''\cong A_{\alpha}''$. \qed
\end{theorem}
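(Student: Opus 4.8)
The statement to prove is Theorem~\ref{thm-Haagerup} (the Haagerup standard form theorem), attributed to Haagerup. Since the substance is contained in \cite{Haagerup:1975xh}*{Theorem 2.3 and Corollary 3.6}, the plan is to assemble those results into the equivariant formulation we need, taking $M = A_\alpha''$ throughout. First I would recall Haagerup's construction: for any von Neumann algebra $M$ there is a standard form $(M, H, J, \mathfrak{P})$ consisting of a faithful normal representation $M \subseteq \Bd(H)$, a conjugate-linear involution $J$ on $H$, and a self-dual cone $\mathfrak{P}$, such that $JMJ = M'$, $JmJ = m^*$ for $m \in Z(M)$, $J\xi = \xi$ for $\xi \in \mathfrak{P}$, and $m\mathfrak{P} \subseteq \mathfrak{P}$ isn't required but $mJmJ\mathfrak{P} \subseteq \mathfrak{P}$ for $m \ge 0$. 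The key rigidity property (Corollary~3.6 of loc.\ cit.) is that every automorphism $\theta$ of $M$ is implemented by a \emph{unique} unitary $u(\theta)$ on $H$ that preserves $\mathfrak{P}$, and this assignment $\theta \mapsto u(\theta)$ is a group homomorphism $\Aut(M) \to \U(H)$ which is continuous for the $u$-topology (point-ultraweak topology) on $\Aut(M)$ and the strong (equivalently weak) operator topology on $\U(H)$.

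Next I would apply this to $M = A_\alpha''$. The action $\alpha'' : G \to \Aut(A_\alpha'')$ is ultraweakly continuous by construction (Theorem~\ref{cont dd} / Definition~\ref{gvna}), hence continuous into $\Aut(A_\alpha'')$ with the $u$-topology; composing with Haagerup's canonical homomorphism gives a strongly continuous unitary representation $u : G \to \U(H)$ with $u_g m u_g^* = \alpha_g''(m)$ for all $m \in A_\alpha''$. Taking $\pi : A_\alpha'' \subseteq \Bd(H)$ to be the standard representation, $(\pi, u)$ is then a covariant representation of $(A_\alpha'', G, \alpha'')$; restricting $\pi$ to $A$ gives a covariant representation of $(A, G, \alpha)$ as well. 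For the opposite algebra: $J\pi(A_\alpha'')J = \pi(A_\alpha'')' $, and the map $m \mapsto Jm^*J$ is a normal $*$-isomorphism $A_\alpha'' \to A_\alpha''{}'$ which, read through the anti-isomorphism $A_\alpha'' \cong (A_\alpha'')^{\op}$, gives a faithful normal representation $\pi^{\op}$ of $(A^{\op})_{\alpha^{\op}}''$ onto $\pi(A_\alpha'')'$. Here one uses that $(A^{\op})_{\alpha^{\op}}'' \cong (A_\alpha'')^{\op}$ equivariantly — this follows from the universal property in Corollary~\ref{cor-vN}, since $A^{\op} \to (A_\alpha'')^{\op}$ is a $G$-equivariant $*$-homomorphism with ultraweakly dense image. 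Since $J$ intertwines $\alpha_g''$ and the opposite action up to the canonical identifications, and $J$ is fixed by $u$ (because $u_g$ preserves $\mathfrak{P}$ and hence commutes with $J$ — this is part of Corollary~3.6), the same $u$ implements $(\alpha^{\op})''$, so $(\pi^{\op}, u)$ is covariant. Property~(2), $\pi(A)' = \pi^{\op}((A^{\op})_{\alpha^{\op}}'')$ and $\pi^{\op}(A^{\op})' = \pi(A_\alpha'')$, is then just $JMJ = M'$ and $J M' J = M$ rewritten; note that $\pi(A)' = \pi(A_\alpha'')'$ since $A$ is ultraweakly dense in $A_\alpha''$ in $\Bd(H)$ (faithfulness and the bicommutant theorem).

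For the final sentence: if $A$ is commutative then $A_\alpha''$ is a commutative von Neumann algebra. For a commutative von Neumann algebra $M$, the standard form has $M = M'$ (a maximal abelian subalgebra coincides with its own commutant — indeed the standard form of an abelian $M$ is just $M$ acting by multiplication on $L^2$ of its spectrum against a faithful normal weight, where $JmJ = \bar m = m^*$ acts the same way as $m$ up to complex conjugation, giving $M' = M$). Hence $\pi(A)' = \pi(A_\alpha'')' = \pi(A_\alpha'') = \pi(A)''$, and since $\pi$ is faithful and normal this is isomorphic to $A_\alpha''$ as a $G$-von Neumann algebra.

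\textbf{Main obstacle.} The genuinely delicate point is the passage from ``$\alpha''$ is an ultraweakly continuous action'' to ``$u$ is strongly continuous'' — i.e.\ verifying that Haagerup's implementing homomorphism $\Aut(M) \to \U(H)$ is continuous for the relevant topologies when $M = A_\alpha''$ is \emph{not} countably decomposable (so one cannot fall back on separable predual arguments). This is exactly the content of \cite{Haagerup:1975xh}*{Corollary 3.6}, which is stated in that generality, so the obstacle is really just one of correctly citing and invoking it rather than reproving it; but care is needed because most textbook treatments of standard forms assume separable preduals. A secondary bookkeeping point is keeping the two identifications $(A^{\op})_{\alpha^{\op}}'' \cong (A_\alpha'')^{\op}$ and $J(\,\cdot\,)^*J : A_\alpha'' \to A_\alpha''{}'$ compatible with the $G$-actions; this is routine but must be checked on generators to be sure the \emph{same} unitary $u$ works for both covariance statements.
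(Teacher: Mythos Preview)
Your proposal is correct and matches the paper's approach: the paper states the theorem with a terminal \qed\ and simply refers to \cite{Haagerup:1975xh}*{Theorem 2.3 and Corollary 3.6}, treating the result as a direct consequence of Haagerup's standard form construction and its automorphism-implementing unitary, applied to $M=A_\alpha''$. Your sketch correctly fills in the assembly the paper leaves implicit---the equivariant identification $(A^{\op})_{\alpha^{\op}}''\cong (A_\alpha'')^{\op}$ via the universal property, the use of $J$ to produce $\pi^{\op}$, and the observation that $u_g$ commutes with $J$ so the same $u$ works for both covariance conditions---and your flagged ``main obstacle'' about continuity without countable decomposability is exactly why the citation to Corollary~3.6 (valid in full generality) is the right one.
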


The equivalence of \eqref{thm-exact-commutative 3} and \eqref{thm-exact-commutative 5} as in the next theorem is due to Matsumura \cite{Matsumura:2012aa}*{Theorem 1.1} for $G$ discrete and exact and $A=C(X)$ commutative and unital.  Matsumura's result has been extended by the authors to actions of discrete, exact $G$ on possibly non-unital $A=C_0(X)$ in \cite{Buss:2019}*{Theorem 5.2}. Here we give a version which works for actions of general locally compact groups.

\begin{theorem}\label{thm-exact-commutative}
Let $(A,G,\alpha)$ be a $G$-$C^*$-algebra with $A=C_0(X)$ commutative and consider 
the following statements:
\begin{enumerate}
\item \label{thm-exact-commutative 1}the underlying action $G\curvearrowright X$ is topologically amenable;
\item \label{thm-exact-commutative 2}$\alpha$ is strongly amenable;
\item \label{thm-exact-commutative 3}$\alpha$ is amenable;
\item \label{thm-exact-commutative 4}$\alpha$ is commutant amenable;
\item \label{thm-exact-commutative 5}$\alpha$ has the weak containment property.
\end{enumerate}
Then 
$$
\eqref{thm-exact-commutative 1} \Leftrightarrow \eqref{thm-exact-commutative 2} \Leftrightarrow \eqref{thm-exact-commutative 3} \Leftrightarrow \eqref{thm-exact-commutative 4}\Rightarrow \eqref{thm-exact-commutative 5}. 
$$
If, in addition, $G$ is exact,
all five properties are equivalent.
\end{theorem}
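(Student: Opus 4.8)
The plan is to establish the cycle of implications in Theorem~\ref{thm-exact-commutative} using the Haagerup standard form (Theorem~\ref{thm-Haagerup}) as the central tool, together with results already in hand: Proposition~\ref{rem-amenable} for \eqref{thm-exact-commutative 1}$\Leftrightarrow$\eqref{thm-exact-commutative 2}, Remark~\ref{rem-SAtoA} for \eqref{thm-exact-commutative 2}$\Rightarrow$\eqref{thm-exact-commutative 3}, Proposition~\ref{prop-general} for \eqref{thm-exact-commutative 3}$\Rightarrow$\eqref{thm-exact-commutative 4}$\Rightarrow$\eqref{thm-exact-commutative 5} and, in the exact case, \eqref{thm-exact-commutative 5}$\Rightarrow$\eqref{thm-exact-commutative 4}. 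The only genuinely new implications to prove are \eqref{thm-exact-commutative 4}$\Rightarrow$\eqref{thm-exact-commutative 3} (which closes the first chain into an equivalence, without any exactness hypothesis) and, to get the five-way equivalence in the exact case, \eqref{thm-exact-commutative 4}$\Rightarrow$\eqref{thm-exact-commutative 2} (or equivalently \eqref{thm-exact-commutative 5}$\Rightarrow$\eqref{thm-exact-commutative 3}); in fact \eqref{thm-exact-commutative 4}$\Rightarrow$\eqref{thm-exact-commutative 3} will handle everything, since \eqref{thm-exact-commutative 3}$\Rightarrow$\eqref{thm-exact-commutative 2} for commutative $A$ and exact $G$ follows from Corollary~\ref{cor-amenable} combined with \cite{Ozawa-Suzuki}*{Proposition 2.5} (or from Bearden--Crann's \cite{Bearden-Crann}*{Corollary 4.14}).

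The key step \eqref{thm-exact-commutative 4}$\Rightarrow$\eqref{thm-exact-commutative 3}: suppose $\alpha$ is commutant amenable. By Proposition~\ref{ca uni rep} it suffices to use that the universal covariant representation is commutant amenable; better still, apply commutant amenability to the specific covariant representation $(\pi,u)$ of $A_\alpha''$ provided by the Haagerup standard form of $A_\alpha''$ as in Theorem~\ref{thm-Haagerup}. Since $A$ is commutative, that theorem gives $\pi(A)'=\pi(A)''\cong A_\alpha''$, with the isomorphism intertwining $\mathrm{Ad}\,u$ on $\pi(A)'$ and $\alpha''$ on $A_\alpha''$. Now one must check that commutant amenability of this particular representation yields a net of positive type functions valued in $\pi(A)'$, hence (via the identification) in $A_\alpha''$ itself; but since $\pi(A)'=\pi(A)''$ is \emph{commutative}, every element is central, so the resulting net of compactly supported norm-continuous positive type functions $\theta_i:G\to Z(A_\alpha'')=A_\alpha''$ with $\|\theta_i(e)\|\le 1$ and $\theta_i(g)\to 1$ ultraweakly uniformly on compacta is exactly what Definition~\ref{def-amenable (A)} requires. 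This gives amenability of $\alpha$. The point worth spelling out is that one needs commutant amenability of the \emph{universal} representation (or any faithful one), which the Haagerup form supplies because its defining representation $\pi$ of $A_\alpha''$ is faithful and normal; passing from $\pi(A_\alpha'')$ back to $\pi(A)=\pi(C_0(X))$ is harmless because $\pi(A)''=\pi(A_\alpha'')$ by density and normality.

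For the exact-case equivalences I would then simply assemble: \eqref{thm-exact-commutative 5}$\Rightarrow$\eqref{thm-exact-commutative 4} is the exact half of Proposition~\ref{prop-general}; \eqref{thm-exact-commutative 4}$\Rightarrow$\eqref{thm-exact-commutative 3} is the step just described (no exactness needed); and \eqref{thm-exact-commutative 3}$\Rightarrow$\eqref{thm-exact-commutative 2} for commutative $A$ and exact $G$ follows because by Corollary~\ref{cor-amenable} amenability implies there is a ucp $G$-map $C_{ub}(G)\to Z(A_\alpha'')_c$, and—using that for commutative $A$ one can realize $C_0(X)$ inside $Z\mathcal M(A)$ and exploit \cite{Ozawa-Suzuki}*{Proposition 2.5} that $C_{ub}(G)$ is strongly amenable exactly when $G$ is exact—one obtains $C_0(X)$-valued (equivalently $Z\mathcal M(C_0(X))$-valued) positive type functions converging strictly to $1$, i.e.\ strong amenability; alternatively cite \cite{Bearden-Crann}*{Corollary 4.14} directly for this implication. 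Together with Proposition~\ref{rem-amenable} this closes all five conditions into a single equivalence class.

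The main obstacle is the bookkeeping in \eqref{thm-exact-commutative 4}$\Rightarrow$\eqref{thm-exact-commutative 3}: one must be careful that the Haagerup-standard-form representation is faithful and normal on $A_\alpha''$, so that $\pi(A)'=\pi(A)''$ really is the \emph{enveloping} $G$-von Neumann algebra (not merely some quotient), and that the $G$-equivariance in Theorem~\ref{thm-Haagerup}(1) transports the positive-type condition and the uniform ultraweak convergence faithfully across the isomorphism $\pi(A)''\cong A_\alpha''$. The approximation-property manipulations themselves are routine once this identification is pinned down, and the commutativity of $A$ is exactly what removes the distinction between ``commutant amenable'' and ``amenable'' (the quasi-centrality subtlety that makes the noncommutative case genuinely different, as discussed around Theorem~\ref{no amen}, simply disappears).
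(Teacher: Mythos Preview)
Your core step \eqref{thm-exact-commutative 4}$\Rightarrow$\eqref{thm-exact-commutative 3} via the Haagerup standard form is correct and matches the paper's argument exactly: for commutative $A$ one has $\pi(A)'=\pi(A)''\cong A_\alpha''=Z(A_\alpha'')$, so the net witnessing commutant amenability of $(\pi,u)$ already witnesses amenability.

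There is, however, a gap in how you assemble the chain. You claim \eqref{thm-exact-commutative 4}$\Rightarrow$\eqref{thm-exact-commutative 3} ``closes the first chain into an equivalence'', but from \eqref{thm-exact-commutative 1}$\Leftrightarrow$\eqref{thm-exact-commutative 2}, \eqref{thm-exact-commutative 2}$\Rightarrow$\eqref{thm-exact-commutative 3}, and \eqref{thm-exact-commutative 3}$\Leftrightarrow$\eqref{thm-exact-commutative 4} you only obtain \eqref{thm-exact-commutative 1}$\Leftrightarrow$\eqref{thm-exact-commutative 2}$\Rightarrow$\eqref{thm-exact-commutative 3}$\Leftrightarrow$\eqref{thm-exact-commutative 4}. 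The implication \eqref{thm-exact-commutative 3}$\Rightarrow$\eqref{thm-exact-commutative 2} is needed \emph{unconditionally} for the four-way equivalence, not just in the exact case. Your proposed route to it via Corollary~\ref{cor-amenable} and Ozawa--Suzuki does not give strong amenability of $\alpha$: composing positive type functions $G\to \contub(G)$ with a ucp $G$-map $\contub(G)\to Z(A_\alpha'')_c$ lands you in $Z(A_\alpha'')_c$, not in $Z\M(A)=C_b(X)$, and there is no natural equivariant retraction $Z(A_\alpha'')_c\to C_b(X)$ to fix this---you only recover amenability, which you started with. The correct reference is the one you cite parenthetically, Bearden--Crann \cite{Bearden-Crann}*{Corollary 4.14}, and it holds without any exactness hypothesis; this is precisely what the paper invokes, flagging it as a ``deep recent result'' because no elementary bypass is available.
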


Only the relationships between the last three properties are new: \eqref{thm-exact-commutative 1} $\Leftrightarrow$ \eqref{thm-exact-commutative 2} is well-known (see Proposition \ref{rem-amenable} above), and the equivalence \eqref{thm-exact-commutative 2} $\Leftrightarrow$ \eqref{thm-exact-commutative 3} is a deep recent result of  Bearden and Crann \cite{Bearden-Crann}*{Corollary 4.14}.  We include \eqref{thm-exact-commutative 1} and \eqref{thm-exact-commutative 2} only for ease of reference.

\begin{proof} As discussed above, we only need to discuss the implications between \eqref{thm-exact-commutative 3}, \eqref{thm-exact-commutative 4}, and \eqref{thm-exact-commutative 5}. Proposition \ref{prop-general} gives \eqref{thm-exact-commutative 3} $\Rightarrow$ \eqref{thm-exact-commutative 4} $\Rightarrow$ \eqref{thm-exact-commutative 5}.  
To see \eqref{thm-exact-commutative 4} $\Rightarrow$ \eqref{thm-exact-commutative 3}  observe that  \eqref{thm-exact-commutative 4} implies in particular 
 that the Haagerup standard form representation $(\pi,u)$ is 
commutant amenable.  For $A$ commutative, we have $\pi(A)'=A_\alpha''=Z(A_\alpha'')$,
hence commutant amenability of $(\pi,u)$ implies amenability of $\alpha$.
If $G$ is exact, Proposition~\ref{prop-general} gives \eqref{thm-exact-commutative 5} $\Rightarrow$ \eqref{thm-exact-commutative 4}.
 \end{proof}

We now move on to noncommutative $G$-$C^*$-algebras.  For arbitrary $G$\nb-$C^*$\nb-algebras we get the following application of the Haagerup standard form:  this result extends Matsumura's \cite{Matsumura:2012aa}*{Theorem 1.1}, where the equivalence of \eqref{thm-Mats-2 1} and \eqref{thm-Mats-2 5} is  shown for  $G$ discrete and $A$ unital and nuclear.

\begin{theorem}\label{thm-Mats-2}
Let $(A,\alpha)$ be a $G$-$C^*$-algebra and consider the following statements:
\begin{enumerate}
\item \label{thm-Mats-2 1}$(A,\alpha)$ is amenable;
\item \label{thm-Mats-2 2}for every $G$-$C^*$-algebra $(B,\beta)$ the diagonal action  $(A\otimes_\max B, \alpha\otimes \beta)$ is  amenable;
\item \label{thm-Mats-2 3}for every $G$-$C^*$-algebra $(B,\beta)$ the diagonal action  $(A\otimes_\max B, \alpha\otimes \beta)$ is  commutant amenable;
\item \label{thm-Mats-2 4}the diagonal action $\alpha\otimes\alpha^{\op}:G\to \Aut(A\otimes_\max A^{\op})$ is commutant amenable;
\item \label{thm-Mats-2 5}$\alpha\otimes\alpha^\op$ satisfies the weak containment property.
\end{enumerate}
Then 
$$
\eqref{thm-Mats-2 1}\Leftrightarrow \eqref{thm-Mats-2 2}\Leftrightarrow\eqref{thm-Mats-2 3}\Leftrightarrow\eqref{thm-Mats-2 4}\Rightarrow \eqref{thm-Mats-2 5}
$$
and if $G$ is exact, all these properties are equivalent.
\end{theorem}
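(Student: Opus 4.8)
The plan is to prove the cycle $(1)\Rightarrow(2)\Rightarrow(3)\Rightarrow(4)\Rightarrow(1)$ together with $(4)\Rightarrow(5)$ in general and $(5)\Rightarrow(4)$ when $G$ is exact; several of these links are immediate. Indeed, $(2)\Rightarrow(3)$ is Proposition \ref{prop-general} applied to the $G$-$C^*$-algebra $A\otimes_\max B$; $(3)\Rightarrow(4)$ is the special case $B=A^\op$, $\beta=\alpha^\op$ (note $(A^\op,\alpha^\op)$ is again a $G$-$C^*$-algebra); $(4)\Rightarrow(5)$ is Proposition \ref{prop-general} applied to $A\otimes_\max A^\op$; and for exact $G$, $(5)\Rightarrow(4)$ is the reverse implication of Proposition \ref{prop-general} (weak containment implies commutant amenability). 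Thus the substance is in $(1)\Rightarrow(2)$ and, above all, in $(4)\Rightarrow(1)$.

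For $(1)\Rightarrow(2)$, I would pass from amenability of $\alpha$ to the (QAP) via Theorem \ref{thm-weak}, fixing a net $(\xi_i)\subseteq C_c(G,A)$ with $\|\braket{\xi_i}{\xi_i}_A\|\le 1$, with $\braket{\xi_i}{\lambda^\alpha_g\xi_i}_A\to 1$ strictly in $\M(A)$ uniformly on compacta, and with $\|\xi_i a-a\xi_i\|_2\to 0$ for all $a\in A$. Embedding $A$ into $\M(A\otimes_\max B)$ via $a\mapsto a\otimes 1$, view each $\xi_i$ in $C_c(G,\M(A\otimes_\max B))$. Then $\braket{\xi_i}{\lambda^{\alpha\otimes\beta}_g\xi_i}=\braket{\xi_i}{\lambda^\alpha_g\xi_i}_A\otimes 1\to 1\otimes 1$ strictly, uniformly on compacta; and for an elementary tensor $a=a_0\otimes b_0$ one has $\xi_i(g)a-a\xi_i(g)=[\xi_i(g),a_0]\otimes b_0$, so $\|\xi_i a-a\xi_i\|_2\le\|b_0\|\,\|[\xi_i,a_0]\|_2\to 0$, and this extends to all $a\in A\otimes_\max B$ by density and the uniform bound $\|\xi_i a-a\xi_i\|_2\le 2\|a\|$. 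Hence $\alpha\otimes\beta$ has the (QAP) and so is amenable by Theorem \ref{thm-weak}. (One could alternatively argue directly at the level of condition (4) of Proposition \ref{prop:Amenability-conditions} together with Theorem \ref{Bearden-Crann}.)

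For $(4)\Rightarrow(1)$ — the heart of the matter — I would use the Haagerup standard form. By Theorem \ref{thm-Haagerup} there are faithful normal representations $\pi$ of $A_\alpha''$ and $\pi^\op$ of $(A^\op)_{\alpha^\op}''$ on a common Hilbert space $H$ and a strongly continuous unitary $u\colon G\to\U(H)$ implementing both actions, with $\pi(A)'=\pi^\op((A^\op)_{\alpha^\op}'')$ and $\pi^\op(A^\op)'=\pi(A_\alpha'')$. Since $\pi^\op(A^\op)\subseteq\pi(A)'$, the ranges of $\pi|_A$ and $\pi^\op|_{A^\op}$ commute, so by the universal property of the maximal tensor product they integrate to a representation $\rho=\pi\cdot\pi^\op$ of $A\otimes_\max A^\op$ on $H$, and $(\rho,u)$ is a nondegenerate covariant representation of $(A\otimes_\max A^\op,G,\alpha\otimes\alpha^\op)$ (nondegeneracy because $\pi,\pi^\op$ are unital on the respective bicommutants). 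Crucially, its commutant is $\rho(A\otimes_\max A^\op)'=\pi(A)'\cap\pi^\op(A^\op)'=\pi^\op(A^\op)''\cap\pi(A)''=\pi(A)'\cap\pi(A)''=Z(\pi(A)'')$, which $\pi$ carries $G$-equivariantly and isomorphically onto $Z(A_\alpha'')$. Applying hypothesis $(4)$ to $(\rho,u)$ yields a net of norm-continuous, compactly supported, positive-type functions $\theta_i\colon G\to Z(\pi(A)'')$ with $\|\theta_i(e)\|\le 1$ and $\theta_i(g)\to 1$ ultraweakly, uniformly on compacta; pulling back along $Z(\pi(A)'')\cong Z(A_\alpha'')$ produces precisely a net as in Definition \ref{def-amenable (A)}, so $\alpha$ is amenable.

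The main obstacle is this last step: one must check carefully that $(\rho,u)$ is genuinely a nondegenerate covariant representation, and — the decisive point — that in the doubled standard-form picture the commutant of $A\otimes_\max A^\op$ is exactly the centre $Z(A_\alpha'')$; this is where the defining identities $\pi^\op(A^\op)''=\pi(A)'$ of the Haagerup standard form do the essential work, and it is what makes commutant amenability of $A\otimes_\max A^\op$ collapse back to amenability of $\alpha$. A lesser point is that in $(1)\Rightarrow(2)$ one must use the multiplier/strict version of the (QAP) in order to make sense of $\xi_i\otimes 1$ when $B$ is non-unital.
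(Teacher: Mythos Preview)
Your argument is correct and matches the paper on the decisive step $(4)\Rightarrow(1)$: both use the Haagerup standard form to build the covariant representation $(\pi\times\pi^{\op},u)$ of $A\otimes_\max A^{\op}$ and exploit the identity $(\pi\times\pi^{\op})(A\otimes_\max A^{\op})'=Z(\pi(A)'')\cong Z(A_\alpha'')$ to convert commutant amenability of this single representation into amenability of $\alpha$. The easy links $(2)\Rightarrow(3)\Rightarrow(4)\Rightarrow(5)$ and, for exact $G$, $(5)\Rightarrow(4)$ are handled identically.

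The one genuine difference is $(1)\Rightarrow(2)$. The paper avoids your multiplier-valued (QAP) detour entirely: it simply observes that the nondegenerate $G$-map $\Phi\colon A\to\M(A\otimes_\max B)$, $a\mapsto a\otimes 1$, extends to a normal unital $G$-map $\Phi''\colon A_\alpha''\to (A\otimes_\max B)_{\alpha\otimes\beta}''$ that sends $Z(A_\alpha'')$ into $Z((A\otimes_\max B)_{\alpha\otimes\beta}'')$ (because $\Phi''(A_\alpha'')$ lies in the ultraweak closure of $A\otimes 1$, hence commutes with $1\otimes B$, and $A\otimes 1$ together with $1\otimes B$ generates the target), and then invokes Lemma~\ref{lem:funct-strong-amenability}. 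This is cleaner: it works directly at the level of Definition~\ref{def-amenable (A)} and sidesteps the issue you flag about $\xi_i\otimes 1$ landing in $C_c(G,\M(A\otimes_\max B))$ rather than $C_c(G,A\otimes_\max B)$. Your route is salvageable (e.g.\ by multiplying by an approximate unit of $B$, or as you note via condition~\eqref{amen 4} of Proposition~\ref{prop:Amenability-conditions}), but the paper's argument is more economical and does not rely on the Ozawa--Suzuki equivalence of amenability and the (QAP).
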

\begin{proof} The map 
$$
\Phi:A\to \M(A\otimes_{\max}B); \quad \Phi(a)=a\otimes 1
$$ 
is a nondegenerate $G$-equivariant $*$-homomorphism whose extension to enveloping 
$G$-von Neumann algebras (see Proposition \ref{evna func}) preserves centres. Thus it follows from Lemma \ref{lem:funct-strong-amenability} that 
$\alpha$ amenable implies $\alpha\otimes\beta$ amenable, whence  \eqref{thm-Mats-2 1} $\Rightarrow$ \eqref{thm-Mats-2 2}.  Proposition \ref{prop-general} shows that amenability always implies commutant amenability, whence \eqref{thm-Mats-2 2} $\Rightarrow$ \eqref{thm-Mats-2 3}. The implication \eqref{thm-Mats-2 3} $\Rightarrow$ \eqref{thm-Mats-2 4} is trivial.  Moreover, \eqref{thm-Mats-2 4} $\Rightarrow$ \eqref{thm-Mats-2 5}, and also 
\eqref{thm-Mats-2 5} $\Rightarrow$ \eqref{thm-Mats-2 4} in the case that $G$ is exact, follow from Proposition \ref{prop-general}.

Thus to complete the proof, we  need to show \eqref{thm-Mats-2 4} $\Rightarrow$ \eqref{thm-Mats-2 1}.
If $(\pi,u)$ and $\pi^{\op}$ are as in the Haagerup standard  form, we obtain a covariant 
representation $(\pi\times \pi^{\op}, u)$ of $(A\otimes_\max A^{\op}, G,\alpha\otimes\alpha^{\op})$ into 
$\Bd(H)$ such that $(\pi\times\pi^{\op})(a\otimes b)= \pi(a)\pi^{\op}(b)$ for all $a\in A$ and $b\in A^{\op}$.
Then \eqref{thm-Mats-2 4} implies that $(\pi\times \pi^{\op}, u)$ is commutant amenable.  
It  follows then from the properties of $\pi$ and $\pi^{\op}$ that 
$$(\pi\times\pi^{\op})(A\otimes_\max A^{\op})'=\pi(A)'\cap\pi(A^{\op})'=\pi(A)'\cap\pi(A)''=Z(\pi(A)'')\cong Z(A_\alpha''),$$ 
hence commutant amenability 
of $(\pi\times \pi^{\op}, u)$  implies amenability of $\alpha$ as required.
\end{proof}

As we will want to refer back to it below, we record the following corollary of Theorem \ref{thm-Mats-2} and Proposition \ref{prop-general}.

\begin{corollary}\label{cor-weakam}
Let $(A,\alpha)$ be a $G$-$C^*$-algebra with $G$ exact. Then the following are equivalent:
\begin{enumerate}
\item $(A,\alpha)$ is amenable.
\item For every $G$-$C^*$-algebra $(B,\beta)$, the diagonal action $\alpha\otimes\beta$ on $A\otimes_\max B$ has the weak containment property.\qed
\end{enumerate}
\end{corollary}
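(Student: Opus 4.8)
The plan is to read this off directly from Theorem \ref{thm-Mats-2} and Proposition \ref{prop-general}; essentially all the work has already been done, and the corollary is just a repackaging.

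For the forward direction, I would argue as follows. Suppose $(A,\alpha)$ is amenable. Then by the implication \eqref{thm-Mats-2 1} $\Rightarrow$ \eqref{thm-Mats-2 2} of Theorem \ref{thm-Mats-2} (which holds for arbitrary $G$, no exactness needed here), the diagonal action $\alpha\otimes\beta$ on $A\otimes_\max B$ is amenable for \emph{every} $G$-$C^*$-algebra $(B,\beta)$. Amenability implies the weak containment property by Proposition \ref{prop-general}, namely the implication \eqref{prop-general 1} $\Rightarrow$ \eqref{prop-general 4} applied to the $G$-$C^*$-algebra $(A\otimes_\max B,\alpha\otimes\beta)$. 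Hence $\alpha\otimes\beta$ has the weak containment property for every $(B,\beta)$, which is statement (2).

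For the converse, I would specialize (2) to the single test algebra $B=A^\op$ with the action $\beta=\alpha^\op$: this gives that $\alpha\otimes\alpha^\op$ on $A\otimes_\max A^\op$ has the weak containment property, which is precisely condition \eqref{thm-Mats-2 5} of Theorem \ref{thm-Mats-2}. Now we use the hypothesis that $G$ is exact: in that case Theorem \ref{thm-Mats-2} asserts that all of \eqref{thm-Mats-2 1}--\eqref{thm-Mats-2 5} are equivalent, so in particular \eqref{thm-Mats-2 5} $\Rightarrow$ \eqref{thm-Mats-2 1}, i.e.\ $(A,\alpha)$ is amenable. This completes the cycle.

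There is no genuine obstacle to overcome at this stage: the substantive inputs are the construction and properties of the Haagerup standard form (Theorem \ref{thm-Haagerup}), the identification $(\pi\times\pi^\op)(A\otimes_\max A^\op)' \cong Z(A_\alpha'')$ used in the proof of Theorem \ref{thm-Mats-2}, and the exactness-dependent implication from the weak containment property back to $G$-injectivity of covariant representations (Corollary \ref{inj lem}) feeding into Proposition \ref{prop-general}. All of these have already been established, so the proof of the corollary is a two-line deduction; the only thing to be careful about is to invoke exactness only where it is actually needed, namely in the direction (2) $\Rightarrow$ (1).
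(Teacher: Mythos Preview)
Your proof is correct and follows exactly the approach intended by the paper, which simply records the corollary as an immediate consequence of Theorem \ref{thm-Mats-2} and Proposition \ref{prop-general} without further argument. Your forward direction via \eqref{thm-Mats-2 1}~$\Rightarrow$~\eqref{thm-Mats-2 2} plus Proposition \ref{prop-general}, and your converse via specializing to $B=A^{\op}$ and invoking the exactness-dependent implication \eqref{thm-Mats-2 5}~$\Rightarrow$~\eqref{thm-Mats-2 1}, are precisely the two-line deduction the \qed in the statement is meant to indicate.
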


\begin{remark}\label{ad weak amen rem}
In  \cite{Anantharaman-Delaroche:2002ij} Anantharaman-Delaroche defined an action  
$(A,\alpha)$ of $G$ to be {\em weakly amenable}, if for every $G$-$C^*$-algebra $(B,\beta)$
$$(A\otimes_{\min} B)\rtimes_{\alpha\otimes \beta,\max}G\cong (A\otimes_{\min} B)\rtimes_{\alpha\otimes \beta,\red}G.$$
Clearly this is closely related to Corollary \ref{cor-weakam}, especially as Anantharaman-Delaroche also mentioned that there was no particular reason to choose the minimal tensor product rather than the maximal one in her definition.
\end{remark}

As an application of Theorem  \ref{thm-Mats-2} and Corollary \ref{cor-weakam}, we conclude this section by showing that amenability of an action $(A,\alpha)$  of $G$  passes to the restriction of the action 
to  an {\em exact} closed subgroup $H$ of $G$. At first sight, this statement looks trivial, since if $(\theta_i:G\to Z(A_\alpha''))$
is a net of continuous compactly supported positive type  functions which implement amenability of $(A,\alpha)$, then the net $(\theta_i|_H:H\to Z(A_\alpha''))$ certainly implements amenability 
of the action of $H$ on $A_\alpha''$, which implies  amenability of $\alpha|_H$ \emph{as long as} we know
 $A_{\alpha|_H}''=A_\alpha''$. This is  true if $H$ is \emph{open} in $G$, since then the 
 $H$-continuous states  of $A$ coincide with the $G$-continuous ones, hence both algebras have the same predual.

In general, it follows from the universal property of $A_{\alpha|_H}''$ that the identity of $A$ extends 
to a normal surjective $*$-homomorphism $q_H:A_{\alpha|_H}''\to A_\alpha''$, but this map is not always injective.
 For example, if $G$ acts on $A=C_0(G)$ by the translation action $\tau$, then $C_0(G)_\tau''=L^\infty(G)$ as already observed before. But if $H=\{e\}$, we get $C_0(G)_{\tau|_{\{e\}}}''=C_0(G)^{**}$
which, as observed before, differs from $L^\infty(G)$ if 
$G$ is not discrete. However, using Theorem~\ref{thm-Mats-2} we can show the following.

\begin{proposition}\label{prop-restrict}
Suppose that $\alpha:G\to \Aut(A)$ is an amenable action and that $H$ is an {\em open}, or {\em exact} and closed, subgroup of $G$.
 Then the restriction $\alpha|_H:H\to \Aut(A)$ is 
amenable as well. 
\end{proposition}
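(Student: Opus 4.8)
The plan is to separate the two cases. The open case is handled by a direct argument with positive type functions, and the exact-and-closed case is reduced to the weak containment property via Theorem~\ref{thm-Mats-2} and Corollary~\ref{cor-weakam}, transporting weak containment between $H$ and $G$ through induction.

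First, suppose $H$ is open (hence closed). I would show that the spaces of $G$-continuous and $H$-continuous functionals on $A$ coincide, so that the enveloping von Neumann algebras $A_\alpha''$ and $A_{\alpha|_H}''$ agree. For the nontrivial inclusion, if $\phi\in A^*$ and $h\mapsto\alpha_h^*(\phi)$ is norm-continuous on $H$, then for each $g_0\in G$ the identity $\alpha_{g_0h}^*(\phi)=\alpha_{g_0}^*(\alpha_h^*(\phi))$ together with the fact that $\alpha_{g_0}^*$ is an isometry of $A^*$ shows that $g\mapsto\alpha_g^*(\phi)$ is norm-continuous on the open coset $g_0H$; since these cosets cover $G$, it is norm-continuous on $G$. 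Hence $A^{*,c}$ is the same for $\alpha$ and $\alpha|_H$, so by the identification of the enveloping $G$-von Neumann algebra with $(A^{*,c})^*$ (Proposition~\ref{prop-Ikunishi}) we get $A_{\alpha|_H}''=A_\alpha''$, the two $H$-actions agreeing because both are the unique normal extension of $\alpha|_H\colon H\to\Aut(A)$ (Corollary~\ref{cor-vN}). If $(\theta_i\colon G\to Z(A_\alpha''))$ is a net of norm-continuous, compactly supported positive type functions implementing amenability of $\alpha$, then $(\theta_i|_H\colon H\to Z(A_\alpha'')=Z(A_{\alpha|_H}''))$ is norm-continuous, positive type for $\alpha|_H$ (a special case of the matrix condition), compactly supported (its support is closed in the compact set $\supp\theta_i$ and contained in the closed subgroup $H$), satisfies $\|\theta_i|_H(e)\|\le1$, and converges ultraweakly to $1$ uniformly on compact subsets of $H\subseteq G$. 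Thus $\alpha|_H$ is amenable.

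Now suppose $H$ is exact and closed. By Corollary~\ref{cor-weakam} applied to the exact group $H$, it suffices to show that for every $H$-$C^*$-algebra $(B,\beta)$ the diagonal action of $H$ on $D:=A\otimes_{\max}B$ has the weak containment property. Fix such a $(B,\beta)$ and form the induced $G$-$C^*$-algebra $\Ind_H^G(B)$. It is standard that induction commutes with maximal tensor products by a $G$-$C^*$-algebra, so there is a $G$-equivariant isomorphism $\Ind_H^G(D)\cong A\otimes_{\max}\Ind_H^G(B)$, where on the right $A$ carries the original action $\alpha$. Since $\alpha$ is amenable, Theorem~\ref{thm-Mats-2} shows that the diagonal $G$-action on $A\otimes_{\max}\Ind_H^G(B)$ is amenable, and hence has the weak containment property by Proposition~\ref{prop-general}; that is, $\Ind_H^G(D)\rtimes_{\max}G=\Ind_H^G(D)\rtimes_{\red}G$. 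By Green's imprimitivity theorem, in both its maximal and reduced versions and compatibly with the canonical quotient maps, $\Ind_H^G(D)\rtimes_{\max}G$ is Morita equivalent to $D\rtimes_{\max}H$ and $\Ind_H^G(D)\rtimes_{\red}G$ to $D\rtimes_{\red}H$; since both canonical quotient maps are surjective and Morita equivalence gives a bijection of their kernels, the map $D\rtimes_{\max}H\to D\rtimes_{\red}H$ is an isomorphism. As $(B,\beta)$ was arbitrary, Corollary~\ref{cor-weakam} yields that $\alpha|_H$ is amenable.

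The essential difficulty, as noted just before the statement, is that when $H$ is not open the algebra $A_{\alpha|_H}''$ is only a quotient of $A_\alpha''$, so one cannot directly restrict the positive type functions witnessing amenability of $\alpha$. The device that avoids this is to pass to the weak containment property, which involves only $A$ itself and not the enveloping von Neumann algebra; exactness of $H$ enters exactly once, in the appeal to Corollary~\ref{cor-weakam} for the group $H$, and the transfer from $G$ to $H$ is carried out at the level of the already-known amenability of $\alpha$ rather than of $\alpha|_H$. I expect the only genuinely delicate points to be the verification of the two standard crossed-product facts used — compatibility of induction with maximal tensor products, and the maximal and reduced forms of Green's imprimitivity theorem together with their compatibility — which are routine but require some care.
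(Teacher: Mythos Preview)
Your proof is correct and follows essentially the same approach as the paper. The open case is identical; for the exact case, both arguments reduce to transporting weak containment through Green's imprimitivity, the only cosmetic difference being that the paper checks the single criterion $(A\otimes_\max A^{\op})\rtimes_\max H\cong (A\otimes_\max A^{\op})\rtimes_\red H$ from Theorem~\ref{thm-Mats-2} using $A\otimes_\max A^{\op}\otimes C_0(G/H)$ directly, whereas you verify the ``all $B$'' criterion of Corollary~\ref{cor-weakam} via $\Ind_H^G(A\otimes_\max B)\cong A\otimes_\max\Ind_H^G(B)$ --- which specializes to the paper's computation when $B=A^{\op}$.
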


While this paper was under review, Ozawa and Suzuki \cite{Ozawa-Suzuki}*{Corollary 3.4} showed that the result holds true even without the exactness or openness assumptions on $H$ using a different argument.

\begin{proof} By the above discussion, we may assume that $H$ is exact.
By Theorem \ref{thm-Mats-2} it suffices to show that
\begin{equation}\label{eq-morH}
(A\otimes_\max A^{\op})\rtimes_\max H\cong(A\otimes_\max A^{\op})\rtimes_\red H
\end{equation}
 via the regular representation.
To see this we first observe that amenability of $\alpha$ implies that the diagonal action 
$\alpha\otimes\alpha^{\op}\otimes \tau$ of $G$ on $A\otimes_\max A^{\op}\otimes C_0(G/H)$ is amenable as well,
where $\tau:G\to \Aut(C_0(G/H))$ is given by left translation. As a consequence, we have
\begin{equation}\label{eq-mor}
\big(A\otimes_\max A^{\op}\otimes C_0(G/H)\big)\rtimes_\max G\cong \big(A\otimes_\max A^{\op}\otimes C_0(G/H)\big)\rtimes_\red G
\end{equation}
via the regular representation. 
Now by Green's imprimitivity theorem (e.g., see the discussion after \cite{CELY}*{Theorem 2.6.4}) there is a  canonical equivalence bimodule $X_H^G(A\otimes_{\max}A^{\op})$ 
which implements a Morita equivalence
$$(A\otimes_\max A^{\op}\otimes C_0(G/H))\rtimes_\max G\sim_M (A\otimes_\max A^{\op})\rtimes_\max H$$
and which factors through an equivalence bimodule $X_H^G(A\otimes_\max A^{\op})_\red$ which  gives a Morita equivalence 
for the reduced crossed products 
$$(A\otimes_\max A^{\op}\otimes C_0(G/H))\rtimes_\red G\sim_M (A\otimes_\max A^{\op})\rtimes_\red H.$$
The isomorphism (\ref{eq-morH}) then follows  from the isomorphism  (\ref{eq-mor})  and the Rieffel correspondence between ideals in 
Morita equivalent $C^*$-algebras (see \cite{Rieffel}*{Theorem 3.1}).
\end{proof}

\section{Weak containment does not imply amenability}\label{sec:example}

In this section, we present an example of a non-amenable action $\alpha:G\to \Aut(A)$ of a locally compact group $G$
on the $C^*$-algebra $A=\K(H)$ of compact operators such that \mbox{$A\rtimes_\max G$} $=A\rtimes_{\red}G$.  Thus the weak containment property (WCP) is not equivalent to amenability in general.  
The groups involved in our construction are concrete: for example, one could use $G=PSL(2,\C)$.  As this group is exact, and as for actions of exact groups the (WCP) is equivalent to commutant amenability, our construction also shows that commutant amenability is strictly weaker than 
amenability.  Our example will also show that the (WCP) (or commutant amenability) for a $G$\nb-$C^*$\nb-algebra $(A,G,\alpha)$ does not generally pass to the 
restriction $(A,H,\alpha|_H)$ to a closed subgroup $H$ of $G$, answering a question of Anantharaman-Delaroche \cite{Anantharaman-Delaroche:2002ij}*{Question 9.2 (b)}.

We do not have an example of a non-amenable action with the weak containment property where the acting group is  discrete.  We shall also see below that our construction is unlikely to produce anything interesting in that case, so the discrete case remains quite open.

In order to prepare our example, we need to recall some basic facts on circle-valued 
 Borel $2$-cocycles $\om:G\times G\to \T$, the corresponding maximal and reduced twisted  group algebras 
 $C^*_\max(G,\om)$ and $C^*_\red(G,\om)$, and their relations to actions of $G$ on the compact operators $\mathcal K(H)$
 and their crossed products. As references for more background and details, we suggest \cite{CKRW} and \cite{CELY}*{Section 2.8.6}.
 Throughout this section we assume that our groups are second countable and that $H$ is a separable Hilbert space.
 
 Recall that a {\em circle-valued Borel $2$-cocycle} on $G$ is a Borel map $\om:G\times G\to \T$ such that
 \begin{equation}\label{eq-cocycle}\om(g,h)\om(gh,l)=\om(g,hl)\om(h,l)\quad\text{and}\quad \om(g,e)=1=\om(e,g)\end{equation}
 for all $g,h,l\in G$, where $e$ denotes the neutral element of $G$.  We write $Z^2(G,\mathbb{T})$ for the set of all such Borel cocycles.
 Two cocycles $\om, \om'$ are \emph{equivalent} if there exists a Borel function $\varphi:G\to\mathbb T$ such that 
 \begin{equation}\label{eq-cocycleequiv}
 \om'(g,h)=\varphi(g)\varphi(h)\overline{\varphi(gh)}\om(g,h)
 \end{equation}
  and we write $H^2(G,\mathbb T)$ for the set of equivalence classes $[\om]$ for $\om\in Z^2(G,\mathbb T)$.
  Note that pointwise multiplication of cocycles induces a group multiplication on $Z^2(G,\mathbb T)$ and $H^2(G,\mathbb T)$, respectively.
 
An  {\em $\om$-representation} for a cocycle $\om \in Z^2(G,\mathbb T)$ is a 
 weakly Borel map $v:G\to \U(H)$ into the unitary group of a Hilbert space $H$ such that 
 $$v_g v_h=\om(g,h)v_{gh}\quad \forall g,h\in G.$$
The {\em regular $\om$-representation} $\lambda^\om:G\to\U(L^2(G))$ is defined by
$$\big(\lambda^\om_g\xi\big)(h)=\om(g, g^{-1}h)\xi(g^{-1}h).$$
The $\om$-twisted $L^1$-algebra $L^1(G,\om)$ consists of the Banach space $L^1(G)$ (with respect to Haar measure) 
with $\om$-twisted convolution and involution given by
$$f_1*_\om f_2(g)=\int_G f_1(h) f_2(h^{-1}g)\om(h, h^{-1}g)\,dg\;\text{and}\; f^*(g)=\Delta(g^{-1})\overline{\om(g,g^{-1})f(g^{-1})},$$
for $f, f_1, f_2\in L^1(G)$ and $g\in G$. Every  $\om$-representation $v:G\to\U(H)$ integrates to give a $*$-represen\-tation
$\tilde{v}:L^1(G,\om)\to \Bd(H)$ via
$$\tilde{v}(f)=\int_G f(g)v_g\,dg,$$
and the assignment $v\mapsto \tilde{v}$ gives a one-to-one correspondence between
$\om$-represen\-tations of $G$ and 
nondegenerate $*$-representations of $L^1(G,\om)$.

The maximal twisted group algebra $C^*_\max(G,\om)$ is the enveloping $C^*$-algebra
of $L^1(G,\om)$, i.e., the completion of $L^1(G,\om)$ by the $C^*$-norm
$\|f\|_\max=\sup_v\|\tilde{v}(f)\|$, where $v$ runs through all $\om$-representations of $G$.  The reduced twisted group algebra $C_\red^*(G,\om)$ is the completion of $L^1(G,\om)$ by the 
reduced norm $\|f\|_\red=\|\widetilde{\lambda^\om}(f)\|$. Up to isomorphism $C^*_\max(G,\om)$ and $C_\red^*(G,\om)$ only depend on the cohomology class $[\om]\in H^2(G,\mathbb T)$: if $\varphi:G\to\mathbb T$ implements an equivalence between $\omega$ and $\omega'$ as in (\ref{eq-cocycleequiv}), the map $f\mapsto \bar\varphi f$, $f\in L^1(G,\omega)$, extends to an isomorphism of the twisted group algebras.

Every $\omega$-representation $v:G\to \U(H)$  (and in particular $v=\lambda^{\om}$) determines an action $\alpha^{\omega}:=\Ad v:G\to\Aut(\mathcal K(H))$.   Following \cite{CKRW}*{Section 3}, let $\mathrm{Br}_G(\{\mathrm{pt}\})$ denote the equivariant Brauer group, which consists of all Morita equivalence classes (equivalently by \cite{CKRW}*{ Lemma 3.1}, stable outer conjugacy classes) of actions
$\alpha:G\to\Aut(\mathcal K(H))$ with multiplication given by 
$[\alpha]\cdot[\beta]=[\alpha\otimes \beta]$. It follows 
from \cite{CKRW}*{ Lemma 3.1 and Section 6.3} that, up to stable outer conjugacy, $\alpha^{\om}$ only depends on the class $[\om]\in H^2(G,\mathbb T)$, and that the map
$$
H^2(G,\mathbb T)\to \mathrm{Br}_G(\{\mathrm{pt}\}); \quad [\om]\mapsto [\alpha^{\om}]
$$
is an isomorphism of groups.

If $\om\in Z^2(G,\T)$ is a $2$-cocycle, its \emph{inverse} in $Z^2(G,\T)$ is given by the complex conjugate $\bar\om$ of $\om$,
and if $v:G\to \U(H)$ is an $\bar{\om}$-representation, we get an action $\alpha^{\bar\om}:=\Ad v:G\to \Aut(\mathcal K(H))$ 
of $G$ on the compact operators $\mathcal K:=\mathcal K(H)$ such that
\begin{equation}\label{cocycle mor ex}
C_\max^*(G,\om)\otimes \mathcal K \cong \mathcal K\rtimes_{\max}G \quad \text{and}\quad  
C^*_\red(G,\om)\otimes\mathcal K\cong \mathcal K\rtimes_\red G
\end{equation}
where both isomorphisms are extensions of the map
$$L^1(G,\om)\odot\mathcal K\to L^1(G,\mathcal K);  \quad f\otimes k\mapsto \Big(g\mapsto f(g)kv_g^*\Big)$$
(e.g., see  \cite{CELY}*{Remark 2.8.18 (1)}).
Conversely, since  every action of $G$ on $\mathcal K(H)$ 
is implemented by some $\bar\om$-representation for some Borel cocycle $\om\in Z^2(G,\T)$ (compare  \cite{CELY}*{pages 76-77}), we see that crossed products 
of group actions on $\mathcal K(H)$  always correspond to twisted group $C^*$-algebras.   As a direct consequence we get the following

\begin{observation}\label{ob1}
Assume that $\om$ is  a circle-valued Borel $2$-cocycle on the locally compact group $G$.
Then $C^*_\max(G,\om)\cong C^*_\red(G,\om)$ via the regular representation $\lambda^\om$ 
if and only if $\K\rtimes_{\alpha^{\bar\om},\max}G\cong \K\rtimes_{\alpha^{\bar\om},\red}G$
via the regular representation.
\end{observation}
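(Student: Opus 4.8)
The plan is to read this off directly from the two isomorphisms recorded in line \eqref{cocycle mor ex}. Write $q_{G,\om}\colon C^*_\max(G,\om)\onto C^*_\red(G,\om)$ for the canonical surjection coming from the regular $\om$-representation $\lambda^\om$, and recall that $\Lambda_{(\K,\alpha^{\bar\om})}\colon \K\rtimes_{\alpha^{\bar\om},\max}G\onto \K\rtimes_{\alpha^{\bar\om},\red}G$ is the analogous quotient for the crossed product. By definition, ``$C^*_\max(G,\om)\cong C^*_\red(G,\om)$ via $\lambda^\om$'' means exactly that $q_{G,\om}$ is injective, and similarly ``$\K\rtimes_{\alpha^{\bar\om},\max}G\cong \K\rtimes_{\alpha^{\bar\om},\red}G$ via the regular representation'' means that $\Lambda_{(\K,\alpha^{\bar\om})}$ is injective (both maps are automatically surjective). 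So it suffices to prove that $q_{G,\om}$ is injective if and only if $\Lambda_{(\K,\alpha^{\bar\om})}$ is.

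The first thing I would check is that the square
\[
\xymatrix{
C^*_\max(G,\om)\otimes\K \ar[r]^-{\cong} \ar[d]_-{q_{G,\om}\otimes\id_\K} & \K\rtimes_{\alpha^{\bar\om},\max}G \ar[d]^-{\Lambda_{(\K,\alpha^{\bar\om})}} \\
C^*_\red(G,\om)\otimes\K \ar[r]^-{\cong} & \K\rtimes_{\alpha^{\bar\om},\red}G
}
\]
commutes, where the horizontal isomorphisms are those of \eqref{cocycle mor ex}. All four maps are bounded $*$-homomorphisms, and the horizontal isomorphisms are given on the dense $*$-subalgebra $L^1(G,\om)\odot\K$ by $f\otimes k\mapsto \big(g\mapsto f(g)kv_g^*\big)$; since $q_{G,\om}$ and $\Lambda_{(\K,\alpha^{\bar\om})}$ act as the identity on the respective dense convolution algebras, both composites send $f\otimes k$ to the image of $g\mapsto f(g)kv_g^*$ in the reduced algebra, so the square commutes on a dense subalgebra and hence everywhere. (This naturality of the ``Green-type'' identification with respect to passing from full to reduced completions is essentially already contained in the discussion around \cite{CELY}*{Remark 2.8.18}.)

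Given the commuting square, $\Lambda_{(\K,\alpha^{\bar\om})}$ is an isomorphism if and only if $q_{G,\om}\otimes\id_\K$ is. Finally, since stabilization by $\K$ is an exact functor we have $\ker(q_{G,\om}\otimes\id_\K)=\ker(q_{G,\om})\otimes\K$, which is zero precisely when $\ker(q_{G,\om})=0$; thus $q_{G,\om}\otimes\id_\K$ is injective if and only if $q_{G,\om}$ is. Combining the last two sentences gives the desired equivalence. The only genuinely non-formal point is the commutativity of the square, i.e.\ that the isomorphisms of \eqref{cocycle mor ex} intertwine the two regular representations; this is a short unwinding of the definitions of $\lambda^\om$ and of the $\alpha^{\bar\om}$-regular representation on dense subalgebras, and I would expect it to be the main (albeit quite mild) obstacle.
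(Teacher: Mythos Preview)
Your proposal is correct and is exactly the intended argument: the paper gives no explicit proof, merely stating that the observation is ``a direct consequence'' of the isomorphisms in line \eqref{cocycle mor ex}, and you have carefully spelled out precisely those details (the commuting square and the fact that $\ker(q_{G,\om}\otimes\id_\K)=\ker(q_{G,\om})\otimes\K$).
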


The next  observation  follows easily from the definition of amenability together with  the fact that 
for any action $\alpha:G\to \Aut(\K(H))$ we have $\K(H)_\alpha''=\K(H)^{**}=\Bd(H)$ and $Z(\Bd(H))=\C$.

\begin{observation}\label{ob2}
An action $\alpha:G\to \Aut(\K(H))$ of a locally compact group $G$ on the algebra of compact operators on a Hilbert space $H$ 
is amenable if and only if $G$ is amenable.
\end{observation}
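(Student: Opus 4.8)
The plan is to combine the identification of the enveloping $G$-von Neumann algebra $\K(H)_\alpha''$ with the classical positive-definite-function characterization of amenability of $G$. First I would observe that $\K(H)_\alpha''\cong\Bd(H)$: by Remark \ref{env iso 0} the canonical map $i_{\K(H)}^{**}\colon\K(H)^{**}=\Bd(H)\to\K(H)_\alpha''$ is a normal surjection, and since $\Bd(H)$ is a factor its only normal quotients are $\{0\}$ and $\Bd(H)$ itself; as the canonical inclusion $\K(H)\hookrightarrow\M(\K(H)\rtimes_\max G)$ is injective (compose, say, with the regular representation), $\K(H)_\alpha''$ is nonzero, so $\K(H)_\alpha''\cong\Bd(H)$ and $Z(\K(H)_\alpha'')=\C 1_H$, with $\alpha''$ acting trivially on it.

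Next I would simply unwind Definition \ref{def-amenable (A)}. Identifying $Z(\K(H)_\alpha'')=\C 1_H$ with $\C$, a map $\theta\colon G\to Z(\K(H)_\alpha'')$ is the same as a scalar function $\phi\colon G\to\C$, and because $\alpha''$ is trivial on the centre, $\theta$ is of positive type in the sense of Definition \ref{amen def} exactly when $\phi$ is positive-definite in the usual sense, i.e.\ $(\phi(g^{-1}h))_{g,h\in F}\geq 0$ for every finite $F\subseteq G$; moreover for scalars ultraweak convergence is ordinary convergence. Thus $(\K(H),\alpha)$ is amenable if and only if $G$ admits a net $(\phi_i)$ of continuous, compactly supported, positive-definite functions with $|\phi_i(e)|\leq 1$ and $\phi_i(g)\to 1$ uniformly on compact subsets of $G$; after dividing by $\phi_i(e)$ for large $i$ (legitimate since $e$ lies in every compact set, so $\phi_i(e)\to 1$) we may assume $\phi_i(e)=1$. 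By the classical characterization of amenability of a locally compact group --- equivalently, the weak containment $1_G\prec\lambda_G$, together with Godement's theorem that a continuous, compactly supported positive-definite function is a matrix coefficient of $\lambda_G$ --- the existence of such a net is equivalent to amenability of $G$; for the direction ``$G$ amenable $\Rightarrow$ net exists'' one takes $\phi_i(g)=\langle\lambda_g\xi_i,\xi_i\rangle$ with $\xi_i\in C_c(G)$ a net of unit vectors satisfying $\langle\lambda_g\xi_i,\xi_i\rangle\to 1$ uniformly on compacta. This proves both implications.

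The only steps needing a little care --- rather than being genuine obstacles --- are the verification that $i_{\K(H)}$ is faithful so that $\K(H)_\alpha''\neq\{0\}$, and the renormalization to $\phi_i(e)=1$; everything else is a direct translation through Definition \ref{def-amenable (A)}. As an alternative, one could argue entirely via Corollary \ref{cor-amenable}: condition \eqref{cor-amenable 4} for $\K(H)$ demands a ucp $G$-map $L^\infty(G)\to Z(\K(H)_\alpha'')=\C$, i.e.\ a left-invariant mean on $L^\infty(G)$, so amenability of $(\K(H),\alpha)$ forces amenability of $G$; conversely, if $G$ is amenable then the $G$-von Neumann algebra $\Bd(H)=\K(H)_\alpha''$ is amenable by the classical result of Anantharaman-Delaroche, whence $(\K(H),\alpha)$ is von Neumann amenable and therefore amenable by Corollary \ref{cor-amenable}.
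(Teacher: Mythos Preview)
Your proof is correct and follows essentially the same approach as the paper, which simply records that $\K(H)_\alpha''=\K(H)^{**}=\Bd(H)$ has centre $\C$ and then appeals to the classical positive-definite-function characterization of amenability of $G$. Your justification of $\K(H)_\alpha''\cong\Bd(H)$ via the factor structure of $\Bd(H)$ is a clean way to handle that step, and the alternative route through Corollary~\ref{cor-amenable} is also valid.
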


Thus, if we combine the above observations, in order to produce a non-amenable action $\alpha^{\bar\om}:G\to \Aut(\K(H))$ that satisfies the (WCP), it suffices  to find a non-amenable group $G$ and a circle-valued $2$-cocycle  $\om:G\times G\to \T$ 
such that $C^*_\max(G,\om)\cong C^*_\red(G,\om)$  via the $\om$-regular representation $\lambda^\om$.\medskip

In order to find such examples, we now consider central extensions
$$1\to Z\to L\to G\to 1$$
of second countable locally compact groups. In this situation the maximal group $C^*$-algebra
$C^*_\max(L)$ carries a canonical structure of a $C_0(\widehat{Z})$-algebra via the 
structure homomorphism
$$\Phi:C_0(\widehat{Z})\cong C^*(Z)\to Z\M(C^*_\max(L)); \quad  \Phi(\widehat{f})=\int_Z f(z)i_L(z)\,dz,$$
where $\widehat{f}\in C_0(\widehat{Z})$ denotes the Fourier transform of  $f\in C_c(Z)\subseteq C^*(Z)$, 
 and where $i_L:L\to \U\M(C^*_\max(L))$ is the canonical homomorphism. It follows from \cite{Dana-book}*{Proposition C.5} that the fibre
 $C^*_\max(L)_\chi$ over a character $\chi\in \widehat{Z}$ is  the 
quotient of $C^*_\max(L)$
by the ideal
\begin{equation}\label{eq-ideal}
I_\chi:=\bigcap\left\{\ker \tilde{u}: u\in \widehat{L}, u|_Z=\chi\cdot 1_{H_u}\right\}.
\end{equation}
Composing $\Phi:C_0(\widehat{Z})\to Z\M(C^*_\max(L))$ with the regular representation 
induces a similar $C_0(\widehat{Z})$-algebra structure on $C^*_\red(L)$ and if $\widehat{Z}$ is discrete\footnote{This is also true for  general $Z$ if $G$ is exact: the key points are the Packer-Raeburn stabilization trick \cite{PR}*{Proposition 1.1} and the work of Kirchberg and Wasserman \cite{KW-exact}*{Theorem 4.2} relating exactness to continuous fields; as we will not need this, we do not provide more details.} (i.e.\ $Z$ is compact), the fibre
$C^*_\red(L)_\chi$ at $\chi\in \widehat{Z}$ is the quotient
$$C^*_\red(L)_\chi=C^*_\red(L)/I_\chi^{\red}= C^*_\max(L)/(\ker\tilde{\lambda}_L+I_\chi)=C^*_\max(L)_\chi/q_\chi(\ker\tilde{\lambda}_L)$$
with 
\begin{equation}\label{eq-idealred}
I_\chi^{\red}:=\bigcap\left\{\ker \tilde{u}: u\in \widehat{L}, u|_Z=\chi\cdot 1_{H_u},  u\prec \lambda_L\right\}\subseteq C^*_\red(L),
\end{equation}
and where $q_\chi:C^*_\max(L)\to C_\max^*(L)_\chi$ denotes the quotient map.

If $Z$ is compact 
(we will  actually only use the case that $Z$ is finite), we  get direct sum decompositions 
\begin{equation}\label{eq-decompose}
C^*_\max(L)=\bigoplus_{\chi\in \widehat{Z}} C_\max^*(L)_\chi \quad\text{and} \quad 
C^*_\red(L)=\bigoplus_{\chi\in \widehat{Z}} C_\red^*(L)_\chi. \end{equation}
The fibres $C_\max^*(L)_\chi$ and $C_\red^*(L)_\chi$ have alternative descriptions as twisted group algebras.
This well-known fact, which goes back to Mackey's analysis of unitary representations for group extensions, 
can be deduced, for example, from \cite{PR}*{Theorem 1.2} (see also \cite{EW}*{Lemma 6.3}), but in order to give a complete picture, 
we present the main ideas below.

First we choose once and for all a Borel cross-section $c:G\to L$  for the quotient map $q:L\to G$ such that $c(e_G)=e_L$ (such a cross-section exists  by \cite{Arv-book}*{Theorem 3.4.1}, for example).
Then each character $\chi\in \widehat{Z}$ determines a $2$-cocycle
$\om_\chi:G\times G\to\T$  by 
$$\om_\chi(g,h)=\chi\big(c(g)c(h)c(gh)^{-1}\big), \quad g,h\in G.$$
The following lemma  is a special case of \cite{Kaniuth}*{Theorem 4.53}, but can be traced back to Mackey \cite{Mackey}.

\begin{lemma}\label{lem-reps}
For $\chi\in \widehat{Z}$, let $\om_\chi$ be as above. Then for every Hilbert space $H$, the assignment
 $$u\mapsto v:=u\circ c$$
 gives a one-to-one correspondence between
the unitary representations $u:L\to \U(H)$ which restrict to $\chi\cdot 1_H$ on $Z$ and the 
$\om_\chi$-representations $v:G\to \U(H)$.  \qed
\end{lemma}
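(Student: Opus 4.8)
The plan is to verify the bijection claimed in Lemma \ref{lem-reps} directly from the cocycle identity and the defining relations.

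First I would fix the Borel cross-section $c:G\to L$ with $c(e_G)=e_L$, and observe that, since $Z$ is central, every element of $L$ can be written uniquely as $z\,c(g)$ with $z\in Z$ and $g=q(\ell)\in G$. The cocycle $\om_\chi(g,h)=\chi\big(c(g)c(h)c(gh)^{-1}\big)$ is well defined because $c(g)c(h)c(gh)^{-1}\in Z$ (it projects to $e_G$ under $q$), and it is a Borel map since $c$ is Borel and multiplication and inversion in $L$ are Borel. I would record that it lies in $Z^2(G,\T)$: the normalization $\om_\chi(g,e)=1=\om_\chi(e,g)$ follows from $c(e_G)=e_L$, and the $2$-cocycle identity \eqref{eq-cocycle} follows by expanding $c(g)c(h)c(l)$ in two ways using associativity in $L$ and applying the homomorphism $\chi$ — this is the standard computation identifying $H^2(G,\T)$ with central extensions, and I would not belabour it.

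Next I would set up the two directions of the correspondence. Given a unitary representation $u:L\to\U(H)$ with $u|_Z=\chi\cdot 1_H$, define $v:=u\circ c:G\to\U(H)$; it is weakly Borel since $u$ is weakly Borel (continuous, even) and $c$ is Borel. Then for $g,h\in G$,
$$v_gv_h=u(c(g))u(c(h))=u\big(c(g)c(h)\big)=u\big(c(g)c(h)c(gh)^{-1}\big)u(c(gh))=\chi\big(c(g)c(h)c(gh)^{-1}\big)v_{gh}=\om_\chi(g,h)v_{gh},$$
so $v$ is an $\om_\chi$-representation. Conversely, given an $\om_\chi$-representation $v:G\to\U(H)$, define $u:L\to\U(H)$ by $u(z\,c(g)):=\chi(z)v_g$ for $z\in Z$, $g\in G$ (using the unique decomposition above). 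One checks $u$ is weakly Borel, restricts to $\chi$ on $Z$, and is multiplicative: writing $\ell_1=z_1c(g)$, $\ell_2=z_2c(h)$, one has $\ell_1\ell_2=z_1z_2\,c(g)c(h)=z_1z_2\,\big(c(g)c(h)c(gh)^{-1}\big)c(gh)$, the bracketed term lying in $Z$, so $u(\ell_1\ell_2)=\chi(z_1z_2)\chi\big(c(g)c(h)c(gh)^{-1}\big)v_{gh}=\chi(z_1)\chi(z_2)\om_\chi(g,h)v_{gh}=\chi(z_1)v_g\,\chi(z_2)v_h=u(\ell_1)u(\ell_2)$.

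Finally I would note the two assignments are mutually inverse: starting from $u$ with $u|_Z=\chi$, the reconstructed representation sends $z\,c(g)\mapsto\chi(z)(u\circ c)(g)=\chi(z)u(c(g))=u(z\,c(g))$; starting from $v$, restricting $u$ along $c$ gives $g\mapsto u(c(g))=v_g$ since $c(g)=e_L\cdot c(g)$ and $\chi(e_L)=1$. This establishes the bijection. The main (minor) obstacle is purely measure-theoretic: checking that the reconstructed $u$ is weakly Borel, which follows because the decomposition map $L\to Z\times G$, $\ell\mapsto(\ell\,c(q(\ell))^{-1},q(\ell))$, is Borel and $\chi$ and $v$ are Borel; I would also remark that the displayed description of $u$ makes its independence from any further choices transparent, so no genuine difficulty arises. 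Since the statement is cited as a special case of \cite{Kaniuth}*{Theorem 4.53}, one could alternatively just invoke that reference, but the direct argument above is short enough to include.
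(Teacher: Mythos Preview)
Your direct verification is correct and is exactly the standard argument underlying the cited reference. Note that the paper does not actually prove this lemma: it simply states it with a \qed and refers to \cite{Kaniuth}*{Theorem 4.53} (tracing back to Mackey), so there is no ``paper's own proof'' to compare against beyond that citation. Your write-up supplies precisely the elementary computation the paper elects to omit; the only point you might add for completeness is that the weakly Borel homomorphism $u:L\to\U(H)$ you reconstruct is automatically strongly continuous (since $L$ is second countable locally compact), so that ``unitary representation'' has its usual meaning.
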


\begin{remark}\label{rem-induced}
If $\chi\in \widehat{Z}$, then, using the Borel section $c:G\to L$, the induced representation  $\Ind_Z^L\chi$ in the sense of Mackey and Blattner (e.g., see \cite{CELY}*{Section 2.7} for the definition in this setting) can  
be realized on the Hilbert space $L^2(G)$ by the formula
\begin{equation}\label{eq-Ind}
\big(\Ind_Z^L\chi(c(g)z)\xi\big)(h)=\chi(z)\om_\chi(g, g^{-1}h)\xi(g^{-1}h) \quad g,h\in G, z\in Z.
\end{equation}
Indeed, if $ H_\chi=\overline{\mathcal F_\chi}$ is the Hilbert space of the induced representation as defined 
preceding \cite{CELY}*{Proposition 2.7.7}, then  the map 
$u: \mathcal F_\chi\to L^2(G); \xi\mapsto \xi\circ c$
extends to a unitary intertwiner between Blattner's realization of $\Ind_Z^L\chi$ and the representation  defined
by the formula  (\ref{eq-Ind}).

It follows that $\Ind_Z^L\chi$ corresponds to the $\om_\chi$-regular representation 
$\lambda^{\om_\chi}$ under the correspondence of Lemma \ref{lem-reps} above. In particular, if $Z$ is compact, the regular representation of $L$ decomposes as the direct sum $\oplus_{\chi\in \widehat{Z}}\lambda^{\om_\chi}$ under the direct sum decomposition in (\ref{eq-decompose}).
\end{remark}

The proof of the  next proposition follows along the lines of \cite{PR}*{Theorem 1.2} (see also \cite{EW}*{Lemma 6.3}).
Since every irreducible representation of $L$ restricts to a multiple of some character $\chi\in \widehat{Z}$, the statement for the maximal group algebra can  be deduced from  Lemma \ref{lem-reps} and a straightforward computation.
The statement for the reduced group $C^*$-algebra $C_{\red}^*(L)$ follows in a similar way by using Remark \ref{rem-induced} and 
 \cite{CELY}*{Lemma 2.8.13}.

\begin{proposition}[cf \cite{PR}*{Theorem 1.2}]\label{prop-fibres}
Let $1\to Z\to L\to G\to 1$ be a central extension of second countable groups, and for each $\chi\in \widehat{Z}$ let $\om_\chi\in Z^2(G,\T)$ be the cocycle
determined by $\chi$ and the Borel cross-section $c:G\to L$. Then the map
$$\varphi_\chi: C_c(L)\to L^1(G,\om_\chi);\quad  \varphi_\chi(f)(g):=\int_Z f(c(g)z)\chi(z)\, dz$$
extends to a surjective $*$-homomorphism $\phi_\chi:C^*_\max(L)\to C_\max^*(G,\om_\chi)$ with 
kernel $I_\chi$, and therefore induces an isomorphism
$$ C_\max^*(L)_\chi \cong C^*_\max(G,\om_\chi).$$
Similarly, if $Z$ is compact (or $G$ is exact), the map $\varphi_\chi$ induces an isomorphism $ C_\red^*(L)_\chi \cong C^*_\red(G,\om_\chi).$ \qed
\end{proposition}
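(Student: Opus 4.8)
\textbf{Proof plan for Proposition \ref{prop-fibres}.}

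The strategy is to verify that $\varphi_\chi$ is a well-defined $*$-homomorphism on $L^1$-algebras, identify its kernel after passing to $C^*$-completions, and then peel off the reduced statement by tracking the regular representation through the identification. First I would check the algebraic claim that $\varphi_\chi\colon C_c(L)\to L^1(G,\om_\chi)$ is a $*$-homomorphism: using the central extension $1\to Z\to L\to G\to 1$, Fubini, and the cocycle identity $\om_\chi(g,h)=\chi(c(g)c(h)c(gh)^{-1})$, one computes $\varphi_\chi(f_1*f_2)=\varphi_\chi(f_1)*_{\om_\chi}\varphi_\chi(f_2)$ and $\varphi_\chi(f^*)=\varphi_\chi(f)^*$. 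Centrality of $Z$ is what makes the twist come out exactly as $\om_\chi$. Since $\|\varphi_\chi(f)\|_{L^1(G,\om_\chi)}\le \|f\|_{L^1(L)}$, the map extends to $C^*$-completions once we know it is norm-decreasing there; but this is cleaner to see via representations. By Lemma \ref{lem-reps}, $\om_\chi$-representations $v$ of $G$ correspond bijectively to unitary representations $u$ of $L$ with $u|_Z=\chi\cdot 1$, and one checks directly that $\tilde v\circ\varphi_\chi=\tilde u$ as representations of $C_c(L)$. Hence for any $\om_\chi$-representation $v$, $\|\tilde v(\varphi_\chi(f))\|=\|\tilde u(f)\|\le \|f\|_{C^*_\max(L)}$, so $\varphi_\chi$ descends to a $*$-homomorphism $\phi_\chi\colon C^*_\max(L)\to C^*_\max(G,\om_\chi)$, and the same correspondence shows it is surjective (every $\om_\chi$-representation of $G$ arises, so the image is not contained in a proper ideal).

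Next I would compute $\ker\phi_\chi$. An element $a\in C^*_\max(L)$ lies in $\ker\phi_\chi$ iff $\tilde v(\varphi_\chi(a))=0$ for every $\om_\chi$-representation $v$ of $G$, equivalently (by the correspondence above and the fact that every such $v$ comes from some $u$ with $u|_Z=\chi\cdot 1$) iff $\tilde u(a)=0$ for every unitary representation $u$ of $L$ with $u|_Z=\chi\cdot 1$. Since any representation of $L$ decomposes into irreducibles, each of which restricts to a multiple of a single character of $Z$, intersecting over all $u$ with $u|_Z=\chi\cdot 1$ is the same as intersecting over irreducibles with that property; thus $\ker\phi_\chi=I_\chi$ with $I_\chi$ exactly as in \eqref{eq-ideal}, and we get $C^*_\max(L)_\chi=C^*_\max(L)/I_\chi\cong C^*_\max(G,\om_\chi)$.

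For the reduced statement, assume $Z$ is compact. By Remark \ref{rem-induced}, $\Ind_Z^L\chi$ is realized on $L^2(G)$ and corresponds under Lemma \ref{lem-reps} precisely to $\lambda^{\om_\chi}$, and the regular representation $\lambda_L$ decomposes as $\bigoplus_{\chi\in\widehat Z}\lambda^{\om_\chi}$ compatibly with the direct sum decomposition \eqref{eq-decompose}. Therefore $\phi_\chi$ carries $\ker\tilde\lambda_L$ (restricted to the $\chi$-fibre) onto $\ker\widetilde{\lambda^{\om_\chi}}$, i.e.\ $\phi_\chi(I_\chi^{\red}/I_\chi)$-style bookkeeping gives $C^*_\red(L)_\chi=C^*_\max(L)_\chi/q_\chi(\ker\tilde\lambda_L)\cong C^*_\max(G,\om_\chi)/\ker\widetilde{\lambda^{\om_\chi}}=C^*_\red(G,\om_\chi)$; invoking \cite{CELY}*{Lemma 2.8.13} to identify the relevant quotient with the reduced twisted algebra makes this precise. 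For general exact $G$ one instead appeals to the Packer--Raeburn trick and the Kirchberg--Wassermann continuity results quoted in the excerpt, but since the subsequent application only uses finite $Z$ I would just remark on this. The main obstacle is bookkeeping the two decompositions \eqref{eq-decompose} simultaneously and checking that $\phi_\chi$ intertwines the regular representations exactly on the nose; the cocycle manipulations for the $*$-homomorphism property are routine but must be done carefully because of the modular function and the conjugation $v_g^*$.
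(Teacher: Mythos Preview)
Your proposal is correct and follows essentially the same route the paper sketches: the maximal case via the representation correspondence of Lemma~\ref{lem-reps} (with the ``straightforward computation'' that $\tilde v\circ\varphi_\chi=\tilde u$ identifying the kernel as $I_\chi$), and the reduced case via Remark~\ref{rem-induced} together with \cite{CELY}*{Lemma 2.8.13}. The paper only gives this outline in the paragraph preceding the proposition, so you have simply filled in the details it leaves implicit.
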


We are now ready to formulate the following principle:

\begin{proposition}\label{prop-principle}
Suppose that $1\to Z\to L\to G\to 1$ is  a central extension of second countable groups such that $Z$ is compact (or $G$ is exact).
Suppose further that $G$ is not amenable and there exists a character $\chi\in \widehat{Z}$ such that the following holds:
\begin{itemize}
\item[(Z)] Every irreducible unitary representation $u:L\to\U(H)$ of $L$ that restricts to $\chi\cdot 1_H$ on $Z$ is weakly contained in the regular representation $\lambda_L$ of $L$.
\end{itemize}
Then $C^*_\max(G, \om_\chi)\cong C^*_\red(G,\om_\chi)$ via the $\om_\chi$-regular representation $\tilde\lambda^{\om_\chi}$.
In particular, there exists a {\em non-amenable action} $\alpha^\chi:G\to \Aut(\K(H))$ such that 
$$\K(H)\rtimes_\max G\cong \K(H)\rtimes_\red G$$
 via the regular representation.
\end{proposition}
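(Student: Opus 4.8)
The plan is to reduce the statement to a fact about the fibres of $C^*_\max(L)$ and $C^*_\red(L)$ over $\chi\in\widehat Z$, and then feed the conclusion into Observations \ref{ob1} and \ref{ob2}. Recall from the discussion around \eqref{eq-ideal} and \eqref{eq-idealred} (valid since $Z$ is compact, or since $G$ is exact) that $C^*_\max(L)_\chi = C^*_\max(L)/I_\chi$ while $C^*_\red(L)_\chi = C^*_\max(L)/(\ker\widetilde\lambda_L + I_\chi)$; hence the canonical surjection $C^*_\max(L)_\chi \onto C^*_\red(L)_\chi$ is an isomorphism precisely when $\ker\widetilde\lambda_L \subseteq I_\chi$.

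First I would check that, after transporting along the isomorphisms $C^*_\max(L)_\chi \cong C^*_\max(G,\om_\chi)$ and $C^*_\red(L)_\chi \cong C^*_\red(G,\om_\chi)$ of Proposition \ref{prop-fibres}, this canonical surjection becomes exactly the $\om_\chi$-regular representation $\widetilde{\lambda^{\om_\chi}}\colon C^*_\max(G,\om_\chi)\to C^*_\red(G,\om_\chi)$. This is a diagram chase on the dense subalgebra $C_c(L)$: both maps in question send $f\in C_c(L)$ to the class of $\varphi_\chi(f)$ in $C^*_\red(G,\om_\chi)$, where $\varphi_\chi$ is the map of Proposition \ref{prop-fibres}, and here one uses Remark \ref{rem-induced}, which identifies $\Ind_Z^L\chi$ with $\lambda^{\om_\chi}$ under the correspondence of Lemma \ref{lem-reps}. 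Consequently $C^*_\max(G,\om_\chi)\cong C^*_\red(G,\om_\chi)$ via $\widetilde{\lambda^{\om_\chi}}$ if and only if $\ker\widetilde\lambda_L\subseteq I_\chi$.

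The heart of the argument is that hypothesis (Z) is precisely this inclusion. On the one hand $\ker\widetilde\lambda_L$ is the intersection of the kernels $\ker\widetilde u$ over the irreducible unitary representations $u$ of $L$ that are weakly contained in $\lambda_L$ (the standard description of the reduced group $C^*$-algebra in terms of its primitive ideal space); on the other hand $I_\chi = \bigcap\{\ker\widetilde u : u\in\widehat L,\ u|_Z = \chi\cdot 1_{H_u}\}$ by \eqref{eq-ideal}. Condition (Z) says the family indexing $I_\chi$ is contained in the family indexing $\ker\widetilde\lambda_L$, and intersecting over the smaller family yields the larger ideal, so $\ker\widetilde\lambda_L\subseteq I_\chi$. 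This establishes $C^*_\max(G,\om_\chi)\cong C^*_\red(G,\om_\chi)$ via $\widetilde{\lambda^{\om_\chi}}$.

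For the final assertion, choose any $\bar\om_\chi$-representation $v\colon G\to\U(H)$ — for instance the regular $\bar\om_\chi$-representation, or $v=\overline{\lambda^{\om_\chi}}$ — and set $\alpha^\chi := \alpha^{\bar\om_\chi} = \Ad v\colon G\to\Aut(\K(H))$. By Observation \ref{ob1} the isomorphism just proved is equivalent to $\K(H)\rtimes_{\alpha^\chi,\max}G\cong\K(H)\rtimes_{\alpha^\chi,\red}G$ via the regular representation, and by Observation \ref{ob2} the action $\alpha^\chi$ is non-amenable because $G$ is non-amenable. I expect the only point needing genuine care to be the bookkeeping in the second paragraph, namely verifying that the fibre identifications of Proposition \ref{prop-fibres} intertwine the canonical quotient map with the $\om_\chi$-regular representation; everything else is a direct application of the results already assembled above, and the hypothesis ``$Z$ compact or $G$ exact'' enters only through Proposition \ref{prop-fibres}.
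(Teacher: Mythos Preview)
Your proof is correct and follows essentially the same route as the paper: use the ideal descriptions \eqref{eq-ideal}--\eqref{eq-idealred} to see that condition (Z) forces $\ker\widetilde\lambda_L\subseteq I_\chi$ and hence $C^*_\max(L)_\chi\cong C^*_\red(L)_\chi$, then invoke Proposition \ref{prop-fibres} together with Observations \ref{ob1} and \ref{ob2}. Your write-up is in fact more explicit than the paper's terse proof, particularly in spelling out the diagram chase that identifies the fibrewise quotient with $\widetilde{\lambda^{\om_\chi}}$; one small quibble is that the compactness/exactness hypothesis already enters in the description of $C^*_\red(L)_\chi$ (as you note in your first paragraph), not solely through Proposition \ref{prop-fibres} as your final sentence suggests.
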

\begin{proof} If $\chi\in \widehat{Z}$ is such that property (Z) holds for $\chi$, then it follows from the description of 
the fibres $C^*_\max(L)_\chi$ and $C^*_\red(L)_\chi$ as quotients of $C_\max^*(L)$ by the ideals described in 
(\ref{eq-ideal}) and (\ref{eq-idealred}) that the regular representation $\tilde\lambda_L:C^*_\max(L)\to C^*_\red(L)$ induces an isomorphism $C^*_\max(L)_\chi\cong C^*_\red(L)_\chi$. The result then follows from 
Proposition \ref{prop-fibres} together with Observations \ref{ob1} and \ref{ob2}.
 \end{proof}

We are grateful to Timo Siebenand for helpful discussions towards the following example for a 
central extension satisfying all the assumptions of the above proposition.

\begin{example}\label{ex-SL(2,R)}
Consider the central extension 
$$1\to C_2\to \SL(2,\C)\to \PSL(2,\C)\to 1,$$ where $C_2$ denotes the cyclic group of order two
sitting in $\SL(2,\C)$ via $\pm I$ with $I$ the identity matrix. Write $\N$ for the positive natural numbers, and $\N_0$ for $\N\cup\{0\}$.  The representation theory of $\SL(2,\C)$ is well known and,
following \cite{Fell}*{Chapter III}, 
a complete list of (equivalence classes) of irreducible representations of $\SL(2,\C)$ 
can be parametrized by the parameter space
$\mathcal P$ consisting of the disjoint union of the following subsets of $\N_0\times \C$:
$$\mathcal P= \big[\N\times i\R\big]\cup \big[\{0\}\times i [0,\infty)\big]\cup \big[\{0\}\times (0,1)\big]\cup \{(0,2)\}.$$
If $(0,2)\neq (n,s)\in \mathcal P$, the corresponding irreducible representation $u^{(n,s)}:\SL(2,\C)\to \U(H_{(n,s)})$
acts on a Hilbert space $H_{(n,s)}$ consisting of certain functions $\xi:\C\to \C$ by the formula
\begin{equation}\label{eq-repsSL2C}
\Big(u^{(n,s)}\left(\begin{smallmatrix} a&b\\ c&d\end{smallmatrix}\right) \xi\Big)(z)
=(bz+d)^{-n}|bz+d|^{n-2s-2}f\Big(\frac{az+c}{bz+d}\Big).
\end{equation}
The point $(0,2)\in \mathcal P$ parametrizes the trivial representation of $\SL(2,\C)$. 
It has been shown by Lipsman in \cite{Lips}
 that the only representations in this list which are not weakly contained in 
the regular representation $\lambda$ are the trivial representation (with parameter $(0,2)$) and 
the representations $u^{(0,t)}$ with $t\in (0,1)$. But formula  (\ref{eq-repsSL2C}) easily shows that all these
representations restrict to a multiple of the trivial character of $C_2$. Hence it follows that 
the non-trivial character $\chi$ of $C_2$ satisfies all assumptions of Proposition \ref{prop-principle}.

A similar direct approach, using the  well-known representation theory of $\SL(2,\R)$, shows that 
the central extension $1\to C_2\to \SL(2,\R)\to \PSL(2,\R)\to 1$ together with the non-trivial 
character of $C_2$ also gives an example  satisfying the assumptions of Proposition \ref{prop-principle}.
\end{example}

\begin{remark}\label{rem:op}
We should point out that the above example does not contradict Theorem \ref{thm-Mats-2}. Indeed, if 
$\om\in Z^2(G,\T)$ and $\alpha^\om=\Ad\lambda^{\om}:G\to\Aut(\K)$ is a corresponding action on $\K=\K(L^2(G))$ under the 
isomorphism $H^2(G,\mathbb T)\cong \mathrm{Br}_G(\{\mathrm{pt}\})$,  then
$(\alpha^{\om})^{\op}:G\to \Aut(\K^{\op})$ can be identified (up to stable  outer conjugacy) with
$\alpha^{\bar\om}:G\to \Aut(\K)$. This follows from the fact that under the bijection $T\mapsto T^{\op}$ from $\mathcal K\to \mathcal K^{\op}$ we get
$$\alpha^{\op}_g(T^{\op})=(\lambda^{\om}_gT\lambda^{\om}_{g^{-1}})^{\op}=(\lambda^{\om}_{g^{-1}})^{\op}T^{\op}(\lambda^{\om}_g)^{\op}$$
and that $g\mapsto v_g:=(\lambda_{g^{-1}}^{\om})^{\op}$ is a $\tilde{\om}$-representation for the cocycle
$$\tilde{\om}(g,h)=\om(h^{-1}, g^{-1})\quad g,h\in G,$$
which is equivalent to $\bar{\om}$ (e.g., see \cite{BNS}*{p.\ 989}). 
 But then the diagonal action 
$\alpha^\om\otimes(\alpha^{\om})^{\op}\sim \alpha^\om\otimes\alpha^{\bar \om}$  corresponds to the 
trivial cocycle $1=\om\cdot\bar\om$.  Therefore 
$\alpha^\om\otimes(\alpha^{\om})^{\op}$ represents the trivial class in the Brauer group 
$\mathrm{Br}_G(\{\pt\})$ and hence is Morita equivalent to the trivial action of $G$ on $\K(H)$.
However, the trivial action on a non-zero $C^*$-algebra $A$  satisfies the (WCP) if and only if $G$ is amenable: indeed, one has canonical surjections
$$
A\rtimes_{\id,\max} G\cong A\otimes_{\max} C^*_{\max}(G)\to A\otimes C^*_{\max}(G)\to A\otimes C^*_{\red}(G)\cong A\rtimes_{\id,\red}G
$$
and if the action has the (WCP) all these maps must be isomorphisms, which forces the canonical quotient $C^*_{\max}(G)\to C^*_{\red}(G)$ to be an isomorphism, too.   It follows in particular that  for the actions $\alpha:G\to\Aut(\K)$ constructed by the principle in
 Proposition \ref{prop-principle}, the actions $\alpha\otimes \alpha^{\op}$ never satisfy the (WCP).
 \end{remark}

A similar construction of actions on $\K=\K(H)$ of a non-amenable {\em discrete} group $\Gamma$ seems to 
be unlikely, due to the following observation:

\begin{proposition}\label{prop-discrete}
Suppose that $\Gamma$ is a discrete group which contains a copy of the free group $\mathbb{F}_2$ in two generators.
Then,  for every action $\alpha:\Gamma\to\Aut(\K)$ the regular representation $\Lambda: \K\rtimes_\max\Gamma\to \K\rtimes_\red\Gamma$ is not 
faithful. Hence $(\K, \alpha)$ does not satisfy the (WCP).
\end{proposition}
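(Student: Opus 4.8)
The plan is to exploit the correspondence, established earlier in this section via equations~\eqref{cocycle mor ex} and Observations~\ref{ob1} and~\ref{ob2}, between actions of $\Gamma$ on $\K=\K(H)$ and twisted group algebras. Since every action $\alpha:\Gamma\to\Aut(\K(H))$ is implemented by a $\bar\om$-representation for some Borel $2$-cocycle $\om\in Z^2(\Gamma,\T)$ (and for discrete $\Gamma$ every such cocycle is automatically Borel), Observation~\ref{ob1} tells us that $\K\rtimes_\max\Gamma\cong\K\rtimes_\red\Gamma$ via the regular representation if and only if $C^*_\max(\Gamma,\om)\cong C^*_\red(\Gamma,\om)$ via $\lambda^\om$. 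So it suffices to show: if $\Gamma$ contains a copy of $\free_2$, then for \emph{every} $\om\in Z^2(\Gamma,\T)$ the regular representation $C^*_\max(\Gamma,\om)\to C^*_\red(\Gamma,\om)$ is not faithful.

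First I would reduce to the free group itself. Restriction of the cocycle $\om$ to the subgroup $\free_2\le\Gamma$ gives a cocycle $\om|_{\free_2}\in Z^2(\free_2,\T)$, and there is a canonical $*$-homomorphism $C^*_\max(\free_2,\om|_{\free_2})\to C^*_\max(\Gamma,\om)$ which is injective (indeed the twisted group algebra of a subgroup sits inside that of the group, as one sees by extending $\om|_{\free_2}$-representations of $\free_2$ to $\om$-representations of $\Gamma$ by inducing, or more simply because there is a conditional expectation onto it). This homomorphism is compatible with the regular representations on both sides. Hence if the regular representation were faithful for $(\Gamma,\om)$ it would also be faithful for $(\free_2,\om|_{\free_2})$, i.e.\ $C^*_\max(\free_2,\om')\cong C^*_\red(\free_2,\om')$ for $\om'=\om|_{\free_2}$.

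Next I would invoke the fact that $H^2(\free_2,\T)$ is trivial: the free group is free, so every $2$-cocycle on it is a coboundary, and therefore $C^*_\max(\free_2,\om')\cong C^*_\max(\free_2)$ and $C^*_\red(\free_2,\om')\cong C^*_\red(\free_2)$, compatibly with the regular representations (this is exactly the cohomology-invariance statement recorded just after equation~\eqref{eq-cocycleequiv}). But $\free_2$ is a non-amenable discrete group, so $C^*_\max(\free_2)\to C^*_\red(\free_2)$ is \emph{not} injective (for instance the trivial representation does not factor through $C^*_\red(\free_2)$). This contradiction shows the regular representation $C^*_\max(\Gamma,\om)\to C^*_\red(\Gamma,\om)$ is not faithful, and hence, translating back via Observation~\ref{ob1}, that $\Lambda:\K\rtimes_\max\Gamma\to\K\rtimes_\red\Gamma$ is not faithful, so $(\K,\alpha)$ fails the (WCP).

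The only point requiring a little care — and the likely main obstacle to a fully rigorous write-up — is the claim that $C^*_\max(\free_2,\om|_{\free_2})$ embeds in $C^*_\max(\Gamma,\om)$ compatibly with the regular representations. For the maximal side this is straightforward from the universal property; for the reduced side one needs that the restriction of the $\om$-regular representation of $\Gamma$ to $\free_2$ contains (a multiple of) the $\om|_{\free_2}$-regular representation of $\free_2$, which follows by decomposing $L^2(\Gamma)$ over left cosets of $\free_2$ exactly as in the untwisted case. Once this standard bookkeeping is in place the argument is complete.
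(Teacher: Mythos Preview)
Your proof is correct and is essentially the same argument as the paper's, just phrased in the twisted group algebra language rather than the crossed product language. The paper works directly with crossed products: it uses Lemma~\ref{lem-restrict} (the (WCP) passes to open subgroups) for your restriction step, then observes that since $\mathbb{F}_2$ is free one can lift $g\mapsto\alpha_g$ to a genuine unitary representation $u:\mathbb{F}_2\to\U(H)$ (your statement that $H^2(\mathbb{F}_2,\T)=0$), so $\alpha|_{\mathbb{F}_2}$ is Morita equivalent to the trivial action, which fails the (WCP) by non-amenability of $\mathbb{F}_2$. The ``point requiring care'' you flag is exactly the content of Lemma~\ref{lem-restrict}, translated through Observation~\ref{ob1}.
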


For the proof we need:

\begin{lemma}\label{lem-restrict}
Suppose that $\alpha:G\to \Aut(A)$ is an action of the locally compact group $G$  on the $C^*$-algebra $A$
and let $H\subseteq G$ be an open subgroup of $G$. If $(A,\alpha)$ satisfies the (WCP), then so does $(A,\alpha|_H)$.
\end{lemma}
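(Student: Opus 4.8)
The plan is to exploit the fact that when $H$ is open in $G$, the crossed product $A\rtimes_{\max}H$ (respectively $A\rtimes_{\red}H$) embeds canonically into $A\rtimes_{\max}G$ (respectively $A\rtimes_{\red}G$), compatibly with the regular representations. First I would recall the standard construction: since $H$ is open, $C_c(G,A)$ contains $C_c(H,A)$ as a $*$-subalgebra (extending functions by zero off $H$), and this inclusion extends to $*$-homomorphisms $\iota_{\max}\colon A\rtimes_{\max}H\to A\rtimes_{\max}G$ and $\iota_{\red}\colon A\rtimes_{\red}H\to A\rtimes_{\red}G$. The key point, which goes back to work on induced representations for open subgroups, is that $\iota_{\max}$ is injective; indeed any covariant representation of $(A,H)$ can be induced up to a covariant representation of $(A,G)$ (using that $G/H$ is discrete, so the induced representation restricted back to $H$ contains the original as a subrepresentation), which gives a left inverse on the level of norms. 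Likewise $\iota_{\red}$ is injective, and one checks $\iota_{\red}$ is compatible with $\iota_{\max}$ via the regular representations, i.e. the diagram
\[
\xymatrix{ A\rtimes_{\max}H \ar[r]^{\iota_{\max}} \ar[d]_{\Lambda_{(A,\alpha|_H)}} & A\rtimes_{\max}G \ar[d]^{\Lambda_{(A,\alpha)}} \\ A\rtimes_{\red}H \ar[r]^{\iota_{\red}} & A\rtimes_{\red}G }
\]
commutes.

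Granting this commuting square, the proof is immediate: if $(A,\alpha)$ has the (WCP), then $\Lambda_{(A,\alpha)}$ is an isomorphism, hence injective; since $\iota_{\red}$ is injective as well, the composition $\iota_{\red}\circ\Lambda_{(A,\alpha|_H)}=\Lambda_{(A,\alpha)}\circ\iota_{\max}$ is injective, and therefore $\Lambda_{(A,\alpha|_H)}$ is injective. As $\Lambda_{(A,\alpha|_H)}$ is always surjective, it is an isomorphism, which is precisely the (WCP) for $(A,\alpha|_H)$.

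Alternatively, and perhaps more cleanly given what is already available in the paper, I would phrase it through Green's imprimitivity theorem in the spirit of the proof of Proposition~\ref{prop-restrict}: there is an equivalence bimodule implementing a Morita equivalence $(A\otimes C_0(G/H))\rtimes_{\max}G\sim_M A\rtimes_{\max}H$, and likewise for the reduced crossed products, and these are compatible with the regular representations. Since $H$ is open, $G/H$ is \emph{discrete}, so $C_0(G/H)=c_0(G/H)$; one then notes that the (WCP) for $(A,\alpha)$ implies the (WCP) for $(A\otimes C_0(G/H),\alpha\otimes\tau)$ — because $A\otimes c_0(G/H)\rtimes G$ decomposes as a $c_0$-direct sum indexed by $G/H$ whose summands are each (isomorphic to, via translation) $A\rtimes_?H$-type pieces, or more directly because $A\otimes c_0(G/H)$ is an induced algebra $\mathrm{Ind}_H^G A$ and $\mathrm{Ind}_H^G A\rtimes_? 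G\cong (A\rtimes_? H)\otimes\K(\ell^2(G/H))$ for both $?=\max$ and $?=\red$, compatibly. Transporting the (WCP) across the Morita equivalence via the Rieffel correspondence between ideals (as in \cite{Rieffel}*{Theorem 3.1}) then yields $A\rtimes_{\max}H\cong A\rtimes_{\red}H$.

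The main obstacle is the bookkeeping needed to establish the commuting square / compatibility of the embeddings (or the Morita equivalences) with the regular representations — in particular making sure the identification $\mathrm{Ind}_H^G A\rtimes_? G\cong (A\rtimes_? H)\otimes\K(\ell^2(G/H))$ is natural in $?$, so that the reduced side really is the image of the maximal side under $\Lambda$. This is essentially standard (it is implicit in Green's imprimitivity machinery and its reduced-crossed-product refinement), so the proof should be short, amounting to citing the appropriate form of the imprimitivity theorem and invoking the Rieffel correspondence exactly as in the proof of Proposition~\ref{prop-restrict}.
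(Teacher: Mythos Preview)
Your first approach is exactly the paper's proof: the paper draws the same commuting square (with rows and columns transposed relative to yours), notes that the two inclusion maps $A\rtimes_{\max}H\to A\rtimes_{\max}G$ and $A\rtimes_{\red}H\to A\rtimes_{\red}G$ are faithful because $H$ is open, and concludes that $\Lambda_{(A,H)}$ is injective since $\Lambda_{(A,G)}$ is. Your alternative route via Green's imprimitivity is correct but considerable overkill for this lemma; the paper reserves that machinery for Proposition~\ref{prop-restrict}, where the subgroup is merely closed and the direct embedding argument is unavailable.
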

\begin{proof}
This follows from the commutative diagram
$$
\begin{CD}
A\rtimes_\max H @>\Lambda_{(A,H)}>> A\rtimes_\red H\\
@VVV @VVV\\
A\rtimes_\max G@>\Lambda_{(A,G)}>> A\rtimes_\red G
\end{CD}
$$ together with the fact that the vertical arrows  are faithful, since  $H$ is open in $G$, and the lower horizontal
arrow is faithful if $(A,\alpha)$ satisfies the (WCP).
\end{proof}

\begin{proof}[Proof of Proposition \ref{prop-discrete}]
Assume that $\alpha:\Gamma\to\Aut(\K)$ is an action which satisfies the (WCP). 
By assumption, $\Gamma$ contains the free group $\mathbb{F}_2$. Thus it follows from Lemma \ref{lem-restrict} 
that the restriction of $\alpha$ to $\mathbb{F}_2\subseteq \Gamma$ also satisfies the (WCP).
Since every automorphism of $\K=\K(H)$ is of the form $\Ad u$ for some $u\in \U(H)$,
it follows from the freeness of $\mathbb{F}_2$ that there exists a unitary representation 
$u:\mathbb{F}_2\to \U(H)$ such that $\alpha_g=\Ad u_g$ for each $g\in \mathbb{F}_2$.  But then $\alpha$ is Morita equivalent to the trivial action. Since $\mathbb{F}_2$ is not amenable, it follows as  in Remark \ref{rem:op} above that $\alpha$ does not have the (WCP).
\end{proof}

We now discuss applications to restricted actions.  In \cite{Anantharaman-Delaroche:2002ij}*{Question 9.2 (b)} Anantharaman-Delaroche asked: if $(A,G,\alpha)$ is a $G$-$C^*$-algebra with the (WCP), and if $H$ is a closed subgroup of $G$, then does $(A,H,\alpha|_H)$ also have the (WCP)?  We will use our work above to give two examples showing that the answer is no.

Here is our first source of examples where the (WCP) does not behave well with respect to restrictions.  The proof is immediate from Proposition \ref{prop-discrete}.

\begin{proposition}\label{cor: wc closed}
Let $(\K,G,\alpha)$ be a non-amenable action with the (WCP) as in the conclusion of Proposition \ref{prop-principle}, and let $\Gamma$ be a discrete subgroup of $G$ that contains a copy of the free group $\mathbb{F}_2$.  Then the restricted action $(\K,\Gamma,\alpha|_\Gamma)$ does not have the (WCP).  \qed
\end{proposition}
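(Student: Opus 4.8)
The statement to prove is Proposition~\ref{cor: wc closed}: given a non-amenable action $(\K,G,\alpha)$ with the (WCP) coming from the construction in Proposition~\ref{prop-principle}, and a discrete subgroup $\Gamma \leq G$ containing a copy of $\mathbb{F}_2$, the restricted action $(\K,\Gamma,\alpha|_\Gamma)$ fails the (WCP).

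\textbf{The plan.} The assertion is essentially a one-line deduction once Proposition~\ref{prop-discrete} is in hand, so the ``proof'' is really a matter of checking that the hypotheses of Proposition~\ref{prop-discrete} are met. First I would observe that $\alpha|_\Gamma : \Gamma \to \Aut(\K)$ is an action of the discrete group $\Gamma$ on the compact operators $\K = \K(H)$ on a (necessarily separable) Hilbert space $H$. Next, by hypothesis $\Gamma$ contains a copy of the free group $\mathbb{F}_2$ on two generators. These are exactly the two standing hypotheses of Proposition~\ref{prop-discrete}: a discrete group containing $\mathbb{F}_2$, acting on $\K$. Therefore Proposition~\ref{prop-discrete} applies directly and tells us that the regular representation $\Lambda : \K \rtimes_{\max} \Gamma \to \K \rtimes_{\red} \Gamma$ is not faithful, i.e.\ $(\K, \Gamma, \alpha|_\Gamma)$ does not have the (WCP). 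That is the whole argument.

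\textbf{What (if anything) is the obstacle.} There is essentially no obstacle here: the work has all been done in Propositions~\ref{prop-principle} and~\ref{prop-discrete}, and the present statement merely packages them together. The only point worth making explicit is that the hypothesis ``$(\K,G,\alpha)$ as in the conclusion of Proposition~\ref{prop-principle}'' guarantees $A = \K$ is literally the compacts on a Hilbert space (rather than some Morita-equivalent algebra), which is what lets us invoke Proposition~\ref{prop-discrete} verbatim; and that $\Gamma$ being a \emph{discrete subgroup} of $G$ means the restricted action is genuinely an action of a discrete group, again as required by Proposition~\ref{prop-discrete}. (One does not even need $\Gamma$ to be closed in $G$, nor does one need the exactness of $G$ that figured in the construction; only the abstract group-theoretic and $C^*$-algebraic data matter.) So the proof reduces to:

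\begin{proof}
Since $\Gamma$ is a discrete group containing a copy of $\mathbb{F}_2$ and $\alpha|_\Gamma : \Gamma \to \Aut(\K)$ is an action on the compact operators, Proposition~\ref{prop-discrete} shows that the regular representation $\K\rtimes_\max \Gamma \to \K\rtimes_\red \Gamma$ is not faithful. Hence $(\K,\Gamma,\alpha|_\Gamma)$ does not have the (WCP).
\end{proof}
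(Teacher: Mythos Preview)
Your proof is correct and matches the paper's own argument exactly: the paper simply states that the result is immediate from Proposition~\ref{prop-discrete}, which is precisely what you invoke. Your additional remarks about why the hypotheses of Proposition~\ref{prop-discrete} are met are accurate and harmless elaboration.
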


\begin{example}
There are many examples satisfying the condition in the statement.  To give a concrete example, note that using Example \ref{ex-SL(2,R)}, we may assume that $G={\PSL}(2,\R)$.  The fundamental group of any closed, orientable surface of genus at least two embeds as a discrete subgroup of ${\PSL}(2,\R)$ by the uniformization theorem.  Such a group $\Gamma$ always contains a copy of $\mathbb{F}_2$, as is clear from its standard presentation.  
\end{example}

Below we give a second source of examples where the (WCP) does not behave well with respect to restriction.  This second collection of examples is perhaps even more striking, as the subgroup in this case is just the diagonal subgroup of a product $G\times G$. 

\begin{proposition}\label{diag}
Let $(\K,G,\gamma)$ be a non-amenable action of $G={\PSL}(2,\C)$ or $G={\PSL}(2,\R)$ satisfying the (WCP) as in Example \ref{ex-SL(2,R)}.  Let $A=\K\otimes \K^\op$ be equipped with the $G\times G$ action defined as $\alpha_{(g,h)}:=\gamma_g\otimes\gamma_h^\op$.  Let $H:=\{(g,g)\in G\times G\mid g\in G\}\subseteq G\times G$ be the diagonal subgroup.

Then $(A,G\times G,\alpha)$ satisfies the (WCP), but $(A,H,\alpha|_H)$ does not.
\end{proposition}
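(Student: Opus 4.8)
The positive direction, that $(A, G\times G, \alpha)$ satisfies the (WCP), should follow directly from the fact that $\gamma$ satisfies the (WCP) together with the structure of the example. Recall from Remark \ref{rem:op} that since $\gamma = \alpha^{\bar\om}$ for some Borel $2$-cocycle $\om$ on $G$ (up to stable outer conjugacy), the action $\gamma^\op$ on $\K^\op$ corresponds to $\alpha^{\om}$. Thus $A = \K\otimes\K^\op$ with the $G\times G$-action $\alpha_{(g,h)} = \gamma_g\otimes\gamma_h^\op$ corresponds, via the isomorphism $H^2(G,\T)\times H^2(G,\T)\cong \mathrm{Br}_{G\times G}(\{\pt\})$ applied to the product group, to the cocycle $(\bar\om,\om)$ on $G\times G$; equivalently one has $\K\otimes\K^\op \rtimes_{(G\times G),\star} \cong (\K\rtimes_{\gamma,\star} G)\otimes (\K^\op\rtimes_{\gamma^\op,\star} G)$ for $\star\in\{\max,\red\}$ (using that crossed products by a product group factor as tensor products of the individual crossed products, and compatibly with the regular representations). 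Since $\gamma$ has the (WCP), so does $\gamma^\op$ (it corresponds to the cocycle $\om$, and the argument of Example \ref{ex-SL(2,R)} applies to $\om$ just as to $\bar\om$ — indeed $C^*_\max(G,\om)\cong C^*_\max(G,\bar\om)^\op$ since complex conjugation of the cocycle corresponds to taking opposite algebras). Hence both tensor factors have isomorphic full and reduced crossed products, and so does their tensor product, giving the (WCP) for $(A,G\times G,\alpha)$. One small point to verify carefully: the compatibility of the tensor-product decomposition of crossed products by $G\times G$ with the regular representations, so that an isomorphism on full crossed products descending through the regular representation on each factor yields the same on the product.

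For the negative direction, I would identify the restricted system. The diagonal subgroup $H\cong G$ acts on $A = \K\otimes\K^\op$ by $\alpha|_H{}_g = \gamma_g\otimes\gamma_g^\op$. Under the correspondence with the Brauer group of $G$, this is $\alpha^{\bar\om}\otimes\alpha^{\om} = \alpha^{\bar\om\cdot\om} = \alpha^1$, the action corresponding to the trivial cocycle. By the discussion at the end of Remark \ref{rem:op}, $\alpha^1 = \gamma\otimes\gamma^\op$ is stably outer conjugate (Morita equivalent) to the \emph{trivial} action of $G$ on $\K(H')$ for an appropriate separable infinite-dimensional Hilbert space $H'$. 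Now the trivial action of a non-zero $C^*$-algebra has the (WCP) if and only if $G$ is amenable — this is exactly the chain of surjections $A\rtimes_{\id,\max}G \cong A\otimes_\max C^*_\max(G)\to A\otimes_\max C^*_\red(G)\cong A\rtimes_{\id,\red}G$ recalled in Remark \ref{rem:op}, which forces $C^*_\max(G)\to C^*_\red(G)$ to be an isomorphism, i.e.\ $G$ amenable. Since $G = \PSL(2,\C)$ or $\PSL(2,\R)$ is not amenable, the trivial action does not have the (WCP); and since the (WCP) is a Morita-invariant property (one can see this from Proposition \ref{prop-general}, as amenability and commutant amenability are Morita invariant for exact groups — both $\PSL(2,\C)$ and $\PSL(2,\R)$ are exact — and for exact groups commutant amenability is equivalent to the (WCP); alternatively the (WCP) is directly Morita invariant via the Rieffel correspondence as used in the proof of Proposition \ref{prop-restrict}), it follows that $(A,H,\alpha|_H)$ does not have the (WCP).

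\textbf{Main obstacle.} The main technical point to pin down carefully is the identification of $(A,H,\alpha|_H)$ with (a Morita equivalent model of) the trivial action: one needs the assertion, implicit in Remark \ref{rem:op}, that $\gamma\otimes\gamma^\op$ corresponds under the Brauer group isomorphism to the product cocycle $\bar\om\cdot\om = 1$ and hence represents the trivial class, so that $\gamma\otimes\gamma^\op$ is Morita equivalent (stably outer conjugate) to the trivial action. This requires knowing that $\gamma^\op$ is implemented by a $\tilde\om$-representation with $\tilde\om\sim\bar\om$ (the computation $\alpha^\op_g(T^\op) = (\lambda^\om_{g^{-1}})^\op T^\op (\lambda^\om_g)^\op$ and $\tilde\om(g,h) = \om(h^{-1},g^{-1})\sim\bar\om$ from Remark \ref{rem:op}). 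The other routine-but-nonzero check is Morita invariance of the (WCP), for which it is cleanest to invoke exactness of $\PSL(2,\R)$ and $\PSL(2,\C)$ and Proposition \ref{prop-general} together with Morita invariance of commutant amenability (noted in the Remark following Proposition \ref{prop-general}), or else to argue directly via Green's imprimitivity theorem and the Rieffel correspondence exactly as in the proof of Proposition \ref{prop-restrict}.
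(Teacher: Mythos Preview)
Your overall strategy matches the paper's, but there is a genuine gap in the positive direction. You write
\[
(\K\otimes\K^\op)\rtimes_{\star}(G\times G)\;\cong\;(\K\rtimes_{\gamma,\star}G)\otimes(\K^\op\rtimes_{\gamma^\op,\star}G)\quad(\star\in\{\max,\red\})
\]
without specifying the tensor product on the right. In fact one has
\[
(\K\otimes\K^\op)\rtimes_{\max}(G\times G)\cong(\K\rtimes_{\max}G)\otimes_{\max}(\K^\op\rtimes_{\max}G),\qquad
(\K\otimes\K^\op)\rtimes_{\red}(G\times G)\cong(\K\rtimes_{\red}G)\otimes_{\min}(\K^\op\rtimes_{\red}G),
\]
so after applying the (WCP) in each factor you must still show that $(\K\rtimes_\red G)\otimes_{\max}(\K^\op\rtimes_\red G)=(\K\rtimes_\red G)\otimes_{\min}(\K^\op\rtimes_\red G)$. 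This is exactly where the paper does extra work: it observes that $\K\rtimes_\red G$ is Morita equivalent to $C^*_\red(G,\omega)$, which is a direct summand of $C^*_\red(\SL(2,\C))$ (resp.\ $C^*_\red(\SL(2,\R))$) via Proposition~\ref{prop-fibres}, and the latter is nuclear because the group is connected (Connes). Without this nuclearity step your chain of isomorphisms does not close up.

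Your negative direction is fine and is the same as the paper's: the restricted action is $\gamma\otimes\gamma^\op$, which by Remark~\ref{rem:op} represents the trivial class in $\mathrm{Br}_G(\{\pt\})$ and hence fails the (WCP) since $G$ is non-amenable. (The paper phrases this via Theorem~\ref{thm-Mats-2}; your Morita-invariance route via the Rieffel correspondence also works.) One minor point: your argument that $\gamma^\op$ has the (WCP) is more roundabout than necessary. The paper simply notes that the cocycle here takes values in $C_2\subseteq\T$ and hence equals its own conjugate, so $\gamma$ and $\gamma^\op$ are Morita equivalent.
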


\begin{proof}
We note first that $(\K^\op,G, \gamma^\op)$ satisfies the (WCP).  This follows as the cocycle $\omega$ corresponding to $\gamma^\op$ takes values in $C_2\subseteq \T$, and thus $\omega$ equals its complex conjugate $\overline{\omega}$.  As in Remark \ref{rem:op}, $\gamma^\op$ is the action on $\K^\op\cong \K$ that corresponds to $\overline{\omega}$, whence $(\K^\op,\gamma^\op)$ and $(\K,\gamma)$ are Morita equivalent.  

Note moreover that $\K\rtimes_\red G$ is nuclear (and therefore also $\K\rtimes_\max G$, $\K^\op\rtimes_\red G$ and $\K^\op\rtimes_{\max} G$ are nuclear).  Indeed, using line \eqref{cocycle mor ex} above, $\K\rtimes_{\red} G$ is Morita equivalent to $C^*_{\red}(G,\omega)$.  On the other hand, using line \eqref{eq-decompose} and Proposition \ref{prop-fibres}, $C^*_\red(G,\omega)$ is a direct summand of 
$C^*_\red(L)$, for $L=\SL(2,\C)$.  Finally, note that $L$  is connected, whence $C^*_\red(L)$ is nuclear by \cite{Connes:1976fj}*{Corollary 6.9 (c)}.

Now, to see that $(A,G\times G,\alpha)$ satisfies the (WCP), note that 
\begin{multline*}
A\rtimes_\max (G\times G)\cong(\K\rtimes_\max G)\otimes (\K^\op\rtimes_\max G)\\\cong(\K\rtimes_\red G)\otimes (\K^\op\rtimes_\red G)\cong A\rtimes_\red (G\times G).
\end{multline*}
Notice that all $C^*$-algebras involved here are nuclear, so that we do not need to worry about the choice of maximal or minimal tensor product. On the other hand, the restriction of the $G\times G$-action on $A$ to $H$ is $\gamma\otimes\gamma^\op$ which, by the non-amenability of $\gamma$ (or $G$), does not satisfy the (WCP) by Theorem~\ref{thm-Mats-2} 
(or by Remark \ref{rem:op}).
\end{proof}

We close this section with an application of the above results towards property (WF3)  as considered by  Bekka and Valette in \cite{BV}.
A locally compact group  $G$ satisfies property (WF3) if (and only if) for every closed subgroup $H$ and every irreducible unitary representation $v$ of $H$ there exists a unitary representation $u$ of $G$ such that $v$ is weakly contained in the restriction $u|_H$.
As pointed out in \cite{BV}, this is equivalent to asking whether the canonical $*$-homomorphism
$$j_H:C^*_\max(H)\to\M(C^*_\max(G))$$
given as the integrated form of the restriction $i_G|_H$ of the canonical map $i_G:G\to \U\M(C^*_\max(G))$ is faithful for 
all closed subgroups $H$ of $G$. Note that amenable groups and discrete groups always satisfy (WF3) but it is shown in \cite{BV} that (WF3) might fail in general. 
Indeed, \cite{BV}*{Theorem 1.3} shows that  an almost connected group $G$ satisfies (WF3) if and only if $G$ is amenable.
It is an interesting problem for a given group $G$ to determine all closed subgroups $H$ for which the 
map $j_H:C^*_\max(H)\to\M(C^*_\max(G))$ fails to be injective. The following result 
complements the results on lattices in $\SL(2,\R)$ and $\SL(2,\C)$ as given in \cite{BV}*{Section 5}:

\begin{theorem}\label{thm-WF3}
Let $\Gamma$ be any nonamenable discrete subgroup of $G=\SL(2,\R)$ (resp.~ $G=\SL(2,\C)$) which contains the centre $Z$ of 
$G$. Then the canonical $*$\nb-homomorphism $j_\Gamma: C^*_\max(\Gamma)\to \M(C^*_\max(G))$ is not injective.
\end{theorem}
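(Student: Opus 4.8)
The plan is to reduce the statement to Example~\ref{ex-SL(2,R)} and Proposition~\ref{prop-discrete} via the central extension $1\to Z\to G\to \bar G\to 1$ where $\bar G=\PSL(2,\R)$ (resp.\ $\PSL(2,\C)$) and $Z=C_2$. First I would observe that $j_\Gamma$ being injective would imply that the restriction-to-$\Gamma$ process is compatible with weak containment in a strong sense. More precisely, the key link is the following: suppose $\omega\in Z^2(\bar G,\T)$ is the cocycle $\omega_\chi$ attached to the nontrivial character $\chi$ of $Z$ and the Borel section $c$ as in Proposition~\ref{prop-fibres}, so that $C^*_\max(\bar G,\omega_\chi)\cong C^*_\red(\bar G,\omega_\chi)$ via $\widetilde\lambda^{\omega_\chi}$ by Example~\ref{ex-SL(2,R)} and Proposition~\ref{prop-principle}. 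Translating through the isomorphisms in line \eqref{cocycle mor ex}, this says precisely that the action $\alpha^{\bar\omega_\chi}=\Ad\lambda^{\bar\omega_\chi}:\bar G\to\Aut(\K)$ (with $\K=\K(L^2(\bar G))$) has the (WCP). Since $\Gamma\supseteq Z$ is a subgroup of $G$ containing the kernel of $G\to\bar G$, its image $\bar\Gamma:=\Gamma/Z$ is a discrete subgroup of $\bar G$, and $\bar\Gamma$ is nonamenable because $\Gamma$ is (amenability is insensitive to the finite central quotient).

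The second step is to use $\bar\omega_\chi$ to build an action of $\Gamma$ on $\K$, not of $\bar\Gamma$. Here is the point: the $\bar\omega_\chi$-representation $\lambda^{\bar\omega_\chi}$ of $\bar G$ pulls back along $q:G\to\bar G$ to an honest unitary representation of $G$ restricted to $Z$ being trivial? No --- rather, I would use Lemma~\ref{lem-reps}: an $\bar\omega_\chi$-representation $v$ of $\bar G$ corresponds to a genuine unitary representation $u=v\circ c$-type construction, i.e.\ a unitary representation $u$ of $G$ that restricts to $\bar\chi\cdot 1$ on $Z$ (where $\bar\chi=\chi$ since $Z\cong C_2$ and characters are self-conjugate). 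Thus there is a genuine unitary representation $u:G\to\U(L^2(\bar G))$ with $u|_Z$ scalar, and $\alpha:=\Ad u:G\to\Aut(\K)$ is a well-defined action whose restriction to $Z$ is \emph{trivial} (the scalars die under $\Ad$), and which, pulled down, realizes $\alpha^{\bar\omega_\chi}$ on $\bar G$. Restricting $\alpha$ to $\Gamma\subseteq G$ gives an action $\alpha|_\Gamma:\Gamma\to\Aut(\K)$; since $\Gamma\supseteq Z$ and $Z$ acts trivially, $\alpha|_\Gamma$ descends to (and is pulled back from) an action of $\bar\Gamma=\Gamma/Z$ on $\K$, namely the restriction of $\alpha^{\bar\omega_\chi}$ to the subgroup $\bar\Gamma\subseteq\bar G$.

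The third step is the contradiction. If $j_\Gamma:C^*_\max(\Gamma)\to\M(C^*_\max(G))$ were injective, then tensoring with $\K$ and using the identifications $\K\rtimes_{\alpha,\max}G\cong C^*_\max(\bar G,\omega_\chi)\otimes\K$ (via Proposition~\ref{prop-fibres} and line \eqref{cocycle mor ex}) together with the naturality of the maximal crossed product for subgroups, one would conclude that $\K\rtimes_{\alpha|_\Gamma,\max}\Gamma\to\M(\K\rtimes_{\alpha,\max}G)$ is injective; combined with the (WCP) for $(\K,G,\alpha)$ (which holds by the previous two steps and Example~\ref{ex-SL(2,R)}), this would force $(\K,\Gamma,\alpha|_\Gamma)$ to have the (WCP) --- this is exactly the reasoning used to prove Lemma~\ref{lem-restrict}, except now $\Gamma$ is merely closed rather than open, so injectivity of the vertical arrow $\K\rtimes_\max\Gamma\to\M(\K\rtimes_\max G)$ is precisely what $j_\Gamma$ injective buys us (it is the crossed-product upgrade of $j_\Gamma$; one gets it by tensoring $j_\Gamma$ with $\id_\K$ and composing with the canonical maps, using that $\alpha$ is inner). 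But $\bar\Gamma$ contains a copy of $\mathbb F_2$: in the $\SL(2,\R)$ case, a nonamenable discrete subgroup of $\PSL(2,\R)$ is either virtually free or a surface group, and in all cases contains $\mathbb F_2$; in the $\SL(2,\C)$ case a nonamenable discrete subgroup of $\PSL(2,\C)$ contains $\mathbb F_2$ by the Tits alternative (since it is linear). Hence by Proposition~\ref{prop-discrete}, $(\K,\bar\Gamma,\alpha^{\bar\omega_\chi}|_{\bar\Gamma})$ --- equivalently $(\K,\Gamma,\alpha|_\Gamma)$ --- does \emph{not} have the (WCP), a contradiction. Therefore $j_\Gamma$ is not injective.

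\textbf{Main obstacle.} The delicate point is the third step: carefully upgrading injectivity of $j_\Gamma$ at the level of group $C^*$-algebras to injectivity of the map $\K\rtimes_{\alpha|_\Gamma,\max}\Gamma\to\M(\K\rtimes_{\alpha,\max}G)$, and then correctly identifying this with the descent of the regular representation so that the (WCP) hypothesis can be applied. One must be careful that the action $\alpha$ is genuinely defined on all of $G$ (not just $\bar G$) and is strongly continuous --- this is where Lemma~\ref{lem-reps} and the second countability assumptions are used --- and that the isomorphisms of line \eqref{cocycle mor ex} and Proposition~\ref{prop-fibres} are natural enough with respect to the subgroup inclusion $\Gamma\hookrightarrow G$ to make the diagram chase legitimate. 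A clean way to organize this is: (i) $j_\Gamma$ injective $\Rightarrow$ (tensoring with $\K$, which is exact and nuclear) $C^*_\max(\Gamma)\otimes\K\to\M(C^*_\max(G)\otimes\K)$ injective; (ii) conjugate by the unitary implementing $\alpha$ to rewrite both sides as crossed products, getting $\K\rtimes_{\alpha|_\Gamma,\max}\Gamma\hookrightarrow\M(\K\rtimes_{\alpha,\max}G)$; (iii) run the Lemma~\ref{lem-restrict} diagram with this injective vertical arrow to deduce (WCP) for $\alpha|_\Gamma$; (iv) invoke Proposition~\ref{prop-discrete}. Once the bookkeeping in (i)--(ii) is set up correctly, the rest is formal.
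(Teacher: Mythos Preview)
There is a genuine gap. Your action $\alpha=\Ad u:G\to\Aut(\K)$ is built from a \emph{genuine} unitary representation $u$ of $G$ (this is exactly what Lemma~\ref{lem-reps} produces), so $\alpha$ is \emph{inner} as a $G$-action. Consequently $\K\rtimes_{\alpha,\max}G\cong\K\otimes C^*_\max(G)$ and $\K\rtimes_{\alpha,\red}G\cong\K\otimes C^*_\red(G)$, and the (WCP) for $(\K,G,\alpha)$ is equivalent to amenability of $G$ --- which fails. Example~\ref{ex-SL(2,R)} establishes the (WCP) for the action of $\bar G=\PSL$, not for your lifted action of $G=\SL$; the passage from $\bar G$ to $G$ destroys the nontrivial Brauer class. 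So step (iii) of your outline cannot be executed: the bottom horizontal arrow in the Lemma~\ref{lem-restrict}-type diagram is not an isomorphism, and the argument collapses. (Your steps (i)--(ii) are in fact tautological: once you recognise $\alpha$ is inner, the map $\K\rtimes_{\alpha|_\Gamma,\max}\Gamma\to\M(\K\rtimes_{\alpha,\max}G)$ is literally $\id_\K\otimes j_\Gamma$.)

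The paper's route is to stay on the $\PSL$ level throughout. Since $Z\subseteq\Gamma$, the direct sum decomposition \eqref{eq-decompose} applies to both $C^*_\max(\Gamma)$ and $C^*_\max(G)$, and $j_\Gamma$ splits as $j_1\oplus j_\chi$. Via Proposition~\ref{prop-fibres} and line~\eqref{cocycle mor ex}, $j_\chi$ identifies with the canonical map $\K\rtimes_{\alpha_\chi,\max}(\Gamma/Z)\to\M\bigl(\K\rtimes_{\alpha_\chi,\max}(G/Z)\bigr)$, where now $\alpha_\chi$ is the \emph{non-inner} $\bar G$-action from Example~\ref{ex-SL(2,R)}. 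Since $\K\rtimes_{\alpha_\chi,\max}(G/Z)=\K\rtimes_{\alpha_\chi,\red}(G/Z)$, this map factors through $\K\rtimes_{\alpha_\chi,\red}(\Gamma/Z)$, and the regular representation $\K\rtimes_{\alpha_\chi,\max}(\Gamma/Z)\to\K\rtimes_{\alpha_\chi,\red}(\Gamma/Z)$ is not faithful by Proposition~\ref{prop-discrete} (the copy of $\mathbb F_2$ in $\Gamma$ given by the Tits alternative survives in $\Gamma/Z$ since $\mathbb F_2$ is torsion-free). Thus $j_\chi$, and hence $j_\Gamma$, is not injective. The key difference from your attempt is that the direct-sum decomposition isolates the $\chi$-piece \emph{before} passing to crossed products, so one lands in the $\bar G$-world where the (WCP) is actually available.
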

\begin{proof} 
As $\Gamma$ is nonamenable, it contains a free subgroup on two generators by the Tits alternative \cite{Tits:1972ut}*{Corollary 1}.
Since $C_2\cong Z\subseteq \Gamma$ we may write $C^*_\max(\Gamma)$ as the direct sum 
$C^*_\max(\Gamma)_{1_Z}\oplus C^*_\max(\Gamma)_\chi$ as in (\ref{eq-decompose}) where $1_Z$ denotes the 
trivial character and $\chi$ the nontrivial character of $Z$. With the similar decomposition of $C^*_\max(G)$ 
as $C^*_\max(G)_{1_Z}\oplus C^*_\max(G)_\chi$ it is easy to see that $j_\Gamma$ decomposes into the direct sum 
of the two canonical $*$-homomorphisms
$$j_1:C^*_\max(\Gamma)_{1_Z}\to \M(C^*_\max(G)_{1_Z})\quad \text{and}\quad j_\chi: C^*_\max(\Gamma)_\chi\to \M(C^*_\max(G)_\chi).$$
Thus $j_\Gamma$ is faithful if and only if both $j_1$ and $j_\chi$ are faithful. 
If $\alpha_\chi:G/Z\to\Aut(\K)$ is the corresponding action on the compacts, faithfulness of $j_\chi$ 
translates into faithfulness of the canonical $*$-homomorphism
$$j_\alpha: \K\rtimes_{\alpha_\chi,\max}  (\Gamma/Z)\to \M(\K\rtimes_{\alpha_\chi,\max} (G/Z)).$$
By Example  \ref{ex-SL(2,R)} we have $\K\rtimes_\max (G/Z)= \K\rtimes_\red (G/Z)$ and hence
$j_\alpha$ factors through the composition 
$$\K\rtimes_\max  (\Gamma/Z)\stackrel{\Lambda}{\to} \K\rtimes_\red{(\Gamma/Z)}\into  \M(\K\rtimes_\red (G/Z))$$
which is not faithful by Proposition \ref{prop-discrete}.
\end{proof}

\chapter{Actions on $X\rtimes G$-algebras and type I $C^*$-algebras}\label{chap:examples}
In this chapter we study amenability of actions on $C^*$-algebras with particularly good structure.  Some results require separability and exactness assumptions: see the theorems below for precise assumptions.

Very roughly, a $G$-space $X$ is regular if the quotient $G\backslash X$ is not too wild.   In Section \ref{sec:regular} we discuss $X\rtimes G$-algebras over a regular $G$-space $X$.   The key tool used throughout Section \ref{sec:regular} is Theorem \ref{thm-Mats-2}; as a consequence most results of this section require exactness of the acting group.  A simple example of an $X\rtimes G$\nb-algebra with $X$ regular occurs if $H$ is a closed subgroup of $G$ and we consider the $G$\nb-space $X=G/H$.  Then, if $A$ is an $H$-$C^*$-algebra, there is an induced $G$\nb-$C^*$\nb-algebra $\Ind_H^GA$, which turns out to be a $(G/H)\rtimes G$-algebra.  In the special case of $\Ind_H^GA$ our results imply that for $G$ exact, $(A,H)$ is amenable if and only if $(\Ind_H^GA,G)$ is amenable, partially generalizing a result of Anantharaman-Delaroche \cite{Anantharaman-Delaroche:1987os}*{Th\'eor\`eme 4.6} for actions of discrete groups.   Another important application occurs when $A$ is a type I $C^*$-algebra, and the induced action of the exact group $G$ on $\widehat{A}$ (which need not be Hausdorff) is regular.  According to a classical result of Glimm, $\widehat{A}$ then admits a sort of decomposition into \emph{Hausdorff} regular $G$-spaces, and we use this to show that $(A,G,\alpha)$ is amenable if and only if for each $[\pi]\in \widehat{A}$, the stabilizer subgroup $G_\pi$ of $[\pi]$ is an amenable group.  

In Section \ref{sec:continuous-trace} we continue to study type I $C^*$-algebras, but under the additional assumption that $\widehat{A}$ is Hausdorff.  In this case, we show that $(A,G)$ is amenable if and only if $(C_0(\widehat{A}),G)$ is amenable.  

Both of these characterizations of amenability for actions on type I $C^*$-algebras can be seen as partial generalizations of Observation \ref{ob2} above, which is the case of an action on a type I $C^*$-algebra with $\widehat{A}$ a single point.

\section{Amenability of regular $X\rtimes G$-algebras}\label{sec:regular}

In this section, we study actions on $C^*$-algebras that decompose in a good way over a $G$-space $X$.  We give applications to induced $G$-$C^*$-algebras and type I $G$-$C^*$-algebras.

Recall that if $X$ is a locally compact $G$-space, then a $G$-$C^*$-algebra $(A,\alpha)$ is called an \emph{$X\rtimes G$-algebra}, if it is equipped with a nondegenerate $G$-equivariant
$*$\nb-homo\-mor\-phism $\Phi:C_0(X)\to Z\M(A)$. 
If $A$ is an $X\rtimes G$-algebra, then for each $x\in X$ the {\em fibre} $A_x$ of $A$ at $x$ is the
quotient $A_x:=A/I_x$, where $I_x:=\Phi(C_0(X\smallsetminus\{x\}))A$ is the ideal of ``sections''  vanishing at $x$.
It follows from 
 \cite{Dana-book}*{Theorem C.26} that a $G$-$C^*$-algebra $(A,\alpha)$ has the structure of an $X\rtimes G$-algebra 
 if and only if  there exists a continuous $G$-equivariant map $\varphi:\Prim(A)\to X$. More precisely, 
 given $\Phi:C_0(X)\to Z\M(A)$ as above, then the corresponding map $\varphi:\Prim(A)\to X$
 sends the closed subspace $\Prim(A_x)\subseteq \Prim(A)$ to the point $x$. 
 
If $G_x=\{g\in G: gx=x\}$ denotes the stabilizer at a point $x\in X$, then 
$\alpha$ induces an action $\alpha^x:G_x\to \Aut(A_x)$ via $\alpha^x_g(a+I_x)=\alpha_g(a)+I_x$. 
In what follows, we  denote  by $G(x)=\{gx: g\in G\}$ the $G$-orbit of $x\in X$.

Recall that a topological space $Z$ is called {\em almost Hausdorff} if  every closed subset $Y$ of $Z$ contains 
a relatively open dense Hausdorff subset  $U\subseteq Y$, and $Z$ is called a T$_0$-space if for two points $y,z\in Z$ with 
$y\neq z$ at least one of these points is not in the closure  of the other.

\begin{definition}\label{def-regular}
A locally compact $G$-space $X$ is called {\em regular} if at least one of the following conditions hold:
\begin{enumerate}
\item \label{regular 1} For each $x\in X$ the canonical map $G/G_x\to G(x)$; $gG_x\mapsto gx$ is a homeomorphism
and the orbit space $G\backslash X$ is either almost Hausdorff or  second countable;
\item \label{regular 2} $G\backslash X$  is almost Hausdorff and $G$ is $\sigma$-compact;
\item \label{regular 3} $G$ and $X$ are second countable  and $G\backslash X$ is a T$_0$-space.
\end{enumerate}
\end{definition}

To help orient the reader, let us point out that it is shown in \cite{Rieffel}*{Proposition 7.1} that 
 \eqref{regular 2} $\Rightarrow$ \eqref{regular 1} and it follows from  \cite{Glimm}*{Theorem} (see Theorem \ref{thm-Glimm} below)
 that  all  three properties are equivalent if $X$ and $G$ are second countable.  For further background on such regularity properties   see \cite{Ech-reg} and \cite{Dana-book}*{Section 6.1}.

As an application of Theorem \ref{thm-Mats-2} and Corollary \ref{cor-weakam} we now prove
\begin{theorem}\label{thm-complete-reg}
Suppose that $G$ is an exact group, that $X$ is a regular locally compact $G$-space, and that $(A,\alpha)$ is an $X\rtimes G$-algebra. Then the following are equivalent:
\begin{enumerate}
\item \label{thm-complete-reg 1} $\alpha:G\to \Aut(A)$ is amenable;
\item \label{thm-complete-reg 2} For every $x\in X$ the  action $\alpha^x:G_x\to \Aut(A_x)$  is  amenable.
\end{enumerate}
\end{theorem}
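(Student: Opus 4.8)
The plan is to reduce the statement to Corollary~\ref{cor-weakam}, which characterizes amenability of an action of an exact group $G$ in terms of the weak containment property for all diagonal tensor product actions $\alpha\otimes\beta$ on $A\otimes_{\max}B$. The implication \eqref{thm-complete-reg 1} $\Rightarrow$ \eqref{thm-complete-reg 2} should be the easier direction. Given amenability of $\alpha$, one first checks that $I_x$ is a $G_x$-invariant ideal of $A$ (viewed as a $G_x$-$C^*$-algebra by restriction), so that $A_x$ is a $G_x$-$C^*$-algebra with the induced action $\alpha^x$. Now I would combine two facts: amenability restricts to closed subgroups of exact groups (Proposition~\ref{prop-restrict}, using that $G_x$ is a closed subgroup of the exact group $G$, hence itself exact by permanence of exactness for closed subgroups), and amenability passes to quotients (Proposition~\ref{prop-ext-amenable}). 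So $\alpha|_{G_x}$ is amenable and therefore its quotient action $\alpha^x$ on $A_x=A/I_x$ is amenable. One subtlety: $G_x$ need not be exact just because $G$ is, in general — wait, closed subgroups of exact locally compact groups are exact, so this is fine; but I should double-check whether Proposition~\ref{prop-restrict} applies, as it requires $H$ open or exact-and-closed, and $G_x$ is closed, so exactness of $G_x$ is exactly what is needed.

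For the harder direction \eqref{thm-complete-reg 2} $\Rightarrow$ \eqref{thm-complete-reg 1}, the strategy is to verify the weak containment criterion from Corollary~\ref{cor-weakam}: for every $G$-$C^*$-algebra $(B,\beta)$, show $(A\otimes_{\max}B)\rtimes_{\max}G \cong (A\otimes_{\max}B)\rtimes_{\red}G$. Note $A\otimes_{\max}B$ is again an $X\rtimes G$-algebra (via $\Phi\otimes 1$), with fibre $(A\otimes_{\max}B)_x = A_x\otimes_{\max}B$ carrying the diagonal $G_x$-action $\alpha^x\otimes\beta|_{G_x}$. Since $\beta|_{G_x}$ is an action of the exact group $G_x$ and $\alpha^x$ is amenable, Theorem~\ref{thm-Mats-2}(2) (amenability of $A_x$ implies amenability of $A_x\otimes_{\max}B'$ for any $G_x$-$C^*$-algebra $B'$) shows $\alpha^x\otimes\beta|_{G_x}$ is amenable, hence has the (WCP) by Proposition~\ref{prop-general}. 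So it suffices to prove a general \emph{fibrewise} principle: if $C$ is an $X\rtimes G$-algebra over a regular $G$-space $X$, $G$ is exact, and for every $x\in X$ the fibre action $C_x$ under $G_x$ has the weak containment property, then $C$ has the weak containment property. This fibrewise reduction is the technical heart of the argument, and is where regularity of $X$ is used: by the Glimm-type structure theorem (the regularity hypotheses guarantee a composition series of $G$-invariant ideals whose subquotients are $C_0(Y)$-algebras over \emph{Hausdorff} regular $G$-spaces $Y$ with locally closed orbits), one decomposes $C$ into pieces fibred over single orbits $G(x)\cong G/G_x$.

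The key step is thus: for a $(G/H)\rtimes G$-algebra $D$, i.e.\ one induced from an $H$-$C^*$-algebra $D_{eH}$, Green's imprimitivity theorem gives a $G$-equivariant Morita equivalence between the crossed products, so that $D\rtimes_{\max}G \sim_M D_{eH}\rtimes_{\max}H$ and $D\rtimes_{\red}G\sim_M D_{eH}\rtimes_{\red}H$ compatibly (as in the proof of Proposition~\ref{prop-restrict}); hence $D$ has the (WCP) iff $D_{eH}$ does as an $H$-$C^*$-algebra. One then has to patch: weak containment passes through extensions (using exactness of the maximal crossed product functor and inner exactness coming from the (WCP) on subquotients, cf.\ the Remark after Proposition~\ref{prop-general}), through the almost-Hausdorff composition series by a transfinite induction, and through the direct-integral-type assembly over the Hausdorff base. \textbf{The main obstacle} I anticipate is making this gluing argument rigorous: one must show that knowing $A\rtimes_{\max}G = A\rtimes_{\red}G$ on each subquotient of a (possibly transfinite) $G$-invariant composition series, plus on each orbit-fibred piece, forces it for $A$ itself — this requires care with exactness of the reduced crossed product along these sequences (which is itself a consequence of the (WCP) on the relevant pieces) and with continuity of fields of $C^*$-algebras over the Hausdorff base spaces. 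I would handle this by first treating the case where $G\backslash X$ is Hausdorff and $X$ has a single orbit type (reducing cleanly to Green imprimitivity plus a continuous-field argument), then bootstrapping to almost-Hausdorff $G\backslash X$ via the Glimm composition series and the extension-permanence of weak containment, and finally invoking the equivalences in Definition~\ref{def-regular} to cover all three regularity hypotheses.
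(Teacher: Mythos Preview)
Your direction \eqref{thm-complete-reg 1} $\Rightarrow$ \eqref{thm-complete-reg 2} matches the paper exactly, and your reduction of \eqref{thm-complete-reg 2} $\Rightarrow$ \eqref{thm-complete-reg 1} to the ``fibrewise (WCP) principle'' via Corollary~\ref{cor-weakam} and Theorem~\ref{thm-Mats-2} is also precisely what the paper does.

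Where you diverge is in the proof of the fibrewise principle itself. You propose a structural argument: Glimm-type composition series for $X$, Green imprimitivity over single orbits, continuous-field arguments over Hausdorff orbit spaces, and a transfinite extension argument (using exactness of $G$ to make the reduced crossed product exact on short exact sequences). The paper instead argues directly on the primitive ideal space of the crossed product: by \cite{Ech-reg}*{Propositions 2 and 3}, the map $\varphi:\Prim(A)\to X$ is a \emph{complete regularization}, so every primitive ideal of $A\rtimes_{\max}G$ is the kernel of a representation induced from some $(\rho\rtimes u)$ on a fibre $(A_x,G_x)$; amenability of $\alpha^x$ then gives $\rho\rtimes u\prec\Lambda_{(A,G_x)}$, and inducing preserves weak containment and takes the regular representation to the regular representation, so every primitive ideal contains $\ker\Lambda_{(A,G)}$.

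Your approach can be made to work but is considerably more laborious, and it has a genuine generality gap: the Glimm composition series (Theorem~\ref{thm-Glimm}) needs $G$ and $X$ second countable, and the almost-Hausdorff decomposition of $G\backslash X$ requires $G\backslash X$ almost Hausdorff. Condition~(1) of Definition~\ref{def-regular} allows $G\backslash X$ to be merely second countable (not almost Hausdorff) with no countability assumption on $G$ or $X$, and your outline does not handle that case. The complete-regularization machinery from \cite{Ech-reg} is designed precisely to treat all three regularity hypotheses uniformly, which is what the paper exploits.
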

 For references on  facts about induced representations of crossed products  as used in the proof below we refer to 
 \cite{CELY}*{Chapter 2} or \cite{Dana-book}.
 
\begin{proof} To see \eqref{thm-complete-reg 1} $\Rightarrow$ \eqref{thm-complete-reg 2} we first apply Proposition \ref{prop-restrict} to see 
that the restriction of $\alpha$  to $G_x$ is amenable for all $x\in X$ (note that closed  subgroups
of exact groups are exact by 
\cite{KW}*{Theorem 4.1}.)  Since  $A_x$ is a quotient of $A$ by the $G_x$\nb-invariant ideal $I_x$, it follows then from
Proposition \ref{prop-ext-amenable} that the resulting action on $A_x=A/I_x$ is amenable as well.

For \eqref{thm-complete-reg 2} $\Rightarrow$ \eqref{thm-complete-reg 1} we show that for every $G$-$C^*$-algebra $(B,\beta)$ the regular representation 
$$\Lambda: (A\otimes_\max B)\rtimes_\max G\to (A\otimes_\max  B)\rtimes _\red G$$
is an isomorphism. The result then follows from  Corollary \ref{cor-weakam}.

Indeed, if $(A,\alpha)$ is an $X\rtimes G$-algebra via the structure map $\Phi:C_0(X)\to Z\M(A)$, 
then $(A\otimes_\max B, \alpha\otimes \beta)$ is an 
$X\rtimes G$-algebra with respect to the structure map
$$\Phi\otimes 1: C_0(X)\to Z\M(A\otimes_\max B).$$
Moreover, it follows from the exactness of the maximal tensor product that 
 the fibre  $(A\otimes_\max B)_x$ is isomorphic to $A_x\otimes_\max B$ with action 
$(\alpha\otimes\beta)^x=\alpha^x\otimes \beta:G_x\to \Aut(A_x\otimes_\max B)$.
If $\alpha^x$ is amenable, the same is true for $\alpha^x\otimes \beta$ by Theorem \ref{thm-Mats-2}.
It follows that the $X\rtimes G$-algebra $(A\otimes_\max B, \alpha\otimes \beta)$ again satisfies 
all assumptions of the theorem.

It therefore suffices to show that, under the assumptions of the theorem, the maximal and reduced 
crossed products coincide. For  this it  suffices  to show that every primitive ideal of $A\rtimes_\max G$ 
contains the kernel  of the regular representation $\Lambda:A\rtimes_\max G\to A\rtimes_\red G$.
To see this, let $\varphi:\Prim(A)\to X$ be the continuous $G$-map corresponding to $\Phi$.
It follows then from \cite{Ech-reg}*{Proposition 3} that $\varphi$ is a complete regularization in the 
sense of \cite{Ech-reg}*{Definition 1}, which then implies (using \cite{Ech-reg}*{Proposition 2})
that  every primitive ideal $P\in \Prim(A\rtimes_\max G)$ can be realized as the kernel
of an induced representation $\Ind_{G_x}^G(\rho\rtimes u)$, where $(\rho, u)$ is the inflation of some irreducible 
representation of $(A_x, G_x, \alpha_x)$ to $(A, G_x, \alpha)$.
Since  $\alpha_x:G_x\to \Aut(A_x)$ is amenable, the representation 
$\rho\rtimes u$ is weakly contained in the inflation of the regular representation of $(A_x, G_x, \alpha^x)$ to $(A,G_x,\alpha)$,
 which, in turn,  is weakly contained in the regular representation 
 of $A\rtimes_\max G_x$. Since induction preserves  weak containment and since
the regular  representation of $A\rtimes_\max G_x$ induces to the regular representation $\Lambda$ of $A\rtimes_\max G$,
we see that $\Ind_{G_x}^G(\rho\rtimes u)$ is weakly contained in $\Lambda$, which just means that 
$P=\ker(\Ind_{G_x}^G(\rho\rtimes u))\supseteq \ker\Lambda$ and the 
result follows.
\end{proof}

A trivial action of $G$ on $X$
 is always regular, so the following is an immediate consequence of Theorem \ref{thm-complete-reg}.

\begin{corollary}\label{cor-field}
Suppose that $(A,\alpha)$ is an $X\rtimes  G$-algebra with $X$ a trivial $G$-space and that $G$ is  exact. 
Then $\alpha$ is amenable if and only if all fibre actions $\alpha^x:G\to\Aut(A_x)$ are amenable. \qed
\end{corollary}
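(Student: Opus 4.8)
The statement to prove is Corollary \ref{cor-field}: if $(A,\alpha)$ is an $X\rtimes G$-algebra with $X$ a \emph{trivial} $G$-space and $G$ exact, then $\alpha$ is amenable if and only if all fibre actions $\alpha^x:G\to\Aut(A_x)$ are amenable.

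The plan is to deduce this directly from Theorem \ref{thm-complete-reg} by checking that a trivial $G$-space is regular. First I would observe that when $G$ acts trivially on $X$, the stabilizer $G_x$ equals $G$ for every $x\in X$, so the fibre action $\alpha^x:G_x\to\Aut(A_x)$ is exactly the action $\alpha^x:G\to\Aut(A_x)$ appearing in the statement of the corollary. Thus, once we know that a trivial $G$-space is regular, condition \eqref{thm-complete-reg 2} of Theorem \ref{thm-complete-reg} becomes literally the condition ``all fibre actions $\alpha^x:G\to\Aut(A_x)$ are amenable'', and condition \eqref{thm-complete-reg 1} is ``$\alpha$ is amenable'', so the equivalence is immediate.

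The one point that needs checking is that a trivial locally compact $G$-space $X$ is regular in the sense of Definition \ref{def-regular}. For this I would verify condition \eqref{regular 1}: since the action is trivial, each orbit $G(x)=\{x\}$ is a single point, and the orbit map $G/G_x=G/G\cong\{\pt\}\to G(x)=\{x\}$ is trivially a homeomorphism; moreover the orbit space $G\backslash X$ is just $X$ itself with its original (Hausdorff, since locally compact) topology, which is in particular almost Hausdorff. So \eqref{regular 1} holds and $X$ is regular. (Strictly speaking one should also recall that ``locally compact'' is taken throughout to include Hausdorff; this is the convention of the paper, so $X$ Hausdorff is automatic and hence $G\backslash X=X$ is almost Hausdorff.)

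There is no real obstacle here — this is a genuine corollary, and the only content is the trivial verification that a trivial $G$-space satisfies one of the clauses of Definition \ref{def-regular}. The proof is therefore one sentence: ``A trivial action of $G$ on $X$ makes $X$ regular via condition \eqref{regular 1} of Definition \ref{def-regular}, since each orbit is a point and $G\backslash X=X$ is (almost) Hausdorff; as $G_x=G$ and hence $\alpha^x:G_x\to\Aut(A_x)$ is the action $\alpha^x:G\to\Aut(A_x)$ for every $x$, the claim follows at once from Theorem \ref{thm-complete-reg}.'' I would present it essentially in that compressed form, perhaps expanding the parenthetical about local compactness implying Hausdorff if the paper's conventions warrant spelling it out.
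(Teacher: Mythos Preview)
Your proposal is correct and takes essentially the same approach as the paper: the paper's proof is the single sentence ``A trivial action of $G$ on $X$ is always regular, so the following is an immediate consequence of Theorem \ref{thm-complete-reg},'' and you have simply (and correctly) unpacked why a trivial $G$-space satisfies Definition \ref{def-regular}\eqref{regular 1} and why $G_x=G$ makes the fibre conditions match.
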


If $H$ is a closed  subgroup of $G$ and $\alpha:H\to \Aut(A)$ is an action, then  
$$\Ind_H^G(A,\alpha):=\left\{f\in C_b(G, A): \begin{matrix} f(gh)=\alpha_{h^{-1}}(f(g)) \;\text{for all $g\in G, h\in H$,}\\
\text{and}\;
(gH\mapsto \|f(g)\|)\in C_0(G/H)\end{matrix}\right\}
$$ 
is a $G$-$C^*$-algebra with respect to the action 
$$\Ind\alpha:G\to \Aut(\Ind_H^G(A,\alpha)); \quad \Ind\alpha_g(f)(t):=f(g^{-1}t).$$
The system $(\Ind_H^G(A,\alpha), G, \Ind\alpha)$ is called the {\em system induced from $(A,H,\alpha)$ to $G$.}
Note that there is  a canonical $G$-equivariant structure  map 
$$\Phi:C_0(G/H)\to Z(\Ind_H^G(A,\alpha));\quad \big(\Phi(\varphi)f\big)(g)=\varphi(gH)f(g),$$
which gives $(\Ind_H^G(A,\alpha), G, \Ind\alpha)$ the structure of a $G/H\rtimes G$-algebra.
The evaluation maps $f\mapsto f(g)$ then identify  the fibres $(\Ind_H^G(A,\alpha))_{gH}$ with $A$
and the actions $(\Ind\alpha)^{gH}: G_{gH}=gHg^{-1}\to \Aut((\Ind_H^G(A,\alpha))_{gH})$  with 
$\alpha^g:gHg^{-1}\to\Aut(A)$ given by $\alpha^g_{ghg^{-1}}=\alpha_h$. Thus, as a direct corollary of 
Theorem \ref{thm-complete-reg} we  get

\begin{corollary}\label{cor-ind}
Let $H$ be a closed subgroup of the exact group $G$ and let $\alpha:H\to \Aut(A)$ be an action.
Then the induced action $\Ind\alpha:G\to \Aut(\Ind_H^G(A,\alpha))$ is amenable if and only if $\alpha$ is amenable. \qed
\end{corollary}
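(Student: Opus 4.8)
The plan is to obtain the corollary directly from Theorem~\ref{thm-complete-reg}, applied to the $G/H\rtimes G$-algebra $B:=\Ind_H^G(A,\alpha)$, together with the explicit description of its fibres and fibre actions recalled in the paragraph preceding the corollary.

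First I would check that $X=G/H$ is a regular locally compact $G$-space in the sense of Definition~\ref{def-regular}: the orbit space $G\backslash X$ is a single point, hence (trivially) almost Hausdorff, and for each $x=gH$ the stabiliser is $G_x=gHg^{-1}$ while the orbit $G(x)$ is all of $G/H$, with the canonical map $G/G_x\to G(x)$ a homeomorphism; thus the first condition of Definition~\ref{def-regular} holds. Since $G$ is exact, Theorem~\ref{thm-complete-reg} applies and tells us that $\Ind\alpha$ is amenable if and only if the fibre action $(\Ind\alpha)^{x}\colon G_x\to\Aut(B_x)$ is amenable for every $x\in X$.

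Next I would invoke the identifications recalled just before the corollary: evaluation $f\mapsto f(g)$ identifies $B_{gH}\cong A$ and the fibre action $(\Ind\alpha)^{gH}\colon G_{gH}=gHg^{-1}\to\Aut(B_{gH})$ with $\alpha^g\colon gHg^{-1}\to\Aut(A)$, $\alpha^g_{ghg^{-1}}=\alpha_h$. Hence $\Ind\alpha$ is amenable if and only if $\alpha^g$ is amenable for every $g\in G$. Finally I would observe that each $\alpha^g$ is obtained from $\alpha$ by precomposing with (the inverse of) the topological group isomorphism $c_g\colon H\to gHg^{-1}$, $h\mapsto ghg^{-1}$. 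Amenability as in Definition~\ref{def-amenable (A)} is manifestly invariant under such relabellings: the enveloping $G$-von Neumann algebra $A_\alpha''$ and its centre are unchanged (the two actions have literally the same automorphisms, and $c_g$ being a homeomorphism preserves norm-continuity of orbit maps), and a net of norm-continuous, compactly supported, positive type functions $(\theta_i\colon H\to Z(A_\alpha''))$ implementing amenability of $\alpha$ transforms into such a net $(\theta_i\circ c_g^{-1})$ for $\alpha^g$ (the positivity of the matrix over a finite set $F\subseteq gHg^{-1}$ becomes positivity over $c_g^{-1}(F)\subseteq H$), and conversely. In particular all the $\alpha^g$, including $\alpha^e=\alpha$, are simultaneously amenable or not, so $\Ind\alpha$ is amenable if and only if $\alpha$ is.

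There is no serious obstacle in this argument; it genuinely is a corollary. The only points needing any care are the (routine) verification that $G/H$ is a regular $G$-space, so that Theorem~\ref{thm-complete-reg} is applicable, and the (equally routine) observation that amenability of an action is preserved under the conjugation isomorphisms $c_g$ — which is why knowing amenability of the single fibre action $\alpha^{eH}=\alpha$ suffices to feed back into Theorem~\ref{thm-complete-reg}.
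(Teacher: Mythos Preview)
Your proof is correct and follows exactly the approach the paper intends: the corollary is marked with a \qed\ precisely because it is obtained from Theorem~\ref{thm-complete-reg} applied to the $G/H\rtimes G$-algebra $\Ind_H^G(A,\alpha)$, using the fibre identifications spelled out in the paragraph immediately preceding the statement. You have simply made explicit the routine checks (regularity of $G/H$ and conjugation-invariance of amenability) that the paper leaves implicit.
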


Note  that for discrete groups $G$ the above result has been shown by Anantharaman-Delaroche 
in \cite{Anantharaman-Delaroche:1987os}*{Th\'eor\`eme 4.6} without any exactness conditions on $G$.

 Before we state our next result, we need to recall 
a theorem of Glimm (\cite{Glimm}*{Theorem 1}):

\begin{theorem}[Glimm]\label{thm-Glimm}
Suppose that the second countable locally compact group $G$ acts on the {\em almost Hausdorff} second countable 
 locally compact space $X$. Then the following are equivalent:
 \begin{enumerate}
 \item \label{thm-Glimm 1}$G\backslash X$ is a T$_0$ space.
 \item \label{thm-Glimm 2}Each orbit $G(x)$ is locally closed, that is  $G(x)$ is relatively open in its closure.
 \item \label{thm-Glimm 3}For each $x\in X$ the canonical map 
 $$G/G_x\to G(x);\quad gG_x\to gx$$ 
 is a homeomorphism.
 \item \label{thm-Glimm 4}There  exists an increasing system $(U_\nu)$ of open $G$-invariant subsets of $X$ 
 indexed over the ordinal numbers $\nu$ such that 
 \begin{enumerate}
 \item $U_0=\emptyset$ and $X=U_{\nu_0}$ for some ordinal $\nu_0$;
 \item for every limit ordinal $\nu$ we have $U_\nu=\bigcup_{\mu<\nu}U_\mu$;   and 
\item  the orbit space $G\backslash(U_{\nu+1}\smallsetminus U_{\nu})$ is Hausdorff for all  $\nu$.  \qed
\end{enumerate}
 \end{enumerate}
 \end{theorem}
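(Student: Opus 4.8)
\textbf{Proof proposal for Theorem~\ref{thm-Glimm} (Glimm's theorem).} This is a classical theorem, so the plan is to cite Glimm's original paper \cite{Glimm}*{Theorem 1} and sketch the structure of the argument rather than reproduce it in full. The equivalences \eqref{thm-Glimm 1} $\Leftrightarrow$ \eqref{thm-Glimm 2} $\Leftrightarrow$ \eqref{thm-Glimm 3} are the heart of the matter, and \eqref{thm-Glimm 4} is an ordinal-induction repackaging of \eqref{thm-Glimm 1}. I would organize the proof as a cycle \eqref{thm-Glimm 1} $\Rightarrow$ \eqref{thm-Glimm 2} $\Rightarrow$ \eqref{thm-Glimm 3} $\Rightarrow$ \eqref{thm-Glimm 1}, followed by \eqref{thm-Glimm 1} $\Leftrightarrow$ \eqref{thm-Glimm 4}.

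First, for \eqref{thm-Glimm 1} $\Rightarrow$ \eqref{thm-Glimm 2}: suppose $G\backslash X$ is $T_0$ and let $x\in X$. The orbit closure $\overline{G(x)}$ is a closed $G$-invariant set, hence (since $X$ is almost Hausdorff) it contains a dense relatively open Hausdorff $G$-invariant subset $U$; one uses the $T_0$ separation on $G\backslash X$ together with a Baire category argument on the Polish space $X$ to show that $G(x)$ itself must be relatively open in $\overline{G(x)}$, i.e. locally closed. The key technical input is that $X$ second countable and locally compact implies $X$ Polish, so Baire category applies, and that a $T_0$ orbit space forbids two distinct orbits from being topologically indistinguishable. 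For \eqref{thm-Glimm 2} $\Rightarrow$ \eqref{thm-Glimm 3}: if $G(x)$ is locally closed it is itself a locally compact Hausdorff second countable $G$-space on which $G$ acts transitively, and then the standard open mapping theorem for continuous transitive actions of $\sigma$-compact (here second countable) locally compact groups on locally compact Hausdorff spaces --- an application of the Baire category theorem to the cover of $G(x)$ by translates of $\overline{V}x$ for $V$ a relatively compact neighborhood of $e$ --- shows $gG_x\mapsto gx$ is a homeomorphism. For \eqref{thm-Glimm 3} $\Rightarrow$ \eqref{thm-Glimm 1}: if all orbits are homogeneous spaces $G/G_x$ then any two distinct orbits are disjoint locally closed sets; a direct argument (again using second countability to avoid pathologies) shows the quotient map is such that distinct points of $G\backslash X$ have distinct closures, giving the $T_0$ property.

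For \eqref{thm-Glimm 1} $\Leftrightarrow$ \eqref{thm-Glimm 4}: assuming \eqref{thm-Glimm 1}, build the chain $(U_\nu)$ transfinitely. Set $U_0=\emptyset$. Given $U_\nu$, the complement $X\smallsetminus U_\nu$ is a closed $G$-invariant almost Hausdorff second countable $G$-space; let $Y$ be its (dense, relatively open, $G$-invariant) Hausdorff part, and define $U_{\nu+1}$ so that $U_{\nu+1}\smallsetminus U_\nu$ is the union of the orbits in the Hausdorff part $Y$ --- one checks via the $T_0$ hypothesis that $G\backslash Y$ is then Hausdorff (a $T_0$ quotient of a Hausdorff space on which orbits are closed). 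At limit ordinals take unions. This must terminate at some $\nu_0$ with $U_{\nu_0}=X$ by a cardinality/second-countability argument (the chain of open sets is strictly increasing, hence countable). Conversely, given such a chain \eqref{thm-Glimm 4}, the $T_0$ property of $G\backslash X$ follows by locating two given orbits in the first stratum $U_{\nu+1}\smallsetminus U_\nu$ that contains one of them and using Hausdorffness there.

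The main obstacle is the Baire-category core of \eqref{thm-Glimm 1} $\Rightarrow$ \eqref{thm-Glimm 2} and the open-mapping step \eqref{thm-Glimm 2} $\Rightarrow$ \eqref{thm-Glimm 3}: these are exactly where second countability (Polishness) is essential and where Glimm's original argument does real work. Since the statement is attributed to Glimm and used here only as a black box for the regularity discussion, the honest and efficient approach is to state the theorem, attribute it precisely to \cite{Glimm}*{Theorem 1}, note that in the locally compact group setting it also appears in \cite{Dana-book}*{Section~6.1} and \cite{Ech-reg}, and omit the detailed proof, as is standard in this literature.
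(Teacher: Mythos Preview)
Your final recommendation is exactly what the paper does: the theorem is stated with a \qed and attributed to \cite{Glimm}*{Theorem 1}, with no proof or sketch provided; the paper simply remarks that Glimm's original result contains further equivalent conditions but that the listed ones suffice. Your proposed sketch of the implications is reasonable extra material, but the paper omits it entirely and treats the result as a black box.
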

 
  Note that Glimm's original theorem lists a number of other equivalent statements, but the above  are  all we need.
 If $G\curvearrowright X$ satisfies the statements of Glimm's theorem, we say that 
 $X$ is a \emph{regular} $G$-space.  Item \eqref{thm-Glimm 1} in Glimm's theorem implies that this is compatible with 
  Definition \ref{def-regular} if $X$ is Hausdorff and $G$ and $X$ are second countable.
  
  The theorem applies in particular to actions $\alpha:G\to\Aut(A)$ of second countable groups on 
  separable type I $C^*$-algebras: in this situation the dual
 space $\widehat{A}$
   of unitary equivalence classes of irreducible $*$-representations of $A$ is almost Hausdorff and locally compact
  with respect to  the Jacobson (or Fell) topology as described in \cite{Dix}*{Chapter 3}.
If  $\alpha:G\to\Aut(A)$ is an action on a type I $C^*$-algebra, then there is a corresponding topological action 
 $G\curvearrowright \widehat{A}$ given by $(g,[\pi])\mapsto[\pi\circ \alpha_{g^{-1}}]$. 
Moreover,  for each $[\pi]\in\widehat{A}$ the action $\alpha$ 
induces an action $\alpha^\pi$ of the stabilizer $G_\pi$ on the algebra of compact operators
$\mathcal K(H_\pi)$ given by $\alpha^\pi_g=\Ad v_g$, where $v_g\in \U(H_\pi)$ is a choice of unitary
which implements the unitary equivalence $\pi\simeq \pi\circ\alpha_g$ (see for example \cite{CELY}*{Remark 2.7.28}).

\begin{theorem}\label{thm-type I}
Let $G$ be a second countable, exact, locally compact group.  Let $(A,G,\alpha)$ be a $G$-$C^*$-algebra such that $A$ is type I, and such that the induced action on $\widehat{A}$ is regular.
Then $\alpha$ is amenable if and only if for all $[\pi]\in \widehat{A}$ the stabilizer $G_\pi$ is an amenable group.
\end{theorem}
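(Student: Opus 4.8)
The strategy is to reduce to Theorem~\ref{thm-complete-reg} by recognizing $A$ as an $X \rtimes G$-algebra over the regular $G$-space $X = \widehat{A}$, and then identifying the fibre algebras and fibre actions with algebras of compact operators equipped with the induced stabilizer actions $\alpha^\pi$, at which point Observation~\ref{ob2} finishes the job. First I would recall that since $A$ is type~I (hence GCR) and separable, there is a canonical continuous $G$-equivariant map $\Prim(A) \to \widehat{A}$ which, combined with the homeomorphism $\Prim(A) \cong \widehat{A}$ in the type~I case (see \cite{Dix}*{Chapter 3}), shows via \cite{Dana-book}*{Theorem C.26} that $A$ carries the structure of an $\widehat{A} \rtimes G$-algebra: that is, there is a nondegenerate $G$-equivariant $*$-homomorphism $C_0(\widehat{A}) \to Z\M(A)$. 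By hypothesis the induced action $G \curvearrowright \widehat{A}$ is regular, so Theorem~\ref{thm-complete-reg} applies and says that $\alpha$ is amenable if and only if, for every $[\pi] \in \widehat{A}$, the fibre action $\alpha^{[\pi]} : G_{[\pi]} \to \Aut(A_{[\pi]})$ is amenable.

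Next I would identify the fibre $A_{[\pi]}$. For a point $[\pi] \in \widehat{A}$, the closed subset $\overline{\{[\pi]\}} \subseteq \widehat{A}$ corresponds to a quotient of $A$; since $A$ is type~I, the fibre $A_{[\pi]} = A / I_{[\pi]}$ at the \emph{point} $[\pi]$ (i.e.\ the quotient by sections vanishing at $[\pi]$, equivalently by the intersection of primitive ideals lying over $[\pi]$) is isomorphic to $\mathcal{K}(H_\pi)$, the compact operators on the Hilbert space of $\pi$, with $\pi$ inducing this isomorphism. Under this identification, the fibre action $\alpha^{[\pi]}$ of the stabilizer $G_{[\pi]}$ is exactly the action $\alpha^\pi_g = \Ad v_g$ described just before the statement of the theorem, where $v_g \in \U(H_\pi)$ implements the equivalence $\pi \simeq \pi \circ \alpha_{g^{-1}}$. (This is the content of \cite{CELY}*{Remark 2.7.28}; I would either cite it or spell out that the cocycle ambiguity in the choice of $v_g$ does not affect the conjugation action.) Thus the fibre action is precisely $\alpha^\pi : G_\pi \to \Aut(\mathcal{K}(H_\pi))$.

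Finally, Observation~\ref{ob2} states that an action of a locally compact group on $\mathcal{K}(H)$ is amenable if and only if the group itself is amenable. Applying this with the group $G_\pi$ and the action $\alpha^\pi$, the fibre action $\alpha^{[\pi]}$ is amenable if and only if $G_\pi$ is an amenable group. Combining with the equivalence from Theorem~\ref{thm-complete-reg}, we conclude that $\alpha$ is amenable if and only if all stabilizers $G_\pi$, $[\pi] \in \widehat{A}$, are amenable. Note that exactness of $G$ is used through Theorem~\ref{thm-complete-reg} (and in turn through Theorem~\ref{thm-Mats-2}), and closed subgroups of exact groups are exact by \cite{KW}*{Theorem 4.1}, so there is no circularity in invoking amenability criteria for $G_\pi$.

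\textbf{Main obstacle.} The one point requiring genuine care is the identification of the fibre $A_{[\pi]}$ with $\mathcal{K}(H_\pi)$ together with the matching of the fibre action with $\alpha^\pi$: one must be careful that ``fibre over the point $[\pi] \in \widehat{A}$'' in the $X \rtimes G$-algebra sense (quotient by sections vanishing at a point) really does give the compacts, which uses the type~I hypothesis in an essential way (for a general $C^*$-algebra the fibre over a point of the spectrum need not be elementary), and that the $G_{[\pi]}$-action one gets is the conjugation action rather than something twisted. Everything else is a direct appeal to Theorem~\ref{thm-complete-reg} and Observation~\ref{ob2}.
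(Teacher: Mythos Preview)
Your proposed approach has a genuine gap: you want to view $A$ as a $\widehat{A}\rtimes G$-algebra and apply Theorem~\ref{thm-complete-reg} with $X=\widehat{A}$, but the entire $X\rtimes G$-algebra framework (and in particular Theorem~\ref{thm-complete-reg}, its proof via \cite{Ech-reg}, and the cited \cite{Dana-book}*{Theorem~C.26}) requires $X$ to be locally compact \emph{Hausdorff}, since it is built around a structure map $C_0(X)\to Z\M(A)$. For a separable type~I $C^*$-algebra, $\widehat{A}$ is locally compact and \emph{almost Hausdorff}, but in general not Hausdorff; there is no commutative $C^*$-algebra ``$C_0(\widehat{A})$'' with spectrum $\widehat{A}$ to serve as the base. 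The paper is explicit about this distinction: see the remark after Theorem~\ref{thm-Glimm}, where ``regular'' for the action on $\widehat{A}$ is defined via Glimm's theorem and is only identified with Definition~\ref{def-regular} when $X$ is Hausdorff. The special case where $\widehat{A}$ \emph{is} Hausdorff is treated separately in Section~\ref{sec:continuous-trace}.

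This is exactly why the paper's proof takes a longer route. For the forward direction, it uses that each orbit $G([\pi])$ is locally closed (Glimm's theorem, part~\eqref{thm-Glimm 2}), extracts the corresponding $G$-invariant subquotient $A_{G([\pi])}$, identifies it with the induced system $\Ind_{G_\pi}^G(\mathcal{K}(H_\pi),\alpha^\pi)$ via \cite{Ech-ind}, and then applies Corollary~\ref{cor-ind} together with Observation~\ref{ob2}. For the converse, it invokes part~\eqref{thm-Glimm 4} of Glimm's theorem to obtain an ordinal-indexed filtration $(U_\nu)$ of $\widehat{A}$ by $G$-invariant open sets with \emph{Hausdorff} orbit space in each successive quotient $G\backslash(U_{\nu+1}\smallsetminus U_\nu)$; only at that stage can Corollary~\ref{cor-field} (a special case of Theorem~\ref{thm-complete-reg}) be applied, and the argument is completed by transfinite induction using Propositions~\ref{prop-ext-amenable} and~\ref{prop-inductive}. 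Your fibre identification and the final appeal to Observation~\ref{ob2} are fine in spirit, but they only become available after one has passed to these Hausdorff pieces.
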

\begin{proof}
We first observe that it follows from part \eqref{thm-Glimm 3} of Glimm's theorem (Theorem \ref{thm-Glimm}), that for each $[\pi]\in \widehat{A}$ the orbit
$G([\pi])=\{ [\pi\circ \alpha_{g}]: g\in G\}$ is locally closed in $\widehat{A}$. This implies that there are 
$G$-invariant ideals $J\subseteq I\subseteq A$ such that $G([\pi])\cong \widehat{I/J}$:
just take  
$$J=\bigcap\{\ker{\rho}:[\rho]\in G([\pi])\}\quad\text{and}\quad 
I=\bigcap\left\{\ker\sigma: [\sigma]\in \overline{G([\pi])}\smallsetminus G([\pi])\right\}$$ 
and use the well-known 
correspondences between open (respectively closed) subsets of $\widehat{A}$ with the duals of ideals (respectively quotients)
of $A$ as explained in \cite{Dix}*{Chapter 3}. In what follows, we shall write $A_{G([\pi])}$ for this subquotient $I/J$.

Since by Proposition \ref{prop-ext-amenable} amenability passes to ideals and quotients,  it follows that 
amenability of $\alpha$ implies amenability of the induced action $\alpha^{G([\pi])}$ of $G$ on the subquotient $A_{G([\pi])}$.
Since $G([\pi])\cong \widehat{A_{G([\pi])}}$, it follows from \cite{Ech-ind}*{Theorem} that 
$(A_{(G[\pi])}, \alpha^{G([\pi])})$ is isomorphic to the induced system   $\big(\Ind_{G_\pi}^G(\mathcal K(H_\pi),\alpha^\pi), \Ind\alpha^\pi\big)$.
Thus, it follows from Corollary \ref{cor-ind} that $\alpha^{G([\pi])}$ is amenable if and only if the action 
$\alpha^\pi:G_\pi\to \Aut(\mathcal K(H_\pi))$ is amenable, which by Observation \ref{ob2} is equivalent to 
amenability of $G_{[\pi]}$.

So far we observed that under the assumptions of the theorem amenability of $\alpha$ implies amenability of $G_\pi$ for all $[\pi]\in \widehat{A}$. To see 
the converse, let $(U_\nu)$ be a system of $G$-invariant open subsets of $\widehat{A}$ over the ordinal numbers 
as in part \eqref{thm-Glimm 4} of Glimm's theorem (Theorem \ref{thm-Glimm}). Then for each $\nu$ there is a unique $G$-invariant ideal $I_\nu\subseteq A$ such that $U_\nu\cong\widehat{I_\nu}$.
Since $\widehat{A}=U_{\nu_0}=\widehat{I_{\nu_0}}$ for some ordinal number $\nu_0$, we  also have $A=I_{\nu_0}$.

We show by transfinite induction that, if all stabilizers $G_\pi$ are amenable, then 
 the  restrictions  $\alpha^\nu:G\to \Aut(I_\nu)$  of  $\alpha$ to the $G$-ideals $I_\nu$ are
amenable for all $\nu$. For $\nu=0$ we have $U_0=\emptyset$ and therefore $I_0=\{0\}$ and the result is clear.
So suppose now that $0<\nu$ is an ordinal number such that $\alpha^\mu:G\to \Aut(I_\mu)$ is amenable for all 
$\mu<\nu$. If $\nu$ is a limit ordinal, then $U_\nu=\bigcup_{\mu<\nu}U_\mu$ from which it follows that 
$I_\nu=\overline{\bigcup_{\mu<\nu}I_\mu}$. It follows  then from Proposition \ref{prop-inductive} that $\alpha^\nu:G\to \Aut(I_\nu)$
is amenable.

So assume now that $\nu=\mu+1$ for some ordinal $\mu$. By the conditions in item \eqref{thm-Glimm 4} of Glimm's theorem (Theorem \ref{thm-Glimm}) the orbit space
$X_\nu:=G\backslash(U_\nu\smallsetminus U_\mu)\cong G\backslash (\widehat{I_\nu/I_\mu})$ is Hausdorff.
Therefore the $G$-$C^*$-algebra $A_\nu:=I_\nu/I_\mu$ has the structure of an $X_\nu\rtimes G$-algebra
for the trivial $G$-space $X_\nu$, and one checks that the  fibre actions 
$\alpha^{G([\pi])}:G\to \Aut(A_{G([\pi])})$ at  orbits $G([\pi])\in X_\nu$ coincide with the actions 
$(A_{G([\pi])}, \alpha^{G([\pi])})$ as studied above.  As seen above, these fibre systems are amenable
if and only if the groups $G_\pi$ are amenable.  
Thus it follows  from Corollary \ref{cor-field} that amenability of $G_\pi$ for all $[\pi]\in \widehat{A}$ 
implies amenability of the action on $A_\nu =I_\nu/I_\mu$. By assumption, the action on 
$I_\mu$ is amenable as well. Hence Proposition \ref{prop-ext-amenable}  now implies amenability of
$\alpha^\nu:G\to \Aut(I_\nu)$. This finishes the proof.
\end{proof}

\section{Actions on type I $C^*$-algebras with Hausdorff spectrum}\label{sec:continuous-trace}

If $(A,\alpha)$ is a separable type I $G$-$C^*$-algebra with Hausdorff spectrum $\widehat{A}=X$ such that the action of the second countable exact group $G$ is regular in the sense of the previous section, then it is an easy consequence of Theorem \ref{thm-type I} that $\alpha$ is amenable if and only if the action on $C_0(X)$ is amenable. 
We will now show with different methods that  this result holds true 
 without any regularity conditions on the action.
Note that if $\widehat{A}=X$ is Hausdorff, then $A$ has 
a canonical $C_0(X)$-algebra structure via the identification $C_b(X)\cong Z(M(A))$ given by 
the Dauns-Hofmann theorem (see e.g.\ \cite{RW}*{Section A.3} for more details).

\begin{theorem}\label{thm-T2-dual}
Let $\alpha:G\to \Aut(A)$ be an action of a second countable locally compact group
on a separable type I $C^*$-algebra $A$ such that $X=\widehat{A}$ is Hausdorff. Then  $\alpha$ is amenable if and only if the  corresponding action 
on $C_0(X)$  is amenable.
\end{theorem}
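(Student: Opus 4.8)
The plan is to establish both directions via the permanence properties and the structure theory already developed, using the Hausdorff hypothesis on $X=\widehat A$ to pass between $A$ and $C_0(X)$ through localization over $X$. The "only if" direction should be the easier one: if $\alpha$ is amenable, then since $A$ carries a $C_0(X)$-algebra structure via the Dauns--Hofmann identification $C_b(X)\cong Z\M(A)$ (and this structure map is automatically $G$-equivariant when $X$ is identified with $\widehat A$ with its natural $G$-action), there is a canonical $G$-equivariant nondegenerate $*$-homomorphism $\Phi\colon C_0(X)\to Z\M(A)$. I would like to invoke Lemma \ref{lem:funct-strong-amenability}, but $\Phi$ goes \emph{into} $A$ rather than out of it, so instead I will argue in the reverse direction: the enveloping $G$-von Neumann algebra functor (Proposition \ref{evna func}) applied to $\Phi$ gives a normal equivariant $*$-homomorphism $\Phi''\colon C_0(X)_\tau''\to Z\M(A)_\alpha''\subseteq A_\alpha''$ whose image lies in $Z(A_\alpha'')$, since $\Phi$ has central image; hence a net $(\theta_i)$ of positive type functions into $C_0(X)_\tau''$ implementing amenability of the $C_0(X)$-action composes with $\Phi''$ to give such a net into $Z(A_\alpha'')$. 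Wait — that argument proves "$C_0(X)$ amenable $\Rightarrow A$ amenable", i.e. the \emph{"if"} direction. For the "only if" direction I instead use that amenability of $A$ gives, via Corollary \ref{cor-amenable}, a ucp $G$-map $L^\infty(G)\to Z(A_\alpha'')$, and I must produce one landing in $Z(C_0(X)_\tau'')$; here I would use that $Z(A_\alpha'')$ should itself be an algebra over $X$ in a suitable enveloping sense, with a canonical equivariant normal quotient onto $C_0(X)_\tau''$ — more precisely, the central quotient of $A$ onto $C_0(X)$ is not a $*$-homomorphism, so the cleanest route is: restrict attention to the subquotients of $A$ sitting over points and use Theorem \ref{thm-type I}-style reasoning, but without exactness.

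Better: I would reduce to Theorem \ref{mw amen intro}/\ref{thm-amenable-all} and Bearden--Crann. Concretely, by Theorem \ref{Bearden-Crann} amenability of $\alpha$ is equivalent to von Neumann amenability, i.e. amenability of the $G$-von Neumann algebra $A_\alpha''$, which by Anantharaman-Delaroche (Remark \ref{rem-vonNeumannam}) is equivalent to amenability of its centre $Z(A_\alpha'')$. So it suffices to show that $Z(A_\alpha'')$ and $C_0(X)_\tau''$ are amenable as $G$-von Neumann algebras under the same conditions; and since $X$ is Hausdorff with $A$ separable, $C_0(X)_\tau''$ and $Z(A_\alpha'')$ should in fact be \emph{$G$-equivariantly isomorphic}. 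To see this, represent $A$ via its universal covariant representation $(i_A,i_G)$ on $H_u$; the Dauns--Hofmann map realizes $C_b(X)$ inside $Z\M(A)\subseteq Z(A_\alpha'')$, and because $X=\widehat A$ is Hausdorff, every central projection of $A_\alpha''$ arising from a covariant representation is "supported on" a Borel subset of $X$ (using the central-cover correspondence of Proposition \ref{lem-quasi} together with the fact that for type I $A$ with Hausdorff spectrum, quasi-equivalence classes of representations correspond to Borel subsets of $X$, via the direct integral decomposition over $X$). This should yield a normal equivariant $*$-isomorphism $C_0(X)_\tau''\cong Z(A_\alpha'')$.

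The technical heart, then, is the identification $Z(A_\alpha'')\cong C_0(X)_\tau''$ of $G$-von Neumann algebras. I would prove it as follows. First, a faithful covariant representation $(i_A,i_G)$ decomposes: since $A$ is separable type I with Hausdorff spectrum $X$, every nondegenerate representation of $A$ disintegrates over $(X,\mu)$ for an appropriate measure $\mu$ as $\int_X^\oplus (\text{multiple of }\pi_x)\,d\mu(x)$, and this is $G$-equivariantly compatible with the $G$-action on $X$ (using a measurable choice of the implementing unitaries $v_g$, as in \cite{CELY}*{Remark 2.7.28}). Consequently the centre of the bicommutant of $i_A(A)$ is $L^\infty(X,\mu)$, and — running this over all of $A\rtimes_{\max}G$'s representations, i.e. over the universal one — $Z(A_\alpha'')$ is a direct sum/limit of such $L^\infty(X,\mu)$'s glued along the common dense subalgebra $C_0(X)$; by the universal property of the enveloping $G$-von Neumann algebra (Corollary \ref{cor-vN}) applied to the inclusion $C_0(X)\hookrightarrow Z(A_\alpha'')$ (whose image is ultraweakly dense, precisely because $A$ has Hausdorff spectrum so the centre is "generated by $X$"), the induced normal equivariant surjection $C_0(X)_\tau''\to Z(A_\alpha'')$ is the desired isomorphism, with inverse built from the universal property of $C_0(X)_\tau''$ applied to the equivariant map $C_0(X)\to Z(A_\alpha'')$ (its normal extension lands in $C_0(X)_\tau''$ after composing with a standard form / using that $Z(A_\alpha'')$ is $G$-continuously generated by $C_0(X)$). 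Once this isomorphism is in hand, amenability of $\alpha$ $\Leftrightarrow$ amenability of $Z(A_\alpha'')$ $\Leftrightarrow$ amenability of $C_0(X)_\tau''$ $\Leftrightarrow$ amenability of the $C_0(X)$-action, where the first and last equivalences are Theorem \ref{Bearden-Crann} together with Corollary \ref{cor-amenable} (and Remark \ref{rem-vonNeumannam}). \textbf{The main obstacle} I anticipate is making the claim "$C_0(X)$ is ultraweakly dense in $Z(A_\alpha'')$ with matching $G$-continuous structure" fully rigorous: this is where Hausdorffness of $\widehat A$ is essential (it fails badly otherwise, as non-Hausdorff points produce extra central projections not visible from $X$), and it requires care with the disintegration theory and with the interaction between the enveloping-von-Neumann-algebra functor and the $C_0(X)$-structure — in particular checking that no "new" centre appears in $A_\alpha''$ beyond what $C_0(X)$ generates, which I expect to handle via the central-cover bijection of Proposition \ref{lem-quasi} and Renault/Glimm-type measurable selection.
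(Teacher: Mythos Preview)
Your approach differs substantially from the paper's. You reduce to proving $Z(A_\alpha'')\cong C_0(X)_\tau''$ as $G$-von Neumann algebras, after which the result follows at once from Bearden--Crann (Theorem~\ref{Bearden-Crann}) and the Anantharaman-Delaroche fact that von Neumann amenability is detected by the centre (Remark~\ref{rem-vonNeumannam}). The paper instead never computes $Z(A_\alpha'')$: it reduces to the continuous-trace case by transfinite induction (Lemma~\ref{lem-conttrace-ideal} produces a nonzero $G$-invariant continuous-trace ideal, and Propositions~\ref{prop-ext-amenable} and~\ref{prop-inductive} handle extensions and limits on both the $A$-side and the $C_0(X)$-side), and then for continuous-trace $A$ (Lemma~\ref{lem-conttrace}) uses the equivariant Brauer group: taking $[B,\beta]=[A,\alpha]^{-1}$ in $\mathrm{Br}_G(X)$, the balanced tensor product $A\otimes_X B$ is Morita equivalent to $C_0(X)$, and amenability passes through tensor products, quotients, and Morita equivalence. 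No disintegration or measurable selection is needed.

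Your reduction is correct, but the isomorphism $Z(A_\alpha'')\cong C_0(X)_\tau''$ you isolate as ``the main obstacle'' is not established by your sketch. You have exactly one map, the normal extension $\Phi'':C_0(X)_\tau''\to Z(A_\alpha'')$ of the Dauns--Hofmann inclusion; your talk of building an inverse ``from the universal property of $C_0(X)_\tau''$'' is circular, since that universal property only produces maps \emph{out of} $C_0(X)_\tau''$. So you must show $\Phi''$ is bijective. Surjectivity is the easier half: disintegrating any covariant $(\pi,u)$ over $X$ shows $Z(\pi(A)'')=L^\infty(X,\mu_\pi)$ because the fibres are type~I factors, hence $\Phi(C_0(X))$ is ultraweakly dense in $Z(A_\alpha'')$. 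Injectivity is the real content: it amounts to showing that every quasi-equivalence class of covariant representations of $(C_0(X),G)$---equivalently every $G$-continuous measure class on $X$---arises by restricting a covariant representation of $(A,G)$. That is an \emph{extension} problem requiring a measurable field of irreducibles $x\mapsto\pi_x$ together with a measurable cocycle of implementing unitaries $v_g(x):H_{g^{-1}x}\to H_x$; this is Mackey-machine territory and, while doable under your separability and Hausdorffness hypotheses, the phrase ``Renault/Glimm-type measurable selection'' is a placeholder, not a proof. The paper's Brauer-group route trades this analytic construction for an algebraic inversion that uses only permanence properties already in hand.
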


Recall  that a $C^*$-algebra $A$ is called a {\em continuous-trace algebra} if 
it is type I with Hausdorff spectrum $\widehat{A}$ and such that for every $[\pi]\in \widehat{A}$ there
exists an open neighbourhood $U$ of $[\pi]$  in $\widehat{A}$ and a positive element 
$a\in A$ such that $\rho(a)$ is a projection of rank one for all $[\rho]\in U$ (see e.g.\ \cite{Dix}*{Proposition 4.5.3}).
In this case the  ideal 
$A_U\subseteq A$ satisfying $\widehat{A_U}=U$ is Morita equivalent to $C_0(U)$. We refer to \cite{Dix}*{Chapter 10} or 
\cite{RW}*{Chapter 5} for  detailed treatments of 
continuous-trace algebras.

The proof of Theorem \ref{thm-T2-dual} will  use the structure of the equivariant Brauer group as introduced in \cite{CKRW}, and which has  been used already  in the special case $X=\{\pt\}$ in Section \ref{sec:example} above. 
For this recall that if $X$ is a  paracompact locally compact  $G$-space, then the elements of the equivariant Brauer group 
$\mathrm{Br}_G(X)$ are the $X\rtimes G$-equivariant  Morita equivalence classes $[A,\alpha]$ 
of systems $(A,G,\alpha)$ 
in which $A$ is a separable continuous-trace $C^*$-algebra with spectrum $\widehat{A}\cong X$ and $\alpha:G\to\Aut(A)$ is an
action which covers the given action of $G$ on $X$ via the identification $X\cong \widehat{A}$.
It is shown in \cite{CKRW}*{Theorem 3.6} that $\mathrm{Br}_G(X)$ becomes an abelian group 
if we define multiplication of two elements $[A,\alpha]$ and $[B,\beta]$ by 
$$[A,\alpha]\cdot [B,\beta]=[A\otimes_X B, \alpha\otimes_X\beta],$$
where $A\otimes_XB$ denotes the $C_0(X)$-balanced tensor product of $A$ with $B$. This can be defined as
the quotient
$(A\otimes B)/J_X$, where $J_X$ denotes the ideal in $A\otimes B$ generated by all elements of the form
$$af\otimes b-a\otimes fb;\quad a\in A, ~b\in B, ~f\in C_0(X).$$
The neutral element is given by the class
$[C_0(X), \tau]$, where $\tau:G\to \Aut(C_0(X))$ is the action associated to the given $G$-action on $X$.
If $X=\{\pt\}$, this just boils down to the group of Morita equivalence classes of actions on compact operators on Hilbert spaces as 
used in Section \ref{sec:example} above.

\begin{lemma}\label{lem-conttrace}
The statement of Theorem \ref{thm-T2-dual} holds true if, in addition, $A$ is a continuous-trace algebra.
\end{lemma}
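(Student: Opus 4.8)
\textbf{Plan of proof of Lemma \ref{lem-conttrace}.}

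The strategy is to reduce the continuous-trace case to the already-established results by exploiting the equivariant Brauer group $\mathrm{Br}_G(X)$, together with the fact that amenability is a Morita-invariant of $G$-$C^*$-algebras (Proposition \ref{pro:Morita}). Since $A$ is a separable continuous-trace algebra with $\widehat A = X$ Hausdorff, we may paracompactify: $X$ is locally compact Hausdorff and second countable, hence paracompact, so $[A,\alpha]$ defines a class in $\mathrm{Br}_G(X)$. The key algebraic observation is that $[A,\alpha]\cdot[A^{\op},\alpha^{\op}] = [C_0(X),\tau]$ in $\mathrm{Br}_G(X)$, i.e. $A\otimes_X A^{\op}$ is $X\rtimes G$-equivariantly Morita equivalent to $C_0(X)$ with the translation action. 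This is the fibrewise/bundle version of the fact used in Remark \ref{rem:op} that $\alpha^{\om}\otimes(\alpha^{\om})^{\op}$ is trivial in $\mathrm{Br}_G(\{\pt\})$; it follows from \cite{CKRW}*{Theorem 3.6} since the opposite class is the inverse in $\mathrm{Br}_G(X)$.

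First I would prove the ``only if'' direction: if $\alpha$ is amenable, then by Morita invariance (Proposition \ref{pro:Morita}) every $G$-$C^*$-algebra Morita equivalent to $A$ is amenable. One then needs to connect $A$ to $C_0(X)$. For this, note that $A$ and $C_0(X)$ need not be Morita equivalent globally (the Dixmier--Douady class can be nontrivial), but the argument goes via tensor products: amenability of $\alpha$ implies amenability of $\alpha\otimes_X\alpha^{\op}$ on $A\otimes_X A^{\op}$ — here I would use that $A\otimes_X A^{\op}$ is a quotient of $A\otimes_{\max}A^{\op}$, that amenability of $\alpha$ gives amenability of $\alpha\otimes_{\max}\alpha^{\op}$ by Theorem \ref{thm-Mats-2}, and that amenability passes to quotients by Proposition \ref{prop-ext-amenable}. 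Since $A\otimes_X A^{\op}$ is equivariantly Morita equivalent to $(C_0(X),\tau)$, Proposition \ref{pro:Morita} yields amenability of the action on $C_0(X)$.

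For the ``if'' direction, suppose the action on $C_0(X)$ is amenable. I would again use the Brauer group: $[C_0(X),\tau] = [A,\alpha]\cdot[A^{\op},\alpha^{\op}]$, but more usefully I would observe that $A\otimes_X A = A\otimes_X (A^{\op})^{\op}$ relates $[A,\alpha]^{2}$ and that, by Morita-invariance under $\otimes_X$, amenability of $(C_0(X),\tau)$ combined with the group structure lets one ``cancel''. Concretely: amenability of $C_0(X)$ implies (via Theorem \ref{thm-Mats-2} applied with $B = A\otimes_{\max}A$, then passing to the $C_0(X)$-balanced quotient) amenability of $C_0(X)\otimes_X A\otimes_X A \cong A\otimes_X A$; but $A\otimes_X A^{\op}\otimes_X A\otimes_X A^{\op}\sim_M A\otimes_X A^{\op}$ under the Brauer multiplication, and iterating one arrives at amenability of $A\otimes_X A^{\op}\sim_M C_0(X)$ — this is consistent but not yet $A$ itself. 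The cleanest route is instead: $[A,\alpha] = [A,\alpha]\cdot[C_0(X),\tau]$ and, since the action on $C_0(X)$ is amenable, use Corollary \ref{cor-weakam} to get that $\alpha\otimes\tau$ on $A\otimes_{\max}C_0(X)$ has the weak containment property; but $A\otimes_{\max}C_0(X)$ surjects onto $A\otimes_X C_0(X)\cong A$ equivariantly, compatibly with the identity $C_0(X)$-structure, so using the $X\rtimes G$-algebra machinery together with the fact that over a Hausdorff base the relevant crossed products localize over $X$, one concludes $A\rtimes_{\max}G = A\rtimes_{\red}G$. Then to upgrade weak containment to amenability I would invoke Theorem \ref{thm-exact-commutative} at the level of the base and the continuous-trace structure: each fibre of $A$ is $\mathcal K(H_x)$ with $G_x$ acting, $G_x$ is amenable because $(C_0(X),\tau)$ amenable forces (via Proposition \ref{rem-amenable} and topological amenability of $G\curvearrowright X$) amenability of all stabilizers, and then Theorem \ref{thm-complete-reg}/\ref{thm-type I} would finish — but that needs exactness and regularity, which we are trying to avoid.

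\textbf{Main obstacle.} The genuine difficulty is proving the ``if'' direction \emph{without} exactness or regularity hypotheses on the action. The honest argument, I expect, is: amenability of $(C_0(X),\tau)$ implies amenability of $(A\otimes_X A^{\op},\alpha\otimes_X\alpha^{\op})$ directly (because $A\otimes_X A^{\op}$ is a $C_0(X)$-algebra and amenability of the base action plus the Brauer-triviality $[A,\alpha]\cdot[A^{\op},\alpha^{\op}]=[C_0(X),\tau]$ identifies it, up to equivariant Morita equivalence, with $(C_0(X),\tau)$), hence amenability of $(A\otimes_X A^{\op})$; then one must pass from amenability of $A\otimes_X A^{\op}$ back to amenability of $A$. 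This last step is exactly the continuous-trace analogue of the equivalence ``\eqref{thm-Mats-2 4} $\Rightarrow$ \eqref{thm-Mats-2 1}'' in Theorem \ref{thm-Mats-2}, and the plan is to run that same Haagerup-standard-form argument fibrewise/relatively over $X$: realize $A_\alpha''$ and $(A^{\op})''_{\alpha^{\op}}$ in a common standard form with $\pi(A)' = \pi^{\op}((A^{\op})'')$ (Theorem \ref{thm-Haagerup}), note that because $A$ is continuous-trace the relative commutant computation gives $(\pi\times\pi^{\op})(A\otimes_X A^{\op})' = \pi(A)'\cap\pi(A)'' = Z(\pi(A)'')\cong Z(A_\alpha'')$ just as in the proof of Theorem \ref{thm-Mats-2} (the $\otimes_X$ rather than $\otimes_{\max}$ does not change the commutant since the extra relations are central), and conclude that commutant amenability of this representation — which follows from amenability of $A\otimes_X A^{\op}$ via Proposition \ref{prop-general} — forces amenability of $\alpha$. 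Verifying that the $C_0(X)$-balancing is harmless for the commutant computation, and that $A\otimes_X A^{\op}\sim_M C_0(X)$ equivariantly, are the two technical points I would need to nail down carefully; everything else is a reassembly of Theorem \ref{thm-Mats-2}, Proposition \ref{pro:Morita}, and Proposition \ref{prop-ext-amenable}.
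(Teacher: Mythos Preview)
Your ``only if'' direction is essentially the paper's argument (the paper uses an abstract representative $(B,\beta)$ of $[A,\alpha]^{-1}$ in $\mathrm{Br}_G(X)$ rather than $A^{\op}$ specifically, but the mechanism is identical: amenability passes to $A\otimes_{\max}B$ by Theorem~\ref{thm-Mats-2}, then to the quotient $A\otimes_X B$ by Proposition~\ref{prop-ext-amenable}, and $A\otimes_X B\sim_M C_0(X)$ finishes via Proposition~\ref{pro:Morita}).

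For the ``if'' direction you have badly overcomplicated things. You correctly write $[A,\alpha]=[A,\alpha]\cdot[C_0(X),\tau]$, but then abandon this for weak-containment arguments and a fibrewise Haagerup standard form. The point you are missing is that the same three-step mechanism you used in the forward direction works verbatim here, with the roles of the two factors swapped: if $\tau$ is amenable, then $\tau\otimes\alpha$ on $C_0(X)\otimes_{\max}A$ is amenable (Theorem~\ref{thm-Mats-2}), hence so is the quotient $C_0(X)\otimes_X A$ (Proposition~\ref{prop-ext-amenable}); but $C_0(X)\otimes_X A\cong A$ as $G$-$C^*$-algebras, since $C_0(X)$ acts as the identity in the balanced tensor product over itself. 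That is the entire argument --- no Brauer inverses, no standard form, no exactness or regularity hypotheses are needed.

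Your Haagerup-standard-form route could in principle be made to work (one must check that $\pi\times\pi^{\op}$ factors through $A\otimes_X A^{\op}$, which comes down to the standard-form identity $JzJ=z^*$ for central $z$), but it is a detour: you would be proving ``amenability of $A\otimes_X A^{\op}$ $\Rightarrow$ amenability of $A$'' by mimicking \eqref{thm-Mats-2 4}$\Rightarrow$\eqref{thm-Mats-2 1} of Theorem~\ref{thm-Mats-2}, when the direct tensor-then-quotient argument gives the conclusion in one line.
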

\begin{proof}
Theorem \ref{thm-Mats-2} implies that if $\alpha:G\to \Aut(A)$ is amenable and $\beta:G\to\Aut(B)$ is any action, 
then the diagonal action $\alpha\otimes\beta:G\to \Aut(A\otimes_{\mathrm{\max}} B)$  is amenable. Since 
amenability always passes to  quotients by $G$-invariant ideals (Proposition \ref{prop-ext-amenable}) it then follows that amenability of $\alpha$ also passes to 
diagonal actions on balanced tensor products $A\otimes_XB$. 
 Thus, if $\alpha:G\to \Aut(A)$ is an amenable action on 
the continuous-trace algebra $A$ with spectrum $X=\widehat{A}$, and  if $(B,G,\beta)$ is a representative of the inverse class $[A,\alpha]^{-1}$ in $\mathrm{Br}_G(X)$, 
then the amenable action $(A\otimes_XB, G,\alpha\otimes_X\beta)$ is equivariantly Morita equivalent to $(C_0(X), G,\tau)$.
Since amenability is stable under $G$-equivariant Morita equivalences by Proposition \ref{pro:Morita},
it follows that $\tau:G\to \Aut(C_0(X))$ is amenable.

For the converse, assume that $\tau:G\to \Aut(C_0(X))$ is amenable. Then 
 $(A,\alpha)\cong (C_0(X)\otimes_XA, \tau\otimes_X\alpha)$
 is amenable as  well.
\end{proof}

The following lemma, which is possibly well known, provides a tool to reduce the proof of Theorem \ref{thm-T2-dual} 
to the case of continuous-trace algebras. 

\begin{lemma}\label{lem-conttrace-ideal}
Let $(A,\alpha)$ be a separable type I $G$-$C^*$-algebra such that $\widehat{A}$ is Hausdorff. Then there exists a non-zero closed $G$-invariant continuous-trace ideal $I$ of $A$.
\end{lemma}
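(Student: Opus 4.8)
\textbf{Proof plan for Lemma \ref{lem-conttrace-ideal}.}

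The plan is to exploit the structure theory of type I $C^*$-algebras, using the existence of a continuous-trace ideal in any type I algebra, and then arrange for this ideal to be $G$-invariant by taking a suitable orbit-closure. First I would recall the classical fact (see \cite{Dix}*{Proposition 4.5.3} or \cite{Dix}*{\S 4.5}) that a non-zero type I $C^*$-algebra $A$ always has a non-zero closed two-sided ideal $J$ which is a continuous-trace algebra; concretely, one takes the (non-empty, open) set of continuous-trace points of $\widehat A$ and lets $J$ be the ideal with $\widehat J$ equal to that open set. The point $\widehat A$ being Hausdorff is only used to make the bookkeeping cleaner; the key input is the general structure result. The subtlety is that this $J$ need not be $G$-invariant, so the main work is to produce an invariant one from it.

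The key step is to observe that the continuous-trace property is preserved under the $G$-action and is ``local'' on the spectrum, so that the union of the translates of $\widehat J$ is again an open set of continuous-trace points. Precisely: for $g\in G$, the automorphism $\alpha_g$ carries $J$ to another continuous-trace ideal $\alpha_g(J)$ whose spectrum is the translate $g\cdot \widehat J \subseteq \widehat A$ under the induced action $G\curvearrowright\widehat A$; this uses that $[\pi]\mapsto[\pi\circ\alpha_{g^{-1}}]$ is a homeomorphism of $\widehat A$ and that a point $[\rho]$ is a continuous-trace point if and only if $[\rho\circ\alpha_{g^{-1}}]$ is. Hence the set $U:=\bigcup_{g\in G} g\cdot\widehat J$ is open, $G$-invariant, non-empty, and every point of $U$ is a continuous-trace point of $\widehat A$. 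Letting $I\subseteq A$ be the (necessarily $G$-invariant, since $U$ is) ideal with $\widehat I=U$, I would then argue that $I$ itself is a continuous-trace algebra: indeed $\widehat I=U$ is Hausdorff (as an open subspace of the Hausdorff space $\widehat A$), $I$ is type I as an ideal in a type I algebra, and the continuous-trace condition at each point of $\widehat I$ holds because it holds at each point of $U$ computed inside $A$, and the relevant rank-one-projection witnesses can be taken inside $I$ since $I$ is an ideal (restrict a positive $a\in A$ witnessing the condition on a neighborhood to a slightly smaller neighborhood inside $U$ and cut down by an element of $I$). This exhibits $I$ as the desired non-zero closed $G$-invariant continuous-trace ideal.

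I expect the main obstacle to be the verification that the ideal $I$ attached to the invariant open set $U$ is genuinely a continuous-trace $C^*$-algebra rather than merely type I with Hausdorff spectrum — i.e.\ checking the pointwise rank-one projection condition, including the fact that the witnessing elements can be chosen in $I$ and vary continuously. This is essentially a matter of carefully invoking \cite{Dix}*{Proposition 4.5.3} together with the standard correspondence between open subsets of $\widehat A$ and ideals of $A$ (\cite{Dix}*{Chapter 3}), and noting that the continuous-trace property of an ideal is inherited from that of the ambient algebra on the corresponding open subset. A minor secondary point is that one should make sure $U\neq\emptyset$, which is immediate since $\widehat J\subseteq U$ and $\widehat J$ is non-empty. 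Everything else — invariance of $U$, openness, the translation computation — is routine once the structure result is in hand.
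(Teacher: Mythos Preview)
Your proposal is correct and follows essentially the same route as the paper: start from a non-zero continuous-trace ideal $J$ (the paper cites \cite{Dix}*{Theorem 4.5.5}), form the $G$-invariant open set $U=\bigcup_{g\in G} g\cdot\widehat J$, and let $I$ be the corresponding ideal. For the verification that $I$ has continuous trace, the paper's argument is marginally cleaner than your cut-down idea: since any $[\pi]\in U$ lies in some $g\cdot\widehat J=\widehat{\alpha_g(J)}$ and $\alpha_g(J)\subseteq I$ is itself a continuous-trace ideal, the rank-one-projection witness at $[\pi]$ can be taken directly in $\alpha_g(J)\subseteq I$ without any further modification.
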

\begin{proof} It follows from \cite{Dix}*{Theorem 4.5.5} that there exists a non-zero con\-tin\-uous-trace closed ideal $J$ of $A$.
Let $V=\widehat{J}\subseteq \widehat{A}$ and let $U=\bigcup_{g\in G} g\cdot V$. Then $U$ is a non-empty $G$-invariant open subset of 
$\widehat{A}$. Let $I\subseteq A$ denote the $G$-invariant ideal such that $U=\widehat{I}\subseteq \widehat{A}$.
Then $\widehat{I}$ is Hausdorff. Thus in order to check that $I$ is a continuous-trace algebra, we only need to check 
that  for each $[\pi]\in \widehat{I}$ there exists an open neighbourhood  $W$ of $[\pi]$ and a positive element $a\in I$ such that 
$\rho(a)$ is a rank one projection for all $[\rho]\in W$. But this follows easily from the fact that $[\pi]\in g\cdot V=\widehat{\alpha_g(J)}$
for some $g\in G$ and that $\alpha_g(J)\subseteq I$ has continuous trace for all $g\in G$.
\end{proof}

\begin{proof}[Proof of Theorem \ref{thm-T2-dual}]
It follows from Lemma \ref{lem-conttrace-ideal} together with transfinite induction that there exists an increasing  system of 
$G$-invariant ideals $(I_\nu)_\nu$,  indexed by the ordinal numbers,  such that $A=I_{\mu_0}$ for some ordinal $\mu_0$,   $I_\mu=\overline{\bigcup_{\nu<\mu}I_\nu}$ for every limit ordinal $\mu$, and  for every $\nu$ the quotient $I_{\nu+1}/I_\nu$ is a contin\-uous-trace algebra.  Likewise, if $U_\nu=\widehat{I_\nu}$, $(C_0(U_\nu))_\nu$ is a system of ideals in $C_0(X)$ 
with the same properties. It follows then from a combination of Proposition \ref{prop-ext-amenable} with Proposition \ref{prop-inductive}
that $\alpha:G\to\Aut(A)$ is amenable if and only if the actions on the quotients $I_{\nu+1}/I_\nu$ are all amenable
and, similarly,  that the action on $C_0(X)$  is amenable if and only if the actions on
 $C_0(U_{\nu+1}\smallsetminus U_\nu)=C_0(U_{\nu+1})/C_0(U_\nu)$ are all amenable.
 Since $U_{\nu+1}\smallsetminus U_\nu=(I_{\nu+1}/I_\nu)^\dach$, the result now follows from Lemma \ref{lem-conttrace}.
 \end{proof}

Since by \cite{Bearden-Crann}*{Corollary 4.14}, an action $\alpha:G\to \Aut(C_0(X))$ is amenable if and only if it is strongly amenable, we get the following corollary:

\begin{corollary}\label{cor-strong-typeI}
Let $(A,\alpha)$ be a separable type I $G$-$C^*$-algebra such that $\widehat{A}$ is Hausdorff, and $G$ is second countable.
 Suppose also that $X=\widehat{A}$ is Hausdorff.  Then $\alpha$ is amenable if and only if $\alpha$ is strongly amenable. \qed
\end{corollary}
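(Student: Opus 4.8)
The statement to prove is Corollary~\ref{cor-strong-typeI}: if $(A,\alpha)$ is a separable type I $G$-$C^*$-algebra with $G$ second countable and $X = \widehat{A}$ Hausdorff, then $\alpha$ is amenable if and only if it is strongly amenable. The plan is to combine the just-proved Theorem~\ref{thm-T2-dual}, which reduces amenability of $\alpha$ to amenability of the induced action $\tau$ on $C_0(X)$, with the deep result of Bearden and Crann \cite{Bearden-Crann}*{Corollary 4.14}, which says that for actions on commutative $C^*$-algebras amenability and strong amenability coincide. The only genuine work is to transport the strong amenability witnessed on $C_0(X)$ back up to $A$, using functoriality of strong amenability under equivariant nondegenerate $*$-homomorphisms that respect centres of multiplier algebras (Lemma~\ref{lem:funct-strong-amenability}).

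First I would note one direction is trivial: strong amenability implies amenability by Remark~\ref{rem-SAtoA}, in complete generality. So it remains to prove that amenability of $\alpha$ implies strong amenability of $\alpha$. Assume $\alpha$ is amenable. By Theorem~\ref{thm-T2-dual}, the induced action $\tau \colon G \to \Aut(C_0(X))$ is amenable, where the $C_0(X)$-structure on $A$ comes from the Dauns--Hofmann identification $C_b(X) \cong Z\M(A)$. Since $C_0(X)$ is commutative, \cite{Bearden-Crann}*{Corollary 4.14} tells us that $\tau$ is in fact strongly amenable; equivalently, by Proposition~\ref{rem-amenable}, the action $G \curvearrowright X$ is topologically amenable. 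Concretely, there is a net $(\theta_i \colon G \to \M(C_0(X)) = C_b(X))$ of norm-continuous, compactly supported, positive type functions with $\|\theta_i(e)\| \le 1$ and $\theta_i(g) \to 1$ strictly, uniformly on compact subsets of $G$.

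Now I would push this net into $A$. The structure map $\Phi \colon C_0(X) \to Z\M(A)$ is a nondegenerate $G$-equivariant $*$-homomorphism, so its strictly continuous extension $\bar\Phi \colon \M(C_0(X)) = C_b(X) \to \M(A)$ carries $Z\M(C_0(X)) = C_b(X)$ into $Z\M(A)$ (indeed the image lands in $Z\M(A)$ by construction of the $C_0(X)$-algebra structure, since $\Phi(C_0(X)) \subseteq Z\M(A)$ and the extension is unital into $\M(A)$ with central image). Since $\bar\Phi$ is a strictly continuous unital $*$-homomorphism, the net $(\bar\Phi \circ \theta_i \colon G \to Z\M(A))$ consists of norm-continuous, compactly supported, positive type functions (positive type is preserved by any $*$-homomorphism applied entrywise to the defining matrices), with $\|\bar\Phi(\theta_i(e))\| \le 1$ and $\bar\Phi(\theta_i(g)) \to \bar\Phi(1) = 1_{Z\M(A)}$ strictly, uniformly on compact subsets of $G$ — strict continuity of $\bar\Phi$ converts strict convergence to strict convergence. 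This is exactly the data required by Definition~\ref{def-amenable (SA)}, so $\alpha$ is strongly amenable. Alternatively, one can invoke Lemma~\ref{lem:funct-strong-amenability} directly with $\Phi$ as above: its hypothesis that $\bar\Phi$ maps $Z\M(C_0(X))$ into $Z\M(A)$ is satisfied, so strong amenability passes from $(C_0(X),\tau)$ to $(A,\alpha)$.

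I do not anticipate a serious obstacle here; the statement is essentially a corollary in the literal sense, being a one-line assembly of Theorem~\ref{thm-T2-dual} and \cite{Bearden-Crann}*{Corollary 4.14}. The only point requiring a moment's care is checking that the map $\Phi$ relating $C_0(X)$ and $A$ is genuinely nondegenerate and central-into-central, so that Lemma~\ref{lem:funct-strong-amenability} applies — but this is immediate from the definition of the Dauns--Hofmann $C_0(X)$-algebra structure on a type I $C^*$-algebra with Hausdorff spectrum (see \cite{RW}*{Section A.3}), under which $C_0(X)$ acts nondegenerately on $A$ via central multipliers. So the proof is simply: trivial direction by Remark~\ref{rem-SAtoA}; nontrivial direction by Theorem~\ref{thm-T2-dual}, then \cite{Bearden-Crann}*{Corollary 4.14} on $C_0(X)$, then Lemma~\ref{lem:funct-strong-amenability} to lift back to $A$.
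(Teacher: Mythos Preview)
Your proof is correct and follows the paper's intended argument: combine Theorem~\ref{thm-T2-dual} with \cite{Bearden-Crann}*{Corollary 4.14}, exactly as the paper indicates in the sentence preceding the corollary. You are slightly more explicit than the paper in spelling out the lifting step from $C_0(X)$ back to $A$ via Lemma~\ref{lem:funct-strong-amenability}; the paper treats this as immediate since the Dauns--Hofmann map identifies $C_b(X)\cong Z\M(A)$ equivariantly, so the $Z\M(C_0(X))$-valued positive type functions witnessing strong amenability of $(C_0(X),\tau)$ \emph{are} $Z\M(A)$-valued positive type functions witnessing strong amenability of $(A,\alpha)$.
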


\begin{remark}
Notice that the results of this section have no analogue if one replaces amenability by the weaker notion of commutant amenability (or weak containment): it can happen, for instance, that a continuous trace $G$-$C^*$-algebra $A$ is commutant amenable but the induced action on $C_0(\widehat{A})$ is not commutant amenable. This follows from our counterexamples in Section~\ref{sec:example} where $A$ is the algebra of compact operators on a Hilbert space, so continuous trace.
\end{remark}

\chapter{Regularity properties}\label{sec:G-WEP}

In this chapter, we collect together some miscellaneous results about regularity properties.  

In Section \ref{sec:pass} we show that various $C^*$-algebraic regularity properties -- for example, nuclearity -- pass from $A$ to $A\rtimes_{\max} G\cong A\rtimes_\red G$ if the action is amenable.  The key tools here are standard facts about the interactions between crossed products and tensor products, and Proposition \ref{prop-general} which tells us that the reduced and maximal crossed products agree for an amenable action.

In Section \ref{sec:inj} we characterize exactness of a group in terms of amenability of actions on injective $C^*$-algebras; this generalizes an earlier result of ours \cite{Buss:2019}*{Theorem 8.3} from actions of discrete groups to actions of general locally compact groups.  

Finally, in Section \ref{sec:gwep} we introduce a property called the \emph{continuous $G$-WEP}.  This is an equivariant analogue of the WEP that seems appropriate to locally compact groups.  We show that the continuous $G$-WEP is closely related to amenability and the weak containment property.

\section{Properties passing to the crossed product by an amenable action}\label{sec:pass}

In this section, we show that amenable actions have good permanence properties with respect to nuclearity, exactness, the WEP, the LLP, and the LP: roughly, we show that an amenable $G$-$C^*$-algebra $A$ has any of these properties if and only if $A\rtimes_{\red}G$ does. 

With the exception of the LP, we state all results only in terms of the reduced crossed product for brevity, but note that 
 amenability implies $A\rtimes_{\max}G=A\rtimes_\red G$ by Proposition \ref{prop-general}.
Indeed, at least nuclearity, exactness, the WEP, and the LLP  pass from $A\rtimes_\red G$ to $A$ for arbitrary actions. 
We are grateful to \mcomment{one of the referees} for pointing out to us an easy argument for this fact,
which replaces a  more complicated one we used in an earlier version. 
We do not know whether the LP always passes from $A\rtimes_\red G$ to $A$,\footnote{This is true in many cases: for instance, if $G$ is discrete, then the LP passes from $A\rtimes_\red G$ to $A$ as it is the image of a conditional expectation.} but we shall see below that 
(for separable systems $(A,G,\alpha)$) it always passes from the maximal crossed product $A\rtimes_\max G$ to $A$.

We first start with a lemma.  For the statement, let us say that a pair $(A,D)$ of $C^*$-algebras is nuclear if the canonical map $A\otimes_{\max} D\to A\otimes D$ is an isomorphism. The proof of the first statement in the lemma was suggested to us by \mcomment{one of the referees}.

\begin{lemma}\label{nuc pair}
Let $(A,\alpha)$ be a $G$-$C^*$-algebra, and let $D$ be a $C^*$-algebra.  If $(A\rtimes_\red G, D)$ is a nuclear pair, then so is $(A,D)$. Conversely, if $(A,\alpha)$ is amenable and $(A,D)$ is a nuclear pair, then so is $(A\rtimes_\red G, D)$. 
\end{lemma}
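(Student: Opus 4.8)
The plan is to prove the two implications separately, using the standard interplay between crossed products and maximal/minimal tensor products. For the first implication, I would start from the observation that for any $C^*$-algebra $D$, there is a natural isomorphism
$$(A\rtimes_\red G)\otimes_{\max} D\cong (A\otimes_{\max} D)\rtimes_\red G,$$
where on the right $G$ acts via $\alpha\otimes\id_D$; this follows from the fact that $\otimes_{\max}$ commutes with reduced crossed products (see e.g.\ the discussion around the regular representation in Section~\ref{Sec:Prel}, or standard references). Similarly, $\otimes_{\min}$ always commutes with reduced crossed products, giving a natural isomorphism $(A\rtimes_\red G)\otimes D\cong (A\otimes D)\rtimes_\red G$. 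Under these identifications the canonical surjection $(A\rtimes_\red G)\otimes_{\max} D\onto (A\rtimes_\red G)\otimes D$ becomes the map $(A\otimes_{\max}D)\rtimes_\red G\to (A\otimes D)\rtimes_\red G$ induced by the canonical surjection $q\colon A\otimes_{\max}D\onto A\otimes D$ at the level of coefficient algebras. Now $q$ is a $G$-equivariant surjection with kernel some $G$-invariant ideal $J\sbe A\otimes_{\max}D$. If $(A,D)$ is \emph{not} a nuclear pair, then $J\neq 0$, and since the reduced crossed product functor is exact for actions of the (fixed, but arbitrary) group $G$ on a short exact sequence $0\to J\to A\otimes_{\max}D\to A\otimes D\to 0$ \emph{provided} $G$ is exact --- which we do not want to assume. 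So instead I would argue directly: if $(A\rtimes_\red G, D)$ is a nuclear pair, then $(A\otimes_{\max}D)\rtimes_\red G\to (A\otimes D)\rtimes_\red G$ is an isomorphism; composing with the canonical inclusion $A\otimes_{\max}D\into \M((A\otimes_{\max}D)\rtimes_\red G)$ (faithful since the reduced crossed product is faithful on the coefficient algebra) and using that $q$ factors this inclusion through $A\otimes D$, we conclude that $q$ itself is injective, hence an isomorphism, so $(A,D)$ is a nuclear pair. This is the cleaner argument and avoids exactness entirely.

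For the converse, suppose $(A,\alpha)$ is amenable and $(A,D)$ is a nuclear pair, so $A\otimes_{\max}D = A\otimes D$. The diagonal action $\alpha\otimes\id_D$ on $A\otimes_{\max}D$ is amenable: this is exactly the content of Theorem~\ref{thm-Mats-2}, \eqref{thm-Mats-2 1}$\Rightarrow$\eqref{thm-Mats-2 2}, applied with $(B,\beta)=(D,\id_D)$ (a trivial action; note $(D,\id_D)$ is certainly a $G$-$C^*$-algebra). By Proposition~\ref{prop-general}, amenability implies the weak containment property, so $(A\otimes_{\max}D)\rtimes_\max G = (A\otimes_{\max}D)\rtimes_\red G$. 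Using $A\otimes_{\max}D=A\otimes D$, and the natural isomorphisms $(A\rtimes_\red G)\otimes_{\max}D\cong (A\otimes_{\max}D)\rtimes_\red G$ and $(A\rtimes_\red G)\otimes D\cong (A\otimes D)\rtimes_\red G$ recalled above, we then get a chain
$$(A\rtimes_\red G)\otimes_{\max}D\cong (A\otimes_{\max}D)\rtimes_\red G\cong (A\otimes D)\rtimes_\red G\cong (A\rtimes_\red G)\otimes D,$$
and one checks the composite is the canonical surjection, so $(A\rtimes_\red G,D)$ is a nuclear pair.

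\textbf{Main obstacle.} The genuinely content-laden input is the amenability-of-tensor-products result (Theorem~\ref{thm-Mats-2}) together with the weak containment consequence (Proposition~\ref{prop-general}); everything else is bookkeeping with the standard compatibilities between crossed products and tensor products. The one subtlety to be careful about is making sure the natural isomorphisms $(A\rtimes_\red G)\otimes_\bullet D\cong (A\otimes_\bullet D)\rtimes_\red G$ (for $\bullet\in\{\max,\min\}$) are genuinely natural and intertwine the canonical quotient maps, so that "nuclear pair" really does transport along them; this is routine but deserves a sentence. I do not expect to need any separability or exactness hypotheses for this particular lemma.
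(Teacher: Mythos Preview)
Your argument rests on the claim that there is a natural isomorphism
\[
(A\rtimes_\red G)\otimes_{\max} D\;\cong\;(A\otimes_{\max} D)\rtimes_\red G,
\]
which you cite as ``the fact that $\otimes_{\max}$ commutes with reduced crossed products.'' This is \emph{not} a general fact. The standard untwisting isomorphisms (e.g.\ \cite{CELY}*{Lemma~2.4.1}) pair $\otimes_{\max}$ with $\rtimes_{\max}$ and $\otimes_{\min}$ with $\rtimes_\red$; mixing $\otimes_{\max}$ with $\rtimes_\red$ only gives a \emph{surjection} in general. For a concrete failure, take $A=\C$ with trivial action of a non-amenable discrete group $G$: then the left side is $C^*_\red(G)\otimes_{\max}D$ while the right side is $D\rtimes_{\id,\red}G\cong D\otimes_{\min}C^*_\red(G)$, and these agree for all $D$ only if $C^*_\red(G)$ is nuclear.

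This breaks both halves of your argument as written. For the first implication the repair is easy and is exactly what the paper does: one only has a chain of surjections
\[
(A\rtimes_\red G)\otimes_\max D\onto (A\otimes_\max D)\rtimes_\red G\onto (A\otimes D)\rtimes_\red G\cong (A\rtimes_\red G)\otimes D,
\]
whose composite is the canonical quotient; nuclearity of $(A\rtimes_\red G,D)$ makes the composite injective, hence each factor is injective, and in particular $A\otimes_\max D\to A\otimes D$ is injective. For the converse, the isomorphism you want is actually \emph{true} under the amenability hypothesis, but it needs proof: the paper obtains it by routing through the maximal crossed product,
\[
(A\rtimes_\red G)\otimes_\max D\cong (A\rtimes_\max G)\otimes_\max D\cong (A\otimes_\max D)\rtimes_\max G\cong (A\otimes_\max D)\rtimes_\red G,
\]
using amenability of $\alpha$ for the first step, the genuine $\max$--$\max$ untwisting for the second, and amenability of $\alpha\otimes\id_D$ (Theorem~\ref{thm-Mats-2}) together with Proposition~\ref{prop-general} for the third. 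Once you insert this detour, your chain in the converse is correct. So the overall strategy is right, but you must replace the asserted isomorphism by a surjection in the first part and justify it via $\rtimes_{\max}$ in the second.
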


\begin{proof}
For every $C^*$-algebra $D$ equipped with the trivial $G$-action, there is a canonical sequence of surjective $*$-homomorphisms
$$(A\rtimes_\red G)\otimes_\max D\onto (A\otimes_\max D)\rtimes_\red G\onto (A\otimes D)\rtimes_\red G\cong (A\rtimes_\red G)\otimes D,$$
where the first map exists by the universal property of ``$\otimes_\max$'' applied to the  canonical  homomorphisms
of $A\rtimes_\red G$ and $D$ into $\M((A\otimes_\max D)\rtimes_\red G)$.
The composition  is the canonical quotient map $(A\rtimes_\red G)\otimes_\max D\onto (A\rtimes_\red G)\otimes D$.
Thus if $(A\rtimes_\red G, D)$ is a nuclear pair, it follows that 
$(A\otimes_\max D)\rtimes_\red G\onto (A\otimes D)\rtimes_\red G$ is injective, which then forces $A\otimes_\max D\to A\otimes D$ to be injective as well, that is, $(A,D)$ is a nuclear pair.

Suppose now that $(A,\alpha)$ is amenable. There is then a sequence of canonical $*$-homomorphisms 
\begin{align*}
(A\rtimes_\red G)\otimes_\max D&\stackrel{(1)}\cong
(A\rtimes_\max G)\otimes_\max D\\
& \stackrel{(2)}{\cong} (A\otimes_\max D)\rtimes_\max G\\
 &\stackrel{(3)}{\cong} (A\otimes_\max D)\rtimes_\red G \\ 
 & \stackrel{(4)}{\to} (A\otimes D)\rtimes_\red G  \\
& \stackrel{(5)}{\cong} (A\rtimes_\red G)\otimes D
\end{align*}
that we now explain.  The isomorphisms (1) and (3) come from  Proposition \ref{prop-general} and amenability of $(A,\alpha)$, which also implies amenability of $(A\otimes_\max D, \alpha\otimes\id)$ (see Theorem \ref{thm-Mats-2}).
The isomorphisms (2) and (5) are the canonical untwisting isomorphisms (see for example \cite{CELY}*{Lemma 2.4.1}).  
  The homomorphism labeled (4) is induced from the canonical quotient map $A\otimes_\max D\to A\otimes D$.  
  Thus if $(A,D)$ is a nuclear pair, then the composition of all the maps (1)-(5) above is an isomorphism,  which then implies  that $(A\rtimes_\red G,D)$ is a nuclear pair.
\end{proof}

\begin{theorem}\label{prop:Nuclear-Pairs}
Let $(A,\alpha)$ be a $G$-$C^*$-algebra. Consider the statements below.
\begin{enumerate}
\item \label{prop:Nuclear-Pairs 1}$A$ is nuclear (respectively~has the LLP, has the WEP).
\item \label{prop:Nuclear-Pairs 2}$A\rtimes_\red G$ is nuclear (respectively~has the LLP, has the WEP).
\end{enumerate}
Then \eqref{prop:Nuclear-Pairs 2} $\Rightarrow$ \eqref{prop:Nuclear-Pairs 1} in general, and if $(A,\alpha)$ is amenable then \eqref{prop:Nuclear-Pairs 1} $\Rightarrow$ \eqref{prop:Nuclear-Pairs 2}.
\end{theorem}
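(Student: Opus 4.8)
\textbf{Plan for the proof of Theorem \ref{prop:Nuclear-Pairs}.}
The plan is to recall that each of nuclearity, the LLP, and the WEP admits a characterization in terms of nuclear pairs (in the sense just defined), and then to reduce the theorem to Lemma \ref{nuc pair} above. Specifically: a $C^*$-algebra $B$ is nuclear if and only if $(B,D)$ is a nuclear pair for \emph{every} $C^*$-algebra $D$; $B$ has the WEP if and only if $(B,C^*(\free_\infty))$ is a nuclear pair (Kirchberg's characterization, see \cite{Brown:2008qy}*{Theorem 13.2.1}), equivalently if $(B,D)$ is a nuclear pair whenever $D$ has the LLP; and $B$ has the LLP if and only if $(B,\Bd(H))$ is a nuclear pair (again Kirchberg, \cite{Brown:2008qy}*{Theorem 13.2.1}), equivalently if $(B,D)$ is a nuclear pair whenever $D$ has the WEP. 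With these three dictionary entries in hand, the theorem is essentially immediate.

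First I would treat nuclearity. If $A\rtimes_\red G$ is nuclear, then $(A\rtimes_\red G,D)$ is a nuclear pair for every $D$, so by the first part of Lemma \ref{nuc pair} $(A,D)$ is a nuclear pair for every $D$, i.e.\ $A$ is nuclear; this needs no amenability hypothesis. Conversely, if $A$ is nuclear and $(A,\alpha)$ is amenable, then for every $D$ the pair $(A,D)$ is nuclear, so by the second part of Lemma \ref{nuc pair} the pair $(A\rtimes_\red G,D)$ is nuclear, whence $A\rtimes_\red G$ is nuclear.

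Next I would do the WEP and the LLP in parallel, using the fact that these two properties are, via nuclear pairs, ``dual'' to one another. For the WEP: $B$ has the WEP iff $(B,D)$ is a nuclear pair for every $D$ with the LLP. So if $A\rtimes_\red G$ has the WEP, then $(A\rtimes_\red G, D)$ is a nuclear pair for every LLP algebra $D$, hence by Lemma \ref{nuc pair} so is $(A,D)$, hence $A$ has the WEP; and conversely, if $A$ has the WEP and $(A,\alpha)$ is amenable, then $(A,D)$ is a nuclear pair for every LLP algebra $D$, so $(A\rtimes_\red G, D)$ is too by Lemma \ref{nuc pair}, giving the WEP for $A\rtimes_\red G$. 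The argument for the LLP is identical with the roles of ``LLP'' and ``WEP'' interchanged: $B$ has the LLP iff $(B,D)$ is a nuclear pair for every $D$ with the WEP, and one runs the same two implications through Lemma \ref{nuc pair}. In all three cases the forward implication \eqref{prop:Nuclear-Pairs 2} $\Rightarrow$ \eqref{prop:Nuclear-Pairs 1} uses only the first (unconditional) half of Lemma \ref{nuc pair}, while \eqref{prop:Nuclear-Pairs 1} $\Rightarrow$ \eqref{prop:Nuclear-Pairs 2} uses the second half and hence amenability.

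The only point requiring care — and the place I expect to spend a sentence or two being precise — is the exact statement of the nuclear-pair characterizations of the LLP and WEP and the check that they are genuinely symmetric: one must cite Kirchberg's theorem in the form that $B$ has the LLP iff $B\otimes_{\max}\Bd(H)=B\otimes\Bd(H)$, together with the fact that $\Bd(H)$ has the WEP and that the WEP of $D$ suffices (not just $D=\Bd(H)$) for $(B,D)$ to detect the LLP of $B$, and symmetrically for the WEP with $D=C^*(\free_\infty)$ (which has the LLP). Granting these standard facts, the whole proof is three applications of Lemma \ref{nuc pair}, and there is no real obstacle; the substance was already isolated in that lemma via the untwisting isomorphisms and Proposition \ref{prop-general}.
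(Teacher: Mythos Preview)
Your proposal is correct and follows essentially the same approach as the paper: reduce everything to Lemma \ref{nuc pair} via the nuclear-pair characterizations of nuclearity, the LLP, and the WEP. The paper's proof is slightly more streamlined in that it invokes only the single-test-algebra versions (LLP iff $(A,\Bd(\ell^2))$ is a nuclear pair; WEP iff $(A,C^*(\mathbb{F}))$ is a nuclear pair, citing \cite{Brown:2008qy}*{Corollary 13.2.5}) rather than the ``for all $D$ with the dual property'' formulations you also mention, but this is a cosmetic difference and your argument goes through just as well.
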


\begin{proof}
A $C^*$-algebra is nuclear if and only if $(A,D)$ is a nuclear pair for any $C^*$-algebra $D$.  Hence the statement on nuclearity is immediate from Lemma \ref{nuc pair}.  The  assertions on the LLP and the WEP are also direct consequences of Lemma \ref{nuc pair}: by \cite{Brown:2008qy}*{Corollary 13.2.5}  the LLP for a $C^*$-algebra $A$ is equivalent to the statement that $\Bd(\ell^2)$ forms a  nuclear pair with $A$, and the WEP is equivalent to the statement that  $C^*(\mathbb{F})$ forms  a nuclear pair with $A$, where $\mathbb{F}$ denotes a free group on countably infinitely many generators.
\end{proof}

\begin{remark}
If $G$ is discrete and $(A,\alpha)$ is a $G$-$C^*$-algebra, Anantharaman-Delaroche shows in \cite{Anantharaman-Delaroche:1987os} that 
$A\rtimes_{\red} G$ is nuclear if and only if $A$ is  nuclear and $\alpha$ is amenable.
An analogous result cannot be true for actions of general locally compact groups, since 
by a famous  result of Connes \cite{Connes:1976fj}*{Corollary 6.9 (c)} we know that the reduced group algebra $C_{\red}^*(G)=\C\rtimes_\red G$ of any second countable, {\em connected}, locally compact group $G$  is nuclear; however, the trivial action of $G$ on $\C$ is amenable if and only if $G$ is amenable. 
Note that there are many connected locally compact groups that are not amenable (e.g.\ $\mathrm{SL}(2,\R)$). 
\end{remark}

\begin{theorem}\label{thm:LP}
Let $(A,\alpha)$ be a $G$-$C^*$-algebra with $A$ separable and $G$ second countable.  Consider the statements
\begin{enumerate}
\item \label{thm:LP 1}$A$ has the LP.
\item \label{thm:LP 2}$A\rtimes_\max G$ has the LP.
\end{enumerate}
Then \eqref{thm:LP 2} $\Rightarrow$ \eqref{thm:LP 1} in general, and if $(A,\alpha)$ is amenable then \eqref{thm:LP 1} $\Rightarrow$ \eqref{thm:LP 2}.
\end{theorem}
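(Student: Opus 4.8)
\textbf{Proof plan for Theorem \ref{thm:LP}.}

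The forward direction \eqref{thm:LP 2} $\Rightarrow$ \eqref{thm:LP 1} should go by the same kind of argument as in Lemma \ref{nuc pair} and Theorem \ref{prop:Nuclear-Pairs}, but now phrased locally. Recall that a separable $C^*$-algebra $A$ has the LP if and only if, whenever $A \hookrightarrow E/J$ for a separable $C^*$-algebra $E$ and a closed ideal $J$, the inclusion admits a local lift, i.e.\ for every finite $F \subseteq A$ and $\epsilon > 0$ there is a ccp map $A \to E$ that lifts the given map up to $\epsilon$ on $F$. The key algebraic input is the sequence of canonical maps used in Lemma \ref{nuc pair}: for any $C^*$-algebra $D$ with trivial $G$-action there is a surjection $(A\rtimes_\max G)\otimes_\max D \onto (A\otimes_\max D)\rtimes_\max G \cong (A\rtimes_\max G)\otimes_\max D$ — wait, more precisely I would use the untwisting isomorphism $(A\otimes_\max D)\rtimes_\max G \cong (A\rtimes_\max G)\otimes_\max D$ (see \cite{CELY}*{Lemma 2.4.1}) together with the fact that the embedding $A \hookrightarrow A\rtimes_\max G$ (via $i_A$, or rather a corner thereof) and the corresponding conditional-expectation-like structure let one transfer a local lifting problem for $A$ into one for $A\rtimes_\max G$. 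Concretely, given a local lifting problem $A \hookrightarrow E/J$, I would tensor up to $(A\rtimes_\max G)$-level, solve it there using \eqref{thm:LP 2}, and restrict back to $A$; the separability hypotheses are exactly what is needed to run the LP characterization in ideal-quotient form rather than via tensor-pair formulations.

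For the direction \eqref{thm:LP 1} $\Rightarrow$ \eqref{thm:LP 2} under the amenability hypothesis, the plan is to mimic the chain of isomorphisms in Lemma \ref{nuc pair}. Since $(A,\alpha)$ is amenable, Proposition \ref{prop-general} gives $A\rtimes_\max G = A\rtimes_\red G$, and Theorem \ref{thm-Mats-2} gives amenability of $(A\otimes_\max D, \alpha\otimes\id)$ for any $D$. One then writes
\begin{equation*}
(A\rtimes_\max G)\otimes_\max D \cong (A\otimes_\max D)\rtimes_\max G \cong (A\otimes_\max D)\rtimes_\red G \to (A\otimes D)\rtimes_\red G \cong (A\rtimes_\red G)\otimes D
\end{equation*}
as in the proof of that lemma. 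The point is that the LP of $A\rtimes_\max G$ can be detected by a suitable nuclear-pair-type statement: by \cite{Brown:2008qy}*{Chapter 13} (the local analogue of the LLP/WEP characterizations), $A$ has the LP if and only if the canonical map $A\otimes_\max B \to A\otimes_{\mathrm{nuc}} B$ — or, more to the point, the relevant map into a quotient — is "locally injective" in the appropriate sense; I would use the precise formulation that makes the LP a statement about lifting ccp maps into $E/J \otimes$ (something). Then amenability propagates this through the crossed product exactly as nuclearity and the LLP did. The separability assumptions are used to guarantee that the relevant $E$ can be taken separable and that countable cofinal systems of finite sets suffice, so the local lifting argument is not obstructed by cardinality issues.

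\textbf{Main obstacle.} The genuinely delicate point is that the LP, unlike nuclearity, the LLP, or the WEP, is \emph{not} obviously characterized by a single "nuclear pair" condition with a fixed test algebra, so Lemma \ref{nuc pair} cannot be cited verbatim — one must instead carry the finite-set-and-$\epsilon$ quantifiers through the five-step chain of (iso)morphisms above, checking that a ccp local lift on the $(A\otimes_\max D)$-level produces one on the $((A\rtimes_\red G)\otimes D)$-level compatibly with the untwisting isomorphisms. I expect the bookkeeping around the untwisting isomorphism $(A\otimes_\max D)\rtimes_{\max}G\cong (A\rtimes_\max G)\otimes_\max D$ — making sure it is compatible with quotients by ideals of the form $J\rtimes G$ and with the approximate identities implicit in passing between $A$ and $\M(A\rtimes G)$ — to be the part that requires genuine care; this is also why separability is imposed (so that one can work with sequences of lifts and a metrizable state space), and likely why the authors do not claim \eqref{thm:LP 2} $\Rightarrow$ \eqref{thm:LP 1} for the reduced crossed product without a conditional expectation.
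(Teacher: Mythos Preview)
Your proposal has a genuine gap: you correctly identify the obstacle --- that the LP is not a single ``nuclear pair'' condition --- but you do not have a tool to overcome it, and the plan to ``carry finite-set-and-$\epsilon$ quantifiers through the five-step chain'' is not a workable strategy as stated. The references to \cite{Brown:2008qy}*{Chapter 13} do not give a characterization of the LP of the form you need; the LLP and WEP characterizations there are precisely about single test algebras ($\Bd(\ell^2)$ and $C^*(\mathbb{F})$), and the LP has no such test algebra.

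The paper's proof hinges on a recent result of Pisier \cite{Pisier}*{Theorem 0.2}: a \emph{separable} $C^*$-algebra $A$ has the LP if and only if for every family $\{D_i : i\in I\}$ the canonical map
\[
\ell^\infty(\{D_i\})\otimes_\max A \;\longrightarrow\; \ell^\infty(\{D_i\otimes_\max A\})
\]
is injective. This converts the LP into exactly the kind of ``injectivity of a canonical tensor map'' statement that \emph{can} be pushed through the untwisting isomorphisms and the $\max = \red$ identifications coming from amenability, much as in Lemma \ref{nuc pair}. The one genuinely new technical ingredient is that the coordinate-evaluation map
\[
\ell^\infty(\{B_i\})_c\rtimes_\red G \;\longrightarrow\; \ell^\infty(\{B_i\rtimes_\red G\})
\]
is injective for any family of $G$-$C^*$-algebras $(B_i,\beta_i)$; this is checked directly using regular representations. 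With Pisier's criterion and this injectivity in hand, both directions reduce to chasing the map $\ell^\infty(\{D_i\})\otimes_\max (A\rtimes_\max G)\to \ell^\infty(\{D_i\otimes_\max (A\rtimes_\max G)\})$ through a chain of canonical (iso)morphisms, with no $\epsilon$-bookkeeping required. Your intuition that the argument should parallel Lemma \ref{nuc pair} is correct, but the missing idea is Pisier's theorem; without it the proposal does not go through.
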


\begin{proof}
We use a recent characterization of the LP due to Pisier. Pisier shows in \cite{Pisier}*{Theorem 0.2} that a separable 
$C^*$-algebra $A$ has the LP if and only if for any index set $I$ and any collection of $C^*$-algebras $\{D_i:i\in I\}$ 
the canonical $*$-homomorphism 
\begin{equation}\label{eq-LP1}\ell^\infty(\{D_i\})\otimes_\max A\to \ell^\infty(\{D_i\otimes_\max A\})\end{equation}
is faithful, where $\ell^\infty(\{D_i\})$ denotes the $C^*$-algebra of bounded $I$-tuples $(d_i)\in \prod_{i\in I}D_i$ equipped with the supremum norm.  We now look at the map
\begin{equation}\label{eq-LP}
 \ell^\infty(\{D_i\})\otimes_\max (A\rtimes_{\max} G)\to \ell^\infty\big(\{D_i\otimes_\max(A\rtimes_{\max} G)\}\big).\end{equation}
Let $\{(B_i, \beta_i):i\in I\}$ be any collection of $G$-$C^*$-algebras, and let $\ell^\infty(\{B_i\})_c$ denote the algebra of continuous elements in $\ell^\infty(\{B_i\})$ with respect to the component-wise action $\beta_g(b_i):=(\beta_{i,g}(b_i))$.  Let
\begin{equation}\label{eq-LP2}
\varphi:\ell^\infty(\{B_i\})_c\rtimes_\red G\into \ell^\infty(\{B_i\rtimes_\red G\})
\end{equation}
be defined on $C_c(G, \ell^\infty(\{B_i\})_c)$ by evaluation at each $i$.  Note that $\varphi$ is injective: this follows as if $\pi_i:B_i\to \Bd(H_i)$ is a faithful representation with induced regular representation $\widetilde{\pi_i}\rtimes \lambda:B_i\rtimes_\red G\to \Bd(H_i\otimes L^2(G))$ (see line \eqref{eq-regular}) and if $\pi:=\bigoplus_{i\in I} \pi_i$ with induced regular representation $\widetilde{\pi}\rtimes\lambda$, then 
$$
\Big(\bigoplus_{i\in I}(\widetilde{\pi_i}\rtimes\lambda)\Big)\circ \varphi=\widetilde{\pi} \rtimes\lambda
$$
as maps $\ell^\infty(\{B_i\})_c\rtimes_\red G\to \Bd\big(\bigoplus_{i\in I}(H_i\otimes L^2(G))\big)$, and the right hand side is injective.

Using this, if $(A,\alpha)$ is amenable, 
we now observe that the map in (\ref{eq-LP}) can be written as the composition of canonical maps
\begin{align*}
 \ell^\infty(\{D_i\})\otimes_\max (A\rtimes_\max G)&\cong  \big(\ell^\infty(\{D_i\})\otimes_\max A\big)\rtimes_\max G\\
& =\big(\ell^\infty(\{D_i\})\otimes_\max A\big)\rtimes_\red G\quad\quad\text{(since $\alpha$ is amenable)}\\
 &\stackrel{\text{(*)}}{\to} \ell^\infty\big(\{D_i\otimes_\max A\}\big)_c\rtimes_\red G\quad\quad\; \text{(induced from (\ref{eq-LP1}))}\\
&\into \ell^\infty\big(\{(D_i\otimes_\max A)\rtimes_\red G\}\big)\quad\quad \text{(since (\ref{eq-LP2}) is faithful)}\\
&=  \ell^\infty\big(\{(D_i\otimes_\max A)\rtimes_\max G\}\big) \quad\quad\text{(since $\alpha$ is amenable)}\\
 &\cong  \ell^\infty\big(\{D_i\otimes_\max (A\rtimes_\max G)\}\big).
\end{align*}
Thus $A\rtimes_\max G$ has the LP if and only if the map (*) is faithful, which holds true if and only if 
the map $\ell^\infty(\{D_i\})\otimes_\max A\to \ell^\infty\big(\{D_i\otimes_\max A\})_c$ is faithful, and hence if and only if the map in (\ref{eq-LP1}) is faithful.
By Pisier's theorem this holds true if and only if $A$ has the LP. 
Thus we proved that  \eqref{thm:LP 1} $\Leftrightarrow$  \eqref{thm:LP 2} if $(A,\alpha)$ is amenable.

In general\footnote{We thank \mcomment{one of the referees} for suggesting a simpler argument in this case.}, we can factor the map in (\ref{eq-LP}) as 
\begin{equation}\label{eq-LPsplit}
\begin{split}
 \ell^\infty(\{D_i\})\otimes_\max (A\rtimes_\max G)&\cong  \big(\ell^\infty(\{D_i\})\otimes_\max A\big)\rtimes_\max G\\
 &\stackrel{\text{(**)}}{\to}  \ell^\infty\big(\{(D_i\otimes_\max A\})_c\rtimes_\max G\big)\\
 &\stackrel{\text{(***)}}{\to} \ell^\infty\big(\{(D_i\otimes_\max A)\rtimes_\max G\}\big)\\
 &\cong  \ell^\infty\big(\{D_i\otimes_\max (A\rtimes_\max G)\}\big),
\end{split}
\end{equation}
where the map (**) is induced by the canonical $G$-map 
\begin{equation}\label{cont pisier}
\ell^\infty(\{D_i\})\otimes_\max A\to \ell^\infty\big(\{D_i\otimes_\max A\})_c,
\end{equation}
and the map (***) is induced by the coordinate projections 
$$\ell^\infty\big(\{(D_i\otimes_\max A\})_c\rtimes_\max G\to (D_i\otimes_\max A)\rtimes_\max G.$$
  Using Pisier's theorem, if $A\rtimes_{\max} G$ has the LP, then the composition is injective, whence the map (**) is injective.  
   This implies that the map in line \eqref{cont pisier} is injective, which implies that the map 
   in line \eqref{eq-LP1} is also injective, so we are done by Pisier's theorem again. 
 \end{proof}

\begin{proposition}\label{prop:ex}
Let $(A,G,\alpha)$ be an amenable $G$-$C^*$-algebra. Then $A$ is exact if and only if $A\rtimes_\red G$ is exact. 
\end{proposition}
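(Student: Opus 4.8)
\textbf{Proof proposal for Proposition \ref{prop:ex}.}

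The plan is to imitate the argument of Lemma \ref{nuc pair} and Theorem \ref{prop:Nuclear-Pairs}, but working with a short exact sequence rather than with a single tensor product. Recall that a $C^*$-algebra $B$ is exact if and only if for every short exact sequence $0\to J\to D\to D/J\to 0$ of $C^*$-algebras the induced sequence $0\to B\otimes J\to B\otimes D\to B\otimes(D/J)\to 0$ is exact; equivalently, the minimal tensor product functor $B\otimes(-)$ is exact. We first treat the easy direction: if $A\rtimes_\red G$ is exact then $A$ is exact. For this I would again use the canonical chain of surjections appearing in the proof of Lemma \ref{nuc pair},
$$
(A\rtimes_\red G)\otimes_\max D \onto (A\otimes_\max D)\rtimes_\red G \onto (A\otimes D)\rtimes_\red G \cong (A\rtimes_\red G)\otimes D,
$$
but now for the minimal tensor product and applied to a short exact sequence. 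Actually the cleaner route is: exactness of $A\rtimes_\red G$ implies (Kirchberg--Wassermann) that $A\rtimes_\red G$ embeds into a nuclear $C^*$-algebra; since $A$ embeds $G$-equivariantly into nothing obvious, instead I would argue directly. Given $0\to J\to D\to D/J\to 0$, the reduced crossed product is exact as a functor (this is a standard fact — see e.g. the untwisting argument or \cite{CELY}), so
$$
0\to (A\otimes J)\rtimes_\red G\to (A\otimes D)\rtimes_\red G\to (A\otimes (D/J))\rtimes_\red G\to 0
$$
is exact, and using the untwisting isomorphism $(A\otimes E)\rtimes_\red G\cong (A\rtimes_\red G)\otimes E$ (valid because $G$ acts trivially on $E$) this becomes $0\to (A\rtimes_\red G)\otimes J\to (A\rtimes_\red G)\otimes D\to (A\rtimes_\red G)\otimes(D/J)\to 0$. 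Wait — that shows exactness of $A\rtimes_\red G$ \emph{from} that of $A$; for the converse direction I instead observe that $A$ sits as a $C^*$-subalgebra of $\M(A\rtimes_\red G)$, and exactness passes to subalgebras, and $\M(A\rtimes_\red G)$ is exact whenever $A\rtimes_\red G$ is (exactness passes to multiplier algebras for $\sigma$-unital algebras; in general one can embed $A$ into $(A\rtimes_\red G)''$-type objects, but the cleanest statement is that exactness passes to $C^*$-subalgebras and one uses $A\hookrightarrow \M(A\rtimes_\red G)$). I would spell this out carefully.

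For the main direction, assume $(A,\alpha)$ is amenable and $A$ is exact; I want $A\rtimes_\red G$ exact. Fix a short exact sequence $0\to J\to D\to D/J\to 0$ of $C^*$-algebras with trivial $G$-action. Since $A$ is exact, $0\to A\otimes J\to A\otimes D\to A\otimes(D/J)\to 0$ is exact. Now equip each of $A\otimes J$, $A\otimes D$, $A\otimes(D/J)$ with the diagonal $G$-action $\alpha\otimes\id$. Apply the reduced crossed product, which is an exact functor, to get that
$$
0\to (A\otimes J)\rtimes_\red G\to (A\otimes D)\rtimes_\red G\to (A\otimes (D/J))\rtimes_\red G\to 0
$$
is exact. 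By amenability of $(A,\alpha)$ (hence of $(A\otimes E,\alpha\otimes\id)$ for every $C^*$-algebra $E$, by Theorem \ref{thm-Mats-2}), Proposition \ref{prop-general} gives $(A\otimes E)\rtimes_\red G = (A\otimes E)\rtimes_\max G$ for $E\in\{J,D,D/J\}$; and the untwisting isomorphism identifies $(A\otimes E)\rtimes_\red G\cong(A\rtimes_\red G)\otimes E$. Hence the displayed sequence becomes
$$
0\to (A\rtimes_\red G)\otimes J\to (A\rtimes_\red G)\otimes D\to (A\rtimes_\red G)\otimes(D/J)\to 0,
$$
which is exact; since $0\to J\to D\to D/J\to 0$ was arbitrary, $A\rtimes_\red G$ is exact.

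The routine bookkeeping is in checking that the untwisting isomorphism $(A\otimes E)\rtimes_\red G\cong (A\rtimes_\red G)\otimes E$ is natural in $E$ with respect to quotient maps and inclusions (so that it intertwines the two short exact sequences), and in confirming that $\rtimes_\red$ is exact as a functor — both are standard (\cite{CELY}*{Lemma 2.4.1} and the surrounding discussion). I expect the main obstacle to be the easy direction $A\rtimes_\red G$ exact $\Rightarrow$ $A$ exact, specifically making rigorous the passage ``$A$ is a subalgebra of something exact'': the clean argument is that $A\hookrightarrow \M(A\rtimes_\red G)$, that exactness is inherited by $C^*$-subalgebras, and that $\M(B)$ is exact whenever $B$ is (for this last point one can reduce to the separable/$\sigma$-unital case or cite the literature; alternatively, since we only need the case where $(A,\alpha)$ is \emph{not} assumed amenable here, one may instead prove this direction by the symmetric chain-of-maps argument as in Lemma \ref{nuc pair}, replacing $\otimes_\max$ by $\otimes$ and applying it to each term of a short exact sequence, which avoids multiplier algebras entirely). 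I would present the chain-of-maps version to keep the proof self-contained.
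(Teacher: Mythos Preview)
Your forward direction is essentially the paper's argument, but with one genuine error: the reduced crossed product is \emph{not} an exact functor in general --- that is precisely the characterization of exact groups. What saves the argument (and what the paper does) is to apply $\rtimes_{\max}$ instead, which \emph{is} always exact, and then use amenability of $\alpha\otimes\id$ (Theorem~\ref{thm-Mats-2} plus Proposition~\ref{prop-general}) to identify $(A\otimes E)\rtimes_{\max}G\cong (A\otimes E)\rtimes_{\red}G$ before untwisting. You invoke amenability in your step (3), but as written it floats free of the argument; it is needed precisely to repair step (2).

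The reverse direction is where you have a real gap. The multiplier-algebra route fails outright: multiplier algebras of exact $C^*$-algebras need not be exact (already $\M(\K)=\Bd(H)$ is not exact for infinite-dimensional $H$), so ``$A\hookrightarrow \M(A\rtimes_{\red}G)$ and exactness passes to subalgebras'' does not go through. Your fallback ``symmetric chain-of-maps'' suggestion is never made precise. The paper's observation is that the \emph{same} commutative diagram handles both directions at once: with $(A\otimes -)\rtimes_{\max}G$ on top and $(A\rtimes_{\red}G)\otimes -$ on the bottom, the vertical maps (amenability, then untwisting) are isomorphisms, so the top row is exact iff the bottom row is. Since $\rtimes_{\max}$ preserves short exact sequences, the top row is exact for all test sequences iff $A$ is exact; the bottom row is exact for all test sequences iff $A\rtimes_{\red}G$ is exact. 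This gives the biconditional cleanly, under the standing amenability hypothesis (and the paper separately notes in Remark~\ref{rem-coaction} that the implication $A\rtimes_{\red}G$ exact $\Rightarrow$ $A$ exact in fact holds without amenability, by a result of Ng).
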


\begin{proof}
Assume that $A$ is exact and let $J\into B\onto C$ be a short exact sequence of $C^*$-algebras equipped with the trivial action.  We thus get a commutative diagram
{\small $$
\begin{CD}
0 @>>> (A\otimes J)\rtimes_\max G @>>>  (A\otimes B)\rtimes_\max G @>>>  (A\otimes C)\rtimes_\max G @>>>0\\
@. @V\cong VV  @V\cong VV @VV\cong V @.\\
0 @>>> (A\otimes J)\rtimes_\red G @>>>  (A\otimes B)\rtimes_\red G @>>>  (A\otimes C)\rtimes_\red G @>>>0\\
@. @V\cong VV  @V\cong VV @VV\cong V @.\\
0 @>>> (A\rtimes_\red G)\otimes J @>>>  (A\rtimes_\red G)\otimes B@>>> (A\rtimes_\red G)\otimes C @>>>0\\
\end{CD}
$$}
for which the upper vertical arrows are isomorphisms by Theorem \ref{thm-Mats-2} and Proposition \ref{prop-general}, and the lower vertical arrows are the canonical untwisting isomorphisms (see \cite{CELY}*{Lemma 2.4.1}).
 Hence the bottom line is exact if and only if the top line is.  The former holds for any short exact sequence if and only if $A\rtimes_\red G$ is exact, and the latter holds for any short exact sequence if and only if $A$ is exact (as $\rtimes_\max G$ preserves short exact sequences -- see for example \cite{CELY}*{Proposition 2.4.8}).   
\end{proof}

\begin{remark}\label{rem-coaction}
It has been shown by Ng in  \cite{Ng}*{Corollary 4.6} that exactness of $A\rtimes_\red G$ implies exactness of $A$ for every $G$-$C^*$-algebra $(A,\alpha)$ without any amenability conditions. Note that exactness of $A\rtimes_\max G$ always implies exactness of $A\rtimes_\red G$ (and hence exactness of $A$), since exactness is inherited  by  quotients (see \cite{Brown:2008qy}*{Corollary 9.4.3}). 
\end{remark}

\section{Characterizing exactness via actions on $G$-injective algebras}\label{sec:inj}

In this section we use amenable actions on $G$-injective $C^*$-algebras to characterize exactness, generalizing our earlier result  \cite{Buss:2019}*{Theorem 8.3} for discrete groups to the locally compact case. 
Recall that a $G$-$C^*$-algebra $A$ is \emph{$G$-injective} if for any commutative diagram
$$
\xymatrix{ C \ar@{-->}[dr] &  \\ B \ar[u] \ar[r] & A }
$$
where the solid horizontal arrow is an equivariant ccp map and the vertical arrow is a ($*$-homomorphic) $G$-embedding, the diagonal arrow can be filled in by an equivariant ccp map.  Using for example \cite{Buss:2018nm}*{Corollary 2.4}, one can see that $G$-injectivity of $A$ is equivalent to the following formally weaker property: for 
every $G$-embedding $\varphi:A\into B$ of $A$ into a
$G$-$C^*$-algebra $B$ there exists a ccp $G$-map $\psi:B\to A$ such that $\psi\circ \varphi=\id_A$.   The class of $G$-injective $C^*$-algebras was introduced (in a more general setting) and extensively studied by Hamana \cite{Hamana:1985,Hamana:2011}.  

The property of being a $G$-injective $C^*$-algebra is a very strong one, but examples exist for any $G$:  for example, $\contub(G)$ is always $G$-injective by \cite{Buss:2018nm}*{Proposition 2.2} (this is also implicit in \cite{Hamana:2011}).  The following result is thus a generalization of the theorem of Brodzki, Cave, and Li \cite{Brodzki-Cave-Li:Exactness} and Ozawa and Suzuki \cite{Ozawa-Suzuki}*{Proposition 2.5} that exactness of $G$ is equivalent to amenability of its canonical action on $\contub(G)$ (which is used in our proof).   It also generalizes a result of Kalantar and Kennedy \cite{Kalantar:2014sp}*{Theorem 1.1} characterizing exactness of a discrete group in terms of amenability of the action of a  group on its Furstenberg boundary. 

\begin{theorem}\label{inj the}
Let $G$ be a locally compact group. Then the following are equivalent:
\begin{enumerate}
\item \label{inj the 1}$G$ is exact;
\item \label{inj the 2}every $G$-injective $G$-$C^*$algebra $(A,\alpha)$ is strongly amenable;
\item \label{inj the 3}there exists a non-zero strongly amenable  $G$-injective $G$-$C^*$-algebra  $(A,\alpha)$.
\end{enumerate}
\end{theorem}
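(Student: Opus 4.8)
The plan is to close a cycle of implications $\eqref{inj the 1}\Rightarrow\eqref{inj the 2}\Rightarrow\eqref{inj the 3}\Rightarrow\eqref{inj the 1}$, using $\contub(G)$ as the key example throughout and leaning on the characterization of exactness via strong amenability of the $G$-action on $\contub(G)$ due to Ozawa--Suzuki \cite{Ozawa-Suzuki}*{Proposition 2.5} (and Brodzki--Cave--Li \cite{Brodzki-Cave-Li:Exactness}*{Theorem 5.8} in the second countable case).

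First I would prove $\eqref{inj the 1}\Rightarrow\eqref{inj the 2}$. Suppose $G$ is exact and let $(A,\alpha)$ be a $G$-injective $G$-$C^*$-algebra. By \cite{Ozawa-Suzuki}*{Proposition 2.5}, exactness gives a net $(\eta_i:G\to \contub(G))$ of norm-continuous, compactly supported positive type functions with $\|\eta_i(e)\|\le 1$ and $\eta_i(g)\to 1_{\contub(G)}$ in norm, uniformly on compact sets; that is, the $G$-action on $\contub(G)$ is strongly amenable. Since $\contub(G)$ is $G$-injective (\cite{Buss:2018nm}*{Proposition 2.2}), it is the range of a ccp $G$-map from any $G$-$C^*$-algebra containing it; I would instead use $G$-injectivity of $A$ together with the canonical $G$-embedding of $A$ into $\M(\contub(G)\otimes A)_c$ (diagonal action) — here the technical point is that the unit $1\otimes 1$ of this multiplier algebra supplies a unital target — to extract a ucp $G$-map $\contub(G)\to Z\M(A)$. (This mirrors Lemma \ref{unital lem}: embed $A\to\M(\contub(G)\otimes A)_c$, $a\mapsto 1\otimes a$; $G$-injectivity yields a ccp $G$-map back to $A$; composing with $c\mapsto c\otimes 1$ lands in the relative commutant of $A$, which one then checks is $Z\M(A)$ using that $A$ is $G$-injective hence has trivial-enough structure — more carefully, one uses a $G$-injective envelope argument or simply that the image commutes with $A$ and is a unital $C^*$-algebra, then composes with a conditional expectation onto the center, or observes directly that for $G$-injective $A$ the relevant image is central.) Composing this ucp $G$-map $\Phi\colon\contub(G)\to Z\M(A)$ with the net $(\eta_i)$ gives a net $(\Phi\circ\eta_i\colon G\to Z\M(A))$ of norm-continuous, compactly supported positive type functions with $\|\Phi(\eta_i(e))\|\le 1$ and $\Phi(\eta_i(g))\to \Phi(1)=1_{Z\M(A)}$; convergence is in norm hence a fortiori strictly, uniformly on compact sets. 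By Definition \ref{def-amenable (SA)}, $(A,\alpha)$ is strongly amenable.

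The implication $\eqref{inj the 2}\Rightarrow\eqref{inj the 3}$ is immediate once we exhibit a single nonzero $G$-injective $G$-$C^*$-algebra: $\contub(G)$ works by \cite{Buss:2018nm}*{Proposition 2.2}. For $\eqref{inj the 3}\Rightarrow\eqref{inj the 1}$, suppose $(A,\alpha)$ is a nonzero strongly amenable $G$-injective $G$-$C^*$-algebra. Strong amenability gives a net $(\theta_i\colon G\to Z\M(A))$ as in Definition \ref{def-amenable (SA)}; by Proposition \ref{rem-amenable} the induced action of $G$ on the spectrum of the (unital) $C^*$-algebra $Z\M(A)$ is topologically amenable, so by \cite{Anantharaman-Delaroche:2002ij}*{Theorem 7.2} the group $G$ is exact. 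Alternatively, and perhaps cleaner: $G$-injectivity of $A$ yields (via the unitized argument above, applied now with target $A$ rather than $\contub(G)$, i.e.\ run Lemma \ref{unital lem} with $C=\contub(G)$) a ucp $G$-map $\contub(G)\to Z\M(A)$; but we actually want the reverse composition — we have a ucp $G$-map into $Z\M(A)$ and strong amenability of $Z\M(A)$, which is exactly what makes the spectrum of $Z\M(A)$ a topologically amenable compact $G$-space, forcing exactness. Either route closes the cycle.

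The main obstacle is the first implication, specifically verifying that the ccp $G$-map extracted from $G$-injectivity of $A$ can be arranged to land in the \emph{center} $Z\M(A)$ rather than merely in $\M(A)$ or the relative commutant of $A$ inside some larger algebra — this is precisely the content of Lemma \ref{unital lem} applied in the present non-unital, locally compact setting, so I would invoke that lemma directly (with the unital $G$-$C^*$-algebra $C=\contub(G)$) rather than reprove it. The remaining subtlety is bookkeeping around non-unitality: ensuring $\Phi$ is unital as a map into $Z\M(A)$ (it is, since it is obtained as $\varphi\circ j$ with $\varphi$ forced to be unital by the approximate-unit argument in the proof of Lemma \ref{unital lem}), and that norm convergence of $\theta_i(g)\to 1$ indeed implies strict convergence in $\M(A)$ (clear, since norm convergence always implies strict convergence). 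With Lemma \ref{unital lem} and \cite{Ozawa-Suzuki}*{Proposition 2.5} in hand, the argument is short.
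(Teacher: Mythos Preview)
Your overall strategy matches the paper's exactly: cycle through $\eqref{inj the 1}\Rightarrow\eqref{inj the 2}\Rightarrow\eqref{inj the 3}\Rightarrow\eqref{inj the 1}$, use Ozawa--Suzuki for exactness $\Leftrightarrow$ strong amenability of $\contub(G)$, extract a ucp $G$-map $\contub(G)\to Z(A)$ via a multiplicative-domain argument, and close with Anantharaman-Delaroche's theorem on topologically amenable compact $G$-spaces.

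There is, however, a genuine gap that you keep circling without closing: you never use that a $G$-injective $G$-$C^*$-algebra is automatically \emph{unital}. The paper invokes this as \cite{Buss:2019}*{Lemma 4.3} at the very start of $\eqref{inj the 1}\Rightarrow\eqref{inj the 2}$, and it is what makes both implications clean. With $A$ unital one works directly in $\contub(G)\otimes A$ (no multiplier algebra needed), the splitting $\varphi:\contub(G)\otimes A\to A$ is ucp with $\varphi(1)=1_A$, and the multiplicative-domain argument lands the image of $\contub(G)$ in $Z(A)=Z\M(A)$. Without unitality your target $Z\M(A)$ need not even carry a norm-continuous $G$-action, so ``strong amenability of the action on $Z\M(A)$'' and the appeal to Proposition~\ref{rem-amenable} in $\eqref{inj the 3}\Rightarrow\eqref{inj the 1}$ are not well-posed; and the strict-versus-norm convergence bookkeeping you flag does not resolve itself. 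Your approximate-unit argument, pushed to its conclusion, actually \emph{proves} $A$ is unital (if $\psi(1)\in A$ dominates an approximate unit and $\|\psi(1)\|\le1$, then $\psi(1)=1_{\M(A)}\in A$), but you should say this explicitly rather than leave it implicit.

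A second, smaller point: Lemma~\ref{unital lem} is about $G$-injective \emph{covariant pairs} (Definition~\ref{def-injective-rep}), not $G$-injective \emph{algebras}; these are different notions and the lemma does not apply directly. Your parenthetical ``this mirrors Lemma~\ref{unital lem}'' is the right instinct---the proof technique transports---but the paper simply runs the analogous argument with $A$ in place of $\Bd(H)$, which works precisely because $A$ is unital. Once you insert the unitality fact, your sketch becomes the paper's proof.
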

\begin{proof} For \eqref{inj the 1} $\Rightarrow$ \eqref{inj the 2}, recall that \cite{Ozawa-Suzuki}*{Proposition 2.5} gives that $G$ is exact if and only if the 
translation action of $G$ on $\contub(G)$ is strongly amenable. 
Let $(A,\alpha)$ be $G$-injective. Then \cite{Buss:2019}*{Lemma 4.3} implies that  
$A$ is unital. Consider the diagonal action of  $G$  on $\contub(G)\otimes A$
and define $\iota: A\into \contub(G)\otimes A$ by $\iota(a)=1\otimes a$. 
By $G$\nb-injectivity of $A$, there exists a ucp $G$-map $\varphi:\contub(G)\otimes A \to A$
such that $\varphi\circ \iota(a)=a$ for all $a\in A$. The restriction of $\varphi$ to $\contub(G)\cong  \contub(G)\otimes 1_A\subseteq \contub(G)\otimes A$ then gives a ucp $G$-map $\Phi:\contub(G)\to A$. 
Since $A$ lies in the multiplicative domain of $\varphi$, it follows that $\Phi$ takes its values in $Z(A)$.
Thus, if $(\theta_i)$ is a net of compactly supported positive type functions as in Definition \ref{def-amenable (SA)} which establishes
strong amenability of the translation action on $\contub(G)$, then the net $(\Phi\circ \theta_i)$ 
establishes strong amenability of $(A,\alpha)$, which implies \eqref{inj the 2}.

\eqref{inj the 2} $\Rightarrow$ \eqref{inj the 3} follows as $G$-injective $C^*$-algebras exist for any $G$  (for example, $\contub(G)$).
For \eqref{inj the 3} $\Rightarrow$ \eqref{inj the 1} let $(A,\alpha)$ be a $G$-injective, strongly amenable $G$-$C^*$-algebra.
Since $A$ is unital, it follows that $\alpha$ restricts to a strongly amenable action of $G$ on the (unital)
centre $Z(A)$. The existence of a strongly amenable action on a unital commutative $G$-$C^*$-algebra implies that $G$ is exact by \cite{Anantharaman-Delaroche:2002ij}*{Theorem 7.2}.
\end{proof} 

\begin{remark}\label{ex unital}
In the spirit of the above result, let us point out that Ozawa and Suzuki show in \cite{Ozawa-Suzuki}*{Corollary 3.6} that a locally compact group admits an amenable action on a unital $C^*$-algebra if and only if it is exact (the authors previously established this for discrete groups \cite{Buss:2019}*{Corollary 6.2}).
\end{remark}

\section{The continuous $G$-WEP}\label{sec:gwep}

In this section we introduce a property called the continuous $G$-WEP for locally compact groups, and relate it to amenability and weak containment.  

\begin{definition}\label{def-weakGWEP}
A $G$-$C^*$-algebra $(A,\alpha)$ has the {\em continuous $G$-WEP} if 
for every $G$-embedding $\varphi:A\into B$ there exists a $G$-equivariant
ccp map $\psi:B\to A_\alpha''$ such that $\psi\circ \varphi=i_A$, where 
$i_A:A\to A_\alpha''\subseteq (A\rtimes_\max G)^{**}$ denotes the canonical inclusion.
\end{definition}

Recall from \cite{Buss:2018nm}*{Definition 3.9} that a $G$-$C^*$-algebra $A$ has the 
{\em $G$-equivariant weak expectation property} ($G$-WEP) if for every  $G$-embedding $\varphi:A\into B$ 
there exists a $G$-equivariant ccp map $\psi:B\to A^{**}$ such that 
$\psi\circ \varphi=\iota$, where $\iota:A\to A^{**}$ denotes the canonical 
inclusion. The continuous $G$-WEP is introduced here as it seems more appropriate to non-discrete groups.  In fact, however, since our first draft of this paper Bearden and Crann \cite{Bearden-Crann1}*{Proposition 4.5} showed that there is no difference between the $G$-WEP and continuous $G$-WEP, so either could be used in any of the statements we give here. 

\begin{proposition}\label{thm-GWEP}
Let $(A,\alpha)$ be a $G$-$C^*$-algebra and consider the statements:
\begin{enumerate}
\item \label{thm-GWEP 1}$(A,\alpha)$ has the continuous $G$-WEP;
\item \label{thm-GWEP 2}$(A, \alpha)$ is amenable.
\end{enumerate}
Then \eqref{thm-GWEP 1} $\Rightarrow$ \eqref{thm-GWEP 2} if $G$ is exact and \eqref{thm-GWEP 2} $\Rightarrow$ \eqref{thm-GWEP 1} if $A$ has the WEP. 
In particular, if $G$ is exact and $A$ has the WEP, then both statements are equivalent.
\end{proposition}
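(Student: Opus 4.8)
\textbf{Proof strategy for Proposition \ref{thm-GWEP}.}

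The plan is to prove the two implications separately, each time leveraging the characterizations of amenability already established, in particular Corollary \ref{cor-amenable} (and, through it, Proposition \ref{prop:Amenability-conditions}).

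\emph{Proof of \eqref{thm-GWEP 2} $\Rightarrow$ \eqref{thm-GWEP 1} assuming $A$ has the WEP.} Suppose $(A,\alpha)$ is amenable and let $\varphi\colon A\into B$ be a $G$-embedding. Since $A$ has the (ordinary) WEP, there is a ccp map $\psi_0\colon B\to A^{**}$ with $\psi_0\circ\varphi$ equal to the canonical inclusion $\iota\colon A\to A^{**}$; this map need not be equivariant. The idea is to average $\psi_0$ using an amenability net. Composing $\psi_0$ with the canonical normal quotient $A^{**}\to A_\alpha''$ (which is $G$-equivariant, see Remark \ref{env iso 0}) we get a ccp map $\psi_1\colon B\to A_\alpha''$ with $\psi_1\circ\varphi=i_A$, still not equivariant. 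By Corollary \ref{cor-amenable}\eqref{cor-amenable 4} there is a ucp $G$-map $\Theta\colon L^\infty(G)\to Z(A_\alpha'')$. One forms, for $b\in B$, the ``matrix coefficient'' map $g\mapsto \alpha_g''\big(\psi_1(\beta_{g^{-1}}(b))\big)$, which lands in $A_\alpha''$ and is a bounded, ultraweakly measurable $A_\alpha''$-valued function on $G$; this produces a ccp map $\Psi\colon B\to L^\infty(G,A_\alpha'')=L^\infty(G)\overline{\otimes}A_\alpha''$. Applying the slice-type map built from $\Theta$ — i.e.\ the composition $L^\infty(G,A_\alpha'')=L^\infty(G)\overline\otimes A_\alpha''\xrightarrow{\Theta\overline\otimes\mathrm{id}}Z(A_\alpha'')\overline\otimes A_\alpha''\xrightarrow{m}A_\alpha''$, where $m$ is multiplication (legitimate because the first factor is central) — and precomposing with $\Psi$ yields the desired $G$-equivariant ccp map $\psi\colon B\to A_\alpha''$; equivariance is exactly the $G$-equivariance of $\Theta$ together with the compatibility of the actions on $L^\infty(G,A_\alpha'')$, and $\psi\circ\varphi=i_A$ follows because $\psi_1\circ\varphi=i_A$ is already equivariant so the averaging leaves it fixed (here one uses that $\Theta$ is unital). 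Concretely it is cleanest to phrase the averaging via the conditional expectation characterization: amenability of $A_\alpha''$ (Corollary \ref{cor-amenable}\eqref{cor-amenable 3}) gives a $G$-equivariant expectation $P\colon L^\infty(G,A_\alpha'')\to A_\alpha''$, and then $\psi:=P\circ\Psi$ works directly.

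\emph{Proof of \eqref{thm-GWEP 1} $\Rightarrow$ \eqref{thm-GWEP 2} assuming $G$ exact.} Suppose $(A,\alpha)$ has the continuous $G$-WEP. Apply the defining property to the $G$-embedding $\iota\colon A\into C_{ub}(G)\otimes A$, $a\mapsto 1\otimes a$ (diagonal action on the target), where $C_{ub}(G)$ carries the translation action. This yields a $G$-equivariant ccp map $\psi\colon C_{ub}(G)\otimes A\to A_\alpha''$ with $\psi\circ\iota=i_A$. Restricting $\psi$ to $C_{ub}(G)\otimes 1$ gives a $G$-equivariant ucp map $\Phi\colon C_{ub}(G)\to A_\alpha''$; it is unital because $\psi$ is ccp and dominates $i_A$ of an approximate unit of $A$, forcing $\psi(1)=1$ by the usual multiplicative-domain argument (cf.\ the proof of Lemma \ref{unital lem}). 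Moreover, since $A$ lies in the multiplicative domain of $\psi$ and $\psi$ restricted to $A$ is the homomorphism $i_A$, the image of $\Phi$ commutes with $i_A(A)$, hence lies in the relative commutant, and in fact one checks $\Phi$ takes values in $Z(A_\alpha'')$ (it commutes with $i_A(A)$, and the ultraweak closure of $i_A(A)$ is $A_\alpha''$, so the image commutes with all of $A_\alpha''$). Since $\Phi$ is $G$-equivariant and $C_{ub}(G)=L^\infty(G)_c$, taking continuous parts gives a ucp $G$-map $C_{ub}(G)\to Z(A_\alpha'')_c$, which is condition \eqref{cor-amenable 5} of Corollary \ref{cor-amenable}. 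As $G$ is exact, Corollary \ref{cor-amenable} then yields amenability of $(A,\alpha)$.

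\emph{Main obstacle.} The genuinely delicate point is the first implication: verifying that the averaged map $\psi=P\circ\Psi$ (or equivalently the slice construction) is well-defined and $G$-equivariant, which hinges on setting up the $A_\alpha''$-valued integrand $g\mapsto \alpha_g''(\psi_1(\beta_{g^{-1}}(b)))$ as an honest element of $L^\infty(G,A_\alpha'')$ and on the compatibility of the various $G$-actions (on $L^\infty(G)$, on $A_\alpha''$, and on $L^\infty(G,A_\alpha'')$). The second implication is comparatively routine given Lemma \ref{unital lem}'s multiplicative-domain trick and Corollary \ref{cor-amenable}; the only care needed there is the identification of the image of $\Phi$ with a central subalgebra of $A_\alpha''$.
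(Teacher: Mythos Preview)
Your argument for \eqref{thm-GWEP 1} $\Rightarrow$ \eqref{thm-GWEP 2} is essentially the paper's: the paper invokes an adaptation of \cite{Buss:2019}*{Lemma 7.9} to produce a ucp $G$-map $\contub(G)\to Z(A_\alpha'')$ and then uses exactness, while you spell out that lemma inline via the embedding and multiplicative-domain trick. One small fix: when $A$ is non-unital, $\contub(G)\otimes 1$ does not sit inside $\contub(G)\otimes A$ but only in its multiplier algebra, so the $G$-embedding to use is $A\into\M(\contub(G)\otimes A)_c$, $a\mapsto 1\otimes a$, exactly as in the proof of Lemma \ref{unital lem} you cite; the rest of your argument then goes through unchanged.

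For \eqref{thm-GWEP 2} $\Rightarrow$ \eqref{thm-GWEP 1} you take a genuinely cleaner route. Both you and the paper start from the same cocycle map $b\mapsto m_b\in L^\infty(G,A_\alpha'')$ with $m_b(g)=\alpha_g''(\psi_1(\beta_{g^{-1}}(b)))$. The paper then averages using the net $(\xi_i)\subset C_c(G,Z(A_\alpha'')_c)$ from Proposition \ref{prop:Amenability-conditions}\eqref{amen 3}: it forms $T_i(b)=\braket{\xi_i}{m_b\xi_i}_{A_\alpha''}$, passes to a pointwise-ultraweak limit $T$, and verifies $G$-equivariance of $T$ by an explicit estimate showing $\|\lambda^{\alpha''}_{h^{-1}}\xi_i-\xi_i\|_\phi\to 0$. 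You instead invoke directly the $G$-equivariant conditional expectation $P\colon L^\infty(G,A_\alpha'')\to A_\alpha''$ furnished by von Neumann amenability (Corollary \ref{cor-amenable}\eqref{cor-amenable 3}) and set $\psi:=P\circ\Psi$; equivariance is then automatic since $\Psi$ itself is $G$-equivariant (the cocycle identity), and $\psi\circ\varphi=i_A$ follows because $\Psi\circ\varphi$ lands in $1\otimes A_\alpha''$. Your approach bypasses the limit argument entirely, at the cost of relying on the Bearden--Crann equivalence (Theorem \ref{Bearden-Crann}) established earlier; the paper's hands-on route uses only the approximation-net characterization but is effectively rebuilding $P$ on the fly (compare the proof of \eqref{amen 4} $\Rightarrow$ \eqref{amen 5} in Proposition \ref{prop:Amenability-conditions}).
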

\begin{proof} It follows from an easy adaptation of \cite{Buss:2019}*{Lemma 7.9} that if $(A,\alpha)$ has the continuous $G$-WEP, then for each unital 
$G$-$C^*$-algebra $C$ there exists a ucp $G$-map $\psi:C\to Z(A_\alpha'')$. This applies in particular to $C=\contub(G)$ with translation action. If $G$ is exact, then the action on $\contub(G)$ is amenable by \cite{Ozawa-Suzuki}*{Proposition 2.5}, and this implies amenability of $(A,\alpha)$.

Suppose conversely that $A$ has the WEP and $\alpha$ is amenable. The WEP implies that for any embedding $\varphi:A\into B$ and any faithful representation $\pi:A\to \Bd(H)$ there exists a ccp map $\psi:B\to \pi(A)''$ such that $\psi\circ \varphi$ is the canonical inclusion $A\into \pi(A)''$.  In particular, for each $G$-embedding $\varphi:A\into B$ there exists a ccp map $\psi:B\to A_\alpha''$ such that $\psi\circ \varphi=i_A$.  Our goal is to replace $\psi$ by a $G$-equivariant ccp map with   the same property.

Recall from Definition \ref{l2wgm} that for a $G$-von Neumann algebra $M\subseteq \Bd(H)$ with action $\sigma:G\to\Aut(M)$, we define a Hilbert $M$-module $L_w^2(G,M)$ to be the weak closure of $C_c(G, M)$ inside $\Bd(H, L^2(G, H))$, where we view $\xi\in C_c(G,M)$ as acting via
$$H\ni v\mapsto \xi\cdot v\in L^2(G,H) \quad\text{given by} \quad (\xi\cdot v)(g)=\xi(g)v.$$
For each $b\in B$, define $m_b\in L^\infty(G,A_\alpha'')$ by
$$
m_b:G\to A_\alpha'',\quad g\mapsto \alpha''_g(\psi(\beta_{g^{-1}}(b))),
$$
which we consider as an adjointable operator on $L^2_w(G,A_\alpha'')$ as in the discussion below Definition \ref{l2wgm}.  Let $(\xi_i)_{i\in I}$ be a net in 
$C_c(G, Z(A_\alpha'')_c)\subseteq L^2_w(G,A_\alpha'')$ with the properties from item \eqref{amen 3} of Proposition \ref{prop:Amenability-conditions}.
For each $i$, define a map
$$
T_i:B\to A_\alpha'',\quad b\mapsto \braket{\xi_i}{m_b\xi_i}_{A_\alpha''}.
$$
One then checks that the net $(T_i)$ consists of ccp maps.  After passing to a subnet we may assume that $(T_i)$ has a pointwise ultraweak limit (see  \cite{Brown:2008qy}*{Theorem 1.3.7}), which is also a ccp map $T:B\to A_\alpha''$.  We claim that this limit has the right properties.

First, let us check that if $a$ is an element of $A$, then $T(\varphi(a))=a$.  
Indeed, in this case $m_a$ is just the operator of left-multiplication by $a$, and so we have 
$$
T_i(a)=\braket{\xi_i}{a\xi_i}_{A_\alpha''}=\int_G\xi_i(g)^*a\xi_i(g) \dd g.
$$
for all $i$.  As $\xi_i$ takes values in $Z(A_\alpha'')$, this just equals $\braket{ \xi_i}{\xi_i}_{A_\alpha''} a$, however, which converges ultraweakly to $a$ as $i$ tends to infinity.  

It remains to check that $T$ is equivariant.  Let $b\in B$ and $h\in G$.  Then 
\begin{align*}
T_i(\beta_h(b))=\braket{\xi_i}{m_{\beta_h(b)}\xi_i}_{A_\alpha''}=\int_G \xi_i(g)^*\alpha_g''\big(\psi(\beta_{g^{-1}h}(b))\big)\xi_i(g) \dd g.
\end{align*}
Replacing $g$ by $hg$, this becomes
\begin{align*}
\int_G  \xi_i(hg)^*\alpha_{hg}''&\big(\psi(\beta_{g^{-1}}(b))\big)\xi_i(hg) \dd g\\
&=\alpha_h''\Big(\int_G (\lambda^{\alpha''}_{h^{-1}}\xi_i)(g)^*\alpha_{g}''\big(\psi(\beta_{g^{-1}}(b))\big)(\lambda^{\alpha''}_{h^{-1}}\xi_i)(g) \dd g\Big) \\ 
& = \alpha_h''(\braket{ \lambda^{\alpha''}_{h^{-1}}\xi_i}{m_b(\lambda^{\alpha''}_{h^{-1}}\xi_i)}_{A_\alpha''}).
\end{align*}
To prove equivariance, it thus suffices to show that 
\begin{equation}\label{equi diff}
\begin{split}
\braket{ \lambda^{\alpha''}_{h^{-1}}\xi_i&}{m_b(\lambda^{\alpha''}_{h^{-1}}\xi_i)}_{A_\alpha''} -  \braket{\xi_i}{m_b\xi_i}_{A_\alpha''}\\
&=\braket{\lambda^{\alpha''}_{h^{-1}}\xi_i-\xi_i}{m_b(\lambda^{\alpha''}_{h^{-1}}\xi_i)}_{A_\alpha''} +
\braket{\xi_i}{m_b(\lambda^{\alpha''}_{h^{-1}}\xi_i-\xi_i)}_{A_\alpha''}
\end{split}
\end{equation}
tends ultraweakly to zero. Indeed, in the identity 
\begin{align*}
&\braket{\lambda^{\alpha''}_{h^{-1}}\xi_i - \xi_i}{\lambda^{\alpha''}_{h^{-1}}\xi_i - \xi_i}_{A_\alpha''}\\
&\quad\quad =\alpha_{h^{-1}}''(\braket{ \xi_i}{\xi_i}_{A_\alpha''}) +\braket{\xi_i}{\xi_i}_{A_\alpha''}-\braket{\xi_i}{\lambda^{\alpha''}_{h^{-1}}\xi_i}_{A_\alpha''}-\braket{ \lambda^{\alpha''}_{h^{-1}}\xi_i}{\xi_i}_{A_\alpha''},
\end{align*}
the right hand side tends ultraweakly to zero.  The expression in line \eqref{equi diff} therefore tends ultraweakly to zero 
by using the Cauchy-Schwartz inequality for the semi-inner products $\braket{\cdot}{\cdot}_\phi:=\phi(\braket{\cdot}{\cdot}_{A_\alpha''})$
for all states $\phi\in S(A)^c$, i.e., for all normal states on $A_\alpha''$ by Proposition~\ref{prop-Ikunishi}.
\end{proof}

We now turn to the relationship of the (continuous) $G$-WEP to weak containment type properties.  Recalling that $A\rtimes_\inj G=A\rtimes_\red G$ when $G$ is exact by \cite{Buss:2018nm}*{Proposition 4.2}, the property  ``$A\rtimes_\max G=A\rtimes_\inj G$'' should be viewed as a generalization of the weak containment property to actions of potentially non-exact groups.

In \cite{Buss:2018nm}*{Proposition 3.12} we showed that the $G$-WEP implies that $A\rtimes_\max G=A\rtimes_\inj G$.
The following is a slight strengthening of this result:

\begin{proposition}\label{prop-GWEP}
Let $(A,\alpha)$ be a $G$-$C^*$-algebra. Then the following are equivalent:
\begin{enumerate}
\item $A$ has the continuous $G$-WEP;
\item For every nondegenerate covariant representation $(\pi,u)\colon (A,G)\to \Bd(H)$ and every $G$-embedding $\iota\colon A\into B$ into another $G$-$C^*$-algebra $B$, there is a  ccp $G$-map $\varphi\colon B\to \pi(A)''$ with $\varphi\circ\iota=\pi$. 
\end{enumerate}
In particular, if $A$ has the continuous $G$-WEP, then every covariant representation is $G$-injective and $A\rtimes_{\max}G=A\rtimes_\inj G$. 
\end{proposition}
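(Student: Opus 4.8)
\textbf{Proof strategy for Proposition \ref{prop-GWEP}.}

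The plan is to prove the equivalence $(1)\Leftrightarrow(2)$ by a standard ``pass to the bicommutant via the universal covariant representation'' argument, and then read off the two named consequences. First I would show $(2)\Rightarrow(1)$: this is essentially trivial, since one applies $(2)$ to the universal covariant representation $(i_A,i_G)$ of $(A,G,\alpha)$ on $H_u$. By Definition \ref{gvna}, $i_A(A)''=A_\alpha''$, so the ccp $G$-map $\varphi\colon B\to i_A(A)''=A_\alpha''$ with $\varphi\circ\iota=i_A$ is precisely the map required in Definition \ref{def-weakGWEP}; one must only note that equivariance of $\varphi$ with respect to $\Ad i_G=\alpha''$ on $A_\alpha''$ is what the definition of continuous $G$-WEP requires, using that $i_A$ is the canonical inclusion $A\to A_\alpha''\subseteq (A\rtimes_\max G)^{**}$.

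The substantive direction is $(1)\Rightarrow(2)$. Given a nondegenerate covariant representation $(\pi,u)\colon(A,G)\to\Bd(H)$ and a $G$-embedding $\iota\colon A\into B$, I would first invoke the continuous $G$-WEP to get a $G$-equivariant ccp map $\psi\colon B\to A_\alpha''$ with $\psi\circ\iota=i_A$. Next, Proposition \ref{prop-universal} supplies the canonical normal $\alpha''$-$\Ad u$-equivariant surjective $*$-homomorphism $\pi''\colon A_\alpha''\to\pi(A)''$ extending $\pi$. Composing, $\varphi:=\pi''\circ\psi\colon B\to\pi(A)''$ is ccp (composition of ccp with a $*$-homomorphism), it is $G$-equivariant (composition of equivariant maps), and $\varphi\circ\iota=\pi''\circ\psi\circ\iota=\pi''\circ i_A=\pi$, where the last equality holds because $\pi''$ extends $\pi$. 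This gives $(2)$. The main (only) thing to be careful about is the bookkeeping of which $G$-actions are in play on each algebra — $\alpha$ on $A$, the diagonal-type action $\beta$ on $B$, $\alpha''$ on $A_\alpha''$, and $\Ad u$ on $\pi(A)''$ — and checking that all the maps involved are equivariant for the correct pairs; none of this is hard, but it is the place where an error could creep in. (In the nonunital case one should also recall that ccp maps into $\Bd(H)_c$ behave well, cf. Lemma \ref{int lem}, though here $\pi(A)''$ already carries the point-ultraweakly continuous action $\Ad u$ so no difficulty arises.)

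Finally, for the ``in particular'' clause: applying $(2)$ with $\pi$ an arbitrary nondegenerate covariant representation immediately shows, comparing with Definition \ref{def-injective-rep}, that every nondegenerate covariant representation is $G$-injective — one only notes that $\pi(A)''\subseteq\Bd(H)$ and that a ccp $G$-map into $\pi(A)''$ is in particular a ccp $G$-map into $\Bd(H)$. Then Corollary \ref{inj lem} (the equivalence of ``every nondegenerate covariant representation is $G$-injective'' with ``$A\rtimes_\max G=A\rtimes_\inj G$'') yields $A\rtimes_\max G=A\rtimes_\inj G$. I do not expect any genuine obstacle here; the content is already packaged in Proposition \ref{prop-universal}, Definition \ref{def-weakGWEP}, and Corollary \ref{inj lem}, and the proposition is really just assembling them.
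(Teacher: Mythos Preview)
Your proposal is correct and matches the paper's proof essentially line for line: the paper also proves $(1)\Rightarrow(2)$ by composing the continuous $G$-WEP map $\psi\colon B\to A_\alpha''$ with the normal extension $\pi''\colon A_\alpha''\to\pi(A)''$ from Proposition~\ref{prop-universal}, proves $(2)\Rightarrow(1)$ by specializing to the universal representation, and deduces the ``in particular'' clause from Definition~\ref{def-injective-rep} together with Proposition~\ref{prop-inj-cov} (your use of Corollary~\ref{inj lem} is equivalent).
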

\begin{proof} 
Fix a nondegenerate covariant representation $(\pi,u)\colon (A,G)\to \Bd(H)$ and a $G$-embedding $\iota \colon  A\into B$. Assuming that $A$ has the continuous $G$-WEP, there is a ccp $G$-map $\psi\colon B\to A_\alpha''$ with $\psi\circ\iota=i_A\colon A\into A_\alpha''$. On the other hand, by Proposition~\ref{prop-universal} there is a normal $G$-equivariant homomorphism $\pi''\colon A_\alpha''\to \pi(A)''\sbe \Bd(H)$ with $\pi''\circ i_A=\pi$. It follows that $\varphi:=\pi''\circ \psi$ is a ccp $G$-map with $\varphi\circ\iota=\pi$, as desired. Conversely, if every covariant representation satisfies this property, then so does the universal representation of $A\rtimes_\max G$ and this gives the continuous $G$-WEP for $A$.

The fact that if $A$ has the continuous $G$-WEP, then every covariant representation is injective is now true by definition 
(see Definition \ref{def-injective-rep}), and $A\rtimes_{\max}G=A\rtimes_\inj G$ follows from Proposition \ref{prop-inj-cov}.
\end{proof}

If the $C^*$-algebra being acted on is commutative, we can do better and get a complete characterization of the weak containment type property ``$A\rtimes_\max G=A\rtimes_\inj G$''.  Note that when $G$ is exact, the following result reduces to Theorem~\ref{thm-exact-commutative}; it should therefore be viewed as a generalization of that theorem that is applicable outside the realm of exact groups.

\begin{theorem}
Let $A=\contz(X)$ be a commutative $G$-$C^*$-algebra. Then $A$ has the continuous $G$-WEP if and only if $A\rtimes_\max G=A\rtimes_\inj G$.
\end{theorem}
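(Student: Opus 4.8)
The plan is to treat the two implications separately; only the converse uses commutativity.

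The implication ``continuous $G$-WEP $\Rightarrow$ $A\rtimes_\max G=A\rtimes_\inj G$'' needs no hypothesis at all: it is precisely the ``in particular'' clause of Proposition~\ref{prop-GWEP}, which says the continuous $G$-WEP forces every nondegenerate covariant representation of $(A,G)$ to be $G$-injective, and hence $A\rtimes_\max G=A\rtimes_\inj G$ by Corollary~\ref{inj lem}.

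So suppose $A=\contz(X)$ is commutative and $A\rtimes_\max G=A\rtimes_\inj G$. By Definition~\ref{def-weakGWEP} I must produce, for every $G$-embedding $\iota\colon A\into B$, a ccp $G$-map $\psi\colon B\to A_\alpha''$ with $\psi\circ\iota=i_A$, where $i_A\colon A\to A_\alpha''$ is the canonical inclusion. By Corollary~\ref{inj lem} the hypothesis says exactly that every nondegenerate covariant representation of $(A,G)$ is $G$-injective, and the point is to pick a good one. Here is where commutativity enters: let $(\pi,u)$ be the Haagerup standard form of the \emph{abelian} $G$-von Neumann algebra $A_\alpha''$, realised on a Hilbert space $H$ with $u\colon G\to\U(H)$ strongly continuous and implementing $\alpha''$ (Theorem~\ref{thm-Haagerup}). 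Since $A_\alpha''$ is abelian, $\pi(A_\alpha'')$ is a maximal abelian subalgebra of $\Bd(H)$, so $\pi(A_\alpha'')=\pi(A_\alpha'')'$; as $A$ is ultraweakly dense in $A_\alpha''$ and $\pi$ is normal, $\pi(A)''=\pi(A_\alpha'')$, so $\pi(A)''=\pi(A)'$ is a ($G$-equivariant) copy of $A_\alpha''$ inside $\Bd(H)$ through which $i_A$ factors. The restriction $(\pi|_A,u)$ is a nondegenerate covariant representation of $(A,G)$ (an approximate unit of $A$ converges strongly to $1_H$ under $\pi$), hence $G$-injective, so $\iota\colon A\into B$ extends to a ccp $G$-map $\sigma\colon B\to\Bd(H)$ with $\sigma\circ\iota=\pi|_A$.

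The remaining task — the heart of the proof — is to replace $\sigma$ by a ccp $G$-map valued in $\pi(A)''\cong A_\alpha''$. Since $\pi(A)''$ is abelian, hence an injective von Neumann algebra, there is a (possibly non-equivariant) conditional expectation $E\colon\Bd(H)\to\pi(A)''$, and $E\circ\sigma\colon B\to\pi(A)''$ is a ccp map still extending $\pi|_A$, because $E$ is the identity on $\pi(A)''\supseteq\pi(A)$. To make it equivariant one averages: $G$-injectivity of $(\pi|_A,u)$ together with $\pi(A)'=\pi(A)''$ gives, via Lemma~\ref{unital lem} applied to $\contub(G)$, a ucp $G$-map $\contub(G)\to\pi(A)''$ (with image automatically in the continuous part), equivalently a ucp $G$-map $L^\infty(G)\to\pi(A)''$ by Proposition~\ref{prop:Amenability-conditions} applied to the abelian $G$-von Neumann algebra $\pi(A)''$; one uses this ``$\pi(A)''$-valued invariant mean'' to average the conjugates $\Ad u_g\circ E\circ\Ad u_g^{-1}$ (all of which are conditional expectations onto $\pi(A)''$, since $u$ normalises it) against the orbit $g\mapsto\Ad u_g\circ\sigma\circ\Ad u_{g^{-1}}$, producing the desired $G$-equivariant ccp extension $\psi\colon B\to\pi(A)''\cong A_\alpha''$. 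The main obstacle is the care this averaging requires: the mean need not be normal, so one cannot naively amplify a state to a von Neumann tensor product, and one has to argue through continuous parts in the spirit of the proof of the implication \eqref{amen 7}$\Rightarrow$\eqref{amen 6} in Proposition~\ref{prop:Amenability-conditions}. As a consistency check, when $G$ is exact one has $A\rtimes_\inj G=A\rtimes_\red G$, the hypothesis becomes the weak containment property, $A=\contz(X)$ is nuclear hence has the WEP, and the statement collapses to Theorem~\ref{thm-contGWEP} combined with Theorem~\ref{thm-exact-commutative}.
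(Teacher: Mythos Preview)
Your forward implication is fine and identical to the paper's. The converse has a genuine gap at the averaging step, and the paper's proof avoids it by a different route.

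The problem is this: from $G$-injectivity of the Haagerup standard form together with Lemma~\ref{unital lem} you correctly obtain a ucp $G$-map $\contub(G)\to\pi(A)'=\pi(A)''\cong A_\alpha''$, i.e.\ condition \eqref{amen 7} (equivalently \eqref{amen 6}) of Proposition~\ref{prop:Amenability-conditions} for $M=A_\alpha''$. But to average the non-equivariant ccp map $E\circ\sigma\colon B\to A_\alpha''$ into an equivariant one you need to integrate the $A_\alpha''$-valued function $g\mapsto \alpha''_g\big((E\circ\sigma)(\beta_{g^{-1}}(b))\big)$, which requires a $G$-equivariant projection $L^\infty(G,A_\alpha'')\to A_\alpha''$---that is condition \eqref{amen 5}. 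Remark~\ref{no equiv} shows explicitly that \eqref{amen 7} does \emph{not} imply \eqref{amen 5} for non-exact $G$ (take $M=\contub(G)_\tau''$), so your averaging cannot be carried out in general. The ``spirit of \eqref{amen 7}$\Rightarrow$\eqref{amen 6}'' does not help: that argument upgrades scalar-valued means, not $M$-valued ones. Your consistency check for exact $G$ is exactly where the argument does work, since then \eqref{amen 7}$\Rightarrow$\eqref{amen 5}.

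The paper sidesteps this entirely. Rather than splitting an arbitrary embedding, it splits the \emph{single} embedding $\iota\colon A\into B:=\contub(G,A_\alpha'')$ given by $\iota(a)(g)=i_A(\alpha_{g^{-1}}(a))$. Two features of this $B$ do all the work: (i) $B$ is \emph{commutative}, so once $\iota(A)$ lies in the multiplicative domain of the ccp $G$-extension $\varphi\colon B\to\Bd(H)$, commutativity of $B$ forces $\varphi(B)\subseteq\pi(i_A(A))'=\pi(A_\alpha'')\cong A_\alpha''$ automatically---no conditional expectation, no averaging; (ii) $B$ is $G$-\emph{injective} (since $A_\alpha''$ is abelian hence injective, \cite{Buss:2018nm}*{Proposition~2.2}), so a ccp $G$-splitting of this one embedding yields the continuous $G$-WEP for every embedding. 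The key idea you are missing is to exploit commutativity of the \emph{target} algebra $B$, not just of $A$.
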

\begin{proof}
If $A$ has the continuous $G$-WEP, then Proposition~\ref{prop-GWEP} implies that $A\rtimes_\max G=A\rtimes_\inj G$.  For the converse, let $(\pi, u)$ be a Haagerup standard form representation of $(A_\alpha'', G,\alpha'')$ on a Hilbert space $H$ as in Theorem \ref{thm-Haagerup}.  Since $A$ is commutative, so is $A_\alpha''$.  In particular $A_\alpha''$ is injective, whence \cite{Buss:2018nm}*{Proposition 2.2} implies that $B:=\contub(G,A_\alpha'')$ is a commutative $G$-injective $G$-$C^*$-algebra with respect to the action induced by the left translation $G$-action on $G$ and the trivial $G$-action on $A_\alpha''$. Consider the canonical $G$-embedding $\iota\colon A\into B$ that sends $a\in A$ to the function $\iota(a)(g):=i_A(\alpha_{g^{-1}}(a))$, where $i_A\colon A\into A_\alpha''$ is the canonical embedding. 
Since $(\pi\circ i_A, u)$ is a nondegenerate covariant representation of $(A,G,\alpha)$
and since  $A\rtimes_\max G=A\rtimes_\inj G$,  it follows from Corollary \ref{inj lem} that $(\pi\circ i_A, u)$ is $G$-injective. Hence
there exists a ccp $G$-map $\varphi\colon B\to \Bd(H)$ with $\varphi\circ\iota=\pi\circ i_A$.
Since $B$ is commutative, it follows that $$\varphi(B)\sbe \pi(i_A(A))'=\pi(A_\alpha'')'=\pi(A_\alpha'')\cong A_\alpha'',$$ where the second equality follows as $\pi(A_\alpha'')$ is a masa in $\Bd(H)$ by the properties of the Haagerup standard form. We may therefore view $\varphi$ as a ccp $G$-map $B\to A_\alpha''$ splitting the inclusion $A\into B$ in the sense that $\varphi\circ \iota=i_A$. Since $B$ is $G$-injective, this implies that $A$ has the continuous $G$-WEP.
\end{proof}

\chapter{Some  new developments and questions}\label{sec-questions}
In what follows we report on some results related to  this work and discuss some open questions.

\section{A summary of new developments}

We start with results related to this work.  Between our first posting of a draft of this paper on the arXiv and the current version, Bearden and Crann \cite{Bearden-Crann,Bearden-Crann1}, McKee and Pourshashami \cite{McKeePour}, and Ozawa and Suzuki \cite{Ozawa-Suzuki} have produced beautiful results which both complement and generalize some of ours.  These authors also answered some questions that we explicitly raised in earlier versions of this paper.  To clarify the state of the art and how these different results are related to each other, and also to record when others answered questions we raised in earlier versions of this paper, we discuss some of this work.

\subsection*{1} The relationship between amenability (Definition \ref{def-amenable (A)}) and von Neumann amenability (Definition \ref{def-vonNeumann}) was first addressed by Anantharaman-Delaroche (using different language).  Indeed in \cite{Anantharaman-Delaroche:1987os}*{Th\'{e}or\`{e}me 3.3}, Anantharaman-Delaroche shows that these notions are equivalent for actions of discrete groups.  In the current paper, we formulated these notions for locally compact groups, and proved they are equivalent whenever the acting group is exact (see Proposition \ref{prop:Amenability-conditions}).  In an earlier version, we asked whether they are equivalent in general.  This was rapidly solved affirmatively by Bearden and Crann in \cite{Bearden-Crann}*{Theorem 4.2}.  This result of Bearden and Crann was  influential on subsequent versions of this paper, as it allowed us to generalize several of our theorems: for example, we originally established the equivalence of amenability of an action $\alpha:G\to \Aut(C_0(X))$ and measurewise amenability of the underlying action $G\curvearrowright X$ in Theorem \ref{thm-amenable-all} under the assumption that $G$ is exact, and \cite{Bearden-Crann}*{Theorem 3.6} allowed us to establish it in general.

\subsection*{2} In an earlier version of this paper, we claimed a proof that amenability and the (QAP) are equivalent for discrete groups.  Our proof unfortunately had a mistake, but the equivalence was subsequently shown to hold in general by Ozawa and Suzuki \cite{Ozawa-Suzuki}*{Theorem 3.2}. 
In this theorem they also show that amenability for an action $\alpha:G\to\Aut(A)$ is equivalent to 
the existence of a $G$-conditional expectation
$$P:L^\infty(G,A^{**})\to A^{**},$$
although, as noted before, $A^{**}$ fails to be a $G$-von Neumann algebra in general. This can be viewed as 
 a direct extension of the original definition 
by  Anantharaman-Delaroche for discrete $G$.
 Ozawa and Suzuki show in \cite{Ozawa-Suzuki}*{Theorem 2.13} that an action has the (QAP) if and only if it has the approximation property (AP) of Exel and Ng \cite{ExelNg:ApproximationProperty}, and therefore the (AP) and amenability are equivalent in general (see Chapter \ref{chap:QAP} for more details).

\subsection*{3} In \cite{Anantharaman-Delaroche:1987os}*{Th\'{e}or\`{e}me 4.9}  Anantharaman-Delaroche shows that if $G$ is discrete, then amenability of $\alpha:G\to\Aut(C_0(X))$ is equivalent to strong amenability.  In an earlier version of this paper, motivated by our results on measurewise amenability in Theorem \ref{thm-amenable-all}, we asked whether this extends to locally compact groups: more precisely, we asked the equivalent question of whether amenability of $\alpha:G\to\Aut(C_0(X))$ and topological amenability of $G\curvearrowright X$ are the same.  This was solved by Bearden and Crann in \cite{Bearden-Crann}*{Corollary 4.14}. 

\subsection*{4} In this paper, we show in Proposition \ref{prop-restrict} that amenability behaves well under restrictions to exact subgroups.  This was established without the exactness assumption by Ozawa and Suzuki in \cite{Ozawa-Suzuki}*{Corollary 3.4}.
\subsection*{5} In an earlier version of this paper, we asked whether a locally compact group is exact if and only if it admits an amenable action on a unital $C^*$-algebra.  This was answered in the affirmative by Ozawa and Suzuki in \cite{Ozawa-Suzuki}*{Corollary 3.6}.
\subsection*{6} In an earlier version of this paper we asked whether the $G$-WEP of \cite{Buss:2018nm}*{Definition 3.9} and the continuous $G$-WEP of Definition \ref{def-weakGWEP} above are the same.  This was solved affirmatively by Bearden and Crann in \cite{Bearden-Crann1}*{Proposition 4.5}, as part of a general study of the equivariant weak expectation property and related issues.
\subsection*{7} In Question \ref{question6} below we ask for which class $\mathcal{C}$ of locally compact groups does the following hold: ``For any $G$-$C^*$-algebra $(A,\alpha)$, if $A\rtimes_\red G$ is nuclear, then $\alpha$ is amenable.''  In a previous version of this paper we suggested that the class $\mathcal C$ might contain all groups with property (W) as studied by Anantharaman-Delaroche in 
\cite{Anantharaman-Delaroche:2002ij}*{Section 4}. In \cite{Crann}*{Theorem 3.5}, Crann shows that property (W) is equivalent to inner amenability.  Very recently, McKee and Pourshahami proved (see \cite{McKeePour}*{Corollary 6.6}) that all  inner amenable groups (and hence all groups with property (W)) are indeed contained in $\mathcal C$. 

\medskip

As a result of this progress, we are now in the very satisfactory situation of knowing that many versions of amenability are equivalent in complete generality: these include von Neumann amenability, amenability, the (wQAP), the (AP), and the (QAP).  On the other hand, it is now clear that other versions of amenability are different: work of Suzuki \cite{Suzuki:2018qo} (see also the discussion in
 \cite{Buss:2019}*{Section 3}) shows that strong amenability is strictly stronger than amenability; and commutant amenability is strictly weaker than amenability thanks to the results of Section \ref{sec:example}.

To conclude the discussion on recent developments, we would like to draw the reader's attention to another aspect of the recent work of Ozawa and Suzuki in \cite{Ozawa-Suzuki}, and in particular to Section 6 of that paper. Ozawa and Suzuki give  several exciting constructions of amenable actions of locally compact groups on purely infinite simple $C^*$-algebras. In particular, they show (among other interesting examples), that every amenable action of a second countable group $G$ on a separable $C^*$-algebra $A$ is $KK^G$-equivalent to an outer amenable action of $G$ on a separable simple purely infinite $C^*$-algebra $B$ so that the $KK^G$-equivalence can be  realized by a $G$-equivariant inclusion $A\subseteq B$ (see \cite{Ozawa-Suzuki}*{Theorem 6.1}).  
\section{Some questions}

We now turn to questions.  

Let us first discuss the connection between amenability and weak containment.  In Section \ref{sec:example}, we showed that there are non-amenable actions on the compact operators whose maximal and reduced crossed products are the same, i.e.\ that have the weak containment property.  However, the following basic question remains open.

\begin{question}\label{a vs wc unital}
Is there a non-amenable action on a unital $C^*$-algebra with the weak containment property?
\end{question}

We also observed that the method used in Section \ref{sec:example} for producing non-amenable examples with weak containment is unlikely to work for discrete groups.  The following question is thus very natural and interesting.

\begin{question}\label{a vs wc discrete}
Is there a non-amenable action of a discrete group with the weak containment property?    
\end{question}

Due to the close relationship between the weak containment property and commutant amenability, the following questions are closely related.

\begin{question}\label{a vs ca unital}
Is there a non-amenable, commutant amenable action on a unital $C^*$-algebra?
\end{question}

\begin{question}\label{a vs ca discrete}
Is there a non-amenable, commutant amenable action of a discrete group?    
\end{question}

Note that Questions \ref{a vs wc unital} and \ref{a vs wc discrete} are equivalent to Questions \ref{a vs ca unital} and \ref{a vs ca discrete} respectively for actions of \emph{exact} groups, as Theorem \ref{prop-general} shows the weak containment property and commutant amenability are equivalent for actions of exact groups.  However, for non-exact groups, this is not at all clear: note for example that Questions \ref{a vs ca unital} and \ref{a vs ca discrete} have a negative answer for actions on \emph{commutative} $G$-$C^*$-algebras by Theorem \ref{thm-exact-commutative}; however, Questions \ref{a vs wc unital} and \ref{a vs wc discrete} seem particularly interesting for commutative $G$-$C^*$-algebras.

The above comments also make the following question quite natural.

\begin{question}\label{wc vs ca}
Are weak containment and commutant amenability equivalent for all actions of locally compact groups?
\end{question}

A problem with commutant amenability is that it seems difficult to check: using Proposition \ref{ca uni rep}, it suffices to check commutant amenability  for the universal representation, but this is generally difficult due to the huge size of this object.  As in many interesting (and possibly all) cases, commutant amenability is equivalent to the weak containment property, it would be very satisfying to have an answer to the following question.

\begin{question}\label{intrinsic ca}
Is there an `intrinsic' characterization of commutant amenability for a $G$-$C^*$-algebra $A$, i.e.\ can one find an approximation property of the pair $(A,G)$ that is equivalent to commutant amenability?
\end{question}

Continuing a discussion of the difference between amenability and commutant amenability, we know from Theorem \ref{prop:Nuclear-Pairs} (and Proposition \ref{prop-general}) that if $(A,\alpha)$ is amenable and $A$ is nuclear, then $A\rtimes_\red G$ is nuclear.  The following is thus natural.

\begin{question}\label{question5}
Suppose that $(A,\alpha)$ is a $G$-$C^*$-algebra with $A$ nuclear.   Does commutant amenability of $(A,\alpha)$ imply nuclearity of $A\rtimes_\red G$?
\end{question}

In \cite{Buss:2019}*{Theorem 6.1, part (i)}, the authors show that if $A$ is nuclear and commutant amenable, and if $G$ is discrete, then the canonical inclusion $A\rtimes_\red G \to (A\otimes A^\op)\rtimes_\red G$ is a nuclear map.  This is  evidence for a positive answer to Question \ref{question5}, at least in the discrete case.

Continuing this circle of ideas, Anantharaman-Delaroche showed in \cite{Anantharaman-Delaroche:1987os}*{Th\'{e}or\`{e}me 4.5} that if the reduced crossed product $A\rtimes_\red G$ by an action of a {\em discrete} group $G$ is nuclear then the action of $G$ on $A$ must be amenable. 
A similar result cannot hold for all locally compact groups since one can find counterexamples even for $A=\C$ by taking $G$ to be any non-amenable group with nuclear reduced $C^*$-algebra, such as $G=\SL(2,\R)$.

\begin{question}\label{question6}
For what class $\mathcal{C}$ of locally compact groups $G$ does the following hold?  ``For any $G$-$C^*$-algebra 
$(A,\alpha)$, if $A\rtimes_\red G$ is nuclear, then $(A,\alpha)$ is amenable.''
\end{question}

As mentioned above, McKee and Pourshashami \cite{McKeePour}*{Corollary 6.6} showed that $\mathcal{C}$ contains all inner amenable groups (equivalently, all groups with property (W)), generalizing Anantharaman-Delaroche's result for discrete groups.  However, the exact extent of the class $\mathcal{C}$ remains open. Recall from Remark \ref{rem-coaction} that nuclearity of $A\rtimes_\red G$ always implies nuclearity of $A$. Hence one may as well restrict to nuclear $A$ in Question \ref{question6}.

As a last question, unrelated to the discussion above, let us recall that Bearden and Crann \cite[Corollary 4.14]{Bearden-Crann} showed that an action on a commutative $G$-$C^*$-algebra is amenable if and only if it is strongly amenable.  On the other hand, in Corollary \ref{cor-strong-typeI} we showed that an action of a second countable group on a separable type I $C^*$-algebra with Hausdorff spectrum is amenable if and only if it is strongly amenable.  It is natural to ask whether the Hausdorffness and separability assumptions can be dropped.

\begin{question}\label{sa and a}
Let $(A,G,\alpha)$ be a type I, amenable $G$-$C^*$-algebra.  Is $(A,G,\alpha)$ also strongly amenable?
\end{question}

Note that if one replaces ``type I'' with ``nuclear'', then the answer is ``no'' as shown by Suzuki in \cite{Suzuki:2018qo} (see also the discussion in \cite{Buss:2019}*{Section 3}).


\bibliographystyle{amsalpha}
\begin{bibdiv}
\begin{biblist}

\bib{Abadie-Buss-Ferraro:Morita_Fell}{article}{
  author={Abadie, Fernando},
  author={Buss, Alcides},
  author={Ferraro, Dami\'an},
  title={Morita enveloping Fell bundles},
  journal={Bull. Braz. Math. Soc. (N.S.)},
  volume={50},
  date={2019},
  number={1},
  pages={3--35},
  issn={1678-7544},
  doi={10.1007/s00574-018-0088-6},
}

\bib{Abadie:2019kc}{article}{
      author={Abadie, Fernando},
      author={Buss, Alcides},
      author={Ferraro, Dami\'{a}n},
       title={Amenability and approximation properties for partial actions and
  {F}ell bundles},
 
        journal={Bull. Braz. Math. Soc. New Series (2021)}
     doi={10.1007/s00574-021-00255-8},
}

\bib{Adams:1994wg}{article}{,
	author = {Scot Adams}
	author={George Elliott}
	author={Thierry Giordano}
	TITLE = {Amenable actions of groups},
   JOURNAL = {Trans. Amer. Math. Soc.},
  FJOURNAL = {Transactions of the American Mathematical Society},
    VOLUME = {344},
      YEAR = {1994},
    NUMBER = {2},
     PAGES = {803--822},
      ISSN = {0002-9947},
   MRCLASS = {22D99 (22D40 28D15)},
  MRNUMBER = {1250814},
MRREVIEWER = {Paul Jolissaint},
       DOI = {10.2307/2154508},
       URL = {https://doi.org/10.2307/2154508},
}

\bib{Anantharaman-Delaroche:1979aa}{article}{
      author={Anantharaman-Delaroche, Claire},
       title={Action moyennable d'un groupe localement compact sur une
  alg\'{e}bre de von {N}eumann},
   JOURNAL = {Math. Scand.},
  FJOURNAL = {Mathematica Scandinavica},
    VOLUME = {45},
      YEAR = {1979},
    NUMBER = {2},
     PAGES = {289--304},
      ISSN = {0025-5521},
   MRCLASS = {22D25 (46L10)},
  MRNUMBER = {580607},
MRREVIEWER = {Pierre Eymard},
       DOI = {10.7146/math.scand.a-11844},
       URL = {https://doi.org/10.7146/math.scand.a-11844},
}

\bib{Anantharaman-Delaroche:1982aa}{article}{
      author={Anantharaman-Delaroche, Claire},
       title={Action moyennable d'un groupe localement compact sur une
  alg\'{e}bre de von {N}eumann {II}},
   JOURNAL = {Math. Scand.},
  FJOURNAL = {Mathematica Scandinavica},
    VOLUME = {50},
      YEAR = {1982},
    NUMBER = {2},
     PAGES = {251--268},
      ISSN = {0025-5521},
   MRCLASS = {22D25 (43A07 46L10)},
  MRNUMBER = {672928},
MRREVIEWER = {Pierre Eymard},
       DOI = {10.7146/math.scand.a-11958},
       URL = {https://doi.org/10.7146/math.scand.a-11958},
}

\bib{Anantharaman-Delaroche:1987os}{article}{
      author={Anantharaman-Delaroche, Claire},
       title={Syst\`{e}mes dynamiques non commutatifs et moyennabilit\'{e}},
  JOURNAL = {Math. Ann.},
  FJOURNAL = {Mathematische Annalen},
    VOLUME = {279},
      YEAR = {1987},
    NUMBER = {2},
     PAGES = {297--315},
      ISSN = {0025-5831},
   MRCLASS = {46L55 (22D25 22D40 43A07 43A35)},
  MRNUMBER = {919508},
MRREVIEWER = {Elliot C. Gootman},
       DOI = {10.1007/BF01461725},
       URL = {https://doi.org/10.1007/BF01461725},
}

\bib{Anantharaman-Delaroche:2002ij}{article}{
      author={Anantharaman-Delaroche, Claire},
       title={Amenability and exactness for dynamical systems and their
  ${C}^*$-algebras},
   JOURNAL = {Trans. Amer. Math. Soc.},
  FJOURNAL = {Transactions of the American Mathematical Society},
    VOLUME = {354},
      YEAR = {2002},
    NUMBER = {10},
     PAGES = {4153--4178},
      ISSN = {0002-9947},
   MRCLASS = {46L55 (22D25 43A07)},
  MRNUMBER = {1926869},
MRREVIEWER = {Berndt Brenken},
       DOI = {10.1090/S0002-9947-02-02978-1},
       URL = {https://doi.org/10.1090/S0002-9947-02-02978-1},
}

\bib{ADR}{article}{
   author={Anantharaman-Delaroche, Claire},
   author={Renault, Jean},
     TITLE = {Amenable groupoids},
    SERIES = {Monographies de L'Enseignement Math\'{e}matique [Monographs of
              L'Enseignement Math\'{e}matique]},
    VOLUME = {36},
      NOTE = {With a foreword by Georges Skandalis and Appendix B by E.
              Germain},
 PUBLISHER = {L'Enseignement Math\'{e}matique, Geneva},
      YEAR = {2000},
     PAGES = {196},
      ISBN = {2-940264-01-5},
   MRCLASS = {22A22 (22D25 43A07 46L05 46L10 46L80)},
  MRNUMBER = {1799683},
MRREVIEWER = {Robert S. Doran},
}

\bib{Ara-Exel-Katsura:Dynamical_systems}{article}{
  author={Ara, Pere},
  author={Exel, Ruy},
  author={Katsura, Takeshi},
  title={Dynamical systems of type $(m,n)$ and their ${C}^*$\nobreakdash-algebras},
   JOURNAL = {Ergodic Theory Dynam. Systems},
  FJOURNAL = {Ergodic Theory and Dynamical Systems},
    VOLUME = {33},
      YEAR = {2013},
    NUMBER = {5},
     PAGES = {1291--1325},
      ISSN = {0143-3857},
   MRCLASS = {37A55 (46L55)},
  MRNUMBER = {3103084},
       DOI = {10.1017/S0143385712000405},
       URL = {https://doi.org/10.1017/S0143385712000405},
}

\bib{Arveson:1977aa}{article}{
    AUTHOR = {Arveson, William},
     TITLE = {Notes on extensions of {$C^{\sp*}$}-algebras},
   JOURNAL = {Duke Math. J.},
  FJOURNAL = {Duke Mathematical Journal},
    VOLUME = {44},
      YEAR = {1977},
    NUMBER = {2},
     PAGES = {329--355},
      ISSN = {0012-7094},
   MRCLASS = {46L05 (47C10)},
  MRNUMBER = {438137},
MRREVIEWER = {C. A. Akemann},
       URL = {http://projecteuclid.org/euclid.dmj/1077312235},
}

\bib{Arv-book}{book}{
    AUTHOR = {Arveson, William},
     TITLE = {An invitation to ${C^*}$-algebras},
 PUBLISHER = {Springer-Verlag, New York-Heidelberg},
      YEAR = {1976},
     PAGES = {x+106},
      ISBN = {0-387-90176-0},
   MRCLASS = {46L05},
  MRNUMBER = {0512360},
}

\bib{Bearden-Crann}{article}{
author={Bearden, Alex},
author={Crann, Jason},
title={Amenable dynamical systems over locally compact groups},
journal ={Ergodic Theory Dynam. Systems (2021)},
note={prepublished electronically},
DOI= {doi:10.1017/etds.2021.57},
}

\bib{Bearden-Crann1}{article}{
author={Bearden, Alex},
author={Crann, Jason},
     TITLE = {A weak expectation property for operator modules, injectivity
              and amenable actions},
   JOURNAL = {Internat. J. Math.},
  FJOURNAL = {International Journal of Mathematics},
    VOLUME = {32},
      YEAR = {2021},
    NUMBER = {2},
     PAGES = {2150005, 33},
      ISSN = {0129-167X},
   MRCLASS = {46M18 (46L55 46M10 47L65)},
  MRNUMBER = {4223524},
       DOI = {10.1142/S0129167X21500051},
       URL = {https://doi.org/10.1142/S0129167X21500051},
}

\bib{BedCont}{article}{
    AUTHOR = {B\'{e}dos, Erik},
    Author= {Conti, Roberto},
     TITLE = {On discrete twisted {$C^*$}-dynamical systems, {H}ilbert
              {$\rm C^*$}-modules and regularity},
   JOURNAL = {M\"{u}nster J. Math.},
  FJOURNAL = {M\"{u}nster Journal of Mathematics},
    VOLUME = {5},
      YEAR = {2012},
     PAGES = {183--208},
      ISSN = {1867-5778},
   MRCLASS = {46L55},
  MRNUMBER = {3047632},
MRREVIEWER = {Yoshikazu Katayama},
}

\bib{Bekka:2000kx}{book}{
  author={Bekka, Bachir},
  author={de la Harpe, Pierre}
  author={Valette, Alain}
  title={{K}azhdan's property {(T)}},
    SERIES = {New Mathematical Monographs},
    VOLUME = {11},
 PUBLISHER = {Cambridge University Press, Cambridge},
      YEAR = {2008},
     PAGES = {xiv+472},
      ISBN = {978-0-521-88720-5},
   MRCLASS = {22-02 (22E40 28D15 37A15 43A07 43A35)},
  MRNUMBER = {2415834},
MRREVIEWER = {Markus Neuhauser},
       DOI = {10.1017/CBO9780511542749},
       URL = {https://doi.org/10.1017/CBO9780511542749},
}

\bib{BV}{article}{
    AUTHOR = {Bekka, Bachir},
author={Valette, Alain},
     TITLE = {Lattices in semi-simple {L}ie groups, and multipliers of group
              {$C^*$}-algebras},
      NOTE = {Recent advances in operator algebras (Orl\'{e}ans, 1992)},
   JOURNAL = {Ast\'{e}risque},
  FJOURNAL = {Ast\'{e}risque},
    NUMBER = {232},
      YEAR = {1995},
     PAGES = {67--79},
      ISSN = {0303-1179},
   MRCLASS = {22E40 (22D25 46L05)},
  MRNUMBER = {1372525},
MRREVIEWER = {Siegfried Echterhoff},
}

\bib{Bhattacharya:2013sj}{article}{
	Author = {Bhattacharya, Angshuman},
	Author = {Farenick, Douglas},
	Title = {Crossed products of ${C^*}$-algebras with the weak expectation property},
  JOURNAL = {New York J. Math.},
  FJOURNAL = {New York Journal of Mathematics},
    VOLUME = {19},
      YEAR = {2013},
     PAGES = {423--429},
   MRCLASS = {46L55},
  MRNUMBER = {3104554},
MRREVIEWER = {Yuhei Suzuki},
       URL = {http://nyjm.albany.edu:8000/j/2013/19_423.html},
}

\bib{BNS}{article}{
    AUTHOR = {Bhowmick, Jyotishman},
    Author = {Neshveyev, Sergey}
     Author = { Sangha, Amandip},
     TITLE = {Deformation of operator algebras by {B}orel cocycles},
   JOURNAL = {J. Funct. Anal.},
  FJOURNAL = {Journal of Functional Analysis},
    VOLUME = {265},
      YEAR = {2013},
    NUMBER = {6},
     PAGES = {983--1001},
      ISSN = {0022-1236},
   MRCLASS = {46L65 (46L80 46L89)},
  MRNUMBER = {3067794},
MRREVIEWER = {Ralf Meyer},
       DOI = {10.1016/j.jfa.2013.05.021},
       URL = {https://doi.org/10.1016/j.jfa.2013.05.021},
}

\bib{Blackadar:K-theory1}{book}{
  author={Blackadar, Bruce},
  title={\(K\)\nobreakdash-Theory for operator algebras},
    SERIES = {Mathematical Sciences Research Institute Publications},
    VOLUME = {5},
 PUBLISHER = {Springer-Verlag, New York},
      YEAR = {1986},
     PAGES = {viii+338},
      ISBN = {0-387-96391-X},
   MRCLASS = {46L80 (18F25 19Kxx 46M20 58G12)},
  MRNUMBER = {859867},
MRREVIEWER = {Vern Paulsen},
       DOI = {10.1007/978-1-4613-9572-0},
       URL = {https://doi.org/10.1007/978-1-4613-9572-0},
}

\bib{Blackadar:2006eq}{book}{
	author = {Bruce Blackadar},
	title = {Operator Algebras: Theory of ${C}^*$-Algebras and {V}on {N}eumann Algebras},
	    SERIES = {Encyclopaedia of Mathematical Sciences},
    VOLUME = {122},
      NOTE = {Theory of $C^*$-algebras and von Neumann algebras,
              Operator Algebras and Non-commutative Geometry, III},
 PUBLISHER = {Springer-Verlag, Berlin},
      YEAR = {2006},
     PAGES = {xx+517},
      ISBN = {978-3-540-28486-4; 3-540-28486-9},
   MRCLASS = {46L05 (46L10 46L80)},
  MRNUMBER = {2188261},
MRREVIEWER = {Paul Jolissaint},
       DOI = {10.1007/3-540-28517-2},
       URL = {https://doi.org/10.1007/3-540-28517-2},
}

\bib{Blecher:2004}{book}{
    AUTHOR = {Blecher, David P.}, 
    AUTHOR = {Le Merdy, Christian},
     TITLE = {Operator algebras and their modules---an operator space
              approach},
    SERIES = {London Mathematical Society Monographs. New Series},
    VOLUME = {30},
      NOTE = {Oxford Science Publications},
 PUBLISHER = {The Clarendon Press, Oxford University Press, Oxford},
      YEAR = {2004},
     PAGES = {x+387},
      ISBN = {0-19-852659-8},
   MRCLASS = {46L07 (47L30)},
  MRNUMBER = {2111973},
MRREVIEWER = {Narutaka Ozawa},
       DOI = {10.1093/acprof:oso/9780198526599.001.0001},
       URL = {https://doi.org/10.1093/acprof:oso/9780198526599.001.0001},
}

\bib{Brodzki-Cave-Li:Exactness}{article}{
  author={Brodzki, Jacek},
  author={Cave, {Ch}ris},
  author={Li, Kang},
  title={Exactness of locally compact groups},
   JOURNAL = {Adv. Math.},
  FJOURNAL = {Advances in Mathematics},
    VOLUME = {312},
      YEAR = {2017},
     PAGES = {209--233},
      ISSN = {0001-8708},
   MRCLASS = {46L10 (19K56 22D25)},
  MRNUMBER = {3635811},
MRREVIEWER = {Judith A. Packer},
       DOI = {10.1016/j.aim.2017.03.020},
       URL = {https://doi.org/10.1016/j.aim.2017.03.020},
}

\bib{Brown:2008qy}{book}{
      author={Brown, Nathanial},
      author={Ozawa, Narutaka},
       title={${C}^*$-algebras and finite-dimensional approximations},
    SERIES = {Graduate Studies in Mathematics},
    VOLUME = {88},
 PUBLISHER = {American Mathematical Society, Providence, RI},
      YEAR = {2008},
     PAGES = {xvi+509},
      ISBN = {978-0-8218-4381-9; 0-8218-4381-8},
   MRCLASS = {46L05 (43A07 46-02 46L10)},
  MRNUMBER = {2391387},
MRREVIEWER = {Mikael R\o rdam},
       DOI = {10.1090/gsm/088},
       URL = {https://doi.org/10.1090/gsm/088},
}

\bib{Buss:2014aa}{article}{
      author={Buss, Alcides},
      author={Echterhoff, Siegfried},
      author={Willett, Rufus},
       title={Exotic crossed products and the {B}aum-{C}onnes conjecture},
   JOURNAL = {J. Reine Angew. Math.},
  FJOURNAL = {Journal f\"{u}r die Reine und Angewandte Mathematik. [Crelle's
              Journal]},
    VOLUME = {740},
      YEAR = {2018},
     PAGES = {111--159},
      ISSN = {0075-4102},
   MRCLASS = {46L55 (19K35 46L40)},
  MRNUMBER = {3824785},
MRREVIEWER = {Jonathan M. Rosenberg},
       DOI = {10.1515/crelle-2015-0061},
       URL = {https://doi.org/10.1515/crelle-2015-0061},
}

\bib{Buss:2018nm}{article}{
      author={Buss, Alcides},
      author={Echterhoff, Siegfried},
      author={Willett, Rufus},
     TITLE = {The maximal injective crossed product},
   JOURNAL = {Ergodic Theory Dynam. Systems},
  FJOURNAL = {Ergodic Theory and Dynamical Systems},
    VOLUME = {40},
      YEAR = {2020},
    NUMBER = {11},
     PAGES = {2995--3014},
      ISSN = {0143-3857},
   MRCLASS = {Prelim},
  MRNUMBER = {4157472},
       DOI = {10.1017/etds.2019.25},
       URL = {https://doi.org/10.1017/etds.2019.25},
}

\bib{Buss:2019}{incollection}{,
      author={Buss, Alcides},
      author={Echterhoff, Siegfried},
      author={Willett, Rufus},
     TITLE = {Injectivity, crossed products, and amenable group actions},
 BOOKTITLE = {{$K$}-theory in algebra, analysis and topology},
    SERIES = {Contemp. Math.},
    VOLUME = {749},
     PAGES = {105--137},
 PUBLISHER = {Amer. Math. Soc., Providence, RI},
      YEAR = {2020},
   MRCLASS = {46L55 (43A35)},
  MRNUMBER = {4087636},
       DOI = {10.1090/conm/749/15069},
       URL = {https://doi.org/10.1090/conm/749/15069},
}

\bib{Combes}{article}{
    AUTHOR = {Combes, F.},
     TITLE = {Crossed products and {M}orita equivalence},
   JOURNAL = {Proc. London Math. Soc. (3)},
  FJOURNAL = {Proceedings of the London Mathematical Society. Third Series},
    VOLUME = {49},
      YEAR = {1984},
    NUMBER = {2},
     PAGES = {289--306},
      ISSN = {0024-6115},
   MRCLASS = {46L55 (46L05)},
  MRNUMBER = {748991},
       DOI = {10.1112/plms/s3-49.2.289},
       URL = {https://doi.org/10.1112/plms/s3-49.2.289},
}

\bib{Connes:1976fj}{article}{
	Author = {Alain Connes},
	Title = {Classification of injective factors. Cases ${II}_1$, ${II}_\infty$, ${III}_\lambda$},
   JOURNAL = {Ann. of Math. (2)},
  FJOURNAL = {Annals of Mathematics. Second Series},
    VOLUME = {104},
      YEAR = {1976},
    NUMBER = {1},
     PAGES = {73--115},
      ISSN = {0003-486X},
   MRCLASS = {46L10},
  MRNUMBER = {454659},
MRREVIEWER = {Fran\c{c}ois Combes},
       DOI = {10.2307/1971057},
       URL = {https://doi.org/10.2307/1971057},
}

\bib{Crann}{article}{,
    AUTHOR = {Crann, Jason},
     TITLE = {Amenability and covariant injectivity of locally compact
              quantum groups {II}},
   JOURNAL = {Canad. J. Math.},
  FJOURNAL = {Canadian Journal of Mathematics. Journal Canadien de
              Math\'{e}matiques},
    VOLUME = {69},
      YEAR = {2017},
    NUMBER = {5},
     PAGES = {1064--1086},
      ISSN = {0008-414X},
   MRCLASS = {22D35 (20G42 46L89 46M10)},
  MRNUMBER = {3693148},
MRREVIEWER = {Bernard Russo},
       DOI = {10.4153/CJM-2016-031-5},
       URL = {https://doi.org/10.4153/CJM-2016-031-5},
}
\bib{CKRW}{article}{
    AUTHOR = {D. Crocker},
    author = {A. Kumjian},
    author =  {I. Raeburn},
author = {D.P. Williams},
     TITLE = {An equivariant {B}rauer group and actions of groups on
              {$C^*$}-algebras},
   JOURNAL = {J. Funct. Anal.},
  FJOURNAL = {Journal of Functional Analysis},
    VOLUME = {146},
      YEAR = {1997},
    NUMBER = {1},
     PAGES = {151--184},
      ISSN = {0022-1236},
   MRCLASS = {46L55 (22D25 46L05 46L35)},
  MRNUMBER = {1446378},
MRREVIEWER = {Jean N. Renault},
       DOI = {10.1006/jfan.1996.3010},
       URL = {https://doi.org/10.1006/jfan.1996.3010},
}

\bib{CELY}{book}{
   AUTHOR = {Cuntz, Joachim},
   author={Echterhoff, Siegfried},
   author={Li, Xin},
   author={Yu, Guoliang},
     TITLE = {{$K$}-theory for group {$C^*$}-algebras and semigroup
              {$C^*$}-algebras},
    SERIES = {Oberwolfach Seminars},
    VOLUME = {47},
 PUBLISHER = {Birkh\"{a}user/Springer, Cham},
      YEAR = {2017},
     PAGES = {ix+319},
      ISBN = {978-3-319-59914-4; 978-3-319-59915-1},
   MRCLASS = {19-02 (19Kxx 46Lxx)},
  MRNUMBER = {3618901},
MRREVIEWER = {Efton Park},
}

\bib{Dix}{book}{
    AUTHOR = {Dixmier, Jacques},
     TITLE = {{$C\sp*$}-algebras},
      NOTE = {Translated from the French by Francis Jellett,
              North-Holland Mathematical Library, Vol. 15},
 PUBLISHER = {North-Holland Publishing Co., Amsterdam-New York-Oxford},
      YEAR = {1977},
     PAGES = {xiii+492},
      ISBN = {0-7204-0762-1},
   MRCLASS = {46L05},
  MRNUMBER = {0458185},
}

\bib{Ech-ind}{article}{
    AUTHOR = {Echterhoff, Siegfried},
     TITLE = {On induced covariant systems},
   JOURNAL = {Proc. Amer. Math. Soc.},
  FJOURNAL = {Proceedings of the American Mathematical Society},
    VOLUME = {108},
      YEAR = {1990},
    NUMBER = {3},
     PAGES = {703--706},
      ISSN = {0002-9939},
   MRCLASS = {46L55 (22D30)},
  MRNUMBER = {994776},
MRREVIEWER = {Trond Digernes},
       DOI = {10.2307/2047790},
       URL = {https://doi.org/10.2307/2047790},
}

\bib{Ech-reg}{article}{
    AUTHOR = {Echterhoff, Siegfried},
     TITLE = {Regularizations of twisted covariant systems and crossed
              products with continuous trace},
   JOURNAL = {J. Funct. Anal.},
  FJOURNAL = {Journal of Functional Analysis},
    VOLUME = {116},
      YEAR = {1993},
    NUMBER = {2},
     PAGES = {277--313},
      ISSN = {0022-1236},
   MRCLASS = {46L55 (22D25)},
  MRNUMBER = {1239073},
MRREVIEWER = {Dorte Olesen},
       DOI = {10.1006/jfan.1993.1114},
       URL = {https://doi.org/10.1006/jfan.1993.1114},
}

\bib{EW}{article}{
    AUTHOR = {Echterhoff, Siegfried},
    author =   {Williams, Dana P.},
     TITLE = {Crossed products by {$C_0(X)$}-actions},
   JOURNAL = {J. Funct. Anal.},
  FJOURNAL = {Journal of Functional Analysis},
    VOLUME = {158},
      YEAR = {1998},
    NUMBER = {1},
     PAGES = {113--151},
      ISSN = {0022-1236},
   MRCLASS = {46L55 (22D25 46L05 46M20)},
  MRNUMBER = {1641562},
MRREVIEWER = {Judith A. Packer},
       DOI = {10.1006/jfan.1998.3295},
       URL = {https://doi.org/10.1006/jfan.1998.3295},
}
\bib{Exel:Amenability}{article}{
  author={Exel, Ruy},
  title={Amenability for Fell bundles},
  FJOURNAL = {Journal f\"{u}r die Reine und Angewandte Mathematik. [Crelle's
              Journal]},
    VOLUME = {492},
      YEAR = {1997},
     PAGES = {41--73},
      ISSN = {0075-4102},
   MRCLASS = {46M20 (46L55)},
  MRNUMBER = {1488064},
MRREVIEWER = {Robert S. Doran},
       DOI = {10.1515/crll.1997.492.41},
       URL = {https://doi.org/10.1515/crll.1997.492.41},
}

\bib{Exel:Partial_dynamical}{book}{
  author={Exel, Ruy},
  title={Partial dynamical systems, Fell bundles and applications},
    SERIES = {Mathematical Surveys and Monographs},
    VOLUME = {224},
 PUBLISHER = {American Mathematical Society, Providence, RI},
      YEAR = {2017},
     PAGES = {vi+321},
      ISBN = {978-1-4704-3785-5},
   MRCLASS = {46L55 (16S35 16S40 37A55 46-02 46L45)},
  MRNUMBER = {3699795},
MRREVIEWER = {Fernando Abadie},
       DOI = {10.1090/surv/224},
       URL = {https://doi.org/10.1090/surv/224},
}

\bib{ExelNg:ApproximationProperty}{article}{
  author={Exel, Ruy},
  author={Ng, {Ch}i-Keung},
  title={Approximation property of $C^*$\nobreakdash-algebraic bundles},
   JOURNAL = {Math. Proc. Cambridge Philos. Soc.},
  FJOURNAL = {Mathematical Proceedings of the Cambridge Philosophical
              Society},
    VOLUME = {132},
      YEAR = {2002},
    NUMBER = {3},
     PAGES = {509--522},
      ISSN = {0305-0041},
   MRCLASS = {46M20 (46L05)},
  MRNUMBER = {1891686},
MRREVIEWER = {David A. Robbins},
       DOI = {10.1017/S0305004101005837},
       URL = {https://doi.org/10.1017/S0305004101005837},
}

\bib{Fell}{article}{
AUTHOR = {Fell, J. M. G.},
     TITLE = {The dual spaces of {$C^{\ast} $}-algebras},
   JOURNAL = {Trans. Amer. Math. Soc.},
  FJOURNAL = {Transactions of the American Mathematical Society},
    VOLUME = {94},
      YEAR = {1960},
     PAGES = {365--403},
      ISSN = {0002-9947},
   MRCLASS = {46.65},
  MRNUMBER = {146681},
MRREVIEWER = {R. Arens},
       DOI = {10.2307/1993431},
       URL = {https://doi.org/10.2307/1993431},
}

\bib{Glimm}{article}{
    AUTHOR = {Glimm, James},
     TITLE = {Locally compact transformation groups},
   JOURNAL = {Trans. Amer. Math. Soc.},
  FJOURNAL = {Transactions of the American Mathematical Society},
    VOLUME = {101},
      YEAR = {1961},
     PAGES = {124--138},
      ISSN = {0002-9947},
   MRCLASS = {22.40 (54.80)},
  MRNUMBER = {136681},
MRREVIEWER = {H. Umegaki},
       DOI = {10.2307/1993415},
       URL = {https://doi.org/10.2307/1993415},
}

\bib{Haagerup:1975xh}{article}{
      author={Haagerup, Uffe},
       title={The standard form of von {N}eumann algebras},
   JOURNAL = {Math. Scand.},
  FJOURNAL = {Mathematica Scandinavica},
    VOLUME = {37},
      YEAR = {1975},
    NUMBER = {2},
     PAGES = {271--283},
      ISSN = {0025-5521},
   MRCLASS = {46L10},
  MRNUMBER = {407615},
MRREVIEWER = {H. Araki},
       DOI = {10.7146/math.scand.a-11606},
       URL = {https://doi.org/10.7146/math.scand.a-11606},
}

\bib{Hamana:1985}{article}{
    AUTHOR = {Hamana, Masamichi},
     TITLE = {Injective envelopes of {$C^\ast$}-dynamical systems},
   JOURNAL = {Tohoku Math. J. (2)},
  FJOURNAL = {The Tohoku Mathematical Journal. Second Series},
    VOLUME = {37},
      YEAR = {1985},
    NUMBER = {4},
     PAGES = {463--487},
      ISSN = {0040-8735},
   MRCLASS = {46L55 (46L40)},
  MRNUMBER = {814075},
MRREVIEWER = {Dorte Olesen},
       DOI = {10.2748/tmj/1178228589},
       URL = {https://doi.org/10.2748/tmj/1178228589},
       }

\bib{Hamana:2011}{article}{
    AUTHOR = {Hamana, Masamichi},
     TITLE = {Injective envelopes of dynamical systems},
   JOURNAL = {Toyama Math. J.},
  FJOURNAL = {Toyama Mathematical Journal},
    VOLUME = {34},
      YEAR = {2011},
     PAGES = {23--86},
      ISSN = {1880-6015},
   MRCLASS = {46L55},
  MRNUMBER = {2985658},
MRREVIEWER = {Masaharu Kusuda},
}

\bib{He:Herz-Schur}{article}{
      author={He, Weijiao},
       title={Herz-Schur Multipliers of Fell Bundles and the nuclearity of the full {$C^*$}-algebra},
   JOURNAL = {International Journal of Theoretical and Applied Mathematics },
    VOLUME = {7},
      YEAR = {2021},
     PAGES = {17--29},
DOI= {10.11648/j.ijtam.20210702.11}
}

\bib{Hewitt}{article}{
    AUTHOR = {Hewitt, Edwin},
     TITLE = {The ranges of certain convolution operators},
   JOURNAL = {Math. Scand.},
  FJOURNAL = {Mathematica Scandinavica},
    VOLUME = {15},
      YEAR = {1964},
     PAGES = {147--155},
      ISSN = {0025-5521},
   MRCLASS = {42.50},
  MRNUMBER = {187016},
MRREVIEWER = {B. R. Gelbaum},
       DOI = {10.7146/math.scand.a-10738},
       URL = {https://doi.org/10.7146/math.scand.a-10738},
}

\bib{Ikunishi}{article}{
    AUTHOR = {Ikunishi, Akio},
     TITLE = {The {$W^*$}-dynamical system associated with a
              {$C^*$}-dynamical system, and unbounded derivations},
   JOURNAL = {J. Funct. Anal.},
  FJOURNAL = {Journal of Functional Analysis},
    VOLUME = {79},
      YEAR = {1988},
    NUMBER = {1},
     PAGES = {1--8},
      ISSN = {0022-1236},
   MRCLASS = {46L55},
  MRNUMBER = {950079},
       DOI = {10.1016/0022-1236(88)90025-0},
       URL = {https://doi.org/10.1016/0022-1236(88)90025-0},
}

\bib{Lance:1995ys}{book}{
      author={Lance, E.~Christopher},
       title={Hilbert {$C^*$}-modules},
    SERIES = {London Mathematical Society Lecture Note Series},
    VOLUME = {210},
      NOTE = {A toolkit for operator algebraists},
 PUBLISHER = {Cambridge University Press, Cambridge},
      YEAR = {1995},
     PAGES = {x+130},
      ISBN = {0-521-47910-X},
   MRCLASS = {46L05 (46H25 46K05 46L80 46M05 47D25)},
  MRNUMBER = {1325694},
MRREVIEWER = {Robert S. Doran},
       DOI = {10.1017/CBO9780511526206},
       URL = {https://doi.org/10.1017/CBO9780511526206},
}

\bib{Kalantar:2014sp}{article}{
	Author = {Kalantar, Mehrdad},
	Author = {Kennedy, Matthew},
	Title = {Boundaries of reduced ${C}^*$-algebras of discrete groups},
   JOURNAL = {J. Reine Angew. Math.},
  FJOURNAL = {Journal f\"{u}r die Reine und Angewandte Mathematik. [Crelle's
              Journal]},
    VOLUME = {727},
      YEAR = {2017},
     PAGES = {247--267},
      ISSN = {0075-4102},
   MRCLASS = {22D15 (20F67 22F05 46L10)},
  MRNUMBER = {3652252},
MRREVIEWER = {Fernando Abadie},
       DOI = {10.1515/crelle-2014-0111},
       URL = {https://doi.org/10.1515/crelle-2014-0111},
}

\bib{Kaniuth}{book}{
    AUTHOR = {Kaniuth, Eberhard},
    Author = {Taylor, Keith F.},
     TITLE = {Induced representations of locally compact groups},
    SERIES = {Cambridge Tracts in Mathematics},
    VOLUME = {197},
 PUBLISHER = {Cambridge University Press, Cambridge},
      YEAR = {2013},
     PAGES = {xiv+343},
      ISBN = {978-0-521-76226-7},
   MRCLASS = {22D30 (43A65)},
  MRNUMBER = {3012851},
MRREVIEWER = {Jorge Galindo},
}

\bib{KW-exact}{article}{
    AUTHOR = {Kirchberg, Eberhard},
    AUTHOR = {Wassermann, Simon},
     TITLE = {Exact groups and continuous bundles of {$C^*$}-algebras},
   JOURNAL = {Math. Ann.},
  FJOURNAL = {Mathematische Annalen},
    VOLUME = {315},
      YEAR = {1999},
    NUMBER = {2},
     PAGES = {169--203},
       DOI = {10.1007/s002080050364},
       URL = {https://doi.org/10.1007/s002080050364},
}

\bib{KW}{article}{
    AUTHOR = {Kirchberg, Eberhard},
    author=  {Wassermann, Simon},
     TITLE = {Permanence properties of {$C^*$}-exact groups},
   JOURNAL = {Doc. Math.},
  FJOURNAL = {Documenta Mathematica},
    VOLUME = {4},
      YEAR = {1999},
     PAGES = {513--558},
      ISSN = {1431-0635},
   MRCLASS = {46L05 (22D05 46L80)},
  MRNUMBER = {1725812},
MRREVIEWER = {Erik B\'{e}dos},
}

\bib{Kranz:2020ug}{unpublished}{
	author = {Julian Kranz},
	date-added = {2021-04-25 16:30:19 -1000},
	date-modified = {2021-04-25 16:31:11 -1000},
	note = {arXiv:2011.13852v2. To appear in: J. Operator Theory},
	title = {The weak containment problem for \'{e}tale groupoids which are strongly amenable at infinity},
	year = {2020}}

\bib{Lips}{article}{
    AUTHOR = {Lipsman, Ronald L.},
     TITLE = {Uniformly bounded representations of {${\rm SL}(2,\,\C)$}},
   JOURNAL = {Amer. J. Math.},
  FJOURNAL = {American Journal of Mathematics},
    VOLUME = {91},
      YEAR = {1969},
     PAGES = {47--66},
      ISSN = {0002-9327},
   MRCLASS = {22.60},
  MRNUMBER = {238995},
MRREVIEWER = {R. A. Gangolli},
       DOI = {10.2307/2373268},
       URL = {https://doi.org/10.2307/2373268},
}
\bib{Mackey}{article}{
    AUTHOR = {Mackey, George W.},
     TITLE = {Unitary representations of group extensions. {I}},
   JOURNAL = {Acta Math.},
  FJOURNAL = {Acta Mathematica},
    VOLUME = {99},
      YEAR = {1958},
     PAGES = {265--311},
      ISSN = {0001-5962},
   MRCLASS = {46.00 (22.00)},
  MRNUMBER = {98328},
MRREVIEWER = {F. I. Mautner},
       DOI = {10.1007/BF02392428},
       URL = {https://doi.org/10.1007/BF02392428},
}

\bib{Matsumura:2012aa}{article}{
      author={Matsumura, Masayoshi},
       title={A characterization of amenability of group actions on
  ${C}^*$-algebras},
   JOURNAL = {J. Operator Theory},
  FJOURNAL = {Journal of Operator Theory},
    VOLUME = {72},
      YEAR = {2014},
    NUMBER = {1},
     PAGES = {41--47},
      ISSN = {0379-4024},
   MRCLASS = {46L55},
  MRNUMBER = {3246980},
MRREVIEWER = {Robin Hillier},
       DOI = {10.7900/jot.2012sep07.1958},
       URL = {https://doi.org/10.7900/jot.2012sep07.1958},
}

\bib{McKeePour}{article}{
author={McKee, Andrew},
author={Pourshahami, Reyhaneh},
title={Amenable and inner amenable actions and approximation properties for crossed products by locally compact groups},
Journal={Canadian Mathematical Bulletin}, 
note={First View, pp. 1 - 19},
DOI={10.4153/S0008439521000333},
URL={https://doi.org/10.4153/S0008439521000333},
}

\bib{MSTL:Herz-Schur}{article}{
    AUTHOR = {McKee, Andrew},
    AUTHOR = {Skalski, Adam},
    AUTHOR = { Todorov, Ivan G.},
    AUTHOR = {Turowska, Lyudmila},
     TITLE = {Positive {H}erz-{S}chur multipliers and approximation properties of crossed products},
   JOURNAL = {Math. Proc. Cambridge Philos. Soc.},
  FJOURNAL = {Mathematical Proceedings of the Cambridge Philosophical
              Society},
    VOLUME = {165},
      YEAR = {2018},
    NUMBER = {3},
     PAGES = {511--532},
      ISSN = {0305-0041},
   MRCLASS = {46E10},
  MRNUMBER = {3860401},
MRREVIEWER = {Vesko Valov},
       DOI = {10.1017/S0305004117000639},
       URL = {https://doi.org/10.1017/S0305004117000639},
}

\bib{Moore}{article}{
    AUTHOR = {Moore, Calvin C.},
     TITLE = {Extensions and low dimensional cohomology theory of locally
              compact groups. {I}, {II}},
   JOURNAL = {Trans. Amer. Math. Soc.},
  FJOURNAL = {Transactions of the American Mathematical Society},
    VOLUME = {113},
      YEAR = {1964},
     PAGES = {40--63; ibid. 113 (1964), 64--86},
      ISSN = {0002-9947},
   MRCLASS = {22.60 (28.70)},
  MRNUMBER = {171880},
MRREVIEWER = {J. M. G. Fell},
       DOI = {10.2307/1994090},
       URL = {https://doi.org/10.2307/1994090},
}

\bib{Ng}{article}{
  author={Ng, {Ch}i-Keung},
     TITLE = {{$C^*$}-exactness and crossed products by actions and
              coactions},
   JOURNAL = {J. London Math. Soc. (2)},
  FJOURNAL = {Journal of the London Mathematical Society. Second Series},
    VOLUME = {51},
      YEAR = {1995},
    NUMBER = {2},
     PAGES = {321--330},
      ISSN = {0024-6107},
   MRCLASS = {46L55 (46L05)},
  MRNUMBER = {1325575},
       DOI = {10.1112/jlms/51.2.321},
       URL = {https://doi.org/10.1112/jlms/51.2.321},
}

\bib{Ozawa:2000th}{article}{
      author={Ozawa, Narutaka},
       title={Amenable actions and exactness for discrete groups},
   JOURNAL = {C. R. Acad. Sci. Paris S\'{e}r. I Math.},
  FJOURNAL = {Comptes Rendus de l'Acad\'{e}mie des Sciences. S\'{e}rie I.
              Math\'{e}matique},
    VOLUME = {330},
      YEAR = {2000},
    NUMBER = {8},
     PAGES = {691--695},
      ISSN = {0764-4442},
   MRCLASS = {22D25 (43A35 46L06 46L55)},
  MRNUMBER = {1763912},
MRREVIEWER = {Alain Valette},
       DOI = {10.1016/S0764-4442(00)00248-2},
       URL = {https://doi.org/10.1016/S0764-4442(00)00248-2},
}

\bib{Ozawa-Suzuki}{article}{
   author={Ozawa, Narutaka},
   AUTHOR = {Suzuki, Yuhei},
     TITLE = {On characterizations of amenable {$\rm C^*$}-dynamical systems
              and new examples},
   JOURNAL = {Selecta Math. (N.S.)},
  FJOURNAL = {Selecta Mathematica. New Series},
    VOLUME = {27},
      YEAR = {2021},
    NUMBER = {5},
     PAGES = {Paper No. 92, 29},
      ISSN = {1022-1824},
   MRCLASS = {46L55 (46L05)},
  MRNUMBER = {4318468},
       DOI = {10.1007/s00029-021-00699-2},
       URL = {https://doi.org/10.1007/s00029-021-00699-2},
}

\bib{Pedersen:1979zr}{book}{
      author={Pedersen, Gert K.},
       title={$C^*$-algebras and their automorphism groups},
    SERIES = {London Mathematical Society Monographs},
    VOLUME = {14},
 PUBLISHER = {Academic Press, Inc. [Harcourt Brace Jovanovich, Publishers],
              London-New York},
      YEAR = {1979},
     PAGES = {ix+416},
      ISBN = {0-12-549450-5},
   MRCLASS = {46Lxx},
  MRNUMBER = {548006},
MRREVIEWER = {J. W. Bunce},
}

\bib{PR}{article}{
    AUTHOR = {Packer, Judith A.},
    author = {Raeburn, Iain},
     TITLE = {On the structure of twisted group {$C^*$}-algebras},
   JOURNAL = {Trans. Amer. Math. Soc.},
  FJOURNAL = {Transactions of the American Mathematical Society},
    VOLUME = {334},
      YEAR = {1992},
    NUMBER = {2},
     PAGES = {685--718},
      ISSN = {0002-9947},
   MRCLASS = {22D25 (19K99 46L55 46L80)},
  MRNUMBER = {1078249},
MRREVIEWER = {Tsuyoshi Kajiwara},
       DOI = {10.2307/2154478},
       URL = {https://doi.org/10.2307/2154478},
}

\bib{Pisier}{unpublished}{
      author={Pisier, Gilles},
       title={On the Lifting Property for $C^*$-algebras},
        date={2020},
        note={arXiv:2009:12911v2. To appear in:  J. Noncommutative Geometry},
}

\bib{RW}{book}{
    author =  {I. Raeburn},
author = {D.P. Williams},
     TITLE = {Morita equivalence and continuous-trace {$C^*$}-algebras},
    SERIES = {Mathematical Surveys and Monographs},
    VOLUME = {60},
 PUBLISHER = {American Mathematical Society, Providence, RI},
      YEAR = {1998},
     PAGES = {xiv+327},
      ISBN = {0-8218-0860-5},
   MRCLASS = {46L05 (22D99 46L55 46M20)},
  MRNUMBER = {1634408},
MRREVIEWER = {Robert S. Doran},
       DOI = {10.1090/surv/060},
       URL = {https://doi.org/10.1090/surv/060},
}

\bib{Renault-LNM}{book}{
    AUTHOR = {Renault, Jean},
     TITLE = {A groupoid approach to {$C^{\ast} $}-algebras},
    SERIES = {Lecture Notes in Mathematics},
    VOLUME = {793},
 PUBLISHER = {Springer, Berlin},
      YEAR = {1980},
     PAGES = {ii+160},
      ISBN = {3-540-09977-8},
   MRCLASS = {46Lxx (22D25 22D40)},
  MRNUMBER = {584266},
MRREVIEWER = {A. K. Seda},
}

\bib{Renault}{article}{
   author={Renault, Jean},
   title= {Repr\'esentation des produits crois\'es d'alg\`ebres de groupoides},
   JOURNAL = {J. Operator Theory},
  FJOURNAL = {Journal of Operator Theory},
    VOLUME = {18},
      YEAR = {1987},
    NUMBER = {1},
     PAGES = {67--97},
      ISSN = {0379-4024},
   MRCLASS = {46L55 (22A30 46L05)},
  MRNUMBER = {912813},
MRREVIEWER = {Elliot C. Gootman},
}

\bib{Rieffel}{article}{
     AUTHOR = {Rieffel, Marc A.},
     TITLE = {Unitary representations of group extensions; an algebraic
              approach to the theory of {M}ackey and {B}lattner},
 BOOKTITLE = {Studies in analysis},
    SERIES = {Adv. in Math. Suppl. Stud.},
    VOLUME = {4},
     PAGES = {43--82},
 PUBLISHER = {Academic Press, New York-London},
      YEAR = {1979},
   MRCLASS = {22D30 (46L05)},
  MRNUMBER = {546802},
MRREVIEWER = {G. L. Litvinov},
}

\bib{Suzuki:2018qo}{article}{
    AUTHOR = {Suzuki, Yuhei},
     TITLE = {Simple equivariant {C}*-algebras whose full and reduced
              crossed products coincide},
   JOURNAL = {J. Noncommut. Geom.},
  FJOURNAL = {Journal of Noncommutative Geometry},
    VOLUME = {13},
      YEAR = {2019},
    NUMBER = {4},
     PAGES = {1577--1585},
      ISSN = {1661-6952},
   MRCLASS = {46L55 (37A55 46L05)},
  MRNUMBER = {4059828},
       DOI = {10.4171/jncg/356},
       URL = {https://doi.org/10.4171/jncg/356},
}

\bib{Suzuki:2020}{article}{
    AUTHOR = {Suzuki, Yuhei},
title={Equivariant $\mathcal O_2$-absorption theorem for exact groups},
   JOURNAL = {Compos. Math.},
  FJOURNAL = {Compositio Mathematica},
    VOLUME = {157},
      YEAR = {2021},
    NUMBER = {7},
     PAGES = {1492--1506},
      ISSN = {0010-437X},
   MRCLASS = {46L55 (46L05)},
  MRNUMBER = {4275465},
       DOI = {10.1112/s0010437x21007168},
       URL = {https://doi.org/10.1112/s0010437x21007168},
}

\bib{Tak}{book}{
AUTHOR = {Takesaki, Masamichi},
     TITLE = {Theory of operator algebras. {I}},
    SERIES = {Encyclopaedia of Mathematical Sciences},
    VOLUME = {124},
      NOTE = {Reprint of the first (1979) edition,
              Operator Algebras and Non-commutative Geometry, 5},
 PUBLISHER = {Springer-Verlag, Berlin},
      YEAR = {2002},
     PAGES = {xx+415},
      ISBN = {3-540-42248-X},
   MRCLASS = {46Lxx (46-01)},
  MRNUMBER = {1873025},
}

\bib{Tits:1972ut}{article}{
	author = {Jacques Tits},
	title = {Free subgroups in linear groups},
	   JOURNAL = {J. Algebra},
  FJOURNAL = {Journal of Algebra},
    VOLUME = {20},
      YEAR = {1972},
     PAGES = {250--270},
      ISSN = {0021-8693},
   MRCLASS = {20.75},
  MRNUMBER = {286898},
MRREVIEWER = {B. A. F. Wehrfritz},
       DOI = {10.1016/0021-8693(72)90058-0},
       URL = {https://doi.org/10.1016/0021-8693(72)90058-0},
}

\bib{Willett}{article}{
    AUTHOR = {Willett, Rufus},
     TITLE = {A non-amenable groupoid whose maximal and reduced
              {$C^*$}-algebras are the same},
   JOURNAL = {M\"{u}nster J. Math.},
  FJOURNAL = {M\"{u}nster Journal of Mathematics},
    VOLUME = {8},
      YEAR = {2015},
    NUMBER = {1},
     PAGES = {241--252},
      ISSN = {1867-5778},
   MRCLASS = {46L55 (22A22 22D25 46L05)},
  MRNUMBER = {3549528},
MRREVIEWER = {Jean N. Renault},
       DOI = {10.17879/65219671638},
       URL = {https://doi.org/10.17879/65219671638},
}

\bib{Dana-book}{book}{
    AUTHOR = {Williams, Dana P.},
     TITLE = {Crossed products of {$C{^\ast}$}-algebras},
   SERIES = {Mathematical Surveys and Monographs},
    VOLUME = {134},
 PUBLISHER = {American Mathematical Society, Providence, RI},
      YEAR = {2007},
     PAGES = {xvi+528},
      ISBN = {978-0-8218-4242-3; 0-8218-4242-0},
   MRCLASS = {46-02 (22D25 46L05 46L35 46L55 46L85)},
  MRNUMBER = {2288954},
MRREVIEWER = {Jonathan M. Rosenberg},
       DOI = {10.1090/surv/134},
       URL = {https://doi.org/10.1090/surv/134},
}
     
     \bib{Zettl}{article}{
    AUTHOR = {Zettl, Heinrich},
     TITLE = {A characterization of ternary rings of operators},
   JOURNAL = {Adv. in Math.},
  FJOURNAL = {Advances in Mathematics},
    VOLUME = {48},
      YEAR = {1983},
    NUMBER = {2},
     PAGES = {117--143},
      ISSN = {0001-8708},
   MRCLASS = {46L99 (47D15)},
  MRNUMBER = {700979},
MRREVIEWER = {E. St\o rmer},
       DOI = {10.1016/0001-8708(83)90083-X},
       URL = {https://doi.org/10.1016/0001-8708(83)90083-X},
}

     \bib{Zimmer}{article}{
         AUTHOR = {Zimmer, Robert J.},
     TITLE = {Amenable ergodic group actions and an application to {P}oisson
              boundaries of random walks},
   JOURNAL = {J. Funct. Anal.},
    VOLUME = {27},
      YEAR = {1978},
    NUMBER = {3},
     PAGES = {350--372},
       DOI = {10.1016/0022-1236(78)90013-7},
       URL = {https://doi.org/10.1016/0022-1236(78)90013-7},
}
\end{biblist}
\end{bibdiv}
\printindex

\end{document}